\newtheorem{lemma}{Lemma}[section]
\newtheorem{theorem}[lemma]{Theorem}
\newtheorem{conjecture}[lemma]{Conjecture}
\newtheorem{proposition}[lemma]{Proposition}
\newtheorem{corollary}[lemma]{Corollary}
\newtheorem{openproblem}[lemma]{Open problem}
\newcommand{\thistheoremname}{}
\newtheorem*{genericthm}{\thistheoremname}
\newenvironment{namedthm}[1]
  {\renewcommand{\thistheoremname}{#1}%
   \begin{genericthm}}
  {\end{genericthm}}
\theoremstyle{definition}
\newtheorem{definition}[lemma]{Definition}
\theoremstyle{remark}
\newtheorem{remark}[lemma]{Remark}
\newtheorem{example}[lemma]{Example}
\newcommand{\tr}[1]{\vphantom{#1}^t #1}
\newcommand{\CC}{\mathbb{C}}
\newcommand{\HH}{\mathbb{H}}
\newcommand{\NN}{\mathbb{N}}
\newcommand{\PP}{\mathbb{P}}
\newcommand{\QQ}{\mathbb{Q}}
\newcommand{\RR}{\mathbb{R}}
\newcommand{\ZZ}{\mathbb{Z}}
\newcommand{\TT}{\mathbb{T}}
\newcommand{\cA}{\mathcal{A}} % the alphabet
\newcommand{\cC}{\mathcal{C}} % collection of tiles
\newcommand{\cD}{\mathcal{D}} % some set 
\newcommand{\cF}{\mathcal{F}} % follower set
\newcommand{\cL}{\mathcal{L}} % a language
\newcommand{\cM}{\mathcal{M}} % set of matrices
\newcommand{\cP}{\mathcal{P}} % partition
\newcommand{\cQ}{\mathcal{Q}} % partition
\newcommand{\cR}{\mathcal{R}} % Rauzy fractal
\newcommand{\cS}{\mathcal{S}} % set of substitutions
\newcommand{\hA}{\widehat{A}} % natural extension cocycle 
\newcommand{\hB}{\widehat{B}} % natural extension cocycle 
\newcommand{\hF}{\widehat{F}} % natural extension map
\newcommand{\hG}{\widehat{G}} % natural extension map
\newcommand{\hX}{\widehat{X}} % natural extension domain
\newcommand{\hY}{\widehat{Y}} % natural extension domain
\newcommand{\hT}{\widehat{T}} % natural extension map, affine version
\newcommand{\hC}{\widehat{\mathcal{C}}} % collection of Rauzy boxes
\newcommand{\hR}{\widehat{\mathcal{R}}} % Rauzy box
\newcommand{\hchi}{\widehat{\chi}}
\newcommand{\hmu}{\widehat{\mu}}
\newcommand{\hnu}{\widehat{\nu}}
\newcommand{\hDelta}{\widehat{\Delta}}
\newcommand{\rAR}{\mathrm{AR}} % Arnoux-Rauzy
\newcommand{\rB}{\mathrm{B}} % ordered Brun
\newcommand{\rF}{\mathrm{F}} % Gauss
\newcommand{\rG}{\mathrm{G}} % Gauss
\newcommand{\rM}{\mathrm{M}} % modified Jacobi-Perron
\newcommand{\rU}{\mathrm{U}} % unordered Brun
\newcommand{\ba}{\mathbf{a}}
\newcommand{\bb}{\mathbf{b}}
\newcommand{\be}{\mathbf{e}}
\newcommand{\bp}{\mathbf{p}}
\newcommand{\bt}{\mathbf{t}}
\newcommand{\bu}{\mathbf{u}}
\newcommand{\bv}{\mathbf{v}}
\newcommand{\bw}{\mathbf{w}}
\newcommand{\bx}{\mathbf{x}}
\newcommand{\by}{\mathbf{y}}
\newcommand{\bz}{\mathbf{z}}
\newcommand{\bM}{\mathbf{M}}
\newcommand{\bN}{\mathbf{N}}
\newcommand{\bl}{\boldsymbol{\ell}}
\newcommand{\bone}{\mathbf{1}}
\newcommand{\bsigma}{\boldsymbol{\sigma}}
\newcommand{\bphi}{\boldsymbol{\varphi}}
\newcommand{\bpsi}{\boldsymbol{\psi}}
\newcommand{\btau}{\boldsymbol{\tau}}
\newcommand{\fr}{\mathfrak{r}}
\newcommand{\GL}{\operatorname{GL}}
\newcommand{\SL}{\operatorname{SL}}
\newcommand{\rint}{\operatorname{int}}
\newcommand{\balpha}{\boldsymbol{\alpha}}
\newcommand{\gen}{{\mathrm{gen}}}
\newcommand{\newlambda}{\mbox{\begin{greek}l\end{greek}}{}}
\newcommand{\labitem}[2]{\def\@itemlabel{#1}\item\def\@currentlabel{#1}\label{#2}}
\numberwithin{equation}{section}
\newcommand{\indx}[1]{\index[index]{#1}} 
\newcommand{\notx}[3]{\index[notation]{#1@\makebox[5em][l]{#2}#3}} 
\title[Nonstationary Markov Partitions]{Nonstationary Markov Partitions and  Multidimensional Continued Fraction Algorithms}
\author[P.~Arnoux]{Pierre~Arnoux}
\address{(P.A.) Institut de Math\'ematiques de Marseille, Aix-Marseille Universit\'e, Campus de Luminy,  F-13288 Marseille, France}
\email{pierre@pierrearnoux.fr}
\author[V.~Berth\'e]{Val\'erie Berth\'e}
\address{(V.B. \& W.S.) Université Paris Cit\'e, CNRS, IRIF, F-75013 Paris, France}
\email{berthe@irif.fr, steiner@irif.fr}
\author[M.~Minervino]{Milton Minervino}
\address{(M.M.) 12 Rue de L\"ubeck, F-75116 Paris}
\email{milminervino@gmail.com}
\author[W.~Steiner]{Wolfgang Steiner}
\author[J. M. Thuswaldner]{J\"org M. Thuswaldner}
\address{(J.M.T.) Chair of Mathematics, Statistics and Geometry, Montan\-universit\"at Leoben, A-8700 Leoben, AUSTRIA}
\email{joerg.thuswaldner@unileoben.ac.at}
\date{\today}
\begin{document}

\begin{abstract}
It is well known from results of Sina\u{\i} and Bowen that a hyperbolic toral automorphism admits a Markov partition. Our aim is to generalize this concept to the nonstationary case, i.e., we associate Markov partitions to nonstationary sequences of toral automorphisms. Special emphasis is placed on sequences of toral automorphisms produced by strongly convergent multidimensional continued fraction algorithms. The convergence of the algorithms is expressed in terms of a Pisot type condition which yields hyperbolicity for the nonstationary dynamics with a splitting into two subspaces of dimension~1 and codimension~1, respectively. 

For a multidimensional  continued fraction map, we first consider its natural extension, whose orbits are given by bi-infinite sequences of matrices with determinant~$\pm 1$. The Pisot type condition allows us to interpret an orbit of this natural extension as an Anosov mapping family, i.e., as a bi-infinite sequence of toral automorphisms with well-defined stable and unstable manifolds. We prove that this Anosov mapping family admits a bi-infinite sequence of explicit nonstationary Markov partitions. To obtain the atoms of the Markov partitions, a combinatorial structure, expressed in terms of symbolic dynamical systems, namely substitutive and $\mathcal{S}$-adic shifts, has to be superimposed on the Anosov mapping family. In particular, the atoms of the Markov partitions are geometric realizations of $\mathcal{S}$-adic shifts, defined by suspensions of so-called $\mathcal{S}$-adic Rauzy fractals. 

These Markov partitions then provide a symbolic model as a nonstationary edge shift for the Anosov mapping family. Restacking of the Markov partition yields a renormalization process that allows us to interpret a multidimensional continued fraction algorithm as a sequence of iteratively induced toral rotations. 

We develop our theory locally in order to make it applicable to single orbits with a local Pisot property as well as generically in order to apply it to each generic orbit of a multidimensional continued fraction algorithm with a global Pisot property.

As a guiding example for our theory we study Anosov mapping families on 2- and 3-dimensional tori associated to various versions of the Brun continued fraction algorithm. As a realization result, these mapping families allow us to attach an explicit nonstationary Markov partition to almost all codimension~1 splittings of $\RR^3$ and $\RR^4$. 
\end{abstract}

\maketitle

\thanks{\vspace{20ex}
\noindent
This work was supported by the ERC grant DynAMiCs (101167561) of the European Research Council, by the bilateral grant SYMDYNAR (ANR-23-CE40-0024 and FWF~I~6750) of the Agence Nationale de la Recherche and the Austrian Science Fund, and by the ANR project IZES (ANR-22-CE40-0011).

\bigskip
\noindent
The authors are indebted to Niels Langeveld, Maria Clara Werneck, and Reem Yassawi for their careful reading of the manuscript.}

\setcounter{tocdepth}{3}
\tableofcontents

\chapter{Introduction}\label{sec:intro}
The classical theory of \emph{dynamical systems}\indx{dynamical system} deals with a space~$X$ together with a single transformation $f: X\to X$ acting on that space. The iterates of this transformation~$f$ on a given point give rise to \emph{orbits}, whose dynamical properties can be studied. It is also meaningful to study a ``nonstationary'' analog of this situation. Namely, in this monograph we consider a sequence of spaces $(X_n)_{n\in\ZZ}$ equipped with a sequence of transformations $(f_{n})_{n\in\ZZ}$ that map along this sequence of spaces according to the diagram  
\begin{equation}\tag{$*$}\label{eq:star}
\cdots\xrightarrow{f_{-2}} X_{-1}\xrightarrow{f_{-1}} X_{0}\xrightarrow{f_{0}} X_{1}\xrightarrow{f_{1}} \cdots.
\end{equation}
At first glance, because of the lack of recurrence, the dynamics of such a \emph{mapping family} is trivial. Nevertheless, for the case of so-called \emph{Anosov mapping families} one can find the whole richness of a classical recurrent dynamical system defined by a single map $f\colon X\to X$. An Anosov mapping family is given by a diagram of the form~\eqref{eq:star}, for which the transformations~$f_n$, $n\in \ZZ$, are diffeomorphisms along a sequence $(X_n)_{n\in \ZZ}$ of compact Riemannian manifolds equipped with an invariant sequence of splittings of the tangent bundle into expanding and contracting subspaces. Anosov mapping families have been introduced in \cite{AF:05} as a nonstationary generalization of Anosov diffeomorphisms and have been further studied for instance in \cite{Fisher:09,Stenlund:11,KL:16,MJ:19,CRV:19}. 
We focus here on the case of \emph{linear Anosov mapping families}, i.e., on Anosov mapping families whose transformations $f_n$, $n\in \ZZ$, are linear mappings. For a given linear Anosov mapping family we will construct a nonstationary version of a Markov partition, which provides a symbolic model of this mapping family in the form of a nonstationary symbolic shift.  

In nonstationary dynamics the different maps~$f_n$, $n\in\ZZ$, are often chosen according to some probability process, or by a dynamical system equipped with a certain skew product construction, such as developed for instance in the setting of random matrices or of differentiable dynamical systems; see e.g.\ \cite{Kifer,Arnold98,KiferLiu,Viana:book}.

Our motivation as well as our guiding examples come from the theory of \emph{unimodular multidimensional continued fraction algorithms}. For a given vector of real numbers, a multidimensional continued fraction algorithm provides increasingly good rational Diophantine approximations in form of vectors whose rational entries all have the same denominator. Classical continued fraction algorithms provide extremely good (and even the best) rational Diophantine approximations; see \cite{Cassels,Lagarias:93,Schweiger:00,WINE} and Chapter~\ref{sec:cf} for details. 

To be more precise, let $d\ge 2$ be fixed. A multidimensional continued fraction algorithm dynamically generates an infinite sequence $(M_n)_{n\in\ZZ}$ of $d {\times} d$ matrices with integer entries and determinant~$\pm 1$ in a way that each column of the product $M_0\cdots M_{n-1}$ contains the numerators and the denominator of an $n$-th rational Diophantine approximation ($n\in \NN$).\footnote{As we will see later, the  negative part of the sequence $(M_n)_{n\in\ZZ}$ is related to the natural extension of the algorithm.} The matrices $M_n$, $n\in\ZZ$, can  be regarded as linear automorphisms that map between elements of a sequence $(\TT_n)_{n\in\ZZ}$\notx{Td}{$\TT^d,\TT_n$}{$d$-dimensional torus} of \mbox{$d$-dimensional} tori, according to the diagram\footnote{It will become apparent later why we take the inverse of the matrices here.} 
\begin{equation}
\tag{$**$}
\label{eq:star2}
\cdots\xrightarrow{M_{-2}^{-1}} \TT_{-1}\xrightarrow{M_{-1}^{-1}} \TT_{0}\xrightarrow{M_{0}^{-1}} \TT_{1}\xrightarrow{M_{1}^{-1}} \cdots.
\end{equation}
Therefore, a multidimensional continued fraction algorithm defines mapping families. The Anosov property of these mapping families is related to convergence properties of the algorithm, which also 
play an important role in the theory of Diophantine approximation. Well known notions of convergence depend on whether the angles between the (column) vectors of the matrices $M_0\cdots M_{n-1}$ tend to~$0$ (\emph{weak convergence}), or whether the distances between the (column) vectors tend to~$0$ (\emph{strong convergence}); see Definition~\ref{def:wsc}.  A~gauge for the ``strongness'' of convergence of a multidimensional continued fraction algorithm is provided by its (first and second) Lyapunov exponents; see \cite{Lagarias:93}. In fact, Lyapunov exponents describe the asymptotic behavior of the singular values of large products of matrices. If the second Lyapunov exponent of an algorithm is (generically) negative, we say that it satisfies the \emph{Pisot condition}.
This condition, which plays a crucial role in our investigations, yields hyperbolicity for the nonstationary dynamics of the algorithm, and, more precisely, guarantees that the mapping families in \eqref{eq:star2} are (generically) ``codimension one'' linear Anosov mapping families in the sense that in the associated splittings of tangent spaces, the contracting spaces have dimension~$1$. 

As above, let  $(M_n)_{n\in\ZZ}$ be a sequence of $d{\times}d$-matrices of determinant~$\pm 1$ generated by the natural extension of a multidimensional continued fraction algorithm with a strong convergence property.  We call such a sequence an \emph{orbit} of the algorithm.
If we assume that $(M_n)_{n\in\ZZ}$ is periodic with period~$p$, we can block this sequence by forming products of blocks of consecutive matrices of length $p$. By this blocking, $(M_n)_{n\in\ZZ}$ becomes the constant sequence $(M)_{n\in \ZZ}$, where $M=M_0\cdots M_{p-1}$ is a hyperbolic integer matrix of determinant~$\pm 1$. In this case, the nonstationary diagram \eqref{eq:star2} defined by $(M_n)_{n\in\ZZ}$ becomes stationary, i.e., it degenerates to the hyperbolic toral automorphism $M^{-1}: \TT^d \to \TT^d$ on the $d$-dimensional torus~$\TT^d$. Since it is well known that each hyperbolic toral automorphism admits a \emph{Markov partition} \cite{Sinai:68,Sinai:68bis,Bow:70,Bedford:86}, one can  associate a Markov partition to a periodic orbit of a multidimensional continued fraction algorithm. As an illustration, we refer to \cite{IO:93}, where this was done for periodic orbits of the Brun continued fraction algorithm. 

In the present work, we generalize this and associate nonstationary Markov partitions with arbitrary orbits of multidimensional continued fraction algorithms with a strong convergence property (namely, the Pisot condition). As mentioned before, by strong convergence, a (generic) orbit of the algorithm $(M_n)_{n\in \ZZ}$ gives rise to an Anosov mapping family \eqref{eq:star2}. In particular, it implies the existence of a well-defined invariant sequence of splittings of the tangent bundles of the tori in stable and unstable manifolds with a uniform upper bound for the contraction and a uniform lower bound for the expansion. As in the stationary case, nonstationary Markov partitions then allow us to build symbolic models for linear Anosov mapping families and, hence, for orbits of multidimensional continued fraction algorithms, as nonstationary edge shifts. Nonstationary edge shifts can be considered as nonstationary subshifts of finite type; see e.g.\ \cite{AF:05,Fisher:09}. They are also related to the (two-sided)  constructions of Markov compacta and Bratteli diagrams introduced in \cite{Bufetov13,Bufetov:14}. 

For the case $d=2$, this has been done already in \cite{AF:05}; see also \cite{AF:01}. In these papers a linear Anosov mapping family on $2$-dimensional tori is associated to each orbit of the classical continued fraction algorithm, and nonstationary Markov partitions with polygonal atoms are constructed for the associated sequences of $2{\times}2$ matrices. The situation becomes much more delicate for $d\ge 3$. Indeed, as in the case of a single toral hyperbolic automorphism, the atoms of nonstationary Markov partitions are no longer polygonal in dimension $d\ge 3$. Our aim is to prove that these atoms are provided by so-called \emph{$\cS$-adic Rauzy fractals} that were first defined in \cite{BST:19}; see also \cite{BST:23}. 

The sections of this introduction are organized as follows. Section~\ref{sec:defMarkov}  briefly introduces Markov partitions for hyperbolic toral automorphisms. In Section~\ref{sec:introsturm}, we review the rich dynamical interpretations of the classical continued fraction algorithm, Section~\ref{sec:introcf}  then discusses how to generalize this setting to  higher dimensions.
Section~\ref{sec:introresults} introduces the main results of this monograph, and Section~\ref{sec:introoutline} lays out its overall organization.

\section{Markov partitions for hyperbolic toral automorphisms}\label{sec:defMarkov}
One of our main aims is to associate a nonstationary Markov partition to a sequence of toral automorphisms. This generalizes the concept of a (stationary) Markov partition for a single toral automorphism, which we discuss briefly in the present section.  A~\emph{toral automorphism}\indx{toral automorphism}~$f$ is  an invertible linear map on the  torus $\TT^d = \RR^d/\ZZ^d$, i.e., $f: \TT^d\to \TT^d$, $\bx \mapsto M \bx \pmod{\ZZ^d}$, for some integer matrix~$M$ with determinant~$\pm1$. It is called \emph{hyperbolic}\index{toral automorphism!hyperbolic}  if none of the eigenvalues of the matrix~$M$ is on the unit circle.
As already mentioned before, it is well known since the work of Sina\u{\i}  and Bowen \cite{Sinai:68,Sinai:68bis,Bow:70} that each hyperbolic toral automorphism~$f$ admits a \emph{Markov partition}. If the Markov partition is fine enough, then it provides a symbolic representation of~$f$ in the sense that it allows to conjugate the dynamical system $(\TT^d,f)$ to a symbolic dynamical system; cf.\ Section~\ref{subsec:nsft}. To illustrate this, we discuss Markov partitions of (the inverse of)\footnote{We consider $f(\bx) = M^{-1}\bx$ instead of $f(\bx) = M\bx$, with $M = \big(\begin{smallmatrix}2&1\\1&1\end{smallmatrix}\big)$, for consistency with the rest of the paper.} Arnold's famous \emph{cat map}\indx{cat map}
\begin{equation}\label{eq:catmap}
f:\, \TT^2 \to \TT^2, \quad \begin{pmatrix}x_1\\x_2\end{pmatrix} \mapsto \begin{pmatrix}2&1\\1&1\end{pmatrix}^{-1} \begin{pmatrix}x_1\\x_2\end{pmatrix} \pmod{\ZZ^2},
\end{equation}
a hyperbolic toral automorphism of~$\TT^2$. For an extensive survey of the theory of Markov partitions for toral automorphisms, we refer to \cite{A98}. 

\begin{figure}[ht] 
\begin{tikzpicture}[scale=2.25]
\filldraw[fill=red](-.171,.276)--(.276,-.447)--(1,0)--(.553,.724)--cycle;
\node at (.41,.14){$R_1$};
\filldraw[fill=blue](-.171,.276)--(-.447, .724)--(0,1)--(.276,.553)--cycle;
\node at (-.09, .64){$R_2$};
\draw[dotted](0,0)--(0,1)--(1,1)--(1,0)--cycle;
\node at (1.25,.33){$\xrightarrow[\displaystyle f^{-1}]{\big(\begin{smallmatrix}2&1\\1&1\end{smallmatrix}\big)}$};
\begin{scope}[shift={(1.75,0)}]
\begin{scope}[cm={2,1,1,1,(0,0)}]
\filldraw[fill=red](-.171,.276)--(.276,-.447)--(1,0)--(.553,.724)--cycle;
\node at (.41,.14){$M R_1$};
\filldraw[fill=blue](-.171,.276)--(-.447, .724)--(0,1)--(.276,.553)--cycle;
\node at (-.09, .64){$M R_2$};
\end{scope}
\draw(.276,-.447)--(1,0)--(.553,.724)--(.276,.553)--(0,1)--(-.447, .724)--cycle;
\draw[dotted](0,0)--(0,1)--(1,1)--(1,0)--cycle;
\node at (1.5,.33){$=$};
\end{scope}
\begin{scope}[shift={(4,0)}]
\filldraw[fill=blue](-.447,.724)--(-.342,.553)--(.106,.829)--(0,1)--cycle;
\filldraw[fill=red](-.342,.553)--(-.171,.276)--(.276,.553)--(.106,.829)--cycle;
\filldraw[fill=blue](-.171,.276)--(-.065,.106)--(.659,.553)--(.553,.724)--cycle;
\filldraw[fill=red](-.065,.106)--(.106,-.171)--(.829,.276)--(.659,.553)--cycle;
\filldraw[fill=red](.106,-.171)--(.276,-.447)--(1,0)--(.829,.276)--cycle;
\draw[dotted](0,0)--(0,1)--(1,1)--(1,0)--cycle;
\end{scope}
\end{tikzpicture}
\caption{The Markov partition $\{R_1,R_2\}$ for the cat map and its image.} \label{fig:cat1}
\end{figure}
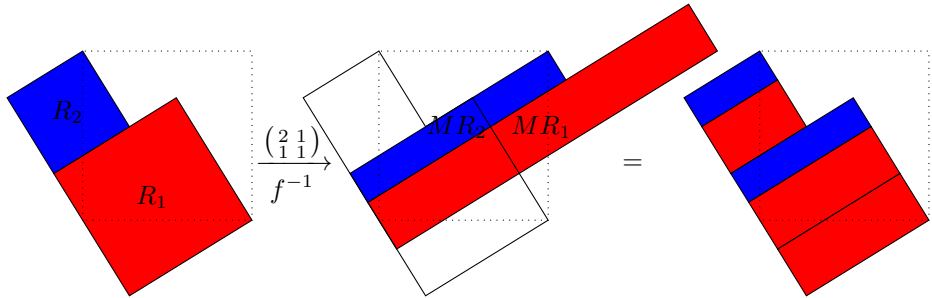

Consider the collection $\{R_1,R_2\}$ of the two disjoint open squares depicted in the left side of Figure~\ref{fig:cat1}. The construction of the squares~$R_1$ and~$R_2$ is surveyed for instance in \cite[Chapter~2, Section~5d]{KaHa:95}. The collection $\{R_1,R_2\}$ forms a \emph{topological partition}\indx{partition!topological} of~$\TT^2$ in the sense that $L = \overline{R_1} \cup \overline{R_2}$ forms a fundamental domain of $\RR^2/\ZZ^2$ and $R_1 \cap R_2 = \emptyset$; see Figure~\ref{fig:cat1}. 
To illustrate what happens when we apply the cat map~$f^{-1}$ to~$R_i$, $i \in \{1,2\}$, we do this in two steps.
We first consider~$R_i$ as a subset of~$\RR^2$ and apply the matrix~$M$ to this set. Because $M$ is hyperbolic and $R_i$ is chosen in a way that its edges are parallel to eigenvectors of~$M$, the image~$M R_i$ is a thinner and longer version of~$R_i$ whose edges are parallel to the edges of~$R_i$; see the middle of Figure~\ref{fig:cat1}. To get the representative of the image $f^{-1}(R_i)$ of~$R_i$ in the fundamental region~$L$, we have to ``restack'' pieces of $MR_i$ back to~$L$. This leads to the right side of Figure~\ref{fig:cat1}, which shows an important feature of the collection $\{R_1,R_2\}$: The collection $\{f^{-1}(R_1),f^{-1}(R_2)\}$ of its images tessellates~$L$ in a way that $f^{-1}(R_i)$ consists of finitely many rectangles each of which is a subset of some set~$R_j$ and has ``full height''. A~similar ``full width'' property holds if we apply~$f$ to the partition $\{R_1,R_2\}$, see Figure~\ref{fig:cat2}. 
\begin{figure}[ht] 
\begin{tikzpicture}[scale=2.25]
\filldraw[fill=red](-.171,.276)--(.276,-.447)--(1,0)--(.553,.724)--cycle;
\node at (.41,.14){$R_1$};
\filldraw[fill=blue](-.171,.276)--(-.447, .724)--(0,1)--(.276,.553)--cycle;
\node at (-.09, .64){$R_2$};
\draw[dotted](0,0)--(0,1)--(1,1)--(1,0)--cycle;
\node at (1.35,.33){$\xrightarrow[\displaystyle f]{\big(\begin{smallmatrix}2&1\\1&1\end{smallmatrix}\big)^{-1}}$};
\begin{scope}[shift={(2,0)}]
\begin{scope}[cm={1,-1,-1,2,(0,0)}]
\filldraw[fill=red](-.171,.276)--(.276,-.447)--(1,0)--(.553,.724)--cycle;
\filldraw[fill=blue](-.171,.276)--(-.447, .724)--(0,1)--(.276,.553)--cycle;
\end{scope}
\node at (-.15,-.25){$M^{-1} R_1$};
\node at (-.35,1.45){$M^{-1} R_2$};
\draw(.276,-.447)--(1,0)--(.553,.724)--(.276,.553)--(0,1)--(-.447, .724)--cycle;
\draw[dotted](0,0)--(0,1)--(1,1)--(1,0)--cycle;
\node at (1.25,.33){$=$};
\end{scope}
\begin{scope}[shift={(3.75,0)}]
\filldraw[fill=red](-.447,.724)--(.276,-.447)--(.553,-.276)--(-.171,.894)--cycle;
\filldraw[fill=blue](-.171,.894)--(.553,-.276)--(.724,-.171)--(0,1)--cycle;
\filldraw[fill=red](.276,.553)--(.724,-.171)--(1,0)--(.553,.724)--cycle;
\draw[dotted](0,0)--(0,1)--(1,1)--(1,0)--cycle;
\end{scope}
\end{tikzpicture}
\caption{The Markov partition $\{R_1,R_2\}$ and its image under~$f$.} \label{fig:cat2}
\end{figure}
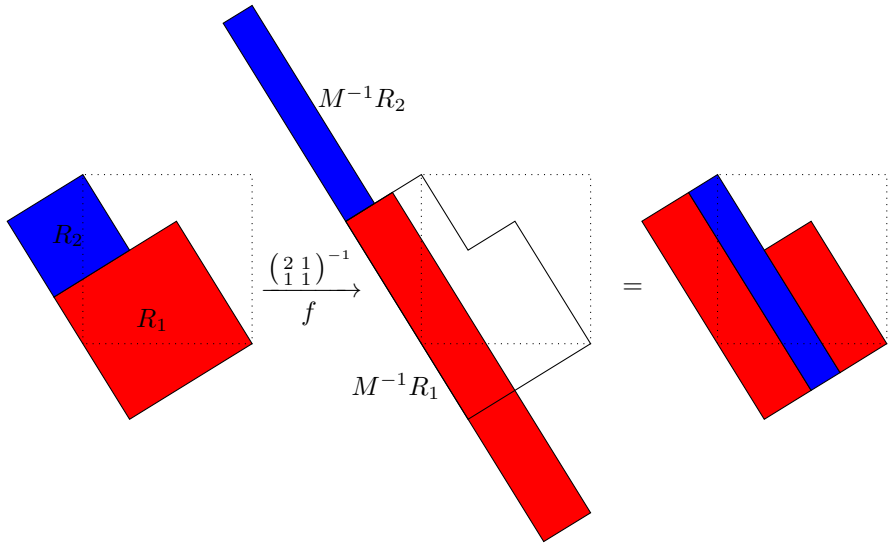
The ``full height'' and ``full width'' property have an important implication: If we choose $i_{-1},i_0,i_1\in\{1,2\}$, then 
\[
f(R_{i_{-1}}) \cap R_{i_{0}} \neq \emptyset, \,
R_{i_0} \cap f^{-1} (R_{i_{1}}) \neq \emptyset
\ \mbox{implies} \
f(R_{i_{-1}}) \cap  (R_{i_{0}})  \cap f^{-1} (R_{i_{1}}) \neq \emptyset.
\]
This implication is illustrated in Figure~\ref{fig:cat3}. 
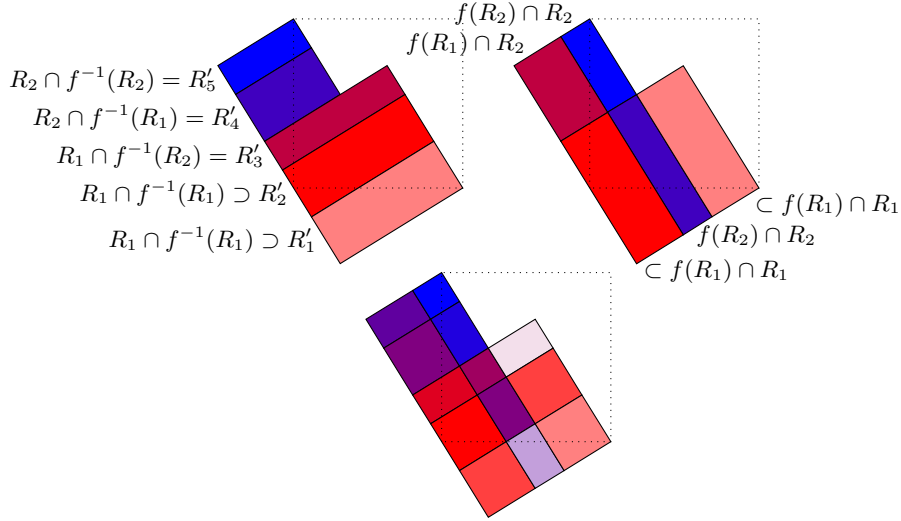
\begin{figure}[ht] 
\begin{tikzpicture}[scale=2.25]
\filldraw[fill=blue](-.447,.724)--(-.342,.553)--(.106,.829)--(0,1)--cycle;
\node[left] at (-.395,.639){\small$R_2 \cap f^{-1}(R_2) = R'_5$};
\filldraw[fill=blue!75!red](-.342,.553)--(-.171,.276)--(.276,.553)--(.106,.829)--cycle;
\node[left] at (-.257,.415){\small$R_2 \cap f^{-1}(R_1) = R'_4$};
\filldraw[fill=red!75!blue](-.171,.276)--(-.065,.106)--(.659,.553)--(.553,.724)--cycle;
\node[left] at (-.118,.191){\small$R_1 \cap f^{-1}(R_2) = R'_3$};
\filldraw[fill=red](-.065,.106)--(.106,-.171)--(.829,.276)--(.659,.553)--cycle;
\node[left] at (.021,-.033){\small$R_1 \cap f^{-1}(R_1) \supset R'_2$};
\filldraw[fill=red!50](.106,-.171)--(.276,-.447)--(1,0)--(.829,.276)--cycle;
\node[left] at (.191,-.309){\small$R_1 \cap f^{-1}(R_1) \supset R'_1$};
\draw[dotted](0,0)--(0,1)--(1,1)--(1,0)--cycle;
\begin{scope}[shift={(1.75,0)}]
\begin{scope}[cm={1,-1,-1,2,(1,-1)}]
\filldraw[fill=blue](-.447,.724)--(-.342,.553)--(.106,.829)--(0,1)--cycle;
\node[above left=-3pt] at (-.224,.862){\small$f(R_2) \cap R_2$};
\filldraw[fill=blue!75!red](-.342,.553)--(-.171,.276)--(.276,.553)--(.106,.829)--cycle;
\node[below right=-4pt] at (.053,.415){\small$f(R_2) \cap R_2$};
\end{scope}
\begin{scope}[cm={1,-1,-1,2,(0,0)}]
\filldraw[fill=red!75!blue](-.171,.276)--(-.065,.106)--(.659,.553)--(.553,.724)--cycle;
\node[above left=-3pt] at (.029,.4){\small$f(R_1) \cap R_2$};
\filldraw[fill=red](-.065,.106)--(.106,-.171)--(.829,.276)--(.659,.553)--cycle;
\node[below right=-2pt] at (.196,-.1){\small$\subset f(R_1) \cap R_1$};
\end{scope}
\begin{scope}[cm={1,-1,-1,2,(0,1)}]
\filldraw[fill=red!50](.106,-.171)--(.276,-.447)--(1,0)--(.829,.276)--cycle;
\node[below right=-3pt] at (.92,-.05){\small$\subset f(R_1) \cap R_1$};
\end{scope}
\draw[dotted](0,0)--(0,1)--(1,1)--(1,0)--cycle;
\end{scope}
\begin{scope}[shift={(.875,-1.5)}] \filldraw[fill=red!37.5!blue](-.447,.724)--(-.342,.553)--(-.065,.724)--(-.171,.894)--cycle;
\filldraw[fill=red!50!blue](-.342,.553)--(-.171,.276)--(.106,.447)--(-.065,.724)--cycle;
\filldraw[fill=red!87.5!blue](-.171,.276)--(-.065,.106)--(.211,.276)--(.106,.447)--cycle;
\filldraw[fill=red](-.065,.106)--(.106,-.171)--(.382,0)--(.211,.276)--cycle;
\filldraw[fill=red!75](.106,-.171)--(.276,-.447)--(.553,-.276)--(.382,0)--cycle;
\filldraw[fill=blue](-.171,.894)--(-.065,.724)--(.106,.829)--(0,1)--cycle;
\filldraw[fill=red!12.5!blue](-.065,.724)--(.106,.447)--(.276,.553)--(.106,.829)--cycle;
\filldraw[fill=red!62.5!blue](.106,.447)--(.211,.276)--(.382,.382)--(.276,.553)--cycle;
\filldraw[fill=red!50!blue](.211,.276)--(.382,0)--(.553,.106)--(.382,.382)--cycle;
\filldraw[fill=red!37.5!blue!37.5](.382,0)--(.553,-.276)--(.724,-.171)--(.553,.106)--cycle;
\filldraw[fill=red!62.5!blue!12.5](.276,.553)--(.382,.382)--(.659,.553)--(.553,.724)--cycle;
\filldraw[fill=red!75](.382,.382)--(.553,.106)--(.829,.276)--(.659,.553)--cycle;
\filldraw[fill=red!50](.553,.106)--(.724,-.171)--(1,0)--(.829,.276)--cycle;
\draw[dotted](0,0)--(0,1)--(1,1)--(1,0)--cycle;
\end{scope}
\end{tikzpicture}
\caption{Illustration of the intersection properties of the Markov partition $\{R_1,R_2\}$.}\label{fig:cat3}
\end{figure}
This tessellation property is even maintained if we consider iterates~$f^n$, $n\in\ZZ$: The collection $\{f^n(R_1),f^n(R_2)\}$ tessellates~$L$ in a way that the images $f^n(R_i)$ consist of finitely many ``full height'' (if $n< 0$) or ``full width'' (if $n> 0$) rectangles, each of which is a subset of some~$R_j$.  
This fact can be made exact in the following way. For each $n \ge 1$ and each sequence of elements $i_0,\ldots,i_{n}\in \{1,2\}$, the property
\begin{equation}\label{eq:classicalmarkovdef}
R_{i_k} \cap f^{-1} (R_{i_{k+1}}) \neq \emptyset \quad \mbox{for all}\ k\in\{0,\ldots, n{-}1\}
\end{equation}
implies that
\begin{equation}\label{eq:classicalmarkovdef2}
R_{i_0} \cap f^{-1} (R_{i_{1}}) \cap \cdots \cap f^{-n} (R_{i_{n}}) \neq \emptyset.
\end{equation}
This implication makes, by definition, $\{R_1,R_2\}$ a \emph{Markov partition}\indx{Markov partition!stationary} for the hyperbolic toral automorphism~$f$.

The partition $\{R_1,R_2\}$ has a drawback. As can be seen in the first panel of Figure~\ref{fig:cat3}, the image $f^{-1}(R_1)$ intersects~$R_1$ in the two open rectangles $R'_1$ and~$R'_2$. (This is due to the $(1,1)$ entry of~$M$ being~$2$.)
This leads to ambiguities when one tries to use this partition to construct a symbolic representation of~$f$.
More precisely, if we consider a sequence $(i_n)_{n\in\ZZ} \in \{1,2\}^\ZZ$ such that $f^n(x) \in R_{i_n}$ for some $x \in R'_1$, then we also find a point $x \in R'_2$ with the same property.
To avoid such ambiguities, we need to refine the partition $\{R_1,R_2\}$. 
It turns out that the partition $\{R'_1,\dots,R'_5\}$ consisting of the five rectangles in the first panel of Figure~\ref{fig:cat3} is an appropriate refinement. Indeed, $\{R'_1,\dots,R'_5\}$ forms a Markov partition again, and this time $f(R'_j)$ intersects~$R'_i$ in at most one rectangle. This entails that the Markov partition $\{R'_1,\dots,R'_5\}$ is \emph{generating}\indx{Markov partition!stationary!generating} in the sense that 
\[
\bigcap_{n\in\NN}\overline{\bigcap_{|k|\le n} f^{-k}(S_{i_k})}
\]
contains at most one point for each sequence $(i_n)_{n\in\ZZ} \in \{1,\dots,5\}^\ZZ$. Thus almost every element of $\bx\in \TT^2$ can be ``coded'' by a sequence $(i_n)_{n\in\ZZ} \in \{1,\dots,5\}^\ZZ$. It turns out that the closure of the set  of sequences in $\{1,\dots,5\}^\ZZ$ that occur as codings of orbits is a \emph{shift of finite type}\indx{shift!of finite type}, i.e., it is a subset $X_M\subset \{1,\dots,5\}^\ZZ$ whose elements are characterized by the fact that they do not contain certain substrings (of length~$2$). In other words, the shift space $(X_M,\Sigma)$ with the shift map $\Sigma: X_M\to X_M$, $(i_n)_{n\in\ZZ}\mapsto (i_{n+1})_{n\in\ZZ}$, is a symbolic representation of $(\TT^2,f)$ in the sense that it allows to conjugate~$f$ to a shift of finite type. The set~$X_M$ is explicitly described in \cite[Chapter~2, Section~5d]{KaHa:95}.

We develop the theory of nonstationary Markov partitions in Section~\ref{sec:MarkovTheory}, consider  symbolic  representations in Section~\ref{subsec:nsft} and apply it to sequences of toral automorphisms in Sections~\ref{sec:markov} and \ref{sec:metricMP}. Its relation with multidimensional continued fraction algorithms will be described in the subsequent section of this introduction.
 
\section[Classical continued fraction algorithm]{Dynamical interpretations of the classical continued fraction algorithm} \label{sec:introsturm}
One motivation for the present work is to give conditions on multidimensional continued fraction algorithms that allow an extension of the dynamical interpretations of the classical continued fraction algorithm, such as described for instance in \cite{Arnoux-Nogueira,Arnoux:94,AF:01}, to these algorithms. In the classical case, these dynamical interpretations rely on two ingredients. Firstly, the classical continued fraction algorithm can be interpreted in terms of a cascade of \emph{inductions} (i.e., \emph{first return maps}) of circle rotations (cf.\ Definition~\ref{def:induction}), an insight that goes back to Rauzy, see e.g.~\cite{Arnoux-Rauzy:91}. Secondly, it can be viewed as a first return map to a section of the \emph{geodesic flow} on the unit tangent bundle of the modular surface. Geometrically, this return map can be viewed as a restacking process that is interlinked with a nonstationary Markov partition. This idea dates back to \cite{Artin:24} and has been pursued e.g.\ in \cite{AF:82,AF:84,Series:85,BKS,AF:01,AF:05}. We will now give a short account of these interpretations.

The classical continued fraction algorithm can be viewed as a map acting on the parameter space $[0,1)$ of a family of circle rotations, in particular, of the rotations $\fr_\alpha:\, x \mapsto x {+} \alpha \mod 1$\notx{rot}{$\fr_{\balpha}, \fr_{\bx}$}{toral rotation} on the one-dimensional torus $\TT = \RR/\ZZ$; here, $\alpha\in[0,1)$ is the parameter. The continued fraction expansion of a real number~$\alpha$ is linked with the recurrence properties of the rotation~$\fr_\alpha$. In particular, the iterates $n$ for which $\fr_\alpha^n(0)$ is closer to~$0$ than each of the previous iterates~$\fr_\alpha^m(0)$, $m\in\{1,\dots,n{-}1\}$, are just the denominators of the convergents in the continued fraction expansion of~$\alpha$.

When viewing the rotation~$\fr_\alpha$ as an exchange of two intervals on $[0,1)$, the \emph{additive} or \emph{Farey}\indx{continued fraction algorithm!Farey}\indx{Farey!algorithm} continued fraction algorithm (see e.g.\ \cite{Schweiger:00} and Section~\ref{subsec:cfdef}) can be recovered by considering the first return map  of~$\fr_\alpha$ on the larger one of the intervals $[0,1{-}\alpha)$ and $[1{-}\alpha,1)$ (cf.\ Definition~\ref{def:induction}), renormalized in a way that the length of this interval becomes~$1$.  Since this return map is again an exchange of two intervals, it can again be viewed as a rotation $\fr_{\alpha'}$ for some other parameter $\alpha'\in[0,1)$. The ensuing map $\alpha \mapsto \alpha'$ is the \emph{Farey map}\indx{Farey!map}
\begin{equation}\label{eq:FareyIntro}
T_{\rF}:\, [0,1) \to [0,1], \quad x \mapsto 
\begin{cases}
\frac{x}{1-x}, & x < \frac12, \\
\frac{1-x}{x}, & x \ge \frac12 \\
\end{cases}
\notx{F}{$\rF$}{object related to the Farey algorithm}
\end{equation}
and, iterating this induction process yields a cascade of rotations that models the iterates of the Farey map; see e.g.\ \cite{AFH:99,Arnoux:01}.

One can give a very explicit symbolic model of this process: The rotation~$\fr_\alpha$, viewed as an exchange of two intervals, can be coded by these two intervals on a two-letter alphabet. This coding gives the classical \emph{Sturmian sequences}\indx{Sturmian!sequence}; see Definition~\ref{def:sturm}.
Sturmian sequences can also be defined by attaching substitutions to the $2{\times}2$ integer matrices generated by the Farey continued fraction algorithm.  This is done by using the two \emph{Sturmian substitutions}\indx{Sturmian!substitution}\indx{substitution!Sturmian}
\begin{equation}\label{eq:sturmsubs}
\sigma_{\rF,1}:  \begin{cases}1 \mapsto 1,\\ 2 \mapsto 21,\end{cases} \qquad
\sigma_{\rF,2}: \begin{cases}1 \mapsto 12, \\ 2 \mapsto 2,\end{cases}
\notx{F}{$\rF$}{object related to the Farey algorithm}
\end{equation}
that are related to the two alternatives of the Farey map, that is, to the two matrices \begin{equation}\label{eq:sturmmat}
M_{\rF,1} = \begin{pmatrix}1&1\\0&1\end{pmatrix} \quad \mbox{and} \quad M_{\rF,2} = \begin{pmatrix}1&0\\1&1\end{pmatrix}
\end{equation}
corresponding to the homographies in \eqref{eq:FareyIntro}.
With help of the substitutions in~\eqref{eq:sturmsubs}, we can define an induction process that forms a symbolic version of the Farey map; see \cite{AF:01,Fog02} and Example~\ref{ex:sturm}.
This is a special instance of Rauzy induction, as performed more generally in the setting of interval exchange transformations \cite{Yoccoz:2006}. Roughly speaking, a sequence which is obtained as a coding of a map can be recovered by applying a substitution to the sequence coding the induced map; each step in the induction process corresponds to a ``desubstitution'' of the original sequence.  
 
By accelerating the Farey map, we obtain the \emph{Gauss map}\indx{Gauss!map}
\begin{equation}\label{eq:GaussMapIntro}
T_{\rG}:\, (0,1) \to [0,1), \quad \alpha \mapsto \frac1\alpha - \Big\lfloor \frac1\alpha\Big\rfloor
\notx{G}{$\rG$}{object related to the classical continued fraction algorithm}
\end{equation}
of the classical continued fraction algorithm  (cf.\ Example~\ref{ex:classicalCF}) that is related to an analogous induction process leading again to a symbolic interpretation in terms of Sturmian sequences. In this case, one ``desubstitutes'' by certain compositions of the substitutions in \eqref{eq:sturmsubs} that correspond to the partial quotients of the continued fraction expansion; see Example \ref{ex:sturm} for more details on Sturmian sequences. Summing up, by ``superimposing'' the combinatorics of Sturmian sequences, we can interpret the classical continued fraction algorithm as acting on a sequence of induced rotations. 

To illustrate the link between the classical continued fraction algorithm and nonstationary Markov partitions via first return maps of the geodesic flow on the unit tangent bundle of the modular surface, we start with \cite{AF:05}. In this paper, another type of symbolic coding associated to continued fraction algorithms is considered. The authors start from a $2$-dimensional hyperbolic toral automorphism. Clearly, the slopes of the stable and unstable foliations of such an automorphism are always quadratic irrationals. As we saw in Section~\ref{sec:defMarkov}, the associated  Markov partition is made of  two boxes, with axes parallel to these slopes, and whose union forms an $L$-shaped region. In \cite{AF:05}, the authors ask if this picture can be generalized to arbitrary slopes. This leads them to replace a single toral automorphism by a sequence of toral automorphisms of the form \eqref{eq:star2}. Starting from two given arbitrary slopes $G^s$ and~$G^u$ for the stable and unstable foliations, respectively, they use the classical continued fraction algorithm in order to set up a sequence of  $2{\times}2$-matrices $(M_n)_{n\in\ZZ}$ having determinant $\pm 1$ whose associated linear Anosov mapping family \eqref{eq:star2} has $G^s$ as stable and $G^u$ as unstable foliation. Besides that, they are able to extend classical Markov partitions and associate a sequence of (nonstationary) Markov partitions to these Anosov mapping families. As in the stationary case illustrated in Section~\ref{sec:defMarkov}, these nonstationary Markov partitions are $L$-shaped regions whose boundary pieces are parallel to $G^s$ and~$G^u$. 
\begin{figure}[ht] 
\begin{tikzpicture}[scale=1.75]
\filldraw[fill=red](.515,-.319)--(-.3,.185)--(0,1)--(.815,.496)--cycle;
\draw[dashed](.215,-.134)--(.515,.681) (-.085,.051)--(.215,.866);
\filldraw[fill=blue](.515,-.319)--(.815,-.504)--(1,0)--(.7,.185)--cycle; 
\draw[dotted](0,0)--(0,1)--(1,1)--(1,0)--cycle;
\node at (1.5,.33){$\xrightarrow{\displaystyle f_0}$};
\begin{scope}[shift={(3.2,0)}]
\begin{scope}[cm={-2,1,1,0,(0,0)}]
\filldraw[fill=red](.515,-.319)--(-.3,.185)--(0,1)--(.815,.496)--cycle;
\draw[dashed](.215,-.134)--(.515,.681) (-.085,.051)--(.215,.866);
\filldraw[fill=blue](.515,-.319)--(.815,-.504)--(1,0)--(.7,.185)--cycle; 
\end{scope}
\draw[dotted](0,0)--(0,1)--(1,1)--(1,0)--cycle;
\node at (1.25,.5){$\xrightarrow{\mbox{restacking}}$};
\end{scope}
\begin{scope}[shift={(5.5,0)}]
\filldraw[fill=red](-.564,.215)--(-.128,.815)--(.657,.515)--(.436,.215)--(1,0)--(.785,-.3)--cycle;
\filldraw[fill=blue](.657,.515)--(-.128,.815)--(0,1)--(.785,.7)--cycle;
\draw[dashed](.221,-.085)--(.436,.215)--(-.349,.515);
\draw[dotted](0,0)--(1,0)--(1,1)--(0,1)--cycle;
\end{scope}
\begin{scope}[shift={(1.5,-1.75)}]
\node at (-1.25,.5){$\xrightarrow{\mbox{recoloring}}$};
\filldraw[fill=red](.221,-.085)--(-.564,.215)--(0,1)--(.785,.7)--cycle;
\draw[dashed](-.343,.13)--(.221,.915);
\filldraw[fill=blue](.221,-.085)--(.785,-.3)--(1,0)--(.436,.215)--cycle;
\draw[dotted](0,0)--(1,0)--(1,1)--(0,1)--cycle;
\node at (1.5,.5){$\xrightarrow{\displaystyle f_1}$};
\end{scope}
\begin{scope}[shift={(4.25,-1.75)}]
\begin{scope}[cm={-1,1,1,0,(0,0)}]
\filldraw[fill=red](.221,-.085)--(-.564,.215)--(0,1)--(.785,.7)--cycle;
\draw[dashed](-.343,.13)--(.221,.915);
\filldraw[fill=blue](.221,-.085)--(.785,-.3)--(1,0)--(.436,.215)--cycle;
\end{scope}
\draw[dotted](0,0)--(1,0)--(1,1)--(0,1)--cycle;
\end{scope}
\end{tikzpicture}
\caption{Illustration of the restacking and renormalization process on the atoms of the nonstationary Markov partition furnished by the classical continued fraction algorithm. The ``recoloring'' is just done in order to visualize the Markov partition of the restacked $L$-shaped region.} \label{fig:Lshaped}
\end{figure}
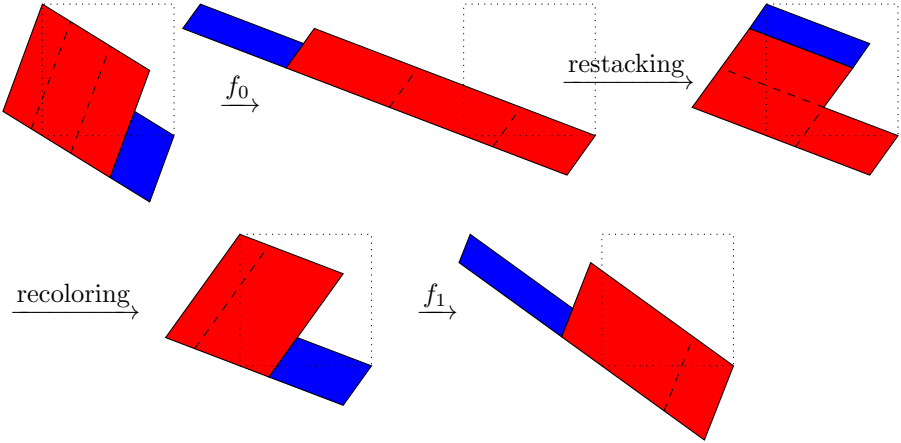
The $(n{+}1)$-st Markov partition of the sequence emerges from the $n$-th one by a \emph{restacking and renormalization process}\indx{restacking}\indx{renormalization} (see e.g.~\cite{AF:05,thuswaldner2019boldsymbolsadic}), which is illustrated in Figure~\ref{fig:Lshaped}. However, contrary to the stationary Markov partition (like the one in Figure~\ref{fig:cat1}), the first $L$-shaped region in Figure~\ref{fig:Lshaped} is not the same as the fourth $L$-shaped region. This reflects the ``nonstationarity'' of this sequence of Markov partitions. 

On this restacking and renormalization process visualized in Figure~\ref{fig:Lshaped} we can see the Gauss map on the axis parallel to~$G_s$. Moreover, it can be reinterpreted as a section of a geodesic flow on the unit tangent bundle of the modular surface. To make this precise, it is convenient to transform the $L$-shaped regions affinely (and volume preserving) in a way that the boundary pieces become axis parallel; see Figure~\ref{fig:Lshaped2}.
\begin{figure}[ht]
\begin{tikzpicture}[scale=2]
\filldraw[fill=red](0,0)--node[below]{$1$}(1,0)--(1,.815)--(0,.815)--cycle;
\draw[dashed](.368,0)--(.368,.815) (.736,0)--(.736,.815);
\filldraw[fill=blue](-.368,0)--node[below]{$\vphantom{1}\alpha$}(0,0)--(0,.504)--(-.368,.504)--cycle; 
\node at (1.5,.5){$\xrightarrow{\mbox{restacking}}$};
\begin{scope}[shift={(2,0)}]
\filldraw[fill=red](0,0)--(.632,0)--(.632,.815)--(.368,.815)--(.368,1.63)--(0,1.63)--cycle;
\node[below] at (.184,0){$\vphantom{\frac{1}{\alpha}}\alpha$};
\node[below] at (.65,0){$1{-}\lfloor\frac{1}{\alpha}\rfloor\alpha$};
\draw[dashed](.368,0)--(.368,.815)--(0,.815);
\filldraw[fill=blue](0,1.63)--(.368,1.63)--(.368,2.133)--(0,2.133)--cycle;
\node at (1.3,.5){$\xrightarrow[\mbox{and recoloring}]{\mbox{renormalizing}}$};
\end{scope}
\begin{scope}[shift={(4.7,0)}]
\filldraw[fill=red](0,0)--node[below]{$\vphantom{\frac{1}{\alpha}}1$}(1,0)--(1,.785)--(0,.785)--cycle;
\draw[dashed](.718,0)--(.718,.785);
\filldraw[fill=blue](-.718,0)--node[below]{$\frac{1}{\alpha}{-}\lfloor\frac{1}{\alpha}\rfloor$}(0,0)--(0,.3)--(-.718,.3)--cycle; 
\end{scope}
\end{tikzpicture}
\caption{The axis-parallel version of the restacking process of Figure~\ref{fig:Lshaped}.}
\label{fig:Lshaped2}
\end{figure}
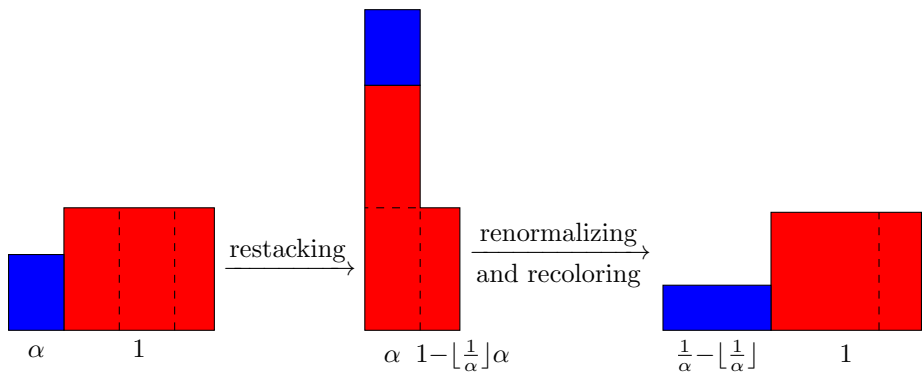
On the $x$-axis, this transformed restacking process performs one step of the underlying classical continued fraction algorithm, i.e., the Gauss map~$g$. By restacking, the interval on  $x$-axis becomes smaller. It is this smaller interval to which we induced the rotation earlier, so this restacking process is intimately related to the induction process described above. The whole restacking process can be seen as a \emph{natural extension of the Gauss map}, that is, an invertible dynamical system that admits the original noninvertible (Gauss) dynamics as a factor; see Section~\ref{sec:natex} for precise definitions. 

Because each of these $L$-shaped regions are fundamental domains of a lattice, it induces a tiling of~$\RR^2$ of covolume~$1$; see Figure~\ref{fig:Ltiling}. Using this lattice viewpoint, the above restacking and renormalization process can now be reinterpreted as a section of a geodesic flow. Indeed, it is well known that the unit tangent bundle of the modular surface can be modeled by the space $\SL(2,\ZZ) \backslash \SL(2,\RR)$ that we just encountered. 
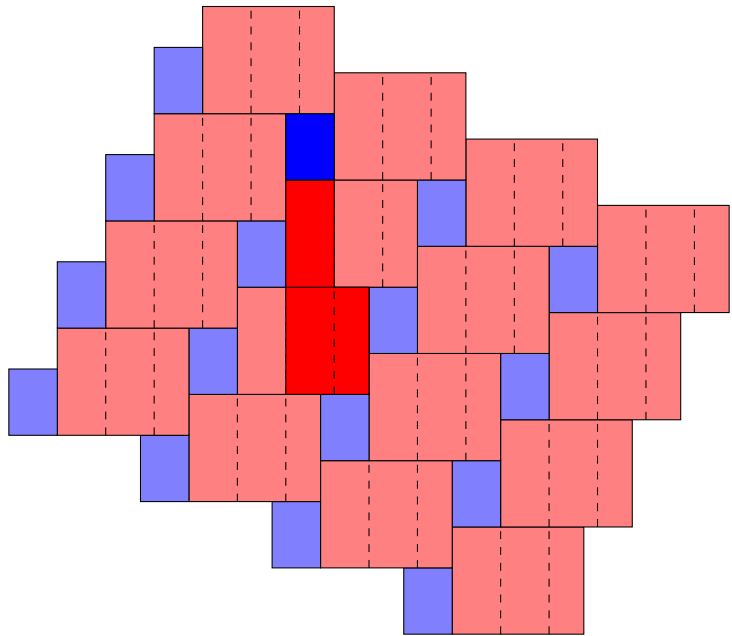
\begin{figure}[ht]
\begin{tikzpicture}[scale=1.75]
\newcommand{\cftile}{\filldraw[fill=red!50](0,0)--(1,0)--(1,.815)--(0,.815)--cycle;
\draw[dashed](.368,0)--(.368,.815) (.736,0)--(.736,.815);
\filldraw[fill=blue!50](-.368,0)--(0,0)--(0,.504)--(-.368,.504)--cycle;}
 \foreach \i in {0,1,2,3} \foreach \j in {0,1,2,3}
{\begin{scope}[shift={(\j*.368-\i,\i*.504+\j*.815)}]
\cftile
\end{scope}}
\begin{scope}[shift={(-1.264,1.823)}]
\filldraw[fill=red](0,0)--(.632,0)--(.632,.815)--(.368,.815)--(.368,1.63)--(0,1.63)--cycle;
\draw[dashed](.368,0)--(.368,.815);
\draw(.368,.815)--(0,.815);
\filldraw[fill=blue](0,1.63)--(.368,1.63)--(.368,2.133)--(0,2.133)--cycle;
\end{scope}
\end{tikzpicture}
\caption{A tiling with covolume~$1$ induced by an $L$-shaped Markov partition. The restacked partition, which is illustrated in bright colors, represents the same lattice.}
\label{fig:Ltiling}
\end{figure}
In this model, the geodesic flow is given by the right action of the diagonal group $g_t = \mathrm{diag}(e^{t/2},e^{-t/2})$, $t\in\RR$. The action of~$g_t$ consists in expanding the horizontal coordinates and contracting the vertical ones. Each point in $\SL(2,\ZZ) \backslash \SL(2,\RR)$ corresponds to a flat torus represented by an appropriate $L$-shaped region. One can now find a transverse section to the geodesic flow on the modular surface such that the first return map of the flow can be viewed geometrically by restacking and renormalizing the $L$-shaped nonstationary Markov partitions as illustrated in Figure~\ref{fig:Lshaped2}. This return map provides a geometric model of the natural extension of the Gauss map by coding geodesics as bi-infinite sequences of symbols in terms of the continued fraction expansions.

On top of this, in each $L$-shaped region, i.e., in each point in $\SL(2,\ZZ) \backslash \SL(2,\RR)$, there is a vertical flow, whose first return map to a well chosen transverse section (which is given by the bottom line of the $L$-shaped region) is the rotation~$\fr_\alpha$ we considered in the first place. The particular case of periodic orbits has been studied in detail, and according to the classical theorem of Galois, they correspond to special quadratic units. For periodic orbits, the corresponding rotations are self-induced, i.e., they are conjugate to their first return map on some subspace. They also correspond to periodic geodesics on the modular surface. 

Note also that the modular surface can be interpreted as the moduli space of the torus (as a quotient of the Teichm\"uller space by the modular group), and an element of its tangent space corresponds to a torus equipped with two transverse measured foliations, a \emph{vertical} and a \emph{horizontal} foliation; the first return map of the flow along the vertical foliation on a suitable horizontal interval is again the rotation~$\fr_\alpha$ considered above. In this moduli space of the torus, a starting point on a closed geodesic corresponds to a pair of transverse measured foliations on the torus, and the action of the geodesic flow along the geodesic corresponds to a hyperbolic automorphism of the torus, which admits these two foliations as stable and unstable foliation. 
One can even consider a fiber bundle, with torus fiber, over the unit tangent bundle of the modular surface. This fiber bundle also has an algebraic model, which is given by the affine quotient $\mathrm{SA}(2,\ZZ) \backslash \mathrm{SA}(2,\RR)$. The right action of $\mathrm{SA}(2,\RR)$ on this quotient contains all the dynamics we have seen above. This action is the \emph{scenery flow} that has been studied in detail in \cite{AF:01}. 

\section[Multidimensional continued fraction algorithms]{Generalization to multidimensional continued fraction algorithms}  \label{sec:introcf}
In the present work, we provide an extension of the theory described in Section~\ref{sec:introsturm} to multidimensional continued fraction algorithms. The most classical examples of such algorithms are the Jacobi--Perron \cite{Bernstein:71,Heine1868,Perron:07,Schweiger:73}, the Brun \cite{Brun19,Brun20,BRUN}, and the Selmer algorithms \cite{Selmer:61, Schweiger:00}.  
Our goal is to view a multidimensional continued fraction algorithm in terms of inductions of toral rotations and associate nonstationary Markov partitions to it. Again, it turns out that this induction process and the Markov partitions are intimately related.  

More precisely, let $F$ be a multidimensional continued fraction map acting on a subset~$X$ of the projective space~$\PP^{d-1}$. (Since the matrices aim at providing rational approximations, it is natural to work in the projective space.) 
To a point $\bx \in X$, we associate a rotation~$\fr_\bx$ acting on the torus~$\TT^{d-1}$.
Using the sequence of matrices $(M_n)_n$ corresponding to the $F$-orbit of~$\bx$, we will find an explicit subset~$\cR_\bx$ of the torus such that, up to some  renormalization process, the induced map of~$\fr_\bx$ on~$\cR_\bx$ is~$\fr_{F(\bx)}$. It turns out that the set can also be used to define suitable atoms for nonstationary Markov partitions for the underlying linear Anosov mapping families~\eqref{eq:star2}; see Section~\ref{sec:induced}. Indeed, it occurs in a restacking process of the atoms of these Markov partitions as it did in the classical case.

Besides the classical case treated in the previous section, also the case of periodic orbits is a strong source of inspiration. It is indeed  known how to set up the induction process mentioned above for periodic orbits of the continued fraction map~$F$. In fact, periodic orbits correspond to self-induced dynamical systems, i.e., systems that are topologically conjugate to their first return map on some subspace. For self-induced systems, the induction process is studied for instance in \cite{KV:98,Arnoux-Ito:01,BS:05,Ito-Rao:06,ST:09}, and it gives rise to  symbolic codings of a particular class of rotations on~$\TT^{d-1}$.  Moreover, in the periodic case, the linear Anosov mapping family \eqref{eq:star2} degenerates to a hyperbolic toral automorphism $M^{-1}\colon \TT^{d-1} \to \TT^{d-1}$, where $M$ is a nonnegative integer matrix with determinant~$\pm 1$. It is known that, from the subset~$\cR_\bx$ of~$ \TT^{d-1}$  on which one induces, one can build a stationary Markov partition for this hyperbolic toral automorphism  by a suspension. Important geometric objects that play a decisive role in these results are the \emph{Rauzy fractals} together with their suspensions, the \emph{Rauzy boxes}. Indeed, Rauzy fractals can be used to define appropriate sets for the induction, while atoms for the Markov partitions can be defined using Rauzy boxes. They play the role of the intervals and of the $L$-shaped region in the classical case, respectively. Rauzy fractals were first introduced in \cite{Rauzy:82} for the so-called Tribonacci substitution; see also \cite{Thurston:1989}. See Figure~\ref{fig:Tribo} for an illustration and Section~\ref{sec:rauzy} for precise definitions.
\begin{figure}[ht]
\includegraphics{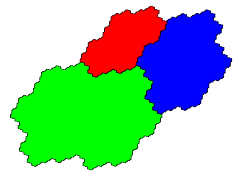}   \quad
\includegraphics{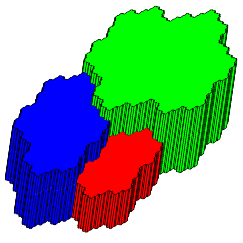}
\caption{The  Rauzy  fractal and the Rauzy box  for the Tribonacci substitution $\sigma: 1 \mapsto 12, 2 \mapsto 13, 3 \mapsto 1$.}
\label{fig:Tribo}
\end{figure}
They can more generally be associated to Pisot substitutions (see \cite{Arnoux-Ito:01} and the surveys \cite{Fog02,BS:05,ST:09,CANTBST}), as well as to Pisot beta-transformations and beta-shifts under the name of central tiles \cite{Aki02,BSSST2011}. The idea is as follows. To a matrix~$M$ defining a hyperbolic \emph{Pisot} toral automorphism  (i.e., the characteristic polynomial of~$M$ is the minimal polynomial of a Pisot number), one attaches a \emph{substitution}~$\sigma$, i.e., a combinatorial rule that replaces letters by words, in a way that the matrix~$M$ counts the number of occurrences of letters in the images of each letter under the substitution~$\sigma$; see Section~\ref{sec:matrices} and the survey \cite{AkiBBLS} for more on  the Pisot assumption. We can view $\sigma$ as a combinatorial version of $M$ and we can even attach a \emph{broken line} to~$\sigma$. This broken line connects two successive points in~$\ZZ^d$ and stays within bounded distance of the line in~$\RR^d$ defined by the Perron--Frobenius eigenvector of~$M$ under the Pisot property of the matrix~$M$; see Figure~\ref{fig:brokenline}. 
\begin{figure}[ht]
\includegraphics[trim=0 60 0 60,clip,width = 0.6\textwidth]{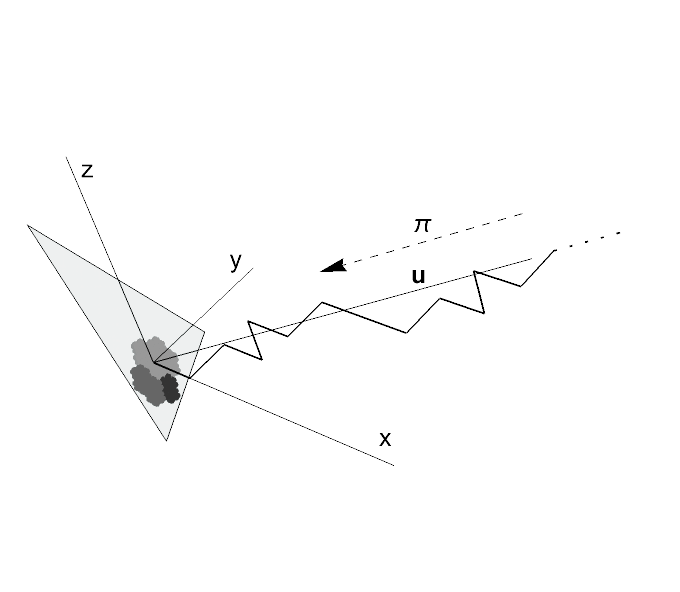} 
\caption{An illustration of a broken line defining a Rauzy fractal.}
\label{fig:brokenline}
\end{figure}
By taking the closure of the projection of the vertices of this broken line on a transverse direction, one obtains the Rauzy fractal of~$\sigma$, which codes the desired rotation on~$\TT^{d-1}$; see again \cite{Fog02,BS:05,ST:09,CANTBST} and also Section~\ref{sec:rauzy}.  Note  that from the periodic substitutive case, one only obtains a very particular set of algebraic rotations.  Note also  that substitutive dynamical systems defined in terms of Pisot substitutions are conjectured to have pure discrete spectrum, that is, to be  measurably conjugate to a rotation on a compact abelian group \cite{AkiBBLS}. 
So, as in the classical case, substitutions are used and by superimposing a combinatorial structure on our periodic orbits again leads to the desired induction process as well as to atoms for the Markov process.

Two strongly interlinked problems arise in the attempt to generalize this to the nonperiodic case. 

\begin{namedthm}{Problem I} 
\makeatletter\def\@currentlabelname{I}\makeatother
\label{p:I}
For a rotation on a torus~$\TT^{d-1}$ of dimension~$d\ge 3$, find a subset of~$\TT^{d-1}$ on which the induced map is measurably conjugate to a rotation.
\end{namedthm}
 
\begin{namedthm}{Problem II}
\makeatletter\def\@currentlabelname{II}\makeatother
\label{p:II}
Find atoms of a nonstationary Markov partition associated to a nonperiodic orbit of a multidimensional continued fraction algorithm.
\end{namedthm}

Problem~\nameref{p:I} is difficult because one usually gets an exchange of pieces with some discontinuities when inducing on a ``simple'' subset of~$\TT^{d-1}$. Moreover, we know from the periodic case that the construction of the subsets on which the induced rotation lives is intimately related to the atoms of the Markov partition for a hyperbolic toral automorphism.
Hence, we expect that Problem~\nameref{p:I} is strongly related to Problem~\nameref{p:II}. Thus we are left with finding atoms of a nonstationary Markov partition associated to a nonperiodic orbit, which is our main goal. However, a well-known result of \cite{Bow78} states that, already in the periodic case (except in trivial cases), such a partition cannot be smooth and has ``fractal'' boundary.
In particular, the boundaries of the atoms of a Markov partition for an algebraically irreducible linear Anosov diffeomorphisms of the $3$-torus cannot be smooth. This indicates that, in the nonperiodic higher-dimensional case, it is hopeless to try simple sets, as for instance polytopes, in order to tackle the above problems. We are thus lead to look more closely at the Rauzy fractals discussed before. 

Indeed, the following strategy, which was developed in \cite{BST:19,BST:23,Fogg:24}, combines features from the classical Sturmian and the higher dimensional periodic situation and will thereby lead to a solution to Problems~\nameref{p:I} and~\nameref{p:II}. Inspired by the known situations described above, one introduces symbolic dynamical systems defined in terms of substitutions, by superimposing a combinatorial structure upon the multidimensional continued fraction map~$F$. This is done by a so-called \emph{substitutive realization} of the algorithm~$F$: One assigns to each square matrix~$M$ that can be produced by the algorithm $F$ a  substitution~$\sigma$. Using this assignment, to each sequence $\bM = (M_n)_n$ of matrices produced by the map~$F$, we can associate a sequence of substitutions $\bsigma = (\sigma_n)_n$; see Definition~\ref{d:realization} below. This \emph{$\cS$-adic} framework (see also \cite{Berthe-Delecroix}) then gives rise to a family of symbolic dynamical systems $(X_{\bsigma},\Sigma)$ (where $\Sigma$ stands for the shift map) consisting of infinite sequences of substitutions generated by the multidimensional continued fraction algorithm~$F$. 

Let us emphasize how this symbolic $\cS$-adic superstructure can be used to solve our problems.  Continued fraction algorithms provide better and better rational approximation vectors of a given vector $(\alpha_1,\dots,\alpha_d) \in \RR^d$. Geometrically, this amounts to exhibiting integer points in~$\ZZ^d$ that lie close to the line~$L$ defined by $(\alpha_1,\dots,\alpha_d)$. These points are column vectors of products $M_1 \cdots M_n$ of matrices $M_1, \ldots, M_n$ produced by the continued fraction algorithm~$F$, with increasing length~$n$. We recall that the algorithm is \emph{strongly convergent} if these column vectors converge in distance (and not only in angle) to the line~$L$; see Definition~\ref{def:wsc}.  As in the periodic case, the substitutions of the substitutive realization provide a broken line connecting points in~$\ZZ^d$. By strong convergence, this broken line stays (usually) within bounded distance of~$L$; see again Figure~\ref{fig:brokenline}. In particular, in this general case, the infinite broken line is given by an element of the symbolic dynamical system $(X_{\bsigma}, \Sigma)$. By taking the closure of the projection of the vertices of this broken line on a transverse direction, one obtains a bounded  $\cS$-adic Rauzy fractal. These fractals were first defined in \cite{BST:19,thuswaldner2019boldsymbolsadic,BST:23}, where they were used to relate orbits of multidimensional continued fraction algorithms to rotations on a torus~$\TT^{d-1}$. Contrary to the periodic case discussed above, $\cS$-adic Rauzy fractals do not admit a self-affine structure, which makes them harder to deal with. Nevertheless, they can be used to solve Problems~\nameref{p:I} and~\nameref{p:II} stated above. 

As a first application, $\cS$-adic Rauzy fractals lead to a solution to a problem that has been open for a long time (see \cite{Arnoux-Rauzy:91}): One wants to find good symbolic codings for rotations~$\fr_\bx$ on the torus that enjoy the beautiful properties of Sturmian sequences like low factor complexity and good symbolic discrepancy properties; see Example~\ref{ex:sturmbalance}. It was shown in  \cite{BST:19,BST:23,Fogg:24} that $\cS$-adic Rauzy fractals are the right objects to obtain such codings. This allows us to associate a rotation on~$\TT^{d-1}$ to an orbit of multidimensional continued fraction algorithms, and it is this rotation on which we start our induction process. In particular, we revisit \cite{BST:19,BST:23} to get the induced rotation~$\fr_{F(\bx)}$ from~$\fr_\bx$ by  taking first return maps on natural subtiles of $\cS$-adic Rauzy fractals for strongly convergent multidimensional continued fraction maps~$F$; see Section~\ref{sec:induced}. Thus, the continued fraction map $F$ renormalizes rotations of the torus by a hyperbolic system. This situation generalizes the way the Gauss map renormalizes rotations of the circle and solves Problem~\nameref{p:I}. As explained in \cite[Introduction]{Forni}, this renormalization allows one to understand the dynamics of a generic system in a given family. 

We want to relate sequences of matrices produced by multidimensional continued fraction algorithms to mapping families of the form~\eqref{eq:star2}. Since mapping families are defined in terms of bi-infinite sequences of matrices, we need to equip the orbit $(M_n)_{n\in\NN}$ with a ``past''. This can be done by considering its \emph{natural extension}, which is discussed in Section~\ref{sec:natex}. Indeed, the natural extension of a multidimensional continued fraction algorithm generates orbits of the form $\bM = (M_n)_{n\in\ZZ}$. We can now use these orbits to define linear Anosov mapping families of the form~\eqref{eq:star2}.  We then use $\cS$-adic Rauzy fractals and their suspensions to construct the desired nonstationary Markov partitions and thereby solve Problem~\nameref{p:II}; see Chapter~\ref{chapter:markov}. 
Using these nonstationary Markov partitions, we are able to associate symbolic models as nonstationary edge and vertex shifts (see Definition~\ref{def:nfst}) to such sequences, extending the well-known construction for a single toral automorphism; see e.g.\ \cite{A98}.  

More generally, our approach via symbolic dynamics is motivated by the search for symbolic models for automorphisms of  compact abelian groups, which elicited a wide literature. Algebraic constructions were initiated by Vershik for hyperbolic toral automorphisms with the use of homoclinic points; see \cite{Vershik:92,KV:98,SV:98}.  Among these symbolic models, the one associated  with Markov partitions are particularly interesting. Note also that the present approach is reminiscent of the generating method of fractal curves due to Dekking \cite{Dekking:82}, used in \cite{Bedford:86,IO:93} to produce Markov partitions with fractal boundaries for a class of hyperbolic toral automorphism on~$\TT^3$.

Besides that, with our theory we want to set the stage for viewing multidimensional continued fraction algorithms as Poincar\'e sections of flows in the same spirit as this was done in \cite{AF:01} for the classical continued fraction algorithm. This is evoked in Chapter~\ref{sec:conclude}.

\section{Discussion of the main results}\label{sec:introresults}
Most of our main results need preparations and definitions. For this reason, their exact statements will be given later. However, for the convenience of the reader, we provide informal versions of these results already here.

We use the  notation
\[
\cM_d = \{M \in \NN^{d\times d} \,:\, |\!\det M| = 1\} \notx{Ma}{$M,M_n$}{matrix}
\]
for the set of square matrices of determinant $\pm 1$ with nonnegative integer entries.
We consider a multidimensional continued fraction map~$F$ acting on a subset of the projective space~$\PP^{d-1}$ that produces sequences $(M_n)_{n\in\NN}\in\cM_d^\NN$\notx{Mb}{$\bM$}{bi-infinite sequence of matrices} of matrices. 
When considering (a geometric realization of) the natural extension~$\hF$ of the continued fraction map~$F$, even \emph{bi-infinite} sequences $\bM = (M_n)_{n\in\ZZ}\in\cM_d^\ZZ$ of matrices are produced. These sequences are often \emph{primitive} in the sense that products $M_k\cdots M_{\ell-1}$ of large blocks of matrices are positive; see Definition~\ref{def:primivite} for details.
In the theory of multidimensional continued fraction algorithms, it is desirable that the produced sequences of matrices~$\bM$ have good convergence properties. These convergence properties are efficiently described in terms of negativity of the second Lyapunov exponent; see e.g.\ \cite{Lagarias:93,Schweiger:00}. We thoroughly study such convergence properties  even if the Lyapunov exponents of a single sequence~$\bM$ do not exist by providing a weaker condition, stating that the second largest singular value $\delta_2(M_{0}\cdots M_{n-1})$ of $M_{0}\cdots M_{n-1}$  tends to zero exponentially fast; see Definition~\ref{def:genPisot}. This condition turns out to be much easier to verify.  We then  superimpose a  substitutive realization of a sequence $\bM = (M_n)_{n\in\ZZ}\in\cM_d^\ZZ$  which gives a sequence of substitutions $\bsigma = (\sigma_n)_{n\in\ZZ}$; see Section~\ref{subsec:realization} for details. Sequences of substitutions are of interest for us as they are needed to define $\cS$-adic Rauzy fractals, the central objects required to obtain induced rotations as well as nonstationary Markov partitions. Good convergence properties entail that these $\cS$-adic Rauzy fractals are bounded, which will be of great importance in our theory. 

\subsection{Convergence results}
We provide results  for single sequences of matrices and single sequences of substitutions, as well as metric results that hold  in the setting of   measure-theoretic dynamical systems of the form $(D,\Sigma,\nu)$, where $\Sigma$ stands for the shift and $D$ is a shift invariant subset of the set  of sequences of unimodular $d{\times}d$ matrices~$\cM_d^{\ZZ}$. The following theorem concerning single sequences of matrices is stated as part of Theorem~\ref{th:matrixPisot}; note that in this theorem only the future of the bi-binfinite sequence $\bM = (M_n)_{n\in\ZZ}$ is relevant.  This theorem is a first step toward the proof of the existence of a hyperbolic splitting; see Section~\ref{subsec:resultsMarkov}.

\begin{namedthm}{Theorem A} 
\makeatletter\def\@currentlabelname{A}\makeatother
\label{t:A}
Let $\bM = (M_n)_{n\in\ZZ}$ be a primitive sequence of matrices in~$\cM_d$ for which the second largest singular value $\delta_2(M_{0}\cdots M_{n-1})$ of $M_{0}\cdots M_{n-1}$ tends to zero exponentially fast in the sense that $\limsup_{n\to\infty} \frac{1}{n} \log \delta_2(M_{0}\cdots M_{n-1}) < 0$. 
Then, under the mild growth condition $\lim_{n\to\infty} \frac{1}{n} \log \lVert M_n\rVert = 0$,  the sequence~$\bM$ converges exponentially fast to a vector with rationally independent entries.
\end{namedthm}

The fact that only the $\limsup$ is required to be less than zero (i.e., the limit and, hence, the second Lyapunov exponent, need not exist) makes this condition checkable and we are able to use this theorem to construct concrete sequences of matrices that admit exponentially fast convergence; see e.g.\ Example~\ref{ex:AR1} or Propositions~\ref{prop:brunpisot:2} and~\ref{prop:brunpisot:1}. 

Because the $\limsup$ condition ensures that the sequence of matrices~$\bM$ grows only in one direction, it is reminiscent of the growth behavior of powers of a matrix having a characteristic polynomial that is the minimal polynomial of a Pisot number. For this reason, the $\limsup$ condition will be called  \emph{local Pisot condition} (for a single sequence of matrices); see Definition~\ref{def:genPisot}. In terms of symbolic dynamics, when imposing a symbolic structure upon sequences of matrices with substitutions, the analog of Theorem~\nameref{t:A} is stated as Theorem~\ref{theo:sufcondPisotS}.  In this case, the local Pisot condition implies on top of the exponentially fast convergence also the combinatorial property of \emph{balance} (see Definition~\ref{def:balance}), which can also be phrased as \emph{bounded symbolic discrepancy} (see Section~\ref{sec:combprop}).

Besides that, Lyapunov exponents are studied from a metric and ergodic viewpoint for measure-theoretic dynamical systems of the form $(D,\Sigma,\nu)$, where $\Sigma$ stands for the shift and $D$ is a shift invariant subset of the set  of sequences of unimodular $d{\times}d$ matrices~$\cM_d^{\ZZ}$. We say that such a dynamical system satisfies the \emph{metric version of the Pisot condition} (see Definition~\ref{def:gengenPisot2}) if its second Lyapunov exponent is less than zero. Similarly as Theorem~\nameref{t:A} does for  single sequences of matrices, this condition implies exponentially fast convergence for a generic sequence $\bM = (M_n)_{n\in\ZZ} \in D$, including strong convergence and a certain irreducibility property; see Theorem~\ref{thm:oseledetsmat}. The metric theory for sequences of substitutions is covered in Section~\ref{subsec:balanceSub}, see in particular, Theorem~\ref{thm:oseledetssub}.

In Theorem~\nameref{t:B} below, we use the local Pisot condition to get a linear Anosov mapping family that can be regarded as a nonstationary version of a hyperbolic toral automorphism.
In particular, the underlying hyperbolic splitting is ``Pisot'' in the sense that the stable space has dimension~$1$, and the unstable space co-dimension~$1$.
Theorem~\nameref{t:B} will be made precise in Theorem~\ref{cor:anosov}. 

\begin{namedthm}{Theorem B} 
\makeatletter\def\@currentlabelname{B}\makeatother
\label{t:B}
Let $\bM = (M_n)_{n\in\ZZ} \in \cM_d^{\ZZ}$ be a primitive sequence of matrices satisfying the local Pisot condition. Then the sequence of linear toral automorphisms
\[  
\cdots \xrightarrow{M_{-2}^{-1}} \TT_{-1}\xrightarrow{M_{-1}^{-1}}  \TT_{0}\xrightarrow{M_{0}^{-1}} \TT_1\xrightarrow{M_{1}^{-1}}  \cdots 
 \]
associated to $\bM$ is a linear Anosov mapping family. In particular, for each $n \in \ZZ$, in the tangent bundle of each $d$-dimensional torus~$\TT_n$, there are directions $\bu_n$ and~$\bv_n$ satisfying
\[
\RR_{\ge0} \bu_n = \bigcap_{k\ge n} M_{n}\cdots M_{k-1}\, \RR_{\ge0}^d \quad \text{ and } \quad
\RR_{\ge0} \bv_n = \bigcap_{k< n} \tr{(M_k\cdots M_{n-1})}\, \RR_{\ge0}^d,
\]
such that the mappings $M_n^{-1} \cdots M_m^{-1}$ \emph{contract}~$\bu_n$ and \emph{expand}~$\bv_n^\perp$ for large $m{-}n$.
 \end{namedthm}

Also this theorem has a metric version, see again Theorem~\ref{thm:oseledetsmat}, in which the generic hyperbolic splitting is related to the Oseledets splitting induced by a certain cocycle which is provided by Oseledets' Multiplicatice Ergodic Theorem. Both versions of Theorem~\nameref{t:B}, the one for single sequences of matrices and the metric one, can also be rephrased for sequences of substitutions, see Theorem~\ref{th:anosovS} and, once again, Theorem~\ref{thm:oseledetssub}.

\subsection{Pisot conjecture, pure discrete spectrum, and induced rotations}
In Section~\ref{sec:rauzy}, we review the theory of $\cS$-adic Rauzy fractals and Rauzy boxes attached to a sequence $\bsigma$ of substitutions over $d$ letters. Under mild assumptions, $\cS$-adic Rauzy fractals are fundamental domains for the action of rotations on~$\TT^{d-1}$, which can be also stated as a tiling property: $\cS$-adic Rauzy fractals periodically tile the $(d{-}1)$-dimensional plane; see Definition~\ref{def:tilingcond} for a precise statement in the present context. This \emph{tiling condition} has been studied in \cite{BST:19,BST:23} in the $\cS$-adic setting,  and its validity is conjectured in an even wider scope. The according conjecture is strongly related to an $\cS$-adic generalization of the well-known \emph{Pisot conjecture}; see e.g.~\cite{AkiBBLS}. It says that each $\cS$-adic dynamical system satisfying a Pisot property has pure discrete spectrum, in other words, it is measurably conjugate to a rotation on a compact abelian group.
This important conjecture is formulated below as Conjecture~\ref{c:SadicPisotconj}. Many partial results exist for the substitutive case (see e.g.\ \cite{HolSol:03,AkiBBLS,Barge:16,Barge15}). Results on the $\cS$-adic case are contained in \cite{BST:19,BST:23}, and we provide slight variants of these results in Theorems~\ref{t:tilingpds},~\ref{prop:combcond2}, and~\ref{theo:metrictilingaccel}.

If we assume that a given sequence $\bsigma$ of substitutions gives rise to a dynamical system $(X_{\bsigma},\Sigma)$ that has pure discrete spectrum, the associated rotation can be ``seen'' on the $\cS$-adic Rauzy fractal $\cR_{\bsigma}$ of $\bsigma$. It turns out that if we induce this rotation on a shrunk version of this $\cS$-adic Rauzy fractal, we obtain again a rotation. Iterating this induction process leads to a cascade of rotations that allows the interpretation of a multidimensional continued fraction algorithm as an action on a family of torus rotations; see Section~\ref{sec:induced}. This is analogous to the situation of the classical continued fraction algorithm discussed in Section~\ref{sec:introsturm}. The following is an informal statement of Theorem~\ref{t:Frot}. The tiling condition below refers to Definition~\ref{def:tilingcond}.

\begin{namedthm}{Theorem C}
\makeatletter\def\@currentlabelname{C}\makeatother
\label{t:C}
Let $\bsigma=(\sigma_n)_{n\in\ZZ}$ be a primitive sequence of substitutions. If the $\cS$-adic Rauzy fractal of $\bsigma$ satisfies a certain tiling condition, then we can associate a sequence of rotations $(\fr_{\Sigma^n\bsigma})_{n\in\ZZ}$ to $\bsigma$ such that each $\fr_{\Sigma^n\bsigma}$, $n\in \NN$, is the rotation obtained by inducing the rotation $\fr_{\bsigma}$ on the subset $M_{\sigma_0}\cdots M_{\sigma_{n-1}}\cR_{\Sigma^n\bsigma}$ of~$\cR_{\bsigma}$.
\end{namedthm}

In Section~\ref{sec:induced} we also state a metric version of this result, as well as versions for multidimensional continued fraction algorithms.

\subsection{Results on nonstationary Markov partitions}\label{subsec:resultsMarkov}
In Sections~\ref{sec:markov} and~\ref{sec:metricMP}, we provide the core results motivating  the present work by exhibiting ``nonstationary Markov partitions'' and symbolic models as ``nonstationary edge shifts'' for the hyperbolic linear Anosov mapping families \eqref{eq:star2} of a sequence $\bM\in\cM^\ZZ$. The pieces of the corresponding generating nonstationary Markov partitions are obtained by associating a sequence~$\bsigma$ of substitutions to~$\bM$. More precisely, these pieces are the Rauzy boxes, defined as suspensions of the $\cS$-adic Rauzy fractals of the sequence $\bsigma = (\sigma_n)_{n\in\ZZ}$. 
In particular, the Rauzy fractals are given by the ``future'' $(\sigma_0,\sigma_1,\dots)$ and the heights of the suspensions are given by the ``past'' $(\sigma_{-1},\sigma_{-2},\dots)$.
For an exact statement of the next theorem, we refer to Theorem~\ref{thm:MarkovFine}, which is a refinement in terms of generating partitions of Theorem~\ref{thm:MarkovCoarse}.  
Again, the tiling condition below refers to Definition~\ref{def:tilingcond}.

\begin{namedthm}{Theorem D}
\makeatletter\def\@currentlabelname{D}\makeatother
\label{t:D}
Let $\bM = (M_n)_{n\in\ZZ}  \in \cM_d^{\ZZ}$ be a primitive sequence of matrices with a  superimposed substitutive structure~$\bsigma$. If the $\cS$-adic Rauzy fractals of $\bsigma$ satisfy a certain tiling condition, then the associated linear Anosov mapping family 
\[  
\cdots \xrightarrow{M_{-2}^{-1}} \TT_{-1}\xrightarrow{M_{-1}^{-1}}  \TT_{0}\xrightarrow{M_{0}^{-1}} \TT_1\xrightarrow{M_{1}^{-1}}  \cdots 
 \]
admits a generating nonstationary Markov partition, whose atoms are explicitly given by Rauzy boxes associated to the substitutive structure~$\bsigma$. This Markov partition also provides a symbolic model for this Anosov mapping family as a nonstationary edge shift. 
\end{namedthm}
 
Theorem~\nameref{t:E}  below provides a metric version of Theorem~\nameref{t:D} in which the crucial condition is again the generic Pisot condition. Indeed, we will show that the generic Pisot condition implies the tiling condition almost everywhere under mild additional requirements. The exact statement of Theorem~\nameref{t:E} is  contained in Theorem~\ref{theo:metricmarkovM}.

\begin{namedthm}{Theorem E}
\makeatletter\def\@currentlabelname{E}\makeatother
\label{t:E}
Let $(D,\Sigma,\nu)$, with $D \subset \cM_d^{\ZZ}$, be an ergodic dynamical system.
Suppose that $(D,\Sigma,\nu)$ satisfies the Pisot condition, and that there exists a single substitutive dynamical system that has pure discrete spectrum that corresponds to a periodic sequence in~$D$. Then, for $\nu$-almost every $\bM = (M_n)_{n\in\ZZ}  \in D$, the associated sequence
\[  
\cdots \xrightarrow{M_{-2}^{-1}} \TT_{-1}\xrightarrow{M_{-1}^{-1}}  \TT_{0}\xrightarrow{M_{0}^{-1}} \TT_1\xrightarrow{M_{1}^{-1}}  \cdots 
 \]
is a linear Anosov mapping family that admits a generating nonstationary Markov partition, whose atoms are explicitly given by Rauzy boxes. This Markov partition also provides a symbolic model for this Anosov mapping family as a nonstationary edge shift. 
\end{namedthm}

Theorems~\nameref{t:D} and~\nameref{t:E} can be formulated also for sequences of substitutions, see for instance Theorem~\ref{theo:metricmarkov}. A version of Theorem~\nameref{t:E} for multidimensional continued fraction algorithms is stated as Theorem~\ref{theo:FC}. 

The following Theorem~\nameref{t:F} provides an ``accelerated'' version of Theorem~\nameref{t:E}.  We state this theorem in the framework of multidimensional continued fraction algorithms. In particular, if $(X,F,\nu)$ is a positive $(d{-}1)$-dimensional continued fraction algorithm satisfying the Pisot condition, then it can be ``accelerated'' in such a way that the generated Markov partition exists almost everywhere without further conditions. By an acceleration we mean that, instead of applying $F$ to an element of $X$, we apply $F^k$ for some $k \in \NN$ that has to be chosen appropriately. This is a classical approach; see for instance \cite{Yoccoz:2005} for accelerations of Rauzy induction in the setting of interval exchange transformations. The reason is that such an acceleration gives us matrices with larger entries and, hence, more freedom in choosing substitutions for the construction of the required Markov partition. Summing up, Theorem~\nameref{t:F} provides Markov partitions under the Pisot condition without further combinatorial conditions. The exact statement for Theorem~\nameref{t:F} is given in Theorem~\ref{theo:FCAcc}, a~version for sequences of matrices is stated as Theorem~\ref{theo:metricmarkovAcc}. 

\begin{namedthm}{Theorem F}
\makeatletter\def\@currentlabelname{F}\makeatother
\label{t:F}
Let $(X,F,\nu)$ be a positive $(d{-}1)$-dimensional continued fraction algorithm satisfying the Pisot condition, and let $(\hX,\hF,\hnu)$ be a natural extension of $(X,F,\nu)$. 
Then there exists an acceleration $F^k$ of $F$ such that the cocycle of the acceleration $(\hX,\hF^k,\hnu)$ generates sequences of matrices $\bM = (M_n)_{n\in\ZZ}  \in \cM_d^{\ZZ}$ that $\hnu$-almost always give rise to an  Anosov mapping family
\[  
\cdots \xrightarrow{M_{-2}^{-1}} \TT_{-1}\xrightarrow{M_{-1}^{-1}}  \TT_{0}\xrightarrow{M_{0}^{-1}} \TT_1\xrightarrow{M_{1}^{-1}}  \cdots 
\]
admitting a generating nonstationary Markov partition, with explicitly given Rauzy boxes as atoms. This Markov partition also provides a symbolic model for this Anosov mapping family as a nonstationary edge shift. 
\end{namedthm}

In the previous theorems, we needed two conditions.  Firstly, the generic Pisot condition, which is formulated in terms of negativity of the second Lyapunov exponent. Secondly, only in Theorem~\nameref{t:E}, we require that our set of $\cS$-adic sequences contains a substitutive dynamical system that has pure discrete spectrum. These conditions suffice to show that the tiling condition holds true almost everywhere and, hence, enable us to derive Theorems~\nameref{t:E} and~\nameref{t:F} from Theorem~\nameref{t:D}. Since there exist many algorithms for checking pure discrete spectrum for a single unimodular Pisot substitution (see e.g.~\cite{Livshits:87,Livshits:92,SS:02,CANTBST,AL11}), the only crucial condition is the Pisot condition. 

\subsection{The Brun algorithm, our running example}
The Brun algorithm, which will be discussed in detail in Section~\ref{sec:Brun}, serves as our running example.
It is a generalization of the Farey continued fraction algorithm to higher dimensions.
For a given vector in~$\RR_{\ge0}^d$, it produces a new vector by subtracting the second largest coordinate from the largest one.
We will consider versions of the Brun algorithm where the coordinates of the vector are ordered or unordered.
We also study the multiplicative acceleration known as modified Jacobi--Perron algorithm, which generalizes the classical continued fraction algorithm.
For all these versions of the Brun algorithm, the Pisot condition holds in dimensions $d{-}1 = 2$ and $d{-}1 = 3$; see Proposition~\ref{prop:Brun23Pisot}.

We mainly focus on the Anosov mapping family on the $3$-dimensional tori~$\TT^3$ associated to the $2$-dimensional Brun continued fraction algorithm, but also discuss the $4$-dimensional case (related to the $3$-dimensional Brun algorithm). With help of the Brun algorithm we are able to construct Anosov mapping families with generating Markov partitions for hyperbolic foliations in dimension~$3$ and~$4$ as detailed in the following realization result; see Corollary~\ref{cor:foliMarkov} for the exact statement.
The restriction on  the dimension comes from the  fact that  these   results  require,   for the Pisot condition to hold,   the  negativity of the second Lyapunov exponent, which is only established for small dimensions.
Observe that numerical results from \cite{BST21} indicate that classical multidimensional continued fraction algorithms cease to be strongly convergent for high dimensions; cf.\ Conjecture~\ref{conj:convfc}.

\begin{namedthm}{Corollary G}
\makeatletter\def\@currentlabelname{G}\makeatother
\label{c:G}
For almost every pair of subspaces $(G^s,G^u)$ with $G^s \oplus G^u = \RR^3$, there is a linear Anosov mapping family \eqref{eq:star2} with stable foliation~$G^s$ and unstable foliation~$G^u$ admitting a generating Markov partition and, hence, a symbolic model as a nonstationary edge shift.

For almost every pair of subspaces $(G^s,G^u)$ with $G^s \oplus G^u = \RR^4$ and $\dim(G^s) \,{\neq}\, 2$, there is a linear Anosov mapping family \eqref{eq:star2} with stable foliation~$G^s$ and unstable foliation~$G^u$ admitting a generating Markov partition and, hence, a symbolic model as a nonstationary edge shift.
\end{namedthm}

The sequences $\bM = (M_n)_{n\in\ZZ}\in\cM^\ZZ$ of matrices defining the claimed Anosov mapping families in this corollary correspond to expansions of pairs of vectors in the natural extension of the Brun continued fraction algorithm.
Therefore, Corollary~\nameref{c:G} is formulated only for splittings where one of the foliations has dimension~$1$.

In the cases covered by Corollary~\nameref{c:G}, it makes sense to speak of a Markov partition of a hyperbolic foliation $(G^s,G^u)$ because such a foliation gives rise to a sequence of matrices to which we can associate a Markov partition. This is in the spirit of \cite[Introduction]{AF:05}. If one looks at hyperbolic toral automorphisms in $\RR^3$ and~$\RR^4$, the slopes of the stable and unstable manifolds can only be cubic and quartic algebraic integers, respectively. What we do here completes the picture in the sense that (almost all) arbitrary slopes are covered when we turn to Anosov mapping families. 
Moreover, in  this context, for our realization result from Corollary~\nameref{c:G}, it is important that there exists a set of positive Lebesgue measure on which the natural extension of the underlying continued fraction algorithm is defined; see Remark~\ref{rem:nu>0}. This is crucial for  being able to cover almost all slopes. 
For the Brun continued fraction algorithm, positivity of the measure is proved in \cite{Arnoux-Nogueira}, and this motivates choosing this algorithm as the main example for our realization results.

\section{Outline} \label{sec:introoutline}
Let us briefly sketch the contents of this monograph. Chapter~\ref{chapter:matrix} is organized as follows. Section~\ref{sec:mapfam} recalls general results on Anosov mapping families and on nonstationary Markov partitions. In Section~\ref{sec:matrices}, convergence properties for bi-infinite sequences of matrices under the Pisot condition are investigated. These results are revisited in Section~\ref{sec:metricmat} from a metric viewpoint by considering shifts of sequences of matrices endowed with an invariant measure.

Chapter~\ref{sec:cf} provides definitions and results concerning  (unimodular) multidimensional continued fraction algorithms.
General results concerning their natural extension, their dual versions, their invariant measures, and their associated linear Anosov mapping families are given in Section~\ref{sec:cfgentheory}. 
Section~\ref{sec:Brun} aims at applying this formalism and the previous results to the Brun algorithm, which will be used as a running example in the monograph, in its additive (ordered and unordered) and multiplicative form.  

Chapter~\ref{chapter:substitution} deals with sequences of substitutions. Section~\ref{sec:subs}  develops symbolic dynamics for a single bi-infinite sequence of substitutions. Here, properties of the incidence matrices of the substitutions play a key role. Rauzy fractals and Rauzy boxes are defined in Section~\ref{sec:rauzy}, where also their tiling properties and related pure discrete spectrum results are thoroughly discussed. Section~\ref{sec:metricS} covers the metric theory of bi-infinite sequences of substitutions and gives metric conditions on tiling properties of Rauzy fractals and Rauzy boxes as well as metric results on pure discrete spectrum.

Chapter~\ref{chapter:markov} is devoted to the construction of induced rotations an nonstationary Markov partitions for sequences of matrices, sequences of substitutions, and multidimensional continued fraction algorithms. The theory of induced rotations is covered in Section~\ref{sec:induced}.
Section~\ref{sec:markov} shows how to use Rauzy boxes in order to define nonstationary Markov partitions for $\cS$-adic linear Anosov mapping families. In Section~\ref{sec:metricMP}, a metric theory of nonstationary Markov partitions for Anosov mapping families and multidimensional continued fraction algorithms is laid out. This section ends with a thorough discussion of our standard example, the Brun algorithm, in Section~\ref{subsec:BrunS}. We apply our theory to this algorithm and give the exact statement and proof of Corollary~\nameref{c:G}.

We conclude this monograph with Chapter~\ref{sec:conclude}, which contains a discussion of perspectives and directions on further research. This includes a list of open problems and research questions, viewing the natural extension of a multidimensional continued fraction algorithm as a Poincar\'e section of a flow, and the discussion of the interplay between different dynamical systems that played a major role in the present work.

\chapter{Sequences of integer matrices and mapping families}
\label{chapter:matrix}

In this chapter, we deal with properties of bi-infinite sequences of (nonnegative) matrices that are relevant for our theory of nonstationary Markov partitions. We organize these sequences into linear mapping families. If certain convergence properties hold, these linear mapping families turn out to be Anosov. We start in Section~\ref{sec:mapfam} with a discussion of mapping families with special emphasis on Anosov mapping families, and introduce the concept of nonstationary Markov partition for these objects. Section~\ref{sec:matrices} is devoted to bi-infinite sequences of matrices. Our main aim is to introduce notions of convergence of such sequences and to give criteria to check convergence. In this context, the so-called Pisot condition (see Definition~\ref{def:genPisot}) plays a major role. This Pisot condition can be viewed as a ``codimension one'' Anosov property of a mapping family associated to a bi-infinite sequence of matrices in the sense that the splitting of the tangent space gives   contracting spaces with dimension~$1$, and thus expanding space with codimension~$1$. Section~\ref{sec:metricmat} finally gives metric results for bi-infinite sequences of matrices and the associated mapping families. Our main tool here will be Oseledets' Multiplicative Ergodic Theorem.

\section{Mapping families} \label{sec:mapfam}
In this section, we define basic objects that will play an important role throughout the manuscript. Section~\ref{subsec:mapf} is devoted to the concept of a \emph{mapping family} which forms the framework for our nonstationary dynamics. It first appears in \cite{AF:05}. In Section~\ref{sec:MarkovTheory} we introduce the notion of a nonstationary Markov partition of a mapping family and define the so-called Property~M, a checkable criterion for a sequence of partitions to form a nonstationary Markov partition. In Section~\ref{subsec:nsft}, using Bratteli diagrams, we define nonstationary edge and vertex shifts. By using nonstationary Markov partitions, these shifts will be shown later to serve as a symbolic representations of a mapping family. 

\subsection{Eventually Anosov mapping families} \label{subsec:mapf}
Consider a bi-infinite sequence of $d$-dimensional Riemannian manifolds $(X_n)_{n\in\ZZ}$ with uniformly bounded diameters.
Let $X = \coprod_{n\in\ZZ} X_n$\notx{0union}{$\amalg$}{disjoint union} be the disjoint union of these spaces equipped with a metric whose restriction to~$X_n$ equals the metric on~$X_n$. 
For each $n \in \ZZ$, let $f_n: X_n \to X_{n+1}$ be a $C^1$-diffeomorphism and define $f: X \to X$ by $f(x) = f_n(x)$ for $x \in X_n$. 
We call the pair $(X,f)$ a \emph{mapping family}\indx{mapping family}. 
We will sometimes write out $(X,f)$ in the form
\begin{equation}\label{eq:mappingfamilydefinition}
\cdots \xrightarrow{f_{-2}} X_{-1} \xrightarrow{f_{-1}}  X_0\xrightarrow{\;f_0\;} X_1\xrightarrow{\;f_1\;}  \cdots . 
\end{equation}

Let $TX$ be the tangent space of~$X$, $T_pX \cong \RR^d$ its fiber over $p \in X$.
The derivative of~$f$ (defined on~$TX$) will be denoted by~$D(f)$\notx{D}{$D(\cdot)$}{derivative}. 

In the sequel, for an unspecified norm, we will use the notation $\lVert\cdot\rVert$\notx{0norm}{$\lVert\cdot\rVert$}{arbitrary norm}. The following definition of an \emph{eventually Anosov mapping family}\indx{mapping family!Anosov} is inspired by \cite[Definition~2.10]{AF:05}. 

Let $(X,f)$ be a mapping family.  Assume that there exists an $f$-invariant splitting\indx{splitting!invariant} $G^s \oplus G^u$ of the tangent space $TX$ of~$X$, and set $G^s_p = G^s \cap T_pX$, $G^u_p = G^u \cap T_pX$ for $p\in X$. \notx{Gs}{$G^s,G_p^s,G_n^s$}{stable subspace of a hyperbolic splitting} \notx{Gu}{$G^u,G_p^u,G_n^u$}{unstable subspace of a hyperbolic splitting}
If, for all $p\in X$,
\begin{enumerate}[\upshape (i)]
\itemsep.5ex
\item \label{i:anosov2}
$\lim\limits_{n\to+\infty}  \sup\{ \lVert D(f^n)\,\bx\rVert / \lVert \bx\rVert \,:\, \bx \in G^s_p \setminus \{\mathbf{0}\} \} = 0$,
\item \label{i:anosov1}
$\lim\limits_{n\to+\infty}  \inf \{ \lVert D(f^n)\,\bx\rVert / \lVert \bx\rVert \,:\, \bx \in G^u_p \setminus \{\mathbf{0}\} \} = +\infty$,
\item \label{i:anosov4} 
$\lim\limits_{n\to-\infty}  \inf\{ \lVert D(f^n)\,\bx\rVert / \lVert \bx\rVert \,:\, \bx \in G^s_p \setminus \{\mathbf{0}\} \} = +\infty$,
\item \label{i:anosov3}
$\lim\limits_{n\to-\infty}  \sup\{ \lVert D(f^n)\,\bx\rVert / \lVert \bx\rVert \,:\, \bx \in G^u_p \setminus \{\mathbf{0}\} \} = 0$,
\end{enumerate}
we say that $(X,f)$ is \emph{(two-sided) eventually Anosov}. 

\begin{remark}
For an eventually Anosov mapping family, none of the conditions (\ref{i:anosov2})--(\ref{i:anosov3}) follows from the others; see for instance \cite[Remark~2.11 and Example~9]{AF:05}. If just (\ref{i:anosov2}) and~(\ref{i:anosov1}) hold, then $(X,f)$ is ``eventually Anosov in the future'', and if only (\ref{i:anosov4}) and~(\ref{i:anosov3}) are in force, then it is ``eventually Anosov in the past''. 
However, we do not need these one-sided versions in the present monograph. 

For the more specialized concept of \emph{strictly Anosov mapping family} defined in \cite[Definition~2.7]{AF:05}, where the contraction and expansion ratios are uniformly bounded away from~$1$,  the $f$-invariance implies that contraction for $n \to {+}\infty$ is equivalent to expansion for $n \to {-}\infty$ and vice versa. 
Here, we use eventually Anosov rather than strictly Anosov because the latter property is not maintained by most sequences generated by multidimensional continued fraction algorithms.
\end{remark}

\begin{remark}
In an eventually Anosov mapping familiy $(X,f)$, a single function $f_n$, $n \in \ZZ$, may well be nonhyperbolic. This is illustrated for instance by the generators of $2{\times}2$ integer matrix with determinant $1$ related to the additive continued fraction algorithm, see~\cite[Example~6]{AF:05} and Example~\ref{ex:classicalMappingFamily} below. 
Also, even if $f_n$ is hyperbolic, the associated stationary splitting of the tangent space~$TX_n$ may be different from the (local) splitting of~$TX_n$ induced by the (global) splitting $G^s \oplus G^u$ of~$TX$.
\end{remark}

\begin{example}\label{ex:classicalMappingFamily}
Let $(a_n)_{n\in\ZZ}$ be a bi-infinite sequence of positive integers. To this sequence, we can attach the sequence of matrices $(M_n)_{n\in\ZZ}$ with
\[
M_n=\begin{pmatrix} 
0&1\\1&a_n
\end{pmatrix} \qquad(n\in\ZZ).
\]
Since the matrices $M_n$ have determinant $-1$, they can be regarded as automorphisms on the torus $\TT^2$. Therefore they give rise to the mapping family
\begin{equation}\label{eq:classicalMF}
\cdots\xrightarrow{M_{-2}^{-1}} \TT^2\xrightarrow{M_{-1}^{-1}} \TT^2\xrightarrow{M_{0}^{-1}} \TT^2\xrightarrow{M_{1}^{-1}} \cdots,
\end{equation}
where $X_n=\TT^2$ and $f_n=M_n^{-1}$ for each $n\in\ZZ$. This mapping family is related to the classical continued fraction algorithm. Indeed, write
\[
[a_0;a_1,a_2,a_3\ldots] = a_0+\cfrac1{a_1+\cfrac1{a_2 +\cfrac1{a_3 + \ddots}}}.
\medskip
\] 
Then, because the classical continued fraction algorithm has good convergence properties, the mapping family \eqref{eq:classicalMF} is Anosov with the hyperbolic splitting
\[
G^s \oplus G^u = \bigg(\RR \binom{1}{[a_n;a_{n+1},a_{n+2},\ldots]} \oplus \RR\binom{[a_{n-1};a_{n-2},a_{n-3},\ldots]}{-1}
\bigg)_{n\in\ZZ}.
\]
In view of generalizations to higher dimensions, note that $\RR([a_{n-1};a_{n-2},\ldots],-1)$ is the subspace of~$\RR^2$ orthogonal to the vector $(1,[a_{n-1};a_{n-2},\ldots])$.
\end{example}

\subsection{Nonstationary Markov partitions}\label{sec:MarkovTheory}
We shall now define Markov partitions for mapping families. To this matter, we will generalize the classical stationary Markov partitions\indx{Markov partition!stationary} discussed in Section~\ref{sec:defMarkov}; see also \cite[Definition~6.1]{A98}. We first recall that a \emph{topological partition}\indx{partition!topological} of a compact topological space~$R$ is a finite collection $\{R_1,\dots,R_q\}$ of pairwise disjoint open sets satisfying $\overline{R_1} \cup \cdots \cup \overline{R_q} = X$. Using this terminology, we are able to state the following definition (compare the analogy to the stationary case in Section~\ref{sec:defMarkov}), which  roughly speaking states that  for any finite pseudo-orbit, there exists a finite orbit with the same symbolic dynamics.
All definitions of this section will be illustrated in Example~\ref{ex:NMexpl2}.

\begin{definition}[Nonstationary Markov partition]\label{nsMarkov}\indx{Markov partition!nonstationary}\indx{Markov partition!nonstationary!generating}
Let $(X,f)$ be the mapping family 
\[
\cdots \xrightarrow{f_{-2}} X_{-1} \xrightarrow{f_{-1}}  X_0\xrightarrow{\;f_0\;} X_1\xrightarrow{\;f_1\;}  \cdots . 
\]
For each $n \in \ZZ$, let $\cP_n = \{R_{n,1},\dots,R_{n,q_n}\}$ be a topological partition of~$X_n$. The sequence $(\cP_n)_{n\in\ZZ}$\notx{Pa}{$\cP_n$}{nonstationary Markov partition} is called a \emph{(nonstationary) Markov partition} for $(X,f)$ if, for any sequence $(i_n)_{n\in\ZZ} \in \prod_{n\in\ZZ} \{1,\dots,q_n\}$,  for each $n \in \ZZ$ and each $m \ge 2$, we have that
\[
R_{k,i_k} \cap f^{-1} (R_{k+1,i_{k+1}}) \neq \emptyset \quad \mbox{for all}\ k\in\{n,\dots,n{+}m{-}1\}
\]
implies that
\[
R_{n,i_n} \cap f^{-1} (R_{n+1,i_{n+1}}) \cap \cdots \cap f^{-m}(R_{n+m,i_{n+m}}) \neq \emptyset.
\]
The nonstationary Markov partition $(\cP_n)_{n\in\ZZ}$ is called \emph{generating} if, for any sequence $(i_n)_{n\in\ZZ} \in \prod_{n\in\ZZ} \{1,\dots,q_n\}$,
\[
\bigcap_{n\in\NN}\overline{\bigcap_{|k|<n}  f^{-k}(R_{k,i_{k}})}
\]
contains at most one point.
\end{definition}

\begin{remark}\label{rem:nsMarkovExplain}
Note that, by the definition of the mapping family $(X,f)$, the mapping $f:X\to X$ equals $f_n$ on $X_n$. Thus Definition~\ref{nsMarkov} can be written out in the following way: A~sequence $(\cP_n)_{n\in\ZZ}$, with $\cP_n = \{R_{n,1},\dots,R_{n,q_n}\}$, is called a \emph{(nonstationary) Markov partition} for $(X,f)$ if, for any sequence $(i_n)_{n\in\ZZ} \in \prod_{n\in\ZZ} \{1,\dots,q_n\}$,  for each $n \in \ZZ$ and each $m \ge 2$, we have that
\[
R_{k,i_k} \cap f_k^{-1} (R_{k+1,i_{k+1}}) \neq \emptyset \quad \mbox{for all}\ k\in\{n,\dots,n{+}m{-}1\}
\]
implies that
\[
R_{n,i_n} \cap f_n^{-1} (R_{n+1,i_{n+1}}) \cap \cdots \cap (f_n^{-1} \circ \cdots \circ f_{n+m-1}^{-1}) (R_{n+m,i_{n+m}}) \neq \emptyset.
\]
The nonstationary Markov partition $(\cP_n)_{n\in\ZZ}$ is called \emph{generating} if, for any sequence $(i_n)_{n\in\ZZ} \in \prod_{n\in\ZZ} \{1,\dots,q_n\}$,
\[
\bigcap_{n\in\NN}\overline{\bigcap_{0\le k<n}  (f_{-1} \circ \cdots \circ f_{-k})(R_{-k,i_{-k}}) \cap \cdots \cap R_{0,i_0} \cap \cdots \cap (f_0^{-1} \circ \cdots \circ f_{k-1}^{-1})(R_{k,i_k})}
\]
contains at most one point.
\end{remark}

Nonstationary Markov partitions can also be interpreted in the following way:
For any sequence $(i_n)_{n\in\ZZ}$, if we have $(x_k)_{n\le k<n+m}$ such that $x_k \in  R_{k,i_k}$ and $f_k(x_k)\in R_{k+1,i_{k+1}}$ for all $k \in \{n,\dots,n{+}m{-}1\}$, then there exists a point $y \in R_{n,i_n}$ such that $f_k \circ \cdots \circ f_n(y) \in R_{k,i_k}$ for all $k \in \{n,\dots,n{+}m\}$. In other words, as mentioned above, for any finite pseudo-orbit, there exists a finite orbit with the same symbolic dynamics. If the partition is generating, then the symbolic dynamics uniquely defines the point.

We now introduce a nonstationary version of \emph{Property~M} from Adler \cite[Section~7]{A98}. This property is easy to check, and we will see below that  it implies the nonstationary Markov property of Definition~\ref{nsMarkov}. We need some terminology first.

\begin{definition}[Transverse partitions; cf.\ {\cite[Definitions~7.1 and~7.2]{A98}}]\label{def:transv}\indx{partition!transverse}\indx{partition!horizontal}\indx{partition!vertical}
To each point~$p$ in a set~$R$, we associate two subsets $h(p), v(p) \subset R$ that both contain~$p$ in a way that each of the collections 
\[
H = \{h(p) : p \in R\}\quad\text{and}\quad V = \{v(p) : p \in R\}
\]
forms a partition of~$R$. We call $H$ and~$V$ \emph{transverse partitions} of~$R$ if $h(p) \cap v(q) \neq \emptyset$ for all $p,q \in R$.
\end{definition}

Using transverse partitions, we can now define a nonstationary version of Property~M given in \cite[Definition~7.4]{A98}. 

\begin{definition}[Nonstationary Property~M]\label{def:M}\indx{Property M}
Let $(X,f)$ be a mapping family. 
For each $n \in \ZZ$, let $\cP_n = \{R_{n,1},\dots,R_{n,q_n}\}$ be a topological partition of~$X_n$. 
The sequence $(\cP_n)_{n\in\ZZ}$ is said to have \emph{(nonstationary) Property~M} for $(X,f)$ if, for each $n \in \ZZ$ and $i \in \{1,\dots,q_n\}$, the atom~$R_{n,i}$ has a pair of transverse partitions 
\[
H_{n,i} = \{h_n(p) \,:\, p \in R_{n,i}\}, \quad V_{n,i} = \{v_n(p) \,:\, p \in R_{n,i}\}, \notx{hnatom}{$h_n(\cdot), h_n^\gen(\cdot)$}{\hspace{.75em}atom of a horizontal partition} \notx{Hn}{$H_{n,\cdot}, H_{n,\cdot}^\gen$}{horizontal partition}
\notx{vnatom}{$v_n(\cdot), v_n^\gen(\cdot)$}{\hspace{.75em}atom of a vertical partition} \notx{Vn}{$V_{n,\cdot}, V_{n,\cdot}^\gen$}{vertical partition}
\]
such that
\begin{equation}\label{eq:propMone}
h_{n+1}(p) \subset f_n\big(h_n(f_n^{-1}p)\big) \quad \mbox{and} \quad f_n\big(v_n(f_n^{-1}p)\big) \subset v_{n+1}(p)
\end{equation}
for all $p \in f_n(R_{n,i} ) \cap  R_{n+1,j}$, $j \in \{1,\dots,q_{n+1}\}$. 
\end{definition}

Property~M implies the nonstationary Markov property of Definition~\ref{nsMarkov}. Indeed, the following proposition can be derived in exactly the same way as its stationary analog in \cite[Theorem~7.9]{A98}.\footnote{Contrarily to \cite{A98}, we do not put the index~$i$ in $h_n(p)$ and $v_n(p)$ because $i$ is given by the location of $p$ within the topological partition $\cP_n = \{R_{n,1},\dots,R_{n,q_n}\}$.}

\begin{proposition}\label{prop:MMarkov}
Let $(X,f)$ be a mapping family and assume that the sequence of partitions $(\cP_n)_{n\in\ZZ}$ has the nonstationary Property~M for $(X,f)$. 
Then $(\cP_n)_{n\in\ZZ}$ is a nonstationary Markov partition for $(X,f)$.
\end{proposition}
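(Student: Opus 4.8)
The plan is to mirror the stationary argument of Adler \cite[Theorem~7.9]{A98}, carrying along the extra bookkeeping forced by the fact that the atom indices vary with~$n$. I would prove the Markov property by induction on the length $m\ge 2$ of the pseudo-orbit. The base case $m=2$ is essentially the content of Property~M combined with the transversality hypothesis, so the real work is the inductive step, and the only genuine obstacle is keeping the iterated inverse images $f_n^{-1}\circ\cdots\circ f_{n+m-1}^{-1}$ and the ``full height/full width'' pieces aligned as $n$ ranges over~$\ZZ$.

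First I would fix a sequence $(i_n)_{n\in\ZZ}\in\prod_{n}\{1,\dots,q_n\}$ and, for a given $n$ and $m\ge 2$, assume the nonemptiness hypotheses
\[
R_{k,i_k}\cap f_k^{-1}(R_{k+1,i_{k+1}})\neq\emptyset\qquad(k=n,\dots,n{+}m{-}1).
\]
For $m=2$, pick $p\in f_n(R_{n,i_n})\cap R_{n+1,i_{n+1}}$ (which exists by the $k=n$ hypothesis) and $q\in f_{n+1}(R_{n+1,i_{n+1}})\cap R_{n+2,i_{n+2}}$ (which exists by the $k=n{+}1$ hypothesis, after pulling back by $f_{n+1}$ to get a point of $R_{n+1,i_{n+1}}$). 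Applying the two inclusions in~\eqref{eq:propMone} at level $n+1$ to the point $f_{n+1}^{-1}q\in R_{n+1,i_{n+1}}$, and using that $\{h_{n+1}(\cdot)\}$ and $\{v_{n+1}(\cdot)\}$ are transverse partitions of the atom $R_{n+1,i_{n+1}}$, I would intersect the horizontal leaf through the image of $p$ with the vertical leaf through $f_{n+1}^{-1}q$: transversality gives a nonempty intersection, and the inclusions~\eqref{eq:propMone} show that any such intersection point lies in $f_n(R_{n,i_n})\cap R_{n+1,i_{n+1}}\cap f_{n+1}^{-1}(R_{n+2,i_{n+2}})$. Pulling back by $f_n$ then yields a point of $R_{n,i_n}\cap f_n^{-1}(R_{n+1,i_{n+1}})\cap f_n^{-1}f_{n+1}^{-1}(R_{n+2,i_{n+2}})$, which is the $m=2$ conclusion.

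For the inductive step, suppose the conclusion holds for $m-1$ (applied at level $n{+}1$), so there is a point $z\in R_{n+1,i_{n+1}}$ with $f_{n+1}^{-1}\circ\cdots\circ f_{k-1}^{-1}\bigl(\text{image of }z\bigr)$ staying in the appropriate atoms for $k$ up to $n{+}m$; equivalently $f_{n+1}(z)\in R_{n+2,i_{n+2}}\cap f_{n+2}^{-1}(\cdots)$. From the $k=n$ hypothesis choose $p\in f_n(R_{n,i_n})\cap R_{n+1,i_{n+1}}$. Now apply~\eqref{eq:propMone} at level~$n+1$ once more: the vertical inclusion pushes $v_n(f_n^{-1}p)$ inside $v_{n+1}(p)$, and the horizontal inclusion controls $h_{n+1}(z)$. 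Using transversality of $H_{n+1,i_{n+1}}$ and $V_{n+1,i_{n+1}}$ inside $R_{n+1,i_{n+1}}$, the leaf $h_{n+1}(z)$ meets the leaf $v_{n+1}(p)$ in a nonempty set; a point $w$ there lies in $R_{n+1,i_{n+1}}$, has its forward image under $f_{n+1}^{-1}\circ\cdots$ in the same atoms as $z$ (because $w\in h_{n+1}(z)$ and by the horizontal half of~\eqref{eq:propMone}, inductively propagated), and lies over a point of $f_n(R_{n,i_n})$ (because $w\in v_{n+1}(p)$ and by the vertical half of~\eqref{eq:propMone}). Hence $f_n^{-1}(w)\in R_{n,i_n}$ and $f_n^{-1}(w)\in f_n^{-1}(R_{n+1,i_{n+1}})\cap\cdots\cap f_n^{-1}\cdots f_{n+m-1}^{-1}(R_{n+m,i_{n+m}})$, completing the induction.

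The step I expect to be the main obstacle is verifying precisely that a point of $h_{n+1}(z)$ shares with $z$ the \emph{entire} forward itinerary $i_{n+2},\dots,i_{n+m}$ — i.e. that the horizontal inclusion in~\eqref{eq:propMone} propagates correctly under the composition $f_{n+1}^{-1}\circ\cdots\circ f_{n+m-1}^{-1}$. This is exactly where one must iterate~\eqref{eq:propMone}: applying $f_{n+1}$ to $h_{n+1}(z)$ lands inside $f_{n+1}\bigl(h_{n+1}(z)\bigr)$, which by the first inclusion of~\eqref{eq:propMone} (at level $n+2$, with $p$ replaced by $f_{n+1}(z)$) contains $h_{n+2}(f_{n+1}(z))$, and one repeats. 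Making this telescoping rigorous, while tracking that all the points involved stay in the correct atoms so that~\eqref{eq:propMone} is applicable at each level, is the only subtle bookkeeping; once it is in place, the statement about the generating property (if it is asserted) follows because the nested intersections $\bigcap_{|k|<n}\overline{\,\cdots\,}$ shrink to the leaves' common refinement, which is a single point by transversality plus the Anosov contraction/expansion — but since the present proposition only claims the Markov property, it suffices to stop at the inductive argument above.
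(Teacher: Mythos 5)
Your overall strategy (induction on $m$, transversality, the two inclusions of \eqref{eq:propMone}) is the right one, and it is all the paper does: the paper gives no written proof but simply asserts that the stationary argument of \cite[Theorem~7.9]{A98} carries over. Your base case also uses the correct combination of leaves: you intersect the \emph{horizontal} leaf $h_{n+1}(p)$ of the image point $p\in f_n(R_{n,i_n})\cap R_{n+1,i_{n+1}}$ with the \emph{vertical} leaf $v_{n+1}(f_{n+1}^{-1}q)$ of the preimage point, and indeed the first inclusion of \eqref{eq:propMone} gives $h_{n+1}(p)\subset f_n(h_n(f_n^{-1}p))\subset f_n(R_{n,i_n})$ while the second gives $f_{n+1}(v_{n+1}(f_{n+1}^{-1}q))\subset v_{n+2}(q)\subset R_{n+2,i_{n+2}}$.

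The gap is in your inductive step, where you silently swap the roles of the two families of leaves: you intersect $h_{n+1}(z)$ with $v_{n+1}(p)$, and both facts you then need are precisely the ones \eqref{eq:propMone} does \emph{not} provide. From $w\in v_{n+1}(p)$ you cannot conclude $w\in f_n(R_{n,i_n})$: the vertical half says $f_n(v_n(f_n^{-1}p))\subset v_{n+1}(p)$, i.e.\ $v_{n+1}(p)$ \emph{contains} the image of a vertical fiber of $R_{n,i_n}$, not that it is contained in $f_n(R_{n,i_n})$. Likewise, from $w\in h_{n+1}(z)$ you cannot conclude that $w$ shares the forward itinerary of $z$: the horizontal half says $h_{n+2}(f_{n+1}(z))\subset f_{n+1}(h_{n+1}(z))$, i.e.\ the image of $h_{n+1}(z)$ \emph{contains} a full horizontal fiber of $R_{n+2,i_{n+2}}$ — and, horizontal fibers being the expanded ones (cf.\ Figure~\ref{fig:transverse}), that image will in general spill across several atoms, so the ``telescoping'' you flag as the main obstacle cannot be made rigorous; the inclusion points the wrong way. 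The fix is to intersect $h_{n+1}(p)$ with $v_{n+1}(z)$ instead, exactly as in your base case: the first inclusion at $p$ puts all of $h_{n+1}(p)$ inside $f_n(R_{n,i_n})$, and iterating the second inclusion along the forward orbit of $z$ (at the points $f_{n+k}\circ\cdots\circ f_{n+1}(z)\in f_{n+k}(R_{n+k,i_{n+k}})\cap R_{n+k+1,i_{n+k+1}}$, which lie in open atoms by the choice of $z$) shows that the whole leaf $v_{n+1}(z)$ shares the forward itinerary $i_{n+2},\dots,i_{n+m}$ of $z$. Any $w\in h_{n+1}(p)\cap v_{n+1}(z)$, nonempty by transversality, then yields $f_n^{-1}(w)$ in the required intersection. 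With this correction your argument becomes Adler's.
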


We illustrate the concepts of this section for a special instance of Example~\ref{ex:classicalMappingFamily}. 

\begin{example}\label{ex:NMexpl2}
Two well-known continued fraction expansions are given by
\[
\begin{aligned}
\phi&=[1;1,1,1,1,\ldots] \quad\text{and} \\
e&=[2;1,2,1,1,4,1,1,6,1,1,8,\ldots],
\end{aligned}
\]
where $\phi=\frac{1+\sqrt{5}}{2}$ is the golden ratio.\notx{phi}{$\phi$}{golden ratio}
We combine these expansions to a bi-infinite sequence 
\[
(a_n)_{n\in\ZZ}=(\ldots,1,1,1,1;2,1,2,1,1,4,1,1,6,1,1,8,\ldots),
\]
where the semicolon is between $a_{-1}$ and~$a_0$. 
As in Example~\ref{ex:classicalMappingFamily}, this sequence gives rise to a sequence $(M_n)_{n\in\ZZ}$ of matrices that defines the mapping family
\begin{equation*}
\cdots\xrightarrow{M_{-2}^{-1}} \TT^2\xrightarrow{M_{-1}^{-1}} \TT^2\xrightarrow{M_{0}^{-1}} \TT^2\xrightarrow{M_{1}^{-1}} \cdots,
\end{equation*}
where $X_n=\TT^2$ and $f_n=M_n^{-1}$ for each $n\in\ZZ$.
The nonstationary Markov partition $(\cP_n)_{n\in\ZZ}$ with
$\cP_n = \{R_{n,1},R_{n,2}\}$ for this mapping family (at levels $-1,0,1$) is shown in Figure~\ref{fig:Markovpartition}. We will see later that these parallelogram shaped atoms are special cases of $\cS$-adic Rauzy fractals; see also \cite{AF:05}, where these atoms are constructed for mapping families related to the classical continued fraction algorithm.

\begin{figure}[ht] 
\begin{tikzpicture}[scale=2.25]
\begin{scope}[shift={(-2,0)}]
\filldraw[fill=red](.185,-.115)--(-.504,.311)--(0,1)--(.689,.574)--cycle;
\node at (.09,.44){$R_{-1,2}$};
\filldraw[fill=blue](.185,-.115)--(.689,-.426)--(1,0)--(.496,.311)--cycle;
\node at (.59,-.06){$R_{-1,1}$};
\draw[dotted](0,0)--(1,0)--(1,1)--(0,1)--cycle;
\draw(0,0)--node[pos=.9,right=2pt]{$\RR\binom{1}{1+1/e}$}(1,1.368);
\draw(0,0)--(1,-.618)node[above right=-5pt]{$\RR\binom{\phi}{-1}$};
\end{scope}
\node at (-.6,.5){$\xrightarrow[\displaystyle f_{-1}]{\left(\begin{smallmatrix}-1&1\\1&0\end{smallmatrix}\right)}$};
\filldraw[fill=red](.515,-.319)--(-.3,.185)--(0,1)--(.815,.496)--cycle;
\node at (.28,.34){$R_{0,2}$};
\filldraw[fill=blue](.515,-.319)--(.815,-.504)--(1,0)--(.7,.185)--cycle; 
\node at (.75,-.16){$R_{0,1}$};
\draw[dotted](0,0)--(0,1)--(1,1)--(1,0)--cycle;
\draw(0,0)--node[pos=.9,right]{$\RR\binom{1}{e}$}(.5,1.36);
\draw(0,0)--(1,-.618)node[above right=-5pt]{$\RR\binom{\phi}{-1}$};
\node at (1.3,.5){$\xrightarrow[\displaystyle f_0]{\left(\begin{smallmatrix}-2&1\\1&0\end{smallmatrix}\right)}$};
\begin{scope}[shift={(2,0)}]
\filldraw[fill=red](.221,-.085)--(-.564,.215)--(0,1)--(.785,.7)--cycle;
\node at (.11,.46){$R_{1,2}$};
\filldraw[fill=blue](.221,-.085)--(.785,-.3)--(1,0)--(.436,.215)--cycle;
\node at (.61,-.04){$R_{1,1}$};
\draw[dotted](0,0)--(1,0)--(1,1)--(0,1)--cycle;
\draw(0,0)--node[pos=.9,left]{$\RR\binom{1}{1/(e{-}2)}$}(1,1.392);
\draw(0,0)--node[pos=.75,below=3pt]{$\RR\binom{\phi^2}{-1}$}(1,-.382);
\end{scope}
\end{tikzpicture}
\caption{Levels $-1$, $0$, and~$1$ of the nonstationary Markov partition of Example~\ref{ex:NMexpl2}. 
The $L$-shaped regions are fundamental domains of~$\TT^2$ in~$\RR^2$ corresponding to the respective levels of the mapping family.}
\label{fig:Markovpartition} 
\end{figure}
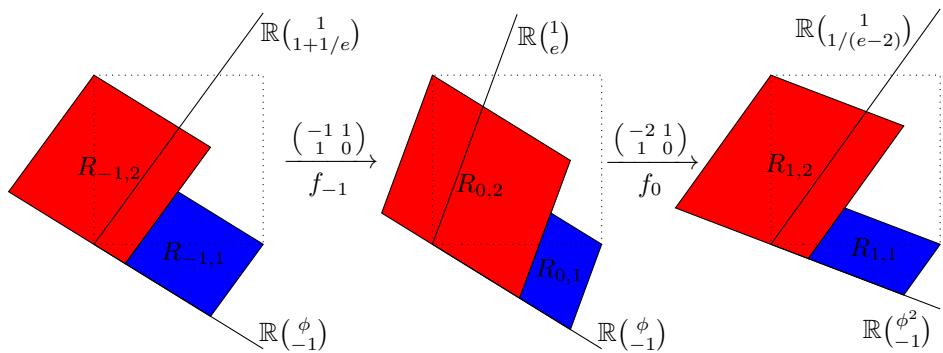

Figure~\ref{fig:transverse} shows how the atoms of the horizontal and vertical partitions are mapped by the transformations of the mapping family. Indeed, since our mapping family is Anosov by Example~\ref{ex:classicalMappingFamily}, the horizontal atoms get longer (on the long run), while the vertical atoms get shorter (on the long run).

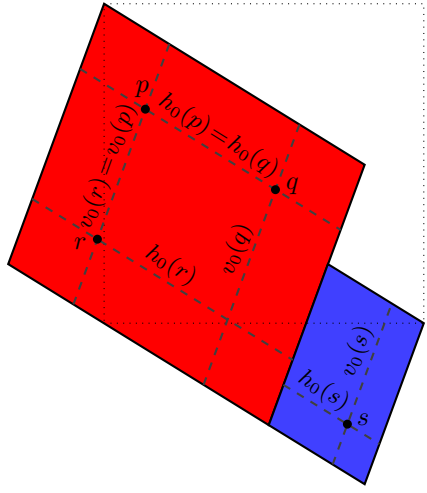
\begin{figure}[ht] 
\begin{tikzpicture}[scale=4.25]
\filldraw[fill=red,thick](.515,-.319)--(-.3,.185)--(0,1)--(.815,.496)--cycle;
\filldraw[fill=blue!75,thick](.515,-.319)--(.815,-.504)--(1,0)--(.7,.185)--cycle; 
\draw[dotted](0,0)--(1,0)--(1,1)--(0,1)--cycle;
\draw[thick,dashed,black!75](-.096,.059)--node[pos=.51,rotate=70,above=-2pt,black]{\small$v_0(r)\!=\!v_0(p)$}(.204,.874)
(.311,-.193)--node[rotate=70,above=-2pt,black]{\small$v_0(q)$}(.611,.622) (.715,-.442)--node[pos=.67,rotate=70,above=-2pt,black]{\small$v_0(s)$}(.9,.062)
(-.075,.796)--node[rotate=-32,above=-2pt,black]{\small$h_0(p)\!=\!h_0(q)$}(.74,.292)
(-.225,.389)--node[rotate=-32,above=-2pt,black]{\small$h_0(r)$}(.59,-.115)
(.561,-.193)--node[pos=.33,rotate=-32,above=-2pt,black]{\small$h_0(s)$}(.861,-.378);
\fill(.129,.67) circle (.4pt);
\node[above=1pt] at (.12,.67){$p$};
\fill(.536,.418) circle (.4pt);
\node[right] at (.54,.43){$q$};
\fill(-.021,.263) circle (.4pt);
\node[left] at (-.025,.25){$r$};
\fill(.761,-.316) circle (.4pt);
\node[right] at (.761,-.3){$s$};
\end{tikzpicture}
\caption{Transverse partitions of the nonstationary Markov Partition $\{R_{0,1},R_{0,2}\}$ of Example~\ref{ex:NMexpl2}.}
\label{fig:transverse}
\end{figure}

Figure~\ref{fig:propertyM} illustrates the inclusions \eqref{eq:propMone} for the atoms of the transverse partitions in the definition of Property~M. We start with the second inclusion. Consider the $L$-shaped region at level 0. The dashed line passing through~$p$ ``vertically'' is the atom~$v_0(p) \subset R_{0,2}$. Its preimage $f_{-1}^{-1}(v_0(p))$ is the grey line passing through~$f_{-1}^{-1}(p)$. The solid part of this grey line is $v_{-1}(f_{-1}^{-1}(p)) \subset R_{-1,2}$. This yields $v_{-1}(f_{-1}^{-1}(p)) \subset f_{-1}^{-1}(v_0(p))$ and, hence, $f_{-1}(v_{-1}(f_{-1}^{-1}(p))) \subset v_0(p)$. This illustrates the second inclusion in \eqref{eq:propMone} on level $n=-1$. To get the first inclusion, we start with the dashed line passing through $p$ ``horizontally'', which is the atom $h_0(p) \subset R_{0,2}$. Its image $f_0(h_0(p))$ is the grey line passing through~$f_0(p)$. The solid part of this line is $h_1(f(p)) \subset R_{1,2}$, which shows that $h_1(f_0(p)) \subset f_0(h_0(p))$. Setting $q = f_0(p)$, this implies that $q \in f_0(R_{0,2})\cap R_{1,2}$ and $h_1(q) \subset f_0(h_0(f_0^{-1}(q)))$. This illustrates the first inclusion in \eqref{eq:propMone}, with $p$ replaced by $q = f_0(p)$, on level $n=0$.

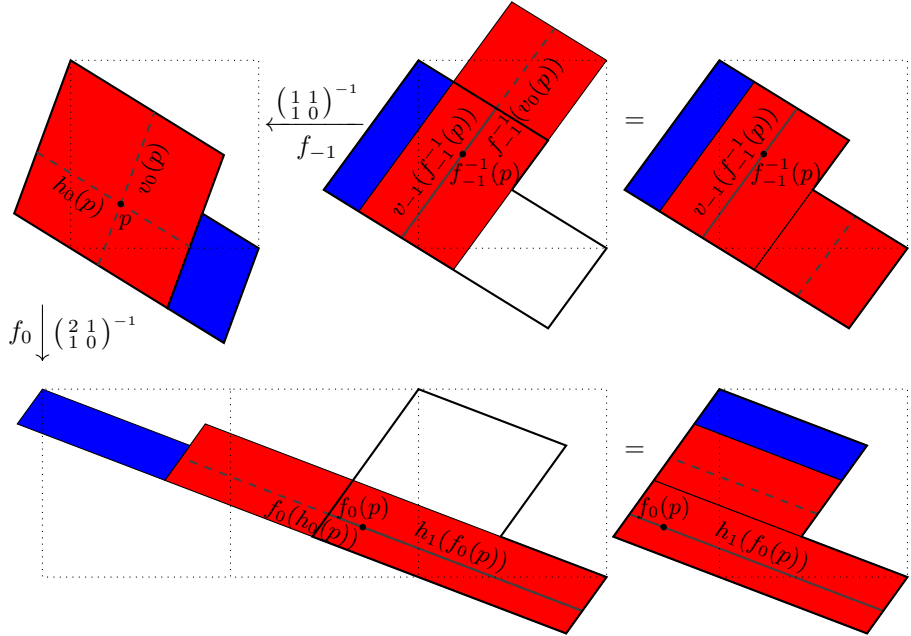
\begin{figure}[ht] 
\begin{tikzpicture}[scale=2.5]
\filldraw[fill=red,thick](.515,-.319)--(-.3,.185)--(0,1)--(.815,.496)--cycle;
\filldraw[fill=blue,thick](.515,-.319)--(.815,-.504)--(1,0)--(.7,.185)--cycle; 
\draw[thick,dashed,black!75](-.18,.511)--node[pos=.33,rotate=-32,below=-2pt,black]{\small$h_0(p)$}(.635,.007)
(.146,-.091)--node[pos=.67,rotate=70,below=-2pt,black]{\small$v_0(p)$}(.446,.724);
\fill(.266,.2354) circle (.5pt);
\node[below] at (.29,.23){\small$p$};
\draw[dotted](0,0)--(1,0)--(1,1)--(0,1)--cycle;
\node at (1.3,.67){$\xleftarrow[\displaystyle f_{-1}]{\big(\begin{smallmatrix}1&1\\1&0\end{smallmatrix}\big)^{-1}}$};
\draw[->](-.15,-.3)--node[left]{$f_0$}node[right]{$\big(\begin{smallmatrix}2&1\\1&0\end{smallmatrix}\big)^{\scriptscriptstyle-1}$}(-.15,-.6);
\begin{scope}[shift={(1.85,0)}]
\begin{scope}[cm={0,1,1,1,(0,0)}]
\filldraw[fill=red](.515,-.319)--(-.3,.185)--(0,1)--(.815,.496)--cycle;
\filldraw[fill=blue](.515,-.319)--(.815,-.504)--(1,0)--(.7,.185)--cycle; 
\draw[thick,dashed,black!75](.146,-.091)--node[pos=.69,rotate=54,below=-1pt,black]{\small$f_{-1}^{-1}(v_0(p))$}(.446,.724);
\draw[thick,black!75](.146,-.091)--node[rotate=54,above=-2pt,black]{\small$v_{-1}(f_{-1}^{-1}(p))$}(.331,.413);
\end{scope}
\fill(.2354,.5014) circle (.5pt);
\node[below] at (.355,.525){\small$f_{-1}^{-1}(p)$};
\draw[thick](.689,-.426)--(-.504,.311)--(0,1)--(.689,.574)--(.496,.311)--(1,0)--cycle;
\draw[dotted](0,0)--(1,0)--(1,1)--(0,1)--cycle;
\node at (1.15,.67){$=$};
\end{scope}
\begin{scope}[shift={(3.45,0)}]
\begin{scope}[cm={0,1,1,1,(0,0)}]
\filldraw[fill=red](.515,-.319)--(-.3,.185)--(-.115,.689)--(.7,.185)--cycle;
\filldraw[fill=blue](.515,-.319)--(.815,-.504)--(1,0)--(.7,.185)--cycle; 
\draw[thick,black!75](.146,-.091)--node[rotate=54,above=-2pt,black]{\small$v_{-1}(f_{-1}^{-1}(p))$}(.331,.413);
\end{scope}
\begin{scope}[cm={0,1,1,1,(0,-1)}]
\filldraw[fill=red](.7,.185)--(-.115,.689)--(0,1)--(.815,.496)--cycle;
\draw[thick,dashed,black!75](.331,.413)--(.446,.724);
\end{scope}
\fill(.2354,.5014) circle (.5pt);
\node[below] at (.355,.525){\small$f_{-1}^{-1}(p)$};
\draw[thick](.689,-.426)--(-.504,.311)--(0,1)--(.689,.574)--(.496,.311)--(1,0)--cycle;
\draw[dotted](0,0)--(1,0)--(1,1)--(0,1)--cycle;
\end{scope}
\begin{scope}[shift={(1.85,-1.75)}]
\begin{scope}[cm={-2,1,1,0,(0,0)}]
\filldraw[fill=red](.515,-.319)--(-.3,.185)--(0,1)--(.815,.496)--cycle;
\draw[thick,dashed,black!75](-.18,.511)--node[pos=.66,rotate=-21,below=-2.5pt,black]{\small$f_0(h_0(p))$}(.635,.007);
\draw[thick,black!75](-.18,.511)--node[rotate=-21,above=-2pt,black]{\small$h_1(f_0(p))$}(.335,.192);
\filldraw[fill=blue](.515,-.319)--(.815,-.504)--(1,0)--(.7,.185)--cycle; 
\end{scope}
\fill(-.2966,.266) circle (.5pt);
\node[above=-1pt] at (-.2966,.266){\small$f_0(p)$};
\draw[thick](.785,-.3)--(-.564,.215)--(0,1)--(.785,.7)--(.436,.215)--(1,0)--cycle;
\draw[dotted](0,0)--(1,0)--(1,1)--(0,1)--cycle;
\begin{scope}[shift={(-1,0)}]
\draw[dotted](1,1)--(0,1)--(0,0)--(1,0);
\end{scope}
\begin{scope}[shift={(-2,0)}]
\draw[dotted](1,1)--(0,1)--(0,0)--(1,0);
\end{scope}
\node at (1.15,.67){$=$};
\end{scope}
\begin{scope}[shift={(3.45,-1.75)}]
\begin{scope}[cm={-2,1,1,0,(0,0)}]
\filldraw[fill=red](.215,-.134)--(-.3,.185)--(0,1)--(.515,.681)--cycle;
\draw[thick,black!75](-.18,.511)--node[rotate=-21,above=-2pt,black]{\small$h_1(f_0(p))$}(.335,.192);
\end{scope}
\begin{scope}[cm={-2,1,1,0,(1,0)}]
\filldraw[fill=red](.515,-.319)--(.215,-.134)--(.515,.681)--(.815,.496)--cycle;
\draw[thick,dashed,black!75](.335,.193)--(.635,.007);
\end{scope}
\begin{scope}[cm={-2,1,1,0,(2,0)}]
\filldraw[fill=blue](.515,-.319)--(.815,-.504)--(1,0)--(.7,.185)--cycle; 
\end{scope}
\fill(-.2966,.266) circle (.5pt);
\node[above=-1pt] at (-.2966,.266){\small$f_0(p)$};
\draw[thick](.785,-.3)--(-.564,.215)--(0,1)--(.785,.7)--(.436,.215)--(1,0)--cycle;
\draw[dotted](0,0)--(1,0)--(1,1)--(0,1)--cycle;
\end{scope}
\end{tikzpicture}
\caption{Illustration of transverse partitions and Property~M for Example~\ref{ex:NMexpl2}.}
\label{fig:propertyM}
\end{figure}

In Figure~\ref{fig:propertyM}, the $L$-shaped region at level $0$ is mapped via linear maps in~$\RR^2$. If we ``restack'' these linear images (at levels $-1$ and~$1$) back to the according fundamental domain of the torus, we obtain Figure~\ref{fig:transverse}.
\end{example}

Suppose that we got a Markov partition $(\cP_n)_{n\in\ZZ}$ of a mapping family $(X,f)$. The following result, which will be needed in Section~\ref{sec:markov}, shows how to construct a refined Markov partition $(\cQ_n)_{n\in\ZZ}$ from $(\cP_n)_{n\in\ZZ}$ having the property that $(\cP_n)_{n\in\ZZ}$ is generating if and only if $(\cQ_n)_{n\in\ZZ}$ is.

\begin{proposition}\label{prop:refinePtoQ}
Let $(X,f)$ be a mapping family that is given as in \eqref{eq:mappingfamilydefinition} and let $(\cP_n)_{n\in\ZZ}$  with $\cP_n = \{R_{n,1},\dots,R_{n,q_n}\}$ be a nonstationary Markov partition for $(X,f)$. Define $(\cQ_n)_{n\in\ZZ}$ by
$\cQ_n=\{S_{n,i,j} = R_{n,i} \cap f^{-1}(R_{n+1,j}) \colon S_{n,i,j} \not=\emptyset\}$. Then the following assertions are true.
\begin{enumerate}[\upshape (i)]
\itemsep.5ex
\item \label{i:Qn}
$(\cQ_n)_{n\in\ZZ}$ is a nonstationary Markov partition for $(X,f)$.
\item \label{i:Pn}
$(\cP_n)_{n\in\ZZ}$ is generating if and only if $(\cQ_n)_{n\in\ZZ}$ is generating.
\end{enumerate}
\end{proposition}

\begin{proof}
To prove~(\ref{i:Qn}), fix a sequence sequence $(i_n,j_n)_{n\in\ZZ} \in \prod_{n\in\ZZ} \{1,\dots,q_n\}^2$ satisfying
\[
S_{k,i_k,j_k} \cap f^{-1} (S_{k+1,i_{k+1}j_{k+1}}) \neq \emptyset \quad \mbox{for all}\ k\in\{n,\ldots, n+m-1\}.
\]
By the definition of $S_{n,i,j}$ this can be written as
\[
R_{k,i_k} \cap f^{-1}(R_{k+1,j_k}) \cap  f^{-1}(R_{k+1,i_{k+1}}) \cap f^{-2}(R_{k+2,j_{k+1}}) \neq \emptyset
\]
for all $k\in\{n,\ldots, n+m-1\}$. Because $f^{-1}(R_{n,i})$, $i\in\{1,\ldots,q_n\}$, are pairwise disjoint this implies that $j_k=i_{k+1}$ for $k\in\{n,\ldots, n+m-1\}$. Hence, redefining $i_{n+m+1}=j_{n+m}$ for convenience, we gain
\[
R_{k,i_k} \cap  f^{-1}(R_{k+1,i_{k+1}}) \cap f^{-2}(R_{k+2,i_{k+2}}) \neq \emptyset \quad \mbox{for all}\ k\in\{n,\ldots, n+m-1\}.
\]
Because $(\cP_n)_{n\in\ZZ}$ is a nonstationary Markov partition for $(X,f)$, using the definition of $S_{i,j,n}$ we finally get
\[
\begin{aligned}
S_{n,i_n,i_{n+1}} \cap f^{-1}(S_{n+1,i_{n+1},i_{n+2}})\cap \cdots  \cap f^{-m}(S_{n+m,i_{n+m},i_{n+m+1}}) &= \\
R_{n,i_n} \cap f^{-1}(R_{n+1,i_{n+1}}) \cap  \cdots  \cap f^{-m-1}(R_{n+m,i_{n+m+1}}) & \neq \emptyset.
\end{aligned}
\]
This proves that  $(\cP_n)_{n\in\ZZ}$ is a nonstationary Markov partition for $(X,f)$.

To prove~(\ref{i:Pn}), assume first that $(\cP_n)_{n\in\ZZ}$ is generating. Because $\cQ_n$ is a refinement of $\cP_n$ this immediately implies that $(\cQ_n)_{n\in\ZZ}$ is generating as well. To see the other direction, assume that $(\cQ_n)_{n\in\ZZ}$ is generating. Then by the definition of $(\cQ_n)_{n\in\ZZ}$ in terms of $(\cP_n)_{n\in\ZZ}$ we have
\[
\bigcap_{n\in\NN}\overline{\bigcap_{|k|<n}  f^{-k}(R_{k,i_{k}})} = \bigcap_{n\in\NN}\overline{\bigcap_{|k|<n}  f^{-k}(S_{k,i_{k},i_{k+1}})},
\]
hence, also $(\cP_n)_{n\in\ZZ}$ is generating.
\end{proof}

In \cite{AF:05}, nonstationary Markov partitions are treated in a slightly different way building on Bowen's definition of a stationary Markov partition~\cite{Bow75}. Property~M is reminiscent of the definition of Bowen proposed in \cite[Section~3C]{Bow75}. Note that the partitions~$H$ (for ``horizontal'') and~$V$ (for ``vertical'') are abstract models for the unstable and stable foliations used by Bowen~\cite{Bow75}. 

\subsection{Symbolic representation of a mapping family} \label{subsec:nsft}
We now discuss how to associate appropriate symbolic representations to eventually Anosov mapping families; these symbolic representations are defined as nonstationary versions of (1-step) finite type shifts, and more precisely as  nonstationary versions of the well-known  vertex and edge  shifts; see e.g.\ \cite[Definitions~2.2.5 and~2.3.7]{Lind-Marcus}. 
 
We now define these objects.
Let $\cD$ be some set. We recall that the shift operator\indx{shift!operator}~$\Sigma$\notx{Sh}{$\Sigma$}{shift operator} maps a sequence $(\omega_n)_{n\in\ZZ} \in \cD^{\ZZ}$ to $(\omega_{n+1})_{n\in\ZZ}$.\footnote{The fact that shift operators on different spaces are always denoted by~$\Sigma$ should cause no confusion.} A~dynamical system $(X,\Sigma)$ with $X \subset \cD^{\ZZ}$ is a \emph{shift}\indx{shift} if $X$ is closed w.r.t.\ the product topology of the discrete topology on $X \subset \cD^{\ZZ}$. 
A~\emph{bi-infinite Bratteli diagram}\indx{Bratteli diagram} is an infinite directed graph $\mathscr{B} = (V,E)$\notx{B}{$\mathscr{B}$}{Bratteli diagram}, where $V = \coprod_{n\in\ZZ} V_n$, $E = \coprod_{n\in\ZZ} E_n$\notx{En}{$E_n$}{set of edges of a Bratteli diagram or nonstationary edge shift, prefixes of a substitution}, with source and range maps $s,r: E \to V$ satisfying $s(E_n) = V_n$ and $r(E_n) = V_{n+1}$ for all $n \in \ZZ$. We assume that $|V_n|$ and~$|E_n|$ are finite for all~$n$. 
With a Bratteli diagram~$\mathscr{B}$, we associate an infinite sequence of transition matrices $(A_n)_{n\in\ZZ}$. 
For each ``level'' $n\in\NN$, the matrix $A_n = (a_{n,w,v})$ has dimension $|V_{n+1}| {\times} |V_{n}|$ and entries $a_{n,w,v} = |\{e \in E_n : s(e) = v,\, r(e) = w\}|$, with $v\in V_n$, $w\in V_{n+1}$.

\begin{definition}[Nonstationary edge and vertex shift] \label{def:nfst}
\indx{shift!nonstationary!edge}\indx{shift!nonstationary!vertex}
Define \notx{X}{$X_{\mathscr{B}}, X_{\mathscr{B}}^{(n)}$}{nonstationary edge shift} \notx{X}{$\hat{X}_{\mathscr{B}}, \hat{X}_{\mathscr{B}}^{(n)}$}{nonstationary vertex shift}
\[
\begin{gathered}
\begin{aligned}
X_{\mathscr{B}}^{(0)} & = \bigg\{(e_n)_{n\in\ZZ} \in \prod_{n\in\ZZ} E_n \,:\, r(e_n) = s(e_{n+1})\ \text{for all}\ n \in \ZZ \bigg\}, & X_{\mathscr{B}}^{(n)} & = \Sigma^n\big(X_{\mathscr{B}}^{(0)}\big), \\
\hat{X}_{\mathscr{B}}^{(0)} & = \bigg\{ (v_n)_{n\in\ZZ} \in \prod_{n\in\ZZ} V_n \,:\, a_{n,v_{n+1},v_{n}} > 0\ \text{for all}\ n \in \ZZ \bigg\}, & \hat{X}_{\mathscr{B}}^{(n)} & = \Sigma^n\big(\hat{X}_{\mathscr{B}}^{(0)}\big),
\end{aligned} \\
X_{\mathscr{B}} = \coprod_{n\in\ZZ} X_{\mathscr{B}}^{(n)}  \qquad\text{and}\qquad
\hat{X}_{\mathscr{B}} = \coprod_{n\in\ZZ} \hat X_{\mathscr{B}}^{(n)}.
\end{gathered}
\]
We call $(X_\mathscr{B},\Sigma)$ and $(\hat{X}_\mathscr{B},\Sigma)$ the \emph{nonstationary edge shift} and the \emph{nonstationary vertex shift} associated to~$\mathscr{B}$, respectively.
\end{definition}

In \cite{AF:05,Fisher:09}, the nonstationary edge shifts we defined here are called \emph{nonstationary subshifts of finite type}.\footnote{We do not adopt this terminology because already in the stationary case edge shifts are only special cases of 1-step shifts of finite type.} Nonstationary edge shifts are also related to the two-sided  constructions of Markov compacta (and Bratteli diagrams) introduced in \cite{Bufetov13,Bufetov:14}. 

One can naturally associate a Bratelli diagram to a given sequence of nonnegative $d{\times}d$ integer matrices $(M_n)_{n\in\ZZ}$. Indeed, it follows from the definition that a Bratelli  diagram is completely defined by providing its sequence $(A_n)_{n\in\ZZ}$ of transition matrices. Thus, setting $A_n = \tr{\!M}_n$, $n\in \ZZ$, we get the \emph{Bratteli diagram $\mathscr{B}$ associated to the sequence of matrices $(M_n)_{n\in\ZZ}$}\indx{Bratteli diagram}.\footnote{It will become clear in Section~\ref{subsec:nsftSub} why we use the transposed matrices here.} The Bratteli diagram associated to the sequence of matrices $(M_n)_{n\in\ZZ}$ from Example~\ref{ex:NMexpl2} is depicted in Figure~\ref{fig:Bratt1}.

\begin{figure}[ht] 
\begin{tikzpicture}[scale=2,thick,->,minimum size=10pt,shorten >=.5pt]
\node (vm1) at (-2,1){1};
\node (vm2) at (-2,0){2};
\node (v01) at (0,1){1};
\node (v02) at (0,0){2};
\node (v11) at (2,1){1};
\node (v12) at (2,0){2};
\draw[->] 
(vm1) edge (v02)
(vm2) edge (v01)
(vm2) edge (v02)
(v01) edge (v12) 
(v02) edge (v11)
(v02) edge[bend left=10] (v12)
(v02) edge[bend right=10] (v12);
\node at (-2,-.25){$V_{-1}$};
\node at (0,-.25){$V_0$};
\node at (2,-.25){$V_1$};
\node at (-2.5,.5){$\cdots$};
\node at (2.5,.5){$\cdots$};
\end{tikzpicture}
\caption{The Bratteli diagram from Example~\ref{ex:NMexpl2} (levels $-1,0,1$).}
\label{fig:Bratt1}
\end{figure}
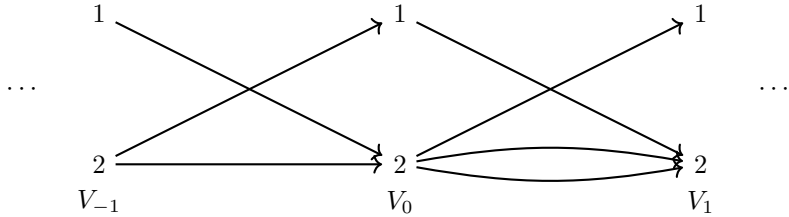

We now want to use nonstationary vertex shifts to provide a symbolic representation of a mapping family. Consider a mapping family $(X,f)$ defined as in~\eqref{eq:mappingfamilydefinition} and assume that $(\cP_n)_{n\in\ZZ}$  with $\cP_n = \{R_{n,1},\dots,R_{n,q_n}\}$ is a nonstationary Markov partition for $(X,f)$. With this Markov partition we associate a bi-infinite Bratteli diagram $\mathscr{B}=(V,E)$ in the following way. The sets of vertices are given by $V_n= \{1,\dots,q_n\}$ and the sets of edges are 
\[
E_n =\big\{(i,j) \;:\;  i\in V_n, j\in V_{n+1} \text{ with } R_{n,i} \cap f^{-1}(R_{n+1,j}) \not= \emptyset \big\}.
\]
Here $(i,j)$ denotes an edge with $s(i,j)=i$ and $r(i,j)=j$.

We provide the following result from \cite[Proposition~3.6]{AF:05}, which forms a nonstationary version of \cite[Theorem~6.5]{A98}.

\begin{proposition}[{\cite[Proposition~3.6]{AF:05}}]
\label{prop:genSymRep}
Let  $(X,f)$ be a mapping family with a generating nonstationary Markov partition $(\cP_n)_{n\in\ZZ}$. Let $(\hat X_{\mathscr{B}},\Sigma)$ be the vertex shift associated to this Markov partition. Then the maps
\[
\psi_n:\, X_{\mathscr{B}}^{(n)} \to X_n;\quad (i_k)_{k\in\ZZ}  \mapsto \bx, \;\text{ where}\;
\bigcap_{\ell\in\NN}\overline{\bigcap_{|k|<\ell}  f^{n-k}(R_{k,i_{k}})}=\{\bx\}.
\]
are continuous and one-to-one except on the set $\psi_n^{-1}\big(\bigcup_{k\in \ZZ}\bigcup_{i\in\{1,\ldots,q_k\}}f^{n-k}(\partial R_{k,i})\big)$ of pullbacks of the boundaries of the atoms $R_{k,i}$, and the diagram
\[  
\xymatrix{  
\cdots \ar[r]^{\Sigma}& X_{\mathscr{B}}^{(-1)} \ar[d]_{\psi_{-1}}\ar[r]^{\Sigma} &  X_{\mathscr{B}}^{(0)} \ar[d]_{\psi_0}\ar[r]^{\Sigma} & X_{\mathscr{B}}^{(1)} \ar[d]_{\psi_1}\ar[r]^{\Sigma} &  X_{\mathscr{B}}^{(2)}\ar[d]_{\psi_2}  \ar[r]^{\Sigma}&\cdots  \\
\cdots \ar[r]^{f}& X_{-1} \ar[r]^{f} &  X_0 \ar[r]^{f} & X_1 \ar[r]^{f} &  X_2 \ar[r]^{f}   &\cdots }  
\]
commutes. In this case $(\hat X_{\mathscr{B}},\Sigma)$ is called a \emph{topological symbolic representation}\indx{mapping family!topological symbolic representation} of $(X,f)$. 
\end{proposition}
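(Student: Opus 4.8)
The plan is to prove this exactly along the lines of the stationary result \cite[Theorem~6.5]{A98}, in the nonstationary form of \cite[Proposition~3.6]{AF:05}, observing at the outset that the statement is purely topological and combinatorial: it uses neither the Anosov property nor any smooth structure, only that the $X_n$ are compact metric spaces, the $f_n$ are homeomorphisms, and that $(\cP_n)_{n\in\ZZ}$ is a \emph{generating} nonstationary Markov partition in the sense of Definition~\ref{nsMarkov}. I would organize the argument into four steps: well-definedness of $\psi_n$, continuity, injectivity off the indicated boundary set, and commutativity of the diagram.

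\emph{Well-definedness.} Fix an admissible vertex sequence $\omega=(i_k)_{k\in\ZZ}$, so that $R_{k,i_k}\cap f^{-1}(R_{k+1,i_{k+1}})\neq\emptyset$ for every $k$ (this is the admissibility condition defining the vertex shift $\hat X_{\mathscr{B}}$). I would first show that each finite intersection $\bigcap_{|k|<\ell}f^{n-k}(R_{k,i_k})$ is nonempty: it is an open set (each $R_{k,i_k}$ is open and each $f^{n-k}$ a homeomorphism), and, up to applying a suitable power of $f$, it is precisely the set obtained by feeding the admissibility relations into the nonstationary Markov property of Definition~\ref{nsMarkov} with an appropriately chosen centre and length $m$ (roughly, centre $-\ell+1$ and $m=2\ell-2$). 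Hence the closures of these sets form a decreasing sequence of nonempty compact subsets of $X_n$, so their total intersection $\bigcap_{\ell\in\NN}\overline{\bigcap_{|k|<\ell}f^{n-k}(R_{k,i_k})}$ is nonempty. To see that it reduces to a single point I would transport it, via the homeomorphism $f^{-n}\colon X_n\to X_0$, onto the level-$0$ intersection $\bigcap_{\ell\in\NN}\overline{\bigcap_{|k|<\ell}f^{-k}(R_{k,i_k})}$ attached to $\omega$, which contains at most one point by the generating hypothesis of Definition~\ref{nsMarkov}. This pins down $\psi_n(\omega)$ and shows $\psi_n\colon X_{\mathscr{B}}^{(n)}\to X_n$ is well defined.

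\emph{Continuity and injectivity.} For continuity, fix $\omega$ with $\psi_n(\omega)=\bx$. Any $\omega'$ that agrees with $\omega$ on the coordinates $-\ell,\dots,\ell$ has the same depth-$(\ell+1)$ intersection as $\omega$, so $\psi_n$ maps the corresponding cylinder into the compact set $\overline{\bigcap_{|k|\le\ell}f^{n-k}(R_{k,i_k})}$; these compacta are decreasing and intersect in the single point $\bx$, hence their diameters tend to $0$ (the elementary fact that a decreasing sequence of nonempty compacta with singleton intersection has vanishing diameter), and since the cylinders form a neighborhood base, $\psi_n$ is continuous. For injectivity, suppose $\psi_n(\omega)=\psi_n(\omega')=\bx$ while $i_{k_0}\neq i'_{k_0}$ for some $k_0$. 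Then $\bx\in f^{n-k_0}\!\big(\overline{R_{k_0,i_{k_0}}}\big)\cap f^{n-k_0}\!\big(\overline{R_{k_0,i'_{k_0}}}\big)=f^{n-k_0}\!\big(\overline{R_{k_0,i_{k_0}}}\cap\overline{R_{k_0,i'_{k_0}}}\big)$, and since distinct atoms of the topological partition $\cP_{k_0}$ are disjoint open sets, $\overline{R_{k_0,i_{k_0}}}\cap\overline{R_{k_0,i'_{k_0}}}\subseteq\partial R_{k_0,i_{k_0}}$; hence $\bx\in f^{n-k_0}(\partial R_{k_0,i_{k_0}})\subseteq\bigcup_{k\in\ZZ}\bigcup_i f^{n-k}(\partial R_{k,i})$. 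Thus $\psi_n$ is one-to-one on the complement of $\psi_n^{-1}\big(\bigcup_{k\in\ZZ}\bigcup_i f^{n-k}(\partial R_{k,i})\big)$, as claimed. (I would also record, although it is not part of the displayed statement, that $\psi_n$ maps onto $X_n$ up to the same boundary set, so that $(\hat X_{\mathscr{B}},\Sigma)$ is a genuine symbolic representation: a point of $X_n$ lying off those boundaries has a well-defined $f$-itinerary, and that itinerary is admissible by the Markov property.)

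\emph{Commutativity, and the main obstacle.} By construction $\psi_n(\omega)$ is the unique point of $X_n$ whose $f$-itinerary visits the atoms prescribed by $\omega$; applying $f_n$ advances this itinerary by one step and shifts the base level from $n$ to $n+1$, which is precisely the data defining $\psi_{n+1}(\Sigma\omega)$. Rewriting both $f(\psi_n(\omega))$ and $\psi_{n+1}(\Sigma\omega)$ as the unique point of the same nested intersection of $f$-images of atoms along $\omega$ then gives $f\circ\psi_n=\psi_{n+1}\circ\Sigma$, i.e.\ the diagram commutes; this is the same index bookkeeping as in the stationary case \cite[Theorem~6.5]{A98}. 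I expect the only step with genuine content to be the well-definedness: one must organize the Markov property of Definition~\ref{nsMarkov} — choosing the centre and the length $m$ correctly — so as to force \emph{arbitrarily} long finite intersections to be nonempty, and then carry the generating property, which is formulated at level $0$, to the uniqueness statement at an arbitrary level $n$ through the homeomorphisms $f_k$. The remaining steps are routine point-set topology, the only subtlety being to keep the relevant intersections open so that ``distinct atoms of a topological partition meet only in their common boundary'' can be invoked in the injectivity step.
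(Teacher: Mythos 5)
Your proof is correct and follows exactly the route the paper intends, namely the nonstationary adaptation of \cite[Theorem~6.5]{A98} as carried out in \cite[Proposition~3.6]{AF:05}: nonemptiness of the finite intersections via the Markov property (with your centre $-\ell+1$ and length $m=2\ell-2$ correctly matching Definition~\ref{nsMarkov}), uniqueness via the generating property transported from level $0$ by the homeomorphisms $f^n$, continuity from shrinking compacta, injectivity off the boundary set, and the standard index bookkeeping for commutativity. Nothing is missing.
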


Consider a mapping family $(X,f)$ defined as in~\eqref{eq:mappingfamilydefinition} and assume that $(\cP_n)_{n\in\ZZ}$  with $\cP_n = \{R_{n,1},\dots,R_{n,q_n}\}$ is a nonstationary Markov partition for $(X,f)$. Sometimes one wants to associate a nonstationary edge shift with a Markov partition of $(X,f)$. In this case, the edges of the Bratteli diagram are given by $E_n= \{i,\dots,q_n\}$. If the vertices are suitably defined, one gets a result that is analogous to Proposition~\ref{prop:genSymRep}; see Theorem~\ref{th:symbMod}.

\section{Bi-infinite sequences of matrices}\label{sec:matrices}
In this section, we  develop the basic theory of bi-infinite sequences of matrices with special emphasis on convergence properties of such sequences. In particular, our main aim is to give natural criteria for strong convergence of a bi-infinite sequence of matrices. These criteria are of independent interest, as strong convergence is of importance for instance in the context of Diophantine approximation properties of multidimensional continued fractions. The results of this section lay the foundation for our main results in Chapter~\ref{chapter:markov}.

Section~\ref{subsec:primrec} contains the definition of primitivity and recurrence, two properties of a sequence of matrices that will be basic for all that follows. In Section~\ref{subsec:eigen} we attach generalized eigenvectors to a sequence of matrices. Moreover, we show that these eigenvalues always exist for primitive and recurrent sequences of matrices. Using the generalized right eigenvector, we then define three notions of convergence: weak, strong, and exponential convergence. Section~\ref{sec:LyapunovPisot} defines Lyapunov exponents for a sequence of matrices and introduces the so-called (local) Pisot condition. This condition assures that a sequence of matrices has one expanding direction and contracts in all the other directions, a behavior that is reminiscent of a Pisot number. As we see in Section~\ref{sec:LyapunovPisot2}, the Pisot condition is crucial when deriving convergence properties of a sequence of matrices. The main part of this subsection is occupied by the proof of Proposition~\ref{p:localOseledets}, a local version of the famous Oseledets Multiplicative Ergodic Theorem. This lemma is the key ingredient of the proof of the fact that the Pisot condition is equivalent to exponential convergence under quite mild conditions. Another result shows the effect of the Pisot condition on the behaviour of the eigenvalues of products of consecutive matrices of the underlying sequence of matrices. Section~\ref{sec:LyapunovPisot3} defines the concept of algebraic irreducibility. Algebraic irreducibility entails that the generalized right eigenvector has rationally independent coordinated, a property that will be crucial later, when we need that certain Rauzy fractals have non-empty interior. This subsection also contains our main convergence result, formulated as Theorem~\ref{th:matrixPisot}, which was already stated as Theorem~\nameref{t:A} in the introduction.
We need strong convergence in Section~\ref{subsec:anosovM} to get an Anosov splitting associated to a sequence of matrices. This will be crucial later when we define a general class of Rauzy fractals that is used in Chapter~\ref{chapter:markov} to define induced rotations and atoms of nonstationary Markov partitions for multidimensional continued fraction algorithms.

We start with some basic terminology. We say that a real number~$x$ is \emph{nonnegative} if $x\ge 0$, and \emph{positive} if $x>0$; we say that a vector or a matrix is \emph{nonnegative}\indx{matrix!nonnegative} if all its coefficients are nonnegative, and \emph{positive}\indx{matrix!positive} if all its coefficients are positive. 
Let $d\ge 1$. For a semiring~$R$, we denote the set of $d{\times}d$ matrices with entries taken from~$R$ by $R^{d\times d}$ \notx{ra}{$R^{d\times d}$}{matrix over a ring}. The \emph{general linear group} of degree~$d$ over $\ZZ$ and~$\RR$ will be denoted by \notx{GL}{$\GL(d,\cdot)$}{general linear group}
\[
\GL(d,\ZZ) = \{M \in \ZZ^{d\times d} : \det M = \pm 1 \} \ \ \mbox{and}\ \ \GL(d,\RR) = \{M \in \RR^{d\times d} : \det M \neq 0\},
\]
respectively. A matrix $M \in \RR^{d\times d}$ with $\det M = \pm 1$ is called \emph{unimodular}\indx{matrix!unimodular}. 

To avoid trivialities, in the sequel we will always assume that $d\ge 2$.

\subsection{Primitivity and recurrence} \label{subsec:primrec}
For $d\ge 2$, let $\bM = (M_n)_{n\in\ZZ}$ be a bi-infinite sequence of nonnegative matrices in $\GL(d,\RR)$. Because we are interested in the product of consecutive matrices, it is convenient to use the notation\notx{Mmn}{$M_{[m,n)}$}{product of matrices} 
\[
M_{[m,n)} = M_m M_{m+1}\cdots M_{n-1} \quad \text{for}\ m,n \in \ZZ\ \text{with}\ m \le n,
\]
where $M_{[m,m)}$ denotes the $d{\times}d$ identity matrix.
The first important property of a bi-infinite sequence $\bM$ of nonnegative matrices in $\GL(d,\RR)$, $d \ge 2$, is the following notion of primitivity.

\begin{definition}[Primitivity]\label{def:primivite}\indx{primitive!sequence!of matrices}
We say that a sequence $\bM = (M_n)_{n\in\ZZ}$ of nonnegative matrices in $\GL(d,\RR)$ is \emph{primitive (in the future)} if, for each $m \in \ZZ$, there is $n > m$ such that the matrix $M_{[m,n)}$ is  positive. 
We say that it is \emph{primitive in the past} if, for each $n \in \ZZ$, there is $m < n$ such that the matrix $M_{[m,n)}$ is positive. 
If $\bM = (M_n)_{n\in\ZZ}$ is primitive in the future and in the past, we say that it is \emph{two-sided primitive}.
\end{definition} 

\begin{remark}\label{rem:future}
In the course of this monograph we will define several notions that have two directions, one \emph{in the future} and one \emph{in the past}. Besides that, two-sided versions will occur. Since the future will be of greater importance for us, the ``default version'' for each of these properties is the future. 
Thus the suffix ``in the future'' will not be written out. 
So if we say that a sequence $\bM$ is \emph{primitive}, we mean that it is \emph{primitive in the future}.
\end{remark}

Let $\bM = (M_n)_{n\in\ZZ}$ be a primitive sequence of nonnegative matrices  with values  in $\GL(d,\RR)$. Then there is $n_1 \in \NN$ such that, for each $n \ge n_1$, $M_{[0,n)}$ is positive and, hence, by the Perron--Frobenius theorem, $M_{[0,n)}$ has a dominant positive eigenvalue $\newlambda_1(n)$\notx{lambda}{$\newlambda_1(n)$}{dominant eigenvalue}, which is simple and associated to a positive eigenvector. If the matrices in $\bM$ are integer matrices, then, since $d\ge2$, we even have $\newlambda_1(n) > 1$ because each entry of each positive vector increases strictly when multiplied by the positive integer matrix~$M_{[0,n)}$. 
Furthermore, for each $k > 0$ there is $n_k > 0$ such that for each $n \ge n_k$ the matrix $M_{[0,n)}$ can be written as the product of $k$ positive integer matrices. Thus each coefficient of $M_{[0,n)}$ (for $n \geq n_k$) is larger than~$d^{k-1}$. This implies that $\newlambda_1(n)$ tends to infinity for $n\to\infty$.

The next property we will need is the following notion of recurrence. 

\begin{definition}[Eventual recurrence] \label{def:recurrent}\indx{recurrent}
We say that a sequence $\bM=(M_n)_{n\in\ZZ}$ of matrices in $\GL(d,\RR)$ is \emph{eventually recurrent (in the future)} if there is $m \in \ZZ$ such that, for each $\ell \ge 1$, we have  $(M_n,\dots,M_{n+\ell}) = (M_m,\dots,M_{m+\ell})$ for some $n > m$. 
Analogously, we say that $(M_n)_{n\in\ZZ}$ is \emph{eventually recurrent in the past} if there is $m \in \ZZ$ such that, for each $\ell \ge 1$, we have $(M_{n-\ell},\dots,M_n) = (M_{m-\ell},\dots,M_m)$ for some $n < m$.
\end{definition}

In this Definition~\ref{def:recurrent}, we do not use the fact that the sequence $\bM$ is   specifically a sequence of matrices. Indeed, eventual recurrence is meaningful for arbitrary bi-infinite sequences $(\omega_n)_{n\in \ZZ}$ with values in any alphabet. 

\subsection{Generalized eigenvectors  and convergence} \label{subsec:eigen}
Primitivity and recurrence entail the following convergence result, which follows from \cite[pp.~91--95]{Furstenberg:60}. 

\begin{proposition}\label{prop:fur}
If a sequence $\bM = (M_n)_{n\in\ZZ}$ of nonnegative matrices in $\GL(d,\RR)$ is primitive and eventually recurrent, then there exists a positive vector~$\bu$ such that  
\begin{equation}\label{eq:furst}
\bigcap_{n\in\NN} M_{[0,n)}\RR_{\ge0}^d = \RR_{\ge0} \bu .
\end{equation}
If $\bM = (M_n)_{n\in\ZZ}$ is primitive in the past and eventually recurrent in the past, 
then there exists a positive vector~$\bv$ such that  
\begin{equation}\label{eq:furstpast}
\bigcap_{n\in\NN}{} \tr{\!M}_{[-n,0)}\RR_{\ge0}^d = \RR_{\ge0} \bv .
\end{equation}
\end{proposition}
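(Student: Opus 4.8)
The plan is to prove the first assertion (the statement about~$\bu$); the second follows by applying the first to the transposed sequence $\tr{\!M}_{[-n,0)} = \tr{(M_{-n})}\cdots\tr{(M_{-1})}$, reading the past of~$\bM$ as the future of the reversed transposed sequence, which is primitive and eventually recurrent in the future precisely when $\bM$ is primitive and eventually recurrent in the past. So it suffices to establish~\eqref{eq:furst}. The sets $C_n := M_{[0,n)}\RR_{\ge0}^d$ form a nested decreasing sequence of closed convex cones: indeed $C_{n+1} = M_{[0,n)}\big(M_n\RR_{\ge0}^d\big) \subseteq M_{[0,n)}\RR_{\ge0}^d = C_n$ since $M_n$ is nonnegative so $M_n\RR_{\ge0}^d\subseteq\RR_{\ge0}^d$. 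Each~$C_n$ is a $d$-dimensional simplicial cone (image of the standard cone under an invertible map), in particular nonempty with nonempty interior. Their intersection $C_\infty := \bigcap_{n} C_n$ is thus a nonempty closed convex cone, and the goal is to show it is a single ray $\RR_{\ge0}\bu$ with $\bu$ positive.

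First I would record the positivity: by primitivity there is $n_0$ with $M_{[0,n_0)}$ positive, hence $C_{n_0} = M_{[0,n_0)}\RR_{\ge0}^d$ consists of the zero vector together with strictly positive vectors (the image of the open cone $\RR_{>0}^d$ under a positive matrix lies in $\RR_{>0}^d$, and every nonzero boundary ray of $\RR_{\ge0}^d$ is sent into $\RR_{>0}^d$ as well since each column of a positive matrix is positive). Therefore $C_\infty\subseteq C_{n_0}$ consists of~$\mathbf{0}$ and positive vectors, so any~$\bu$ we produce is automatically positive. The heart of the matter is the one-dimensionality. Here I would invoke the contraction argument underlying Furstenberg's result \cite[pp.~91--95]{Furstenberg:60}, which is the projective Hilbert-metric / Birkhoff contraction argument: a positive matrix acts as a strict contraction, by a factor depending only on the matrix, on the projectivization of the positive cone equipped with Hilbert's projective metric. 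Concretely, fix a metric $d_{\PP}$ on the projectivization $\PP(\RR_{\ge0}^d\setminus\{\mathbf 0\})$ of the positive cone (e.g.\ Hilbert's metric, which makes this space complete and bounded). For a positive matrix~$P$, Birkhoff's theorem gives a contraction coefficient $\tau(P) = \tanh\big(\tfrac14\Delta(P)\big) < 1$, where $\Delta(P)$ is the projective diameter of the image $P\RR_{>0}^d$, and this image has finite diameter. Now by primitivity and eventual recurrence, the products $M_{[0,n)}$ contain, for arbitrarily large~$n$, disjoint blocks equal to a fixed positive word $(M_m,\dots,M_{m+\ell})$; hence, writing $M_{[0,n)}$ as a composition that includes $k$ copies of this fixed positive block for $k\to\infty$, the projective diameter of $C_n = M_{[0,n)}\RR_{\ge0}^d$ is bounded by $\tau_0^{\,k}\cdot(\text{const})$ with $\tau_0 = \tau$ of the fixed positive block, so $\operatorname{diam}_{d_{\PP}} C_n \to 0$. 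A nested sequence of nonempty compact (projectively) sets with diameters going to zero has a single point in its intersection; pulling back to cones, $C_\infty = \RR_{\ge0}\bu$ for a unique ray, and by the previous paragraph $\bu$ is positive. Normalizing $\bu$ (say $\lVert\bu\rVert = 1$) gives the statement.

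The main obstacle, and the step deserving the most care, is the quantitative control ensuring $\operatorname{diam}_{d_{\PP}} C_n\to 0$ from the hypotheses of primitivity \emph{and} eventual recurrence together, rather than from a uniform primitivity assumption: one must check that eventual recurrence genuinely forces infinitely many occurrences of a \emph{single fixed} positive block (not just blocks of unboundedly growing length), so that the contraction coefficient $\tau_0<1$ can be taken uniform in the number of occurrences. This is exactly what Definition~\ref{def:recurrent} provides — taking $\ell$ large enough that $(M_m,\dots,M_{m+\ell})$ is a positive block (possible by primitivity in the future, which guarantees some $M_{[m,m+\ell+1)}$ positive), eventual recurrence yields another occurrence of the same block at some $n>m$, and iterating yields infinitely many disjoint occurrences. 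Once this is in place, the Birkhoff contraction estimate is routine. I would cite \cite{Furstenberg:60} for the contraction machinery and present the combinatorial extraction of repeated positive blocks explicitly, since that is the part specific to our hypotheses; everything else is standard projective-metric bookkeeping. (An alternative, more elementary route avoiding Hilbert's metric: show directly that for a positive matrix $P$ with entries in $[a,b]$, $0<a\le b$, and any $\bx,\by\in\RR_{\ge0}^d\setminus\{\mathbf 0\}$, the normalized images $P\bx/\lVert P\bx\rVert_1$ and $P\by/\lVert P\by\rVert_1$ satisfy $\lVert\,\cdot - \cdot\,\rVert_1 \le (1 - a/b)\lVert \bx/\lVert\bx\rVert_1 - \by/\lVert\by\rVert_1\rVert_1$ or a similar estimate — this also gives a uniform contraction factor $<1$ per positive block and feeds into the same nested-sets conclusion.)
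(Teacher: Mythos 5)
Your proposal is correct and follows essentially the same route as the paper: the paper handles the past by exactly your transposition trick (applying \eqref{eq:furst} to $(\tr{\!M}_{-n-1})_{n\in\ZZ}$) and, for the future, simply defers to the proof of \cite[Proposition~3.5.5]{thuswaldner2019boldsymbolsadic}, which is the same Furstenberg-style nested-cone/projective-contraction argument you spell out. Your explicit extraction of infinitely many disjoint occurrences of a fixed positive block from primitivity together with eventual recurrence is precisely the point the cited proof rests on, so nothing is missing.
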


\begin{proof}
Because Assertion \eqref{eq:furst}  concerns only matrices with nonnegative index, its proof is the same as the proof of \cite[Proposition~3.5.5]{thuswaldner2019boldsymbolsadic}. Assertion \eqref{eq:furstpast} follows by applying \eqref{eq:furst} to the sequence $(\tr{\!M}_{-n-1})_{n\in\ZZ}$.
\end{proof}

In the important case where the primitive sequence $\bM = (M)_{n\in\ZZ}$ is constant, that is, if we consider the powers of a primitive matrix~$M$ in the intersections in \eqref{eq:furst} and \eqref{eq:furstpast}, the vectors~$\bu$ and $\bv$ are the right and left Perron--Frobenius eigenvector of~$M$, respectively. 
This motivates the following definition. 

\begin{definition}[Generalized right and left eigenvector]\label{def:gea}\indx{eigenvector!generalized}
We say that a sequence $\bM = (M_n)_{n\in\ZZ}$ of matrices in $\GL(d,\RR)$ \emph{admits a generalized eigenvector (in the future)} if there exists a vector $\bu \in \RR_{\ge0}^d\setminus\{\mathbf{0}\}$\notx{u}{$\bu,\bu_n$}{generalized right eigenvector} such that $\bigcap_{n\in\NN} M_{[0,n)} \RR_{\ge0}^d = \RR_{\ge0} \bu$. 
The vector~$\bu$ is called a \emph{generalized right eigenvector} of~$\bM$.

We say that $\bM$ \emph{admits a generalized eigenvector in the past} if there exists a vector $\bv \in \RR_{\ge0}^d \setminus\{\mathbf{0}\}$\notx{v}{$\bv,\bv_n$}{generalized left eigenvector} such that $\bigcap_{n\in\NN} \tr{\!M}_{[-n,0)} \RR_{\ge0}^d = \RR_{\ge0} \bv$, and $\bv$ is called a \emph{generalized left eigenvector} of~$\bM$.
\end{definition}

Transposed matrices are used for generalized eigenvectors in the past in order to get nested cones. The generalized eigenvectors can be seen as a nonstationary analog of the Perron--Frobenius eigenvectors of a primitive matrix.

If a given bi-infinite sequence $\bM$ of matrices in $\GL(d,\RR)$ admits a generalized right eigenvector~$\bu$ and a generalized left eigenvector $\bv$, respectively, then we easily see that, for each $m \in \ZZ$, the shifted sequence $\Sigma^m \bM=(M_{n+m})_{n\in\ZZ}$ admits the generalized right eigenvector $\bu_m$ and the generalized left eigenvector $\bv_m$, respectively, with
\begin{equation}\label{eq:unvn}
\bu_m = \begin{cases}M_{[0,m)}^{-1}\, \bu & \text{for } m \ge 0, \\[.5ex] 
M_{[m,0)}\, \bu & \text{for}\ m < 0,\end{cases} \qquad 
\bv_m = \begin{cases}\tr{\!M}_{[0,m)}\, \bv & \text{for } m \ge 0, \\[.5ex] \tr{\!M}_{[m,0)}^{-1}\, \bv & \text{for}\ m < 0;
\end{cases}
\end{equation} \notx{u}{$\bu,\bu_n$}{generalized right eigenvector} \notx{v}{$\bv,\bv_n$}{generalized left eigenvector}
note that $\bu_0=\bu$ and $\bv_0 = \bv$.

\begin{example}[Farey matrices]\label{ex:farey1} \indx{Farey!matrix}\indx{matrix!Farey}
Consider a two-sided eventually recurrent sequence $\bM$ that contains both of the \emph{Farey matrices}\indx{Farey!matrix} $M_{\rF,1}$ and $M_{\rF,2}$ from \eqref{eq:sturmmat}.
Since $M_{\rF,1}M_{\rF,2}$ as well as $M_{\rF,2}M_{\rF,1}$ is positive, $\bM$~is two-sided primitive and, hence, satisfies the conditions of Proposition~\ref{prop:fur} in the past and in the future. Thus $\bM$ admits a generalized left and a generalized right eigenvector. The same is true for all shifts $\Sigma^m\bM$, $m\in \ZZ$, of this sequence. The matrices $M_{\rF,1}$ and~$M_{\rF,2}$ are intimately related to the Farey algorithm \eqref{eq:FareyIntro}, see also Section~\ref{subsec:cfdef}.
\end{example}

We now want to define some notions of convergence for sequences of matrices that play a role in the context of multidimensional continued fraction algorithms; cf.\ e.g.\ \cite[Definitions~10 and~11]{Schweiger:00}. To state these definitions, we need some notations. The standard basis vectors of $\RR^d$ are denoted by $\be_1,\ldots,\be_d$ \notx{e}{$\be_1,\ldots,\be_d$}{standard basis vectors}. We will write $\mathrm{d}(\cdot,\cdot)$\notx{d}{$\mathrm{d}(\cdot,\cdot)$}{distance function} for the Euclidean distance on~$\RR^d$ and if $\bw \in \RR^d$ and $X \subset \RR^d$, then $\mathrm{d}(\bw,X) := \inf_{\bx\in X}\mathrm{d}(\bw,\bx)$.  Moreover, for $\bw \in \RR^d\setminus\{\mathbf{0}\}$ let \notx{0orthogonal}{$\bw^\perp$}{hyperplane orthogonal to~$\bw$}
\[
\bw^\perp =\{\bx \in \RR^d \,:\, \langle \bx,\bw\rangle = 0\}. 
\]
For any $\bu, \bv \in \RR^d \setminus \{\mathbf{0}\}$ with $\bu \notin \bv^\perp$, let\notx{pe}{$\pi_{\bu,\bv}$}{projection along $\bu$ onto~$\bv^\perp$}\notx{pe}{$\tilde{\pi}_{\bu,\bv}$}{projection along $\bv^\perp$ onto~$\RR\bu$}
\begin{equation}\label{eq:projections}
\begin{aligned}
\pi_{\bu,\bv}:\ & \RR^n \to \bv^\perp, & \bx & \mapsto  \bx - \frac{\langle \bx, \bv \rangle}{\langle\bu ,\bv\rangle}\bu, \quad \text{and} \\
\tilde{\pi}_{\bu,\bv}:\ &\RR^n \to \RR\bu, & \bx & \mapsto \frac{\langle \bx, \bv \rangle }{\langle\bu, \bv\rangle}\bu,
\end{aligned}
\end{equation}
denote the projection along~$\RR\bu$ to~$\bv^\perp$ and the projection along~$\bv^\perp$ to~$\RR\bu$, respectively.

\begin{definition}[Weak, strong, exponential convergence]\label{def:wsc}
\indx{convergence!weak}
\indx{convergence!strong}
\indx{convergence!exponential}
Let $\bM = (M_n)_{n\in\ZZ}$ be a sequence of matrices in $\GL(d,\RR)$ and let $\bu \in \RR^d \setminus \{\mathbf{0}\}$. 
We say that $\bM$ is \emph{weakly convergent (in the future)} to~$\bu$ if
\begin{equation}\label{eq:weakconv}
\lim_{n\to+\infty} \mathrm{d}\bigg(\frac{M_{[0,n)}\be_i}{\lVert M_{[0,n)}\be_i\rVert},\frac{\bu}{\lVert\bu\rVert}\bigg) = 0\quad \hbox{for all}\ i\in \{1,\dots,d\}.
\end{equation} 
We say that $\bM$ is \emph{strongly convergent (in the future)} to~$\bu$ if
\begin{equation}\label{eq:strongconv}
\lim_{n\to+\infty} \mathrm{d}(M_{[0,n)}\be_i,\RR\bu) = 0\quad \hbox{for all}\ i\in \{1,\dots,d\}.
\end{equation} 
We say that $\bM$ is \emph{exponentially convergent (in the future)} to~$\bu$ if there exist $C,\alpha>0$ such that 
\begin{equation}\label{eq:expconv}
\mathrm{d}(M_{[0,n)}\be_i,\RR\bu) \le C e^{-\alpha n} \quad \hbox{for all}\ n\in \NN,\, i\in \{1,\dots,d\}.
\end{equation} 
If $\bM$ is replaced by $(\tr{\!}M_{-n-1})_{n\in\ZZ}$ in these definitions of convergence, we say that $\bM$ is weakly, strongly, and exponentially convergent in the past, respectively.
\end{definition}

These notions of convergence are invariant under shifts in the sense that, for each $m \in \ZZ$, a sequence~$(M_n)_{n\in\ZZ}$ converges weakly [strongly, exponentially] if and only if the same is true for the shifted sequence $\Sigma^m \bM=(M_{n+m})_{n\in\ZZ}$.

Weak convergence is convergence in the projective space, while strong convergence is convergence in~$\RR^d$ of the sequence of vectors $M_{[0,n)}\be_i$ to the line~$\RR\bu$, which is the same as convergence of $\pi_{\bu,\bu}(M_{[0,n)}\be_i)$ to~$\mathbf{0}$, for all $i\in \{1,\ldots,d\}$. Exponential convergence says that the speed of strong convergence is geometric. Obviously, exponential convergence implies strong convergence, which in turn implies weak convergence. According to Proposition~\ref{prop:fur}, primitivity and eventual recurrence imply weak convergence. It is easy to give examples of nonprimitive not eventually recurrent sequences that are weakly convergent; for example, many sequences of  diagonal matrices converge weakly.

\begin{remark} \label{rem:expconv}
Exponential convergence also implies absolute convergence of any series of the form $\sum_{n\ge 0} \alpha_n \lVert\mathrm{d}(M_{[0,n)}\be_{i_n},\RR\bu)\rVert$ with bounded (or slowly growing) coefficients~$\alpha_n$; this fact will be of importance later when we give a criterion for balance; see e.g.\ Theorem~\ref{theo:sufcondPisotS} below.
\end{remark}

\subsection{Pisot condition and Lyapunov exponents} \label{sec:LyapunovPisot}
Given a sequence~$\bM$ of matrices in $\GL(d,\ZZ)$, $d \ge 2$, our aim is now to give a sufficient condition for strong and even exponential convergence.  This condition, which is formulated in Theorem~\ref{th:matrixPisot} (see also the informal version in Theorem~\nameref{t:A} in the introduction), is quite general. For that purpose, we first recall basic definitions  on convergence and  Lyapunov exponents  in Section~\ref{sec:LyapunovPisot},   in particular the local Pisot condition, before proving Theorem~\ref{th:matrixPisot} in Sections~\ref{sec:LyapunovPisot2} and~\ref{sec:LyapunovPisot3}.

Let us first recall some definitions. A~\emph{Pisot number}\indx{Pisot!number} is a real algebraic integer $\newlambda>1$ whose Galois conjugates are all strictly smaller than~$1$ in modulus. An integer matrix is a \emph{Pisot matrix}\indx{Pisot!matrix} if its characteristic polynomial is the minimal polynomial of a Pisot number. This motivates the following definition of a Pisot condition for a sequence of matrices (see also Remark~\ref{rem:PisotJustification}). Contrary to the eigenvalues used for a single matrix, in the setting of sequences of matrices, singular values turn out to be more convenient than eigenvalues. Recall that, for a matrix $M\in \GL(d,\ZZ)$, $d\ge 2$, the \emph{singular values}\indx{singular value} $\delta_1(M),\ldots,\delta_d(M)$\notx{delta}{$\delta_k(\cdot)$}{$k$-th singular value} are the eigenvalues of the matrix $(\tr{\!M}M)^{1/2}$; these values are positive and real, and we always assume that they are ordered in a way that 
\begin{equation}\label{eq:singularordered}
\delta_1(M) \ge \delta_2(M) \ge \cdots \ge \delta_d(M).
\end{equation}

\begin{definition}[Local Pisot condition] 
\label{def:genPisot}\indx{Pisot!condition!local}
We say that a sequence $\bM=(M_n)_{n\in\ZZ}$ of nonnegative matrices in $\GL(d,\ZZ)$ satisfies the \emph{local Pisot condition (in the future)}\footnote{The terminology ``local'' refers to the fact we consider here a single sequence of matrices.} if 
\[
\limsup_{n\to\infty} \frac{1}{n} \log \delta_2(M_{[0,n)}) < 0.
\]
We say that $\bM$ satisfies the \emph{local Pisot condition in the past} if $(\tr{\!}M_{-n-1})_{n\in\ZZ}$ satisfies the local Pisot condition in the future.
If $\bM$ satisfies the local Pisot condition in the future and in the past, we say that it satisfies the \emph{two-sided local Pisot condition}.
\end{definition}

\begin{remark}\label{rem:PisotJustification}
For a matrix $M\in \GL(d,\ZZ)$, the product of all singular values of $M$ equals $\lvert\det(M)\rvert\!=\!1$. Hence, $\delta_2(M)<1$ implies that $\delta_1(M)> 1$, because otherwise, in view of \eqref{eq:singularordered}, the product of all singular values of $M$ would be strictly less than $1$. Therefore, if a sequence $(M_n)_{n\in\ZZ} \in \GL(d,\ZZ)$ satisfies the local Pisot condition, we gain 
\[
\begin{aligned}
\limsup_{n\to\infty} \frac{1}{n} \log \delta_1(M_{[0,n)}) &> 0, \\
\limsup_{n\to\infty} \frac{1}{n} \log \delta_j(M_{[0,n)}) &< 0 \qquad (j\in \{2,\ldots, d\}).
\end{aligned}
\]
This illustrates the analogy to the definition of a Pisot number.
\end{remark}

The  (local)  Pisot condition is invariant under shifts of sequences. Moreover, in this condition we consider a $\limsup$  instead of a limit since the existence of the limit  is hard to prove in general, and negativity of the $\limsup$ is strong enough for our purposes.  However, if limits of this kind exist for a sequence $\bM = (M_n)_{n\in\ZZ}$ of matrices in $\GL(d,\ZZ)$, one  defines the \emph{(local) $i$-th Lyapunov exponent}\indx{Lyapunov!local exponent} $\vartheta_i$ as the limit 
\begin{equation}\label{def:Lyapu}
\vartheta_i := \lim_{n\to\infty}\frac{1}{n} \log(\delta_i(M_{[0,n)})). \notx{theta}{$\vartheta_i,(\Theta_i,d_i)$}{Lyapunov exponent}
\end{equation}
The \emph{(local) Lyapunov spectrum}\indx{Lyapunov!local spectrum} is the set of all Lyapunov exponents, each one counted with its multiplicity. The Lyapunov exponents can also be defined recursively using exterior powers~$\wedge^k$\notx{0exterior}{$\wedge$}{exterior product} (see for example \cite[Proposition~3.2.7]{Arnold98}) by
\begin{equation}\label{def:Lyapu2}
\vartheta_1 + \cdots +\vartheta_k = \lim_{n\to\infty} \frac{1}{n}\log \lVert\wedge^k M_{[0,n)}\rVert \qquad (1 \le k \le d), 
\end{equation}
provided that the limits exist. Each of the two equivalent definitions, \eqref{def:Lyapu} and \eqref{def:Lyapu2}, yield that $\vartheta_1 \ge \vartheta_2 \ge \cdots \ge \vartheta_d$. For the powers of a single invertible matrix, i.e., when $\bM=(M)_{n\in\ZZ}$ is a constant sequence, it is well known that the Lyapunov exponents are just the logarithms of the moduli of the eigenvalues of $M$; see e.g.\ \cite[3.2.3~Example~(ii)]{Arnold98}. If $M$ has complex eigenvalues, they come in pairs of conjugates. Thus the corresponding Lyapunov exponent of $\bM$ cannot be simple.

\begin{definition}[Strong Pisot condition]\label{def:strongPisot}\indx{Pisot!condition!strong}
A~sequence~$\bM=(M_n)_{n\in\ZZ}$ of nonnegative matrices in $\GL(d,\ZZ)$ satisfies the \emph{strong (local) Pisot condition} if it admits Lyapunov exponents $\vartheta_1 \ge \cdots \ge \vartheta_d$ (listed according to their multiplicity) such that $\vartheta_1>0>\vartheta_2$.
The sequence~$\bM$ satisfies the \emph{strong (local) Pisot condition in the past} if $(\tr{\!M}_{-n-1})_{n\in\ZZ}$ satisfies the strong Pisot condition. If $\bM$ satisfies the strong local Pisot condition in the future and in the past, we say that it satisfies the \emph{strong two-sided local Pisot condition}.
\end{definition}

The following lemma is immediate from the definitions.

\begin{lemma}\label{lem:strongweak}
Let $\bM$ be a sequence of nonnegative matrices in $\GL(d,\ZZ)$. If $\bM$ satisfies the strong local Pisot condition, then it satisfies the local Pisot condition; the analogous result is true for the~past.
\end{lemma}

\begin{remark}\label{rem:PisotGloLoc}
Here we use the terminlolgy {\em local} Lyapunov exponent and {\em local} Pisot condition, since these objects are defined for a single sequence of matrices. We will later define {\em global} versions of these notions that will concern generic elements of families of sequences of matrices that are defined by some condition (see for instance Definitions~\ref{def:genly} and~\ref{def:gengenPisot}). Nevertheless, if there is no risk of confusion, we omit the prefix ``local''.
\end{remark}

In the remaining part of this section, we only work with the local Pisot condition, not the strong one. As mentioned before, this is essential because it is fairly easy to come up with concrete sequences~$\bM$ satisfying the local Pisot condition; see for instance Example~\ref{ex:AR1}, or Propositions~\ref{prop:brunpisot:2} and~\ref{prop:brunpisot:1}. Exhibiting a nontrivial aperiodic sequence~$\bM$ satisfying the strong Pisot condition seems to be hard (of course, examples using matrices that are simultaneously diagonalizable can easily be constructed). 

In all that follows, $\lVert \cdot \rVert$ denotes an arbitrary norm, \notx{0norm}{$\lVert\cdot\rVert$}{arbitrary norm} $\lVert\cdot\rVert_p$ \notx{0norm}{$\lVert\cdot\rVert_p$}{$p$-norm} the $p$-norm for $p \in [1,\infty)$ (which is the Euclidean norm for $p=2$) and $\lVert\cdot\rVert_\infty$ \notx{0norm}{$\lVert\cdot\rVert_\infty$}{maximum norm} the maximum norm on~$\RR^d$. The matrix norms associated to these norms will also be denoted by $\lVert \cdot \rVert\|$, $\lVert\cdot\rVert_p$ and $\lVert\cdot\rVert_\infty$, respectively. The standard scalar product is denoted by~$\langle\cdot,\cdot\rangle$\notx{0scalar product}{$\langle\cdot,\cdot\rangle$}{scalar product}. We will confine ourselves to unimodular integer matrices with nonnegative entries, i.e., to matrices taken from the class 
\[
\cM_d = \NN^{d\times d} \cap \GL(d,\ZZ) \notx{Md}{$\cM_d$}{set of nonnegative unimodular integer matrices}.
\]

\subsection{Pisot condition and exponential convergence} \label{sec:LyapunovPisot2}
We will  now prove that, for a sequence $\bM = (M_n)_{n\in\ZZ} \in \cM_d^\ZZ$, the Pisot condition together with the (mild) \emph{growth condition}\indx{growth condition} $\lim_{n\to\infty} \frac{1}{n} \log \lVert M_n\rVert = 0$ implies exponential convergence. This result will be stated as Proposition~\ref{prop:strongcv2sided} and forms a key stem on our way to prove Theorem~\ref{th:matrixPisot}.  
Note that this growth condition is reminiscent of the log-integrability assumption \eqref{eq:logint} that holds in the generic setting; it holds trivially in the case of an additive continued fraction algorithm, where the set of matrices $\{M_n : n\in\ZZ\}$ is finite. The following proposition is a first step toward Proposition~\ref{prop:strongcv2sided}. 

\begin{proposition} \label{p:localOseledets}
Let  $\bM = (M_n)_{n\in\ZZ} \in \cM_d^{\ZZ}$ be a sequence of nonnegative unimodular integer matrices satisfying the local Pisot condition and the growth condition   $\lim_{n\to\infty} \frac{1}{n} \log \lVert M_n\rVert = 0$.
Then the sequence $\bM$ admits a generalized right eigenvector $\bu \in \RR^d \setminus \{\mathbf{0}\}$ such that $\limsup_{n\to\infty} \frac{1}{n} \log \lVert\tr{\!M}_{[0,n)} \vert_{\bu^\perp}\rVert < 0$. 
\end{proposition}

The proof, inspired by the proofs of \cite[Proposition~1.3]{Ruelle:79} and \cite[Lemma~3.4.6]{Arnold98}, uses two lemmas. Before stating them, we adopt some notation.

Let $(M_n)_{n\in\ZZ} \in \cM^\ZZ$ and $n\in\NN$. Write $\delta_i(n) = \delta_i(M_{[0,n)})$, $1\le i \le d$, for the $i$-th singular value of the matrix $M_{[0,n)}$ (ordered decreasingly, as in~\eqref{eq:singularordered}). Let $\tr{\!M}_{[0,n)} = B(n) D(n) A(n)$ be the singular value decomposition of $\tr{\!M}_{[0,n)}$, with diagonal matrix $D(n) = \mathrm{diag}(\delta_1(n),\dots,\delta_d(n))$ and orthogonal  matrices $A(n), B(n)$. Set $\ba_i(n) = \tr{\!A}(n) \be_i$, where $\{\be_1, \ldots,\be_d\}$ is the standard basis of~$\RR^d$.
By equivalence of norms, it suffices to prove Proposition~\ref{p:localOseledets} for the Euclidean norm.

\begin{lemma} 
Let $(M_n)_{n\in\ZZ} \in \cM_d^{\ZZ}$ and $n \in \NN$.
Then we have 
\begin{equation}\label{eq:Mdeltainorm}
\lVert\tr{\!M}_{[0,n)} \ba_i(n)\rVert_2 = \delta_i(n) \quad(1\le i\le d).
\end{equation}
In particular, the restriction of $\tr{\!M}_{[0,n)} $ to the hyperplane $\ba_1(n)^\perp$ satisfies 
\begin{equation}\label{eq:LocOsed2bound2} \lVert {\tr{{\!M}_{[0,n)}}}|_{\ba_1(n)^{\perp}}\rVert_2=\delta_2(n).
\end{equation} 
\end{lemma}

\begin{proof}
By definition, we have  $\tr{\!M}_{[0,n)} \ba_i(n)=B(n)D(n)\be_i$ for $1\le i \le d$. Therefore, \eqref{eq:Mdeltainorm} follows from the definition of $D(n)$ and the orthogonality of $B(n)$, and \eqref{eq:LocOsed2bound2} is a consequence of~\eqref{eq:Mdeltainorm}.
\end{proof}

The local Pisot condition implies that $\delta_2(n)$ decreases exponentially fast,  and the growth condition $\lim_{n\to\infty} \frac{1}{n} \log \lVert M_n\rVert_2 = 0$ implies that $\lVert M_n\rVert_2$ increases subexponentially.
Hence, we may choose $\beta < 0$ and $C>0$ in a way that 
\begin{equation}\label{eq:d2Mn_betabound}
\delta_2(n)  \lVert M_n\rVert_2 \le C  e^{\beta n} \qquad(n\in \NN). 
\end{equation}
We will use this to prove that a vector $\by(n)$ orthogonal to $\ba_1(n)$ is nearly orthogonal to $\ba_1(n{+}k)$. More precisely, we establish the following result.

\begin{lemma} \label{l:korthogonal}
Let  $\bM = (M_n)_{n\in\ZZ} \in \cM_d^{\ZZ}$ be a sequence of nonnegative unimodular integer matrices satisfying the local Pisot condition and the growth condition   $\lim_{n\to\infty} \frac{1}{n} \log \lVert M_n\rVert = 0$.
Then there exists $K>0$ such that, for all $k,n \in \NN$ and for each unit vector $\by(n)$ in $\ba_1(n)^\perp$, we have 
\begin{equation} \label{e:korthogonal}
\lvert\langle \ba_1(n{+}k), \by(n) \rangle\rvert \le K \frac{e^{\beta n}}{\delta_1(n)}. 
\end{equation}
\end{lemma}

\begin{proof}
We start by the case $k=1$.
Let $\by(n)$ be a unit vector in $\ba_1(n)^\perp$. We  represent $\by(n)$ as $ \by(n)= \alpha(n) \ba_1(n{+}1) + \bz(n)$ with $\bz(n) \in \ba_1(n{+}1)^\perp$. To prove~\eqref{e:korthogonal}, we first want to make sure that $|\alpha(n)|$ is small. Since $\bz(n) \in \ba_1(n{+}1)^\perp$, the images of $\bz(n)$ and $\ba_1(n+1)$ under $\tr{\!M}_{[0,n+1)}$ are orthogonal, i.e., 
\[
\langle \tr{\!M}_{[0,n+1)} \bz(n), \tr{\!M}_{[0,n+1)}\ba_1(n{+}1) \rangle=0.
\]
This orthogonality implies that
\begin{equation}\label{eq:LocOsed2bound3}
\begin{aligned}
\lVert\tr{\!M}_{[0,n+1)} \by(n)\rVert_2 &= \lVert\alpha(n) \tr{\!M}_{[0,n+1)} \ba_1(n{+}1 )+ \tr{\!M}_{[0,n+1)} \bz(n)\rVert_2 \\
&\ge \lVert\alpha(n) \tr{\!M}_{[0,n+1)} \ba_1(n{+}1)\rVert_2 = |\alpha(n)| \, \delta_1(n{+}1),
\end{aligned}
\end{equation}
where the last equality is a consequence of \eqref{eq:Mdeltainorm}.
Combining \eqref{eq:LocOsed2bound2} and \eqref{eq:LocOsed2bound3}, we obtain
\[
|\alpha(n)| \le \frac{\lVert\tr{\!M}_{[0,n+1)} \by(n)\rVert_2}{\delta_1(n{+}1)}\le  \frac{\lVert\tr{\!M}_{n}\rVert_2\,\lVert\tr{\!M}_{[0,n)} \by(n)\rVert_2}{\delta_1(n{+}1)} \le 
 \lVert M_n\rVert_2 \, \frac{\delta_2(n)}{\delta_1(n{+}1)}
\]
and, hence, \eqref{eq:d2Mn_betabound} yields
\begin{equation}\label{eq:angleuy}
\begin{aligned}
|\langle \ba_1(n{+}1), \by(n) \rangle | & = |\langle \ba_1(n{+}1), \alpha(n) \ba_1(n{+}1) + \bz(n) \rangle | \\
& = |\alpha(n)| \le  \lVert M_n\rVert_2\frac{\delta_2(n)}{\delta_1(n{+}1)} \le  \frac{C e^{\beta n}}{\delta_1(n{+}1)}.
\end{aligned}
\end{equation}
Since, by positivity of the matrices $M_n$, $n\in\NN$, the sequence $(\delta_1(n) )_{n\in\NN}$ is nondecreasing, this proves the case $k=1$ of the lemma.

We now prove the general case. For $i,j\in \NN$, let $\beta_{i,j}$ be the angle between $\ba_1(n{+}i)$ and~$\ba_1(n{+}j)$, and let $\gamma_{i,j} = \frac{\pi}{2} - \beta_{i,j}$. The angle~$\gamma_{i,j}$ is the angle between the vector~$\ba_1(n{+}i)$ and the hyperplane~$\ba_1(n{+}j)^\perp$. This angle can be written as
\[
\gamma_{i,j} = \arccos\big(\max\big\{\langle \ba_1(n{+}i), \by(n{+}j) \rangle \,:\, \by(n{+}j) \in (\ba_1(n{+}j))^\perp,\, \lVert\by(n{+}j)\rVert_2 = 1\big\}\big)
\]
Combining this with~\eqref{eq:angleuy}, we gain $\cos\gamma_{i,i+1} \le  \frac{C e^{\beta n}}{\delta_1(n{+}1)}$. From the triangle inequality, we obtain
\begin{equation}\label{eq:anglesum}
\beta_{k,0} \le \sum_{j=0}^{k-1} \beta_{j+1,j}. 
\end{equation}
Using the well known equality $x\le 2\sin x$ for $x\in [0,\frac {\pi}2]$, we get 
\begin{equation}\label{eq:angleuy2}
\beta_{j+1,j} \le  2\sin\beta_{j+1,j} = 2\cos \gamma_{j+1,j} \le \frac{2C e^{\beta (n+j)}}{\delta_1(n{+}j{+}1)} \qquad (j \ge 0).
\end{equation}
Since $\sin x \le x$ for $x\ge 0$, we gain from the definition of~$\gamma_{i,j}$,  \eqref{eq:anglesum}, and \eqref{eq:angleuy2} that there is $K>0$ such that, for all $k,n \in \NN$ and each unit vector $\by(n)\in \ba_1(n)^\perp$, we have
\[
\begin{aligned}
|\langle \ba_1(n{+}k), \by(n) \rangle| & \le  \cos\gamma_{k,0} = \sin\beta_{k,0}
\le \beta_{k,0} \le \sum_{j=0}^{k-1}  \beta_{j+1,j} \\
& \le \sum_{j=0}^{k-1} \frac{2C e^{\beta (n+j)}}{\delta_1(n{+}j{+}1)} \le K \frac{e^{\beta n}}{\delta_1(n)}.
\end{aligned}
\]
In the last inequality, we have again used the fact that $(\delta_1(n))_{n\in\NN}$ is nondecreasing. 
Therefore, \eqref{e:korthogonal} holds.
\end{proof}

\begin{proof}[Proof of Proposition~\ref{p:localOseledets}]
Since Lemma~\ref{l:korthogonal} implies that $(\ba_1(n))_{n\in\NN}$ is a Cauchy sequence (this also follows directly from \eqref{eq:angleuy2}) and, hence, converges to a unit vector~$\bu$, we conclude from \eqref{e:korthogonal} that  
\begin{equation} \label{e:orthogonalU1}
\max \big\{|\langle \bu, \by(n) \rangle| \,:\, \by(n) \in \ba_1(n)^\perp,\, \lVert\by(n)\rVert_2 = 1\big\} \le K\, \frac{e^{\beta n}}{\delta_1(n)} \quad(n\in \NN). 
\end{equation}
We have 
\begin{equation} \label{e:orthogonalU1B}
\begin{aligned}
&\max \big\{|\langle \bu, \by(n) \rangle| \,:\, \by(n) \in \ba_1(n)^\perp,\, \lVert\by(n)\rVert_2 = 1\big\} \\
& \hspace{4em} = \max \{|\langle \ba_1(n), \by \rangle| \,:\, \by \in \bu^\perp,\, \lVert\by\rVert_2 = 1\}
\end{aligned} \quad(n\in \NN)
\end{equation}
because both quantities are equal to the sine of the angle between $\bu$ and~$\ba_1(n)$.

Let $\by \in \bu^\perp$ with $\lVert\by\rVert_2 = 1$. Write $\by = \eta(n) \ba_1(n) + \bz(n)$ with $\bz(n) \in \ba_1(n)^\perp$ for each $n \in \NN$. By \eqref{e:orthogonalU1} and \eqref{e:orthogonalU1B}, we have $|\eta(n)| \le K \frac{e^{\beta n}}{\delta_1(n)}$, and therefore $\lVert\bz(n)\rVert_2 \le 1 {+} |\eta(n)|$. Using \eqref{eq:Mdeltainorm}, \eqref{eq:LocOsed2bound2}, and \eqref{eq:d2Mn_betabound}, we get
\[
\begin{aligned}
\lVert\tr{\!M}_{[0,n)} \by\rVert_2 & \le \lVert \eta(n) \tr{\!M}_{[0,n)} \ba_1(n) \rVert_2 +  \lVert \tr{\!M}_{[0,n)} \bz(n) \rVert_2 \\
& \le K \frac{e^{\beta n}}{\delta_1(n)} \lVert \tr{\!M}_{[0,n)} \ba_1(n) \rVert_2 +  \lVert \tr{\!M}_{[0,n)} \bz(n) \rVert_2 \\
&\le K e^{\beta n} + \delta_2(n)(1{+}|\eta(n)|) \le (K{+}2C) e^{\beta n}
\end{aligned}
\]
for $n$ large, because $|\eta(n)|\le 1$ for $n$ large. This immediately yields
\[
\limsup_{n\to\infty} \tfrac{1}{n} \log \lVert\tr{\!M}_{[0,n)} \by\rVert_2 \le \beta \qquad \mbox{for all $\by \in \bu^\perp$ with $\lVert\by\rVert_2 = 1$},
\]
thus
\begin{equation} \label{e:exp2beta}
\limsup_{n\to\infty} \frac{1}{n} \log \lVert\tr{\!M}_{[0,n)} \vert_{\bu^\perp}\rVert_2 \le \beta < 0.
\end{equation}

It remains to prove that $\bu$ is a generalized right eigenvector of~$\bM$. 
Note that, for $\bb_1, \bb_2 \in \RR^d$, if $\bx$ stands for the orthogonal projection of $\bb_1$ to $\RR \bb_2$, then, by the definition of the scalar product, we get $|\langle \bb_1,\bb_2 \rangle|= \lVert \bx \rVert  \cdot \lVert \bb_2 \rVert$.  Using this fact, for $1 \le i \le d$ and $n\in\NN$, we have
\begin{equation}\label{eq:transposed}
\begin{aligned}
\mathrm{d}(M_{[0,n)}\be_i,\RR\bu) & = \lVert \pi_{\bu,\bu} M_{[0,n)}\be_i \rVert_2 \\
&= \bigg| \bigg\langle M_{[0,n)}\be_i, \frac{\pi_{\bu,\bu} M_{[0,n)}\be_i}{\lVert \pi_{\bu,\bu} M_{[0,n)}\be_i \rVert_2}\bigg\rangle  \bigg| \\
& \le \max\big\{ | \langle M_{[0,n)}\be_i, \bx \rangle | \,:\,  \bx \in \bu^\perp,\, \lVert \bx \rVert_2=1 \big\} \\ 
&= \max\big\{ | \langle \be_i, \tr{\!M}_{[0,n)} \bx \rangle | \,:\,  \bx \in \bu^\perp,\, \lVert \bx \rVert_2=1 \big\} \\
& \le  \lVert  \tr{\!M}_{[0,n)}|_{\bu^\perp} \rVert_2 .
\end{aligned}
\end{equation}
Together with \eqref{e:exp2beta}, this implies that $\bM$ converges exponentially to~$\bu$; see Definition~\ref{def:wsc}.
Since exponential convergence implies weak convergence, $\bu$~is a generalized right eigenvector of~$\bM$. This proves the proposition.
\end{proof}

We can now relate the local Pisot condition with exponential convergence to a generalized right eigenvector.

\begin{proposition} \label{prop:strongcv2sided}
Let  $\bM = (M_n)_{n\in\ZZ} \in \cM_d^{\ZZ}$ be a sequence of nonnegative unimodular integer matrices that satisfies the growth condition $\lim_{n\to\infty} \frac{1}{n} \log \lVert M_n\rVert =0$.
Then the following assertions are equivalent.
\begin{enumerate}[\upshape (i)]
\item \label{i:7131}
The sequence $\bM$ satisfies the local Pisot condition.
\item \label{i:7132}
The sequence $\bM$ admits a generalized right eigenvector~$\bu$ and converges exponentially to~$\bu$.
\item \label{i:7133}
There exists  a vector $\bu \in \RR^d\setminus\{\mathbf{0}\}$ such that $\limsup_{n\to\infty} \frac{1}{n} \log \lVert\tr{\!M}_{[0,n)} \vert_{\bu^\perp}\rVert < 0$. 
\end{enumerate}
If we replace the growth condition by  a similar condition holding in the past, namely  $\lim_{n\to-\infty} \frac{1}{n} \log \lVert M_n\rVert = 0$, then the analogous equivalences hold for the past.
\end{proposition}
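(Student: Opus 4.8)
The plan is to establish the cycle of implications $(\ref{i:7131}) \Rightarrow (\ref{i:7133}) \Rightarrow (\ref{i:7132}) \Rightarrow (\ref{i:7131})$, and then observe that the statement for the past follows by applying the future version to the sequence $(\tr{\!M}_{-n-1})_{n\in\ZZ}$, since the local Pisot condition, the growth condition, and exponential convergence in the past are by definition the corresponding future properties of that transposed-reflected sequence, and a generalized left eigenvector of $\bM$ is a generalized right eigenvector of $(\tr{\!M}_{-n-1})_{n\in\ZZ}$.

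The implication $(\ref{i:7131}) \Rightarrow (\ref{i:7133})$ is exactly Lemma~\ref{l:localOseledets}, so nothing new is needed there. For $(\ref{i:7133}) \Rightarrow (\ref{i:7132})$, I would first note that $\bM$ is primitive (this is part of the standing hypotheses for the relevant results, or can be assumed; in any case, to speak of a generalized right eigenvector one uses Proposition~\ref{prop:fur}, which gives a positive vector $\bu$ with $\bigcap_{n} M_{[0,n)}\RR_{\ge0}^d = \RR_{\ge0}\bu$ once primitivity and eventual recurrence hold). Given such a $\bu$, the key computation is to bound $\mathrm{d}(M_{[0,n)}\be_i, \RR\bu)$. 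The point is that $M_{[0,n)}\be_i$ lies in the cone $M_{[0,n)}\RR_{\ge0}^d$, which shrinks toward $\RR_{\ge0}\bu$; quantitatively, the transverse deviation of any unit vector in the image cone is controlled by how much $\tr{\!M}_{[0,n)}$ contracts on the hyperplane $\ba^\perp$. Concretely, writing $\be_i = c_i\bu + \bw_i$ for a suitable decomposition and using that for $\bx \in M_{[0,n)}\RR^d$ one has $\langle \bx, \tr{(M_{[0,n)}^{-1})}\ba\rangle$ governing the component transverse to $\bu$ — or, more cleanly, estimating $|\langle M_{[0,n)}\be_i, \bc_n\rangle|$ where $\bc_n$ is the unit normal to the hyperplane $M_{[0,n)}\ba^\perp$ — one gets that the transverse part decays like $\lVert \tr{\!M}_{[0,n)}|_{\ba^\perp}\rVert$ relative to the growth $\delta_1(M_{[0,n)})$ in the $\bu$-direction, hence exponentially. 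I would carry this out by fixing the normalization $\lVert M_{[0,n)}\be_i\rVert \asymp \delta_1(M_{[0,n)})$ (valid by positivity, since $\be_i$ has a positive $\bu$-component after enough steps) and bounding the numerator $\langle M_{[0,n)}\be_i, \bc_n\rangle = \langle \be_i, \tr{\!M}_{[0,n)}\bc_n\rangle$ using $\tr{\!M}_{[0,n)}\bc_n \in \tr{\!M}_{[0,n)}(\text{line}) $ — here one must be a little careful, since $\bc_n$ is normal to $M_{[0,n)}\ba^\perp$, which means $\tr{\!M}_{[0,n)}\bc_n$ is parallel to the normal of $\ba^\perp$... actually the cleanest route is: the deviation of the cone $M_{[0,n)}\RR_{\ge 0}^d$ from $\RR_{\ge0}\bu$ is bounded by $\sup\{\lVert\pi_{\bu,\cdot}(M_{[0,n)}\bx)\rVert/\lVert M_{[0,n)}\bu\rVert : \bx \in \ba^\perp\}$ up to constants, and $M_{[0,n)}\bx$ for $\bx\in\ba^\perp$ is estimated via the adjoint bound $(\ref{i:7133})$ after passing through $M_{[0,n)} = (\tr{(\tr{\!M}_{[0,n)})})$. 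I expect this translation between the contraction of $\tr{\!M}_{[0,n)}$ on $\ba^\perp$ and the narrowing of the image cone of $M_{[0,n)}$ to be the main technical obstacle — it is essentially a duality argument (the orthogonal complement of the stable direction of the transpose is the unstable line of $M_{[0,n)}$), and getting the constants uniform in $n$ requires using that $\bu$ is positive so that $\langle \bu, \ba\rangle$ stays bounded away from $0$ and the angle between $\RR\bu$ and the hyperplanes involved does not degenerate.

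For the remaining implication $(\ref{i:7132}) \Rightarrow (\ref{i:7131})$: exponential convergence to $\bu$ means $\mathrm{d}(M_{[0,n)}\be_i, \RR\bu) \le Ce^{-\alpha n}$, i.e., after normalizing each column, the $d$ unit vectors $M_{[0,n)}\be_i/\lVert M_{[0,n)}\be_i\rVert$ all lie within $O(e^{-\alpha n}/\delta_1(M_{[0,n)})\cdot\lVert M_{[0,n)}\be_i\rVert)$-distance — more simply, the normalized columns all lie within angular distance $O(e^{-\alpha n})$ of $\bu/\lVert\bu\rVert$ (using the growth condition to control $\lVert M_{[0,n)}\be_i\rVert$ from below by, say, $1$, since entries are positive integers and increasing). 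A matrix all of whose normalized columns are $\varepsilon$-close to a single unit vector has its second singular value bounded by $C'\varepsilon \lVert M_{[0,n)}\rVert_2 \le C'\varepsilon \sum_i \lVert M_{[0,n)}\be_i\rVert$; combined with the growth condition $\frac1n\log\lVert M_n\rVert \to 0$, which gives $\frac1n\log\lVert M_{[0,n)}\rVert_2 \to $ something controlled (at most $0$ if we also knew boundedness, but in general $\limsup \frac1n \log\lVert M_{[0,n)}\rVert$ could be positive — here one uses that $\delta_1$ times a subexponential factor bounds $\lVert M_{[0,n)}\rVert_2$, and $\delta_1\delta_2\cdots\delta_d = 1$), we conclude $\limsup \frac1n\log\delta_2(M_{[0,n)}) \le -\alpha + 0 < 0$. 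Writing this out carefully — in particular getting the relation $\delta_2(M_{[0,n)}) \le C'' e^{-\alpha n}\,\lVert M_{[0,n)}\rVert_2$ and then bounding $\lVert M_{[0,n)}\rVert_2$ by a subexponential multiple of $\delta_1(M_{[0,n)})\le \delta_1\cdots\delta_d\cdot(\text{stuff})$, hence by a subexponential factor since the product of all singular values is $1$ — closes the loop. I would finally remark that the equivalence of $(\ref{i:7133})$ with the others, together with Lemma~\ref{l:localOseledets}, shows all three conditions are equivalent, and invoke the transpose-reflection to dispatch the "past" version without repeating any argument.
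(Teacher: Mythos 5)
Your overall architecture --- the cycle (i)$\Rightarrow$(iii)$\Rightarrow$(ii)$\Rightarrow$(i), with Lemma~\ref{l:localOseledets} supplying the first arrow and the transposed--reflected sequence dispatching the statement for the past --- is exactly the paper's. Two of the three arrows, however, need repair.

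For (iii)$\Rightarrow$(ii) you correctly identify the duality between the contraction of $\tr{\!M}_{[0,n)}$ on $\ba^\perp$ and the narrowing of the image of $M_{[0,n)}$ as the crux, but your route (construct $\bu$ first, then compare cones, normalizing by $\delta_1$ and worrying about non-degenerate angles) imports a hypothesis the proposition does not have: primitivity and eventual recurrence are \emph{not} assumed, so Proposition~\ref{prop:fur} is not available. The paper avoids all of this by measuring the distance to $\RR\ba$ itself: since $\mathrm{d}(M_{[0,n)}\be_i,\RR\ba)=\lVert\pi_{\ba,\ba}M_{[0,n)}\be_i\rVert_2=|\langle M_{[0,n)}\be_i,\bx_0\rangle|$ for the unit vector $\bx_0\in\ba^\perp$ pointing along the projection, one gets in one line
\[
\mathrm{d}(M_{[0,n)}\be_i,\RR\ba)\le\max\big\{|\langle\be_i,\tr{\!M}_{[0,n)}\bx\rangle| \,:\, \bx\in\ba^\perp,\ \lVert\bx\rVert_2=1\big\}\le\lVert\tr{\!M}_{[0,n)}|_{\ba^\perp}\rVert_2,
\]
so exponential convergence to~$\RR\ba$ follows directly from~(iii); $\ba$ may be taken nonnegative because the columns are, and it is a generalized right eigenvector by weak convergence. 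No constants to track, no positivity of~$\bu$, no uniformity issue.

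The genuine gap is in (ii)$\Rightarrow$(i). You pass to normalized columns, which yields only the \emph{ratio} bound $\delta_2(n)\le C e^{-\alpha n}\,\lVert M_{[0,n)}\rVert_2 = C e^{-\alpha n}\,\delta_1(n)$, and you then assert that $\delta_1(n)$ is subexponential ``since the product of all singular values is~$1$.'' That is false: unimodularity forces $\delta_1(n)\ge 1$, and in the cases of interest $\delta_1(n)$ grows exponentially (this is the content of Remark~\ref{rem:PisotJustification}). For $d\ge 3$ the ratio bound together with $\delta_1\cdots\delta_d=1$ does not force $\delta_2(n)\to 0$ --- singular values $e^{2\alpha n}$, $e^{\alpha n/2}$, $e^{-5\alpha n/2}$ satisfy both constraints with $\delta_2\to\infty$. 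The fix is simply not to normalize: exponential convergence is by definition a bound on the \emph{absolute} distance $\mathrm{d}(M_{[0,n)}\be_i,\RR\bu)\le Ce^{-\alpha n}$, so the image of the unit ball lies in a cylinder of radius $O(e^{-\alpha n})$ around $\RR\bu$, forcing $d-1$ semiaxes of the image ellipsoid --- in particular $\delta_2(n)$ --- to be $O(e^{-\alpha n})$; equivalently, subtracting the rank-one matrix of orthogonal projections of the columns onto $\RR\bu$ leaves a matrix of Frobenius norm $O(e^{-\alpha n})$. This is the paper's argument, and it uses neither the growth condition nor any normalization of the columns.
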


\begin{proof}
We only prove the equivalences for the future. The according  equivalences for the past  follow along the same lines.
The implication (\ref{i:7131}) $\Rightarrow$ (\ref{i:7133}) follows from Proposition~\ref{p:localOseledets}, and (\ref{i:7133}) $\Rightarrow$ (\ref{i:7132}) is established in the last paragraph of the proof of Proposition~\ref{p:localOseledets}  (starting after \eqref{e:exp2beta}).

It remains to show that (\ref{i:7132}) implies~(\ref{i:7131}).  Let $n \in \NN$ be arbitrary but fixed. We recall that the image of the unit sphere under~$M_{[0,n)}$ is an ellipsoid~$E(n)$ with lengths of its semiaxes given by the singular values $\delta_1(n) \ge \cdots \ge \delta_d(n)$. 
Here, we write $\delta_i(n) = \delta_i(M_{[0,n)})$.  By~(\ref{i:7132}), $\mathrm{d}(M_{[0,n)}\be_i,\RR\bu) \le C_1 e^{\beta n}$ for some $C_1>0$ and $\beta <0$. The ellipsoid $E(n)$  is contained in a cylinder around~$\bu$ of radius $ C_1 e^{\beta n}$.
Thus the Euclidean length of $d{-}1$ of its semiaxes has to be bounded by $ C_2 e^{\beta n}$ for some $C_2>0$.  In particular, $\delta_2(n) \leq  C_2 e^{\beta n}$, which proves~(\ref{i:7131}).
\end{proof}

The next step toward the proof of Theorem~\ref{th:matrixPisot} is Proposition~\ref{prop:domev}. For a given sequence $\bM\in\cM_d^\ZZ$ satisfying the Pisot condition, we establish  a ``Pisot property''  for the growth of  eigenvalues of the products~$M_{[0,n)}$, namely an exponential  dominant behaviour for the largest  eigenvalue.  We need the following preparatory~result.

\begin{lemma}\label{lem:unwconv}
Assume that the primitive sequence $\bM = (M_n)_{n\in\ZZ} \in \cM_d^{\ZZ}$ admits a generalized right eigenvector~$\bu$ with $\lVert\bu\rVert=1$. Choose $m$ in a way that $M_{[0,m)}$ is positive. For $n \ge m$, let $\bu_1(n)$ be the positive dominant eigenvector of~$M_{[0,n)}$ satisfying $\lVert\bu_1(n)\rVert=1$. Then $\lim_{n\to\infty} \bu_1(n) = \bu$. 
\end{lemma}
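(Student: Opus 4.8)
The plan is to exploit the elementary fact that every normalized dominant eigenvector $\bu_1(n)$ lies inside the cone $C_n := M_{[0,n)}\RR_{\ge0}^d$, and that the defining property of the generalized right eigenvector forces these nested cones to contract onto the ray $\RR_{\ge0}\bu$.

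First I would verify that $\bu_1(n) \in C_n$ for every $n \ge m$. Since $M_{[0,m)}$ is positive and each $M_n$ is nonnegative, $M_{[0,n)}$ is positive for all $n \ge m$, so by the Perron--Frobenius theorem its dominant eigenvector $\bu_1(n)$ is a positive vector with a positive associated eigenvalue; hence $\bu_1(n)$ is $M_{[0,n)}$ applied to a positive scalar multiple of itself, which exhibits $\bu_1(n)$ as an element of $C_n = M_{[0,n)}\RR_{\ge0}^d$. Next I would record that the cones are nested, $C_{n+1} = M_{[0,n)}\big(M_n \RR_{\ge0}^d\big) \subseteq M_{[0,n)}\RR_{\ge0}^d = C_n$ by nonnegativity of $M_n$, and that Definition~\ref{def:gea} gives $\bigcap_{n\in\NN} C_n = \RR_{\ge0}\bu$. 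Intersecting with the unit sphere $S = \{\bx \in \RR^d : \lVert\bx\rVert = 1\}$, the sets $K_n := C_n \cap S$ form a decreasing sequence of nonempty compact sets — they all contain $\bu$ — whose intersection is $\RR_{\ge0}\bu \cap S = \{\bu\}$, since $\lVert\bu\rVert = 1$.

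The conclusion then follows from the standard fact that a decreasing sequence of nonempty compact sets converges in the Hausdorff sense to its intersection: for any $\epsilon > 0$ one has $K_n \subseteq \{\bx \in \RR^d : \mathrm{d}(\bx,\bu) < \epsilon\}$ for all large $n$, because otherwise a convergent subsequence of witnessing points would produce a point of $\bigcap_n K_n$ at distance at least $\epsilon$ from $\bu$, contradicting $\bigcap_n K_n = \{\bu\}$. Since $\bu_1(n) \in K_n$ for $n \ge m$, this yields $\mathrm{d}(\bu_1(n),\bu) \to 0$, that is, $\bu_1(n) \to \bu$.

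I do not expect a genuine obstacle. The two points that need a little care are the positivity bookkeeping that places $\bu_1(n)$ inside $C_n$ (which is exactly where the hypothesis $n \ge m$ is used) and the identification of the nested intersection with the single point $\bu$ rather than a larger subset of the sphere. One could alternatively control the projective diameter of $C_n$ via Hilbert's projective metric and the Birkhoff contraction property of the positive matrices $M_{[0,n)}$, but the compactness argument above is the most economical.
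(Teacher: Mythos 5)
Your proof is correct and follows essentially the same route as the paper's: both arguments rest on the eigenvector equation placing $\bu_1(n)$ inside the nested cones $M_{[0,n)}\RR_{\ge0}^d$, whose intersection is $\RR_{\ge0}\bu$ by the definition of the generalized right eigenvector. The paper phrases the convergence step via weak convergence of the normalized columns and convexity, whereas you make the underlying compactness argument (decreasing compact sets $C_n\cap S$ shrinking to $\{\bu\}$) explicit, which is a fine and if anything more self-contained way to justify the same conclusion.
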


\begin{proof}
Because $\bM$ admits a generalized right eigenvector~$\bu$, the sequence of vectors $\frac{M_{[0,n)}\be_i}{\lVert M_{[0,n)}\be_i\rVert}$  converges to~$\bu$ for all $i \in \{1,\dots, d\}$. Thus, by convexity, for each sequence of positive vectors $(\bx_n)_{n\in \NN}$, the sequence $\big(\frac{M_{[0,n)}\bx_n}{\lVert M_{[0,n)}\bx_n\rVert}\big)_{n\in \NN}$ converges to~$\bu$. 
In particular, $\big(\frac{M_{[0,n)}\bu_1(n)}{\lVert M_{[0,n)}\bu_1(n)\rVert}\big)_{n\ge m}$ converges to~$\bu$. Since the dominant eigenvector satisfies $\bu_1(n) = \frac{M_{[0,n)}\bu_1(n)}{\lVert M_{[0,n)}\bu_1(n)\rVert}$, the lemma follows.
\end{proof}

\begin{proposition}\label{prop:domev} 
Assume that  $\bM = (M_n)_{n\in\ZZ} \in \cM_d^{\ZZ}$ is primitive and satisfies the local Pisot condition. Choose $m \ge 0$ in a way that $M_{[0,m)}$ is a positive matrix. 
Then there exist $\alpha > 0$ and $C > 0$ such that, for $n\ge m$ we have
\begin{equation}\label{lamlam1}
\newlambda_1(n) \ge C\, e^{\alpha n}
\end{equation}
for the dominant eigenvalue $\newlambda_1(n)$ of~$M_{[0,n)}$ and 
\begin{equation}\label{lamlam}
|\newlambda(n)| \le C\, e^{-\alpha n} 
\end{equation}
for all other eigenvalues $\newlambda(n)$ of~$M_{[0,n)}$.
\end{proposition}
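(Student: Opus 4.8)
The plan is to translate the singular‑value information furnished by the local Pisot condition into eigenvalue information, exploiting that $M_{[0,n)}$ is a positive matrix for every $n\ge m$ (so that Perron--Frobenius applies) together with Lemma~\ref{lem:unwconv}. Writing $\delta_i(n)=\delta_i(M_{[0,n)})$, I would first record the consequences for singular values: the local Pisot condition gives $\beta<0$ and $N\ge m$ with $\delta_2(n)<e^{\beta n}$ for all $n\ge N$, and since $\prod_{i=1}^d\delta_i(n)=|\det M_{[0,n)}|=1$ with $\delta_2(n)\ge\dots\ge\delta_d(n)$, this forces
\[
\delta_1(n)=\Bigl(\prod_{i\ge 2}\delta_i(n)\Bigr)^{-1}\ge \delta_2(n)^{-(d-1)}\ge e^{(d-1)|\beta|\,n}\qquad(n\ge N),
\]
so $\delta_1$ grows exponentially while $\delta_2,\dots,\delta_d$ decay exponentially.

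The crucial and hardest step is a matching \emph{lower} bound $\newlambda_1(n)\ge c\,\delta_1(n)$ with $c>0$ independent of~$n$. The trivial bound $\newlambda_1(n)\le\delta_1(n)$ (spectral radius at most operator norm) points the wrong way, and an elementary estimate like $\newlambda_1(n)^2=\rho(M_{[0,n)}^2)\ge\max_i(M_{[0,n)}^2)_{ii}\ge\max_{i,j}(M_{[0,n)})_{ij}\ge\tfrac1d\delta_1(n)$ only yields the square‑root bound $\newlambda_1(n)\ge d^{-1/2}\delta_1(n)^{1/2}$, which will not be enough for the last step when $d\ge3$ (there $\delta_1(n)$ need not grow merely exponentially). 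To get the linear bound I would use that $\bM$ admits a generalized right eigenvector $\bu$ with $\lVert\bu\rVert_2=1$ (one checks that primitivity and the local Pisot condition force the nested cones $M_{[0,n)}\RR_{\ge0}^d$ to shrink onto a single ray), so that Lemma~\ref{lem:unwconv} applies and the positive unit dominant eigenvectors $\bu_1(n)$ of $M_{[0,n)}$ converge to the \emph{strictly} positive vector~$\bu$; hence there are $c>0$ and $N_1\ge m$ with $(\bu_1(n))_i\ge c$ for all $i$ and all $n\ge N_1$. Taking a singular value decomposition $M_{[0,n)}=U(n)D(n)\,\tr{V(n)}$ and letting $\bv^{\ast}(n)=V(n)\be_1$ be the top right singular vector --- which can be chosen strictly positive of unit length, since $\tr{\!M}_{[0,n)}M_{[0,n)}$ is positive --- one gets $\langle\bu_1(n),\bv^{\ast}(n)\rangle\ge c\sum_i(\bv^{\ast}(n))_i\ge c\lVert\bv^{\ast}(n)\rVert_2=c$, and from $\lVert M_{[0,n)}\bx\rVert_2^2=\sum_i\delta_i(n)^2\langle\bx,V(n)\be_i\rangle^2$ one concludes
\[
\newlambda_1(n)=\lVert M_{[0,n)}\bu_1(n)\rVert_2\ge\delta_1(n)\,\langle\bu_1(n),\bv^{\ast}(n)\rangle\ge c\,\delta_1(n)\qquad(n\ge N_1).
\]
Combined with the first step this gives $\newlambda_1(n)\ge c\,e^{(d-1)|\beta|\,n}$ for $n$ large, hence \eqref{lamlam1} after shrinking the constant to absorb the finitely many indices $m\le n<\max(N,N_1)$, for which $\newlambda_1(n)>1$ anyway.

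For \eqref{lamlam} I would pass to the exterior square: $\wedge^2 M_{[0,n)}$ has singular values $\{\delta_i(n)\delta_j(n):i<j\}$, hence operator norm $\delta_1(n)\delta_2(n)$, and for any eigenvalue $\newlambda(n)\ne\newlambda_1(n)$ of $M_{[0,n)}$ the number $\newlambda_1(n)\newlambda(n)$ is an eigenvalue of $\wedge^2 M_{[0,n)}$, whence
\[
\newlambda_1(n)\,|\newlambda(n)|\le\rho\bigl(\wedge^2 M_{[0,n)}\bigr)\le\lVert\wedge^2 M_{[0,n)}\rVert_2=\delta_1(n)\delta_2(n).
\]
Dividing by $\newlambda_1(n)\ge c\,\delta_1(n)$ yields $|\newlambda(n)|\le\delta_2(n)/c\le e^{\beta n}/c$ for $n\ge N$, and adjusting the constant again handles the finitely many small~$n$. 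To finish I would fix any $\alpha$ with $0<\alpha<|\beta|$; the strict inequality --- together with taking $m$ slightly larger at the outset if one wishes to eliminate exceptional indices --- gives the slack needed to reconcile the exponential lower bound for $\newlambda_1(n)$ with the exponential upper bound for $|\newlambda(n)|$ using a single constant $C$, establishing both \eqref{lamlam1} and \eqref{lamlam}. The main obstacle is precisely the linear lower bound in the second step: the point is that the Perron eigenvector is not merely positive but converges to a \emph{strictly} positive limit, which is exactly what forces its overlap with the (also positive) top right singular direction to stay bounded away from~$0$.
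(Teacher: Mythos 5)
Your proposal is correct, and its overall architecture coincides with the paper's: use the local Pisot condition and unimodularity to make $\delta_1(n)$ exponentially large and $\delta_2(n)$ exponentially small, compare $\newlambda_1(n)$ with $\delta_1(n)$ via the convergence $\bu_1(n)\to\bu$ of Lemma~\ref{lem:unwconv}, and control the remaining eigenvalues through $\newlambda_1(n)\,|\newlambda(n)|\le\lVert\wedge^2M_{[0,n)}\rVert_2=\delta_1(n)\delta_2(n)$.

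The one place where you genuinely diverge is the key comparison between $\newlambda_1(n)$ and $\delta_1(n)$. The paper decomposes the top right singular vector $\bw=\delta_1(n)M_{[0,n)}^{-1}\bw_1(n)$ as $x\,\bu_1(n)+y\,\bu_2(n)$ with $\bu_2(n)$ a unit vector in $M_{[0,n)}^{-1}\bw_1(n)^\perp$ (hence nonpositive), bounds $|x|,|y|\le 1/\sin\gamma$ using a uniform lower bound $\gamma$ on the angle between $\bu_1(n)$ and nonpositive unit vectors, and obtains the inequality $\delta_1(n)\le(\newlambda_1(n)+\delta_2(n))/\sin\gamma$, which it then reuses in the wedge-square step. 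You instead bound the overlap directly: $\newlambda_1(n)=\lVert M_{[0,n)}\bu_1(n)\rVert_2\ge\delta_1(n)\,\langle\bu_1(n),\bv^{\ast}(n)\rangle\ge c\,\delta_1(n)$, where the uniform lower bound on $\langle\bu_1(n),\bv^{\ast}(n)\rangle$ comes from the entries of $\bu_1(n)$ being bounded below (since $\bu_1(n)\to\bu>\mathbf{0}$) and $\bv^{\ast}(n)$ being a positive unit vector, so that $\langle\bu_1(n),\bv^{\ast}(n)\rangle\ge c\lVert\bv^{\ast}(n)\rVert_1\ge c\lVert\bv^{\ast}(n)\rVert_2$. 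This is a cleaner route to a slightly stronger intermediate statement ($\newlambda_1(n)\ge c\,\delta_1(n)$ rather than $\delta_1(n)\lesssim\newlambda_1(n)+\delta_2(n)$), and it simplifies the final division in the wedge-square step, where the paper must additionally note that $\newlambda_1(n)\ge\delta_2(n)$ for large $n$. Both arguments rest on the same two pillars (Perron--Frobenius for the positive matrices $M_{[0,n)}$ and $\tr{\!M}_{[0,n)}M_{[0,n)}$, plus Lemma~\ref{lem:unwconv}), and both, like the paper, implicitly use that the generalized right eigenvector exists — your parenthetical justification via the shrinking nested cones is the right way to see this under primitivity and the Pisot condition alone. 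Your closing remarks about why the na\"{\i}ve bound $\newlambda_1(n)\ge d^{-1/2}\delta_1(n)^{1/2}$ is insufficient for $d\ge3$ are also accurate.
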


\begin{proof} 
Throughout this proof, we work with the Euclidean norm $\lVert\cdot\rVert_2$. For $n \ge m$, let $\bu_1(n)$ be the normalized positive dominant eigenvector of the positive matrix  $M_{[0,n)}$. 

We first prove \eqref{lamlam1}. Let $n \ge m$. We recall that the image of the unit sphere under~$M_{[0,n)}$ is an ellipsoid~$E(n)$ with lengths of its semiaxes given by the singular values $\delta_1(n) \ge \cdots \ge \delta_d(n)$. 
Here, we write $\delta_i(n) = \delta_i(M_{[0,n)})$. 
Let $\bw_1(n)$ be the positive unit vector in the direction of the  largest axis of~$E(n)$ (where positivity is due to the fact that $\bw_1(n)$ is an eigenvector for the dominant eigenvalue $\delta_1(n)^2$ of the positive and, hence, primitive, matrix $\tr{\!M}_{[0,n)} M_{[0,n)}$). 
The singular values $\delta_2(n), \dots, \delta_d(n)$ are the lengths of the semiaxes that are situated on the intersection of the ellipsoid $E(n)$ with the hyperplane $H(n) = \bw_1(n)^\perp$. 
This hyperplane contains no nonnegative element other than the origin. Since $M_{[0,n)}^{-1} H(n) = (\tr{\!M}_{[0,n)} \bw_1(n))^\perp$ and $\tr{\!M}_{[0,n)} \bw_1(n)$ is a positive vector, we gain that $M_{[0,n)}^{-1} H(n)$ contains no positive element. 
For any $\bx \in M_{[0,n)}^{-1}H(n)$, we have $\lVert M_{[0,n)} \bx\rVert_2 \le \delta_2(n) \lVert\bx\rVert_2$.

Let $\bw = \delta_1(n) M_{[0,n)}^{-1} \bw_1(n)$. 
Since $H(n)$ does not intersect the interior of the positive cone, we have $\RR \bu_1(n) \oplus M_{[0,n)}^{-1}H(n) = \RR^d$, hence, 
\begin{equation}\label{eq:vxy}
\bw = x\, \bu_1(n) + y\, \bu_2(n), 
\end{equation}
for some $\bu_2(n) \in M_{[0,n)}^{-1}H(n)$ of unit norm and some $x,y \in \RR$.
As $(\bu_1(n))_{n\ge m}$ converges to the (positive) generalized right eigenvector~$\bu$ of~$\bM$ by Lemma~\ref{lem:unwconv}, there is $m_2 \ge m$ and $\gamma >0$ such that the angle between $\bu_1(n)$ and the nonpositive vector $\bu_2(n)$ is bounded from below by $\gamma $ for $n \ge m_2$. 
From now, we assume that $n \ge m_2$.
Because $\bu_1(n), \bu_2(n)$, and $\bw$ are all unit vectors, this implies that $|x|,|y| \le \frac{1}{\sin\gamma}$. Multiplying \eqref{eq:vxy} by $M_{[0,n)}$ gives 
\[
\delta_1(n) \bw_1(n) = M_{[0,n)} \bw = x \newlambda_1(n) \bu_1(n) + y M_{[0,n)} \bu_2(n).
\] 
Observing that $\lVert M_{[0,n)} \bu_2(n)\rVert_2 \le \delta_2(n)$, taking norms we obtain
\begin{equation}\label{eq:delta1est}
\delta_1(n) = \lVert  M_{[0,n)} \bw  \rVert_2 \le \frac{\newlambda_1(n)+\delta_2(n)}{\sin\gamma}
\end{equation}
for $n \ge m_2$. 
By the local Pisot condition, we have $\delta_2(n) < e^{-\alpha n}$ and thus, by unimodularity of $M_{[0,n)}$, $\delta_1(n) > e^{\alpha n}$ for some $\alpha > 0$ and large enough~$n$.
This implies that $\newlambda_1(n) > (\sin\gamma) e^{\alpha n} - e^{-\alpha n}$, i.e., \eqref{lamlam1} holds for some $C > 0$.

We now turn to the proof of \eqref{lamlam}.
Let $n\ge m$. Suppose that $\newlambda(n)$ is real, and let $\bu(n)$ be a unit eigenvector associated to the eigenvalue $\newlambda(n)$. Since $\wedge^2 M_{[0,n)}\, (\bu_1(n) {\wedge} \bu(n)) = \newlambda_1(n) \newlambda(n)\, ( \bu_1(n) {\wedge} \bu(n))$, one has 
\begin{equation} \label{eq:wedge2bound}
\lVert\wedge^2 M_{[0,n)}\rVert_2 \ge \newlambda_1(n) |\newlambda(n)|.
\end{equation}
If $\newlambda(n)$ is not real, then there is a conjugate eigenvalue with a conjugate eigenvector; take the intersection of the 2-dimensional complex space generated by these eigenvectors with the real space: this space is invariant under $M_{[0,n)}$, and the restriction of  $M_{[0,n)}$ to it is a conformal map which changes norm by $|\newlambda(n)|$ which shows that \eqref{eq:wedge2bound} also holds in this case.

Combining \eqref{eq:wedge2bound} with \eqref{eq:delta1est} we have
\begin{equation}\label{eq:lambda2est}
\newlambda_1(n) |\newlambda(n)| \le \lVert\wedge^2 M_{[0,n)}\rVert_2 =\delta_1(n)\delta_2(n)
\le \frac{\newlambda_1(n)+\delta_2(n)}{\sin\gamma}\delta_2(n).
\end{equation}
Thus, since $\newlambda_1(n) \geq  C e^{\alpha n}$  by \eqref{lamlam1} and $\delta_2(n) < e^{-\alpha n}$, for $n$ large enough, we have $\newlambda_1(n) \ge \delta_2(n)$. Therefore, dividing \eqref{eq:lambda2est} by $\newlambda_1(n)$ we gain
\[
|\newlambda(n)| < \frac{2}{\sin\gamma} e^{-\alpha n},
\]
which implies that \eqref{lamlam} holds for some constant~$C$.
\end{proof}

It follows from the previous result that if $\bM = (M_n)$ satisfies the  Pisot condition,  all matrices~$M_{[0,n)}$ are Pisot matrices for $n$ large enough  (recall that a Pisot matrix is characterized by the behavior of its eigenvalues,  namely  all its eigenvalues except the dominant eigenvalue are strictly smaller than~1 in modulus). The converse does not hold according to Example~\ref{ex:wm} below.

\subsection{Consequences of the Pisot condition}\label{sec:LyapunovPisot3}
We now state further consequences of the local Pisot condition, with the following important irreducibility property, derived from Proposition~\ref{prop:domev}. This irreducibility property will be crucial in proving a criterion for rational independence of the entries of the generalized right eigenvector of~$\bM$; see Corollary~\ref{cor:indepNew}, and also  Theorem~\ref{theo:sufcondPisotS} in the metric framework. 
The following definition goes back to \cite[Section~2.2]{BST:19} for one-sided sequences.

\begin{definition}[Algebraic irreducibility]\label{def:AI}\indx{irreducible!algebraically}
A sequence $\bM = (M_n)_{n\in\ZZ} \in \cM_d^{\ZZ}$ is called \emph{algebraically irreducible (in the future)} if for each $m \in \ZZ$, there is $n_0 \in \NN$ such that the characteristic polynomial of $M_{[m,n)}$ is irreducible for each $n \ge n_0$.
\end{definition}

We now relate the Pisot condition to algebraic irreducibility.

\begin{corollary} \label{bst8.7} 
If a primitive sequence $\bM = (M_n)_{n\in\ZZ} \in \cM_d^{\ZZ}$  satisfies the local Pisot condition, then $\bM$ is algebraically irreducible.
\end{corollary}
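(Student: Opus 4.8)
The goal is to show that a primitive sequence $\bM = (M_n)_{n\in\ZZ} \in \cM_d^{\ZZ}$ satisfying the local Pisot condition is algebraically irreducible. Fix $m \in \ZZ$; by shift invariance of both primitivity and the local Pisot condition we may assume $m = 0$. The plan is to argue by contradiction: suppose the characteristic polynomial $\chi_n$ of $M_{[0,n)}$ factors nontrivially over $\QQ$ (equivalently over $\ZZ$, by Gauss's lemma, since $M_{[0,n)}$ is an integer matrix) for infinitely many $n$. Since $M_{[0,n)}$ is unimodular, the constant term of each irreducible factor is $\pm 1$, so the product of the roots inside any such factor has modulus $1$. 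By Proposition~\ref{prop:domev}, for $n$ large there is exactly one eigenvalue of modulus exceeding $1$, namely the dominant eigenvalue $\newlambda_1(n)$ with $\newlambda_1(n) \ge C e^{\alpha n}$, while all the other eigenvalues $\newlambda(n)$ satisfy $|\newlambda(n)| \le C e^{-\alpha n} < 1$ once $n$ is large enough.

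The contradiction then comes from a pigeonhole-type argument on the degrees of the factors. For $n$ large, write $\chi_n = g_n \cdot h_n$ with $g_n, h_n \in \ZZ[x]$ both of positive degree and monic (up to sign). The dominant root $\newlambda_1(n)$ lies in exactly one of the two factors, say $g_n$; then every root of $h_n$ has modulus $\le C e^{-\alpha n} < 1$. But $h_n$ has constant term $\pm 1$, so the product of its roots has modulus $1$; if $h_n$ has degree $k \ge 1$, this product has modulus at most $(C e^{-\alpha n})^k \le C e^{-\alpha n}$, which is strictly less than $1$ for $n$ large. This is impossible, so no such nontrivial factorization exists once $n \ge n_0$ for a suitable $n_0 \in \NN$. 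This proves irreducibility of $\chi_n$ for all $n \ge n_0$, which is precisely algebraic irreducibility (in the future) with this choice of $m$.

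I would also double-check the one subtlety that makes this clean: Proposition~\ref{prop:domev} is stated for $n \ge m$ where $m$ is chosen so that $M_{[0,m)}$ is positive, which exists by primitivity; and the bound $|\newlambda(n)| \le C e^{-\alpha n}$ is uniform over \emph{all} non-dominant eigenvalues, so the estimate on the product of roots of $h_n$ goes through regardless of which roots of $\chi_n$ fall into $h_n$. One should be mildly careful that the constant $C$ from Proposition~\ref{prop:domev} does not depend on $n$, which is exactly what that proposition asserts, so choosing $n_0$ large enough that $C e^{-\alpha n_0} < 1$ suffices.

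The main (and essentially only) obstacle is realizing that one must invoke unimodularity to control the constant terms of the factors — without that, a factor could consist entirely of small roots with a large constant term. Everything else is a short deduction from Proposition~\ref{prop:domev} together with Gauss's lemma, so the proof is expected to be quite brief.
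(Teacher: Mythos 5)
Your proof is correct and follows essentially the same route as the paper's: both invoke Proposition~\ref{prop:domev} to get that for large $n$ all non-dominant eigenvalues of $M_{[0,n)}$ have modulus strictly less than $1$, and then derive the contradiction from the factor not containing the dominant root having a constant term of modulus at least $1$ (you sharpen this to $\pm1$ via unimodularity, whereas the paper only needs invertibility to exclude a zero constant term). The only cosmetic difference is your "infinitely many $n$" framing, which is unnecessary since the argument directly rules out reducibility for every $n \ge n_0$.
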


\begin{proof} 
By the shift invariance of the Pisot condition, it suffices to show that for all large enough~$n$, the characteristic polynomial of~$M_{[0,n)}$ is irreducible. By Proposition~\ref{prop:domev}, there is $n_0 \in \NN$ so that each eigenvalue of~$M_{[0,n)}$ which is not dominant is strictly smaller than~$1$ for each $n \ge n_0$. If the characteristic polynomial of~$M_{[0,n)}$ were reducible, then the factor which does not have the dominant eigenvalue as a root would be an integral polynomial whose roots all have modulus strictly less than~$1$, which is impossible.  (Note that it is important here that the matrices are invertible, so that $0$ cannot occur as an eigenvalue.)
\end{proof}

The following result is a variant of \cite[Lemma~4.2]{BST:19}. 

\begin{corollary} \label{cor:indepNew} 
If a primitive sequence $\bM = (M_n)_{n\in\ZZ} \in \cM_d^{\ZZ}$ satisfies the local Pisot condition and the growth condition $\lim_{n\to\infty} \frac{1}{n} \log \lVert M_n\rVert = 0$, then the coordinates of any generalized right eigenvector~$\bu$ are rationally independent. 
\end{corollary}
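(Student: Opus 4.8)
The plan is to show that if the coordinates of a generalized right eigenvector $\bu$ satisfied a nontrivial rational relation $\langle \bu, \bp \rangle = 0$ with $\bp \in \ZZ^d \setminus \{\mathbf{0}\}$, then this would force a hyperplane to be eventually invariant under the transposed products, contradicting algebraic irreducibility (Corollary~\ref{bst8.7}). First I would assume for contradiction that $\langle \bu, \bp \rangle = 0$ for some nonzero integer vector $\bp$. By the shift invariance of the hypotheses and of the generalized right eigenvector (see \eqref{eq:unvn}), I may restrict attention to large~$n$ and work with $M_{[0,n)}$. By Proposition~\ref{prop:domev}, there is $n_0$ such that for $n \ge n_0$ the matrix $M_{[0,n)}$ is a Pisot matrix: its dominant eigenvalue $\newlambda_1(n)$ satisfies $\newlambda_1(n) \ge C e^{\alpha n}$ and all other eigenvalues have modulus at most $C e^{-\alpha n} < 1$. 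By Proposition~\ref{prop:strongcv2sided} (equivalently Proposition~\ref{prop:fur}), $\bu$ is the generalized right eigenvector, so $\bu$ lies in the (one-dimensional) dominant eigendirection of $M_{[0,n)}$ in the limit; by Lemma~\ref{lem:unwconv}, the normalized dominant eigenvectors $\bu_1(n)$ converge to~$\bu$.

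The key step is then the following dichotomy coming from irreducibility. For $n \ge n_0$, consider the rational vector space $W(n)$ spanned over~$\QQ$ by $\bp, \tr{\!M}_{[0,n)} \bp, \tr{(M_{[0,n)}^2)}\bp, \dots$, i.e.\ the smallest $\tr{\!M}_{[0,n)}$-invariant rational subspace containing~$\bp$. Since $M_{[0,n)}$ has an irreducible characteristic polynomial for $n \ge n_0$ (Corollary~\ref{bst8.7}), either $W(n) = \{\mathbf{0}\}$, which is impossible as $\bp \neq \mathbf{0}$, or $W(n) = \RR^d$. In the latter case the minimal polynomial of $\tr{\!M}_{[0,n)}$ acting on the cyclic subspace generated by~$\bp$ has degree~$d$, so $\bp$ is a cyclic vector and no eigenvector of $\tr{\!M}_{[0,n)}$; in particular $\bp$ is not orthogonal to the dominant eigendirection of $M_{[0,n)}$, so $\langle \bu_1(n), \bp \rangle \neq 0$. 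But I also want a uniform lower bound. Here I would use that $\bp$ lies in a fixed lattice: the quantity $\langle \bu_1(n), \bp \rangle$, suitably normalized, can be estimated from below because the component of $\bp$ in the dominant eigendirection cannot be too small — concretely, decompose $\bp = c_1(n)\, \bq_1(n) + \bp'(n)$ along the eigenspace decomposition of $\tr{\!M}_{[0,n)}$, where $\bq_1(n)$ is the (left) dominant eigenvector; applying $\tr{\!M}_{[0,n)}^k$ for growing $k$ and using the eigenvalue gap from Proposition~\ref{prop:domev}, one sees that $\langle \bu, \tr{\!M}_{[0,n)}^k \bp \rangle = \langle M_{[0,n)}^k \bu, \bp\rangle = \newlambda_1(n)^k \langle \bu, \bp \rangle = 0$ identically, which already says $\bp \perp \bu$ is inherited by all iterates, hence $\bp$ lies in the (at most $(d{-}1)$-dimensional) sum of the non-dominant eigenspaces of $\tr{\!M}_{[0,n)}$ — but that space is not rational (it is defined over a proper extension since the characteristic polynomial is irreducible of degree $d$), so it cannot contain the nonzero integer vector~$\bp$. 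This is the desired contradiction.

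Summarizing the argument cleanly: from $\langle \bu, \bp\rangle = 0$ and $M_{[0,n)}^k \bu \in \RR \bu$ one gets $\langle \bu, \tr{\!M}_{[0,n)}^k \bp \rangle = 0$ for all $k$, so the full $\tr{\!M}_{[0,n)}$-orbit of~$\bp$ lies in the hyperplane $\bu^\perp$; hence $W(n) \subseteq \bu^\perp \subsetneq \RR^d$, so $W(n) \neq \RR^d$, forcing $W(n) = \{\mathbf{0}\}$ by irreducibility of the characteristic polynomial of $M_{[0,n)}$ for $n \ge n_0$ --- but $\bp \in W(n)$ and $\bp \neq \mathbf{0}$, a contradiction. (The growth condition enters only through Propositions~\ref{prop:strongcv2sided} and~\ref{prop:domev}, which are needed to guarantee that $\bu$ is genuinely the dominant eigendirection and that the characteristic polynomials are eventually irreducible; primitivity gives existence of $\bu$ via Proposition~\ref{prop:fur}, noting that the local Pisot condition together with primitivity implies eventual recurrence is \emph{not} needed here --- instead algebraic irreducibility is supplied directly by Corollary~\ref{bst8.7}.)

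The main obstacle, or rather the point requiring care, is making sure that ``$\bp$ integer and nonzero'' genuinely conflicts with ``$\bp$ orthogonal to the dominant eigendirection of an irreducible-characteristic-polynomial matrix.'' The clean way is the rational-invariant-subspace argument above: the $\QQ$-span $W(n)$ of the $\tr{\!M}_{[0,n)}$-orbit of $\bp$ is a nonzero $\QQ[\tr{\!M}_{[0,n)}]$-submodule of $\QQ^d$, and since the characteristic (hence minimal, by irreducibility) polynomial of $\tr{\!M}_{[0,n)}$ is irreducible of degree~$d$, $\QQ^d$ is a simple $\QQ[\tr{\!M}_{[0,n)}]$-module, so $W(n)$ is either $\{\mathbf{0}\}$ or all of $\QQ^d$ --- and we have just shown $W(n) \subseteq \bu^\perp$, ruling out $W(n) = \QQ^d$, while $\bp \neq \mathbf{0}$ rules out $W(n) = \{\mathbf{0}\}$. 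This is essentially the content of \cite[Lemma~4.2]{BST:19} adapted to our setting, and I would cite that lemma for the module-theoretic core while supplying the reduction via Proposition~\ref{prop:domev} and Corollary~\ref{bst8.7} as above.
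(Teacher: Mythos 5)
Your overall strategy---derive a contradiction with algebraic irreducibility (Corollary~\ref{bst8.7}) from a rational invariant subspace argument---is in the right family of ideas, and both your proposal and the paper's proof do end by contradicting Corollary~\ref{bst8.7}. However, your proposal has a genuine gap at its central step. You repeatedly use the identity $M_{[0,n)}^k\,\bu \in \RR\bu$, explicitly computing $\langle \bu, \tr{\!M}_{[0,n)}^k\bp\rangle = \langle M_{[0,n)}^k\bu,\bp\rangle = \newlambda_1(n)^k\langle\bu,\bp\rangle$; that is, you treat the generalized right eigenvector~$\bu$ as an actual eigenvector of the finite products~$M_{[0,n)}$. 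This is false in the nonstationary setting: by~\eqref{eq:unvn}, $M_{[0,n)}^{-1}\bu = \bu_n$ is the generalized right eigenvector of the \emph{shifted} sequence $\Sigma^n\bM$, which is in general not collinear with~$\bu$; the relation $M^k\bu = \newlambda_1^k\bu$ holds only for periodic (e.g.\ constant) sequences. What is actually available is only that the normalized dominant eigenvectors~$\bu_1(n)$ of~$M_{[0,n)}$ converge to~$\bu$ (Lemma~\ref{lem:unwconv}). Consequently the inclusion $W(n)\subseteq\bu^\perp$---the heart of your contradiction---is never established: $\bp\perp\bu$ implies neither $\bp\perp\bu_1(n)$ nor that the $\tr{\!M}_{[0,n)}$-orbit of~$\bp$ stays in~$\bu^\perp$. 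Your module-theoretic argument does correctly show that each~$\bu_1(n)$ has rationally independent coordinates for large~$n$, but rational independence is not preserved under the limit $\bu_1(n)\to\bu$, and the uniform lower bound on $|\langle\bu_1(n),\bp\rangle|$ that you yourself flag as necessary is exactly what is missing (and is not easy to obtain, since $\newlambda_1(n)$ and the eigenbasis change with~$n$).

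The paper's proof routes around this by using strong convergence quantitatively instead of any eigenvector identity. Since $\bp\perp\bu$, the $i$-th coordinate of $\tr{\!M}_{[0,n)}\bp$ equals $\langle\bp, M_{[0,n)}\be_i\rangle$, which is bounded by $\lVert\pi_{\bu,\bu}\bp\rVert_2\cdot\mathrm{d}(M_{[0,n)}\be_i,\RR\bu)$, and this is uniformly bounded by Proposition~\ref{prop:strongcv2sided}. Hence the integer vectors $\tr{\!M}_{[0,n)}\bp$ lie in a finite set and coincide for infinitely many indices, producing a block $M_{[k,\ell)}$ for which the nonzero integer vector $\tr{\!M}_{[0,k)}\bp$ is an eigenvector of $\tr{\!M}_{[k,\ell)}$ with eigenvalue~$1$; this contradicts Corollary~\ref{bst8.7}. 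If you want to repair your write-up, replace the false eigenvector identity by this boundedness-plus-pigeonhole step; the rest of your framing (reduction to Corollary~\ref{bst8.7}, role of Propositions~\ref{prop:strongcv2sided} and~\ref{prop:domev}) can then be kept.
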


\begin{proof}
Proposition~\ref{prop:strongcv2sided} implies that $\bM$ converges strongly to $\bu$.
Suppose that $\bu$ has rationally dependent coordinates.  Then $\langle \bx, \bu  \rangle= 0$ for some fixed $\bx \in \ZZ^d \setminus \{\mathbf{0}\}$. Thus, by strong convergence to $\bu$, \eqref{eq:strongconv} implies that there is a constant $C>0$ such that
\footnote{Here we use the same property of the scalar product as in the proof of Proposition~\ref{prop:strongcv2sided}.}
\[
\begin{aligned}
\langle \tr{\!M}_{[0,n)}\, \bx,  \be_i  \rangle & = \langle \bx,  M_{[0,n)}\, \be_i \rangle = \langle \pi_{\bu,\bu}\bx,  M_{[0,n)}\, \be_i \rangle \\
&\le \lVert \pi_{\bu,\bu}\bx\rVert_2\cdot  \lVert\pi_{\bu,\bu} M_{[0,n)}\, \be_i\rVert_2 =  \lVert\pi_{\bu,\bu}\bx \rVert_2 \cdot\mathrm{d}(M_{[0,n)}\, \be_i, \RR\bu)<C
\end{aligned}
\]
for all $i \in  \{1,\ldots,d\}$ and all $n\in \NN$.  Therefore $\lVert\tr{\!M}_{[0,n)}\, \bx\rVert_\infty<C$, and because $\tr{\!M}_{[0,n)}\, \bx \in \ZZ^d$, there is $k \in \NN$ such that $\tr{\!M}_{[0,\ell)}\, \bx = \tr{\!M}_{[0,k)}\, \bx$ for infinitely many $\ell > k$.  Being a product of regular matrices, the matrix $M_{[0,k)}$ is regular. 
Thus $\tr{\!M}_{[0,k)}\, \bx \neq \mathbf{0}$ is an eigenvector of~$\tr{\!M}_{[k,\ell)}$ to the eigenvalue~$1$, and, hence, $M_{[k,\ell)}$ has reducible characteristic polynomial for large~$\ell$. Because this contradicts Corollary~\ref{bst8.7}, the result follows.
\end{proof}

Versions for the past can be formulated for Proposition~\ref{prop:domev} and for Corollaries~\ref{bst8.7} and~\ref{cor:indepNew}, with analogous proofs. 

We sum up the most important results of the present section and of Section~\ref{sec:LyapunovPisot2} in the following theorem, which emphasizes the strength of the Pisot condition; it is formulated as Theorem~\nameref{t:A} in the introduction.

\begin{theorem}\label{th:matrixPisot}
For $d\ge 2$, let $\bM = (M_n)_{n\in\ZZ} \in \cM_d^{\ZZ}$ be a primitive sequence of nonnegative unimodular integer matrices satisfying the local Pisot condition and the growth condition $\lim_{n\to\infty} \frac{1}{n} \log \lVert M_n\rVert = 0$. Then $\bM$ is algebraically irreducible and admits a generalized right eigenvector with rationally independent coordinates to which it converges exponentially. 

An analogous result holds for the past.
\end{theorem}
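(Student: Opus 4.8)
The plan is to assemble the three assertions of the theorem — algebraic irreducibility, existence of a generalized right eigenvector with rationally independent coordinates, and exponential convergence to it — directly from the results proved earlier in this section and in Section~\ref{sec:LyapunovPisot2}, so the proof will be short.

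First I would invoke Proposition~\ref{prop:strongcv2sided}. Under the growth condition $\lim_{n\to\infty}\frac1n\log\lVert M_n\rVert=0$, the local Pisot condition (assertion~(\ref{i:7131}) there) is equivalent to assertion~(\ref{i:7132}), namely that $\bM$ admits a generalized right eigenvector $\bu$ and converges exponentially to $\bu$. This yields at once the existence of $\bu$ and the exponential convergence. Next I would apply Corollary~\ref{bst8.7} to the primitive sequence $\bM$ satisfying the local Pisot condition to obtain algebraic irreducibility, and then Corollary~\ref{cor:indepNew} — whose hypotheses (primitivity, local Pisot condition, growth condition) are exactly those assumed here — to conclude that the coordinates of $\bu$ are rationally independent; note that Corollary~\ref{cor:indepNew} applies to \emph{any} generalized right eigenvector, so in particular to the one furnished by Proposition~\ref{prop:strongcv2sided}.

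For the statement concerning the past, I would pass to the sequence $(\tr{\!M}_{-n-1})_{n\in\ZZ}$: by definition $\bM$ satisfies the local Pisot condition in the past iff this sequence satisfies it in the future, primitivity in the past for $\bM$ transfers to primitivity of the new sequence, and the condition $\lim_{n\to-\infty}\frac1n\log\lVert M_n\rVert=0$ becomes the future growth condition for it; applying the ``future'' results just cited (or their explicitly stated past versions noted after Corollary~\ref{cor:indepNew}) gives the claim.

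There is essentially no serious obstacle here, since every ingredient is already in place; the only point requiring care is the bookkeeping of the transposition-and-reversal when translating between the future and past formulations, together with checking that primitivity, the local Pisot condition, and the growth condition all survive this operation — which they do by construction of the relevant definitions.
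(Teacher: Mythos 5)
Your proposal is correct and coincides with the paper's own treatment: the paper presents Theorem~\ref{th:matrixPisot} explicitly as a summary of Proposition~\ref{prop:strongcv2sided}, Corollary~\ref{bst8.7}, and Corollary~\ref{cor:indepNew}, which are exactly the three ingredients you assemble, and the past version is handled by the same transposition-and-reversal bookkeeping noted after those results.
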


We illustrate our theory by an example that comes from the theory of multidimensional continued fraction algorithms.

\begin{example}\label{ex:AR1}
We consider the set $\cM_{\rAR} = \{M_{\rAR,1},M_{\rAR,2},M_{\rAR,3}\}$ of \emph{Arnoux--Rauzy matrices}\indx{Arnoux--Rauzy!matrix}\indx{matrix!Arnoux--Rauzy}\notx{AxR}{$\rAR$}{object related to the Arnoux--Rauzy algorithm} (see \cite{Arnoux-Rauzy:91,AD:19}),  that generalize the case of Sturmian matrices from \eqref{eq:sturmmat} (see also Example~\ref{ex:farey1}), defined as 
\[
M_{\rAR,1} = \begin{pmatrix}
1& 1 &  1\\
0&1&0\\
0&0&1
\end{pmatrix}, \quad
M_{\rAR,2} = \begin{pmatrix}
1& 0 &  0\\
1&1&1\\
0&0&1
\end{pmatrix}, \quad
M_{\rAR,3} = \begin{pmatrix}
1& 0 &  0\\
0&1&0\\
1&1&1
\end{pmatrix}.
\]
Choose a sequence $\bM = (M_n)_{n \in \ZZ}\in  \cM_{\rAR}^{\ZZ}$ for which  there exists a constant $h > 0$ such that  $\{M_n,\ldots,M_{n+h-1}\} = \cM_{\rAR}$ for all $n \in \ZZ$. Such a sequence is said to have \emph{strong partial quotients}\indx{partial quotient!strong} bounded by~$h$.
The sequence $\bM$ is primitive because each block of length at least~$h$ is positive. Because $\cM_{\rAR}$ contains only finitely many matrices, the growth condition holds for~$\bM$.
By the proof of \cite[Theorem~1]{Delecroix-Hejda-Steiner},  there exists a constant $C > 0$, such that for each $n \in \NN$, there exists a hyperplane $\bw(n)^\perp $ such that 
\[
\lVert{\bM}_{[0,n)} \vert_{\bw(n)^\perp}\rVert \leq  C\, \Big( \frac{2^h-3}{2^h-1}\Big)^{n/h}.
\]
Hence, the hyperplane $\bw(n)^\perp $ is exponentially contracted by $M_{[0,n)}$, i.e.,  
\[
\limsup_{n\to\infty} \frac{1}{n} \log \lVert\tr{\!M}_{[0,n)} \vert_{\bw(n)^\perp}\rVert < 0.
\]
By the definition of singular values this implies that
\[
\limsup_{n\to\infty} \frac{1}{n} \log \delta_2(M_{[0,n)}) \leq \limsup_{n\to\infty} \frac{1}{n} \log \lVert\tr{\!M}_{[0,n)} \vert_{\bw(n)^\perp}\rVert < 0.
\]
Thus the Pisot condition holds for~$\bM$ and we may apply Theorem~\ref{th:matrixPisot}. This theorem shows that the sequence~$\bM$ is algebraically irreducible and admits a generalized right eigenvector with rationally independent coordinates to which it converges exponentially.
Note that rational independence also follows from \cite{Andrieu:21}.

Further examples involving Arnoux--Rauzy matrices are given in \cite{Berthe-Cassaigne-Steiner}. In particular, in this paper generalizations of Arnoux--Rauzy matrices in~$\cM_d$ are defined for arbitrary $d\ge 3$. Our theory can be applied to this more general situation as well. 
\end{example}

We provide further examples for sequences that satisfy the conditions of Theorem~\ref{th:matrixPisot} in Propositions~\ref{prop:brunpisot:2} and~\ref{prop:brunpisot:1}.

\subsection{Eventually Anosov $\cM$-adic mapping families} \label{subsec:anosovM} We now relate a linear mapping family to a sequence of matrices; see Section~\ref{subsec:mapf} for the  general notion of a mapping family. Linear mapping families are reminiscent of multidimensional continued fraction algorithms and will be related to these algorithms in Section~\ref{subsec:MCFmappingfamily}.

\begin{definition}[$\cM$-adic mapping family] \label{def:Madicmapping}\indx{mapping family!M@$\cM$-adic}
Fix $d \ge 2$ and  $\cM\subset\cM_d$, and consider a sequence $\bM \in \cM^{\ZZ}$. For each $n \in \ZZ$, let $\TT_n$ be a copy of the $d$-dimensional torus $\RR^d/\ZZ^d$ equipped with the Euclidean metric inherited from~$\RR^d$. 
Moreover, regard the inverse of the matrix~$M_n$ as the automorphism\footnote{This is possible since $M_n$ is a unimodular integer matrix.} $M_n^{-1}: \TT_n \to \TT_{n+1}$ of $\RR^d/\ZZ^d$  given by left multiplication by~$M_n^{-1}$. 
Then we set $\TT = \coprod_{n\in\ZZ}\TT_n$ and define $f_{\bM}: \TT \to \TT$ by $f_{\bM}(\bx) = M_n^{-1} \bx$ for $\bx \in \TT_n$. \notx{fM}{$(\TT,f_{\bM})$}{$\cM$-adic mapping family}
We call $(\TT,f_{\bM})$ the \emph{$\cM$-adic mapping family} associated to~$\bM$. 
It can be written out as
\[  
\cdots \xrightarrow{M_{-2}^{-1}} \TT_{-1} \xrightarrow{M_{-1}^{-1}}  \TT_0 \xrightarrow{M_{0}^{-1}} \TT_1 \xrightarrow{M_{1}^{-1}}  \cdots . 
\]
\end{definition}

Let $\cM\subset\cM_d$ and $\bM \in \cM^{\ZZ}$ be given. Since the mapping $M_n^{-1}: \TT_n\to \TT_{n+1}$ of the associated $\cM$-adic mapping family $(\TT,f_{\bM})$ is linear for each $n\in\ZZ$, the derivative of this mapping $M_n^{-1}$ at a point $\bp \in \TT_n$ is  the linear mapping $M_n^{-1}:\,\RR^d\to \RR^d$ (note that $T_{\bp} \TT_n \cong \RR^d$ and $T_{f_{\bM}(\bp)} \TT_{n+1} \cong \RR^d$). 

As  in Section~\ref{subsec:mapf}, assume that there exists an $f_{\bM}$-invariant splitting\indx{splitting!invariant} $G^s \oplus G^u$ of the tangent space~$T\TT$ of~$\TT$, and set $G^s_{\bp} = G^s \cap T_{\bp}\TT$, $G^u_{\bp} = G^u \cap T_{\bp}\TT$ for $\bp \in \TT$. Thus the derivative~$D(f_{\bM})$ and, hence, also the spaces $G_{\bp}^s$ and~$G_{\bp}^u$  depend only on the integer~$n$ satisfying $\bp \in \TT_n$, and we may define (by some abuse of notation) $G_n^s = G_{\bp}^s$\notx{Gs}{$G^s,G_p^s,G_n^s$}{stable subspace of a hyperbolic splitting} and $G_n^u = G_{\bp}^u$\notx{Gu}{$G^u,G_p^u,G_n^u$}{unstable subspace of a hyperbolic splitting} for $\bp \in \TT_n$, $n \in \ZZ$.  By $f_{\bM}$-invariance of the splitting, one has  $M_n^{-1}G_n^s =G_{n+1}^s$ and $M_n^{-1}G_n^u = G_{n+1}^u$ for all~$n$.

Summing up, in this case, the conditions required for $(\TT,f_{\bM})$ to be eventually Anosov can be written as follows. 
For some $n \in \ZZ$ (and, hence, for each $n \in \ZZ$), we have 
\begin{enumerate}[\upshape (i)]
\itemsep.5ex
\item
$\lim\limits_{k\to+\infty} \sup \{ \lVert M_{[n,k)}^{-1}\, \bx \rVert / \lVert \bx \rVert  \,:\, \bx \in G_n^s \setminus \{\mathbf{0}\} \} = 0$, 
\item
$\lim\limits_{k\to+\infty} \inf \{ \lVert M_{[n,k)}^{-1}\, \bx \rVert / \lVert \bx \rVert \,:\, \bx \in G_n^u \setminus \{\mathbf{0}\} \} = +\infty$, 
\item 
$\lim\limits_{k\to-\infty} \inf\{ \lVert M_{[k,n)}\, \bx \rVert / \lVert \bx \rVert \,:\, \bx \in G_n^s \setminus \{\mathbf{0}\} \}= +\infty$,
\item 
$\lim\limits_{k\to-\infty} \sup\{ \lVert M_{[k,n)}\,\bx \rVert / \lVert \bx \rVert \,:\, \bx \in G_n^u \setminus \{\mathbf{0}\} \} = 0$.
\end{enumerate}

We shall now prove that an $\cM$-adic mapping familiy is eventually Anosov if the corresponding sequence of matrices~$\bM$ admits strong convergence in the future and in the past (see Definition~\ref{def:wsc} for the definition of strong convergence). 
In this case, the families of generalized right and left eigenvectors~$\bu_n$ and~$\bv_n$ from \eqref{eq:unvn} are used to construct the splitting of the tangent space  for which the $\cM$-adic mapping family is eventually Anosov. 

\begin{proposition} \label{th:anosov} 
For $d \ge 2$ and $\cM\subset  \cM_d$, let $\bM \in \cM^{\ZZ}$ be a bi-infinite sequence of $d{\times}d$ unimodular nonnegative integer matrices. 
Assume that $\bM$ is two-sided primitive and admits strong convergence to the generalized right eigenvector $\bu$ in the future and to the generalized left eigenvector~$\bv$ in the past. Then the $\cM$-adic mapping family $(\TT,f_{\bM})$ associated to~$\bM$ is eventually Anosov for the splitting 
\begin{equation}\label{eq:220splitting}
G^s \oplus G^u = \coprod_{n\in\ZZ} G_n^s \oplus G_n^u = \coprod_{n\in\ZZ} \RR \bu_n \oplus \bv_n^\perp,
\end{equation}
where $\bu_n$ and~$\bv_n$ are as in~\eqref{eq:unvn} for all $n \in \ZZ$.
\end{proposition}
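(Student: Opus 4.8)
The plan is to verify, for the splitting in~\eqref{eq:220splitting}, first that it really is an $f_{\bM}$-invariant splitting of $T\TT$, and then the four limit conditions~(i)--(iv) characterizing eventually Anosov $\cM$-adic mapping families listed just before the statement. Throughout I write $\bu_n,\bv_n$ for the generalized right and left eigenvectors of $\Sigma^n\bM$ given by~\eqref{eq:unvn}; these exist because $\bM$ admits $\bu=\bu_0$ and $\bv=\bv_0$, and $\bu,\bv$ are in fact positive since e.g.\ $\bu=M_{[0,m)}\bw$ for a positive block $M_{[0,m)}$ (two-sided primitivity) and a nonzero $\bw\in\RR_{\ge0}^d$. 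First, using $\langle A\bx,\by\rangle=\langle\bx,\tr{A}\by\rangle$ and~\eqref{eq:unvn} one checks that $\langle\bu_n,\bv_n\rangle=\langle\bu,\bv\rangle=:c$ is independent of $n$, and $c>0$; hence $\bu_n\notin\bv_n^\perp$ and $\RR\bu_n\oplus\bv_n^\perp=\RR^d$. Again from~\eqref{eq:unvn} one gets $M_n^{-1}\bu_n=\bu_{n+1}$ and $\tr{M}_n\bv_n=\bv_{n+1}$, so $M_n^{-1}(\RR\bu_n)=\RR\bu_{n+1}$; and for $\bx\in\bv_n^\perp$, $\langle M_n^{-1}\bx,\bv_{n+1}\rangle=\langle\bx,(\tr{M}_n)^{-1}\bv_{n+1}\rangle=\langle\bx,\bv_n\rangle=0$, so $M_n^{-1}(\bv_n^\perp)=\bv_{n+1}^\perp$ (equality by dimension). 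This is the $f_{\bM}$-invariance of $G^s\oplus G^u$.

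\emph{Stable direction (conditions (i) and (iii)).} Since $G_n^s=\RR\bu_n$ is one-dimensional and $M_{[n,k)}\bu_k=\bu_n$ by~\eqref{eq:unvn}, conditions~(i) and~(iii) amount exactly to $\lVert\bu_k\rVert\to 0$ as $k\to+\infty$ and $\lVert\bu_k\rVert\to+\infty$ as $k\to-\infty$. I would derive these from two-sided primitivity alone: for $P\in\cM_d$ and $\bw\in\RR_{\ge0}^d$ one has $\lVert P\bw\rVert_1\ge\lVert\bw\rVert_1$ (column sums are $\ge1$), and $\lVert P\bw\rVert_1\ge d\,\lVert\bw\rVert_1$ when $P$ is positive. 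Writing $M_{[n,k)}$ as a product of $j$ disjoint positive blocks once $k$ is large (primitivity in the future) and using $\bu_n=M_{[n,k)}\bu_k$ gives $\lVert\bu_k\rVert_1\le d^{-j}\lVert\bu_n\rVert_1\to 0$; the case $k\to-\infty$ is symmetric via primitivity in the past.

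\emph{Unstable direction (conditions (ii) and (iv)).} From $M_{[n,k)}(\bv_k^\perp)=\bv_n^\perp$ and $M_{[k,n)}(\bv_n^\perp)=\bv_k^\perp$, conditions~(ii) and~(iv) reduce to $\lVert M_{[n,k)}|_{\bv_k^\perp}\rVert\to 0$ as $k\to+\infty$ and $\lVert M_{[k,n)}|_{\bv_n^\perp}\rVert\to 0$ as $k\to-\infty$. For~(ii): as $M_{[n,k)}$ sends $\bv_k^\perp$ into $\bv_n^\perp$, which is fixed by the projection $\pi_{\bu_n,\bv_n}$ of~\eqref{eq:projections}, one has $\lVert M_{[n,k)}|_{\bv_k^\perp}\rVert\le\lVert\pi_{\bu_n,\bv_n}M_{[n,k)}\rVert$; the $i$-th column of $\pi_{\bu_n,\bv_n}M_{[n,k)}$ is $\pi_{\bu_n,\bv_n}(M_{[n,k)}\be_i)$, and splitting $M_{[n,k)}\be_i$ into its $\RR\bu_n$-part (killed by $\pi_{\bu_n,\bv_n}$) and its orthogonal part shows this has norm $\le\bigl(1+\lVert\bu_n\rVert\lVert\bv_n\rVert/c\bigr)\,\mathrm{d}(M_{[n,k)}\be_i,\RR\bu_n)$, which tends to $0$ by strong convergence in the future of $\Sigma^n\bM$ (shift-invariance of Definition~\ref{def:wsc}), the prefactor being constant since $n$ is fixed. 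For~(iv) the mirror argument would need the prefactor $1+\lVert\bu_k\rVert\lVert\bv_k\rVert/c$ at the \emph{moving} index~$k$, which need not stay bounded; instead I would transpose and use strong convergence of $\Sigma^n\bM$ in the \emph{past}: the columns $\tr{M}_{[k,n)}\be_i$ converge in distance to $\RR\bv_n$ as $k\to-\infty$, so $\tr{M}_{[k,n)}$ equals, up to an error of operator norm $\le\sqrt{d}\max_i\mathrm{d}(\tr{M}_{[k,n)}\be_i,\RR\bv_n)\to 0$, a matrix all of whose columns are scalar multiples of $\bv_n$; transposing this identity and applying the result to $\bx\in\bv_n^\perp$ yields $\lVert M_{[k,n)}|_{\bv_n^\perp}\rVert\to 0$.

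\emph{Main obstacle.} The splitting and the one-dimensional stable estimates are routine. The delicate point is the asymmetry in the unstable direction: the naive projection estimate works for~(ii) because the contracting prefactor sits at the fixed reference level, but it degenerates for~(iv), where that level diverges; replacing it there by the transpose/approximate-rank-one estimate—and thus invoking strong convergence on the \emph{past} side—is the crux, and it is precisely why the hypothesis requires strong convergence in both directions rather than the uniform bounds of a strictly Anosov family.
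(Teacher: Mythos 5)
Your proof is correct and follows essentially the same route as the paper's: two-sided primitivity handles the one-dimensional stable direction $\RR\bu_n$, strong convergence in the future combined with the projection $\pi_{\bu_n,\bv_n}$ gives the expansion of $\bv_n^\perp$, and strong convergence in the past, exploited through the transposes $\tr{M}_{[k,n)}$ and the orthogonality $\bx\perp\bv_n$, gives its contraction. The only differences are presentational: you verify the invariance of the splitting explicitly and prove the uniform operator-norm estimates directly, whereas the paper argues pointwise by contradiction (the uniformity needed for the infimum in condition (ii) being implicit in its bound $\max_{\lVert\by\rVert\le C}\lVert\pi_{\bu,\bv}M_{[0,k)}\by\rVert\to 0$).
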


\begin{proof}
For each $n \in \ZZ$, the vectors $\bu_n$ and~$\bv_n$ are a right and left eigenvector of the two-sided primitive sequence $\Sigma^n \bM = (M_{k+n})_{k\in\ZZ}$, respectively. Therefore, these vectors have only positive entries.
We first prove that
\begin{align} 
\lim_{k\to+\infty} M_{[n,k)}^{-1}\, \bx  & = \mathbf{0} \qquad \mbox{for all}\ \bx \in \RR \bu_n, \label{eq:anosovthm1} \\
\lim_{k\to+\infty} \lVert M_{[n,k)}^{-1}\, \bx \rVert & = +\infty \quad \mbox{for all}\ \bx \in  \bv_n^\perp \setminus \{\mathbf{0}\}.
\label{eq:anosovthm2}
\end{align} 
These equations imply that $(\TT,f_{\bM})$ is eventually Anosov in the future with the splitting \eqref{eq:220splitting}. By~\eqref{eq:unvn} it suffices to prove \eqref{eq:anosovthm1} and \eqref{eq:anosovthm2} for $n = 0$. 

To prove~\eqref{eq:anosovthm1} for $n = 0$, assume that there exists $C > 0$ such that  $\lVert \bu_k\rVert_\infty$ is larger than~$C$ for infinitely many positive integers~$k$.  Primitivity and positivity of the coordinates of $\bu_k$ implies that the coordinates of $M_{[0,k)} \bu_k$ are unbounded in~$k$, which cannot be true because $\bu = M_{[0,k)} \bu_k$ holds by~\eqref{eq:unvn}. Thus $\lim_{k\to+\infty} \lVert M_{[0,k)}^{-1 }\bu \rVert_\infty = \lim_{k\to+\infty} \lVert \bu_k \rVert_\infty = 0$, which yields~\eqref{eq:anosovthm1} for~$n = 0$.

To prove~\eqref{eq:anosovthm2} for $n = 0$, let $\bx \in  \bv^\perp \setminus \{\mathbf{0}\}$. Assume that there exists $C > 0$ such that $\lVert M_{[0,k)}^{-1} \bx \lVert \leq C$ for infinitely many $k \ge 0$. 
Because $\bu \not\in \bv^\perp$, the strong convergence of~$\bM$ to~$\bu$ yields $\lim_{k\to\infty} \pi_{\bu,\bv} M_{[0,k)} \be_i = \mathbf{0}$ for all $i \in  \{1,\ldots,d\}$, thus 
\[
\lim_{k\to\infty} \max_{\lVert\by\rVert\le C} \lVert \pi_{\bu,\bv} M_{[0,k)} \by \rVert =  0.
\] 
Since  $ \lVert M_{[0,k)}^{-1} \bx \rVert \leq C$ for infinitely many~$k$, this implies that 
\[
\lVert \pi_{\bu,\bv} \bx \rVert = \liminf_{k\to\infty} \lVert \pi_{\bu,\bv} M_{[0,k)}  M_{[0,k)}^{-1} \bx \rVert \le \liminf_{k\to\infty} \max_{\lVert\by\rVert\le C} \lVert \pi_{\bu,\bv} M_{[0,k)} \by \rVert =  0
\]
and, hence, $\pi_{\bu,\bv} \bx = \mathbf{0}$.
This contradicts the fact that $\bx \in \bv^\perp \setminus \{\mathbf{0}\}$. This contradiction proves \eqref{eq:anosovthm2} for $n = 0$.

Next, we prove that the mapping family $(\TT,f_{\bM})$ is eventually Anosov in the past with the splitting \eqref{eq:220splitting}. 
This amounts to showing that
\begin{align} 
\lim_{k\to+\infty} \lVert M_{[-k,n)} \bx \rVert & = +\infty \quad \mbox{for all}\ \bx \in \RR \bu_n \setminus \{\mathbf{0}\}, \label{eq:anosovpast1} \\
\lim_{k\to+\infty} M_{[-k,n)} \bx  & = \mathbf{0} \qquad \mbox{for all}\ \bx \in  \bv_n^\perp. \label{eq:anosovpast2}
\end{align} 
To prove these equations we may again assume w.l.o.g.\ that $n=0$. Equation \eqref{eq:anosovpast1} follows immediately from primitivity in the past and by the fact that $\bu=\bu_0$ has positive coordinates.

To prove \eqref{eq:anosovpast2}, we use that $\bM$ converges strongly to~$\bv$ in the past, which implies that  
\begin{equation}\label{eq:vstrongNew}
\lim_{k\to +\infty} \mathrm{d}(\tr{\!M}_{[-k,0)}\,\be_i, \RR \bv) = 0\quad \hbox{for all}\ i\in  \{1,\ldots,d\}.
\end{equation}
Let $\bx \in \bv^\perp \setminus \{\mathbf{0}\}$ (the case $\bx = \mathbf{0}$ is trivial). 
Suppose that $M_{[-k,0)} \bx \in \bv_k^\perp$ does not converge to~$\mathbf{0}$ for $k \to \infty$. 
Then there exists $i \in \{1,\ldots,d\}$ such that the $i$-th coefficient of $M_{[-k,0)} \bx$ is greater than some constant $C > 0$ in modulus, for infinitely many~$k$. 
Combining this with~\eqref{eq:vstrongNew}, we may choose $k\in\NN$ in a way that there is $\gamma_k > 0$ with $\lVert \tr{\!M}_{[-k,0)}\, \be_i - \gamma_k \bv \rVert_2 < \frac{C}{\lVert\bx\rVert_2}$ and $|\langle \bx, \tr{\!M}_{[-k,0)}\, \be_i \rangle| > C$.
But then, because $\bx \in \bv^\perp \setminus \{\mathbf{0}\}$, we have
\[
\begin{aligned}
|\langle \bx, \tr{\!M}_{[-k,0)}\, \be_i\rangle| &= |\langle \bx, \tr{\!M}_{[-k,0)}\, \be_i - \gamma_k \bv \rangle + \langle \bx, \gamma_k \bv \rangle|
= |\langle \bx, \tr{\!M}_{[-k,0)}\, \be_i - \gamma_k \bv \rangle| \\
&\le  \lVert\bx\rVert_2 \cdot  \lVert \tr{\!M}_{[-k,0)}\, \be_i - \gamma_k \bv\rVert_2 < C,
\end{aligned}
\]
a contradiction. This proves \eqref{eq:anosovpast2} for $n = 0$.
\end{proof}

\begin{remark}
If $\bM$ converges strongly to~$\bu$ in the future and only weakly to~$\bv$ in the past, then we can only prove that $(\TT,f_{\bM})$ is eventually Anosov in the future.
The existence of generalized right and left eigenvectors is needed to define the splitting (directly for the generalized right eigenvector, and by duality for the generalized left eigenvector); weak convergence is enough to prove the desired limits 
\eqref{eq:anosovthm1} and \eqref{eq:anosovpast1} on the lines~$\RR \bu_n$, but strong convergence is needed to obtain the \eqref{eq:anosovthm2} and \eqref{eq:anosovpast2}
limits on the complementary hyperplanes~$\bv_n^\perp$.
\end{remark}

The following theorem, which is stated as Theorem~\nameref{t:B} in the introduction, is an immediate consequence of Proposition~\ref{th:anosov} and Theorem~\ref{th:matrixPisot}; see also Proposition~\ref{prop:strongcv2sided}.

\begin{theorem}\label{cor:anosov} 
Assume that $\bM = (M_n)_{n\in\ZZ} \in \cM_d^{\ZZ}$ is two-sided primitive and satisfies the two-sided local Pisot condition and the two-sided growth condition $\lim_{n\to\pm \infty} \frac{1}{n} \log \rVert M_n\lVert = 0$. 
Then $\bM$ admits strong convergence to a generalized right eigenvector $\bu$ in the future and to a generalized left eigenvector~$\bv$ in the past. The mapping family $(\TT,f_{\bM})$ associated to $\bM$ is eventually Anosov for the splitting 
\[
G^s \oplus G^u = \coprod_{n\in\ZZ} G_n^s \oplus G_n^u = \coprod_{n\in\ZZ} \RR \bu_n \oplus \bv_n^\perp,
\]
where $\bu_n$ and~$\bv_n$ are as in~\eqref{eq:unvn} for all $n \in \ZZ$.
\end{theorem}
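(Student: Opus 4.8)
The plan is to obtain the theorem by combining the two principal results of this section, so that the argument reduces to checking that the hypotheses line up in both time directions. Concretely, Theorem~\ref{th:matrixPisot} will supply the generalized eigenvectors together with the required convergence, and Proposition~\ref{th:anosov} will turn strong convergence in the future and in the past into the eventually Anosov property for the splitting in \eqref{eq:220splitting}.

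First I would establish the convergence assertion. The hypothesis that $\bM$ is two-sided primitive and satisfies the two-sided local Pisot condition and the two-sided growth condition $\lim_{n\to\pm\infty}\frac1n\log\lVert M_n\rVert=0$ contains, in particular, the ``future'' version of all three of these conditions. Hence Theorem~\ref{th:matrixPisot} applies directly to $\bM$ and yields a generalized right eigenvector $\bu$ (with rationally independent coordinates) to which $\bM$ converges exponentially, and in particular strongly; alternatively, this convergence follows from Proposition~\ref{prop:strongcv2sided}. For the past, I would apply the ``past version'' of Theorem~\ref{th:matrixPisot}, which is just Theorem~\ref{th:matrixPisot} applied to the sequence $(\tr{\!M}_{-n-1})_{n\in\ZZ}$; this sequence again lies in $\cM_d^{\ZZ}$, since transposition and reindexing preserve nonnegativity and unimodularity, and it is primitive and satisfies the local Pisot and growth conditions precisely because $\bM$ does so in the past. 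This produces a generalized left eigenvector $\bv$ such that $\bM$ converges strongly in the past.

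With strong convergence to $\bu$ in the future and to $\bv$ in the past in hand, I would then invoke Proposition~\ref{th:anosov} verbatim. Its hypotheses are exactly ``$\bM$ two-sided primitive, converging strongly to the generalized right eigenvector $\bu$ in the future and to the generalized left eigenvector $\bv$ in the past'', and its conclusion is precisely that the associated $\cM$-adic mapping family $(\TT,f_{\bM})$ is eventually Anosov for the splitting
\[
G^s \oplus G^u = \coprod_{n\in\ZZ} G_n^s \oplus G_n^u = \coprod_{n\in\ZZ} \RR\bu_n \oplus \bv_n^\perp ,
\]
with $\bu_n,\bv_n$ defined by \eqref{eq:unvn}. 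This is exactly the statement to be proved.

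Since all the substantial work is already contained in Theorem~\ref{th:matrixPisot} (which itself rests on the local Oseledets-type Lemma~\ref{l:localOseledets}, Propositions~\ref{prop:strongcv2sided} and~\ref{prop:domev}, and Corollaries~\ref{bst8.7} and~\ref{cor:indepNew}) and in Proposition~\ref{th:anosov}, I do not expect any genuine obstacle here. The only point requiring care is the future/past bookkeeping: one must check that passing to $(\tr{\!M}_{-n-1})_{n\in\ZZ}$ converts each past-hypothesis on $\bM$ into the matching future-hypothesis, and that the generalized left eigenvector produced in this way is the same $\bv$ that appears in \eqref{eq:unvn} and in the statement of Proposition~\ref{th:anosov}. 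This is purely notational, so in effect the ``hard part'' of the proof has been front-loaded into the earlier results and the present argument is a short assembly of them.
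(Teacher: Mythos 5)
Your proposal is correct and matches the paper's own argument: the paper derives this theorem as an immediate consequence of Theorem~\ref{th:matrixPisot} (applied in the future and, via the transposed reversed sequence, in the past) together with Proposition~\ref{th:anosov}, exactly as you do. The future/past bookkeeping you flag is indeed the only point of care, and it works out as you describe.
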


We refer to Example \ref{ex:AR1} and to Propositions~\ref{prop:brunpisot:2} and~\ref{prop:brunpisot:1} for examples of sequences $\bM\in \cM_d^{\ZZ}$ that satisfy the conditions of this corollary.

\section{Metric theory for bi-infinite sequences of matrices} \label{sec:metricmat}
In Section~\ref{sec:matrices}, we have set up a theory for a single sequence of matrices~$\bM\in\cM_d^{\ZZ}$, $d\ge 2$. Our aim in this section is to establish a metric theory that is valid for almost all sequences of matrices in a subset~$D$ of $\cM_d^{\ZZ}$. In Section~\ref{sec:oseledets} we define Lyapunov exponents for an invertible dynamical system, state the Oseledets Multiplicative Ergodic Theorem, and provide a metric version of the Pisot condition. Section~\ref {sec:induceddyn} states the definition of an induced dynamical system and shows the effect of inducing on Lyapunov exponents. This will be needed later in the context of multidimensional continued fraction algorithms. In Section~\ref{subsec:cocyclemat} we define $\cM$-adic cocycles  for shifts of sequences of matrices. We also clarify how the generic Pisot condition on a shift of sequences of matrices is related to the local Pisot condition on each single sequence of matrices contained in this shift. Finally, in Section~\ref{sec:hyperb} we provide a metric version of the main results of Section~\ref{sec:matrices} for shifts of sequences of matrices. In particular, we show the effect of the Pisot condition on the Oseledets splitting of such a shift.

\subsection{Oseledets splitting and the Pisot condition}\label{sec:oseledets}
In the present section, we study linear cocycles on ergodic dynamical systems. For convenience, we recall the definition of these objects.
First, a dynamical system $(X,F,\nu)$ with invariant measure~$\nu$ is \emph{ergodic}\indx{ergodic} if $F^{-1}(B) = B$ implies that $\nu(B) = 0$ or $\nu(X \setminus B) = 0$.
Two dynamical systems $(X,F,\nu)$ and $(X',F',\nu')$ are \emph{measurably conjugate}\indx{conjugacy!measurable} if there are measurable subsets $X_1 \subset  X$, $X'_1 \subset  X'$ of measure~$1$ and a bijection $g: X_1 \to X'_1$ with $g$ and~$g^{-1}$ being measurable, such that $g \circ F(x)= F' \circ g(x)$ for every $x \in X_1$, and $\nu' \circ g(U)= \nu(U)$ for every measurable set $U \subset X_1$.
The conjugacy is said \emph{topological}\indx{conjugacy!topological}  if  $X_1=X$, $X'_1=X'$ and $g$ is a homeomorphism.

\begin{definition}[{Linear cocycle; cf.\ e.g.\ \cite[Definition~1.1.1 and Section~2.1.2]{Arnold98}}]
\label{def:cocy}
Let $(X,F,\nu)$ be an invertible ergodic dynamical system.
Define a mapping $A: X \to \GL(d,\ZZ)$. Then
\begin{equation}\label{bcocycleAllg}  
A^{(n)}(x) = 
\begin{cases}
A(F^{n-1}(x)) \cdots A(F(x)) A(x) & \text{if}\ n > 0, \\
\mathrm{Id} & \text{if}\ n = 0, \\
A(F^{n}(x))^{-1} \cdots A(F^{-1}(x))^{-1} & \text{if}\ n < 0,
 \end{cases}
\end{equation}
\notx{An}{$A(\cdot),A^{(n)}(\cdot)$}{\hspace{.5em}linear cocycle}which is a function from $\ZZ \times X$ to $\GL(d,\ZZ)$,
is called a \emph{linear cocycle on $(X,F,\nu)$}\indx{cocycle!linear}. To keep things simple, we call $A$ a linear cocycle on $(X,F,\nu)$.
\end{definition}

A linear cocycle~$A$ is defined analogously for a noninvertible ergodic dynamical system $(X,F,\nu)$. In this case, $A^{(n)}$ makes sense only for $n\ge 0$.

Oseledets' Multiplicative Ergodic Theorem\indx{Oseledets!multiplicative ergodic theorem} is a powerful tool in the study of linear cocycles. 
Recommended references to this theorem are for instance ~\cite{Arnold98} and \cite{Viana:book}. In the case of invertible dynamical systems (for instance, a two-sided shift is invertible), Oseledets' Theorem yields a splitting of the space (see \cite[Theorem~3.4.11]{Arnold98}) reflecting the growth behavior of the cocycle. For noninvertible dynamical systems like for instance, one-sided shifts, it only provides a filtration; see \cite[Theorem~3.4.1]{Arnold98}. In this section, we state results around Oseledets' Theorem for general invertible ergodic dynamical systems in a way that suits our purposes and that will be needed later in various special cases. The Furstenberg Kesten theorem is stated in the following proposition.

\begin{proposition}[{cf.~\cite[Theorem~3.3.3 and~3.3.10]{Arnold98}}]\label{prop:genly}
Let $(X,F,\nu)$ be an invertible ergodic dynamical system with linear cocycle $A: X \to \GL(d,\ZZ)$. Assume that $A$ is log-integrable\indx{log-integrable}, in the sense that\footnote{Note that $\lVert\cdot\rVert_\infty$ is a matrix norm here. Because $A$ has determinant $\pm1$, by \cite[Remark~3.5]{BouLa:85} we do not need to consider $\int_X \log \max\{\lVert A(x)\rVert_{\infty},\lVert A(x)^{-1}\rVert_{\infty}\}d\nu(x) < \infty$ here.}
\begin{equation}\label{eq:logint} 
\int_X \log \lVert A(x)\rVert_{\infty} d\nu(x) < \infty. 
\end{equation}
Then the quantities~$\vartheta_1,\ldots,\vartheta_d$, which are recursively defined by
\begin{equation} \label{eq:LyGenericBothSides}
\vartheta_1 + \cdots +\vartheta_k = {\lim_{n\to\infty}} \frac{1}{n}\log \lVert\wedge^k A^{(n)}(x)\rVert \qquad (1 \le k \le d) \notx{theta}{$\vartheta_i,(\Theta_i,d_i)$}{Lyapunov exponent}
\end{equation}
exist and do not depend on~$x$ for almost all $x \in X$.
\end{proposition}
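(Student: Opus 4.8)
The statement is the Furstenberg--Kesten theorem (together with the subadditive ergodic theorem applied to exterior powers), so the plan is to reduce everything to Kingman's subadditive ergodic theorem. The plan is as follows.

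First I would fix $k \in \{1,\dots,d\}$ and consider the cocycle $\wedge^k A$, that is, the map $x \mapsto \wedge^k A(x)$ taking values in $\GL(\binom{d}{k},\RR)$, with associated cocycle $(\wedge^k A)^{(n)}(x) = \wedge^k (A^{(n)}(x))$. Set $g_n(x) = \log \lVert \wedge^k A^{(n)}(x) \rVert$. The key point is submultiplicativity of the operator norm together with the cocycle identity $A^{(n+m)}(x) = A^{(m)}(F^n x)\, A^{(n)}(x)$, which gives $\wedge^k A^{(n+m)}(x) = \wedge^k A^{(m)}(F^n x)\cdot \wedge^k A^{(n)}(x)$ and hence the subadditivity relation $g_{n+m}(x) \le g_n(x) + g_m(F^n x)$ for all $n,m \ge 0$. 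So $(g_n)_{n\ge0}$ is a subadditive sequence over the measure-preserving system $(X,F,\nu)$.

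Next I would check the integrability hypothesis of Kingman's theorem, namely $g_1 \in L^1(\nu)$ and $\inf_n \frac1n \int g_n\, d\nu > -\infty$. For the first, note $g_1(x) = \log \lVert \wedge^k A(x) \rVert$ and that there is a constant $c_{d,k}$ with $\lVert \wedge^k B \rVert \le c_{d,k} \lVert B \rVert_\infty^k$ for any $B \in \RR^{d\times d}$ (each $k\times k$ minor is a bounded polynomial in the entries), so $g_1(x) \le \log c_{d,k} + k \log \lVert A(x)\rVert_\infty$, which is integrable by the log-integrability assumption \eqref{eq:logint}; one also needs $g_1$ bounded below, which follows because $\wedge^k A(x)$ is invertible with $|\!\det \wedge^k A(x)| = |\!\det A(x)|^{\binom{d-1}{k-1}} = 1$, forcing $\lVert \wedge^k A(x)\rVert \ge 1$ in the operator norm (a matrix of norm $<1$ cannot have determinant of modulus $1$), hence $g_1 \ge 0$. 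The same determinant bound gives $g_n \ge 0$ for all $n$, so the infimum condition is trivially satisfied. Kingman's subadditive ergodic theorem then yields that $\frac1n g_n(x)$ converges $\nu$-almost everywhere and in $L^1$ to an $F$-invariant function; by ergodicity of $(X,F,\nu)$ this limit is $\nu$-almost everywhere a constant, which we call $\Lambda_k$. Define $\vartheta_1 + \cdots + \vartheta_k := \Lambda_k$, i.e. $\vartheta_k := \Lambda_k - \Lambda_{k-1}$ (with $\Lambda_0 = 0$); this gives \eqref{eq:LyGenericBothSides} and the constancy and existence claims.

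Two small points remain. First, Kingman's theorem as usually stated gives a limit in $[-\infty,\infty)$; here the lower bound $g_n \ge 0$ rules out the value $-\infty$, so $\Lambda_k$ is a genuine real number. Second, strictly speaking Proposition~\ref{prop:genly} only asserts the limits exist and are $\nu$-a.e.\ constant, which is exactly what the above gives; the fact that $\vartheta_1 \ge \cdots \ge \vartheta_d$ (asserted in the surrounding discussion but not in this statement) would follow from the standard inequality $\lVert \wedge^{k+1} B\rVert \cdot \lVert B \rVert \ge \lVert \wedge^k B\rVert^2$ applied along the subsequence, but is not needed for the statement proper. The one genuinely nontrivial input is Kingman's subadditive ergodic theorem itself, which I would simply cite (e.g.\ from \cite{Arnold98} or \cite{Viana:book}); everything else is the routine verification of its hypotheses via submultiplicativity of the operator norm and the unimodularity of $A$. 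The main obstacle, such as it is, is purely bookkeeping: making sure the exterior-power norm estimate and the determinant lower bound are stated cleanly enough that both the $L^1$ bound and the $\inf_n \frac1n\int g_n > -\infty$ condition are transparent.
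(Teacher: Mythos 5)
Your proof is correct and is the standard argument: the paper gives no proof of this proposition but simply cites Arnold's Theorems~3.3.3 and~3.3.10, and the proof in that reference is exactly your route — Kingman's subadditive ergodic theorem applied to $\log\lVert\wedge^k A^{(n)}\rVert$, with subadditivity from the cocycle identity and submultiplicativity, the $L^1$ upper bound from $\lVert\wedge^k B\rVert\le c_{d,k}\lVert B\rVert_\infty^k$ together with \eqref{eq:logint}, and the lower bound $g_n\ge 0$ from unimodularity. Your verification of the hypotheses (in particular using $|\det\wedge^k B|=1$ to exclude the value $-\infty$) is complete, so nothing is missing.
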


\begin{definition}[Lyapunov exponents and Lyapunov spectrum for generic sequences of matrices]
\label{def:genly}\indx{Lyapunov!generic exponent}\indx{Lyapunov!generic spectrum}
Let $(X,F,\nu)$ be an invertible ergodic dynamical system with log-integrable linear cocycle $A: X \to \GL(d,\ZZ)$. The numbers $\vartheta_1 \ge \cdots \ge \vartheta_d$ in \eqref{eq:LyGenericBothSides} are called the \emph{(generic) Lyapunov exponents} of the system $(X,F,\nu)$ for the linear cocycle~$A$.

Let $\Theta_1 > \Theta_2 > \cdots > \Theta_p$ be the different elements in the sequence $(\vartheta_i)_{1\le i\le d}$ of Lyapunov exponents ordered according to $\vartheta_1 \ge \vartheta_2 \ge \cdots \ge \vartheta_d$ and denote by~$d_i$ ($1\le i \le p$) the number of occurences of $\Theta_i$ in the sequence $(\vartheta_i)_{1\le i\le d}$. Then $\{(\Theta_i,d_i) : 1 \le i \le p\}$ is called the \emph{(generic) Lyapunov spectrum} of $(X,F,\nu)$.
\end{definition}

\begin{remark}\label{rem:oneLy}\label{fn03}
Note that only positive integers $n$ enter the definition of $\vartheta_1,\ldots, \vartheta_d$ in \eqref{eq:LyGenericBothSides}. Nevertheless, the Lyapunov exponents $\vartheta_1,\ldots, \vartheta_d$ of Definition~\ref{def:genly} are ``two-sided'' Lyapunov exponents and $\{(\Theta_i,d_i) : 1 \le i \le p\}$ is the ``two-sided'' Lyapunov spectrum because the matrices on the negative side generically have the same behaviour as the ones on the positive side by $F$-invariance of $\nu$.  In particular, we see from \cite[Theorem~3.3.3 and~3.3.10]{Arnold98} that for almost all $x\in X$ we have
\[
\vartheta_k = \lim_{n\to\infty} \frac1n \log \delta_k(A^{(n)}(x))\qquad (1\le k \le d),
\]
and if we go to the negative direction we gain
\begin{equation}\label{eq:LyapInverseNegative}
\vartheta_{d+1-k} = \lim_{n\to-\infty} \frac1n \log \delta_k(A^{(n)}(x))\qquad (1\le k \le d)
\end{equation}
almost all $x\in X$.
\end{remark}

The following proposition contains a version of Oseledets' Multiplicative Ergodic Theorem.

\begin{proposition}[{cf.~\cite[Theorem~3.4.11]{Arnold98}}]\label{prop:genly2}
Let $(X,F,\nu)$ be an invertible ergodic dynamical system with Lyapunov spectrum $\{(\Theta_i,d_i) : 1 \le i \le p\}$ and log-integrable linear cocycle~$A: X \to \GL(d,\ZZ)$ in the sense of (\ref{eq:logint}). Then, for $\nu$-a.e.\  $x \in X$, there is an \emph{Oseledets~splitting}\indx{splitting!Oseledets}\indx{splitting!invariant}\indx{Oseledets!splitting}
\begin{equation}\label{eq:oseledetsSplitting}
\RR^d = G^1(x) \oplus \cdots \oplus G^p(x)
\end{equation}
that is dynamically characterized by
\begin{equation}\label{ospaces}
{\lim_{n\to\pm\infty}} \frac{1}{n}\log \lVert A^{(n)}(x)\bz\rVert  = \Theta_i \ \Longleftrightarrow\  \bz \in G^i(x)\setminus \{\mathbf{0}\} \qquad (1 \le i \le p).
\end{equation}
The Oseledets splitting in \eqref{eq:oseledetsSplitting} satisfies $\dim (G^i(x)) = d_i$ and the invariance property $A^{(n)}(x)G^i(x) = G^i(F^nx)$, for all $i \in \{1,\dots,p\}$ and $n\in\ZZ$.
\end{proposition}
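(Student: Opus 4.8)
The plan is to deduce the statement directly from the invertible version of Oseledets' Multiplicative Ergodic Theorem, \cite[Theorem~3.4.11]{Arnold98}, after checking that its hypotheses hold in our setting. Invertibility and ergodicity of $(X,F,\nu)$ are assumed outright, and the existence of the Lyapunov spectrum $\{(\Theta_i,d_i):1\le i\le p\}$ together with the limits in \eqref{eq:LyGenericBothSides} is Proposition~\ref{prop:genly}. The only point requiring attention is integrability: Arnold's formulation asks for $\nu$-integrability of both $\log^+\lVert A(x)\rVert_\infty$ and $\log^+\lVert A(x)^{-1}\rVert_\infty$, whereas we have assumed only \eqref{eq:logint}. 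But since $A(x)\in\GL(d,\ZZ)$ has $\det A(x)=\pm1$, Cramer's rule expresses each entry of $A(x)^{-1}$ as a signed $(d{-}1)\times(d{-}1)$ minor of $A(x)$, so $\lVert A(x)^{-1}\rVert_\infty \le C_d\,\lVert A(x)\rVert_\infty^{d-1}$ for a constant $C_d$ depending only on~$d$; hence $\int_X \log\lVert A(x)^{-1}\rVert_\infty\,d\nu(x)<\infty$ follows from \eqref{eq:logint}. (This is precisely the content of the footnote to \eqref{eq:logint}, via \cite[Remark~3.5]{BouLa:85}.) With this observation, \cite[Theorem~3.4.11]{Arnold98} delivers the Oseledets splitting \eqref{eq:oseledetsSplitting}, its dynamical characterization \eqref{ospaces}, and the dimension and invariance statements verbatim, with $\Theta_i,d_i$ identified with the exponents and multiplicities of Definition~\ref{def:genly}.

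For the reader's orientation I would also recall the architecture of the argument, as it explains why invertibility is indispensable. One first applies the noninvertible Multiplicative Ergodic Theorem \cite[Theorem~3.4.1]{Arnold98} to the cocycle $A$ over $F$: subadditivity of $n\mapsto\log\lVert\wedge^k A^{(n)}(x)\rVert$ and Kingman's theorem yield the exponents, and an analysis of the growth rates of $\lVert A^{(n)}(x)\bz\rVert$ as $n\to+\infty$ produces a strictly decreasing \emph{forward filtration} $\RR^d=V^1(x)\supset V^2(x)\supset\cdots\supset V^{p+1}(x)=\{\mathbf{0}\}$ characterized by $\bz\in V^i(x)\setminus V^{i+1}(x)$ iff $\frac1n\log\lVert A^{(n)}(x)\bz\rVert\to\Theta_i$. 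Applying the same result to the inverse cocycle $A^{-1}$ over $F^{-1}$ — this is exactly where log-integrability of $A^{-1}$ is needed — gives a strictly increasing \emph{backward filtration} $\{\mathbf{0}\}=W^0(x)\subset W^1(x)\subset\cdots\subset W^p(x)=\RR^d$ governing the growth of $\lVert A^{(n)}(x)\bz\rVert$ as $n\to-\infty$. One then sets $G^i(x)=V^i(x)\cap W^i(x)$ and proves that $\dim G^i(x)=d_i$, that $V^i(x)=G^i(x)\oplus V^{i+1}(x)$ and $W^i(x)=W^{i-1}(x)\oplus G^i(x)$ on a set of full $\nu$-measure, and, combining the one-sided characterizations, that \eqref{ospaces} holds with the two-sided limit; the invariance $A^{(n)}(x)G^i(x)=G^i(F^nx)$ is then immediate from that of the two filtrations.

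The main obstacle, were one to reproduce the proof from scratch, is precisely this last step: establishing that the forward and backward filtrations are in general position almost everywhere, which rests on the oscillation estimates underlying Oseledets' theorem and on the $F$-invariance of~$\nu$. In the present monograph this transversality is imported wholesale from \cite{Arnold98}, so no further work is required beyond the hypothesis check in the first paragraph.
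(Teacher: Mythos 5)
The paper gives no proof of this proposition: it is stated as a direct citation of \cite[Theorem~3.4.11]{Arnold98}, with the integrability of $A^{-1}$ handled by the footnote to \eqref{eq:logint} via unimodularity, exactly as you argue with Cramer's rule. Your proposal therefore matches the paper's (implicit) approach; the additional sketch of the forward/backward filtration construction is accurate but is imported from Arnold rather than reproved here.
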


We need the following property of the Lyapunov exponents of a dynamical system. Note that this property concerns invertible dynamical systems and, hence, is two-sided;  cf.\ Remark~\ref{rem:oneLy}.

\begin{definition}[Generic Pisot condition for a cocycle]\label{def:gengenPisot}
\indx{Pisot!condition!generic}
Let $(X,F,\nu)$ be an invertible ergodic dynamical system with log-integrable linear cocycle $A: X \to \GL(d,\ZZ)$. We say that $(X,F,\nu)$ satisfies the \emph{(generic) Pisot condition} for~$A$ if its generic Lyapunov exponents $\vartheta_1 \ge \vartheta_2 \ge \cdots \ge \vartheta_d$ satisfy $\vartheta_1 > 0 > \vartheta_2$ for~$A$.  
\end{definition}

When we say that $(D,\Sigma,\nu)$ satisfies the generic Pisot condition for~$A$, then the log-integrability of~$A$ is implied (because otherwise the Lyapunov exponents are not defined). 

\subsection{Induced dynamical systems}\label{sec:induceddyn}
We will later consider induced rotations. For this reason we recall the following general definition for induced maps (also called  \emph{first return maps})\indx{return map}.

\begin{definition}[Induced dynamical system and induced map; {see for instance~\cite[Definition~8.11]{Hawkins:21}}]  \label{def:induction}\indx{induction}\indx{dynamical system!induced}
Let  a measurable dynamical system $(X,F,\nu)$ with $\nu(X)<\infty$ and a subset $Y \subset X$ be given. For each $x \in Y$, define the first return time\footnote{The return time exists $\nu$-a.e.\ by the Poincar\'e recurrence theorem.}
by $n(x) = \min\{m \ge 1 : F^{m}(x) \in Y\}$.  The map $G:\, Y \to Y$, $x \mapsto F^{n(x)}(x)$, is called the \emph{induced map} of $F$ on $Y$. The resulting dynamical system $(Y,G,\nu|_Y)$ is called the \emph{induced dynamical system} of $(X,F,\nu)$ on~$Y$. 

If $A$ is a linear coycle of  $(X,F,\nu)$ then $B: Y \to \GL(d,\ZZ)$, $y \mapsto A^{(n(y))}(y)$ is the linear cocycle of $(Y,G,\nu)$ associated to $A$.
\end{definition}

For ergodic invertible dynamical systems, the Lyapunov exponents behave quite regularly under induction. In particular, we have the following result.

\begin{lemma}[see {\cite[Proposition~4.18]{Viana:book}}]\label{lem:viana819}
Let $(X,F,\nu)$ with $\nu(X)<\infty$ be an invertible ergodic dynamical system with linear cocycle $A$
and let $Y \subset X$. Let $(Y,G,\nu)$ be the induced dynamical system of $(X,F,\nu)$ on~$Y$ and let $B$ be the linear cocycle of $(Y,G,\nu)$ associated to $A$.

If $\vartheta_1> \ldots > \vartheta_d$ are the Lyapunov exponents of $A$, then there is a constant $c\ge 1$ such that $c\vartheta_1> \ldots > c\vartheta_d$ are the Lyapunov exponents of $B$.
\end{lemma}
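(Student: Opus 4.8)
\textbf{Proof proposal for Lemma~\ref{lem:viana819}.}

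The plan is to reduce the statement to the Kac formula combined with the subadditive ergodic theorem, in the spirit of \cite[Proposition~4.18]{Viana:book}. First I would introduce the return time $n(\cdot)$ of $F$ on $Y$ and recall that, by Kac's lemma, $\int_Y n(y)\,d(\nu|_Y)(y) = \nu(X) < \infty$; in particular $n \in L^1(\nu|_Y)$. Write $c = \frac{1}{\nu(Y)}\int_Y n(y)\,d(\nu|_Y)(y) = \frac{\nu(X)}{\nu(Y)} \ge 1$; this will be the constant in the statement. The key combinatorial observation is that, for $y \in Y$, the $N$-th iterate of the induced map $G$ corresponds to the $S_N(y)$-th iterate of $F$, where $S_N(y) = n(y) + n(G y) + \cdots + n(G^{N-1} y)$ is the Birkhoff sum of $n$ along the $G$-orbit. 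Hence $B^{(N)}(y) = A^{(S_N(y))}(y)$ for $\nu$-a.e.\ $y \in Y$ and all $N \ge 1$.

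Next I would invoke ergodicity of $(Y,G,\nu|_Y)$ (which follows from ergodicity of $(X,F,\nu)$ together with $\nu(Y) > 0$, since $Y$ meets $\nu$-a.e.\ orbit) and apply Birkhoff's ergodic theorem to the $L^1$ function $n$: for $\nu$-a.e.\ $y \in Y$,
\[
\lim_{N\to\infty} \frac{S_N(y)}{N} = \frac{1}{\nu(Y)}\int_Y n\,d\nu = c.
\]
Now fix $1 \le k \le d$. By Proposition~\ref{prop:genly} applied to $(X,F,\nu)$ with cocycle $A$, we have $\lim_{m\to\infty}\frac1m \log\lVert \wedge^k A^{(m)}(x)\rVert = \vartheta_1 + \cdots + \vartheta_k$ for $\nu$-a.e.\ $x$, and this convergence can be upgraded (using the subadditive ergodic theorem, or Proposition~\ref{prop:genly2}) to hold along the subsequence $m = S_N(y)$, which tends to infinity. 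Therefore, for $\nu$-a.e.\ $y \in Y$,
\[
\lim_{N\to\infty} \frac1N \log \lVert \wedge^k B^{(N)}(y)\rVert = \lim_{N\to\infty} \frac{S_N(y)}{N}\cdot \frac{1}{S_N(y)}\log\lVert \wedge^k A^{(S_N(y))}(y)\rVert = c\,(\vartheta_1 + \cdots + \vartheta_k).
\]
Since $B$ is log-integrable with respect to $\nu|_Y$ (because $\int_Y \log\lVert A^{(n(y))}(y)\rVert_\infty\,d\nu \le \int_Y n(y)\log\max_{x}\lVert A(x)\rVert_\infty$-type bounds, or more carefully by subadditivity $\log\lVert A^{(n(y))}(y)\rVert_\infty \le \sum_{j<n(y)}\log\lVert A(F^j y)\rVert_\infty$ whose integral is finite by Kac), Proposition~\ref{prop:genly} applies to $(Y,G,\nu|_Y)$ and identifies the left-hand side as $\vartheta_1^B + \cdots + \vartheta_k^B$, the sum of the first $k$ Lyapunov exponents of $B$. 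Comparing for consecutive values of $k$ gives $\vartheta_i^B = c\,\vartheta_i$ for each $i$, and since $c > 0$ the strict inequalities $\vartheta_1 > \cdots > \vartheta_d$ are preserved, giving $c\vartheta_1 > \cdots > c\vartheta_d$ as claimed.

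The main obstacle I anticipate is the justification that the convergence in the Furstenberg--Kesten/Oseledets limit persists along the random subsequence $S_N(y)$ rather than along all of $\NN$: a priori the limit $\frac1m\log\lVert\wedge^k A^{(m)}(x)\rVert \to \vartheta_1+\cdots+\vartheta_k$ only controls the full sequence, and one needs that the "sampling times" $S_N(y)$ do not conspire with the fluctuations. The clean way around this is to work at the level of the subadditive cocycle $m \mapsto \log\lVert\wedge^k A^{(m)}(x)\rVert$ and apply Kingman's subadditive ergodic theorem directly to the induced system, or equivalently to note that $\frac1m\log\lVert\wedge^k A^{(m)}(x)\rVert$ converges $\nu$-a.e.\ as $m\to\infty$ through $\NN$ (not merely in mean), so composing with any deterministic-growth-rate subsequence $S_N(y) \to \infty$ is automatic once the a.e.\ convergence and $S_N(y)/N \to c$ are both in hand. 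This is exactly the argument of \cite[Proposition~4.18]{Viana:book}, which I would cite for the details while recording the above outline.
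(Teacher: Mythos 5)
Your proposal is correct and follows essentially the same route as the paper: the paper's proof consists of citing \cite[Proposition~4.18]{Viana:book} and remarking that the factor $c(x)$ there is constant by ergodicity, which is exactly the constant $c=\nu(X)/\nu(Y)$ you obtain via Kac's lemma and Birkhoff's theorem. Your outline simply fills in the details of Viana's argument (the identity $B^{(N)}(y)=A^{(S_N(y))}(y)$, a.e.\ convergence along the sampled times, and log-integrability of the induced cocycle), all of which are sound.
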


\begin{proof}
This immediately follows from \cite[Proposition~4.18]{Viana:book}. Just note that the function $c(x)$ occurring in this result is a constant in our case, because we assumed ergodicity.
\end{proof}

\subsection{Definition of $\cM$-adic cocycles} \label{subsec:cocyclemat}
Recall that $\cM_d$, $d \ge 2$, is the set of unimodular nonnegative integer $d{\times}d$ matrices. Equip the set~$\cM_d^{\ZZ}$ of two-sided infinite sequences of matrices in~$\cM_d$ with the product topology of the discrete topology on $\cM_d$, and let $D \subset \cM_d^{\ZZ} $ be a shift invariant set equipped with the subspace topology. Then $(D,\Sigma,\nu)$ stands for the measure-theoretical dynamical system which is defined by the shift $\Sigma$ on $D$ endowed with an ergodic  $\Sigma$-invariant Borel measure~$\nu$. The set $D$ is not assumed to be closed. To the ergodic dynamical system $(D,\Sigma,\nu)$, we relate two linear cocycles.

\begin{definition}[$\cM$-adic cocycles]\label{def:cocycle}
Let $\cM\subset \cM_d$, and let $(D,\Sigma,\nu)$ be an ergodic dynamical system with $D \subset \cM^{\ZZ}$. We define the following cocycles.

The linear cocycle $A_{\mathrm{tr}}: D \to \GL(d,\ZZ)$, $(M_n)_{n\in\ZZ} \mapsto \tr{\!M}_0$  is called \emph{($\cM$-adic) transpose cocycle}\indx{cocycle!transpose}. \notx{Atr}{$A_{\mathrm{tr}}$}{transpose cocycle}

The linear cocycle $A_{\mathrm{inv}}: D \to \GL(d,\ZZ)$, $(M_n)_{n\in\ZZ} \mapsto M_0^{-1}$  is called \emph{($\cM$-adic) inverse cocycle}\indx{cocycle!inverse}.
\notx{Atinv}{$A_{\mathrm{inv}}$}{inverse cocycle}
\end{definition}

\begin{remark}
One can now define the vector bundle morphism as the skew product ($\bM=(M_n)_{n\in\ZZ}$)
\[
\Phi_{\mathrm{tr}}:\, D \times \TT^d \to D \times \TT^d, \quad (\bM,\bx) \mapsto (\Sigma\bM, \tr{\!M}_0 \bx). 
\]
This mapping constitutes a \emph{random dynamical system}\indx{dynamical system!random} in the sense of \cite[Definition~1.1.1]{Arnold98} with \emph{base transformation}~$\Sigma$ and \emph{generator}~$A_{\mathrm{tr}}$. It can be defined for~$A_{\mathrm{inv}}$ analogously.
\end{remark}

For the iterates $A_{\mathrm{tr}}^{(n)}$ of~$A_{\mathrm{tr}}$, we have, for $n \in \ZZ$, that 
\begin{equation}\label{bcocycle}  
A_{\mathrm{tr}}^{(n)}(\bM) = \begin{cases} 
A_{\mathrm{tr}}(\Sigma^{n-1}\bM) \cdots A_{\mathrm{tr}}(\Sigma\bM)A_{\mathrm{tr}}(\bM) = \tr{\!M}_{[0,n)} & \text{if}\ n > 0, \\
\mathrm{Id} & \text{if}\ n = 0, \\
A_{\mathrm{tr}}(\Sigma^{n}\bM)^{-1} \cdots A_{\mathrm{tr}}(\Sigma^{-1}\bM)^{-1} = \tr{\!M}_{[n,0)}^{-1} & \text{if}\ n < 0. 
\end{cases}
\end{equation}
Iterates of~$A_{\mathrm{inv}}$ can be viewed in an analogous way by $A_{\mathrm{inv}}^{(n)}(\bM) = M_{[0,n)}^{-1}$ for $n \ge 0$ and  $A_{\mathrm{inv}}^{(n)}(\bM) = M_{[n,0)}$ for $n < 0$.

\begin{remark}\label{rem:invtr}
The generic Lyapunov exponents of the system $(D,\Sigma,\nu)$ for $A_{\mathrm{tr}}$ are $\vartheta_1,\ldots, \vartheta_d$ if and only if the  generic Lyapunov exponents of the system $(D,\Sigma,\nu)$ for $A_{\mathrm{inv}}$ are $-\vartheta_d,\ldots, -\vartheta_1$. This follows immediately from the definition of these two cocycles.
\end{remark}

\begin{definition}[Generic Pisot condition for sequences of matrices]\label{def:gengenPisot2}
\indx{Pisot!condition!generic}
The ergodic dynamical system $(D,\Sigma,\nu)$ satisfies the \emph{(generic) Pisot condition} (without qualifying the cocycle) if it satisfies the generic Pisot condition for  the linear cocycle~$A_{\mathrm{tr}}$.
\end{definition}

The lemma below relates the generic Pisot condition and the local Pisot conditions; see Definitions~\ref{def:genPisot} and~\ref{def:strongPisot}.

\begin{lemma}\label{lem:GloLocPisot}
For $d\ge 2$, let $(D,\Sigma,\nu)$, with $D \subset \cM_d^{\ZZ}$, be an ergodic dynamical system. 
\begin{enumerate}[\upshape (i)]
\item \label{it:glp1}
If $(D,\Sigma,\nu)$ satisfies the log-integrability condition \eqref{eq:logint}, then for $\nu$-almost all $\bM = (M_n)_{n\in\ZZ}\in D$ the growth condition   $\lim_{n\to\infty} \frac{1}{n} \log \lVert M_n\rVert = 0$ holds.
\item \label{it:glp2}
If $(D,\Sigma,\nu)$ satisfies the generic Pisot condition, i.e., $\vartheta_1 > 0 > \vartheta_2 \ge \cdots \ge \vartheta_d$ for the transpose cocycle~$A_{\mathrm{tr}}$, then, for $\nu$-almost all $\bM = (M_n)_{n\in\ZZ} \in D$, the growth condition, the two-sided strong local Pisot condition, and  the  two-sided local Pisot condition hold.
\end{enumerate}
\end{lemma}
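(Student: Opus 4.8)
The plan is to prove each of the two assertions of Lemma~\ref{lem:GloLocPisot} separately, deducing everything from the ergodic theorems (Birkhoff and Oseledets) applied to the cocycles $A_{\mathrm{tr}}$ and $A_{\mathrm{inv}}$ on $(D,\Sigma,\nu)$. For~(\ref{it:glp1}), the point is that the growth condition $\lim_{n\to\infty}\frac1n\log\lVert M_n\rVert = 0$ is a statement about a \emph{single} symbol $M_n = A_{\mathrm{tr}}(\Sigma^n\bM)^{\mathrm t}$ evaluated along the orbit. First I would observe that $\log\lVert M_n\rVert \ge 0$ since $M_n\in\cM_d$ has nonnegative integer entries and is nonzero, and that $\lVert M_n\rVert \le \lVert M_{[0,n)}\rVert \le \lVert M_{[0,n)}\rVert \cdot \lVert M_{[0,n)}^{-1}\rVert$ is dominated by a quantity whose logarithm is $L^1$ by \eqref{eq:logint}. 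More directly: set $g(\bM) = \log\lVert {}^t\!M_0\rVert_\infty = \log\lVert M_0\rVert_\infty$ (equivalence of norms), so $g\ge 0$ and $g\in L^1(\nu)$ by \eqref{eq:logint}. Then $g(\Sigma^n\bM) = \log\lVert M_n\rVert_\infty$, and the Birkhoff ergodic theorem (or, even more elementarily, the fact that for $g\in L^1$ one has $g(\Sigma^n\bM)/n \to 0$ $\nu$-a.e., which follows from $\sum_n \nu(\{g\circ\Sigma^n > \varepsilon n\}) $-type estimates via Borel--Cantelli together with $\Sigma$-invariance of $\nu$) gives $\lim_{n\to\infty}\frac1n g(\Sigma^n\bM) = 0$ for $\nu$-a.e.\ $\bM$. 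Since the same argument applied to $\Sigma^{-1}$ handles $n\to-\infty$, the growth condition holds two-sidedly a.e.

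For~(\ref{it:glp2}), assume the generic Pisot condition $\vartheta_1 > 0 > \vartheta_2 \ge \cdots \ge \vartheta_d$ for $A_{\mathrm{tr}}$; in particular $A_{\mathrm{tr}}$ is log-integrable, so~(\ref{it:glp1}) already gives the growth condition a.e., both in the future and (applying~(\ref{it:glp1}) also works verbatim in the past) in the past. Next I would invoke Proposition~\ref{prop:genly} and Remark~\ref{rem:oneLy}: for $\nu$-a.e.\ $\bM$ one has $\lim_{n\to\infty}\frac1n\log\delta_k(A_{\mathrm{tr}}^{(n)}(\bM)) = \vartheta_k$ for each $k$, and by \eqref{bcocycle} we have $A_{\mathrm{tr}}^{(n)}(\bM) = {}^t\!M_{[0,n)}$ for $n>0$. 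Since the singular values of a matrix and of its transpose coincide, $\delta_k({}^t\!M_{[0,n)}) = \delta_k(M_{[0,n)})$, hence $\lim_{n\to\infty}\frac1n\log\delta_k(M_{[0,n)}) = \vartheta_k$ for $\nu$-a.e.\ $\bM$. In particular $\vartheta_1 \ge \cdots \ge \vartheta_d$ are the \emph{local} Lyapunov exponents (in the sense of \eqref{def:Lyapu}) of $\bM$ for a.e.\ $\bM$, and $\vartheta_1 > 0 > \vartheta_2$ is exactly the strong local Pisot condition of Definition~\ref{def:strongPisot}. By Lemma~\ref{lem:strongweak}, the strong local Pisot condition implies the local Pisot condition, so $\bM$ satisfies the local Pisot condition (in the future) for a.e.\ $\bM$.

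It remains to get the \emph{past} versions. Here I would use Remark~\ref{rem:oneLy}, equation \eqref{eq:LyapInverseNegative}: for $\nu$-a.e.\ $\bM$, $\lim_{n\to-\infty}\frac1n\log\delta_k(A_{\mathrm{tr}}^{(n)}(\bM)) = \vartheta_{d+1-k}$. Unwinding \eqref{bcocycle} for $n<0$, $A_{\mathrm{tr}}^{(n)}(\bM) = {}^t\!M_{[n,0)}^{-1}$; writing $m = -n$ and using $\delta_k(N^{-1}) = \delta_{d+1-k}(N)^{-1}$ together with $\delta_k({}^tN) = \delta_k(N)$, one translates this into a statement about $\delta_k(M_{[-m,0)})$ as $m\to\infty$, and checks that it says exactly $\lim_{m\to\infty}\frac1m\log\delta_k(M_{[-m,0)}) = \vartheta_k$ for a.e.\ $\bM$. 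Equivalently and more cleanly: apply everything above to the cocycle $A_{\mathrm{tr}}$ of the \emph{inverted} system, i.e.\ observe that the local Pisot condition in the past for $\bM$ is, by Definition~\ref{def:genPisot}, the local Pisot condition in the future for the sequence $({}^t\!M_{-n-1})_{n\in\ZZ}$; the associated dynamical system is $(D',\Sigma,\nu')$ where $D'$ is the image of $D$ under $\bM\mapsto ({}^t\!M_{-n-1})_{n\in\ZZ}$ and $\nu'$ the pushforward, which is ergodic with the same Lyapunov spectrum for its transpose cocycle by $\Sigma$-invariance of $\nu$ and Remark~\ref{rem:oneLy}. Applying the already-proved future statements to $(D',\Sigma,\nu')$ then yields the growth condition, the strong local Pisot condition, and (via Lemma~\ref{lem:strongweak}) the local Pisot condition in the past for $\nu$-a.e.\ $\bM$, completing the proof. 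I expect the only mildly delicate point to be the careful bookkeeping with transposes, inverses, and the index reversal relating $\delta_k$ of $A_{\mathrm{tr}}^{(n)}$ for negative $n$ to $\delta_k(M_{[-m,0)})$; everything else is a direct citation of Proposition~\ref{prop:genly}, Remark~\ref{rem:oneLy}, and Lemma~\ref{lem:strongweak}.
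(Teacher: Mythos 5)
Your proof is correct, and it reaches the same conclusions as the paper while taking a noticeably lighter route in two places. For part~(\ref{it:glp1}) the paper builds a suspension flow over $(D,\Sigma,\nu)$ with roof function $\log\lVert M_0\rVert$ and reads the growth condition off the asymptotics of the return times $t_n$ via Birkhoff's theorem for the flow; your argument applies the elementary fact that $g\in L^1(\nu)$ implies $\frac1n g(\Sigma^n\bM)\to 0$ a.e.\ directly to $g(\bM)=\log\lVert M_0\rVert_\infty$, which is exactly the Borel--Cantelli alternative the paper itself credits to \cite[Lemma~3.4.3~(i)]{Arnold98} in a parenthetical remark. (The preliminary inequality $\lVert M_n\rVert\le\lVert M_{[0,n)}\rVert$ you mention in passing is neither needed nor obviously true for all norms, but you discard it immediately, so no harm is done.) For part~(\ref{it:glp2}), your treatment of the future direction coincides with the paper's: identify the a.e.\ limits $\frac1n\log\delta_k(M_{[0,n)})$ with the generic exponents $\vartheta_k$ via Proposition~\ref{prop:genly}, then invoke Lemma~\ref{lem:strongweak}. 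For the past direction the paper passes through Proposition~\ref{prop:genly2}, extracting a hyperplane contracted at rate $\vartheta_2$ under $\tr{\!M}_{[n,0)}^{-1}$ as $n\to-\infty$ and converting that into a bound on $\delta_2(\tr{\!M}_{[-n,0)})$ in the chain~\eqref{eq:nehlylocglo}; you instead quote \eqref{eq:LyapInverseNegative} from Remark~\ref{rem:oneLy} and do the transpose/inverse/index-reversal bookkeeping $\delta_k(\tr{\!M}_{[-m,0)}^{-1})=\delta_{d+1-k}(M_{[-m,0)})^{-1}$, which lands on the strong local Pisot condition in the past in one step. Your alternative reformulation via the conjugation $\bM\mapsto(\tr{\!M}_{-n-1})_{n\in\ZZ}$ (which intertwines $\Sigma$ with $\Sigma^{-1}$ and preserves ergodicity and the spectrum) is also sound. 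The trade-off is that the paper's hyperplane argument only uses the Oseledets filtration it has already set up and carefully justifies the existence of the limit, whereas your route leans on \eqref{eq:LyapInverseNegative} as a black box from Remark~\ref{rem:oneLy}; both are legitimate given how the paper states that remark.
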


\begin{proof}
To prove~(\ref{it:glp1}), we define a \emph{suspension flow}\indx{suspension flow}~$T_t$ over the shift $(D,\Sigma,\nu)$. To this end, let 
\[
\tau: D \to \RR_{\ge0};\quad (M_n)_{n\in\ZZ} \mapsto \log \lVert M_0\rVert = \log \lVert \tr{\!M}_0\rVert
\]
be the \emph{roof function} and
\[
D_\tau=\{ (\bM,x) \in D \times \RR_{\ge0} \,:\, 0\le x < \tau(\bM) \}.
\]
Now we define the suspension flow by 
\[
T_t: D_\tau \to D_\tau;\quad (\bM,x) \mapsto (\bM,x + t), 
\]
where $(\bM,x+\tau(\bM))$ is identified with $(\Sigma\bM,x)$. 
Let $\lambda$ be the Lebesgue measure on~$\RR_{\ge0}$.
Since $\tau \in L^1(\nu)$ holds by log-integrability, the measure
\[
\bar\nu = \frac{(\nu\times \lambda)|_{D_\tau}}{\int \tau d\nu}
\]
is well defined, and the flow $(D_\tau,T_t,\bar\nu)$ is ergodic; see \cite[Lemma~9.24~(2)]{EinsiedlerWard:11}.

We now follow \cite[Section~5.2]{DelHL}. For each $\delta>0$ small enough, we have $U = D \times [0,\delta]\subset D_\tau$, and by Birkhoff's ergodic theorem we gain
\begin{equation}\label{eq:genrec}
\lim_{t\to \infty} \frac{\lambda(\{s\in[0,t] : T_s(\bM,x) \in U \})}{t}= \bar\nu(U)
\end{equation}
for a.e.\ $(\bM,x) \in D_\tau$. 
Denote the $n$-th return time of~$T_t$ to $D {\times} \{0\}$ by $t_n = t_n(\bM,x)$. Then, \eqref{eq:genrec} implies that for a.e.\ $(\bM,x)\in D_\tau$ we have $\lim_{n\to \infty} {t_n}/{n} < \infty$ 
and, hence, for each $\varepsilon > 0$, there is $N \in \NN$ such that $t_{n+1} - t_n < \varepsilon n$ for each $n \ge N$. Thus, for a.e.\ $(\bM,x)\in D_\tau$, we have $\lim_{n\to\infty} \frac{1}{n} \log\lVert M_n\rVert = \lim_{n\to\infty} \frac{t_{n+1}-t_n}{n} = 0$ and, hence, the growth condition holds for a.e.\ $\bM\in D$; a~different proof of this fact, which is based on the Borel--Cantelli Lemma, is contained in \cite[Lemma~3.4.3~(i)]{Arnold98}. 

It remains to prove~(\ref{it:glp2}). Because the generic Pisot condition contains the log-integrability condition, the growth condition holds for a.e.\ $\bM\in D$ by (i). Thus, in view of Lemma~\ref{lem:strongweak}, we have to show that a.e.\ $\bM\in D$ satisfies the two-sided strong local Pisot condition.

As for the future, because $(D,\Sigma,\nu)$ satisfies the Pisot condition for the cocycle~$A_{\mathrm{tr}}$, by \eqref{eq:LyGenericBothSides} we have for a.e.\ $\bM \in D$ that
\[
\begin{aligned}
\vartheta_1 + \cdots +\vartheta_k & = \lim_{n\to\infty} \frac{1}{n}\log \lVert\wedge^k A^{(n)}(\bM)\rVert = \lim_{n\to\infty} \frac{1}{n} \log \lVert\wedge^k \, \tr{\!M}_{[0,n)} \rVert \\
& = \lim_{n\to\infty} \frac{1}{n}\log \lVert\wedge^k M_{[0,n)}\rVert
\end{aligned}
\]
holds for all $k \in \{1,\dots,d\}$, with $\vartheta_1 > 0 > \vartheta_2$.  Thus a.e.\ $\bM \in D$ satisfies the strong local Pisot condition in the future. 

For the past, we argue as follows. By Proposition~\ref{prop:genly2}, the Pisot condition of $(D,\Sigma,\nu)$ for the cocycle~$A_{\mathrm{tr}}$ implies that for a.e.\ $\bM \in D$ there is a hyperplane~$H$ in $\RR^d$ such that
\[
\lim_{n\to -\infty} \frac{1}{n} \log \lVert \tr{\!M}_{[n,0)}^{-1}\bx \rVert \le \vartheta_2 < 0
\]
holds for each $\bx \in H\setminus\{\mathbf{0}\}$. Because $n$ tends to~$-\infty$, this implies that there exists $N\in \ZZ$ such that for each $\bx \in H$ with $\lVert \bx \rVert = 1$ and each $n \le N$ we have $\lVert \tr{\!M}_{[n,0)}^{-1}\bx \rVert \ge e^{\vartheta_2n/2}$. As each $\bx\in H$ with $\lVert\bx\rVert=1$ is a linear combination of the standard basis vectors with coefficients bounded by $1$ in modulus, $N$~is uniform for all $\bx \in H$ with $\lVert \bx \rVert = 1$. Thus, by the definition of the $(d{-}1)$-st singular value, $\delta_{d-1}(\tr{\!M}_{[n,0)}^{-1}) \ge e^{\vartheta_2n/2}$ holds for $n \le N$, and we get 
\begin{equation}\label{eq:nehlylocglo}
\begin{aligned}
0 > \frac{\vartheta_2}{2} & \ge  \lim_{n\to -\infty} \frac{1}{n} \log \delta_{d-1}(\tr{\!M}_{[n,0)}^{-1})  \\
& = -\lim_{n\to \infty} \frac{1}{n} \log \delta_{d-1}(\tr{\!M}_{[-n,0)}^{-1})  \\
& = \lim_{n\to \infty} \frac{1}{n} \log \delta_{2}(\tr{\!M}_{[-n,0)});
\end{aligned}
\end{equation}
note that the a.e.\ existence of the limit in the first line of \eqref{eq:nehlylocglo} follows from \eqref{eq:LyGenericBothSides}.
Thus a.e.\ $\bM \in D$ satisfies the strong local Pisot condition in the past as well. 
\end{proof}

\begin{remark}\label{rem:GloLocPisot}
For the local Pisot condition in Definition~\ref{def:genPisot}, we worked with a limit superior instead of a limit. For the generic Pisot condition in Definition~\ref{def:gengenPisot2} we use limits and, hence, Lyapunov exponents, because the limits in \eqref{eq:LyGenericBothSides} exist a.e.\ in view of Proposition~\ref{prop:genly} (indeed, these limits even exist in the past). This means that the generic Pisot condition of an invertible dynamical system, although defined only in the future, is always two-sided. So, generically, the Pisot condition in the future coincides with the two-sided one. 
Thus Definition~\ref{def:gengenPisot2} is the appropriate generic version of the two-sided Pisot condition in Definition~\ref{def:genPisot}. 

Moreover, we did not include the growth condition $\lim_{n\to\infty} \frac{1}{n} \log \lVert M_n\rVert = 0$ in the definition of the local Pisot condition because the local Pisot condition can hold even if the growth condition fails. On the contrary, the log-integrability, which is the generic counterpart of the growth condition, is needed for stating the generic Pisot condition.
\end{remark}

\subsection{Hyperbolic splitting for sequences of matrices}\label{sec:hyperb}
By Remark~\ref{rem:invtr}, the  Pisot condition for~$A_{\mathrm{tr}}$ guarantees the existence of an invariant Oseledets splitting into a stable space of dimension~$1$ and an unstable space of codimension~$1$ for the cocycle~$A_{\mathrm{inv}}$. This is one of the assertions of the following theorem which summarizes important consequences of the generic Pisot condition. It forms a metric version of Theorem~\ref{th:matrixPisot} and Corollary~\ref{cor:anosov}. In its statement, for a dynamical system $(D,\Sigma,\nu)$, with $D\subset \cM_d^\ZZ$, we call a set of the form
\[
[Z_0,\ldots,Z_{\ell-1}] = \{(M_n)_{n\in\ZZ} \in  D\,:\, M_0=Z_0, \ldots,M_\ell=Z_3\} \notx{0cylinder}{$[\ldots]$}{cylinder set}
\] 
a \emph{cylinder set}\indx{cylinder set}. Here, $\ell\in\NN$ and $Z_0,\ldots, Z_{\ell-1} \in \cM_d$.

\begin{theorem}\label{thm:oseledetsmat}
For $d\ge 2$, let $(D,\Sigma,\nu)$ with $D \subset \cM_d^{\ZZ}$ be an ergodic dynamical system which contains a cylinder set $[Z_0,\ldots,Z_{\ell-1}]$ of positive measure corresponding to a positive matrix $Z_{[0,\ell)}$.
If $(D,\Sigma,\nu)$ satisfies the Pisot condition for the Lyapunov exponents of the cocycle~$A_{\mathrm{tr}}$, then, for $\nu$-almost all $\bM \in D$,  the following assertions hold.
\begin{enumerate}[\upshape (i)]
\item
The sequence~$\bM$ is two-sided primitive and algebraically irreducible (in the future).
\item
The sequence~$\bM$ converges exponentially to a generalized right eigenvector $\bu$ in the future and to the generalized left eigenvector~$\bv$ in the past.
\item
The mapping family $(\TT,f_{\bM})$ associated to~$\bM$ is eventually Anosov for the hyperbolic splitting\indx{splitting!hyperbolic}   
\begin{equation}\label{eq:hyperbolic}
G^s(\bM) \oplus G^u(\bM) = \coprod_{n\in\ZZ} G_n^s(\bM) \oplus G_n^u(\bM) \end{equation}
with 
\[
G^s_n(\bM) = \RR\bu_n \quad\hbox{and}\quad G^u_n(\bM)= \bv_n^\perp,
\]
where $ \bu_n$ and $\bv_n$ are the generalized right and left eigenvectors of $\Sigma^n\bM$ defined in \eqref{eq:unvn}.
\item
There is a relation between the Anosov splitting of $(\TT,f_{\bM})$ and the Oseledets splittings \eqref{eq:oseledetsSplitting} of~$\bM$ for the cocycles $A_{\mathrm{inv}}$ and~$A_{\mathrm{tr}}$. In particular, for each $n \in \ZZ$, we have (setting $G^i = G_{\mathrm{inv}}^p$ for the inverse and  $G^i = G_{\mathrm{tr}}^p$ for the transpose splitting) 
\begin{equation}\label{eq:osele2}
\begin{aligned}
G^s_n(\bM) & = G_{\mathrm{inv}}^p(\Sigma^n\bM) = \RR\bu_n, \\
G^u_n(\bM) & = G_{\mathrm{inv}}^1(\Sigma^n\bM) \oplus \cdots \oplus G_{\mathrm{inv}}^{p-1}(\Sigma^n\bM) = \bv_n^\perp, \\
G^s_n(\bM)^\perp & = G_{\mathrm{tr}}^2(\Sigma^n\bM) \oplus \cdots \oplus G_{\mathrm{tr}}^{p}(\Sigma^n\bM) = \bu_n^\perp, \\
G^u_n(\bM)^\perp & = G_{\mathrm{tr}}^1(\Sigma^n\bM) = \RR\bv_n.
\end{aligned}
\end{equation}
\end{enumerate}
\end{theorem}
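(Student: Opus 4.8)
The plan is to derive all four assertions by combining the metric results of Section~\ref{sec:oseledets} with the single-sequence theory of Section~\ref{sec:matrices}, the bridge being Lemma~\ref{lem:GloLocPisot}. First I would observe that the hypotheses of Lemma~\ref{lem:GloLocPisot} are met: the generic Pisot condition for $A_{\mathrm{tr}}$ includes log-integrability, so part~(\ref{it:glp2}) of that lemma applies. Hence, for $\nu$-almost every $\bM\in D$, the growth condition $\lim_{n\to\infty}\frac1n\log\lVert M_n\rVert=0$ holds together with the two-sided (strong) local Pisot condition. Next I would handle primitivity. Since $[Z_0,\ldots,Z_{\ell-1}]$ has positive measure and $Z_{[0,\ell)}$ is positive, Birkhoff's ergodic theorem applied to the indicator of this cylinder gives that, for $\nu$-a.e.\ $\bM$, the block $(Z_0,\ldots,Z_{\ell-1})$ occurs in $\bM$ infinitely often both in the future and in the past; each such occurrence yields a positive factor $M_{[k,k+\ell)}=Z_{[0,\ell)}$, so $\bM$ is two-sided primitive. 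Algebraic irreducibility in the future then follows from Corollary~\ref{bst8.7}, since a primitive sequence satisfying the local Pisot condition is algebraically irreducible. This proves~(i).

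For~(ii) and~(iii), I would invoke Theorem~\ref{th:matrixPisot} (equivalently Corollary~\ref{cor:anosov}): a two-sided primitive sequence satisfying the two-sided local Pisot and the two-sided growth condition admits a generalized right eigenvector $\bu$ with rationally independent coordinates to which it converges exponentially in the future, and analogously a generalized left eigenvector $\bv$ in the past. Feeding exponential (hence strong) convergence into Proposition~\ref{th:anosov} shows that the $\cM$-adic mapping family $(\TT,f_{\bM})$ is eventually Anosov with the splitting $G^s_n(\bM)=\RR\bu_n$, $G^u_n(\bM)=\bv_n^\perp$, where $\bu_n,\bv_n$ are the shifted eigenvectors of~\eqref{eq:unvn}. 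This is exactly~(iii), and~(ii) is the convergence statement extracted along the way.

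The only genuinely new content is~(iv), the identification of the Anosov splitting with the Oseledets splittings. By Remark~\ref{rem:invtr}, the generic Lyapunov exponents of $(D,\Sigma,\nu)$ for $A_{\mathrm{inv}}$ are $-\vartheta_d\ge\cdots\ge-\vartheta_1$, so the Pisot condition $\vartheta_1>0>\vartheta_2$ means the top exponent for $A_{\mathrm{inv}}$ is $-\vartheta_d>0$ (with its multiplicity) down to a simple bottom exponent $-\vartheta_1<0$; in particular the smallest Oseledets subspace $G_{\mathrm{inv}}^p(\Sigma^n\bM)$ is one-dimensional and is characterized by $\frac1m\log\lVert A_{\mathrm{inv}}^{(m)}(\Sigma^n\bM)\bz\rVert\to-\vartheta_1<0$, i.e.\ by $\lVert M_{[n,n+m)}^{-1}\bz\rVert\to0$. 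I would show this forces $\bz\in\RR\bu_n$: exponential convergence of $\bM$ to $\bu$ gives $\lVert M_{[0,k)}^{-1}\bu\rVert=\lVert\bu_k\rVert_\infty\to0$ (as in the proof of Proposition~\ref{th:anosov}), while any $\bz\notin\RR\bu_n$ has a nonzero component along the complementary hyperplane $\bv_n^\perp$, which is expanded by~\eqref{eq:anosovthm2}; hence $G_{\mathrm{inv}}^p(\Sigma^n\bM)=\RR\bu_n=G^s_n(\bM)$, and the complementary sum of Oseledets subspaces for $A_{\mathrm{inv}}$ is $\bv_n^\perp=G^u_n(\bM)$, giving the first two lines of~\eqref{eq:osele2}. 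For the last two lines I would use that $A_{\mathrm{tr}}$ is, up to transposition, inverse-conjugate to $A_{\mathrm{inv}}$: the Oseledets subspace $G_{\mathrm{tr}}^1(\Sigma^n\bM)$ for the top exponent $\vartheta_1>0$ of $A_{\mathrm{tr}}$ is the line on which $\lVert\tr{\!M}_{[n,n+m)}\bz\rVert$ grows fastest, and by Proposition~\ref{prop:fur}/Definition~\ref{def:gea} in the past (applied to $\tr{\!M}$) this is exactly $\RR\bv_n$; orthogonality relations between the Oseledets splittings of a cocycle and its transpose-inverse (standard, e.g.\ via the duality in \cite[Section~3.4]{Arnold98}) then identify $G^s_n(\bM)^\perp=\bu_n^\perp$ with $\bigoplus_{i\ge2}G_{\mathrm{tr}}^i(\Sigma^n\bM)$ and $G^u_n(\bM)^\perp=\RR\bv_n$ with $G_{\mathrm{tr}}^1(\Sigma^n\bM)$.

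\textbf{Main obstacle.} The routine parts are~(i)--(iii), which are essentially bookkeeping on top of already-proved statements. The delicate point is~(iv): one must be careful that the dynamical characterizations~\eqref{ospaces} of the Oseledets subspaces (which involve limits as $n\to\pm\infty$ of $\frac1n\log\lVert A^{(n)}(x)\bz\rVert$) match the cone-intersection descriptions of $\bu_n$ and $\bv_n^\perp$ coming from Furstenberg's theorem (Proposition~\ref{prop:fur}), and that the transpose/inverse duality of Oseledets splittings is applied with the correct orientation of time. Making the orthogonality statements in the last two lines of~\eqref{eq:osele2} precise — rather than merely asserting them — is where the real work lies, and I would spell out the pairing $\langle G_{\mathrm{inv}}^i(\Sigma^n\bM),\,G_{\mathrm{tr}}^j(\Sigma^n\bM)\rangle=0$ for $i\neq j$ explicitly before reading off the claimed identities.
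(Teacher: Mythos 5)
Your proposal is correct and follows essentially the same route as the paper: Lemma~\ref{lem:GloLocPisot} to pass from the generic to the two-sided local Pisot and growth conditions, recurrence of the positive cylinder for two-sided primitivity, Corollary~\ref{bst8.7} for irreducibility, Proposition~\ref{prop:strongcv2sided} and Corollary~\ref{cor:anosov} for (ii)--(iii), and the dynamical characterization of the Oseledets subspaces via the Anosov limits for (iv). The duality step you defer to a standard reference is exactly what the paper supplies in two lines, via the inequality $\lVert M_{[0,n)}^{-1}\bx\rVert_2\cdot\lVert\tr{\!M}_{[0,n)}\by\rVert_2\ge|\langle\bx,\by\rangle|$ applied to suitably chosen $\bx,\by$ together with a recursive dimension count starting from $i=1$.
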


\begin{proof}
From Lemma~\ref{lem:GloLocPisot}, a.e.\ $\bM \in D$ satisfies the two-sided  (local) Pisot condition and the growth condition. Thus we may apply our results from Section~\ref{sec:LyapunovPisot2}  and  ~\ref{sec:LyapunovPisot3} almost everywhere in $D$.

We start with Assertion~(i). By assumption, there exists a cylinder set of positive measure in~$D$ corresponding to a  positive  matrix. This implies two-sided primitivity for $\nu$-a.e.\ sequence $\bM \in D$ by ergodicity of~$D$ and the Poincar\'e recurrence theorem. Since a.e.\ algebraic irreducibility is a consequence of Corollary~\ref{bst8.7}, Assertion (i) follows. Assertion~(ii) follows from Proposition~\ref{prop:strongcv2sided}, and Corollary~\ref{cor:anosov} can be applied to get~(iii).  

To prove~(iv), we have to characterize the Oseledets splitting \eqref{ospaces} for the inverse and the transpose cocycle. Let $\{(\Theta_i,d_i) : 1 \le i \le p\}$ be the Lyapunov spectrum for~$A_{\mathrm{tr}}$. By the definition of Lyapunov exponents in \eqref{eq:LyGenericBothSides}, $A_{\mathrm{inv}}$~has the Lyapunov exponents $-\vartheta_d \ge \dots \ge -\vartheta_2 > 0 > -\vartheta_1$ and, hence, Lyapunov spectrum $\{(-\Theta_i,d_i) : 1 \le i \le p\}$.
We have that $\bx \in G_{\mathrm{inv}}^p(\bM)$ if and only if $\lVert A_{\mathrm{inv}}^{(n)}(\bM)\bx\rVert $ grows like $e^{{-\Theta_1}n}$, with $\Theta_1 = \vartheta_1 > 0$. By Corollary~\ref{cor:anosov}, see in particular \eqref{eq:anosovthm1} and \eqref{eq:anosovpast1}, we have 
\[
\lim_{n\to+\infty} M_{[0,n)}^{-1}\, \bx  = \mathbf{0}, \quad \lim_{n\to-\infty} \lVert M_{[n,0)}\bx  \lVert = +\infty, \quad \mbox{for all}\ \bx \in  \RR\bu \setminus \{\mathbf{0}\}, 
\]
hence, $G_{\mathrm{inv}}^p(\bM) = \RR\bu$.
The same reasoning applies to~$\bv^\perp$. Again by Corollary~\ref{cor:anosov} this time we refer also to \eqref{eq:anosovthm2} and \eqref{eq:anosovpast2}, we gain
\[ 
\lim_{n\to+\infty} \lVert M_{[0,n)}^{-1}\, \bx \lVert = +\infty, \quad \lim_{n\to-\infty}  M_{[n,0)}\bx = \mathbf{0}, \quad  \mbox{for all}\ \bx \in \bv^\perp \setminus \{\mathbf{0}\}, 
\]
therefore $G_{\mathrm{inv}}^1(\bM) \oplus \cdots \oplus G_{\mathrm{inv}}^{p-1}(\bM) = \bv^\perp$. Now, the first two lines of~(iv) follow from the invariance property of the Oseledets spectrum stated in Propostion~\ref{prop:genly2} and the definition of $\bu_n$ and~$\bv_n$ in \eqref{eq:unvn}. The other two lines follow because 
\[
G_{\mathrm{tr}}^i(\Sigma^n\bM) = \bigg( \prod_{j\neq i} G_{\mathrm{inv}}^{p-j+1}(\Sigma^n\bM) \bigg)^\perp \qquad (1\le i\le p,\, n\in\NN).
\]
To see this, pick $\bx \in G_{\mathrm{inv}}^{p-i+1}(\bM)$ and $\by \in \big( \prod_{j\neq i} G_{\mathrm{inv}}^{p-j+1}(\bM) \big)^\perp$ satisfying $\langle \bx,\by\rangle\not=\mathbf{0}$, which is possible because $\prod_{j=1}^d G_{\mathrm{inv}}^{j}(\bM)=\RR^d$. Since
\[
\lVert M_{[0,n)}^{-1} \bx \rVert_2 \cdot \lVert \tr{\!M}_{[0,n)} \by\rVert_2  \ge  \langle M_{[0,n)}^{-1} \bx, \tr{\!M}_{[0,n)} \by \rangle = \langle \bx,\by \rangle \neq 0,
\] 
we gain ${\lim_{n\to\pm\infty}} \frac{1}{n} \log \lVert \tr{\!M}_{[0,n)}\by\rVert \ge \vartheta_i$.
Starting with $i=1$, one recursively checks that this inequality actually has to be an equality.
\end{proof}

\begin{example}\label{ex:AR2}
For the set $\cM_{\rAR} = \{M_{\rAR,1},M_{\rAR,2}, M_{\rAR,3}\}$ of Arnoux--Rauzy matrices defined in Example~\ref{ex:AR1}, consider the full shift $(\cM_{\rAR}^{\ZZ},\Sigma,\nu)$, where $\nu$ is an ergodic shift invariant probability measure with full support. Because $\cM_{\rAR}$ is finite, the log-integrability condition is trivially satisfied. Moreover, by \cite[Proposition~6.4]{BST:23} (see also its proof), the shift $(\cM_{\rAR}^{\ZZ},\Sigma,\nu)$ satisfies the generic Pisot condition. Finally, because $\nu$ has full support, the cylinder set
\[
\begin{aligned}
& [M_{\rAR,1},M_{\rAR,2},M_{\rAR,3}] \\
& \hspace{3em} = \{(M_n)_{n\in\ZZ} \in  \cM_{\rAR}^{\ZZ}\,:\, M_0 = M_{\rAR,1}, M_1 = M_{\rAR,2}, M_2 = M_{\rAR,3}\}
\end{aligned}
\]
has positive measure. Because $M_{\rAR,1} M_{\rAR,2} M_{\rAR,3}$ is a positive matrix, $(\cM_{\rAR}^{\ZZ},\Sigma,\nu)$ satisfies all conditions of Theorem~\ref{thm:oseledetsmat}.

Thus $\nu$-a.e.\ sequence $\bM\in \cM_{\rAR}^{\ZZ}$ is two-sided primitive and algebraically irreducible. It converges exponentially to a left and right eigenvector in the future and in the past, respectively. Moreover, the mapping family $(\TT,f_{\cM})$ associated to it is eventually Anosov w.r.t.\ the splittings defined in Theorem~\ref{thm:oseledetsmat}~(iii) and~(iv).
\end{example}

We will see in Proposition~\ref{prop:Brun23Pisot} that the Brun continued fraction algorithm gives rise to sequences of matrices that satisfy the conditions of Theorem~\ref{thm:oseledetsmat} for $d\in\{3,4\}$.

\chapter{Multidimensional continued fraction algorithms}\label{sec:cf}

In this chapter we develop the parts of the theory of multidimensional continued fraction algorithms that are relevant for us. In particular, Section~\ref{sec:cfgentheory} contains the definition of a multidimensional continued fraction algorithm, presents the construction of geometric natural extensions for these algorithms, and shows how they are related to mapping families. In Section~\ref{sec:Brun} we provide a detailed treatment of different versions of the Brun continued fraction algorithm, calculate their natural extensions and invariant measures, and show that they satisfy the Pisot condition.

\section[General theory of continued fraction algorithms]{General theory of multidimensional continued fraction algorithms} \label{sec:cfgentheory}

As mentioned in the introduction, one of the motivations for studying sequences of integer matrices comes from the theory of multidimensional continued fraction algorithms. In the present section, we show how continued fraction algorithms fit in the theory developed for sequences of matrices developed in Chapter~\ref{chapter:matrix}. In particular, in Section~\ref{subsec:cfdef} we provide all necessary definitions around multidimensional continued fraction algorithms and illustrate them for the classical continued fraction algorithm and for the Arnoux--Rauzy algorithm. In Section~\ref{sec:natex}, we discuss geometric natural extensions of continued fraction algorithms and show, for the example of the classical continued fraction algorithm, how they can be used to calculate an invariant measure for an algorithm that is equivalent to the Lebesgue measure. Section~\ref{sec:prpcppp} gives some definitions in the context of multidimensional continued fraction algorithms that will be needed later. For instance,  Lyapunov exponents and a version of the Pisot condition for such an algorithm are provided. In Section~\ref{subsec:MCFmappingfamily}, we show how multidimensional continued fraction algorithms can be related to $\cM$-adic mapping families. This enables us to apply the theory of Chapter~\ref{chapter:matrix} to multidimensional continued fraction algorithms.

\subsection{Unimodular algorithms}\label{subsec:cfdef}
There exists a variety of formalisms for defining multidimensional continued fraction algorithms (see for instance \cite{Lagarias:93,Schweiger:00,KLM:07,AL18,Fougeron:20}). For our purposes, it turns out to be convenient to define a multidimensional continued fraction algorithm on a projective space.  In this setting, a continued fraction algorithm ``produces'' sequences of matrices, which perfectly matches with our studies in Chapter~\ref{chapter:matrix}. In the sequel, the $d$-dimensional projective space is denoted as~$\PP^{d}$\notx{Pd}{$\PP^d, \PP^d_{\ge0}$}{projective $d$-space, nonnegative cone}, and $\PP^{d}_{\ge0}$ is the nonnegative cone of~$\PP^{d}$.

\begin{definition}[Continued fraction algorithm] \label{def:cf} 
\indx{continued fraction algorithm!multidimensional}
Let $d\ge 2$. Let 
\[
A:\, X \to \GL(d,\ZZ) \notx{An}{$A(\cdot),A^{(n)}(\cdot)$}{\hspace{.5em}linear cocycle}
\]
be a map from a subset~$X$ of the $(d{-}1)$-dimensional projective space~$\PP^{d-1}$ to the set $ \GL(d,\ZZ)$ of unimodular integer $d {\times} d$ matrices such that $\tr{\!A}(\bx)^{-1} \bx \in X$ for all $\bx \in X$, and let 
\begin{equation}\label{eq:CFalgo}
F:\, X \to X, \quad \bx \mapsto \tr{\!A}(\bx)^{-1} \bx. \notx{Fc}{$F(\cdot)$}{continued fraction map}
\end{equation}
Then $(X,F,A)$\notx{XFA}{$(X,F,A[,\nu])$}{\hspace{.75em}continued fraction algorithm} is called a \emph{$(d{-}1)$-dimensional continued fraction algorithm}.

If the range of~$A$ is a finite set of matrices, then we say that the continued fraction algorithm is \emph{additive}\indx{continued fraction algorithm!additive}; otherwise, it is called \emph{multiplicative}\indx{continued fraction algorithm!multiplicative}.
If all $A(\bx)$, $\bx \in X$, have nonnegative entries, then the algorithm is called \emph{positive}\indx{continued fraction algorithm!positive}. 
\end{definition}

The use of the transpose inverse $\tr{\!A}(\bx)^{-1}$ in the definition of a multidimensional continued fraction algorithm $(X,F,A)$ may look cumbersome at first sight. The inversion is used in \eqref{eq:CFalgo} because the \emph{partial quotient matrices}\indx{partial quotient!matrix} $\tr{\!A}(F^n\bx)$, $n\ge 0$, are more important in our work than their inverses. Indeed, for each $\bx \in X$, iterative application of the algorithm $(X,F,A)$ generates a sequence of partial quotient matrices $(\tr{\!A}(F^n\bx))_{n\ge0}$. Taking products of the partial quotient matrices leads to the \emph{convergent matrices}\indx{convergent!matrix} $\tr{\!A}^{(n)}(\bx)=\tr{\!A}(\bx)\, \tr{\!A}(F\bx) \cdots \tr{\!A}(F^{n-1}\bx)$, $n\ge 0$ that are used for simultaneous Diophantine approximation of $\bx=[x_1:\dots : x_d]\in X$. 
Indeed, the column vectors of the convergent matrices
\[
\tr{\!A}^{(n)}(\bx) = \begin{pmatrix}p_{1,1}^{(n)} & p_{1,2}^{(n)} & \cdots & p_{d,1}^{(n)} \\ \vdots & \vdots & \ddots & \vdots \\ p_{d-1,1}^{(n)} & p_{d-1,2}^{(n)} & \cdots & p_{d-1,d}^{(n)} \\ q_1^{(n)} & q_2^{(n)} & \cdots & q_d^{(n)}\end{pmatrix} \qquad(n\ge 0)
\]
allow us to define $d$ sequences of \emph{rational convergents}\indx{convergent!rational} $(p^{(n)}_{i,j}/q_j^{n)})_{1\le i<d}$, one for each $j\in\{1,\ldots,d\}$, that are supposed to converge  in~$\RR^{d-1}$ to $(\frac{x_1}{x_d},\dots,\frac{x_{d-1}}{x_d})$; recall that $\bx=[x_1:\dots : x_d]$. Here, we have chosen to  use   the last line  for  the  denominators of convergents. This is  arbitrary but it corresponds to a situation met for   classical
 algorithms  such as the Brun algorithm discussed in Section \ref{sec:Brun} below, where the  integers $q_i^{(n)}$  in the last line are the largest entries for each column.   

The transposition used in \eqref{eq:CFalgo} comes from the fact that we are interested in the \emph{linear cocycle}\indx{cocycle!linear} of the algorithm $(X,F,A)$, which is given by~$A$. Indeed
\[
A^{(n)}(\bx) = A(F^{n-1}\bx) \cdots A(F\bx) \, A(\bx),
\]
\notx{An}{$A(\cdot),A^{(n)}(\cdot)$}{\hspace{.5em}linear cocycle}satisfies the \emph{cocycle property} $A^{(m+n)}(\bx) = A^{(m)}(F^n\bx)\, A^{(n)}(\bx)$; see also Section~\ref{subsec:cocyclemat} and Definition~\ref{def:cocycle}. In other words, $A(\bx)$ and $\tr{\!A}(\bx)^{-1}$ are covariant, while ${A}(\bx)^{-1}$ and $\tr{\!A}(\bx)$ are contravariant. 
 
We will need the following notions of cylinder sets and follower sets.

\begin{definition}[Cylinder set and follower set for multidimensional continued fraction algorithms]
\label{def:MFCcyl}\indx{cylinder set}
The \emph{cylinder sets} of a multidimensional continued fraction algorithm $(X,F,A)$ are given by
\[
X_{[0,n)}(\bx) = \{\by \in X \,:\, A(\by) = A(\bx), A(F\by) = A(F \bx), \dots, A(F^{n-1}\by) = A(F^{n-1} \bx)\}, \notx{X}{$X_{[0,n)}(\cdot),X_M$}{\hspace{1em}cylinder set of a continued fraction algorithm}
\]
with $X_{[0,0)}(\bx) = X$.
Moreover, for $M\in A(X)$ we write
\[
X_M = \{\by \in X \,:\, A(\by) = M\} = A^{-1}(M)
\]
for the cylinder set associated to a given matrix~$M$. 

A \emph{follower set}\indx{follower set} is a set of the form $F^nX_{[0,n)}(\bx)$, where $\bx \in X$, $n \in \NN$, and $X_{[0,n)}(\bx)$ is a cylinder set of $(X,F,A)$. 
\end{definition}

Instead of working in the homogeneous space~$\PP^{d-1}$, one can choose a norm $\lVert\cdot\rVert$ on $\RR^d$ (various norms can be convenient for the normalization; see for instance Section~\ref{sec:Brun}) and select for each element of~$\PP^{d-1}$ a representative in 
\[
\Delta^{d-1} = \{\bx \in \RR^d \,:\, \lVert\bx\rVert = 1\}. 
\]
In other words, via the corresponding  (injective) map\footnote{We will mostly stay in the nonnegative cone $\PP_{\geq 0}^{d-1}$ of $\PP^{d-1}$, where the restriction of $\chi: \PP_{\geq 0}^{d-1} \rightarrow \chi(\PP_{\geq 0}^{d-1})$   can be regarded as an affine chart; of course, we need more than one chart to obtain an affine atlas for $\PP^{d-1}$.} 
\begin{equation}\label{eq:chidef}
\chi: \PP^{d-1} \to \Delta^{d-1}, \notx{chi}{$\chi(\cdot),\hat{\chi}(\cdot)$}{affine chart on projective space} 
\end{equation} 
which assigns to each $\bw \in \PP^{d-1}$ a representative with norm~$1$, we can equip~$X$ with affine coordinates. Therefore, setting $\Delta = \chi(X)$, in the coordinates~$\chi$ the continued fraction map~$F$ is given by the map $T = \chi \circ F \circ \chi^{-1}\vert_\Delta$, which can be written out as
\begin{equation}\label{eq:CFT}
T:\, \Delta \to \Delta, \quad \bx \mapsto \frac{\tr{\!A}(\bx)^{-1} \bx}{\lVert\tr{\!A}(\bx)^{-1} \bx\rVert}, \notx{T}{$T(\cdot)$}{affine continued fraction map}
\end{equation}
and $(\Delta,T,A)$\notx{Delta}{$(\Delta,T,A[,\mu])$}{\hspace{.75em}affine version of a continued fraction algorithm} is the \emph{affine version}\indx{continued fraction algorithm!affine version} of the continued fraction algorithm $(X,F,A)$. In our applications, $X$~is always small enough to guarantee that the restriction of~$\chi$ to~$X$  is bijective from~$X$ onto $\Delta = \chi(X)$.

A multidimensional continued fraction algorithm $(X,F,A)$ as in Definition~\ref{def:cf} as well as its affine version \eqref{eq:CFT} is called \emph{linear simplex-splitting}\indx{continued fraction algorithm!simplex-splitting}  in \cite[Section~2]{Lagarias:93}. This kind of algorithms contain classical algorithms like the algorithms of Brun~\cite{Brun19,Brun20,BRUN}, Jacobi--Perron \cite{Heine1868,Perron:07,Bernstein:71,Schweiger:73}, and Selmer~\cite{Selmer:61}. 
As the following example shows, also the classical continued fraction algorithm fits into the scheme of  Definition~\ref{def:cf}.

\begin{example}[Classical continued fraction algorithm]\label{ex:classicalCF}
To define the \emph{classical continued fraction algorithm}\indx{continued fraction algorithm!classical} we set $d=2$ and $X_{\rG} = \{[x:1]\,:\, 0< x < 1\} \subset \PP^1_{\ge0}$, and define the cocycle~$A_{\rG}$ by
\begin{equation}\label{eq:classicalcocycle}
A_{\rG}: X \to \GL(2,\ZZ); \quad \begin{bmatrix}x \\ 1\end{bmatrix} \mapsto \begin{pmatrix} 
0&1\\1&a
\end{pmatrix} \quad\hbox{with } a=\left\lfloor\frac1{x} \right\rfloor.
\end{equation}
The mapping~$F_{\rG}$ is therefore given by 
\[
F_{\rG}: X_{\rG} \to X_{\rG}; \quad \begin{bmatrix} x\\ 1 \end{bmatrix} \mapsto \begin{pmatrix} 
-a&1\\1&0
\end{pmatrix} \begin{bmatrix} x\\ 1 \end{bmatrix} =  
\begin{bmatrix} 1-ax\\ x \\  \end{bmatrix} =  
\begin{bmatrix}\frac{1-ax}{x} \\ 1\end{bmatrix}.
\]
If we choose the maximum norm $\lVert\cdot\rVert_\infty$ to define $\Delta^1$ we get 
\[
\Delta^1 = \chi(X_{\rG}) = \{(x,1) \,:\, 0<x< 1\}. 
\]
According to \eqref{eq:CFT}, the affine version of the classical continued fraction map is given by
\[
T_{\rG}:  \Delta^1 \to \Delta^1; \quad \begin{pmatrix}x \\ 1 \end{pmatrix} = \begin{pmatrix}\frac{1}{x}-a  \\ 1\end{pmatrix}.
\]
Because there is no information in the second coordinate in the definition of~$T_{\rG}$, we may identify $\Delta^1$ with $(0,1)$. Because $a=\lfloor\frac1{x}\rfloor$ by the definition of~$A_{\rG}$ we gain
\[
T_{\rG}:\, (0,1) \to (0,1), \quad x \mapsto \frac1x - \left\lfloor\frac1x\right\rfloor,
\]
which is exactly the well-known Gauss map that we already mentioned in \eqref{eq:GaussMapIntro}.\footnote{Since we are mainly interested in metric results, we didn't take care of the fact that $0$ is a possible image of $T_{\rG}$.} Note that, since $A_{\rG}$ can take infinitely many values, the classical continued fraction algorithm is a multiplicative algorithm.
\end{example} 

We note that the Farey map \eqref {eq:FareyIntro} can be defined in terms of Definition~\ref{def:cf} as an additive continued fraction algorithm. The range of its cocycle equals the finite set $\{M_{\rF,1},M_{\rF,2}\}$, where $M_{\rF,1}$ and~$M_{\rF,2}$ are the Farey matrices\indx{Farey!matrix} introduced in Example~\ref{ex:farey1}.
The following example contains a well-known generalization of the Farey algorithm to higher dimensions.
 
\begin{example}[Arnoux--Rauzy continued fraction algorithm]\label{ex:AR3} \indx{Arnoux--Rauzy!algorithm}\indx{continued fraction algorithm!Arnoux--Rauzy}
To define the \emph{Arnoux--Rauzy continued fraction algorithm} for $d=3$, recall the set of Arnoux--Rauzy matrices $\cM_{\rAR}=\{M_{\rAR,1},M_{\rAR,2},M_{\rAR,3}\}$ from Example~\ref{ex:AR1}. The cocyle $A_{\rAR}: \PP^2_{\ge0}  \to \GL(3,\ZZ)$\notx{AxR}{$\rAR$}{object related to the Arnoux--Rauzy algorithm} for this algorithm is given by
\begin{equation*}\label{eq:ARcocycle}
A_{\rAR}([x_1:x_2:x_3]) = 
M_{\rAR,a}^{-1} \quad \hbox{if } x_a=\max\{x_1,x_2,x_3\};\;  a\in\{1,2,3\}.
\end{equation*}
Using this cocycle, we define a mapping $F: \PP^2_{\ge0} \to \PP^2$ by  $F(\bx)= \tr{\!A}_{\rAR}(\bx)^{-1} \bx$, which can be written explicitly as
\[
F: \PP^2_{\ge0} \to \PP^2, \quad  [x_1:x_2:x_3] \mapsto
\begin{cases}
[x_1{-}x_2{-}x_3:x_2:x_3] & \mbox{ if }x_1=\max\{x_1,x_2,x_3\}, \\
[x_1:x_2{-}x_1{-}x_3:x_3] & \mbox{ if }x_2=\max\{x_1,x_2,x_3\}, \\
[x_1:x_2:x_3{-}x_1{-}x_2] & \mbox{ if }x_3=\max\{x_1,x_2,x_3\}.
\end{cases}
\]
In general, the image of~$F$ is not contained in the positive cone of~$\PP^2$ and, hence, $F$ cannot always be iterated. We therefore restrict~$F$ to the set
\[
X_{\rAR} = \big\{\bx \in \PP^2_{\ge0} \,:\,  F^n(\bx) \in \PP^2_{\ge0}\ \mbox{for all}\ n \in\NN\big\},
\]
which is a projective version of the \emph{Rauzy gasket}\indx{Rauzy!gasket}, see~\cite{ArnouxStarosta:13}. This set allows us to define the \emph{Arnoux--Rauzy map}\indx{Arnoux--Rauzy!map}
\begin{equation} \label{eq:ARmap}
F_{\rAR}:\, X_{\rAR} \to X_{\rAR}, \quad \bx \mapsto F(\bx),
\end{equation}
and, hence, the \emph{Arnoux--Rauzy continued fraction algorithm} $(X_{\rAR},F_{\rAR},A_{\rAR})$. 
If we choose the norm $\lVert\cdot\rVert_1$ in the definition of the chart $\chi$ in \eqref{eq:chidef} to define $\Delta_{\rAR}$ we get the \empty{Rauzy gasket}
$\Delta_{\rAR}=\chi(X_{\rAR})$. According to \eqref{eq:ARmap}, the affine version of the Arnoux--Rauzy map $T_{\rAR}: \Delta_{\rAR} \to \Delta_{\rAR}$ is given by
\[
T_{\rAR}(x_1,x_2,x_3) =
\begin{cases}
\frac{1}{x_1}(x_1{-}x_2{-}x_3,x_2,x_3) & \hbox{ if }x_1=\max\{x_1,x_2,x_3\}, \\
\frac{1}{x_2}(x_1,x_2{-}x_1{-}x_3,x_3) & \hbox{ if }x_2=\max\{x_1,x_2,x_3\}, \\
\frac{1}{x_3}(x_1,x_2,x_3{-}x_1{-}x_2) & \hbox{ if }x_3=\max\{x_1,x_2,x_3\}.
\end{cases}
\]
Since $A_{\rAR}$ can take only 3 different values, the Arnoux--Rauzy algorithm is an additive continued fraction algorithm. It can be defined in an analogous way for each $d\ge 3$, see for instance \cite[Section~6.3]{BST:23}.
\end{example}

\subsection{Natural extension and invariant measure}\label{sec:natex}
Our main goal in this section is to construct an invariant measure for a multidimensional continued fraction algorithm $(X,F,A)$. Because invariant measures are easier to find for invertible dynamical system, we first extend $(X,F,A)$ to an invertible dynamical system, the so-called natural extension $(\hX,\hF,\hA)$ of $(X,F,A)$. By constructing an invariant measure $\hnu$ on some geometric version of $(\hX,\hF,\hA)$, we can then obtain the desired invariant measure $\nu$ of $(X,F,A)$ as a pushed forward measure of $\hnu$. For technical reasons we will mainly work in the affine version $(\Delta,T,A)$ of $(X,F,A)$. The section ends with a series of important definitions related to multidimensional continued fraction algorithms like the Pisot condition, positive range, and periodic Pisot point.

A multidimensional continued fraction algorithm $(X,F,A)$ is usually not one-to-one, hence, it is not invertible. To produce an invertible version of the algorithm, we use the  well-known concept of inverse limit. Let $F: X \to X$ be a surjective map. Then we can always find a set~$\hX$, an invertible map $\hF: \hX \to \hX$, and a surjective map $\pi: \hX \to X$ such that 
\begin{equation}\label{eq:natural}
\pi \circ \hF = F \circ \pi. 
\end{equation}
The system $(\hX,\hF)$ is called the \emph{inverse limit}\indx{inverse limit} of $(X,F)$. It can be constructed as 
\begin{equation}\label{eq:hatX}
\hX = \big\{(\bx_n)_{n\in\ZZ} \in X^{\ZZ} \,:\, F(\bx_n) = \bx_{n+1}\ \mbox{for all}\ n \in \ZZ\big\},
\end{equation}
with $\hF((\bx_n)_{n\in\ZZ}) = (\bx_{n+1})_{n\in\ZZ}$ and $\pi((\bx_n)_{n\in\ZZ}) = \bx_0$.
If, furthermore, $\mu$~is an invariant measure for the dynamical system $(X,F)$, then one can easily build an invariant measure $\hmu$ for $(\hX,\hF)$ such that $\mu = \pi_*\hmu$.\footnote{For a function $f$ and a measure~$\mu$, the pushforward measure\indx{measure!pushforward} $f_* \mu$ is defined by setting $f_* \mu (B) := \mu(f^{-1} (B))$ for all measurable sets~$B$.} The triple $(\hX,\hF,\hmu)$ is a \emph{natural extension}\indx{natural extension} of the measurable dynamical system $(X,F,\mu)$ in the sense of Rokhlin, see e.g.\ \cite[Section~2.4.2]{VianaOlivieira}. 
The cocycle~$A$ then extends to a cocycle $\hA: \hX \to \GL(d,\ZZ)$, with $\hA = A \circ \pi$, and $(\hX,\hF,\hA)$ is called a \emph{natural extension} of the multidimensional continued fraction algorithm $(X,F,A)$.

According to Definition~\ref{def:cf}, a multidimensional continued fraction algorithm $(X,F,A)$ is defined on a subset~$X$ of the projective space~$\PP^{d-1}$. While there is no canonical Lebesgue measure in a projective space, there is a Lebesgue measure class: the class of measures that are absolutely continuous with respect to the Lebesgue measure in any local affine coordinate system. The problem of finding an ergodic invariant measure of $(X,F,A)$ in the Lebesgue measure class (i.e., a \emph{Gauss measure}\indx{Gauss!measure}) is difficult. As mentioned before, it is convenient to look for an invariant measure $\hnu$ in a geometric version of the natural extension of $(X,F,A)$ and then get the desired invariant measure as a pushforward measure of $\hnu$. Moreover, it is more convenient to do these constructions for the affine version of the algorithm $(X,F,A)$.

Let $d\ge 2$, let $\lVert\cdot\rVert$  be a norm on~$\RR^d$, and let $\Delta^{d-1} = \{\bx \in \RR^d : \lVert\bx\rVert = 1\}$. Equip $(X,F,A)$ with affine coordinates using the map $\chi: \PP^{d-1}\to\Delta^{d-1}$, introduced above,  which assigns to each $\bw \in \PP^{d-1}$ a representative with norm~$1$, and consider the affine version $(\Delta,T,A)$ of $(X,F,A)$ with $\Delta = \chi(X) \subset \Delta^{d-1}$ and $T$ as in \eqref{eq:CFT}. 

Our starting point for a geometric version of a natural extension of $(\Delta,T,A)$ is the map
\begin{equation}\label{eq:hFdef2}
\hT:\, \Delta^{d-1} \times\Delta^{d-1}\to \Delta^{d-1} \times\Delta^{d-1}, \quad (\bx, \by) \mapsto \left(\frac{\tr{\!A}(\bx)^{-1}(\bx) \bx}{\lVert \tr{\!A}(\bx)^{-1}(\bx)\bx \rVert}, 
\frac{{A}(\bx)\by}{\lVert{A}(\bx)\by\rVert}\right).
\end{equation} 
Our task is now twofold. We have to find a suitable subset $\hDelta$ of  $\Delta^{d-1} \times\Delta^{d-1}$ on which $\hT$ is bijective, and an invariant measure for $\hT$ on $\hDelta$. 

We first define a measure $\hmu$ on $\Delta^{d-1} \times\Delta^{d-1}$ which is locally preserved by $\hT$. We start by a map $h: \Delta^{d-1} \to \Delta^{d-1}$ that is locally defined by $h(\bx) = \frac{M\bx}{\lVert M \bx\rVert}$ for some invertible matrix $M$. An easy computation shows that the Jacobian of~$h$ at~$\bx$ is $\frac{|\det M|}{\lVert M\bx\rVert^d}$; for more details on this computation, see e.g.\  Veech~\cite[Proposition~5.2]{Veech:78}.\footnote{Note that Veech uses $\lVert \cdot\rVert_1$, however, his proof goes through for any other norm.} This implies that the Jacobian of the map $\hT$ at $(\bx,\by)\in \Delta^{d-1} \times\Delta^{d-1}$ equals  ${\lVert \tr{\!A}(\bx)^{-1}(\bx) \bx \rVert}^{-d} {\lVert {A}(\bx) \by \rVert}^{-d}$, which motivates the following definition. Let $\lambda$\notx{lambda}{$\lambda,\lambda_{\bone}$}{Lebesgue measure} be the (piecewise)\footnote{If the unit ball for the norm $\lVert\cdot\rVert$ is for instance a cube or an octahedron (according to the dimension), then we use the Lebesgue measure on each surface; if it is a sphere, then we use the spherical Lebesgue measure.} Lebesgue measure on~$\Delta^{d-1}$ and define $\hmu$ on $\Delta^{d-1} \times\Delta^{d-1}$ by $\mathrm{d} \hmu(\bx,\by) = \frac{\mathrm{d}\lambda(\bx)\cdot \mathrm{d}\lambda{(\by)}}{\langle \bx,\by \rangle^d}$. We obviously have $\langle \bx,\by \rangle=\langle \tr{\!A}(\bx)^{-1} \bx,A(\bx) \by \rangle$. Thus the computation of the Jacobian of~$\hT$ shows that 
\begin{equation}\label{eq:YhTa}
\hmu(\hT(Y)) = \hmu(Y), \quad\text{if } Y \subset \chi(A^{-1}(M)) \times\Delta^{d-1} \text{ for some } M\in \GL(d,\ZZ), 
\end{equation}
because $\hT$ is obviously injective on such a set $Y$. Thus $\hT$ preserves the measure $\hmu$ on subsets of $\Delta^{d-1} \times\Delta^{d-1}$ on which $\chi\circ A$ is constant. If we allow for general subsets of $\Delta^{d-1} \times\Delta^{d-1}$, \eqref{eq:YhTa} implies that
\begin{equation}\label{eq:YhT}
\hmu(\hT(Y)) \le \hmu(Y)  \quad\text{if }  Y \subset \Delta^{d-1} \times\Delta^{d-1}.
\end{equation}

As for the region of bijectivity for $\hT$, it turns out that, when the map~$T$  satisfies some mild contractivity conditions and  using the Banach fixed-point argument from~\cite[Theorem~1.1]{ArnouxSchmidt:19}, it is possible to construct a set~$\hDelta\subset\Delta^{d-1} \times\Delta^{d-1}$ on which $\hT$ is bijective. 
If the set $\hDelta$ exists, then it is unique by \cite[Theorem~1.2]{ArnouxSchmidt:19}  under some mild conditions on~$T$. Sometimes (see e.g.\ Section~\ref{sec:Brun} for the Brun continued fraction algorithm) we obtain a candidate for $\hDelta$ by guessing. In this case one has to prove bijectivity of $\hT$ on $\hDelta$ directly and the following lemma, which is an immediate consequence of \eqref{eq:YhT}, is useful (see also \cite[Theorem~4.7]{ArnouxSchmidt:19}).

\begin{lemma}\label{lem:hTsurbi}
If $\hT$ is surjective on $\hDelta$ then it is bijective on $\hDelta$.
\end{lemma}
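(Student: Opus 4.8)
The statement to prove is Lemma~\ref{lem:hTsurbi}: if $\hT$ is surjective on $\hDelta$, then it is bijective on $\hDelta$. The plan is to combine the measure-contraction inequality \eqref{eq:YhT} with the fact that $\hmu$ is a $\sigma$-finite measure on $\hDelta$, and then use a pigeonhole/measure-exhaustion argument to rule out failure of injectivity. First I would observe that $\hmu$ restricted to $\hDelta$ is finite on the relevant pieces (or at least $\sigma$-finite), so that measures of subsets can be meaningfully compared; the map $\hT$ decomposes $\hDelta$ into countably many pieces $Y_M = \hDelta \cap (\chi(A^{-1}(M)) \times \Delta^{d-1})$ on each of which, by \eqref{eq:YhTa}, $\hT$ is injective and measure-preserving.

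The key step is the following. Suppose, for contradiction, that $\hT$ is not injective on $\hDelta$. Then there exist two distinct pieces $Y_{M}$ and $Y_{M'}$ (with $M \neq M'$) whose images under $\hT$ overlap in a set of positive $\hmu$-measure, say $\hmu(\hT(Y_M) \cap \hT(Y_{M'})) = c > 0$. (If instead the overlap were always of measure zero, injectivity of $\hT$ on all of $\hDelta$ would follow from injectivity on each piece, modulo a null set; one then has to upgrade "injective a.e." to genuine injectivity, which I expect to follow from continuity of $\hT$ on the closure of each piece together with the structure of $\hDelta$ as the attractor from \cite{ArnouxSchmidt:19}.) Using $\hmu(\hT(Y_M)) = \hmu(Y_M)$ on each piece and countable additivity, the total measure lost in forming $\hT(\hDelta) = \bigcup_M \hT(Y_M)$ is at least $c$, i.e.\ $\hmu(\hT(\hDelta)) \le \hmu(\hDelta) - c < \hmu(\hDelta)$. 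But surjectivity of $\hT$ on $\hDelta$ gives $\hT(\hDelta) = \hDelta$, hence $\hmu(\hDelta) \le \hmu(\hDelta) - c$, which is absurd since $c > 0$. This contradiction shows $\hT$ is injective, and being already surjective, it is bijective on $\hDelta$.

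The main obstacle I anticipate is the finiteness/$\sigma$-finiteness bookkeeping: the density $\langle \bx, \by\rangle^{-d}$ blows up where $\bx$ and $\by$ become orthogonal, so $\hmu(\hDelta)$ need not be finite in general, and the subtraction argument "$\hmu(\hDelta) \le \hmu(\hDelta) - c$" is only valid when $\hmu(\hDelta) < \infty$. To handle this, I would either (i) work with an exhaustion of $\hDelta$ by sets of finite $\hmu$-measure on which the overlap still has positive measure — possible since the overlap set has positive measure and $\hmu$ is $\sigma$-finite — and apply the argument on such a finite-measure piece; or (ii) invoke the additional mild conditions on $T$ from \cite{ArnouxSchmidt:19} under which $\hDelta$ is the unique attractor and $\hmu(\hDelta) < \infty$, as is the case in all our applications (e.g.\ the Brun algorithm in Section~\ref{sec:Brun}). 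A secondary, more minor point is passing from "injective up to a $\hmu$-null set" to genuine injectivity everywhere on $\hDelta$; since $\hDelta$ arises as the closure of the image of the Banach fixed-point iteration and $\hT$ is piecewise a projective (hence continuous) map, a null exceptional set of non-injectivity cannot occur, but I would spell this out by noting that the pieces $Y_M$ are (up to boundary) open and $\hT|_{Y_M}$ is a diffeomorphism onto its image, so overlaps of images are open, hence either empty or of positive measure.
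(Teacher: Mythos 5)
Your core argument — decompose $\hDelta$ into the pieces $Y_M$ on which $\hT$ is injective and measure-preserving by \eqref{eq:YhTa}, and observe that any positive-measure overlap of two images $\hT(Y_M)$, $\hT(Y_{M'})$ forces $\hmu(\hT(\hDelta)) \le \hmu(\hDelta) - c < \hmu(\hDelta)$, contradicting surjectivity — is exactly the argument the paper has in mind when it calls the lemma "an immediate consequence of \eqref{eq:YhT}". Your secondary worry about upgrading a.e.-injectivity to genuine injectivity is not actually needed: the lemma is applied in a purely measure-theoretic sense (the conclusion of Lemma~\ref{eq:natexBU} is "bijective almost everywhere", and the surjectivity established there is itself only up to measure-disjoint unions), so "bijective" here means bijective modulo $\hmu$-null sets.

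The one substantive issue is the finiteness of $\hmu(\hDelta)$, which you correctly identify as essential: the subtraction $\hmu(\hDelta) \le \hmu(\hDelta) - c$ yields no contradiction when $\hmu(\hDelta) = \infty$, and indeed the statement is false for general surjective, locally measure-preserving maps on infinite measure spaces (counting measure on $\NN$ with a two-to-one surjection gives a counterexample satisfying the analogue of \eqref{eq:YhT}). However, your proposed fix (i) does not repair this: surjectivity of $\hT$ on $\hDelta$ does not localize to a finite-measure exhaustion piece $D_N$, since nothing forces $\hT(D_N) \supset D_N$, so the "wasted" measure $c$ inside $D_N$ contradicts nothing. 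The correct resolution is your fix (ii): one must know $\hmu(\hDelta) < \infty$, which is implicit in the construction (the pushforward $\mu = \pi_*\hmu$ is meant to be a normalizable Gauss measure) and is verified explicitly in the applications, e.g.\ by the integrability of the densities computed in Lemmas~\ref{lem:unorderedinvariant}, \ref{lem:orderedinvariant} and~\ref{lem:multinvariant} for the Brun algorithms. With that hypothesis made explicit, your proof is complete; a clean way to phrase the endgame is $\hmu(\hDelta) = \sum_M \hmu(Y_M) = \sum_M \hmu(\hT(Y_M)) = \int_{\hDelta} N\, \mathrm{d}\hmu \ge \hmu(\hDelta)$, where $N(\bz)$ counts the preimages of~$\bz$, so that finiteness forces $N = 1$ almost everywhere.
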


From now on, we regard $\hT$ as a function on $\hDelta$ and $\hmu$ as a measure on $\hDelta$. 
Because $\hT$ is bijective on $\hDelta$, it follows from \eqref{eq:YhTa} that in \eqref{eq:YhT} we have equality for each $Y\subset \hDelta$. Again by bijectivity of $\hT$ this implies that $\hT_* \hmu = \hmu$, i.e., $\hmu$ is an invariant measure for $\hT$ that is equivalent to the piecewise Lebesgue measure $\lambda\times\lambda$ on~$\hDelta$.
Using the natural projection 
\[
\pi:\, \hDelta \to \Delta, \quad (\bx,\by) \mapsto \bx,
\]
we see that our construction implies that $(\hDelta, \hT,\hA,\hmu)$\notx{Delta}{${(\hat{\Delta},\hat{T},\hat{A},\hat{\mu})}$}{\hspace{.25em}natural extension of $(\Delta,T,A,\mu)$},  with $\hA = A \circ \pi$, is an invertible measurable dynamical system which is a geometric version of the natural extension of $(\Delta,T,A,\mu)$ with $\mu = \pi_*\hmu$.\notx{Delta}{$(\Delta,T,A[,\mu])$}{\hspace{.75em}affine version of a continued fraction algorithm}

If $\hmu(\hDelta) > 0$ and if $\hmu$ (or, equivalently, $\mu$; see~\cite[Chapter~10, \S~4]{CFS:82}) is ergodic, then the measure~$\mu$ is a Gauss measure of the affine version $(\Delta,T,A,\mu)$ of the algorithm. There is no general method to ensure that $\hmu(\hDelta) > 0$, and one has to check this directly for each algorithm. The situation is better for ergodicity. Usually, ergodicity of $\hmu$ is proved by verifying a R\'enyi type condition, see e.g.\ \cite{Renyi:57} or \cite[Chapter~3]{Schweiger:00}. Indeed, it suffices to show that there exists an ergodic invariant measure which is equivalent to the Lebesgue measure: If we exhibit such a measure, each invariant set has either zero or full Lebesgue measure and, hence, either zero or full measure~$\hmu$. Thus $\hmu$ and, hence, $\mu$, is ergodic. For the Brun continued fraction algorithm, a proof of ergodicity is detailed in \cite[Chapter~3]{Schweiger:00}. This also shows that sometimes, before a R\'enyi type condition can be applied, one has to consider certain accelerations of~$T$, so-called \emph{jump transformations}\indx{jump transformation}; see~\cite{Schweiger:1975} for a theory of these transformations (we will use a jump transformation in Section~\ref{subsec:BrunMul} to define the multiplicative version of the Brun continued fraction algorithm).  
 
It remains to obtain the ergodic invariant measure $\nu$ in the Lebesgue measure class for the multidimensional continued fraction algorithm $(X,F,A)$. Via the inverse of the  bijective chart  given by the restriction of~$\chi$ from~$X$ onto $\Delta = \chi(X)$, we can transfer this dynamical system back to~$\PP^{d-1}$ and end up with the system $(X,F,A,\nu)$.\notx{XFA}{$(X,F,A[,\nu])$}{\hspace{.75em}continued fraction algorithm} The measure~$\nu$, which is defined by $\chi_*\nu = \mu$, is the desired Gauss measure for $(X,F,A,\nu)$. In our examples we will only provide the Gauss measure $\mu$ for the projective version of the algorithm, because it is the more natural object and $\nu$ can easily be computed from $\nu$. 
 
By transferring $(\hDelta,\hT,\hA,\hmu)$ back to $\PP^{d-1} {\times} \PP^{d-1}$ via the bijection\footnote{Again we assume that $\hDelta$ is not too large, otherwise we need more than one chart.} $\hchi = \chi {\times} \chi$\notx{chi}{$\chi(\cdot),\hat{\chi}(\cdot)$}{affine chart on projective space}, we see that $(\hX,\hF,\hA,\hnu)$\notx{XFA}{$(\hat{X},\hat{F},\hat{A},\hat{\nu})$}{\hspace{.25em}natural extension of $(X,F,A,\nu)$} is a natural extension of $(X,F,A,\nu)$ with  $\hX = \hchi^{-1}(\hDelta)$ and $\hF = \chi^{-1} \circ \hT \circ \chi$. Note that $\hF$ can be written explicitly as 
\begin{equation}\label{eq:hatFdef}
\hF:\, \hX \to \hX, \quad (\bx, \by) \mapsto (\tr{\!A}(\bx)^{-1} \bx, A(\bx)\by).
\end{equation}

\begin{remark}\label{rem:nu>0}
Of course, this construction is meaningful only if $\hDelta$ and, hence~$\hX$, can be chosen in a way that $\hmu(\hDelta) > 0$, and equivalently $\hnu(\hX) > 0$, because otherwise $\mu(\Delta) = 0$. As mentioned above, no general method is known at the moment to achieve this positivity.  

If $\hnu(\hX) > 0$ and $\hnu$ is absolutely continuous with respect to the Lebesgue measure, we get that $\hX$ has positive Lebesgue measure.  Hence, all the statements  that hold a.e.\ hold Lebesgue a.e.\ in this case. This turns out to be particularly relevant, since the set of parameters for which the mapping families for multidimensional continued fraction algorithms will be defined in Section~\ref{subsec:MCFmappingfamily} is precisely~$\hX$.
\end{remark}

We illustrate our constructions briefly for the classical continued fraction algorithm discussed in Example~\ref{ex:classicalCF}.

\begin{example}[Gauss measure for the classical continued fraction algorithm]\label{ex:classicalCF2}
Let $(\Delta^1,T_{\rG},A_{\rG})$ be the affine version of the classical continued fraction algorithm defined in Example~\ref{ex:classicalCF} and use the notation of that example. By the definition of the cocycle~$A_{\rG}$ provided in \eqref{eq:classicalcocycle}, we gain from~\eqref{eq:hFdef2} (recall that we use the norm $\lVert\cdot\rVert_\infty$ for the classical continued fraction algorithm) that the mapping
\[
\hT_{\rG}:\, \Delta^1 {\times} \Delta^1 \to \Delta^1 {\times} \Delta^1, \quad \bigg(\binom{x}{1}, \binom{y}{1}\bigg) \mapsto 
\bigg(\binom{\frac1{x}-a}{1}, \binom{\frac{1}{a+y}}{1}\bigg) \quad \mbox{with}\ a=\Big\lfloor\frac1{x}\Big\rfloor
\]
is the starting point in our quest for a natural extension of~$T_{\rG}$. Because the second coordinates of the occurring vectors are irrelevant, we can regard~$\hT_{\rG}$ as a mapping from $(0,1)^2$ to itself defined by
\begin{equation}\label{eq:NEclassicalCF} 
\hT_{\rG}:\, (0,1)^2 \to (0,1)^2, \quad (x,y) \mapsto \big(\tfrac{1}{x}-a, \tfrac{1}{a+y}\big) \quad \mbox{with}\ a = \big\lfloor\tfrac1{x}\big\rfloor.
\end{equation}
With the methods developed in \cite{ArnouxSchmidt:19} (or directly from \cite{TI:81}) one can see that $\hT_{\rG}$ is bijective on $(0,1)^2$, thus $\hX_{\rG} = (0,1)^2$ and $((0,1)^2,\hT_{\rG}, \hA_{\rG})$ is a geometric natural extension of the classical continued fraction algorithm. According to the above construction, the invariant measure of the mapping~$\hT_{\rG}$ is given by $\mathrm{d}\hmu_{\rG}(\bx,\by)= \frac{\mathrm{d}x\mathrm{d}y}{(1+xy)^2}$. Positivity of $\hmu_{\rG}(\hX)$ is easy to see in this case. Pushing forward $\hmu_{\rG}$ to $(0,1)$ (i.e., integrating away $y$ from $0$ to $1$) and renormalizing yields the well-known Gauss measure $\mathrm{d}\mu_{\rG}(x)=\frac{1}{\log 2}\frac1{1+x}$ on $(0,1)$. Ergodicity of this Gauss measure can be proved by standard methods, see for instance~\cite[Theorem~3.7]{EinsiedlerWard:11}, or by using the strategy described above. 

According to \eqref{eq:hatFdef}, the natural extension of~$F_{\rG}$ is given by ($a= \lfloor1/x\rfloor$)
\[
\hF_{\rG}: \PP_{>0}^1 {\times} \PP_{>0}^1  \to \PP_{>0}^1 {\times} \PP_{>0}^1; \; 
\left(\begin{bmatrix}x\\1\end{bmatrix},\begin{bmatrix}y\\1\end{bmatrix}\right) \mapsto \left(\begin{pmatrix} -a & 1 \\ 1&0 \end{pmatrix}\begin{bmatrix}x\\1\end{bmatrix}, \begin{pmatrix} 0 & 1 \\ 1&a \end{pmatrix}\begin{bmatrix}y\\1\end{bmatrix} \right)
\]
It is easy to carry over the invariant measure to the projective version of the classical continued fraction algorithm with help of the chart~$\chi$. We refrain from giving the details here. 

The geometric natural extension~$\hT_{\rG}$ given in \eqref{eq:NEclassicalCF} goes back to~\cite{Nakada-Ito-Tanaka:77}.
\end{example}

\subsection{Positive range, Pisot condition, and periodic Pisot point}\label{sec:prpcppp}
The following property of a multidimensional continued fraction algorithm will be of importance for us as well.

\begin{definition}[Positive range] \label{def:prFC}\indx{range!positive}
Let $(X,F,A,\nu)$ be a multidimensional continued fraction algorithm
An element $\bx \in X$ is said  to have \emph{positive range} in $(X,F,A,\nu)$ if 
\[
\inf_{n\in\NN} \nu(F^n X_{[0,n)}(\bx)) > 0.
\]
\end{definition}

Note that measurability of the cylinder sets of $(X,F,A,\nu)$ holds because $A$ is measurable by assumption.

In all the classical algorithms we are aware of, almost every $\bx \in X$ has positive range, and we even have the stronger (global) \emph{finite range property}\indx{range!finite} (cf.~\cite{ItoYuri:87}) stating that the set of follower sets
\[
\cF = \{F^n X_{[0,n)}(\bx) \,:\, \bx \in X,\, n \in \NN\}
\]
is finite, where sets differing only on a set of $\nu$-measure zero are identified. Some of them are even \emph{full}\indx{continued fraction algorithm!full} in the sense that $\cF=\{X\}$ (again up to sets of $\nu$-measure zero). Since our theory works in the much weaker setting of  positive range, we develop it under this condition, This could turn out to be useful in other situations.

By the $F$-invariance of~$\nu$, the finite range property obviously implies positive range for a.e.\ $\bx \in X$ if we suppose that all cylinders satisfy $\nu(X_{[0,n)}(\bx)) > 0$; this will be the case for the algorithms considered in this monograph. If $(X,F,A,\nu)$ has the finite range property and $\bigcap_{n\in\NN}  X_{[0,n)}(\bx) = \{\bx\}$ for almost all $\bx \in X$, i.e., the set of cylinders $\{X(\bx) : \bx \in X\}$  is a generating partition, then $\{U \cap X(\bx) : U \in \cF,\, \bx \in X\}$ forms a (measurable countable)  {generating Markov partition} of $(X,F)$; see e.g.\  \cite[Theorem~10.1]{Yuri:95}. Most of the classical continued fraction algorithms (like Brun, Selmer, and Jacobi--Perron) are designed in a way that this Markov partition property holds. This Markov condition is not to be confused with the Markov partition we are studying here. Indeed, according to Definition~\ref{def:cf}, a multidimensional continued fraction algorithm generates sequences of matrices. If $D \subset \cM_d^\NN$ is the set of all possible sequences of matrices, then this ``global'' Markov partition states that the set~$D$ is a sofic shift; see again \cite[Theorem~10.1]{Yuri:95}. The Markov partitions we are considering in this monograph are ``local'' in the sense that they concern a single sequence of matrices~$\bM$ taken from the natural extension $\widehat{D}$ of~$D$. Regarding such a sequence as a sequence of toral automorphisms, our theorems associate nonstationary Markov partitions to almost all sequences $\bM \in \widehat{D}$.

In certain cases, a classical construction leads to a dual algorithm of a given multidimensional continued fraction algorithm $(X,F,A)$ (see e.g.~\cite[Definition~27]{Schweiger:00}): Indeed, if $(X,F,A)$ is a full algorithm, the image $\hF(\hX)$ admits a product structure in the sense that, for all $M$ contained in the image $A(X)$ of the cocycle, 
\[
\hF((X_M \times \PP^{d-1}) \cap \hX) = (X \times X^*_M) 
\]
with the cylinder set~$X_M$ from Definition~\ref{def:MFCcyl} and some $X_M^* \subset \PP^{d-1}$. This implies that the algorithm is full and enables us to define the \emph{dual algorithm}\indx{continued fraction algorithm!dual} $(X^*,F^*,A^*)$\notx{XFAstar}{$(X^*\hspace{-.25em},F^*\hspace{-.25em},A^*)$}{\hspace{.25em}dual continued fraction algorithm} on $X^* = \bigcup_{M\in A(X)} X_M^*$ (up to measure~0) by
\[
\begin{aligned}
A^*:\ & X^* \to \GL(d,\ZZ), & \by & \mapsto \tr{\!M} \quad \mbox{if}\ \by \in X^*_M, \\
F^*:\ & X^* \to X^*, & \by & \mapsto \tr{\!A}^*(\by)^{-1} \by,
\end{aligned}
\]
Thus $(X^*,F^*,A^*)$ is a multidimensional continued fraction algorithm. Moreover, we have, for almost all $(\bx, \by) \in \hX$,
\[
\hF^{-1}(\bx, \by) = (A^*(\by) \bx, \tr{\!A}^*(\by)^{-1}\by).
\]
In particular, $X\times X_M^* = \hF(X_M {\times} X^*)$ and, hence, $X_M^*$ is a cylinder set for the dual algorithm. 

The following definition concerns one of our main objects of interest.

\begin{definition}[Pisot condition for multidimensional continued fractions]
\indx{Pisot!condition!generic}
\label{def:MCF_Pisot}
Let $(X,F,A,\nu)$ be an ergodic continued fraction algorithm as defined in Definition~\ref{def:cf} and assume that the log-integrability condition 
\begin{equation}\label{eq:logintMCF}
\int_X  \log\lVert A \rVert_{\infty}  d{\nu} < \infty
\end{equation}
holds. We say that $(X,F,A,\nu)$ satisfies the Pisot condition if the (one-sided) Lyapunov exponents $\vartheta_1 \ge \cdots \ge \vartheta_d$ of~$A$ satisfy $\vartheta_1  > 0 > \vartheta_2$. (Note that for the definition of Lyapunov exponents only the future is relevant but observe Remark~\ref{rem:oneLy}.)
\end{definition}

From Example~\ref{ex:AR2}, we see that the Arnoux--Rauzy algorithm $(X_{\rAR},F_{\rAR},A_{\rAR})$ defined in Example~\ref{ex:AR3} satisfies the Pisot condition.

\begin{remark}[Lyapunov exponents and Pisot condition of the natural extension] \label{rem:1vs2}
If $(X,F,A,\nu)$ satisfies the Pisot condition then its natural extension $(\hX,\hF,\hA,\hnu)$, where $\nu = \pi_*\hnu$, satisfies the Pisot condition from Definition~\ref{def:gengenPisot}. We follow \cite[p.~99]{Viana:book}. For $f \in L_1(\nu)$  and $\widehat{f} = f \circ \pi_0: \hX \rightarrow \RR$ one therefore has $\int \widehat{f} d\hnu =\int f d{\nu}$. Thus, in this setting, the two-sided log-integrability condition \eqref{eq:logint} for $(\hX,\hF,\hA,\hnu)$ reads 
\[
\int_{\hX} \log \lVert \hA\rVert_{\infty}  \mathrm{d}\hnu= \int_{X} \log\lVert A\rVert_{\infty} \mathrm{d}\nu < \infty
\] 
and, hence, is the same as \eqref{eq:logintMCF}. 
We conclude that the Lyapunov exponents of $(\hX,\hF,\hA,\hnu)$ are equal to those of $(X,F,A,\nu)$.\footnote{This is true even in the general setting of Section~\ref{sec:oseledets} but we only need it for continued fraction algorithms.}
\end{remark}

We will need in our main results that any set $E \subset X$ with $\nu(E) > 0$ included in the follower set $F^n X_{[0,n)}(\bx)$ leads to an intersection $F^{-n} B \cap X_{[0,n)}(\bx)$ with positive measure. 
To this end, we will often assume that  
\begin{equation} \label{e:abscontinuity}
\nu(E) = 0 \quad \mbox{implies that} \quad \nu \circ F(E) = 0 \quad \mbox{for each measurable set}\ E.
\end{equation}
We will use the notation $\nu\circ F \ll \nu$ to indicate that \eqref{e:abscontinuity} holds. \notx{0ll}{$\ll$}{relation between measures similar to absolute continuity}
Although $\nu \circ F$ is usually not additive and therefore not a measure, we use this notation because \eqref{e:abscontinuity} is reminiscent of absolute continuity. 

Finally, the following notion of \emph{periodic Pisot point} will be of importance later.

\begin{definition}[Periodic Pisot point] \label{d:PisotpointFC}\indx{Pisot!periodic point}
For a multidimensional continued fraction algorithm $(X,F,A)$ with natural extension $(\hX,\hF,\hA)$, we say that $(\bx_0,\by_0) \in \hX$ is a \emph{periodic Pisot point} if there is $k \ge 1$ such that $\hF^k(\bx_0,\by_0)=(\bx_0,\by_0)$ and $A^{(k)}(\bx_0)$ is a Pisot matrix. 
\end{definition}

In other words, a periodic Pisot point is an element of $\hX$, on which the algorithm produces a periodic orbit $\bM\in \cM^\ZZ$ that converges exponentially. 

\begin{example}[The golden ratio gives rise to a periodic Pisot point] \label{ex:Pisotperiodic}
Let $\phi=\frac{1+\sqrt{5}}{2}$ be the golden ratio. If we apply the natural extension $(\hX,\hF,\hA)$ of the classical continued fraction algorithm $(X,F,A)$ constructed in Example~\ref{ex:classicalCF2} to the element $([\phi^{-1},1], [\phi^{-1},1]) \in \hX$, we get 
\[
\hF\bigg(\begin{bmatrix}\phi^{-1}\\1\end{bmatrix}, \begin{bmatrix}\phi^{-1}\\1\end{bmatrix} \bigg) =
\bigg(\begin{pmatrix}-1 & 1 \\1&0\end{pmatrix} \begin{bmatrix}\phi^{-1}\\1\end{bmatrix}, \begin{pmatrix}0 & 1 \\ 1&1\end{pmatrix} \begin{bmatrix}\phi^{-1}\\1\end{bmatrix} \bigg) =
\bigg(\begin{bmatrix}\phi^{-2}\\ \phi^{-1}\end{bmatrix}, \begin{bmatrix}1\\\phi\end{bmatrix} \bigg)
\]
and because $[1,\phi] = [\phi^{-1},1] = [\phi^{-2}, \phi^{-1}]$, the invariance property of Definition~\ref{d:PisotpointFC} is satisfied with $k=1$. Since 
\[
A\bigg(\begin{bmatrix}\phi^{-1}\\1\end{bmatrix}\bigg) = \begin{pmatrix} 0 & 1 \\ 1&1 \end{pmatrix}
\]
is a Pisot matrix (its eigenvalues are $\phi$ and $-1/\phi$), the element $([\phi^{-1},1], [\phi^{-1},1])$ is a periodic Pisot point.
\end{example}

\subsection{Multidimensional continued fraction algorithms and mapping families}
\label{subsec:MCFmappingfamily}
Let $(X,F,A)$ be a positive multidimensional continued fraction algorithm with natural extension $(\hX,\hF,\hA)$. We now show how this algorithm is related to the concepts defined in Chapter~\ref{chapter:matrix}. Each orbit of the natural extension $(\hX,\hF,\hA)$ produces a bi-infinite sequence of partial quotient matrices via the mapping
\begin{equation}\label{eq:bpsidef}
\bpsi:\, \hX \to \cM_d^{\ZZ}, \quad (\bx,\by) \mapsto(\tr{\!\hA}(\hF^n(\bx,\by)))_{n\in\ZZ}. \notx{psib}{$\bpsi(\cdot)$}{map returning sequence of partial quotient matrices}
\end{equation} 
This leads to the shift $(\bpsi(\hX),\Sigma)$, for which the diagram 
\begin{equation}\label{eq:diagpsis}
\begin{tikzcd}
\hX\arrow[r, "\hF"]\arrow[d,"\bpsi"] & \hX \arrow[d, "\bpsi"] \\
\bpsi(\hX) \arrow[r, "\Sigma"]& \bpsi(\hX) 
\end{tikzcd}
\end{equation}
commutes. Set $D = \bpsi(\hX)$. 

This commutative diagram can be used to carry over the notions of convergence defined for sequences of matrices in Definition~\ref{def:wsc} to the natural extension $(\hX,\hF,\hA)$ of a multidimensional continued fraction algorithm. More precisely, $(\hX,\hF,\hA)$ is \emph{weakly}\indx{continued fraction algorithm!convergent}, and \emph{exponentially convergent} (in the past or in the future) at $(\bx,\by)$ if the same is true for $\bpsi(\bx,\by)$ in the sense of Definition~\ref{def:wsc}. Also, the local Pisot condition (see Definition~\ref{def:genPisot}) can be carried over  to $(\hX,\hF,\hA)$ in this way. The local Pisot condition implies exponential convergence under mild conditions; see Proposition~\ref{prop:strongcv2sided}. 

Recall that $\PP^{d-1}_{\ge0}$  stands for the \emph{nonnegative cone} of~$\PP^{d-1}$. It turns out that the domain~$\hX$ of the natural extension $(\hX,\hF,\hA)$ of $(X,F,A)$ is often contained in the direct product $\PP^{d-1}_{\ge0} {\times} \PP^{d-1}_{\ge0}$ of nonnegative cones. Thus the following  relation of $(\hX,\hF,\hA)$ to the generalized right and left eigenvectors of $\bpsi(\bx,\by)$ is often useful.

\begin{lemma}\label{lem:MCF_eigen}
Assume that $(X,F,A)$ is a positive multiplicative continued fraction algorithm whose natural extension $(\hX,\hF,\hA)$ satisfies $\hX \subset X {\times} \PP^{d-1}_{\ge0}$ and is two-sided weakly convergent at $(\bx,\by)$. Then  $(\bx,\by) \in \PP_{\ge0}^d {\times} \PP_{\ge0}^d$ and any representative of $(\bx,\by)$ in $\RR_{\ge0}^d {\times} \RR_{\ge0^d}$ is a pair $(\bu,\bv)$, where $\bu$ and $\bv$ are generalized right and left eigenvectors of $\bpsi(\bx,\by)$.
\end{lemma}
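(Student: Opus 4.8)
The strategy is to unfold the definitions and then invoke the weak convergence of the sequence of partial quotient matrices together with Proposition~\ref{prop:fur}. Write $\bM = \bpsi(\bx,\by) = (M_n)_{n\in\ZZ}$, where $M_n = \tr{\!\hA}(\hF^n(\bx,\by))$. By the commutative diagram~\eqref{eq:diagpsis} and the explicit form~\eqref{eq:hatFdef} of~$\hF$, an easy induction shows that for $n \ge 1$ the first coordinate of $\hF^n(\bx,\by)$ is $\tr{\!A}^{(n)}(\bx)^{-1}\bx$ (in projective coordinates), and since $\hX \subset X \times \PP^{d-1}_{\ge0}$ is forward invariant, this first coordinate lies in $X \subset \PP^{d-1}$; meanwhile the second coordinate is $A^{(n)}(\bx)\by = M_{[0,n)}^{t}$-related, i.e., in $\RR^d$-terms it is obtained by applying the nonnegative matrix $A^{(n)}(\bx)$ to a representative of~$\by$. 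First I would fix nonnegative representatives and track which cone each iterate lands in.

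Next I would establish that $(\bx,\by) \in \PP^d_{\ge0} \times \PP^d_{\ge0}$. For the second coordinate this is immediate: applying $\hF^{-n}$ we see $\by$ is the image of the second coordinate of $\hF^{-n}(\bx,\by)$ under $A^{(-n)}(\cdot)^{-1}$ in the appropriate sense, but more simply, weak convergence in the past of $\bM$ at $(\bx,\by)$ means (by Definition~\ref{def:wsc} applied to $(\tr{\!}M_{-n-1})_{n\in\ZZ}$) that $\tr{\!M}_{[-n,0)}\be_i/\lVert\cdot\rVert$ converges to a fixed direction; since each $M_n \in \cM_d$ is nonnegative, the limiting direction is nonnegative, and one checks using~\eqref{eq:hatFdef} for $\hF^{-1}$ (or the dual description $\hF^{-1}(\bx,\by) = (A^*(\by)\bx, \tr{\!A}^*(\by)^{-1}\by)$) that this limiting direction is precisely (a representative of)~$\by$. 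The same argument with the future weak convergence of $\bM$ shows that the columns $M_{[0,n)}\be_i/\lVert M_{[0,n)}\be_i\rVert$ converge to a nonnegative direction which, by unfolding $\hF^n$, equals (a representative of)~$\bx$; hence $\bx \in \PP^d_{\ge0}$.

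Finally, to identify $\bu$ and $\bv$ as generalized right and left eigenvectors of $\bM$: by the definition of weak convergence in the future~\eqref{eq:weakconv}, all columns $M_{[0,n)}\be_i$ converge projectively to the direction of~$\bu$, and since the matrices are nonnegative, $M_{[0,n)}\RR^d_{\ge0}$ is a nested decreasing sequence of closed cones whose ``width'' (in the projective metric) shrinks to zero with limit direction $\RR_{\ge0}\bu$; this gives $\bigcap_{n\in\NN} M_{[0,n)}\RR^d_{\ge0} = \RR_{\ge0}\bu$, which is exactly Definition~\ref{def:gea} of a generalized right eigenvector. The argument for $\bv$ and the transposed products $\tr{\!M}_{[-n,0)}$ is identical, using weak convergence in the past. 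I expect the main obstacle to be the bookkeeping in the first step — carefully matching the projective iterates $\hF^{\pm n}(\bx,\by)$ with the products $M_{[0,n)}$ and $\tr{\!M}_{[-n,0)}$, keeping track of transposes and inverses so that the ``right'' eigenvector pairs with the first coordinate $\bx$ and the ``left'' one with the second coordinate $\by$ — rather than any deep analytic point, since the convergence content is already packaged in Definition~\ref{def:wsc} and Proposition~\ref{prop:fur}.
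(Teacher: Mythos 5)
Your proposal is correct and follows essentially the same route as the paper's proof: unfold $\hF^{\pm n}$ via \eqref{eq:hatFdef} to get $\bx = M_{[0,n)}\bx_n$ and $\by = \tr{\!M}_{[n,0)}\by_n$, then use two-sided weak convergence of the nonnegative matrices to identify the nested cone intersections $\bigcap_{n} M_{[0,n)}\RR_{\ge0}^d$ and $\bigcap_{n} \tr{\!M}_{[n,0)}\RR_{\ge0}^d$ with the rays through nonnegative representatives of $\bx$ and~$\by$, which is exactly Definition~\ref{def:gea}. The only stray element is the mention of Proposition~\ref{prop:fur} in your plan, which is neither needed nor available here (primitivity and recurrence are not assumed), but your detailed argument does not actually rely on it.
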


\begin{proof}
Setting $(M_n)_{n\in\ZZ} = (\tr{\!\hA}(\hF^n(\bx ,\by)))_{n\in\ZZ}$, the diagram \eqref{eq:diagpsis} yields together with the definition of~$\hF$ in \eqref{eq:hatFdef} that 
\[
\hF^{n}(\bx,\by) = \begin{cases} ((M_{[0,n)})^{-1}\bx, \tr{(M_{[0,n)})}\by), & \mbox{if}\ n \ge 0, \\
(M_{[n,0)}\bx, \tr{(M_{[n,0)})}^{-1}\by), & \mbox{if}\ n < 0.\end{cases}
\]
Thus, setting $\hF^{n}(\bx,\by)=(\bx_n,\by_n)$, we have
\begin{equation}\label{eq:xyhatxy}
\bx =  \bigcap_{n\ge0}  M_{[0,n)}\bx_n  \quad\text{and}\quad
\by = \bigcap_{n<0} \,\tr\! M_{[n,0)}\by_n.
\end{equation}
For an element $\ba \in \PP^{d-1}_{\ge0}$ let $\tilde \ba$ be some fixed nonnegative representative of $\tilde\ba$ in $\RR^d$. Then, by two-sided weak convergence of $(M_n)_{n\in\ZZ}$, \eqref{eq:xyhatxy} implies that
\begin{equation}\label{eq:txtyeqivuv}
\begin{aligned}
\RR_{\ge0} \tilde\bx &= \bigcap_{n\ge0}  M_{[0,n)}\tilde\bx_n  = \bigcap_{n\ge0}  M_{[0,n)} \RR_{\ge0}^d \quad\text{and}\quad \\
\RR_{\ge0} \tilde\by &= \bigcap_{n<0} \,\tr\! M_{[n,0)}\tilde\by_n =\bigcap_{n<0} \,\tr\! M_{[n,0)} \RR_{\ge0}^d .
\end{aligned}
\end{equation}
On the other hand, by Definition~\ref{def:gea}, a generalized right eigenvector $\bu$ and a generalized left eigenvector $\bv$ of $(M_n)_{n\in\ZZ}$ satisfy
\begin{equation}\label{eq:txtyeqivuv2}
\RR_{\ge0}\bu =  \bigcap_{n\ge 0}  M_{[0,n)}\RR_{\ge0}^d  \quad\text{and}\quad
\RR_{\ge0}\bv =  \bigcap_{n<0} \tr{(M_{[n,0)})} \RR_{\ge0}^d.
\end{equation}
By comparing \eqref{eq:txtyeqivuv} with \eqref{eq:txtyeqivuv2}, we see that $(\bu,\bv)$ is a representative of $(\bx,\by)$ in $\RR_{\ge0}^d {\times} \RR_{\ge0}^d$.
\end{proof}

A~positive multidimensional continued fraction algorithm $(X,F,A)$ produces one-sided sequences of nonegative matrices to which, in case of weak convergence, we can associate a generalized right eigenvector. To get a left eigenvector as well, we need to work with the natural extension of the algorithm, as shown by Lemma~\ref{lem:MCF_eigen}. 

We turn to the metric theory. The following result gives a criterion of $\bpsi$ being a measurable conjugacy.

\begin{proposition}\label{prop:conjMCFD}
Let $(X,F,A,\nu)$ be a positive multidimensional continued fraction algorithm. If 
its natural extension $(\hX,\hF,\hA,\hnu)$ satisfies $\hX \subset X {\times} \PP^{d-1}_{\ge0}$ and is two-sided weakly convergent a.e., then the mapping~$\bpsi$ in \eqref{eq:diagpsis} is injective and, hence, $(\hX,\hF,\hA,\hnu)$ is measurably conjugate to the shift $(\bpsi(\hX),\Sigma,\bpsi_*\nu)$. 
\end{proposition}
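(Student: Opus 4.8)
The plan is to show that $\bpsi$ is injective on $\hX$; once that is established, since $\bpsi$ is measurable and intertwines $\hF$ with the shift $\Sigma$ by the commutative diagram \eqref{eq:diagpsis}, it becomes a measurable conjugacy onto its image $(\bpsi(\hX),\Sigma,\bpsi_*\nu)$ almost everywhere, and the statement follows. So the heart of the matter is the reconstruction of a point $(\bx,\by)\in\hX$ from the bi-infinite sequence $\bM=\bpsi(\bx,\by)$ of partial quotient matrices.

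First I would invoke Lemma~\ref{lem:MCF_eigen}. By hypothesis $(\hX,\hF,\hA,\hnu)$ satisfies $\hX\subset X\times\PP^{d-1}_{\ge0}$ and is two-sided weakly convergent at $\hnu$-a.e.\ point. Hence for a.e.\ $(\bx,\by)\in\hX$ we have $(\bx,\by)\in\PP^d_{\ge0}\times\PP^d_{\ge0}$, and any nonnegative representative $(\bu,\bv)\in\RR^d_{\ge0}\times\RR^d_{\ge0}$ of $(\bx,\by)$ consists of a generalized right eigenvector $\bu$ and a generalized left eigenvector $\bv$ of $\bpsi(\bx,\by)=\bM=(M_n)_{n\in\ZZ}$. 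Concretely, from \eqref{eq:txtyeqivuv2} in the proof of that lemma,
\[
\RR_{\ge0}\bu=\bigcap_{n\ge0}M_{[0,n)}\RR^d_{\ge0}\quad\text{and}\quad\RR_{\ge0}\bv=\bigcap_{n<0}\tr{(M_{[n,0)})}\RR^d_{\ge0},
\]
so the rays $\RR_{\ge0}\bu$ and $\RR_{\ge0}\bv$ are determined by $\bM$ alone, with no reference to the particular preimage point. Passing to the projective space, this says that the pair $([\bu],[\bv])\in\PP^{d-1}_{\ge0}\times\PP^{d-1}_{\ge0}$ is a function of $\bM$ only.

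Next I would identify $(\bx,\by)$ with this reconstructed pair. Again by Lemma~\ref{lem:MCF_eigen} (its final line, via \eqref{eq:txtyeqivuv}), $(\bu,\bv)$ is a representative of $(\bx,\by)$ in $\RR^d_{\ge0}\times\RR^d_{\ge0}$; that is, $\bx=[\bu]$ and $\by=[\bv]$ in $\PP^{d-1}$. Therefore, if $(\bx,\by),(\bx',\by')\in\hX$ are two a.e.-points with $\bpsi(\bx,\by)=\bpsi(\bx',\by')=\bM$, then both $\bx,\bx'$ equal $[\bu]$ and both $\by,\by'$ equal $[\bv]$ for the same generalized right and left eigenvectors $\bu,\bv$ of $\bM$ — here one uses that weak convergence forces the intersections above to be single rays, so the generalized eigenvectors are unique up to positive scaling. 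Hence $(\bx,\by)=(\bx',\by')$, and $\bpsi$ is injective on a set of full $\hnu$-measure.

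The final step is routine: $\bpsi$ restricted to the full-measure set where the above works is a measurable bijection onto its image, its inverse is measurable (being given explicitly in terms of the intersections of nested cones, which is a measurable operation), and $\bpsi\circ\hF=\Sigma\circ\bpsi$ holds by \eqref{eq:diagpsis}; pushing forward $\hnu$ we obtain the measure $\bpsi_*\nu$ on $\bpsi(\hX)$, and $(\hX,\hF,\hA,\hnu)$ is measurably conjugate to $(\bpsi(\hX),\Sigma,\bpsi_*\nu)$. The main obstacle — though it is essentially already handled by the machinery in place — is the uniqueness of the generalized eigenvectors: one must be sure that two-sided \emph{weak} convergence (not merely the existence of some generalized eigenvector) is exactly what guarantees that $\bigcap_{n\ge0}M_{[0,n)}\RR^d_{\ge0}$ is one-dimensional, so that $\bx$ is pinned down uniquely by $\bM$; this is precisely the content of weak convergence in Definition~\ref{def:wsc} together with Proposition~\ref{prop:fur}, so no new argument is needed beyond carefully citing it.
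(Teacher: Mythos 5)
Your argument is correct and is essentially the paper's proof: both rest on Lemma~\ref{lem:MCF_eigen}, which under two-sided weak convergence identifies $(\bx,\by)$ with the (projectively unique) generalized right and left eigenvector rays of $\bpsi(\bx,\by)$, so two distinct points cannot produce the same matrix sequence. The paper phrases this as a contradiction argument while you reconstruct $(\bx,\by)$ from $\bM$ directly, but the content is identical.
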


\begin{proof}
The mapping $\bpsi$ is obviously measurable. Assume that it is not a.e.\ injective. 
Then a.e.\ weak convergence of $(\hX,\hF,\hA,\hnu)$ implies the existence of distinct elements $(\bx_1,\by_1),(\bx_2,\by_2)\in\hX$ at which $(\hX,\hF,\hA,\hnu)$ is weakly convergent, such that $\bpsi(\bx_1,\by_1) = \bpsi(\bx_2,\by_2)$.  Hence, by Lemma~\ref{lem:MCF_eigen}, the sequence $\bpsi(\bx_1,\by_1) = \bpsi(\bx_2,\by_2)$ would have two different pairs of right and left eigenvectors 
$(\bu,\bv)$ and $(\bu',\bv')$, where $\bu$ and $\bu'$ or $\bv$ and $\bv'$ are not collinear. This is a contradiction to two sided weak convergence.
\end{proof}

Under the conditions of Proposition~\ref{prop:conjMCFD}, the map which associates to each sequence~$\bM\in D=\bpsi(\hX)$ the pair of generalized eigenvectors $(\bu,\bv)$, is a.e.\ injective as well (in order to make this map well-defined, we need to normalize $\bu$ and $\bv$ in some way). Thus it allows to view the domain~$\hX$ as a set of pairs of generalized eigenvectors. Note that the conditions of Proposition~\ref{prop:conjMCFD} conditions are realized for most multidimensional continued fraction algorithms, see e.g.\ for the Brun algorithm described below in Section~\ref{sec:Brun}.

We also get that $(X,F,A,\nu)$ satisfies the Pisot condition from Definition~\ref{def:MCF_Pisot} if $(\bpsi(\hX),\Sigma,\bpsi_*\nu)$ satisfies the generic Pisot condition for~$A_{\mathrm{tr}}$ from Definition~\ref{def:gengenPisot}.  

Each sequence of matrices $\bM \in D$ produced by $(\hX,\hF,\hA)$ can be used to define the $\cM$-adic mapping family $(\TT,f_\bM)$ which performs the multidimensional continued fraction algorithm~$\hF$ for a suitable element of the domain~$\hX$. In the cocycle fiber $\{\bM\} {\times} \TT^d$, the orbits of~$\Phi_{\mathrm{tr}}$ are the $\cM$-adic mapping families of Definition~\ref{def:Madicmapping}. Thus $D$ parametrizes the set of mapping families and the shift on $D$ performs a renormalization procedure. 

For $\bM = (M_n)_{n\in\ZZ} \in D= \bpsi(\hX)$,  the linear cocycle~$A_{\mathrm{tr}}$ defined in Section~\ref{subsec:cocyclemat} acts on generalized eigenvectors by
\[
(\bu,\bv) \mapsto  (\tr{\!A}_{\mathrm{tr}}^{(n)}(\bM)^{-1}\bu, A_{\mathrm{tr}}^{(n)}(\bM)\bv) = (M_{[0,n)}^{-1}\bu, \tr{(\!M}_{[0,n)} \bv) = (\bu_n,\bv_n) \quad (n >0). 
\]
The vectors $\bu_n$ and~$\bv_n$ will be used in the sequel as the vectors whose entries are the measures of the bases and heights of the \emph{Rauzy boxes} (see Definition~\ref{def:rauzybox}), respectively. As we will see in Section~\ref{sec:markov}, these Rauzy boxes define nonstationary Markov partitions for $\cM$-adic mapping families and, hence, for multidimensional continued fraction algorithms.

\section{Brun continued fraction algorithm} \label{sec:Brun} 
In this section, we want to apply the theory we developed in Sections~\ref{sec:matrices}--\ref{sec:cfgentheory} to a multidimensional continued fraction algorithm that goes back to Viggo Brun. We start with the unordered version~$F_{\rU}$ of the Brun continued fraction algorithm in Section~\ref{subsec:BrunUnOrd}. We then consider its ordered version~$F_{\rB}$ in Section~\ref{subsec:BrunOrd}.  Both versions are additive, they hence rely on a finite set of matrices. The accelerated version~$F_{\rM}$ of the ordered Brun algorithm is handled in Section~\ref{subsec:BrunMul}; it is a multiplicative algorithm, i.e., it relies 
on an infinite set of matrices. Finally,  Section~\ref{sec:BrunPisot} investigates the relation between Brun's algorithm and the generic Pisot condition. A~general description for the Brun continued fraction algorithm is as follows: \emph{For a given vector it produces a new vector by subtracting its second largest coordinate from its largest one}. 

\subsection{Unordered Brun continued fraction algorithm} \label{subsec:BrunUnOrd} 
We first discuss the unordered Brun continued fraction algorithm $(X_{\rU},F_{\rU},A_{\rU})$, which is an additive multidimensional continued fraction algorithm. Since we will use it in later examples in the 2- and 3-dimensional case, we present this algorithm in arbitrary dimensions; for the 2-dimensional case we also refer to \cite{AL18}, some aspects of the higher-dimensional case are discussed in~\cite{Delecroix-Hejda-Steiner}. The algorithm $(X_{\rU},F_{\rU},A_{\rU})$ consists in subtracting the second largest coordinate of a given vector from the largest one, without reordering the coordinates. 

In the sequel we will use the notation $\cA=\{1,\ldots,d\}$.
We first define the sets and matrices needed for the definition of these algorithm. For $d\ge 3$, we define the subset
\[
X_\rU = \{ [w_1:w_2:\cdots:w_d] \in \PP_{\ge0}^{d-1} \,:\, w_i \neq w_j\  \mbox{for}\ i \neq j  \}.
\notx{U}{$\rU$}{object related to the unordered Brun algorithm}
\]
If, for all $i,j \in \cA$ with $i \neq j$, we set
\begin{equation}\label{eq:XunBrun}
X_{\rU,ij} = \{ [w_1:w_2:\cdots:w_d] \in \PP_{\ge0}^{d-1}: w_k \le  w_j \le w_i\ \text{for}\ k \in \cA \setminus \{i,j\} \}.
\end{equation}
then the collection $\{X_{\rU,ij}\colon i,j \in \cA,\, i \neq j\}$ is, apart from overlaps at the boundaries, a partition of $X_\rU$.

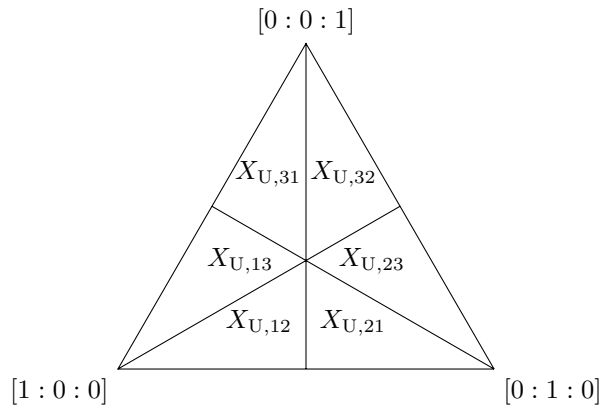
\begin{figure}[ht]
\begin{tikzpicture}[scale=2.5]
\coordinate [label={below right:$[0:1:0]$}] (A) at (1,0);
\coordinate [label={above :$[0:0:1]$}] (B) at (0, 1.732);
\coordinate [label={below left:$[1:0:0]$}] (C) at (-1, 0);
\draw (A) -- node[above] {} (B) -- node[right] {} (C) -- node[below] {} (A) (B)--(0,0) (-.5,1.732/2)--(A) (.5,1.732/2)--(C) ;
\draw (0.2,1.732*3/5)node[]{$X_{\rU,32}$} (0.35,1.732*1/3)node[]{$X_{\rU,23}$} (-0.35,1.732*1/3)node[]{$X_{\rU,13}$} (-0.25,1.732*1/7)node[]{$X_{\rU,12}$} (0.25,1.732*1/7)node[]{$X_{\rU,21}$} (-0.2,1.732*3/5)node[]{$X_{\rU,31}$};
\end{tikzpicture}
\caption{The topological partition $X_{\rU} = \bigcup_{i,j\in \{1,2,3\},\,i\neq j} X_{\rU,ij}$ for the unordered Brun algorithm in two dimensions.} \label{fig:brun}
\end{figure}
We also define the $(d{-}1)d$ different matrices $M_{\rU,ij}$, $i,j\in \cA$, $i \neq j$,  
\begin{equation}\label{eq:unorderedBrunMatrices}
M_{\rU,ij} = I_d + P_{ij} \qquad (i,j\in \cA,\, i \neq j),
\end{equation}
where $I_d$ is the $d{\times}d$ identity matrix and $P_{ij}$ is the $d{\times}d$ matrix having entries zero except a~$1$ at position $(i,j)$. We call the matrices $M_{\rU,ij}$ the \emph{unordered Brun matrices}\indx{matrix!Brun!unordered}\indx{Brun!matrix!unordered}.

Using this data, we can define our algorithm as follows.

\begin{definition}[Unordered Brun algorithm]
\indx{Brun!algorithm, map!unordered}\indx{continued fraction algorithm!Brun!unordered}
For $d \ge 3$ let
\[
A_{{\rU}}:\, X_{\rU} \to \GL(d,\ZZ);\quad \bx \mapsto \tr{\!M}_{\rU,ij} \qquad \text{if}\ \bx \in X_{\rU,ij} \qquad (i,j\in \cA,\, i \neq j)
\]
with $X_{\rU,ij}$ as in \eqref{eq:XunBrun} and $M_{\rU,ij}$ as in \eqref{eq:unorderedBrunMatrices}.
This defines the \emph{unordered Brun continued fraction map}
\begin{equation} \label{CFalgoBU}
F_{\rU}:\, X_{\rU} \to X_{\rU}; \quad \bx \mapsto \tr{\!A}_{\rU}(\bx)^{-1} \bx.
\end{equation}
Then the multidimensional continued fraction algorithm $(X_{{\rU}},F_{{\rU}},A_{{\rU}})$ is called the $(d{-}1)$-dimensional \emph{unordered Brun continued fraction algorithm} or the \emph{unordered Brun algorithm}, for short.
\end{definition}

The map~$F_{\rU}$ lives on $X_{\rU} = \bigcup_{i,j\in \cA,\, i\neq j} X_{\rU,ij}$; see Figure~\ref{fig:brun} for the case $d=3$. It is defined as in \eqref{eq:CFalgo} by the cocycle\footnote{Note that the definition is ambigous in the small overlaps of the sets $X_{\rU,ij}$. This can easily be avoided by redefining $X_{\rU,ij}$ on their boundaries. Since we are mainly interested in metric properties of the algorithm, we do not care how $A_{\rU}$ is defined on these overlaps; this also holds for other algorithms.}
Note that $X_{\rU,ij}=X_{^t\!{M}_{\rU,{ij}}}$ are exactly the cylinder sets of length one of the unordered Brun algorithm; see Definition~\ref{def:MFCcyl}. This algorithm is \emph{symmetric} in the sense that $F_{\rU}(\zeta \bx) = \zeta F_{\rU}(\bx)$, where the permutation $\zeta \in \mathfrak{S}_d$\notx{sdy}{$\mathfrak{S}_d$}{symmetric group} acts on $\bx \in \RR^d$ naturally by $\zeta(x_1,\dots,x_d) = (x_{\zeta(1)},\dots,x_{\zeta(d)})$. Moreover, since $F_{\rU}^n(X_{[0,n)}(\bx))$ is a union of sets of the form $X_{\rU,ij}$ for each $n\in \NN$ and each $\bx\in X_{\rU}$, the unordered Brun algorithm has \emph{finite range}.  

According to \eqref{eq:CFT}, the \emph{affine version} of~$(X_{\rU},F_{\rU},A_{\rU})$ is given by
$(\Delta_{\rU},T_{\rU},A_{\rU})$, where $\Delta_{\rU} = \Delta^{d-1} \cap [0,1]^d$ and
\begin{equation}\label{eq:CFTBU}
T_{\rU}:\, \Delta_{\rU} \to \Delta_{\rU}, \quad \bx \mapsto \frac{\tr{\!A}(\bx)^{-1} \bx}{\lVert\tr{\!A}(\bx)^{-1} \bx\rVert};
\end{equation}
recall that $\Delta_{\rU}=\chi(X_{\rU})$, with $\chi$ being the chart defined in~\eqref{eq:chidef}.

Our first goal is to construct a natural extension and a Gauss measure for the unordered Brun algorithm. To do this, we proceed by the strategy outlined in Section~\ref{sec:natex}. 
In the following lemma we set up the natural extension of $(\Delta_{\rU},T_{\rU},A_{\rU})$.

\begin{lemma}\label{eq:natexBU}
For $i,j \in \cA$, $i \neq j$, let 
\[
\begin{aligned}
\Delta_{ij} &= \{ (x_1,x_2,\dots,x_d) \in\Delta_{\rU} \,:\, x_k \le x_j \le  x_i\ \text{for all}\ k \in \cA \setminus \{i,j\} \}, \\
\Delta_{ij}^* &= \{ (y_1,y_2,\dots,y_d) \in\Delta_{\rU} \,:\, y_k \le y_i\ \text{for all}\ k\in \cA \setminus \{i,j\} \}.
\end{aligned}
\]
Then for 
\begin{equation}\label{eq:DeltaUDef}
\hDelta_{\rU} = \bigcup_{i,j\in \cA,\, i \neq j} \Delta_{ij} \times \Delta_{ij}^*,
\end{equation}
the mapping
\[
\hT_{\rU}:\,  \hDelta_{\rU} \to \hDelta_{\rU},\quad (\bx, \by) \mapsto \left(\frac{\tr{\!A_{\rU}}^{-1}(\bx)\bx}{\lVert \tr{\!A_{\rU}}^{-1} (\bx)\bx \rVert} , 
\frac{A_{\rU}(\bx)\by}{\lVert  A_{\rU}(\bx)\by \rVert}\right)
\]
is bijective almost everywhere w.r.t.\ the Lebesgue measure on $\hDelta_{\rU}$. 
\end{lemma}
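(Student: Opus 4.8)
The strategy is the one outlined in Section~\ref{sec:natex}: we already know from \eqref{eq:YhT} that $\hmu(\hT_{\rU}(Y)) \le \hmu(Y)$ for every measurable $Y \subset \Delta^{d-1} {\times} \Delta^{d-1}$, and by Lemma~\ref{lem:hTsurbi} it suffices to prove that $\hT_{\rU}$ is surjective on $\hDelta_{\rU}$ (up to a Lebesgue null set). So the whole argument reduces to showing that every point of $\hDelta_{\rU}$ has a preimage in $\hDelta_{\rU}$ under $\hT_{\rU}$.

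\textbf{Key steps.} First I would compute the action of $\hT_{\rU}$ on a cell $\Delta_{ij} \times \Delta_{ij}^*$ explicitly in (non-normalized) coordinates, i.e.\ work with the linear maps $\bx \mapsto \tr{\!M}_{\rU,ij}^{-1}\bx = \bx - x_i \be_j$ (wait --- more precisely $\tr{\!A}_{\rU}(\bx)^{-1} = (\tr{\!M}_{\rU,ij})^{-1}$ subtracts the $i$-th coordinate from the $j$-th, since $\tr{\!M}_{\rU,ij} = I_d + P_{ji}$) on the first factor, and $\by \mapsto M_{\rU,ij}\by = \by + y_j\be_i$ on the second factor, and track where the defining inequalities of $\Delta_{ij}$ and $\Delta_{ij}^*$ go. The inequalities $x_k \le x_j \le x_i$ on $\Delta_{ij}$ become, after subtracting $x_i$ from the $j$-th slot, the condition that the new vector $\bx'$ has all coordinates except possibly the $j$-th bounded by $x_i' = x_i$; this is exactly the "dual-type" constraint, so $\bx'$ lands in a region of the form $\Delta_{i\ell}^*$ for an appropriate $\ell$ determined by which coordinate is now second-largest. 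Symmetrically, applying $M_{\rU,ij}$ to $\by \in \Delta_{ij}^*$ (where $y_k \le y_i$ for $k \ne i,j$) produces a vector $\by'$ whose largest coordinate is the $i$-th and whose second-largest is the $j$-th, i.e.\ $\by' \in \Delta_{ij}$-type region. Carrying this bookkeeping out over all cells shows $\hT_{\rU}(\Delta_{ij}\times\Delta_{ij}^*) = \bigcup_{\ell} \Delta_{i\ell}^{*,\text{something}} \times \Delta_{i\ell}$; matching the pieces across all $(i,j)$ one checks that the images tile $\hDelta_{\rU}$, establishing surjectivity. Then Lemma~\ref{lem:hTsurbi} gives bijectivity a.e., and the remark after \eqref{eq:YhT} (equality in \eqref{eq:YhT} for $Y \subset \hDelta_{\rU}$, hence $(\hT_{\rU})_*\hmu = \hmu$) finishes the identification of the natural extension.

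\textbf{Main obstacle.} The genuinely delicate point is the combinatorial bookkeeping of step one: one must verify that, as $(\bx,\by)$ ranges over a cell $\Delta_{ij}\times\Delta_{ij}^*$, the image coordinates satisfy \emph{exactly} the inequalities cutting out the union of cells claimed, with no overlap of positive measure and no missing region --- in other words that the $(d{-}1)d$ images of the cells under $\hT_{\rU}$ partition $\hDelta_{\rU}$. This requires being careful about which index becomes the new "second-largest" coordinate after the subtraction on the $\bx$-side (it depends on the relative order of the remaining coordinates, which is precisely the extra data that the \emph{unordered} algorithm keeps, in contrast to the ordered one), and about the fact that on the $\by$-side the matrix $M_{\rU,ij}$ pushes the $i$-th coordinate above all others and the $j$-th into second place regardless of the previous configuration. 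I would organize this by a change of indexing: for fixed $i$, the cells $\Delta_{ij}^*$ with $j$ varying over $\cA\setminus\{i\}$ partition $\{\by : y_i \ge y_k \text{ for all } k\}$ according to which coordinate $y_j$ sits just below $y_i$, and the map $\hT_{\rU}$ on the $\bx$-side restricted to the cells with first index $i$ realizes exactly this refinement in the image --- making the tiling transparent. Everything else (the Jacobian computation, the inequality $\hmu(\hT_{\rU}(Y))\le\hmu(Y)$, the passage from surjectivity to bijectivity) is already supplied by the material preceding the lemma.
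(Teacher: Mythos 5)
Your high-level architecture is exactly the paper's: reduce to surjectivity via Lemma~\ref{lem:hTsurbi}, compute the image of each cell $\Delta_{ij}\times\Delta_{ij}^*$, and match the images combinatorially. However, the concrete computation at the heart of the argument is transposed. Since $A_{\rU}(\bx)=\tr{\!M}_{\rU,ij}$ on $X_{\rU,ij}$, the first factor is acted on by $\tr{\!A}_{\rU}(\bx)^{-1}=M_{\rU,ij}^{-1}=I_d-P_{ij}$, i.e.\ $\bx\mapsto\bx-x_j\be_i$ (subtract the second-largest coordinate from the largest, landing back in the positive cone), and the second factor by $\tr{\!M}_{\rU,ij}=I_d+P_{ji}$, i.e.\ $\by\mapsto\by+y_i\be_j$. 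Your version ($\bx\mapsto\bx-x_i\be_j$ and $\by\mapsto\by+y_j\be_i$) subtracts the largest coordinate from the second-largest, which produces a negative entry and leaves $\Delta_{\rU}$, so the bookkeeping built on it cannot close up. With the correct maps the image of a cell is a \emph{single} cell with the indices swapped, $\Delta_{ji}^*\times\Delta_{ji}$, not a union over a new index $\ell$ "determined by which coordinate is now second-largest'': the dual cells $\Delta_{ij}^*$ deliberately do not record the second-largest coordinate of $\by$, only that $y_i$ dominates all coordinates except possibly $y_j$.

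Two further points that you leave as unverified claims are actually the substance of the proof. First, the direct computation only gives the inclusions $\tr{\!A}_{\rU}(\Delta_{ij})^{-1}\Delta_{ij}\subseteq\Delta_{ji}^*$ and $A_{\rU}(\Delta_{ij})\Delta_{ij}^*\subseteq\Delta_{ji}$; surjectivity needs \emph{equalities}, which the paper extracts by applying the same inclusions to the transposed matrices (using $\tr{\!M}_{\rU,ij}=M_{\rU,ji}$) and sandwiching. Second, even knowing $\hT_{\rU}(\hDelta_{\rU})=\bigcup_{i\ne j}\Delta_{ij}^*\times\Delta_{ij}$, one must still show this union equals $\bigcup_{i\ne j}\Delta_{ij}\times\Delta_{ij}^*$; the paper does this via the decomposition $\Delta_{ij}^*=\Delta_{ji}\cup\bigcup_{k\ne i}\Delta_{ik}$ and the resulting symmetry $\Delta_{k\ell}\subset\Delta_{ij}^*\Leftrightarrow\Delta_{ij}\subset\Delta_{k\ell}^*$. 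Your "matching the pieces one checks that the images tile'' is precisely the step that needs this identity, so as written the proposal has a genuine gap there in addition to the index error.
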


\begin{proof}
In view of Lemma~\ref{lem:hTsurbi}, it suffices to prove  the surjectivity of~$\hT$. If $\bx = (x_1,\dots,x_d)\in \Delta_{ij}$ and $\by = (y_1,\dots,y_d) \in \Delta_{ij}^*$ holds for some $i,j \in \cA$, $i \neq j$, then
\[
\begin{aligned}
\tr{\!A_{\rU}}(\bx)^{-1} \bx &= M_{\rU,ij}^{-1} \bx = (x_1,\dots,x_{i-1},x_i-x_j,x_{i+1},\dots,x_d) \quad \text{and}  \\
A_{\rU}(\bx) \by &= \tr{\!M}_{\rU,ij} \by = (y_1,\dots,y_{j-1},y_j+y_i,y_{j+1},\dots,y_d).
\end{aligned}
\]
This immediately implies that 
\begin{equation}\label{eq:Deltatrafo}
\tr{\!A_{\rU}}(\Delta_{ij})^{-1} \Delta_{ij} \subseteq \Delta_{ji}^* \quad\text{and}\quad
A_{\rU}(\Delta_{ij})\Delta_{ij}^* \subseteq \Delta_{ji}
\end{equation}
(here and in the sequel we write $A_{\rU}(\Delta_{ij})$ for the matrix $A_{\rU}(\bx)$ with $\bx \in \Delta_{ij}$; this is a slight abuse of notation).
Hence, since $\tr{\!M_{U,ij}}=M_{U,ji}$,  one has 
\[
A_{\rU}(\Delta_{ij})^{-1} \Delta_{ji} \subseteq \Delta_{ij}^* \quad\text{and}\quad
\tr{\!A_{\rU}}(\Delta_{ij})\Delta_{ji}^* \subseteq \Delta_{ij}.
\]
However, together with \eqref{eq:Deltatrafo}, this yields
\[
\begin{aligned}
\Delta_{ij} & = \tr{\!A}_{\rU}(\Delta_{ij})\, \tr{\!A}_{\rU}(\Delta_{ij})^{-1} \Delta_{ij} \subseteq \tr{\!A}_{\rU}(\Delta_{ij})\, \Delta_{ji}^* \subseteq \Delta_{ij}  \quad\text{and}\quad \\
\Delta_{ij}^* & = A_{\rU}(\Delta_{ij})^{-1}\, A_{\rU}(\Delta_{ij}) \Delta_{ij}^* \subseteq  A_{\rU}(\Delta_{ij})^{-1} \Delta_{ji} \subseteq \Delta_{ij}^*.
\end{aligned}
\]
Thus in \eqref{eq:Deltatrafo} we can replace ``$\subseteq$'' by ``$=$'' to get
\[
\tr{\!A}_{\rU}(\Delta_{ij})^{-1} \Delta_{ij} = \Delta_{ji}^* \quad \mbox{and} \quad
A_{\rU}(\Delta_{ij}) \Delta_{ij}^* = \Delta_{ji}.
\]
We therefore gain $\hT (\hDelta_{\rU}) = \bigcup_{i,j\in \cA,\, i \neq j} \Delta_{ij}^* {\times} \Delta_{ij}$. 
It easily follows from the definitions that $\Delta_{ij}^*= \Delta_{ji} \cup\bigcup_{k\in\cA,k\not=i}\Delta_{ik}$ (with measure disjoint unions) for all $i,j\in \cA$, $i\not=j$. This implies that $\Delta_{k\ell} \subset \Delta_{ij}^*$ if and only if $\Delta_{ij} \subset \Delta_{k\ell}^*$ $(i,j,k,\ell\in \cA,\, i \neq j,\, k \neq \ell)$. Using these facts we immediately gain
\[
\bigcup_{i,j\in \cA,\, i \neq j} \Delta_{ij} \times \Delta_{ij}^* = \bigcup_{i,j\in \cA,\, i \neq j} \Delta_{ij}^* \times \Delta_{ij},
\]
and surjectivity of~$\hT$ follows.
\end{proof}

Recall from Section~\ref{sec:natex} that $\mathrm{d} \hmu(\bx,\by) = \frac{\mathrm{d}\lambda(\bx)\cdot \mathrm{d}\lambda{(\by)}}{\langle \bx,\by \rangle^d}$ is a measure on $\hDelta_{\rU}$.
By Lemma~\ref{eq:natexBU}, $(\hDelta_{\rU},\hT_{\rU},\hA_{\rU}, \hmu)$  is a natural extension of $(\Delta_{\rU},T_{\rU},A_{\rU},\mu_{\rU})$, where $\mu_{\rU} = \pi_*\hmu$ with $\pi: \hDelta\to\Delta$, $(\bx,\by) \mapsto \bx$ is the natural projection. This can easily be transferred to~$\hX$ via $\chi {\times} \chi$, and we exhibit a natural extension $(\hX_{\rU},\hF_{\rU},\hA_{\rU},\hnu)$ of $(X_{\rU},F_{\rU},A_{\rU},\nu_{\rU})$. We have that $\hDelta_{\rU}$ has positive Lebesgue measure by \eqref{eq:DeltaUDef}, and, hence, $\hnu_{\rU}(\hX_{\rU}) > 0$ (see also Remark~\ref{rem:nu>0}). 

Our next aim is to give the invariant measure~$\mu$ explicitly.

\begin{lemma}\label{lem:unorderedinvariant}
If we use the norm $\lVert\cdot\rVert_\infty$ in the definition of~$\Delta$, then the invariant measure~$\mu_{\rU}$ of the affine version of the unordered Brun algorithm $(T_{\rU},\Delta_{\rU},A_{\rU},\mu_{\rU})$ is absolutely continuous w.r.t.\ the (piecewise) Lebesgue measure on~$\Delta_{\rU}$ and has density
\[
\frac{1}{x_1\cdots x_{i-1}x_{i+1}\cdots x_d} \sum_{S\subset\cA\setminus\{i,j\}} \frac{(-1)^{|S|}}{x_i+\sum_{k\in S}x_k}
\]
for $(x_1,\dots,x_d) \in \Delta_{ij}$. This can be transfered to an invariant measure~$\nu_{\rU}$ of~$F_{\rU}$ via the bijection $\chi: X_{\rU} \to \Delta_{\rU}$. The measures $\mu_{\rU}$ and~$\nu_{\rU}$ are ergodic.
\end{lemma}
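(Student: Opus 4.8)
The plan is to recognize $\mu_{\rU}$ as the pushforward, along the projection $\pi\colon\hDelta_{\rU}\to\Delta_{\rU}$, $(\bx,\by)\mapsto\bx$, of the invariant measure $\hmu$ of the geometric natural extension built in Lemma~\ref{eq:natexBU}, and then to evaluate the resulting fibre integral explicitly. By Lemma~\ref{eq:natexBU} together with the general construction of Section~\ref{sec:natex}, the measure $\hmu$ with $\mathrm{d}\hmu(\bx,\by)=\langle\bx,\by\rangle^{-d}\,\mathrm{d}\lambda(\bx)\,\mathrm{d}\lambda(\by)$ is $\hT_{\rU}$-invariant on $\hDelta_{\rU}$, and $\mu_{\rU}=\pi_*\hmu$ is $T_{\rU}$-invariant on $\Delta_{\rU}$. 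Since $\hDelta_{\rU}=\bigcup_{i\ne j}\Delta_{ij}\times\Delta_{ij}^*$ is, up to a $(\lambda\times\lambda)$-null set, a disjoint union, $\mu_{\rU}$ is absolutely continuous with respect to the piecewise Lebesgue measure $\lambda|_{\Delta_{\rU}}$, with density
\[
g(\bx)=\int_{\Delta_{ij}^*}\frac{\mathrm{d}\lambda(\by)}{\langle\bx,\by\rangle^{d}}\qquad(\bx\in\Delta_{ij}),
\]
and by the $\mathfrak{S}_d$-equivariance of the algorithm it suffices to compute $g$ for one pair $(i,j)$.

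The first step is to describe $\Delta_{ij}^*$ explicitly under the $\lVert\cdot\rVert_\infty$-normalization. A norm-$1$ representative $\by$ satisfies $\max_k y_k=1$, and the constraint $y_k\le y_i$ for $k\in\cA\setminus\{i,j\}$ forces this maximum to be attained at $y_i$ or at $y_j$; hence, up to a null set, $\Delta_{ij}^*$ is the disjoint union of the full face $\{y_i=1\}$ (with the remaining $d-1$ coordinates ranging freely over $[0,1]$) and the part of the face $\{y_j=1\}$ on which $y_k\le y_i$ for all $k\in\cA\setminus\{i,j\}$ and $y_i\in[0,1]$. On each piece the linear form $\langle\bx,\by\rangle$ is affine in the free coordinates with positive coefficients $x_k$, so $g(\bx)$ can be evaluated by Fubini. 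The elementary identity
\[
\int_{[0,1]^{m}}\frac{\mathrm{d}y_{1}\cdots\mathrm{d}y_{m}}{(a+b_{1}y_{1}+\cdots+b_{m}y_{m})^{m+1}}=\frac{1}{m!\,b_{1}\cdots b_{m}}\sum_{T\subseteq\{1,\dots,m\}}\frac{(-1)^{|T|}}{a+\sum_{k\in T}b_{k}},
\]
proved by an immediate induction on $m$, and the analogous computation in which the innermost coordinates are integrated over $[0,y_i]$ rather than $[0,1]$, are the only computational inputs needed.

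Applying these, one finds that the face $\{y_i=1\}$ contributes, after division by $(d-1)!\prod_{k\neq i}x_k$, the sum over $T\subseteq\cA\setminus\{i\}$ of $(-1)^{|T|}\bigl(x_i+\sum_{k\in T}x_k\bigr)^{-1}$; and that integrating the coordinates $y_k$, $k\in\cA\setminus\{i,j\}$, over $[0,y_i]$ and then $y_i$ over $[0,1]$ on the face $\{y_j=1\}$ contributes, after the factor $x_i+\sum_{k\in S}x_k$ arising from these integrations cancels and after division by the same quantity, the sum over $S\subseteq\cA\setminus\{i,j\}$ of $(-1)^{|S|}\bigl(x_i+x_j+\sum_{k\in S}x_k\bigr)^{-1}$. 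Splitting the first sum according to whether $j\in T$ and adding the two contributions, all terms with denominator $x_i+x_j+\sum_{k\in S}x_k$ cancel, leaving
\[
g(\bx)=\frac{1}{(d-1)!\prod_{k\neq i}x_k}\sum_{S\subseteq\cA\setminus\{i,j\}}\frac{(-1)^{|S|}}{x_i+\sum_{k\in S}x_k}\qquad(\bx\in\Delta_{ij}),
\]
which is the asserted density (the constant $1/(d-1)!$ being immaterial for an invariant measure, and absorbable into the normalization of $\hmu$). The measure $\nu_{\rU}$ on $X_{\rU}$ and its density are then obtained by transporting $\mu_{\rU}$ along the restriction of the chart $\chi$ of \eqref{eq:chidef}, using $\chi_*\nu_{\rU}=\mu_{\rU}$.

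For ergodicity I would argue as in Section~\ref{sec:natex}: $\hmu$ is equivalent to $\lambda\times\lambda$ on $\hDelta_{\rU}$, so it suffices to exhibit one ergodic $T_{\rU}$-invariant measure in the Lebesgue class; this is classical for the Brun algorithm and carried out in detail in \cite[Chapter~3]{Schweiger:00} via a R\'enyi-type condition (possibly after passing to a jump transformation). Ergodicity of $\hmu$, and hence of $\mu_{\rU}=\pi_*\hmu$ and of $\nu_{\rU}$, then follows at once. The main obstacle is the bookkeeping in the third step: one must identify correctly the two-face decomposition of $\Delta_{ij}^*$, keep track of which inclusion–exclusion terms arise from which face and with which sign, and verify that the contributions combine so that the cancellation producing the stated closed form is exact; everything else — the Fubini identity, its $[0,y_i]$-variant, the chart transfer, and the ergodicity statement — is routine.
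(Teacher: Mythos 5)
Your proof is correct and follows essentially the same route as the paper's: the same identification of the density as the fibre integral $\int_{\Delta_{ij}^*}\langle\bx,\by\rangle^{-d}\,\mathrm{d}\lambda(\by)$, the same two-face decomposition of $\Delta_{ij}^*$ into $\{y_i=1\}$ and the constrained part of $\{y_j=1\}$, and the same inclusion--exclusion evaluation -- the paper organizes the integral as a recursion $I_d^{(\ell)}\mapsto I_{d-1}^{(\ell)}$ and performs the cancellation at the bottom level via $I_2^{(1)}(z)+I_2^{(2)}(z)=1/(x_2z)$, whereas you expand both faces fully and cancel at the end, which is the identical calculation in a different order. The one point to adjust is the ergodicity citation: Schweiger's R\'enyi-type argument treats the \emph{ordered} Brun algorithm, and ergodicity of the unordered version (a $\mathfrak{S}_d$-extension of the ordered one) does not follow formally from that of its factor; the paper instead cites \cite[Proposition~28]{AL18} for the unordered case.
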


\begin{proof}
To derive the required density, we have to integrate the density $\frac1{\langle \bx,\by \rangle^d}$ of the invariant measure $\hmu$ of $\hT$ over $\by$. Thus we have to compute
\[
I = \int_{\Delta_{ij}^*} \frac{\mathrm{d}\lambda{(\by)}}{\langle \bx,\by \rangle^d}.
\]
W.l.o.g., we can consider the case $(i,j) = (1,2)$. From the definition of $\Delta_{12}^*$ we see that a vector $(y_1,\ldots,y_d) \in \Delta_{12}^*$ satisfies $y_1=1$ or $y_2=1$. According to these two alternatives, we can split $I$ into a sum and gain
\[
\begin{aligned}
I & = \underbrace{\int_{y_2=0}^1 \hspace{-1em} \cdots \hspace{-.1em} \int_{y_d=0}^1 \frac{\mathrm{d}y_2\cdots \mathrm{d}y_d}{(x_1+x_2y_2+\cdots+x_dy_d)^d}}_{\displaystyle I^{(1)}_d(x_1)} \\
& \quad + \underbrace{\int_{y_1=0}^1 \int_{y_3=0}^{y_1} \hspace{-1em} \cdots \hspace{-.1em} \int_{y_d=0}^{y_1} \frac{\mathrm{d}y_1\mathrm{d}y_3\cdots \mathrm{d}y_d}{(x_1y_1+x_2+x_3y_3+\cdots+x_dy_d)^d}}_{\displaystyle I^{(2)}_d(x_1)}.
\end{aligned}
\]
By evaluating the innermost integral of~$I^{(\ell)}_d(x_1)$, $\ell \in \{1,2\}$, we gain
\[
I_d^{(\ell)}(x_1) = \frac{1}{(d-1)x_d} (I_{d-1}^{(\ell)}(x_1)-I_{d-1}^{(\ell)}(x_1+x_d)) \qquad (\ell \in \{1,2\}).
\]
Iterating this procedure for $d{-}2$ times yields by induction that
\begin{equation}\label{eq:id12}
I_d^{(\ell)}(x_1) = \frac{1}{(d{-}1)!\,x_3\cdots x_d} \sum_{S\subset\cA\setminus\{1,2\}} (-1)^{|S|} I_2^{(\ell)}\bigg(x_1+ \sum_{k\in S} x_k\bigg)
\quad (\ell\in\{1,2\}).
\end{equation}
Because 
\[
\begin{aligned}
I_2^{(1)}(z) &= \int_{y_2=0}^1 \frac{\mathrm{d}y_2}{(z+x_2y_2)^2}=\frac{1}{z(x_2+z)}, \\
I_2^{(2)}(z) &= \int_{y_1=0}^1 \frac{\mathrm{d}y_1}{(z y_1+x_2)^2}=\frac{1}{x_2(x_2+z)},
\end{aligned}
\]
and $\frac{1}{z(x_2+z)} + \frac{1}{x_2(x_2+z)} = \frac{1}{x_2z}$, the result on the density follows by inserting this in \eqref{eq:id12}. Note that the constant factor $1/(d{-}1)!$ is immaterial because the invariant measure is only defined up to a constant.

Ergodicity of $\mu_{\rU}$ and~$\nu_{\rU}$ is established in \cite[Proposition~28]{AL18}.
\end{proof}

Let $\bpsi$ be defined as in \eqref{eq:bpsidef}.
Because the collection $\{X_{\rU,ij} \colon i,j\in\cA,\, i\not=j\}$ form a measurable partition of $X_{\rU}$, the diagram \eqref{eq:diagpsis} in Section~\ref{subsec:MCFmappingfamily} defines a measurable conjugacy between $(\hX_{\rU},\hF_{\rU},\hA_{\rU},\hnu)$ and the shift $(\bpsi(\hX),\Sigma, \bpsi_*\hnu)$. It is known that the sequences in $\bpsi(\hX)$ are characterized by a Markov condition, more precisely, we have
\begin{equation}\label{eq:unBrunMarkov}
\begin{aligned}
\bpsi(\hX_{\rU}) & = \{(M_n)_{n\in\ZZ}\,:\, (M_n,M_{n+1}) = (M_{\rU,ij},M_{\rU,ij})  \text{ for } i,j \in \cA,\, i \neq j,  \\
& \hspace{2em} \text{or}\ (M_n,M_{n+1}) = (M_{\rU,ij},M_{\rU,jk}) \text{ for } i,j,k \in \cA, \, i\ne j \ne k\};
\end{aligned}
\end{equation}
see\ \cite[Section~3]{Delecroix-Hejda-Steiner}. Via this conjugacy, we attach to each $(\bx,\by)\in \hX_{\rU}$ the mapping family $(\TT,f_\bM)$ with $\bM=\bpsi(\bx,\by)$.

As the following result shows, the 2-dimensional unordered Brun algorithm furnishes examples for the theory we developed in Chapters~\ref{sec:matrices} and~\ref{sec:metricmat}. Recall that $\chi: \PP^{d-1} \to \Delta^{d-1}$ assigns to each element of~$\PP^{d-1}$ a representative in~$\Delta^{d-1}$. Note that the set of parameters in $X_{\rU}$ described in Proposition~\ref{prop:brunpisot:2} has zero measure. Indeed, this statement holds for concrete sequences of matrices. For according metric results we refer to Section~\ref{sec:BrunPisot}.

\begin{proposition}\label{prop:brunpisot:2}
Let $(\hX_{\rU},\hF_{\rU},\hA_{\rU},\hnu_{\rU})$ be the natural extension of the 2-dimensional ordered Brun algorithm and let $\bpsi$ be as in \eqref{eq:bpsidef}. Suppose that for $\bpsi(\bx,\by) = (M_n)_{n\in\ZZ}$ there exists $h \in \NN$ such that 
\begin{equation}\label{eq:BrunAllMatrices}
\{M_n,\dots,M_{n+h}\} = \{M_{\rU,ij}\,:\, i,j\in \{1,2,3\},\, i \neq j\}
\end{equation}
for each $n \in \ZZ.$\footnote{Note that this condition is what we called ``bounded strong partial quotients'' in Example~\ref{ex:AR1}.}
Then the following assertions hold.
\begin{itemize}
\item $\bpsi(\bx,\by)$ satisfies the two-sided Pisot condition.
\item $\bpsi(\bx,\by)$ is algebraically irreducible.
\item $\bpsi(\bx,\by)$ converges strongly to the generalized right eigenvector $\bu = \chi(\bx)$ in the future and to the generalized left eigenvector $\bv = \chi(\by)$ in the past. The vector $\chi(\bx)$ has rationally independent coordinates.
\item The mapping family $(\TT,f_{\bpsi(\bx,\by)})$ associated to $(\bx,\by)$ is eventually Anosov for the splitting
\[
\coprod_{n\in\ZZ} \RR \bu_n \oplus \bv_n^\perp.
\]
\end{itemize}
\end{proposition}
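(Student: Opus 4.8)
The plan is to verify that $\bM=\bpsi(\bx,\by)=(M_n)_{n\in\ZZ}$ fulfils the hypotheses of Theorem~\ref{th:matrixPisot} and of Theorem~\ref{cor:anosov}, and then to read off the generalized eigenvectors from Lemma~\ref{lem:MCF_eigen}; the argument runs parallel to Example~\ref{ex:AR1}, where the same program is carried out for Arnoux--Rauzy matrices.

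First I would establish two-sided primitivity. By~\eqref{eq:BrunAllMatrices}, every block $(M_n,\dots,M_{n+h})$ contains each of the six unordered Brun matrices $M_{\rU,ij}$ (with $i\neq j$) at least once. Writing such a product as $P=M_{\rU,a_1}\cdots M_{\rU,a_{h+1}}$, the nonnegative integer $(P)_{k\ell}$ is bounded below by the contribution of the single factor equal to $M_{\rU,k\ell}$ when $k\neq\ell$, and by~$1$ when $k=\ell$; hence $P$ is positive and $\bM$ is two-sided primitive. Since $\cM_{\rU}=\{M_{\rU,ij}:i,j\in\cA,\, i\neq j\}$ is finite, $\lVert M_n\rVert$ is uniformly bounded, so the two-sided growth condition $\lim_{n\to\pm\infty}\tfrac1n\log\lVert M_n\rVert=0$ holds trivially. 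Next, for the two-sided local Pisot condition I would note that~\eqref{eq:BrunAllMatrices} is exactly the ``bounded strong partial quotients'' hypothesis of Example~\ref{ex:AR1}: the analogue for the Brun matrices of the contraction estimate invoked there, established in \cite{Delecroix-Hejda-Steiner}, provides constants $C>0$ and $r\in(0,1)$ depending only on~$h$ and, for every $n\in\NN$, a hyperplane $\bw(n)^\perp$ with $\lVert\tr{\!M}_{[0,n)}\vert_{\bw(n)^\perp}\rVert\le C\,r^{n/h}$. As $\delta_2(M_{[0,n)})=\delta_2(\tr{\!M}_{[0,n)})\le\lVert\tr{\!M}_{[0,n)}\vert_{\bw(n)^\perp}\rVert$ by the min--max characterization of singular values, this yields $\limsup_{n\to\infty}\tfrac1n\log\delta_2(M_{[0,n)})<0$, the local Pisot condition in the future. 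Since~\eqref{eq:BrunAllMatrices} is invariant under $n\mapsto-n$, the reversed sequence $(\tr{\!M}_{-n-1})_{n\in\ZZ}$ satisfies it as well, so the same argument gives the local Pisot condition in the past; this proves the first bullet.

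With two-sided primitivity, the two-sided local Pisot condition, and the two-sided growth condition in hand, Theorem~\ref{th:matrixPisot}, applied both to $\bM$ and to $(\tr{\!M}_{-n-1})_{n\in\ZZ}$, shows that $\bM$ is algebraically irreducible and admits a generalized right eigenvector $\bu$ with rationally independent coordinates to which it converges exponentially, together with a generalized left eigenvector $\bv$ obtained likewise in the past; in particular $\bM$ converges strongly in the future and in the past. Because $\hDelta_{\rU}=\bigcup_{i\neq j}\Delta_{ij}\times\Delta_{ij}^*\subset\Delta_{\rU}\times\Delta_{\rU}$ by~\eqref{eq:DeltaUDef}, the natural extension satisfies $\hX_{\rU}\subset X_{\rU}\times\PP^{d-1}_{\ge0}$ and is two-sided weakly convergent at $(\bx,\by)$, so Lemma~\ref{lem:MCF_eigen} identifies $\bu=\chi(\bx)$ and $\bv=\chi(\by)$ (the particular normalizing representative chosen being immaterial for generalized eigenvectors). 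This gives the second and third bullets, the rational independence of the coordinates of $\chi(\bx)$ being part of Theorem~\ref{th:matrixPisot} via Corollary~\ref{cor:indepNew}. Finally, the fourth bullet is immediate from Theorem~\ref{cor:anosov}: as $\bM$ is two-sided primitive and satisfies the two-sided local Pisot and growth conditions, the $\cM$-adic mapping family $(\TT,f_{\bpsi(\bx,\by)})$ is eventually Anosov for the splitting $\coprod_{n\in\ZZ}\RR\bu_n\oplus\bv_n^\perp$ with $\bu_n,\bv_n$ as in~\eqref{eq:unvn}.

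The argument is essentially a matter of checking hypotheses, so I do not anticipate a genuine obstacle; the single external ingredient is the hyperplane-contraction estimate for Brun products, imported from \cite{Delecroix-Hejda-Steiner} in the same way as in Example~\ref{ex:AR1}, and the only points demanding a little care are confirming that this estimate transfers to the unordered normalization used here, and checking that the block condition~\eqref{eq:BrunAllMatrices} indeed forces positivity of the associated products.
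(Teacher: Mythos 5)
Your proposal is correct and follows essentially the same route as the paper's proof: positivity of the blocks gives two-sided primitivity, the contraction estimate from \cite{Delecroix-Hejda-Steiner} gives the two-sided Pisot condition, finiteness of the matrix set gives the growth condition, and the conclusions then follow from Theorem~\ref{th:matrixPisot}, Corollary~\ref{cor:anosov}, and Lemma~\ref{lem:MCF_eigen}. The extra details you supply (entrywise positivity of the block products, closure of the Brun matrices under transposition for the past direction) are sound.
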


\begin{proof}
Primitivity in the future and in the past holds because \eqref{eq:BrunAllMatrices} implies that $M_{[n,n+h)}$ is positive for each $n \in \ZZ$. It follows from the proof of \cite[Theorem~2]{Delecroix-Hejda-Steiner} that $\bM$ satisfies the two-sided Pisot condition. Since there are only finitely many (namely, six) unordered Brun matrices, the growth condition $\lim_{n\to\pm\infty}\frac{1}{n} \log \lVert M_n\rVert = 0$ is again trivially satisfied, and Proposition~\ref{prop:domev}, Theorem~\ref{th:matrixPisot}, Corollary~\ref{cor:anosov}, and Lemma~\ref{lem:MCF_eigen} imply the lemma.
\end{proof}

\subsection{Ordered Brun continued fraction algorithm} \label{subsec:BrunOrd} 
Let us now turn to the ordered version of the Brun continued fraction algorithm. This is the classical version of this algorithm. Indeed, the 2-dimensional case of the ordered Brun algorithm goes back to~\cite{Brun19,Brun20,BRUN}. Let $d\ge 3$ be fixed. We obtain the ordered algorithm from the unordered one by taking the quotient by the symmetric group~$\mathfrak{S}_d$ of $d$ elements, so the ordered algorithm is a $d!$-to-$1$ factor of the unordered one. This fact will allow us to immediately derive its invariant measure form the one of the unordered version.

For $d\ge 3$, we define 
\begin{equation}\label{eq:BOX}
X_{\rB} = \{ [w_1:w_2:\cdots:w_d] \in \PP_{\ge0}^{d-1} \,:\, w_1 \le w_2 \le \cdots \le w_d \},
\notx{B}{$\rB$}{object related to the ordered Brun algorithm}
\end{equation}
and its affine version will be defined on 
\[
\Delta_{\rB} := \{(x_1,\ldots x_{d-1},1) \in [0,1]^d \,:\, x_1 \le x_2 \le \cdots \le x_{d-1}\}.
\]

Let $\mathrm{ord}: \PP^{d-1} \to \PP^{d-1}$ be the \emph{coordinate ordering map} that assigns to each $\bw \in \PP^{d-1}$ the element $\mathrm{ord}(\bw)$ of~$\PP^{d-1}$\notx{ord}{$\mathrm{ord}(\cdot)$}{coordinate ordering map} that emerges from~$\bw$ by ordering its coordinates ascendingly. It will cause no confusion that we use the same name also for the coordinate ordering map on~$\RR^d$. Because $\zeta \circ F_{\rU} = F_{\rU} \circ \zeta$ and $\zeta \circ T_{\rU} = T_{\rU} \circ \zeta$ holds for each $\zeta \in \mathfrak{S}_d$, the maps $F_{\rB}$ and~$T_{\rB}$ are well-defined by the following commutative diagrams.
\begin{equation}\label{eq:diagBrun}
\begin{tikzcd}
X_{\rU} \arrow[r, "F_{\rU}"]\arrow[d,"\mathrm{ord}"] &X_{\rU} \arrow[d, "\mathrm{ord}"] \\
X_{\rB} \arrow[r, "F_{\rB}"]& X_{\rB}
\end{tikzcd}
\qquad\qquad
\begin{tikzcd}
\Delta_{\rU} \arrow[r, "T_{\rU}"]\arrow[d,"\mathrm{ord}"] &\Delta_{\rU} \arrow[d, "\mathrm{ord}"] \\
\Delta_{\rB} \arrow[r, "T_{\rB}"]& \Delta_{\rB}
\end{tikzcd}
\end{equation}
Note that $F_{\rB}$ subtracts the second largest coordinate from the largest one and reorders the result, i.e., $F_{\rB}$ performs the operation
\[
[w_1 : \cdots : w_d] \mapsto \mathrm{ord}[w_1 : \cdots : w_{d-1} : w_d{-}w_{d-1}].
\]
Since $w_d{-}w_{d-1}$ is the only element that is out of order after the subtraction, this element has to be inserted in the appropriate position. Because there are $d$ available positions, there are $d$ different \emph{ordered Brun matrices}\indx{Brun!matrix!ordered}\indx{matrix!Brun!ordered} that correspond to the definition of the mapping~$F_{\rB}$. These $d{\times}d$ matrices are defined by
\begin{equation}\label{eq:orderedbrunmatrices}
\begin{aligned}
M_{\rB,k} & = 
\begin{pmatrix} 
1&&&&&&&\\
&\ddots&&&&&&\\
&&1&&&&&\\
&&&0&1&&&\\
&&&&\ddots&\ddots&&\\
&&&&&0&1&\\
&&&1&&&1&
\end{pmatrix} \text{$\leftarrow$ $k$-th row} \qquad(k\in \cA\setminus\{d\})
\\
&\hskip2.5cm \uparrow \\
&\hskip2.2cm \text{$k$-th column}
\end{aligned}
\end{equation}
and 
\begin{equation}\label{eq:orderedbrunmatrices2}
M_{\rB,d}= 
\begin{pmatrix} 
1&&&\\
&\ddots&&\\
&&1&\\
&&1&1
\end{pmatrix}.
\end{equation}
For $d=3$, one gets for instance that
\begin{equation}\label{eq:brunmatrices}
M_{\rB,1} = \begin{pmatrix}0&1&0\\0&0&1\\1&0&1 \end{pmatrix},\quad
M_{\rB,2} = \begin{pmatrix}1&0&0\\0&0&1\\0&1&1 \end{pmatrix},\quad
M_{\rB,3} = \begin{pmatrix}1&0&0\\0&1&0\\0&1&1 \end{pmatrix}.
\end{equation}
One checks immediately that the $d$ sets 
\begin{equation}\label{eq:XpartOrdered}
X_{\rB,k} = M_{\rB,k} X_{\rB} \subset X_{\rB} \qquad (k\in \cA)
\end{equation}
form a partition of~$X_{\rB}$ up to a set of measure zero; see Figure~\ref{fig:brun2} for the case $d=3$. 

\begin{figure}[ht]
\begin{tikzpicture}[scale=3.5]
\coordinate [label={below :$[1:1:1]$}] (D) at (0,1.732/3);
\coordinate [label={above:$[0:0:1]$}] (E) at (0,1.732);
\coordinate [label={right:$[0:1:1]$}] (F) at (.5,1.732/2);
\coordinate [label={left:$[1:1:2]$}] (G) at (0,1.732/2);
\coordinate [label={right:$[0:1:2]$}] (H) at (1/3,2/3*1.732);
\draw (D)--(E)--(F)--(D) (G)--(F) (G)--(H);
\node at (.15,1.2){$X_{\rB,1}$};
\node at (.125,.75){$X_{\rB,3}$};
\node at (.275,.95){$X_{\rB,2}$};
\end{tikzpicture}
\caption{The topological partition $X_{\rB} = X_{\rB,1} \cup X_{\rB,2} \cup X_{\rB,3}$ for the 2-dimensional ordered Brun algorithm.} \label{fig:brun2}
\end{figure}
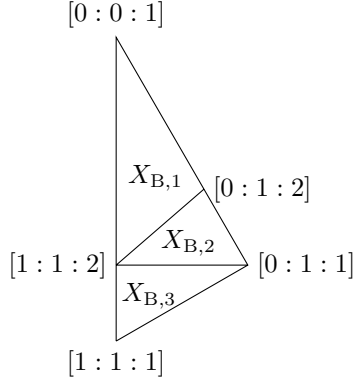

By these considerations, the mapping $F_{\rB}$ defined in \eqref{eq:diagBrun} fits into the general definition of a multidimensional continued fraction algorithm given in Definition~\ref{def:cf}. In particular, we may rewrite $F_{\rB}$ by using the cocycle $A_{\rB}$ as in the following definition.

\begin{definition}[Ordered Brun algorithm]
\indx{Brun!algorithm, map!ordered}\indx{continued fraction algorithm!Brun!ordered}
For $d \ge 3$ let\footnote{As before, there is ambiguity in the definition of $A_{\rB}$ on the overlap of the sets $X_{\rB,k}$ and we can define $A_{\rB}$ on these overlaps by any of the possible choices.}
\[
A_{\rB}:\, X_{\rB} \to \GL(d,\ZZ), \quad \bx \mapsto  \tr{\!M}_{\rB,k} \qquad \text{if}\ \bx \in X_{\rB,k} \qquad (k \in \cA).
\notx{B}{$\rB$}{object related to the ordered Brun algorithm}
\]
with $X_{\rB,k} $ as in \eqref{eq:XpartOrdered} and $M_{\rB,ij}$ as in \eqref{eq:orderedbrunmatrices} and \eqref{eq:orderedbrunmatrices2}. This defines the \emph{ordered Brun continued fraction~map}
\begin{equation} \label{CFalgoB}
F_{\rB}:\, X_{\rB} \to X_{\rB}; \quad \bx \mapsto \tr{\!A}_{\rB}(\bx)^{-1} \bx.
\end{equation}
Then the multidimensional continued fraction algorithm $(X_{{\rB}},F_{{\rB}},A_{{\rB}})$ is called the $(d{-}1)$-dimensional \emph{ordered Brun continued fraction algorithm} or the \emph{ordered Brun algorithm}, for short.
\end{definition}

This algorithm is additive because it is defined by a finite family of matrices. Again, $X_{\rB,k} = X_{\tr{\!M}_{\rB,k}}$, $k \in \cA$, are the cylinder sets of the ordered Brun algorithm; see Definition~\ref{def:MFCcyl}. Moreover, because $F_{\rB}(X_{\rB,k})=X_{\rB}$ holds for each $k\in\cA$, the ordered Brun algorithm is a \emph{full} algorithm.

The \emph{affine version} $(\Delta_{\rB},T_{\rB},A_{\rB})$ of the ordered Brun algorithm can now be defined according to \eqref{eq:CFT}.  In particular, since we used the norm $\lVert\cdot\rVert_\infty$ in the definition of~$\Delta_{\rB}$, the chart $\chi: X_{\rB} \to \Delta^{d-1}$ yields that 
\[
T_{\rB}\big(\tfrac{w_1}{w_d},\dots,\tfrac{w_{d-1}}{w_d}\big) = \big(\tfrac{w_1'}{w_d'},\dots,\tfrac{w_{d-1}'}{w_d'}\big) \ \Longleftrightarrow\
F_{\rB}(w_1,\dots,w_d) = (w_1',\dots,w_d').
\]
If we identify $\Delta_{\rB}$ with $\widetilde{\Delta}_2= \{(x_1,x_2) \in [0,1]^2 \,:\, x_1\le x_2\}$ in the 2-dimensional case $d=3$, the mapping $T_{\rB}: \widetilde{\Delta}_2 \to \widetilde{\Delta}_2$ is defined explicitly by 
\begin{equation}\label{eq:brunmap}
T_{\rB}:\, (x_1,x_2) \mapsto 
\begin{cases}
\big(\frac{x_1}{1-x_2},\frac{x_2}{1-x_2}\big) & \hbox{if}\ x_2 \le \frac{1}{2}, \\[.5ex]
\big(\frac{x_1}{x_2},\frac{1-x_2}{x_2}\big) & \hbox{if}\ \frac{1}{2} \le x_2 \le 1{-}x_1, \\[.5ex]
\big(\frac{1-x_2}{x_2},\frac{x_1}{x_2}\big) & \hbox{if}\ 1{-}x_1 \le x_2.
\end{cases}
\end{equation}
This is the classical continued fraction algorithm of Brun going back to \cite{Brun19,Brun20,BRUN}.

Let us define a set~$\hX_{\rB}$ for the geometric version of the natural extension of the ordered Brun algorithm. Let 
\begin{equation}\label{eq:BOXstar}
X_{\rB}^* = \{ [w_1 : \cdots : w_d] \in \PP_{\ge0}^{d-1} \,:\,  w_1,\dots,w_{d-2} \le w_d\} 
\end{equation}
and define the subsets 
\[
\begin{aligned}
X _{\rB,k}^* & = \{ [w_1 : \cdots : w_d] \in \PP_{\ge0}^{d-1} \,:\,
w_1,\dots,w_{d-1} \le w_k \le w_d\} \quad (1 \le k \le d{-}2), \\
X_{\rB,d-1}^* & = \{ [w_1 : \cdots : w_d] \in \PP_{\ge0}^{d-1} \,:\, w_1,\dots,w_{d-2} \le w_{d-1} \le w_d\}, \\
X_{\rB,d}^* & = \{ [w_1 : \cdots : w_d] \in \PP_{\ge0}^{d-1} \,:\, w_1,\dots, w_{d-2} \le w_d\le  w_{d-1}\}.
\end{aligned}
\]
For each $k\in \cA$, one readily checks that  $X_{\rB,k}^* = \tr{\!M}_{\rB,k} X_{\rB}^*$. Moreover, the collection $\{X_{\rB,k}^* : k\in \cA\}$ forms a topological partition of~$X_{\rB}^*$, and the map~$\hF_{\rB}$ defined  on $X_{\rB} {\times} X_{\rB}^*$ by $\hF_{\rB}(\bx,\by) = (\tr{\!A}_{\rB}(\bx)^{-1}\bx,A_{\rB}(\bx)\by)$ sends $X_{\rB,k} {\times} X_{\rB}^*$ to $X_{\rB} {\times} X_{\rB,k}^*$, so $\hF_{\rB}$ is a one-to-one map from $\hX_{\rB} = X_{\rB} {\times} X_{\rB}^*$ to itself. 
Thus, with the appropriate measure~$\hnu_{\rB}$ chosen according to Section~\ref{sec:natex}, we have found a natural extension $(\hX_{\rB},\hF_{\rB},\hA_{\rB},\hnu_{\rB})$ of $(X_{\rB},F_{\rB},A_{\rB},\nu_{\rB})$. 

However, we do not need this natural extension in order to get the Gauss measure~$\nu_{\rB}$ for the ordered Brun algorithm. Indeed, from Lemma~\ref{lem:unorderedinvariant}, we immediately obtain the density of the invariant measure of the ordered Brun algorithm; compare this with the equivalent formula for the density in \cite{Arnoux-Nogueira}.

\begin{lemma}\label{lem:orderedinvariant}
The invariant measure~$\mu_{\rB}$ of the affine version of the ordered Brun algorithm $(T_{\rB},\Delta_{\rB},A_{\rB},\mu_{\rB})$ is absolutely continuous w.r.t.\ the (piecewise) Lebesgue measure on~$\Delta_{\rB}$ and has density\footnote{We checked for small values of $d$ that this formula coincides with the one in \cite[p.~661]{Arnoux-Nogueira}.}
\begin{equation}\label{eq:orderedDensityBrun}
\frac{1}{x_1\cdots x_{d-1}}\sum_{S \subset\{1,\dots,d-2\}} \frac{(-1)^{|S|}}{1+\sum_{k\in S} x_k}.
\end{equation}
Via the bijection $\chi: X_{\rB} \to \Delta_{\rB}$, this provides an invariant measure~$\nu_{\rB}$ of~$F_{\rB}$. The measures $\mu_{\rB}$ and $\nu_{\rB}$ are ergodic.
\end{lemma}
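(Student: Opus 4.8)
The plan is to deduce the lemma from Lemma~\ref{lem:unorderedinvariant} by exploiting that the ordered Brun algorithm is a $d!$-to-$1$ factor of the unordered one, as encoded in the right-hand diagram of \eqref{eq:diagBrun}. Since the coordinate ordering map $\mathrm{ord}\colon\Delta_{\rU}\to\Delta_{\rB}$ satisfies $\mathrm{ord}\circ T_{\rU}=T_{\rB}\circ\mathrm{ord}$ off the Lebesgue-null set of points with two coinciding coordinates, the pushforward $\mu_{\rB}:=\mathrm{ord}_*\mu_{\rU}$ is automatically $T_{\rB}$-invariant; it is absolutely continuous with respect to the piecewise Lebesgue measure on $\Delta_{\rB}$ because $\mathrm{ord}$ is, on each of the $d!$ open chambers into which the permutations in $\mathfrak{S}_d$ cut $\Delta_{\rU}$, a coordinate permutation and hence Lebesgue-measure preserving onto $\Delta_{\rB}$.

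To identify the density I would first record that the density $g_{\rU}$ of $\mu_{\rU}$ is symmetric, $g_{\rU}(\zeta\bx)=g_{\rU}(\bx)$ for all $\zeta\in\mathfrak{S}_d$: applying $\zeta$ carries $\Delta_{ij}$ to $\Delta_{\zeta^{-1}(i)\zeta^{-1}(j)}$, preserves $x_i$ and the product $\prod_{k\neq i}x_k$, and merely permutes the index set $\cA\setminus\{i,j\}$ over which the alternating sum in Lemma~\ref{lem:unorderedinvariant} runs. Because every $\bx\in\Delta_{\rB}$ is already in ascending order we have $\mathrm{ord}(\bx)=\bx$, so the fiber $\mathrm{ord}^{-1}(\bx)$ over a generic $\bx\in\Delta_{\rB}$ is the $d!$-element set $\{\zeta\bx:\zeta\in\mathfrak{S}_d\}$; combining this with the chamberwise change of variables and the symmetry of $g_{\rU}$ shows that $\mu_{\rB}$ has density $\sum_{\zeta\in\mathfrak{S}_d}g_{\rU}(\zeta\bx)=d!\,g_{\rU}(\bx)$, that is, up to the immaterial constant $d!$, the restriction $g_{\rU}|_{\Delta_{\rB}}$. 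Finally, on $\Delta_{\rB}$ the coordinate $x_d=1$ is the largest and $x_{d-1}$ the second largest, so $\Delta_{\rB}\subset\Delta_{d,d-1}$ and $g_{\rU}$ is there given by the branch of Lemma~\ref{lem:unorderedinvariant} with $(i,j)=(d,d-1)$; substituting $x_d=1$ into $\tfrac{1}{x_1\cdots x_{d-1}}\sum_{S\subset\{1,\dots,d-2\}}\tfrac{(-1)^{|S|}}{x_d+\sum_{k\in S}x_k}$ gives exactly \eqref{eq:orderedDensityBrun}.

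It remains to transfer this to the projective version and to prove ergodicity. Using the bijective chart $\chi\colon X_{\rB}\to\Delta_{\rB}$ from \eqref{eq:chidef}, one defines $\nu_{\rB}$ by $\chi_*\nu_{\rB}=\mu_{\rB}$; it is $F_{\rB}$-invariant because $\chi$ conjugates $F_{\rB}$ to $T_{\rB}$, and absolutely continuous because $\chi$ is a smooth bijection with nonvanishing Jacobian onto the relevant face. For ergodicity I would invoke that a measure-theoretic factor of an ergodic system is ergodic: a $T_{\rB}$-invariant set $B$ pulls back to the $T_{\rU}$-invariant set $\mathrm{ord}^{-1}(B)$, which has $\mu_{\rU}$-measure zero or full by the ergodicity assertion of Lemma~\ref{lem:unorderedinvariant}, whence $\mu_{\rB}(B)\in\{0,1\}$ after normalization; ergodicity of $\nu_{\rB}$ then follows since $\chi$ is a measurable conjugacy. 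One may alternatively cite the ergodicity proofs in \cite[Chapter~3]{Schweiger:00} or \cite{Arnoux-Nogueira}. The argument is essentially routine given Lemma~\ref{lem:unorderedinvariant}; the only points needing care are the bookkeeping in the pushforward — that $\mathrm{ord}$ is Lebesgue-measure preserving on each chamber and that its generic fiber has $d!$ points — and the observation that $\Delta_{\rB}$ lies in the single set $\Delta_{d,d-1}$, so that one branch of the unordered density applies after specializing $x_d=1$; the footnote comparison with \cite[p.~661]{Arnoux-Nogueira} is then a direct algebraic identity for small $d$.
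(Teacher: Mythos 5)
Your proposal is correct and follows essentially the same route as the paper: the invariant measure is obtained as the pushforward $\mu_{\rB}=\mathrm{ord}_*\mu_{\rU}$ via the commutative diagram \eqref{eq:diagBrun}, using the $\mathfrak{S}_d$-symmetry of the unordered density, and the density \eqref{eq:orderedDensityBrun} is read off from the branch of Lemma~\ref{lem:unorderedinvariant} with $(i,j)=(d,d-1)$ and $x_d=1$; your explicit verification of the symmetry and of the $d!$-point fibers simply fills in details the paper leaves implicit. The only (harmless) deviation is that you prove ergodicity of $\mu_{\rB}$ by the factor-of-ergodic argument from the unordered case, whereas the paper cites \cite[Theorem~21]{Schweiger:00} directly — both are valid, and you mention the citation as an alternative anyway.
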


\begin{proof}
This is an immediate consequence of \eqref{eq:diagBrun} and Lemma~\ref{lem:unorderedinvariant}. Indeed, we pushforward the invariant measure of the unordered case via $\mu_{\rB} = \mathrm{ord}_* \mu_{\rU}$. The result follows because $\zeta \circ T_{\rU} = T_{\rU}\circ \zeta$ for each $\zeta \in \mathfrak{S}_d$ and, hence, the measure~$\mu_{\rU}$ is invariant under the action of~$\mathfrak{S}_d$ on~$\Delta_{\rU}$. 

Ergodicity of the measures $\mu_{\rB}$ and $\nu_{\rB}$ follows from \cite[Theorem~21]{Schweiger:00}.
\end{proof}

We get the invariant measure of the classical case as a corollary.

\begin{corollary}[{cf.~\cite[p.~646]{Arnoux-Nogueira}}]
The classical 2-dimensional Brun continued fraction algorithm $T_{\rB}: \widetilde{\Delta}_2 \to \widetilde{\Delta}_2$ defined in \eqref{eq:brunmap} has an invariant probability measure~$\mu_{\rB,2}$ that is absolutely continuous w.r.t.\ the Lebesgue measure and has density
\[
\frac{12}{\pi^2x_2(1+x_1)}.
\]
\end{corollary}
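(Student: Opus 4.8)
The plan is to derive the corollary directly from Lemma~\ref{lem:orderedinvariant} by specializing the density formula \eqref{eq:orderedDensityBrun} to $d=3$ and then computing the normalizing constant. First I would set $d=3$ in \eqref{eq:orderedDensityBrun}. The sum runs over $S\subset\{1\}$, so it has exactly two terms: $S=\emptyset$ giving $1$, and $S=\{1\}$ giving $-\frac{1}{1+x_1}$. Hence the density on $\Delta_{\rB}$ (in the affine coordinates $(x_1,x_2)$ with $x_1\le x_2$, and after identifying $\Delta_{\rB}$ with $\widetilde\Delta_2$ via $\chi$) is, up to a multiplicative constant,
\[
\frac{1}{x_1 x_2}\Bigl(1 - \frac{1}{1+x_1}\Bigr) = \frac{1}{x_1 x_2}\cdot\frac{x_1}{1+x_1} = \frac{1}{x_2(1+x_1)}.
\]
This already matches the shape $\frac{c}{x_2(1+x_1)}$ claimed in the corollary; it remains only to identify $c=\frac{12}{\pi^2}$ as the constant making this a probability measure.

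Next I would compute the total mass $\int_{\widetilde\Delta_2}\frac{dx_1\,dx_2}{x_2(1+x_1)}$ over the region $\widetilde\Delta_2=\{(x_1,x_2):0\le x_1\le x_2\le 1\}$. Integrating $x_1$ first from $0$ to $x_2$ gives $\int_0^{x_2}\frac{dx_1}{1+x_1}=\log(1+x_2)$, so the mass equals $\int_0^1\frac{\log(1+x_2)}{x_2}\,dx_2$. This is a standard dilogarithm integral: $\int_0^1\frac{\log(1+t)}{t}\,dt=\frac{\pi^2}{12}$ (it equals $-\mathrm{Li}_2(-1)=\frac{\pi^2}{12}$). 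Therefore the normalization constant is $c=\bigl(\frac{\pi^2}{12}\bigr)^{-1}=\frac{12}{\pi^2}$, and the invariant probability density is $\frac{12}{\pi^2 x_2(1+x_1)}$, as claimed. Ergodicity of $\mu_{\rB,2}$ is inherited from the ergodicity of $\mu_{\rB}$ established in Lemma~\ref{lem:orderedinvariant} (via \cite[Theorem~21]{Schweiger:00}).

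I do not anticipate a genuine obstacle here; the only points requiring a little care are bookkeeping ones. One is to make sure the coordinates in the corollary's statement of $T_{\rB}$ in \eqref{eq:brunmap} are consistent with the ordering convention used in \eqref{eq:orderedDensityBrun} — in \eqref{eq:BOX} the coordinates are ordered $w_1\le\cdots\le w_d$ and the affine chart uses $\lVert\cdot\rVert_\infty$, so $(x_1,x_2)=(w_1/w_3,w_2/w_3)$ with $0\le x_1\le x_2\le 1$, which is exactly the domain $\widetilde\Delta_2$ appearing in \eqref{eq:brunmap}; no reindexing is needed. The other is the evaluation of $\int_0^1\frac{\log(1+t)}{t}\,dt$, which I would justify either by expanding $\log(1+t)=\sum_{n\ge1}\frac{(-1)^{n-1}}{n}t^n$ and integrating term by term to get $\sum_{n\ge1}\frac{(-1)^{n-1}}{n^2}=\frac{\pi^2}{12}$, or simply by citing the value of $\mathrm{Li}_2(-1)$. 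The agreement with the density $\frac{12}{\pi^2 x_2(1+x_1)}$ stated in \cite[p.~646]{Arnoux-Nogueira} then completes the proof.
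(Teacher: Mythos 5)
Your proposal is correct and follows exactly the paper's own argument: specialize the density formula \eqref{eq:orderedDensityBrun} to $d=3$, simplify the two-term sum to $\frac{1}{x_2(1+x_1)}$, and normalize using $\int_{0\le x_1\le x_2\le 1}\frac{\mathrm{d}x_1\,\mathrm{d}x_2}{x_2(1+x_1)}=\frac{\pi^2}{12}$. The only difference is that you spell out the dilogarithm evaluation, which the paper leaves implicit.
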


\begin{proof}
To gain the result, put $d = 3$ in \eqref{eq:orderedDensityBrun}. The renormalization factor~$\frac{12}{\pi^2}$ is a consequence of $\int_{1\le x_1\le x_2\le 1} \frac{\mathrm{d}x_1\mathrm{d}x_2}{x_2(1+x_1)}=\frac{\pi^2}{12}$.
\end{proof}

Because the sets $X_{\rB,k}$ form a partition of~$X_{\rB}$, the diagram in \eqref{eq:diagpsis} defines a  (measurable) conjugacy between the multidimensional continued fraction algorithm $(\hX_{\rB},\hF_{\rB},\hA_{\rB},\hnu)$ and the shift $(\{M_{\rB,1},\dots,M_{\rB,d}\}^{\ZZ},\Sigma, \bpsi_*\hnu)$. Indeed, it follows from the definition, that $\bpsi(\hX) = (M_n)_{n\in\ZZ}\in \{M_{\rB,1},\dots,M_{\rB,d}\}^{\ZZ}$. Thus $\bpsi(\hX)$ is the full shift which is equivalent to the fact that the algorithm is full. 

Also this algorithm serves as an example for the theory we developed in Chapters~\ref{sec:matrices} and~\ref{sec:metricmat}. Indeed, we gain the following result.

\begin{proposition}\label{prop:brunpisot:1}
Let $(\hX_{\rB},\hF_{\rB},\hA_{\rB}, \hnu_{\rB})$ be the natural extension of the 2-dimensional ordered Brun algorithm and let $\bpsi$ be as in \eqref{eq:bpsidef}. Let $\tilde{M}$ be a product of the 2-dimensional Brun ordered matrices from \eqref{eq:brunmatrices} that contains~$M_{\rB,3}$. Suppose that for  $\bpsi(\bx,\by) = (M_n)_{n\in\ZZ}$ there exists $h \in \NN$ such that $M_{[n,n+h)} = \tilde{M}^3$ occurs with bounded gaps in~$\bM$.  Then the following assertions hold.
\begin{itemize}
\item $\bpsi(\bx,\by)$ satisfies the two-sided Pisot condition.
\item $\bpsi(\bx,\by)$ is algebraically irreducible.
\item $\bpsi(\bx,\by)$ converges strongly to the generalized right eigenvector $\bu=\chi(\bx)$ in the future and to the generalized left eigenvector $\bv = \chi(\by)$ in the past. The vector $\chi(\bx)$ has rationally independent coordinates.
\item The mapping family $(\TT,f_{\bpsi(\bx,\by)})$ associated to $(\bx,\by)$ is eventually Anosov for the splitting
\[
\coprod_{n\in\ZZ} \RR \bu_n \oplus \bv_n^\perp.
\]
\end{itemize}
\end{proposition}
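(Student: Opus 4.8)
The plan is to follow the proof of Proposition~\ref{prop:brunpisot:2} almost verbatim; the only genuinely new input is the verification that the hypothesis on $\tilde M$ forces the two-sided local Pisot condition for $\bpsi(\bx,\by)$, everything else being an application of the machinery already in place. Write $\bM = \bpsi(\bx,\by) = (M_n)_{n\in\ZZ}$. First I would establish two-sided primitivity. The hypothesis says that the block $M_{[n,n+h)} = \tilde M^3$ occurs along a set of indices~$n$ with bounded gaps, both as $n\to+\infty$ and as $n\to-\infty$. Since $\tilde M$ is a product of ordered Brun matrices involving $M_{\rB,3}$, a direct inspection of the associated digraph shows that $\tilde M$ is a primitive nonnegative integer matrix and that $\tilde M^3$ is already strictly positive; consequently any block $M_{[k,\ell)}$ containing a full occurrence of $\tilde M^3$ is positive, and the bounded gap condition yields two-sided primitivity in the sense of Definition~\ref{def:primivite}. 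The growth condition $\lim_{n\to\pm\infty}\frac1n\log\lVert M_n\rVert = 0$ is trivially satisfied because only the three matrices $M_{\rB,1},M_{\rB,2},M_{\rB,3}$ occur.

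The heart of the argument is the two-sided Pisot condition, i.e.\ $\limsup_{n\to\infty}\frac1n\log\delta_2(M_{[0,n)}) < 0$ together with the analogous statement in the past. I would derive this from the corresponding statement for the \emph{unordered} Brun algorithm already used in the proof of Proposition~\ref{prop:brunpisot:2} (through the proof of \cite[Theorem~2]{Delecroix-Hejda-Steiner}), exploiting that the ordered algorithm is a factor of the unordered one by the symmetric group via the coordinate ordering map of diagram~\eqref{eq:diagBrun}. Along corresponding orbits of $(X_{\rB},F_{\rB},A_{\rB})$ and $(X_{\rU},F_{\rU},A_{\rU})$ the convergent matrices differ only by multiplication (on either side) by permutation matrices that re-sort the current iterate, hence have identical singular values; so the decay of $\delta_2$ transfers between the two algorithms. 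It then suffices to check that the unordered lift of $\bM$ meets the recurrence criterion of \cite[Theorem~2]{Delecroix-Hejda-Steiner}, which follows because an occurrence of $\tilde M^3$ in the ordered sequence forces, through the reordering bookkeeping, a correspondingly long block of the unordered lift covering all the required unordered Brun matrices. Alternatively, one quotes the Pisot property directly from \cite{Delecroix-Hejda-Steiner} in its ordered form, exactly as Proposition~\ref{prop:brunpisot:2} quotes the unordered form.

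Once the two-sided local Pisot condition and the growth condition are in hand, the remaining assertions follow from results already established. Proposition~\ref{prop:domev} shows that $M_{[0,n)}$ is a Pisot matrix for all large~$n$; Theorem~\ref{th:matrixPisot} (and its past version) gives algebraic irreducibility and exponential convergence to a generalized right eigenvector~$\bu$, respectively a generalized left eigenvector~$\bv$, with the coordinates of the right eigenvector rationally independent; since $\hX_{\rB} = X_{\rB}\times X_{\rB}^* \subset X_{\rB}\times\PP^{d-1}_{\ge0}$ and $\bM$ is (two-sided) weakly convergent, Lemma~\ref{lem:MCF_eigen} identifies $\bu = \chi(\bx)$ and $\bv = \chi(\by)$; and Theorem~\ref{cor:anosov} (or Proposition~\ref{th:anosov} combined with the strong convergence from Proposition~\ref{prop:strongcv2sided}) yields that the mapping family $(\TT,f_{\bpsi(\bx,\by)})$ is eventually Anosov for the splitting $\coprod_{n\in\ZZ}\RR\bu_n\oplus\bv_n^\perp$.

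The main obstacle is the middle step: making the translation between the ordered and unordered Brun combinatorics precise — or, equivalently, locating the exact statement in \cite{Delecroix-Hejda-Steiner} that covers the ordered case — since the permutation bookkeeping that links the two algorithms must be handled carefully enough to conclude that the $\tilde M^3$-recurrence hypothesis really does guarantee exponential contraction of the complementary hyperplane. The accompanying digraph verification that $\tilde M^3$ is positive for the products $\tilde M$ permitted by the hypothesis (and the role of the factor $M_{\rB,3}$ in ruling out the degenerate products) is routine but slightly fiddly; everything after the Pisot condition is a direct invocation of the earlier theorems, exactly as in Proposition~\ref{prop:brunpisot:2}.
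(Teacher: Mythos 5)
Your overall architecture is the same as the paper's: once two-sided primitivity, the two-sided local Pisot condition, and the (trivial) growth condition are in hand, you invoke Proposition~\ref{prop:domev}, Theorem~\ref{th:matrixPisot}, Corollary~\ref{cor:anosov}, and Lemma~\ref{lem:MCF_eigen} exactly as the paper does. The difference — and the problem — lies entirely in how you try to establish the first two items. The paper obtains \emph{both} two-sided primitivity and the two-sided Pisot condition in one stroke by citing \cite[Section~5]{AD:19}, which treats the ordered Brun matrices directly; it does not pass through the unordered algorithm at all.

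Two concrete issues with your version of that step. First, your claim that ``a direct inspection of the associated digraph shows that $\tilde M$ is primitive and $\tilde M^3$ is strictly positive'' is false as stated: $\tilde M = M_{\rB,3}$ and $\tilde M = M_{\rB,2}M_{\rB,3}$ are both products of ordered Brun matrices containing $M_{\rB,3}$, yet $M_{\rB,3}^n = \bigl(\begin{smallmatrix}1&0&0\\0&1&0\\0&n&1\end{smallmatrix}\bigr)$ is never positive, and $(M_{\rB,2}M_{\rB,3})^n$ always has first row $(1,0,0)$. So primitivity cannot be read off from the occurrence of $\tilde M^3$ alone; the precise combinatorial condition that makes the hypothesis work is exactly what \cite{AD:19} supplies, and your proof does not reconstruct it. Second, the transfer of the Pisot condition from the unordered algorithm via the factor map of \eqref{eq:diagBrun} is not carried out: you yourself flag the permutation bookkeeping as ``the main obstacle,'' and indeed for a \emph{single} orbit (as opposed to the generic statement of Proposition~\ref{prop:BrunEquivalentPisot}) one would have to verify that the lifted unordered sequence satisfies the bounded-strong-partial-quotient hypothesis of \cite[Theorem~2]{Delecroix-Hejda-Steiner}, which does not obviously follow from recurrence of $\tilde M^3$ in the ordered sequence. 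So the heart of the proposition — the Pisot condition — is asserted rather than proved. Everything downstream of it is fine and matches the paper.
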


\begin{proof}
It follows from \cite[Section~5]{AD:19} that $\bM$ is two-sided primitive and satisfies the two-sided Pisot condition. Since there are only finitely many (namely, three) ordered Brun matrices, the growth condition $\lim_{n\to \pm\infty}\frac{1}{n} \log \lVert M_n\rVert = 0$ is trivially satisfied. Thus, Proposition~\ref{prop:domev}, Theorem~\ref{th:matrixPisot}, Corollary~\ref{cor:anosov}, and Lemma~\ref{lem:MCF_eigen} imply the lemma.
\end{proof}

For according metric results we again refer to Section~\ref{sec:BrunPisot}.

\subsection{Multiplicative acceleration of the Brun continued fraction algorithm} \label{subsec:BrunMul}
We now study an accelerated version of the ordered Brun algorithm.\footnote{The unordered Brun algorithm can be accelerated in a similar way but we do not give the details of this acceleration.} This algorithm, which is often called modified Jacobi--Perron algorithm, 
was first studied in~\cite{POD77} and further examined for instance in \cite{FUKE96,Schratzberger:98,MEESTER,Schratzberger:01,Hardcastle:02,HK:02}. Because it is a multiplicative algorithm, it is also known as multiplicative Brun algorithm.

Assume that $d \ge 3$ and let $X_{\rB}$ and $X_{\rB,k}$, $k\in\cA$, be given as in \eqref{eq:BOX} and \eqref{eq:XpartOrdered}, respectively. Set $X_{\rM} = X_{\rB}$ and $\Delta_{\rM} = \Delta_{\rB}$. \notx{M}{$\rM$}{object related to the multiplicative Brun algorithm}
We now use the \emph{jump transformation}\indx{jump transformation} (in the sense of~\cite{Schweiger:1975})
\begin{equation}\label{eq:BrunJumpMult}
j:X_{\rM} \to \NN;\quad\bx \mapsto \min\{m \ge 1 \,:\, F_{\rB}^{m-1}(\bx) \not\in X_{\rB,d}\}
\end{equation}
on the ordered Brun algorithm $(X_{\rB},F_{\rB},A_{\rB})$, i.e., we define the mappings (cf.~e.g.\ \cite{BST21})
\begin{equation}\label{eq:FMTM}
\begin{aligned}
F_{\rM}:\, X_{\rM} \to X_{\rM}, \quad \bx \mapsto F_{\rB}^{j(\bx)}(\bx), \\
T_{\rM}:\, \Delta_{\rM} \to \Delta_{\rM}, \quad \bx \mapsto T_{\rB}^{j(\bx)}(\bx).
\end{aligned}
\end{equation}
The expected value of $j$, which is the expected value of the partial quotients of the multiplicative Brun algorithm considered in \cite[Theorem~1~(d)]{BLV:18}, is finite by this theorem.  This implies that $j(\bx)$ is a well-defined positive integer for Lebesgue a.e.\ $\bx\in X_M$ and, hence, the mappings $F_{\rM}$ and $T_{\rM}$ are defined Lebesgue a.e.\ on their respective domains. Observe that the notion of a jump transformation is  reminiscent from the notion  of induction from Definition~\ref{def:induction}.

Note that $F_{\rM}$ is related to the dynamical system let $(X_{\rB}\setminus X_{\rB,d}, G, \nu_{\rB}|_{X_{\rB}\setminus X_{\rB,d}})$ obtained by inducing $(X_{\rB},F_{\rB},\nu_{\rB})$ on $X_{\rB}\setminus X_{\rB,d}$ in the sense of Definition~\ref{def:induction}. From the definition of $j$ we see that the diagram 
\begin{equation}\label{eq:diagXbXm}
\begin{tikzcd}
X_{\rB}\setminus X_{\rB,d}\arrow[r, "G"]\arrow[d,"F_{\rB}"] &X_{\rB}\setminus X_{\rB,d} \arrow[d, "F_{\rB}"] \\
X_{\rM} \arrow[r, "F_{\rM}"]& X_{\rM}
\end{tikzcd}
\end{equation}
commutes. 

For $1 \le k < d$ and $m \ge 1$, define the \emph{multiplicative Brun matrices}\indx{Brun!matrix!multiplicative}\indx{matrix!Brun!multiplicative} (the matrices $M_{\rB,k}$ are defined in \eqref{eq:orderedbrunmatrices} for $k\in\cA\setminus \{d\}$ and in \eqref{eq:orderedbrunmatrices2} for $k=d$)
\begin{equation}\label{eq:MMkm}
\begin{aligned}
M_{\rM,k,m} = M_{\rB,d}^{m-1} M_{\rB,k}
& = 
\begin{pmatrix} 
1&&&&&&&\\
&\ddots&&&&&&\\
&&1&&&&&\\
&&&0&1&&&\\
&&&&\ddots&\ddots&&\\
&&&&&0&1&\\
&&&1&&&m&
\end{pmatrix} \text{$\leftarrow$ $k$-th row} \\
&\hskip2.5cm \uparrow \\
&\hskip2.2cm \text{$k$-th column}
\end{aligned}
\end{equation}
and the sets 
\begin{equation}\label{eq:XMkm}
X_{\rM,k,m} = \{ [w_1 : \cdots : w_d] \,:\, [w_1 : \cdots : w_{d-1} : w_d-(m{-}1)w_{d-1}] \in X_{\rB,k} \}.
\end{equation}
These sets form a measurable partition of~$X_{\rM}$. 

By these considerations, the mapping $F_{\rM}$ defined in \eqref{eq:FMTM} fits into the general definition of a multidimensional continued fraction algorithm given in Definition~\ref{def:cf}. In particular, we may rewrite $F_{\rM}$ (Lebesgue a.e.) by using the cocycle $A_{\rM}$ as follows.

\begin{definition}[Modified Jacobi--Perron or multiplicative Brun algorithm]
\indx{Brun!algorithm, map!multiplicative}\indx{continued fraction algorithm!Brun!multiplicative}\indx{continued fraction algorithm!Jacobi--Perron!modified}\indx{Jacobi--Perron!algorithm!modified}
For $d \ge 3$ let\footnote{As before, there is ambiguity in the definition of $A_{\rB}$ on the overlap of the sets $X_{\rB,k}$ and we can define $A_{\rB}$ on these overlaps by any of the possible choices.}
\[
A_{\rM}:\, X_{\rM}  \to \GL(d,\ZZ), \quad \bx \mapsto \tr{\!M}_{\rM,k,m} \quad \text{if}\ \bx \in X_{\rM,k,m} \qquad (1 \le k < d,\, m \ge 1).
\]
with $X_{\rM,k,m} $ as in \eqref{eq:XMkm} and $M_{\rM,k,m}$ as in \eqref{eq:MMkm}. This defines the \emph{modified Jacobi--Perron~map}
\begin{equation} \label{CFalgoBM}
F_{\rM}:\, X_{\rM} \to X_{\rM}; \quad \bx \mapsto \tr{\!A}_{\rM}(\bx)^{-1} \bx.
\end{equation}
Then the multidimensional continued fraction algorithm $(X_{{\rB}},F_{{\rB}},A_{{\rB}})$ is called the $(d{-}1)$-dimensional \emph{modified Jacobi--Perron algorithm} or the \emph{multiplicative Brun algorithm}.
\end{definition}

Again, $X_{M,k,m} = X_{\tr{\!M}_{\rM,k,m}}$, $1 \le k < d$, $m \ge 1$, are the cylinder sets of the multiplicative Brun algorithm; see Definition~\ref{def:MFCcyl}. Since they are all mapped to $X_{\rM}$ by $F_{\rM}$, it is a \emph{full} algorithm. 

If we define~$\Delta_{\rM}$ using the norm $\lVert\cdot\rVert_\infty$, which we will do in this section, we get the following explicit representation of the affine version of the modified Jacobi--Perron algorithm (see also \cite{HK:02}; we suppress the last coordinate $x_d = 1$ and set $x_0 = 0$):
\[
\begin{aligned}
T_{\rM}(x_1,\ldots,x_{d-1})  = &
\Big( \frac{x_1}{x_{d-1}}, \dots, \frac{x_{k-1}}{x_{d-1}}, \frac{1}{x_{d-1}} - \Big\lfloor \frac{1}{x_{d-1}} \Big\rfloor, \frac{x_{k}}{x_{d-1}}, \dots, \frac{x_{d-2}}{x_{d-1}} \Big) \\
& \hbox{if} \quad\frac{x_{k-1}}{x_{d-1}} \le \frac{1}{x_{d-1}} - \Big\lfloor \frac{1}{x_{d-1}} \Big\rfloor \le \frac{x_{k}}{x_{d-1}} \qquad (1 \le k < d).
\end{aligned}
\]
It is not hard to see that $F_{\rM}$ is bijective on $\hX_{\rM} = X_{\rM} {\times} X_{\rM}^*$ with
\begin{equation}\label{eq:xmstar}
X_{\rM}^* = \{[w_1 : \cdots : w_d] \,:\, w_1,\dots,w_{d-1} \le w_d\}.
\end{equation}
Thus, using the method developed in Section~\ref{sec:natex}, we get a natural extension $(\hX_{\rM},\hF_{\rM},\hA_{\rM},\hnu_{\rM})$ of $(X_{\rM},F_{\rM},A_{\rM},\nu_{\rM})$. Again, we are able to come up with an explicit formula for the invariant measure; see \cite[p.~661]{Arnoux-Nogueira} for a different expression for the density in \eqref{eq:multDensityBrun}.

\begin{lemma}\label{lem:multinvariant}
The invariant measure~$\mu_{\rM}$ of the affine version of the modified Jacobi--Perron algorithm $(T_{\rM},\Delta_{\rM},A_{\rM},\mu_{\rM})$ is absolutely continuous w.r.t.\ the (piecewise) Lebesgue measure on~$\Delta_{\rM}$ and has density
\begin{equation}\label{eq:multDensityBrun}
\frac{1}{x_1\cdots x_{d-1}} \sum_{S \subset\{1,\dots,d-1\}} \frac{(-1)^{|S|}}{1+\sum_{k\in S}x_k}.
\end{equation}
Via the bijection $\chi: X_{\rM} \to \Delta_{\rM}$, this provides an invariant measure~$\nu_{\rM}$ of~$F_{\rM}$. The measures $\mu_{\rM}$ and $\nu_{\rM}$ are ergodic.
\end{lemma}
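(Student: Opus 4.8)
The plan is to derive this lemma exactly as its two siblings (Lemma~\ref{lem:unorderedinvariant} and Lemma~\ref{lem:orderedinvariant}) were derived: by integrating out the ``dual'' coordinate from the invariant density $\langle\bx,\by\rangle^{-d}$ of the natural extension $(\hX_{\rM},\hF_{\rM},\hA_{\rM},\hnu_{\rM})$ that we have just constructed, and then transferring back to $\PP^{d-1}$ via the chart $\chi$, finally invoking a reference for ergodicity. Concretely, since $\hF_{\rM}$ is bijective on $\hX_{\rM} = X_{\rM}{\times}X_{\rM}^*$ and preserves $\mathrm{d}\hmu(\bx,\by) = \frac{\mathrm{d}\lambda(\bx)\,\mathrm{d}\lambda(\by)}{\langle\bx,\by\rangle^d}$ by the Jacobian computation of Section~\ref{sec:natex}, the pushforward $\mu_{\rM} = \pi_*\hmu_{\rM}$ has density $\bx \mapsto \int_{X_{\rM}^*} \frac{\mathrm{d}\lambda(\by)}{\langle\bx,\by\rangle^d}$ on the chart $\Delta_{\rM}$. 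So the computational heart is to evaluate
\[
I = \int_{X_{\rM}^*} \frac{\mathrm{d}\lambda(\by)}{\langle\bx,\by\rangle^d},
\]
where, using $\lVert\cdot\rVert_\infty$ for the chart, $X_{\rM}^*$ (see \eqref{eq:xmstar}) consists of $\by=(y_1,\dots,y_d)$ with $y_1,\dots,y_{d-1}\le y_d$, i.e.\ the face $y_d = 1$ with $0\le y_k\le 1$ for $k<d$.

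First I would set up the integral on the single face $y_d=1$, which makes $X_{\rM}^*$ the unit cube $[0,1]^{d-1}$ in the coordinates $y_1,\dots,y_{d-1}$, so that with $\bx=(x_1,\dots,x_d)$ (and $x_d=1$ on the chart),
\[
I = \int_0^1\!\!\cdots\!\int_0^1 \frac{\mathrm{d}y_1\cdots \mathrm{d}y_{d-1}}{\big(x_d + x_1 y_1 + \cdots + x_{d-1}y_{d-1}\big)^d}.
\]
Then I would integrate one variable at a time: evaluating the innermost integral over $y_{d-1}$ gives
\[
\int_0^1 \frac{\mathrm{d}y_{d-1}}{(z+x_{d-1}y_{d-1})^d} = \frac{1}{(d-1)x_{d-1}}\Big(\frac{1}{z^{d-1}} - \frac{1}{(z+x_{d-1})^{d-1}}\Big),
\]
and iterating this $d-1$ times produces, by an induction identical in structure to the one in the proof of Lemma~\ref{lem:unorderedinvariant}, the inclusion--exclusion sum
\[
I = \frac{1}{(d-1)!\,x_1\cdots x_{d-1}} \sum_{S\subset\{1,\dots,d-1\}} \frac{(-1)^{|S|}}{x_d + \sum_{k\in S}x_k}.
\]
Setting $x_d=1$ and discarding the immaterial constant $1/(d-1)!$ gives exactly the claimed density \eqref{eq:multDensityBrun}. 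The transfer to an invariant measure $\nu_{\rM}$ of $F_{\rM}$ via the bijection $\chi: X_{\rM}\to\Delta_{\rM}$ is immediate, just as in the earlier lemmas.

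Alternatively, and perhaps more cleanly, one can avoid redoing the induction by exploiting the jump-transformation picture: the commuting diagram \eqref{eq:diagXbXm} shows $(X_{\rM},F_{\rM})$ is the induced system of $(X_{\rB},F_{\rB})$ on $X_{\rB}\setminus X_{\rB,d}$, so its invariant density is $\nu_{\rB}$ restricted (and renormalized) to that set; one then checks that restricting the density \eqref{eq:orderedDensityBrun} of $\nu_{\rB}$ to $X_{\rB}\setminus X_{\rB,d}$ and reexpressing it on $\Delta_{\rM} = \Delta_{\rB}$ yields \eqref{eq:multDensityBrun} — the extra subset $S\ni d-1$ terms in \eqref{eq:multDensityBrun} versus \eqref{eq:orderedDensityBrun} reflecting precisely the removal of the branch $X_{\rB,d}$. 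I would present the direct integration as the main argument since it is self-contained, and mention the induction route briefly. Ergodicity of $\mu_{\rM}$ and $\nu_{\rM}$ follows from the ergodicity of $\mu_{\rB},\nu_{\rB}$ (Lemma~\ref{lem:orderedinvariant}) together with the fact that a jump transformation of an ergodic system is ergodic, or directly from the references cited for the modified Jacobi--Perron algorithm (e.g.\ \cite{Schratzberger:98,Hardcastle:02,HK:02}); I would cite the cleanest available statement.

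The main obstacle I anticipate is purely bookkeeping: carrying the inclusion--exclusion induction through all $d-1$ integrations while keeping track of which denominators acquire which subsets $S$, and making sure the base case and the combinatorial factor are handled correctly — this is the same delicate point flagged in the footnote to Lemma~\ref{lem:orderedinvariant} about matching the formula in \cite[p.~661]{Arnoux-Nogueira}. A secondary, more conceptual point is verifying that $\hF_{\rM}$ is genuinely bijective on $\hX_{\rM}$ (so that \eqref{eq:YhT} becomes an equality and $\hmu_{\rM}$ is truly invariant); but since the statement of the lemma already presupposes the natural extension $(\hX_{\rM},\hF_{\rM},\hA_{\rM},\hnu_{\rM})$ built in the text, this is assumed, and Lemma~\ref{lem:hTsurbi} reduces it to surjectivity, which is clear from the action of the matrices $M_{\rM,k,m}$ on the cones $X_{\rM}, X_{\rM}^*$.
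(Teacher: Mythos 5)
Your proposal is correct and follows essentially the same route as the paper: the paper also reduces to the integral $I_d=\int_{[0,1]^{d-1}}\langle\bx,\by\rangle^{-d}\,\mathrm{d}\lambda(\by)$, evaluates it by the same recursive one-variable-at-a-time integration (reusing $I_d^{(1)}$ from the proof of Lemma~\ref{lem:unorderedinvariant} and finishing with a partial fraction decomposition rather than carrying the induction all the way down, a purely presentational difference), discards the factor $(d-1)!^{-1}$, and cites \cite[Theorem~21]{Schweiger:00} for ergodicity.
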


\begin{remark}\label{rem:AN93}
In \cite{Arnoux-Nogueira}, the density of the measure~$\mu_{\rM}$ was represented by the formula 
\begin{equation}\label{eq:multDensityBrun2}
\sum_{\sigma \in \mathfrak{S}_{d-1}} \prod_{i=1}^{d-1}  \frac{1}{1+x_{\sigma(1)}+\cdots+x_{\sigma(i)}},
\end{equation}
where $\mathfrak{S}_d$ is the group of permutations on $n$ elements. One can show that the representations \eqref{eq:multDensityBrun} and \eqref{eq:multDensityBrun2} are equal.\footnote{Note that, a priori, the density of $\mu_{\rM}$ is only determined up to a constant factor.} However, the representation in \eqref{eq:multDensityBrun} is simpler in the sense that the number of subsets of a set with $d{-}1$ elements is much smaller than $\mathfrak{S}_{d-1}$ for large~$d$.
\end{remark}

\begin{proof}
By using affine coordinates we see from \eqref{eq:xmstar} that the natural extension of~$T_{\rM}$ is defined on $\Delta_{\rM} {\times} [0,1]^{d-1} {\times} \{1\}$. According to Section~\ref{sec:natex}, to derive the required density we have to compute 
\[
I_d=\int_{0\le y_1,\ldots,y_{d-1}\le 1} \frac{\mathrm{d}\lambda{(\by)}}{\langle \bx,\by \rangle^d}.
\]
Using the evaluation of~$I_d^{(1)}(x_1)$ in the proof of Lemma~\ref{lem:unorderedinvariant} and partial fraction decomposition, we gain
\[
\begin{aligned}
I_d & = \int_{y_1=0}^1 \cdots \int_{y_{d-1}=0}^1 \frac{\mathrm{d}y_1\cdots \mathrm{d}y_{d-1}}{(x_1y_1+\cdots + x_{d-1}y_{d-1}+1)^d}  \\
& = \frac{1}{(d{-}1)!\,x_1\cdots x_{d-2}} \sum_{S\subset\{1,\dots,d-2\}} \frac{(-1)^{|S|}}{\big(1+x_{d-1}+\sum_{k\in S}x_k\big)\big(1+\sum_{k\in S}x_k\big)} \\
& = \frac{1}{(d{-}1)!\,x_1\cdots x_{d-1}}\sum_{S\subset\{1,\dots,d-1\}} \frac{(-1)^{|S|}}{1+\sum_{k\in S}x_k}.
\end{aligned}
\]
Since the constant factor $(d{-}1)!^{-1}$ is immaterial because the invariant measure is only defined up to a constant, the result follows.

Ergodicity of the measures $\mu_{\rM}$ and $\nu_{\rM}$ follows from \cite[Theorem~21]{Schweiger:00}.
\end{proof}

Let $\cM_{\rM} = \{M_{\rM,k,m} : 1 \le k < d,\, m \ge 1\}$.
Again the sets $X_{\rM,k,m}$, $1 \le k < d$, $m \ge 1$, form a partition of~$X_{\rM}$. Thus the diagram \eqref{eq:diagpsis} in Section~\ref{subsec:MCFmappingfamily} defines a measurable conjugacy between $(\hX_{\rM},\hF_{\rM},\hA_{\rM},\hnu)$ and $(\cM_{\rM}^{\ZZ},\Sigma,\bpsi_*\hnu)$. Indeed, as in the additive ordered case, we have $\bpsi(\hX) = \cM_{\rM}^{\ZZ}$. 

We did not work out concrete examples of our theory from Chapters~\ref{sec:matrices} and~\ref{sec:metricmat} for this multiplicative version of Brun's algorithm. However, we will see in Section~\ref{sec:BrunPisot} that the 2- and 3-dimensional case of all versions of Brun's algorithm covered so far furnish examples for the metric theory developed in that chapters.

\subsection{Pisot condition for the Brun continued fraction algorithm}\label{sec:BrunPisot}
In this section, we provide results on the relation between Brun's algorithm and the generic Pisot condition of Definition~\ref{def:MCF_Pisot}. First we show that it suffices to establish the generic Pisot condition for one of the three versions of Brun's continued fraction algorithm.

\begin{proposition}\label{prop:BrunEquivalentPisot}
If one of the $(d{-}1)$-dimensional continued fraction algorithms $(X_{\rU},F_{\rU},A_{\rU},\nu_{\rU})$, $(X_{\rB},F_{\rB},A_{\rB},\nu_{\rB})$, and $(X_{\rM},F_{\rM},A_{\rM},\nu_{\rM})$ satisfies the Pisot condition for a given $d \ge 3$, then the other two satisfy the Pisot condition for this~$d$ as well.
\end{proposition}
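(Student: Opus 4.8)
The plan is to show that the Lyapunov exponents of the three versions of Brun's algorithm are all proportional to each other, so that negativity of the second Lyapunov exponent (i.e., the Pisot condition of Definition~\ref{def:MCF_Pisot}) for one version is equivalent to the same property for the other two. I would handle the three pairwise relations separately.

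First I would relate the unordered and the ordered algorithms. By the commutative diagram \eqref{eq:diagBrun}, the ordered algorithm $(X_{\rB},F_{\rB},A_{\rB},\nu_{\rB})$ is the factor of the unordered algorithm $(X_{\rU},F_{\rU},A_{\rU},\nu_{\rU})$ under the coordinate ordering map $\mathrm{ord}$, which is finite-to-one (exactly $d!$-to-$1$ on the set where all coordinates are distinct), and $\nu_{\rB} = \mathrm{ord}_*\nu_{\rU}$ by Lemma~\ref{lem:orderedinvariant}. The key point is that the cocycle is compatible with this factor map: applying $\mathrm{ord}$ only permutes coordinates, and a permutation matrix $\zeta$ has $\lVert \zeta M\rVert$ and $\lVert M\rVert$ comparable up to a dimensional constant, so $\log\lVert A_{\rU}^{(n)}(\bx)\rVert$ and $\log\lVert A_{\rB}^{(n)}(\mathrm{ord}(\bx))\rVert$ differ by a bounded amount. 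More precisely, $A_{\rB}(\mathrm{ord}(\bx))$ equals $A_{\rU}(\bx)$ conjugated (on the left and right) by permutation matrices that track the reordering, so $A_{\rB}^{(n)}(\mathrm{ord}(\bx)) = \zeta_n A_{\rU}^{(n)}(\bx) \zeta_0^{-1}$ for suitable permutations $\zeta_n, \zeta_0$; hence $\wedge^k A_{\rB}^{(n)}$ and $\wedge^k A_{\rU}^{(n)}$ have comparable norms. Using the definition of Lyapunov exponents via exterior powers \eqref{def:Lyapu2} (together with Birkhoff's ergodic theorem applied on both systems, using that $\mathrm{ord}$ pushes $\nu_{\rU}$ to $\nu_{\rB}$), I conclude that the two algorithms have identical Lyapunov spectra, in particular $\vartheta_1 > 0 > \vartheta_2$ holds for one iff it holds for the other.

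Next I would relate the ordered algorithm to its multiplicative acceleration $(X_{\rM},F_{\rM},A_{\rM},\nu_{\rM})$. By construction \eqref{eq:BrunJumpMult}--\eqref{eq:FMTM} and the commutative diagram \eqref{eq:diagXbXm}, the map $F_{\rM}$ is (conjugate to) the induced map of $F_{\rB}$ on $X_{\rB}\setminus X_{\rB,d}$, and the multiplicative Brun matrices $M_{\rM,k,m} = M_{\rB,d}^{m-1} M_{\rB,k}$ are exactly the products of partial quotient matrices accumulated between successive returns; thus the cocycle $A_{\rM}$ of the induced system is the induced cocycle $B$ in the sense of Definition~\ref{def:induction}. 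Now I invoke Lemma~\ref{lem:viana819}: since $\nu_{\rB}$ is ergodic (Lemma~\ref{lem:orderedinvariant}) and $\nu_{\rB}(X_{\rB}\setminus X_{\rB,d}) > 0$, there is a constant $c \ge 1$ such that the Lyapunov exponents of $A_{\rM}$ are $c$ times those of $A_{\rB}$. In particular $\vartheta_1^{\rM} = c\,\vartheta_1^{\rB}$ and $\vartheta_2^{\rM} = c\,\vartheta_2^{\rB}$ have the same signs as $\vartheta_1^{\rB}$ and $\vartheta_2^{\rB}$, so the Pisot condition for the ordered algorithm is equivalent to that for the multiplicative one. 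One small point I must check before applying Lemma~\ref{lem:viana819} is the log-integrability of the cocycle $A_{\rM}$ over $(X_{\rM},F_{\rM},\nu_{\rM})$ (needed for the Lyapunov exponents to be defined in the first place): this follows because the expected value of the jump time $j$ is finite by \cite[Theorem~1~(d)]{BLV:18} (as noted right after \eqref{eq:FMTM}), and $\log\lVert M_{\rM,k,m}\rVert = O(\log m)$ grows only logarithmically in the number $m$ of ordered-Brun steps, so $\int_{X_{\rM}} \log\lVert A_{\rM}\rVert_\infty\,d\nu_{\rM} < \infty$; alternatively this is part of the statement of the Pisot condition hypothesis we are assuming holds for one of the three.

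Combining the two equivalences (unordered $\Leftrightarrow$ ordered and ordered $\Leftrightarrow$ multiplicative) gives the full three-way equivalence asserted in the proposition. The main obstacle I anticipate is a bookkeeping one rather than a conceptual one: carefully tracking the permutation matrices relating $A_{\rU}$ and $A_{\rB}$ along an orbit, and making sure the comparison of exterior-power norms is uniform in $n$, so that the $\limsup$/$\liminf$ defining the Lyapunov exponents genuinely coincide. The acceleration half is essentially immediate once Lemma~\ref{lem:viana819} is in place, the only care being the log-integrability verification and the observation that ergodicity forces the constant $c(x)$ there to be genuinely constant. Neither step requires new ideas beyond the metric machinery already developed in Sections~\ref{sec:metricmat} and~\ref{sec:cfgentheory}.
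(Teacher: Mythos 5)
Your proposal follows essentially the same route as the paper: the unordered/ordered equivalence is read off from the commutative diagram \eqref{eq:diagBrun} (the paper simply declares this ``immediate''; your permutation-conjugacy bookkeeping is a correct way to justify it), and the ordered/multiplicative equivalence is obtained by viewing $F_{\rM}$ as the induced map associated with the jump transformation and applying Lemma~\ref{lem:viana819}, with the constant $c$ being the expected value of $j$. One small technical point: Lemma~\ref{lem:viana819} is stated for \emph{invertible} ergodic systems, so you cannot apply it directly to the non-invertible map $F_{\rB}$; the paper first passes to the natural extensions via Remark~\ref{rem:1vs2} (whose Lyapunov exponents agree with those of the original algorithms) and induces $(\hX_{\rB},\hF_{\rB},\hA_{\rB},\hnu_{\rB})$ on the set $\hY$ lying over $X_{\rB}\setminus X_{\rB,d}$ -- an easy fix you should insert before invoking the lemma.
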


\begin{proof}
Let $d \ge 3$ be fixed. By \eqref{eq:diagBrun}, it is immediate that $(X_{\rU},F_{\rU},A_{\rU},\nu_{\rU})$ satisfies the Pisot condition if and only if $(X_{\rB},F_{\rB},A_{\rB},\nu_{\rB})$ satisfies it. It remains to deal with $(X_{\rM},F_{\rM},A_{\rM},\nu_{\rM})$, which is a multiplicative acceleration of  $(X_{\rB},F_{\rB},A_{\rB},\nu_{\rB})$ that is obtained by the jump transformation~$j$ defined in~\eqref{eq:BrunJumpMult}. 
In view of Remark~\ref{rem:1vs2}, it suffices to show that $(\hX_{\rB},\hF_{\rB},\hA_{\rB},\hnu_{\rB})$ satisfies the Pisot condition if and only if $(\hX_{\rM},\hF_{\rM},\hA_{\rM},\hnu_{\rM})$ does. Let 
\[
\hY=\{(\bx,\by)\in \hX_{\rB}\colon \bx\in X_{\rB}\setminus X_{\rB,d} \}
\]
and let $(\hY, \hG, \hB,\hnu_{\rB}|_{\hY})$ be the dynamical system that we obtain by inducing the system $(\hX_{\rB},\hF_{\rB},\hA_{\rB},\hnu_{\rB})$ on $\hY$ in the sense of Definition~\ref{def:induction}. From the definition of~$j$, we see that (compare \eqref{eq:diagXbXm})
\[
\begin{tikzcd}
\hY\arrow[r, "\hG"]\arrow[d,"\hF_{\rB}"] &\hY \arrow[d, "\hF_{\rB}"] \\
\hX_{\rM} \arrow[r, "\hF_{\rM}"]& \hX_{\rM}
\end{tikzcd}
\]
defines a measurable conjugacy. Thus we may apply Lemma~\ref{lem:viana819} to see that the Lyapunov exponents of $(\hX_{\rM},\hF_{\rM},\hA_{\rM},\hnu_{\rM})$ are obtained from the Lyapunov exponents  from  $(\hX_{\rB},\hF_{\rB},\hA_{\rB},\hnu_{\rB})$ by multiplication by a constant $c\ge 1$. In fact, this constant~$c$ is the expected value of $j$ which is finite by \cite[Theorem~1~(d)]{BLV:18}. Thus $(X_{\rM},F_{\rM},A_{\rM},\nu_{\rM})$ satisfies the Pisot condition if and only if the same is true for $(X_{\rB},F_{\rB},A_{\rB},\nu_{\rB})$.
\end{proof}

This result allows us to show that each version of the Brun continued fraction algorithm satisfies  the Pisot condition in low dimensions~$d$.

\begin{proposition}\label{prop:Brun23Pisot}
For $d \in \{3,4\}$, each of the $(d{-}1)$-dimensional continued fraction algorithms $(X_{\rU},F_{\rU},A_{\rU},\nu_{\rU})$, $(X_{\rB},F_{\rB},A_{\rB},\nu_{\rB})$, and $(X_{\rM},F_{\rM},A_{\rM},\nu_{\rM})$ satisfies the Pisot condition. In other words, each of these algorithms is a.e.\ exponentially convergent in the future and in the past. Moreover for each of these algorithms there is a cylinder of positive measure corresponding to a positive matrix.
\end{proposition}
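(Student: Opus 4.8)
The statement to be proven, Proposition~\ref{prop:Brun23Pisot}, asserts that in dimensions $d\in\{3,4\}$ all three versions of the Brun algorithm satisfy the (generic) Pisot condition, that they are a.e.\ exponentially convergent in both directions, and that there is a cylinder of positive measure corresponding to a positive matrix. The plan is to reduce everything to a single statement about one of the three algorithms and then invoke the literature for that one. First I would apply Proposition~\ref{prop:BrunEquivalentPisot}: it says that if any one of the three $(d{-}1)$-dimensional Brun algorithms satisfies the Pisot condition for a given $d$, then all three do. So it suffices to verify the Pisot condition for, say, the ordered Brun algorithm $(X_{\rB},F_{\rB},A_{\rB},\nu_{\rB})$, whose Lyapunov exponents coincide (by Remark~\ref{rem:1vs2}) with those of its natural extension.

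Second, I would establish negativity of the second Lyapunov exponent for $d-1=2$ and $d-1=3$. Here I would cite the known numerical/rigorous results on the Lyapunov spectrum of the Brun algorithm. The $2$-dimensional ordered Brun algorithm has its Lyapunov exponents computed in the literature (e.g.\ work of Hardcastle--Khanin and of Fougeron, or \cite{BST21}); in particular $\vartheta_1>0>\vartheta_2$ holds. For $d-1=3$ the same references (or \cite{AD:19}, \cite{Delecroix-Hejda-Steiner} together with the numerical evidence recorded in the introduction) give negativity of $\vartheta_2$. The log-integrability condition \eqref{eq:logintMCF} is automatic for the additive algorithms (finitely many matrices) and holds for the multiplicative algorithm because the expected value of the jump time $j$ from \eqref{eq:BrunJumpMult} is finite by \cite[Theorem~1~(d)]{BLV:18}, as already used in the proof of Proposition~\ref{prop:BrunEquivalentPisot}. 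Once the Pisot condition is in hand, a.e.\ exponential convergence in the future and in the past follows for all three algorithms: passing to the natural extension (Remark~\ref{rem:1vs2}), the shift $(\bpsi(\hX),\Sigma,\bpsi_*\hnu)$ satisfies the generic Pisot condition for $A_{\mathrm{tr}}$, so Lemma~\ref{lem:GloLocPisot}~(\ref{it:glp2}) gives the two-sided local Pisot condition and the growth condition for a.e.\ sequence $\bM$, and then Proposition~\ref{prop:strongcv2sided} (equivalence of the local Pisot condition with exponential convergence) yields exponential convergence in both directions.

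Third, for the cylinder of positive measure corresponding to a positive matrix, I would exhibit an explicit word. For the ordered Brun algorithm in dimension $d=3$, a short product of the matrices $M_{\rB,1},M_{\rB,2},M_{\rB,3}$ from \eqref{eq:brunmatrices} is positive; for instance one checks directly that $M_{\rB,1}M_{\rB,2}M_{\rB,3}$ (or some similarly short product) has all entries positive. Since the ordered Brun algorithm is full, each cylinder set $X_{\rB,k}$ has positive $\nu_{\rB}$-measure, hence so does the cylinder $X_{[0,\ell)}$ corresponding to such a positive product; on the natural extension this gives a cylinder set $[Z_0,\ldots,Z_{\ell-1}]$ of positive $\hnu$-measure with $Z_{[0,\ell)}$ positive, as required. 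The analogous statements for the unordered and multiplicative versions follow from the conjugacies in \eqref{eq:diagBrun} and \eqref{eq:diagXbXm} (the ordering map and the jump transformation both send positive-measure cylinders to positive-measure cylinders, and one can arrange the image words to be positive). For $d=4$ the same strategy applies: the $3$-dimensional ordered Brun matrices from \eqref{eq:orderedbrunmatrices}--\eqref{eq:orderedbrunmatrices2} admit a short positive product, and the algorithm is full, so the corresponding cylinder has positive measure.

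\textbf{Main obstacle.} The genuinely non-routine ingredient is the negativity of the second Lyapunov exponent $\vartheta_2$ in dimensions $d-1=2$ and $d-1=3$. This is not something one proves from scratch in a line; it rests on either rigorous computation of the Lyapunov spectrum or on the convergence-rate estimates of \cite{Delecroix-Hejda-Steiner}, \cite{AD:19}, \cite{BST21} and related works, and it is precisely the reason the result is stated only for small $d$ (as the introduction emphasizes, classical multidimensional continued fraction algorithms are expected to lose strong convergence in high dimensions, cf.\ Conjecture~\ref{conj:convfc}). Everything else — the reduction among the three versions, the passage to the natural extension, the derivation of exponential convergence, and the existence of a positive-measure cylinder with positive matrix — is bookkeeping with results already established in the excerpt. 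So in the written proof I would state the reduction via Proposition~\ref{prop:BrunEquivalentPisot}, cite the literature for $\vartheta_2<0$ when $d-1\in\{2,3\}$, invoke Lemma~\ref{lem:GloLocPisot} and Proposition~\ref{prop:strongcv2sided} for the convergence conclusions, and finish with the explicit positive product of Brun matrices together with fullness of the algorithm.
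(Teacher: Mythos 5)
Your proposal is correct and follows essentially the same route as the paper: reduce to one version of the algorithm via Proposition~\ref{prop:BrunEquivalentPisot}, cite the literature for the negativity of the second Lyapunov exponent in dimensions $d\in\{3,4\}$ (the paper uses \cite{AD:19} for the ordered case with $d=3$ and \cite{Schratzberger:98,Schratzberger:01,HK00,Hardcastle:02} for the multiplicative case), and observe that positive finite blocks yield positive-measure cylinders because the invariant measure is equivalent to Lebesgue measure. The only point to tighten is that for $d=4$ you should lean on the rigorous results of Schratzberger and Hardcastle--Khanin rather than on ``numerical evidence''; otherwise the argument matches the paper's.
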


\begin{proof}
For $d = 3$, it is proved in \cite{AD:19} that $(X_{\rB},F_{\rB},A_{\rB},\nu_{\rB})$ satisfies the Pisot condition; a proof for $(X_{\rM},F_{\rM},A_{\rM},\nu_{\rM})$ is contained in \cite{Schratzberger:98}. For $d = 4$, the Pisot condition for $(X_{\rM},F_{\rM},A_{\rM},\nu_{\rM})$ is proved in \cite{Schratzberger:01}; see also \cite{HK00,Hardcastle:02}. Thus the result on the Pisot condition follows from Proposition~\ref{prop:BrunEquivalentPisot}.
It is easy to see that there exist finite blocks that are positive. The corresponding cylinders have positive measure by equivalence of the invariant measure with the Lebesgue measure.
\end{proof}

Proposition~\ref{prop:Brun23Pisot} shows that the conditions of Theorem~\ref{thm:oseledetsmat} are satisfied for the Brun algorithms $(X_{\rU},F_{\rU},A_{\rU},\nu_{\rU})$, $(X_{\rB},F_{\rB},A_{\rB},\nu_{\rB})$, and $(X_{\rM},F_{\rM},A_{\rM},\nu_{\rM})$ if $d\in\{3,4\}$. Thus the conclusions of this theorem hols for a.e.\ sequence of matrices produced by these algorithms.

\chapter{Sequences of substitutions and Rauzy fractals}
\label{chapter:substitution}
We now consider bi-infinite sequences of substitutions and their associated dynamical systems.  These sequences form a refinement of the sequences of nonnegative integer matrices that we studied so far. Indeed, one can obtain a sequence of substitutions by superimposing a combinatorial structure on a sequence of matrices. In Section~\ref{sec:subs}, we deal with single bi-infinite sequences of substitutions. Section~\ref{sec:rauzy} is devoted to Rauzy fractals and to Rauzy boxes, which are defined as suspensions of Rauzy fractals. These objects are used to give a geometric interpretation of  a sequence of substitutions. Finally, the metric viewpoint for shift invariant sets of bi-infinite sequences of substitutions will be treated in Section~\ref{sec:metricS}.

\section{Bi-infinite sequences of substitutions}\label{sec:subs}
This section is the analog of Section~\ref{sec:matrices} for sequences of substitutions. In particular, we define \emph{$\cS$-adic shifts} by using sequences of substitutions. After setting up the basic terminology in Section~\ref{subsec:lang}, we see in Section~\ref{sec:combprop} that most of the properties we defined for sequences of matrices naturally carry over to sequences of substitutions by looking at \emph{incidence matrices}. However, there is a new property that makes sense only in the context of substitutions, namely \emph{balance}. As we will see in Section~\ref{subsec:crit}, besides the Pisot condition, also balance can be used to formulate a criterion for strong convergence in the setting of sequences of substitutions. In Section~\ref{sec:spectr-prop-sub} we will establish some spectral properties of $\cS$-adic shifts, and Section~\ref{sec:SadicMF} is devoted to $\cS$-adic mapping families. In Section~\ref{subsec:realization} we show how we can associate $\cS$-adic shifts to multidimensional continued fraction algorithms. This is of great importance later because the extra structure furnished by sequences of substitutions is needed in order to define the atoms of nonstationary Markov partitions, that we will associate with multidimensional continued fraction algorithms. Finally, in Section\ref{sec:subsSadicBrun} we relate $\cS$-adic shifts to the Brun algorithm.

\subsection{Languages and $\cS$-adic shifts}\label{subsec:lang}
For $d \ge 2$ let $\cA = \{1,2,\dots,d\}$\notx{alphabet}{$\cA$}{finite alphabet} be a finite alphabet, and let $\cA^*$\notx{alphabet}{$\cA^*$}{set of finite words}  be the set of finite words over~$\cA$, which is a free monoid with respect to concatenation of words. Moreover, we denote by~$\cA^{\ZZ}$\notx{alphabet}{$\cA^{\ZZ}$}{set of bi-infinite sequences} the set of two-sided infinite sequences over~$\cA$, equipped with the product topology of the discrete topology on~$\cA$. 
For a word $w \in \cA^*$, we denote by $|w|$\notx{0length}{$\lvert\cdot\rvert$}{length of a word} the number of letters of~$w$, i.e., the \emph{length}\indx{length} of $w$, and by~$|w|_a$\notx{0lengtha}{$\lvert\cdot\rvert_a$}{number of occurrences of a letter in a word}, $a \in \cA$, the number of occurrences of the letter $a$ in~$w$. A~word~$v$ is a \emph{factor}\indx{factor} of a word~$w$ if there exist words $p,s$ such that $w = pvs$. Moreover, if $p$ is the empty word, then $v$ is a \emph{prefix}\indx{prefix} of~$w$, which is denoted by $v \preceq w$.\notx{0prefix}{$\preceq,\prec$}{prefix relation} We write $v \prec w$ when $v \preceq w$ and $v \neq w$. 

A~\emph{substitution}\indx{substitution}~$\sigma$\notx{sigma}{$\sigma,\sigma_n$}{substitution} on the alphabet~$\cA$ is an endomorphism of the free monoid~$\cA^*$ that is \emph{nonerasing} in the sense that $|\sigma(a)| \ge 1$ holds for each $a\in\cA$. 
With~$\sigma$, we associate its \emph{incidence matrix}\indx{substitution!incidence matrix}\indx{matrix!incidence}\notx{Msigma}{$M_\sigma$}{incidence matrix of a substitution} $M_\sigma = (|\sigma(b)|_a)_{a,b\in\cA} \in \NN^{d\times d}$. This matrix is the abelianized version of~$\sigma$ in the following sense. Let  
\[
\bl:\, \cA^*\to \NN^d, \quad w\mapsto \tr{(|w|_1,\ldots,|w|_d)}
\]
\notx{l}{$\bl(\cdot)$}{abelianization map}be the \emph{abelianization map}\indx{abelianization}, then $\bl(\sigma(w)) = M_\sigma \bl(w)$ holds for all $w \in \cA^*$. A~substitution is called \emph{unimodular}\indx{substitution!unimodular} if $|{\det M_\sigma}| = 1$, \emph{primitive}\indx{primitive!substitution}\indx{substitution!primitive} if $M_{\sigma}$ is primitive, and it is called \emph{Pisot}\indx{Pisot!substitution}\indx{substitution!Pisot} if $M_\sigma$ is a  matrix whose characteristic polynomial is the minimal polynomial of a Pisot number.\footnote{Some authors call such a substitution \emph{irreducible Pisot}, and define a Pisot substitution as a substitution whose matrix is primitive, and has a Perron eigenvalue (i.e., a unique largest eigenvalue) which is a Pisot number.} Pisot substitutions are extensively studied in the literature; see e.g.\ \cite{Rauzy:82,Arnoux-Ito:01,Fog02,AkiBBLS,AD:19} and the references given there. 

Set
\[
\cS_d = \big\{\sigma \,:\, \sigma\ \text{is a unimodular substitution over the alphabet}\ \cA = \{1,\dots,d\}\big\}.
\]
\notx{Sd}{$\cS_d$}{set of unimodular substitutions} and let $\bsigma = (\sigma_n)_{n\in\ZZ} \in \cS_d^{\ZZ}$\notx{sigmab}{$\bsigma$}{sequence of substitutions} be a sequence of substitutions over the alphabet~$\cA$.
In accordance with our convention for matrices, compositions of consecutive substitutions  will be written as
\[
\sigma_{[m,n)} = \sigma_m \sigma_{m+1} \cdots \sigma_{n-1} \quad \text{for}\ m,n \in \ZZ\ \text{with}\ m \le n;
\]
\notx{subs}{$\sigma_{[m,n)}$}{composition of substitutions}here, $\sigma_{[m,m)}$ is the identity substitution, which  maps each letter of $\cA$ to itself.
Note that the incidence matrix of $\sigma_{[m,n)}$ satisfies
\[
M_{\sigma_{[m,n)}} = M_{\sigma_m} M_{\sigma_{m+1}} \cdots M_{\sigma_{n-1}}.
\]
Many properties of~$\bsigma$ depend only on its \emph{sequence of incidence matrices} 
\[
\bM_{\bsigma} = (M_{\sigma_n})_{n\in\ZZ}. \notx{Msigman}{$\bM_{\bsigma}$}{sequence of incidence matrices}
\]
In particular, $\bsigma$ is called \emph{primitive}\indx{primitive!sequence!of substitutions} if $\bM_{\bsigma}$ is primitive (see Definition~\ref{def:geaS} below). 

In order to define the $\cS$-adic shift associated to $\bsigma$, we set, for $n \in \ZZ$,
\begin{equation}\label{eq:langDef}
\cL_{\bsigma}^{(n)} = \{w \in \cA^* \,:\, \hbox{$w$ is a factor of $\sigma_{[n,m)}(a)$ for some $a \in \cA$, $m>n$}\}.
\end{equation}
We call $\cL_{\bsigma}^{(n)}$\notx{La}{$\cL_{\bsigma}, \cL_{\bsigma}^{(n)}$}{$\cS$-adic language} the \emph{language of level~$n$}\indx{language!level $n$} associated to~$\bsigma$, and we define $\cL_{\bsigma} := \cL_{\bsigma}^{(0)}$.
We recall that $\Sigma: \cA^{\ZZ} \to \cA^{\ZZ}$, $(\omega_n)_{n\in\ZZ}\mapsto (\omega_{n+1})_{n\in\ZZ}$, denotes the \emph{shift operator}. 

\begin{definition}[$\cS$-adic shift]\label{def:sadicshift}
\indx{shift!S@$\cS$-adic}\indx{S@$\cS$-adic!shift}
Let $\bsigma \in \cS^{\ZZ}$ with $\cS \subset \cS_d$, $d \ge 2$.
We define
\[
X _{\bsigma}^{(n)} = \{\omega \in \cA^{\ZZ} \,:\, \hbox{each factor of $\omega$ is an element of $\cL_{\bsigma}^{(n)}$}\} \qquad (n \in \ZZ). \notx{Xsigma}{$X _{\bsigma}, X _{\bsigma}^{(n)}$}{$\cS$-adic shift}
\]
The symbolic dynamical system $(X_{\bsigma}^{(n)},\Sigma)$ is called the \emph{(two-sided) $\cS$-adic shift of level~$n$ associated to $\bsigma$}. We let $X_{\bsigma}:=X^{(0)}_{\bsigma}$, and call $(X_{\bsigma},\Sigma)$ the \emph{$\cS$-adic shift associated to $\bsigma$}. 
\end{definition}

In Section~\ref{sec:matrices}, we considered sequences of matrices taken from a set $\cM \subset \cM_d$. Choose a set $\cS \subset \cS_d$ of substitutions such that $\mathcal{M} = \{M_\sigma : \sigma \in \cS\}$ and $M_\sigma \ne M_\tau$ for distinct $\sigma, \tau \in \cS$. This yields a natural injection $\varrho:\cM \to \cS_d$ with $\cS=\varrho(\cM)$. The mapping $\varrho$ is called a \emph{substitution assignment}\indx{substitution!assignment}; see Definition~\ref{d:realization}). Note that, given~$\bM$, the choice of~$\varrho$ is in general not unique and there is no ``standard'' choice for such a set~$\cS$. Nevertheless, given such a substitution assignment $\varrho$, we can relate with each sequence $(M_n)_{n\in\ZZ}$ a unique sequence $(\sigma_n)_{n\in\ZZ}$. In view of \eqref{eq:diagpsis}, a substitution selection associates with each orbit of the natural extension of a continued fraction algorithm a sequence of substitutions with values in~$\cS$ and, hence, an $\cS$-adic shift; we will make this precise in Section~\ref{subsec:realization}. For now we just illustrate this by a famous example.

\begin{definition}[Sturmian sequences]\label{def:sturm}
A bi-infinite sequence $\omega = (\omega_n)_{n\in\ZZ} \in \{1,2\}^{\ZZ}$  is called a \emph{Sturmian sequence}\indx{Sturmian!sequence} if there exist $\alpha \in (0,1) \setminus \QQ$ and $x \in \RR$ such that, for each $n \in \ZZ$, we have
\[ 
\omega_n = i \quad \mbox{if and only if} \quad  \fr_\alpha^n(x) = n \alpha + x  \in I_i\  \pmod{\ZZ},
\]
where 
\[
\fr_\alpha:\, \TT^1 \to \TT^1, \quad x \mapsto x + \alpha, \notx{rot}{$\fr_{\balpha}, \fr_{\bx}$}{toral rotation}
\]
with either $I_1 = [0,1{-}\alpha)$, $I_2=[1{-}\alpha, 1)$, or $I_1=(0,1{-}\alpha]$, $I_2 = (1{-}\alpha, 1]$. In this sense, a Sturmian sequence is a \emph{coding} of the rotation $(\TT^1,\fr_\alpha)$ w.r.t.\ the two-interval partition $\{I_1,I_2\}$.
\end{definition}

An irrational rotation~$\fr_\alpha$ leading to a Sturmian sequence is illustrated in Figure~\ref{fig:Rotationsequence}.

\begin{figure}[ht] 
\begin{tikzpicture}[scale=1.5,radius=1]
\draw[red,very thick] (0:1)arc[start angle=0,end angle=263]--(263:1);
\draw[blue,very thick] (263:1)arc[start angle=263,end angle=360]--(0:1);
\draw (0:.9)--(0:1.1) (263:.9)--(263:1.1);
\node at (311.5:1.2){$I_1$};
\node at (131.5:1.2){$I_2$};
\fill (-10:1) circle (1pt);
\node at (-10:1.4){$\fr_\alpha^{-1}(z)$};
\fill (87:1) circle (1pt);
\node at (87:1.15){$z$};
\fill (184:1) circle (1pt);
\node at (184:1.35){$\fr_\alpha(z)$};
\fill (281:1) circle (1pt);
\node at (281:1.2){$\fr_\alpha^2(z)$};
\fill (18:1) circle (1pt);
\node at (18:1.35){$\fr_\alpha^3(z)$};
\end{tikzpicture}
\caption{Sturmian sequences as codings of the rotation~$\fr_\alpha$ on~$\RR/\ZZ$ (with $\alpha = \frac{1}{1+e}$), represented on the unit circle}
\label{fig:Rotationsequence} 
\end{figure}
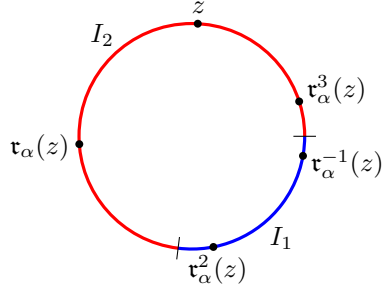

Sturmian sequences are well-known and have many interesting properties like low subword complexity;  in fact they have exactly $n+1$  subwords of length $n$ for each~$n$; see e.g.~\cite[Chapter~6]{Fog02} or \cite{Berstel&Seebold:2002}. 
For details on the correspondence between Sturmian sequences, the classical continued fraction algorithm, and sequences of low subword complexity, we also refer to \cite{Arnoux-Rauzy:91} or  \cite[Section~3.2]{thuswaldner2019boldsymbolsadic}. 

\begin{example}[Sturmian sequences and $\cS$-adic shifts]\label{ex:sturm}
We continue Examples~\ref{ex:classicalMappingFamily} and~\ref{ex:NMexpl2}.
For integers $a \ge 1$, define the matrices and substitutions 
\[
M_{\rG,a} = \begin{pmatrix}0&1\\1&a\end{pmatrix}, \quad \sigma_{\rG,a}: \begin{cases}1 \mapsto 2,\\ 2 \mapsto 1\underbrace{2\cdots2}_{a \text{ \scriptsize times}}.\end{cases}
\notx{G}{$\rG$}{object related to the classical continued fraction algorithm}
\]
Let $\cM_{\rG} = \{M_{\rG,a} : a \ge 1\}$ and $\cS_{\rG} = \{\sigma_{\rG,a} : a \ge 1\}$.
Then $M_{\rG,a}$ is the incidence matrix of~$\sigma_{\rG,a}$, hence the substitution selection
\[
\varrho:\ \cM_{\rG} \to \cS_{\rG}, \quad M_{\rG,a} \mapsto \sigma_{\rG,a},
\]
is well-defined. 
Let $x,y \in [0,1] \setminus \QQ$ be irrational numbers with continued fraction expansions $x = [0;a_0,a_1,\dots]$, $y = [0;a_{-1},a_{-2},\dots]$.
Then the natural extension~$\hF$ of the classical continued fraction algorithm associates to $\big(\genfrac[]{0pt}{}{x}{1}, \genfrac[]{0pt}{}{y}{1}\big) \in \PP_{>0}^1 {\times} \PP_{>0}^1$ the sequence of matrices $(M'_{a_n})_{n\in\ZZ}$; 
see Examples~\ref{ex:classicalCF} and~\ref{ex:classicalCF2}.
The set~$\cM$ is the range of the cocycle~$A$ of the classical continued fraction algorithm defined in~\eqref{eq:classicalcocycle}. 
The substitution selection~$\varrho$ then associates the sequence of substitutions $\bsigma = (\sigma_{\rG,a_n})_{n\in\ZZ}$ to $\bM = (M_{\rG,a_n})_{n\in\ZZ}$ and, hence, an $\cS$-adic shift $(X_{\bsigma},\Sigma)$. 
It was observed already in \cite{Morse&Hedlund:1940,Coven&Hedlund:1973} that the elements of~$X_{\bsigma}$ are \emph{Sturmian sequences}, which are codings of the rotation~$\fr_\alpha$ on $\RR/\ZZ$ by $\alpha = \frac{x}{1+x}$ with respect to two intervals $I_1$ and~$I_2$ of length~$\alpha$ and $1{-}\alpha$ respectively; see Figure~\ref{fig:Rotationsequence}.
This can be rephrased by saying that the shift $(X_{\bsigma},\Sigma)$ is mesurably  conjugate to the irrational rotation $(\RR/\ZZ,\fr_\alpha)$; this conjugacy is stated in a more general context in Theorem~\ref{t:tilingpds} below. 

Note that $M_{\rG,a_1} M_{\rG,a_2} = M_{\rF,2}^{a_1} M_{\rF,1}^{a_2}$ and $\sigma_{\rG,a_1} \circ \sigma_{\rG,a_2} = \sigma_{\rF,2}^{a_1} \circ \sigma_{\rF,1}^{a_2}$ with the matrices~$M_{\rF,i}$ from \eqref{eq:sturmmat} and the substitutions $\sigma_{\rF,i}$ from~\eqref{eq:sturmsubs}.
\end{example}

For surveys on results about $\cS$-adic shifts we refer to \cite{Fer:96,DL:12,Durand-Leroy-Richomme:13,Berthe-Delecroix,thuswaldner2019boldsymbolsadic}. In \cite{AMS:14,BST:19,BST:23,Fogg:24}, relations between $\cS$-adic shifts to geometry and arithmetic have been investigated, and in \cite{BSTY,BPRS} recognizability results of these shifts have been proved in a quite general context. Recall that a shift $(X,\Sigma)$ is \emph{minimal}\indx{shift!minimal} if $(X,\Sigma)$ has no nontrivial closed shift invariant subset. In the same way as the one-sided infinite case (see for instance \cite[Proposition~3.5.3~(iii)]{thuswaldner2019boldsymbolsadic}), one can show that  the $\cS$-adic shift associated to a primitive sequence of substitutions is minimal. Primitivity and minimality are in fact equivalent. To be more precise, if the sequence $\bsigma$ of substitutions is everywhere growing (in the sense of \cite[Definition~3.1]{Berthe-Delecroix}), but not primitive, it results from the definition of primitivity that some letter occurs with unbounded gaps in some element of~$X_{\bsigma}$. This contradicts minimality of $(X_{\bsigma},\Sigma)$. Primitivity is a necessary condition for obtaining meaningful symbolic dynamics in our context. 

\subsection{Combinatorial properties and generalized eigenvectors}\label{sec:combprop}
In this section, we revisit the properties stated in Section~\ref{sec:matrices} for sequences of matrices and carry them over to sequences of substitutions. Besides that, we define notions that are specific to sequences of substitutions. These are mainly two-sided versions of properties defined in \cite{BST:19,BST:23}.

One of the main notion that depends on~$\bsigma$ and not only on~$\bM_{\bsigma}$ is the notion of \emph{balance} (for letters), which is a property that we desire for the languages~$\cL_{\bsigma}^{(n)}$ associated to a sequence~$\bsigma$ of substitutions.
We stress the  fact that this notion is specific  to   the symbolic setting and its importance is a reason for superimposing sequences of substitutions on sequences of matrices. 

\begin{definition}[Balance]\label{def:balance}
\indx{balanced}
Let $C \in \NN$. 
A~ set of words $\cL \subset \cA^*$ is called \emph{$C$-balanced} if
\[  
|v|_a - |w|_a \le C \quad \mbox{for all factors $v,w$ of $\cL$ with $|v| = |w|$ and for all $a \in \cA$}. 
\]
An element of $\cA^{\ZZ}$ is called \emph{$C$-balanced} if the set of its factors is $C$-balanced. 
A~shift $(X,\Sigma)$ is called \emph{$C$-balanced} if the set of all factors of its elements is $C$-balanced. If one of these objects is $C$-balanced for some unspecified~$C$, then we just say that it is \emph{balanced}.
\end{definition}

The language of a Pisot substitution~$\sigma$, i.e., the language $\mathcal{L}_{\bsigma}$ for the constant sequence $\bsigma = (\sigma)$, is balanced; see e.g.~\cite{Adamczewski:03,Adamczewski:04}.  

\begin{definition}[Properties of sequences of substitutions inherited from their incidence matrices]\label{def:geaS}
\indx{convergence!weak}\indx{convergence!strong}\indx{convergence!exponential}\indx{eigenvector!generalized}\indx{irreducible!algebraically}\indx{Pisot!condition}
Let $\bsigma = (\sigma_n)_{n\in\ZZ}\in \cS_d^{\ZZ}$, $d \ge 2$, be a sequence of substitutions and $\bM_{\bsigma} = (M_{\sigma_n})_{n\in\ZZ}$ the associated sequence of incidence matrices. If a certain property holds for $\bM_{\bsigma}$ then we say that it also holds for $\bsigma$. In particular, this applies to primitivity, the existence of generalized eigenvectors, weak, strong, and exponential convergence, the Pisot condition, and to algebraic irreducibility.
\end{definition}

The following notion of letter frequency is related to balance.
\begin{definition}[Letter frequency]\indx{letter frequency}
For a sequence $(\omega_n)_{n\in\ZZ} \in \cA^{\ZZ}$, the \emph{frequency} of a letter $a \in \cA$ is 
\[ 
f_a = \lim_{m,n\to\infty} \frac{|\omega_{-m} \cdots \omega_n|_a}{n+m+1}, 
\]
if this limit exists. The vector $\mathbf{f}= \tr{(f_1,\dots,f_d)}$ is then called \emph{letter frequency vector}. 
\end{definition}

Let $\bsigma \in \cS_d^{\ZZ}$ be primitive. As in the case of a primitive substitution, if the generalized right eigenvector~$\bu$ exists, then it is proportional to the letter frequency vector for any infinite word in~$X_{\bsigma}$. In fact, the existence of this generalized right eigenvector provides the existence of the frequency vector, and even in most cases unique ergodicity of the symbolic system~$X_{\bsigma}$; see e.g.\ \cite{Fisher:09,BKMS12,Berthe-Delecroix}. In other words, the existence of well-defined frequencies for the letters is related  to weak convergence. 

But we also need balance, which has to do with strong convergence.
Indeed, balance (as defined in Definition~\ref{def:balance}) implies several important properties in terms of letter frequencies. 
It implies first that if $(X,\Sigma)$ is a minimal balanced shift (defined over the alphabet~$\cA$), then letters admit frequencies, and moreover, the (letter) \emph{symbolic discrepancy}\indx{symbolic discrepancy} 
\begin{equation}\label{eq:discrepancy}
\begin{aligned}
\mathrm{Discr}(X,\Sigma) & = \sup \big\{|\omega_{-n} \cdots \omega_n|_a - (2n{+}1) f_a \,:\, (\omega_k)_{k\in\ZZ} \in X,\, n \ge 1,\, a \in \cA\big\} \\
& = \sup \big\{\lVert \bl(w)-|w|\,\mathbf{f}  \rVert_\infty \,:\, \mbox{$w$ is a factor of an element of~$X$}\big\}
\end{aligned}
\end{equation}
is bounded, where $\mathbf{f}$ stands for the  letter frequency vector.

We can  even characterize balance geometrically by using the projection~$\pi_{\bu,  \bone}$  (defined in \eqref{eq:projections}) as follows, with $\bone= {}^t(1,\ldots,1)$\notx{1}{$\bone$}{diagonal vector} being the ``diagonal vector'' in~$\RR^d$.
Note that, for $\bu \in \RR_{\ge0}^d$ with $\lVert\bu\rVert_1 = 1$, we have $$\pi_{\bu,\bone}\,\bl(w) = \bl(w)-|w|\,\bu,$$ which is related to local discrepancy when $\bu=\mathbf{f}$. A minimal shift  $(X,\Sigma)$  such that all sequences have   frequency vector $\mathbf{f}$
 is balanced if and only if $\lVert\pi_{\bu,\bone}\,\bl(w)\rVert$ is bounded for factors $w$ of elements of~$X$  (see \cite[Proposition 4.10]{BST:23} or   \cite[Proposition~7]{Adamczewski:03}). This geometric vision is crucial for  establishing the boundedness of Rauzy fractals; see  Lemma~\ref{lem:RFbalanced}.  It turns out that balance also implies that the symbolic system is a finite extension of a toral rotation; see e.g.\ \cite[Proposition~2.9]{BC:19}.

\begin{example}[Sturmian sequences and balance]\label{ex:sturmbalance}
We continue Example \ref{ex:sturm}. Consider  a Sturmian sequence $\omega=(\omega_n)_n$ coding the rotation~$\fr_{\alpha}$, with $\alpha$ irrational, according to Definition~\ref{def:sturm}.
By equidistribution of the sequence $(n \alpha)_n \bmod 1$,  the Sturmian sequence~$\omega$ has letter frequencies, given by the lengths $\alpha$ and $1{-}\alpha$ of the coding intervals. One can easily deduce from Definition~\ref{def:sturm} that Sturmian sequences are $1$-balanced.
Moreover, aperiodic $1$-balanced sequences are Sturmian; see \cite[Chapter~6]{Fog02}.
\end{example}

\subsection{Pisot condition and strong convergence} \label{subsec:crit}
The condition we are after now is  strong convergence. We will need strong convergence (in the future) of $\bsigma\in \cS_d^{\ZZ}$ to $\bu$ in order to have the eventually Anosov property for the mapping family associated to~$\bsigma$; see Section~\ref{sec:SadicMF} below. Strong convergence is related to balance (see Definition \ref{def:balance} above), and will allow us to define the $\cS$-adic Rauzy fractals (see Section~\ref{sec:rauzy}), which will be important to construct explicit Markov partitions for the mapping families. The generalized left eigenvector $\bv$ plays an important role in the construction of Markov partitions.

In the stationary case $\bsigma = (\sigma)$, the fact that $\sigma$ is a Pisot substitution is  equivalent to strong convergence; this can be seen by looking at the eigenvalues and eigenvectors of the incidence matrix~$M_\sigma$. We now consider  the general case of  an $\cS$-adic  system satisfying   the  Pisot condition and we give sufficient conditions for the strong convergence of~$\bsigma$ (in the future). This statement is the substitutive form of Theorem~\nameref{t:A} stated in the introduction; see also Theorem~\ref{th:matrixPisot}.

\begin{theorem} \label{theo:sufcondPisotS}
Let $\bsigma \in \cS^{\ZZ}$, with $d \ge 2$, be a primitive sequence of substitutions with sequence of incidence matrices $\bM_{\bsigma} = (M_{\sigma_n})_{n\in\ZZ}$ that satisfies the local Pisot condition and the growth condition $\lim_{n\to\infty} \frac{1}{n} \log \lVert M_{\sigma_n}\rVert = 0$.  Then the follwing assertions hold.
\begin{enumerate}[\upshape (i)]
\item \label{i:scP2} The sequence $\bsigma$ converges exponentially and, hence, strongly to~$\bu$.
\item \label{i:scP1}
The coordinates of the generalized right eigenvector~$\bu$ of $\bsigma$ are rationally independent,   
\item \label{i:scP3}
The language $\cL_{\bsigma}$ is a balanced.
\end{enumerate}
\end{theorem}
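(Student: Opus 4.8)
The plan is to deduce all three assertions from the matrix-level results already established in Chapter~\ref{chapter:matrix}, using the fact that the relevant properties of $\bsigma$ are by Definition~\ref{def:geaS} exactly the corresponding properties of the incidence matrix sequence $\bM_{\bsigma}=(M_{\sigma_n})_{n\in\ZZ}$, together with the observation that $\bM_{\bsigma}\in\cM_d^{\ZZ}$ since each $M_{\sigma_n}$ is a nonnegative unimodular integer matrix. Since $\bsigma$ is primitive, $\bM_{\bsigma}$ is primitive, and by hypothesis $\bM_{\bsigma}$ satisfies the local Pisot condition and the growth condition $\lim_{n\to\infty}\frac1n\log\lVert M_{\sigma_n}\rVert=0$. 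Thus $\bM_{\bsigma}$ satisfies all the hypotheses of Theorem~\ref{th:matrixPisot}.

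First I would prove~(\ref{i:scP2}): by Proposition~\ref{prop:strongcv2sided}, the local Pisot condition together with the growth condition implies that $\bM_{\bsigma}$ admits a generalized right eigenvector~$\bu$ and converges exponentially to~$\bu$; exponential convergence implies strong convergence (as noted after Definition~\ref{def:wsc}). By Definition~\ref{def:geaS} this is precisely the statement that $\bsigma$ converges exponentially, hence strongly, to~$\bu$. Next, for~(\ref{i:scP1}), I would invoke Corollary~\ref{cor:indepNew}, which states that under the local Pisot condition and the growth condition the coordinates of any generalized right eigenvector of a primitive sequence in $\cM_d^{\ZZ}$ are rationally independent; applying this to $\bM_{\bsigma}$ gives rational independence of the coordinates of~$\bu$. (Alternatively, both~(\ref{i:scP2}) and~(\ref{i:scP1}) are packaged together in Theorem~\ref{th:matrixPisot}, which also records algebraic irreducibility via Corollary~\ref{bst8.7}.)

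The substantive part is~(\ref{i:scP3}), balance of $\cL_{\bsigma}$, and this is where the symbolic structure genuinely enters rather than just the incidence matrices. The plan is to use the geometric characterization of balance recalled in Section~\ref{sec:combprop}: for a minimal shift all of whose sequences have frequency vector $\mathbf{f}$, the shift is balanced if and only if $\lVert\pi_{\bu,\bone}\,\bl(w)\rVert$ is bounded over all factors $w$ of its elements, where $\bu=\mathbf{f}$ is the normalized frequency vector (cf.\ \cite[Proposition~4.10]{BST:23}). Since $\bsigma$ is primitive, $(X_{\bsigma},\Sigma)$ is minimal; weak convergence (a consequence of exponential convergence) gives the existence of the frequency vector, proportional to $\bu$. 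So it remains to bound $\lVert\pi_{\bu,\bone}\,\bl(w)\rVert$ for factors $w\in\cL_{\bsigma}$. The standard approach, following the desubstitution/broken-line argument of \cite[Section~4]{BST:19} adapted to the two-sided setting, is: any factor $w$ of some $\sigma_{[0,m)}(a)$ can be decomposed along the levels of $\bsigma$ into a bounded number of pieces at each level, each piece being of the form $\sigma_{[0,k)}(\text{prefix or suffix of }\sigma_k(b))$ plus one ``central'' factor at the top level, so that $\bl(w)$ is a sum $\sum_{k} M_{\sigma_{[0,k)}}\,\bz_k$ with each $\bz_k$ bounded (by the maximal length $\lVert M_{\sigma_k}\rVert_\infty$ of $\sigma_k$-images, projected). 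Applying $\pi_{\bu,\bone}$ and using that $\pi_{\bu,\bone}$ commutes appropriately so that each term is controlled by $\mathrm{d}(M_{\sigma_{[0,k)}}\be_{i_k},\RR\bu)$ up to the factor $\lVert\bz_k\rVert$, exponential convergence of $\bM_{\bsigma}$ together with the growth condition makes $\sum_k \lVert M_{\sigma_k}\rVert\cdot\mathrm{d}(M_{\sigma_{[0,k)}}\be_{i_k},\RR\bu)$ absolutely convergent (this is exactly the summability remark in Remark~\ref{rem:expconv}), yielding a uniform bound independent of $w$.

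The main obstacle I expect is the bookkeeping in this last step: making the decomposition of an arbitrary factor $w$ into level-by-level pieces precise in the \emph{two-sided} $\cS$-adic setting (where one must handle the language $\cL_{\bsigma}^{(n)}$ of each level and desubstitute consistently), and verifying that the ``weights'' $\lVert\bz_k\rVert$ grow slowly enough — at most polynomially, or more precisely subexponentially by the growth condition — so that they are dominated by the exponential decay of $\mathrm{d}(M_{\sigma_{[0,k)}}\be_{i_k},\RR\bu)$. This is routine in spirit but delicate, and it is the only place where the growth condition on $\lVert M_{\sigma_n}\rVert$ is used in an essential, non-cosmetic way (it is what guarantees the $\bz_k$ do not spoil absolute convergence). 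Everything else reduces directly to Theorem~\ref{th:matrixPisot}, Proposition~\ref{prop:strongcv2sided}, Corollary~\ref{cor:indepNew}, Remark~\ref{rem:expconv}, and the cited balance criterion.
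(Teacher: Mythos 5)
Your proposal is correct and follows essentially the same route as the paper's proof: items (i) and (ii) are read off from Proposition~\ref{prop:strongcv2sided} and Corollary~\ref{cor:indepNew} applied to $\bM_{\bsigma}$, and item (iii) is proved by exactly the level-by-level desubstitution $w = p_0\,\sigma_0(p_1\cdots\sigma_{n-1}(s_np_n)\cdots s_1)s_0$, with $\lvert p_k\rvert,\lvert s_k\rvert\le\lVert M_{\sigma_k}\rVert_\infty$ controlled subexponentially by the growth condition and dominated by the exponential contraction transverse to $\RR\bu$, yielding a convergent series bounding $\mathrm{d}(\bl(w),\RR\bu)$. You also correctly pinpoint the growth condition as the ingredient that makes this series converge, which is precisely how the paper uses it.
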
 

\begin{proof}[Proof of Theorem~\ref{theo:sufcondPisotS}]
Since $\bsigma$ satisfies the Pisot condition and the growth condition $\lim_{n\to\infty} \frac{1}{n} \log \lVert M_{\sigma_n}\rVert = 0$, assertion~(\ref{i:scP2}) is an immediate consequence of Proposition~\ref{prop:strongcv2sided}.  Moreover, Corollary~\ref{bst8.7} yields algebraic irreducibility of $\bsigma$, and we may apply Corollary~\ref{cor:indepNew} to get assertion\eqref{i:scP1}. 

It remains to prove assertion~(\ref{i:scP3}). If we show that there is a constant $C > 0$ such that, for each $w \in \cL_{\bsigma}$, the vector~$\bl(w)$ is at distance at most~$C$ from the ray~$\RR\bu$, balance follows. 
Let $\delta_1(n) \ge \cdots \ge \delta_d(n)$ be the singular values of $M_{\sigma_{[0,n)}}$. By the Pisot condition and the growth condition, we may choose $\beta > 0$ and $C_1 > 0$  in a way that 
\begin{equation} \label{eq:d2Mn_betabd2}
\delta_2(n) \le C_1  e^{-\beta n} \quad \mbox{for all}\ n \in \NN. 
\end{equation}
By the growth condition there is $C_2>0$ such that 
\[
\lVert M_{\sigma_n} \rVert_{\infty}  \le C_2\, e^{\frac\beta2 n} \quad \mbox{for all}\ n \in \NN. 
\]
Let $w \in \cL_{\bsigma}$, i.e., $w$~is a factor of $\sigma_{[0,n)}(a)$ for some $n \in \NN$, $a \in \cA$.
Then we can write $w = p_0 \sigma_0(w_1) s_0$ for some word $w_1 \in \cL_{\bsigma}^{(1)}$, some factors $p_0$ of $\sigma_0(a_0)$ and $s_0$ of $\sigma_0(b_0)$, $a_0, b_0 \in \cA$.
Recursively, we obtain
\begin{equation} \label{eq:lsigmarep}
w = p_0\, \sigma_0(p_1 \cdots \sigma_{n-1}(s_n p_n) \cdots s_1) s_0  
\end{equation}
for some words $p_j,s_j \in \cA^*$, $0 \le j \le n$, that are factors of words~$\sigma_j(a)$, $a \in \cA$.
By the growth condition, we have
\begin{equation}\label{eq:pnsnest}
\max\{|p_n|,|s_n|\} \le \max\{|\sigma_n(a)| \,:\, a \in \cA\}  = \lVert M_{\sigma_n}\rVert_{\infty}  \le C_2\, e^{\frac\beta2 n}.
\end{equation}
Then \eqref{eq:lsigmarep}, \eqref{eq:transposed}, \eqref{eq:pnsnest}, and \eqref{eq:d2Mn_betabd2} imply that
\[
\begin{aligned}
\mathrm{d}(\bl(w),\RR\bu) &\le \sum_{n=0}^N \mathrm{d}\big(M_{\sigma_{[0,n)}} \bl(p_ns_n), \RR\bu\big) \le 2 \sum_{n=0}^\infty \lVert \tr{\!M}_{\sigma_{[0,n)}}|_{\bu^\perp} \rVert_2 \cdot  \max\{|p_n|,|s_n|\} \\ 
& \le 2 C_2 \sum_{n=0}^\infty \lVert \tr{\!M}_{\sigma_{[0,n)}}|_{\bu^\perp}\rVert_2 \cdot e^{\frac\beta2 n}
\le  2 C_1 C_2 \sum_{n=0} ^\infty  \exp\Big({-}\frac{\beta}{2} n\Big) = \frac{2C_1C_2}{1+\beta/2}
\end{aligned}
\]
(with $\mathrm{d}(\bl(w),\RR\bu)$ measured by the 2-norm). 
This implies that $\cL_{\bsigma}$ is balanced.
\end{proof}

Another condition assuring strong convergence and rational independence of the coordinates of the generalized right eigenvector is contained in the following proposition.

\begin{proposition} \label{propo:sufcondPisotS}
Let $\bsigma \in \cS^{\ZZ}$, with $d \ge 2$, be a primitive sequence of substitutions that is algebraically irreducible and whose language~$\cL_{\bsigma}$ is balanced. Then the coordinates of its generalized right eigenvector~$\bu$ are rationally independent, and $\bsigma$ converges strongly to~$\bu$.
\end{proposition}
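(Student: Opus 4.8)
The statement to be proven is Proposition~\ref{propo:sufcondPisotS}: for a primitive, algebraically irreducible sequence of substitutions $\bsigma \in \cS^\ZZ$ whose language $\cL_{\bsigma}$ is balanced, the generalized right eigenvector $\bu$ has rationally independent coordinates and $\bsigma$ converges strongly to $\bu$. The plan is to extract the geometric content of balance — boundedness of $\pi_{\bu,\bone}$ on abelianizations of factors — and feed it into the same algebraic irreducibility argument used in Corollary~\ref{cor:indepNew}.

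\emph{Step 1: Strong convergence.} First I would note that since $\bsigma$ is primitive (hence $\bM_{\bsigma}$ is primitive) and, say, eventually recurrent is \emph{not} assumed — so instead I rely directly on the hypothesis: the language $\cL_{\bsigma}$ is balanced, which by the discussion in Section~\ref{sec:combprop} (following \cite[Proposition~4.10]{BST:23}) means that the letter frequency vector $\mathbf{f}$ exists, that $\bu$ is proportional to $\mathbf{f}$, and that $\lVert \pi_{\bu,\bone}\bl(w)\rVert$ is bounded by some constant $C$ over all factors $w$ of elements of $X_{\bsigma}$. Now fix $i \in \{1,\dots,d\}$. The letter $i$, viewed as a one-letter word, lies in $\cL_{\bsigma}^{(0)}$, hence $\sigma_{[0,n)}(i) \in \cL_{\bsigma}$ has $\bl(\sigma_{[0,n)}(i)) = M_{\sigma_{[0,n)}} \be_i = M_{[0,n)} \be_i$ within bounded $\pi_{\bu,\bone}$-distance of $\RR\bu$. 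Since $\pi_{\bu,\bone}$ and $\pi_{\bu,\bu}$ differ only by a bounded linear change on the complement of $\RR\bu$, this gives $\mathrm{d}(M_{[0,n)}\be_i,\RR\bu) \le C'$ uniformly in $n$. This is bounded strong convergence in the sense that the \emph{transverse} component stays bounded, but to get genuine strong convergence — $\mathrm{d}(M_{[0,n)}\be_i,\RR\bu) \to 0$ — I would instead invoke Proposition~\ref{prop:fur}: primitivity together with this boundedness forces the nested cones $\bigcap_n M_{[0,n)}\RR_{\ge0}^d$ to collapse to the ray $\RR_{\ge0}\bu$, so $\bu$ is genuinely a generalized right eigenvector and the projections converge to zero. (Alternatively, one can argue that a bounded integer-coordinate transverse component combined with primitivity forces the direction to stabilize; I would spell out whichever route is cleanest.)

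\emph{Step 2: Rational independence.} This is the heart, and I would follow the proof of Corollary~\ref{cor:indepNew} essentially verbatim. Suppose $\bu$ has rationally dependent coordinates, so $\langle \bx, \bu\rangle = 0$ for some $\bx \in \ZZ^d \setminus \{\mathbf{0}\}$. Using the scalar-product identity $\langle \tr{\!M}_{[0,n)}\bx, \be_i\rangle = \langle \bx, M_{[0,n)}\be_i\rangle = \langle \pi_{\bu,\bu}\bx, M_{[0,n)}\be_i\rangle$ and the boundedness of $\mathrm{d}(M_{[0,n)}\be_i,\RR\bu)$ from Step~1 (boundedness suffices here — we do not even need convergence to zero), we obtain $\lVert \tr{\!M}_{[0,n)}\bx\rVert_\infty < C''$ for all $n$. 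Since $\tr{\!M}_{[0,n)}\bx \in \ZZ^d$, there is $k$ with $\tr{\!M}_{[0,\ell)}\bx = \tr{\!M}_{[0,k)}\bx$ for infinitely many $\ell > k$; as $M_{[0,k)}$ is invertible, $\tr{\!M}_{[0,k)}\bx \neq \mathbf{0}$ is a fixed eigenvector of $\tr{\!M}_{[k,\ell)}$ to eigenvalue $1$, so $M_{[k,\ell)}$ has reducible characteristic polynomial for infinitely many $\ell$. This contradicts algebraic irreducibility, which is a hypothesis here. Hence $\bu$ has rationally independent coordinates.

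\emph{Main obstacle.} The only delicate point is Step~1: Corollary~\ref{cor:indepNew} obtained strong convergence from the Pisot condition via Proposition~\ref{prop:strongcv2sided}, but here we have balance instead, and balance \emph{a priori} only yields boundedness of the transverse component, not decay. I expect the cleanest fix is that for the rational-independence argument (Step~2) boundedness is already enough — one never uses $\mathrm{d} \to 0$ there, only $\mathrm{d}$ bounded — so the contradiction goes through regardless. For the ``$\bsigma$ converges strongly to $\bu$'' clause of the conclusion, I would need to upgrade boundedness to decay, and the right tool is Proposition~\ref{prop:fur}: primitivity forces the nested cone intersection to be a single ray, and then $M_{[0,n)}\be_i / \lVert M_{[0,n)}\be_i\rVert \to \bu/\lVert\bu\rVert$ (weak convergence), while the bounded transverse component together with the unbounded growth of $\lVert M_{[0,n)}\be_i\rVert$ (which follows from primitivity of integer matrices, as recalled after Definition~\ref{def:primivite}) gives that the \emph{absolute} transverse distance need not shrink — so I should be careful: strong convergence is the statement $\mathrm{d}(M_{[0,n)}\be_i,\RR\bu)\to 0$, and bounded transverse component does NOT imply this. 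I would therefore derive decay directly: writing $w = \sigma_{[0,n)}(i)$ and decomposing as in the proof of Theorem~\ref{theo:sufcondPisotS}, $\bl(w)$ is a sum $\sum_{j} M_{[0,j)}\bl(p_j s_j)$ of ``boundary'' contributions, and balance of each level-$j$ language controls $\lVert\pi_{\bu,\bone} M_{[0,j)}\bl(p_js_j)\rVert$, but summing these requires a decay rate on $\lVert \tr{\!M}_{[0,j)}|_{\bu^\perp}\rVert$ which is exactly what the Pisot condition gave and balance alone does not. The resolution is that Proposition~\ref{propo:sufcondPisotS} as stated presumably intends the hypotheses to jointly force this: algebraic irreducibility plus balance, via the argument of Step~2, shows the generalized eigenvector is well-defined and irrational, and then weak convergence (from primitivity, Proposition~\ref{prop:fur}) combined with balance upgrades to strong convergence because a balanced minimal shift is a bounded-discrepancy coding — i.e.\ $\lVert\bl(w) - |w|\mathbf{f}\rVert_\infty$ bounded is precisely $\mathrm{d}(\bl(w),\RR\bu)$ bounded in the $\bone$-normalization, and for the specific words $w = \sigma_{[0,n)}(i)$ one checks the transverse part in fact tends to zero using that it is a bounded integer vector that must eventually be periodic along $\tr{\!M}_{[k,\ell)}$, contradicting irreducibility unless it is zero in the limit direction. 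I would present this last point carefully, as it is the genuinely load-bearing step.
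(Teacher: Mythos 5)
Your Step~2 (rational independence) is exactly the paper's argument: the paper also observes that balance gives a uniform bound on $\mathrm{d}(M_{\sigma_{[0,n)}}\be_a,\RR\bu)$ and then runs the proof of Corollary~\ref{cor:indepNew} verbatim, noting as you do that boundedness (not decay) is all that argument needs. Your use of balance to get existence of the frequency vector and hence of~$\bu$ also matches the paper, which cites the analogous fact.

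The gap is in the strong convergence clause, and you have in fact diagnosed it correctly in your ``main obstacle'' paragraph before trying to talk your way out of it. Two specific problems. First, Proposition~\ref{prop:fur} cannot be invoked: it requires eventual recurrence, which is not a hypothesis here (you note this yourself at the start of Step~1 and then appeal to the proposition anyway), and even where it applies it only yields the collapse of the nested cones, i.e.\ weak convergence, which together with a bounded transverse component still does not give $\mathrm{d}(M_{[0,n)}\be_i,\RR\bu)\to 0$. Second, your final attempted resolution --- that the transverse part is ``a bounded integer vector that must eventually be periodic along $\tr{\!M}_{[k,\ell)}$'' --- fails because $\pi_{\bu,\bu}(M_{[0,n)}\be_i)$ is not an integer vector; the pigeonhole/irreducibility trick of Corollary~\ref{cor:indepNew} applies to $\tr{\!M}_{[0,n)}\bx$ with $\bx\in\ZZ^d$, not to the projection of $M_{[0,n)}\be_i$ onto $\bu^\perp$. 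The paper does not close this gap by an elementary argument either: it simply cites \cite[Lemma~5.11]{BST:23} for the strong convergence and explicitly warns that the proof of that lemma is ``quite technical.'' So the decay of the transverse component under balance plus algebraic irreducibility is a genuinely nontrivial external result, not something recoverable from the tools you assembled; your write-up should either import that lemma or supply its (substantial) proof.
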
 

\begin{proof}
If the language~$\cL_{\bsigma}$ is balanced, then the existence of a frequency vector of~$\cL_{\bsigma}$ and thus a generalized right eigenvector $\bu$ is proved e.g.\ in \cite{PoirierSteiner}. Moreover, balance implies that the distance of $M_{\sigma_{[0,n)}} \be_a$ from~$\RR \bu$ is bounded for all $a \in \cA$ (see \cite[Proposition~4.10]{BST:23}), thus algebraic irreducibility of~$\bsigma$ implies rational independence of~$\bu$ as in the proof of Corollary~\ref{cor:indepNew}. Now, the strong convergence to~$\bu$ is the content of \cite[Lemma~5.11]{BST:23}, the proof of which is quite technical. 
\end{proof}

\begin{remark}
We have seen in Proposition~\ref{prop:fur} that primitivity and eventual recurrence play a fundamental role in order to have the existence of generalized eigenvectors.  Theorem~\ref{theo:sufcondPisotS}  shows that the Pisot condition (together with the quite weak growth condition, which is even empty for finite~$\cS$) is much stronger.

Converses of the implications in Theorem~\ref{theo:sufcondPisotS} and Proposition~\ref{propo:sufcondPisotS} are false. Indeed, there exist substitutions with balanced fixed points that do not satisfy the Pisot condition (see \cite[Example~2.6]{BC:19}), and strong convergence does not give in general further information on the speed of convergence, which is exponential in the Pisot case. In general, the proof of the balance property uses the Dumont--Thomas decomposition of a prefix of the fixed point into words of the form $\sigma_0\sigma_1\cdots \sigma_n(a)$ (see \cite{Dumont-Thomas}), and the distance of the corresponding abelianized vector to the generalized eigenline~$\RR\bu$. We can in this way derive a bound for balance in terms of a series; see the proof of  the third assertion of Theorem~\ref{theo:sufcondPisotS}. If the strong convergence is too slow, this series does not converge and the word is not balanced. On the other hand, the Pisot condition implies exponential convergence, but a slower convergence suffices for balance. 
\end{remark}

As an application of Theorem~\ref{theo:sufcondPisotS}, we provide the following example. It shows that a sequence $(\sigma_n)_{n\in\ZZ}\in\cM^\ZZ$ may not have the local Pisot property even if the substitutions $\sigma_{[0,n)}$, $n\in\NN$, are Pisot substitutions. 

\begin{example} \label{ex:wm}
Define the set $\cS_{\rAR} =\{\sigma_{\rAR,i} : i \in \cA\}$ of \emph{Arnoux--Rauzy substitutions}\indx{Arnoux--Rauzy!substitution}\indx{substitution!Arnoux--Rauzy}\notx{AxR}{$\rAR$}{object related to the Arnoux--Rauzy algorithm} over the alphabet~$\cA$ by
\[
\sigma_{\rAR,i}:\, i \mapsto i,\ j \mapsto ij\ \mbox{for}\ j \in \cA \setminus \{i\}\qquad (i\in\cA).
\]
An \emph{Arnoux--Rauzy  directive sequence} is a sequence $\bsigma \in \cS_{\rAR}^{\ZZ}$  for which each substitution $\alpha_i$ occurs infinitely often. These sequences were introduced in \cite{Arnoux-Rauzy:91}.
By  \cite[Example 5]{Arnoux-Ito:01} all finite products of Arnoux--Rauzy substitutions are primitive Pisot substitutions
as soon as they contain each of the  three substitutions. 
By \cite[Theorem 2.4]{Cassaigne-Ferenczi-Zamboni:00},\footnote{The paper \cite{Cassaigne-Ferenczi-Zamboni:00} considers one-sided directive sequences; they can easily be prolongated on the left as two-sided Arnoux--Rauzy directive sequences because the balance property only depends on the future.} there exists a primitive Arnoux--Rauzy directive sequence $\bsigma$ for which $(X_{\bsigma},\Sigma)$ is  not balanced;  it satisfies the growth condition  since it is defined on a finite set of substitutions; however it does not satisfy the local Pisot condition by Theorem~\ref{theo:sufcondPisotS}~(\ref{i:scP3}), even though all the substitutions
$\sigma_{[0,n]}$ are Pisot substitutions for $n$ large enough.
\end{example}

\subsection{Spectral properties of $\cS$-adic shifts} \label{sec:spectr-prop-sub}
In this section we discuss spectral properties of $\cS$-adic shifts that follow as a consequence of the Pisot condition. Before stating the according results, we recall some definitions from ergodic theory.

We say that a directive sequence~$\bsigma$ has \emph{pure discrete spectrum}\indx{spectrum!pure discrete} if the system $(X_{\bsigma},\Sigma)$ is \emph{uniquely ergodic}\indx{ergodic!uniquely} (i.e., it has a unique shift invariant probability measure~$\mu$), minimal, and has pure discrete measurable spectrum (i.e., the measurable eigenfunctions of the \emph{Koopman operator}\indx{Koopman operator} $U_T: L^2(X_{\bsigma},\Sigma,\mu) \to L^2(X_{\bsigma},\Sigma,\mu)$, $f \mapsto f \circ \Sigma$, span $L^2(X_{\bsigma},\Sigma,\mu))$. A~complex number $\newlambda$ is a \emph{continuous eigenvalue}\indx{eigenvalue!continuous} of $(X,T)$ if there exists a nonzero continuous function $f: X \to \CC$ such that $f \circ T = \newlambda f$. 
A \emph{weak topological mixing}\indx{mixing!weak topological} shift~$X$ is one that has no nonconstant continuous (with respect to the topology) eigenfunctions of the shift operator. The Pisot condition implies the existence of a topological factor.

The following lemma shows that our standard assumptions on~$\bsigma$ imply minimality and unique ergodicity of the shift $(X_{\bsigma},\Sigma)$.

\begin{lemma} \label{l:uniquelyergodic}
Let $\bsigma \in \cS_d^{\ZZ}$, with $d \ge 2$, be a primitive sequence of unimodular substitutions that admits a generalized right eigenvector. 
Then the shift $(X_{\bsigma},\Sigma)$ is minimal and uniquely ergodic. 
\end{lemma}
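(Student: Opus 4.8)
\textbf{Proof plan for Lemma~\ref{l:uniquelyergodic}.}

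The plan is to deduce minimality from primitivity and unique ergodicity from the existence of a generalized right eigenvector, following the standard arguments for $\cS$-adic shifts (cf.~\cite[Proposition~3.5.3]{thuswaldner2019boldsymbolsadic} in the one-sided case, and \cite{Fisher:09,BKMS12,Berthe-Delecroix}), adapted to the two-sided setting. First I would recall that primitivity of $\bsigma$ means that for each $m\in\ZZ$ there is $n>m$ such that $M_{\sigma_{[m,n)}}$ is positive; in particular every letter occurs in $\sigma_{[m,n)}(a)$ for every $a\in\cA$ once $n-m$ is large enough. From this one gets a \emph{uniform} bound: there is a function $R(k)$ such that any factor of length $R(k)$ of any element of $X_{\bsigma}$ contains every word of $\cL_{\bsigma}$ of length $k$ as a factor. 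This is the classical ``linear recurrence of the language'' argument — one first shows every letter of $\cA$ appears with bounded gaps in the elements of $X_{\bsigma}$ using positivity of a block $M_{\sigma_{[0,n)}}$, then bootstraps to words of length $k$ by noting that a factor of $\sigma_{[0,n)}(a)$ of length $k$ sits inside $\sigma_{[0,n)}(w)$ for a short word $w\in\cL_{\bsigma}^{(n)}$, and $w$ reappears with bounded gaps at level $n$. Minimality then follows immediately: if $Y\subset X_{\bsigma}$ is a nonempty closed shift-invariant subset, pick $\omega\in Y$; every factor of every element of $X_{\bsigma}$ occurs in $\omega$ (by the uniform recurrence just established applied to $\omega$), hence the shift orbit closure of $\omega$ is all of $X_{\bsigma}$, so $Y=X_{\bsigma}$.

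For unique ergodicity, I would invoke the existence of the generalized right eigenvector $\bu$, i.e., $\bigcap_{n\in\NN} M_{\sigma_{[0,n)}}\RR_{\ge0}^d=\RR_{\ge0}\bu$ (Definition~\ref{def:gea}, transported to $\bsigma$ via Definition~\ref{def:geaS}). The key point is that this forces the letters to have well-defined frequencies, uniformly over $X_{\bsigma}$, and more: all words do. Concretely, for a factor $w$ of an element of $X_{\bsigma}$ one uses the Dumont--Thomas-type decomposition (used already in the proof of Theorem~\ref{theo:sufcondPisotS}) to write, for each $n$, any long prefix of $\omega\in X_{\bsigma}$ as a concatenation of blocks $\sigma_{[0,n)}(a)$, $a\in\cA$, together with short boundary pieces. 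The number of occurrences of a letter $a$ in such a prefix is then $\langle M_{\sigma_{[0,n)}}\bN_n,\be_a\rangle$ up to a bounded error, where $\bN_n$ counts the blocks at level $n$; dividing by the total length and letting $n\to\infty$, weak convergence $\frac{M_{\sigma_{[0,n)}}\be_b}{\lVert M_{\sigma_{[0,n)}}\be_b\rVert}\to\frac{\bu}{\lVert\bu\rVert}$ (which holds because $\bu$ is a generalized right eigenvector, cf.\ the discussion after Proposition~\ref{prop:fur} and Definition~\ref{def:gea}) forces the limit frequency to be $\frac{f_a}{\lVert\mathbf f\rVert_1}$ with $\mathbf f$ proportional to $\bu$, independently of which $\omega$ and which starting position were chosen, and with uniform speed. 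By the standard criterion (a minimal subshift in which every cylinder has a uniform frequency is uniquely ergodic — see e.g.~\cite[Chapter~7]{Fog02} or \cite[Theorem~4.9]{Queffelec}), $(X_{\bsigma},\Sigma)$ is uniquely ergodic. Alternatively, one can cite directly the general statements of \cite{Berthe-Delecroix} or \cite{BKMS12}, which establish unique ergodicity of primitive $\cS$-adic shifts admitting a limit frequency vector.

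The main obstacle is the uniformity in the frequency argument: one needs the error terms in the block decomposition (the boundary pieces $p_n, s_n$ and the ``mismatch'' between $\bN_n$ and the abelianized block count) to be $o$ of the total length, uniformly over $X_{\bsigma}$ and over the choice of window. For a \emph{finite} set $\cS$ of substitutions this is automatic since the block lengths $\lVert M_{\sigma_n}\rVert$ are bounded; in the general case (e.g.\ the multiplicative Brun algorithm) one uses the generalized right eigenvector together with primitivity to control the ratio $\lVert M_{\sigma_{[0,n)}}\rVert / \lVert M_{\sigma_{[0,n+1)}}\rVert$ — this is where the argument is genuinely more delicate, and I would lean on the hypothesis that a generalized right eigenvector \emph{exists} (which already encodes a convergence, hence a control on the cone geometry) rather than trying to reprove it. I expect the write-up to reduce, modulo these uniformity estimates, to a citation of \cite[Proposition~3.5.3]{thuswaldner2019boldsymbolsadic} and \cite{Berthe-Delecroix}, noting that the proofs there are insensitive to passing from one-sided to two-sided sequences since both minimality and unique ergodicity only depend on the language $\cL_{\bsigma}$, which is determined by the future $(\sigma_0,\sigma_1,\dots)$.
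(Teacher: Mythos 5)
Your proposal is correct and ends up where the paper does: the paper's proof is simply a two-line citation of \cite{Berthe-Delecroix} (Theorem~5.2 for minimality from primitivity, and Theorem~5.7 for unique ergodicity from the generalized right eigenvector, noting that primitivity gives the ``everywhere growing'' hypothesis), exactly the reduction you anticipate in your closing paragraph. The detailed uniform-recurrence and frequency arguments you sketch are the content behind those cited results, so there is no gap — just more work shown than the paper records.
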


\begin{proof}
If $\bsigma$ is primitive, then $(X_{\bsigma},\Sigma)$ is minimal by  \cite[Theorem~5.2]{Berthe-Delecroix} and everywhere growing in the sense of \cite[Definition~3.1]{Berthe-Delecroix}.
Then the existence of a generalized right eigenvector yields unique ergodicity by \cite[Theorem~5.7]{Berthe-Delecroix}.
\end{proof}

We also need the following classical consequence of a theorem of Gottschalk and Hedlund~\cite{GotHed:55} which can be found for instance in \cite{BC:19}.

\begin{lemma}[{see e.g.~\cite[Proposition~2.8]{BC:19}}]\label{lemma:GotHed}
Let $(X,T,\mu)$ be a minimal and uniquely ergodic subshift of~$\cA^{\ZZ}$. If $X$ is balanced on letters, then $\exp(2\pi i \mu([a]))$ is a continuous eigenvalue of $(X,T,\mu)$ for each $a\in\cA$.
\end{lemma}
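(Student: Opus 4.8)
The statement to prove is Lemma~\ref{lemma:GotHed}: for a minimal uniquely ergodic subshift $(X,T,\mu)$ of $\cA^{\ZZ}$ that is balanced on letters, $\exp(2\pi i \mu([a]))$ is a continuous eigenvalue of $(X,T,\mu)$ for each $a\in\cA$. Since this is attributed to \cite[Proposition~2.8]{BC:19}, the plan is to recall the standard Gottschalk--Hedlund argument adapted to this setting. The first step is to fix $a\in\cA$ and define the function $g:X\to\ZZ$ by $g(\omega) = \mathbf{1}_{[a]}(\omega) - \mu([a])$; this is a continuous, indeed locally constant, function with $\int_X g\, d\mu = 0$. The Birkhoff sums are $S_n g(\omega) = |\omega_0\omega_1\cdots\omega_{n-1}|_a - n\,\mu([a])$ for $n\ge 1$, and one extends this to $n\le 0$ in the usual way via $S_n g(\omega) = -S_{-n}g(T^n\omega)$.

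\textbf{Key steps.} The heart of the argument is to show that the Birkhoff sums $S_n g$ are \emph{uniformly bounded} on $X$, uniformly in $n\in\ZZ$. This is exactly where the balance hypothesis enters: by Definition~\ref{def:balance}, there is a constant $C$ such that $\big||v|_a - |w|_a\big|\le C$ for all factors $v,w$ of elements of $X$ with $|v|=|w|$. By unique ergodicity together with minimality, the uniform version of Birkhoff's theorem gives that $\frac{1}{n}S_n g(\omega)\to 0$ uniformly in $\omega$; combining this with the balance bound (which controls the fluctuation of $|\omega_0\cdots\omega_{n-1}|_a$ around its frequency $n\,\mu([a])$ by the constant $C$), one concludes $\sup_{n\in\ZZ,\,\omega\in X} |S_n g(\omega)| < \infty$. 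Indeed, this is essentially the statement recorded in \eqref{eq:discrepancy}: the symbolic discrepancy $\mathrm{Discr}(X,\Sigma)$ is finite for a minimal balanced shift, and $|S_n g(\omega)|$ is bounded by (a constant multiple of) this discrepancy. Once the Birkhoff sums of the continuous zero-mean function $g$ are bounded, the Gottschalk--Hedlund theorem \cite{GotHed:55} applies: there exists a continuous function $h:X\to\RR$ such that $g = h\circ T - h$, i.e., $h\circ T - h = \mathbf{1}_{[a]} - \mu([a])$.

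\textbf{Conclusion.} Set $f = \exp(2\pi i h):X\to\CC$, which is continuous and nowhere zero. Then
\[
f\circ T = \exp(2\pi i (h\circ T)) = \exp\big(2\pi i (h + \mathbf{1}_{[a]} - \mu([a]))\big) = \exp(2\pi i \mathbf{1}_{[a]})\,\exp(-2\pi i\mu([a]))\, f.
\]
Since $\mathbf{1}_{[a]}$ is integer-valued, $\exp(2\pi i \mathbf{1}_{[a]}) = 1$, so $f\circ T = \exp(-2\pi i\mu([a]))\, f$; replacing $h$ by $-h$ (or $f$ by $\bar f$) yields a continuous nonzero eigenfunction with eigenvalue $\exp(2\pi i\mu([a]))$, as claimed. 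The main obstacle is verifying the uniform boundedness of the Birkhoff sums; this is where one must be careful to use both minimality and unique ergodicity (so that the frequency $f_a = \mu([a])$ is well-defined and the convergence to it is uniform) together with the balance hypothesis (which upgrades this to a genuine uniform bound on the partial sums, not merely $o(n)$ control). Everything else is the formal Gottschalk--Hedlund machinery, which I would simply cite.
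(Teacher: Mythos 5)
Your proof is correct and is precisely the standard Gottschalk--Hedlund argument behind the cited result; the paper itself offers no proof of this lemma, only the pointer to \cite[Proposition~2.8]{BC:19}, so there is nothing to diverge from. Two cosmetic remarks: the function $g = \mathbf{1}_{[a]} - \mu([a])$ is real-valued rather than $\ZZ$-valued, and the appeal to uniform Birkhoff convergence is not needed as a separate ingredient --- balance gives $\bigl||w|_a - |w|\,f_a\bigr| \le C$ for every factor $w$ by the usual block-decomposition argument, and unique ergodicity identifies $f_a = \mu([a])$, which already yields $\sup_{n,\omega}|S_n g(\omega)| \le C$ directly.
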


\begin{theorem}\label{th:subsSpectral}
Let $\bsigma = (\sigma_n)_{n\in\ZZ} \in \cS_d^{\ZZ}$, with $d \ge 2$, be a primitive sequence of unimodular substitutions with incidence matrices $(M_{\sigma_n})_{n\in\ZZ}$ satisfying the Pisot condition and the growth condition $\lim_{n\to\infty} \frac{1}{n} \log \lVert M_{\sigma_n} \rVert = 0$. Then the uniquely ergodic $\cS$-adic shift $(X_{\bsigma},\Sigma,\mu)$ is not weakly mixing. In particular, $\exp(2\pi i \mu([a]))$  is a continuous eigenvalue for each letter $a\in\cA$. Moreover, the shift admits a minimal rotation on~$\TT^{d-1}$ as a topological factor.
\end{theorem}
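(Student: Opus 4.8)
The plan is to combine the strong-convergence and balance results already established with the classical Gottschalk--Hedlund criterion encoded in Lemma~\ref{lemma:GotHed}. First I would invoke Lemma~\ref{l:uniquelyergodic} to guarantee that $(X_{\bsigma},\Sigma)$ is minimal and uniquely ergodic: by Theorem~\ref{th:matrixPisot} the local Pisot condition together with the growth condition forces $\bsigma$ to admit a generalized right eigenvector (indeed it converges exponentially to one), so the hypotheses of Lemma~\ref{l:uniquelyergodic} are met. Write $\mu$ for the unique shift-invariant probability measure.

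Next I would apply Theorem~\ref{theo:sufcondPisotS}\eqref{i:scP3}, which under exactly the stated hypotheses (primitivity, local Pisot condition, growth condition) gives that the language $\cL_{\bsigma}$ is balanced; hence $(X_{\bsigma},\Sigma)$ is a balanced subshift of $\cA^{\ZZ}$. Now Lemma~\ref{lemma:GotHed} applies and yields that $\exp(2\pi i\,\mu([a]))$ is a continuous eigenvalue of $(X_{\bsigma},\Sigma,\mu)$ for every $a\in\cA$. It remains to check that at least one of these eigenvalues is nontrivial, i.e.\ that $\mu([a])\notin\ZZ$ for some $a$; but $\mu([a])=f_a\in(0,1)$ for each letter, since by primitivity every letter occurs with positive frequency and, the alphabet having at least two letters, no single frequency can equal $1$. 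Thus $(X_{\bsigma},\Sigma,\mu)$ has a nonconstant continuous eigenfunction, so it is not weakly topologically mixing.

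For the topological factor onto a minimal rotation on $\TT^{d-1}$, I would use the letter-frequency vector $\mathbf{f}=\bu/\lVert\bu\rVert_1$, where $\bu$ is the generalized right eigenvector, whose coordinates are rationally independent by Theorem~\ref{theo:sufcondPisotS}\eqref{i:scP1}. The standard construction (as in the balanced-subshift literature, e.g.\ via the map $\omega\mapsto \lim_{n} \bl(\omega_0\cdots\omega_{n-1}) - n\mathbf{f} \bmod \Lambda$ for a suitable lattice $\Lambda\subset\bone^\perp\cong\RR^{d-1}$, which converges by boundedness of $\pi_{\bu,\bone}\bl(w)$ from balance) produces a continuous surjection $\phi\colon X_{\bsigma}\to\TT^{d-1}$ semiconjugating $\Sigma$ to the rotation by the image of $\mathbf{f}$; this rotation is minimal precisely because the $d$ projected coordinates together with $1$ are rationally independent. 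I would cite \cite[Proposition~2.9]{BC:19} (balance implies the system is a finite extension of a toral rotation) together with the rational-independence statement to identify the factor and its dimension.

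\textbf{Main obstacle.} The genuinely delicate point is the construction and continuity of the factor map $\phi$ onto the full $(d{-}1)$-dimensional torus and the verification that the image rotation is minimal; this requires carefully choosing the lattice so that $\phi$ is well defined and using boundedness of $\lVert\pi_{\bu,\bone}\bl(w)\rVert$ (balance) to get convergence and continuity, and then invoking rational independence of the coordinates of $\bu$. The non-weak-mixing part, by contrast, is essentially immediate once balance and unique ergodicity are in hand via Lemma~\ref{lemma:GotHed}; the only thing to watch there is the trivial but necessary remark that $\mu([a])\in(0,1)$.
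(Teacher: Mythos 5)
Your proposal is correct and follows essentially the same route as the paper: unique ergodicity via Lemma~\ref{l:uniquelyergodic}, balance via Theorem~\ref{theo:sufcondPisotS}, continuous eigenvalues via Lemma~\ref{lemma:GotHed}, and rational independence of the frequencies for minimality of the rotation (the paper gets this by noting that $(\mu([a]))_{a\in\cA}$ is a generalized right eigenvector and applying Corollary~\ref{cor:indepNew}, which is the same fact you extract from Theorem~\ref{theo:sufcondPisotS}). The only point to tighten: for the factor map the paper simply invokes the classical result that $d{-}1$ rationally independent continuous eigenvalues yield a topological factor onto the corresponding minimal rotation on $\TT^{d-1}$ (the factor map being built from the continuous eigenfunctions supplied by Gottschalk--Hedlund), whereas your explicit formula $\omega\mapsto\lim_n \bl(\omega_0\cdots\omega_{n-1})-n\mathbf{f}$ is problematic as written --- balance gives only boundedness of that sequence, not convergence, so you should replace the limit by the continuous transfer function coming from the Gottschalk--Hedlund coboundary (equivalently, use the eigenfunctions directly).
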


\begin{proof}
It follows from the Pisot condition, which is shift invariant, that $\Sigma^n \bsigma$ converges weakly to $\RR \bu_n$ for some $\bu_n \in \RR_{\ge0}^d\setminus\{\mathbf{0}\}$ for each $n \in \ZZ$. Therefore, Lemma~\ref{l:uniquelyergodic} implies that $(X_{\bsigma},\Sigma)$ satisfies the conditions of Lemma~\ref{lemma:GotHed}, and the assertion on the eigenvalues follows from this lemma. The existence of these eigenvalues implies that $(X_{\bsigma},\Sigma)$ is not topologically weakly mixing. 
Let $\mu$ denote its unique invariant measure.
Since, by \cite[Theorem~5.7]{Berthe-Delecroix}, the vector $(\mu([a]))_{a\in\cA}$ is a generalized right eigenvector of~$\bsigma$, Corollary~\ref{cor:indepNew} implies that the numbers~$\mu([a])$, $a \in \cA$, are rationally independent and, since $\sum_{a\in\cA} \mu([a]) = 1$, the vector $(\mu([a]))_{a\in\cA\setminus\{d\}}$ induces a minimal rotation on~$\TT^{d-1}$. Then, by a classical result (see e.g.\ \cite[Exercise~1.6.3]{Fog02}), the shift admits a minimal rotation on~$\TT^{d-1}$ as a topological factor.
\end{proof}

\subsection{Eventually Anosov $\cS$-adic mapping families}\label{sec:SadicMF}
We now revisit the results stated for sequences of matrices in  Section~\ref{subsec:anosovM} and we now  relate a mapping family to a sequence of substitutions. 
We consider again the set~$\cS_d$ of substitutions associated to unimodular matrices.

\begin{definition}[$\cS$-adic mapping family]\label{def:SadicmappingS}
\indx{mapping family!S@$\cS$-adic}\indx{S@$\cS$-adic!mapping family}
Let $\cS \subset \cS_d$, with $d \ge 2$, be a family of unimodular substitutions and let $\bsigma = (\sigma_n)_{n\in\ZZ}\in \cS^{\ZZ}$. For each $n \in \ZZ$, let $\TT_n$  be a copy of the $d$-dimensional torus $\RR^d/\ZZ^d$ equipped with the standard metric inherited from~$\RR^d$. Moreover, regard the inverse of the incidence matrix of $\sigma_n$ as the automorphism\footnote{This is possible since $M_{\sigma_n}$ is an integer matrix of determinant $\pm 1$ by unimodularity of~$\sigma_n$.} $M_{\sigma_n}^{-1}: \TT_n \to \TT_{n+1}$ of $\RR^d/\ZZ^d$  given by left multiplication by~$M_{\sigma_n}^{-1}$. Then we set $\TT = \coprod_{n\in\ZZ} \TT_n$ and define $f_{\bsigma}: \TT \to \TT$ by $f_{\bsigma}\bx = M_{\sigma_n}^{-1} \bx$ for $\bx \in \TT_n$. \notx{fsigma}{$(\TT,f_{\bsigma})$}{$\cS$-adic mapping family}
We call $(\TT,f_{\bsigma})$ the \emph{$\cS$-adic mapping family} associated to~$\bsigma$. It can be written out as
\[  
\cdots \xrightarrow{M_{\sigma_{-2}}^{-1}} \TT_{-1} \xrightarrow{M_{\sigma_{-1}}^{-1}}  \TT_0 \xrightarrow{M_{\sigma_0}^{-1}} \TT_1 \xrightarrow{M_{\sigma_1}^{-1}}  \cdots . 
\]
\end{definition}

We shall now state the $\cS$-adic version of Theorem~\ref{cor:anosov}, i.e., $\cS$-adic mapping families are eventually Anosov if the corresponding sequences of substitutions $\bsigma$ admit strong convergence to the generalized eigenvectors (see Definition~\ref{def:geaS}) in the future and in the past. In the following result, also an $\cS$-adic version of Proposition~\ref{th:anosov} is included.

\begin{theorem}\label{th:anosovS}
Let $\bsigma \in \cS_d^{\ZZ}$, with $d \ge 2$, be a two-sided primitive sequence of 
unimodular substitution that admits strong convergence to the generalized right eigenvector $\bu$ in the future and to the generalized left eigenvector~$\bv$ in the past. Then the mapping family $(\TT,f_{\bsigma})$ associated to~$\bsigma$ is eventually Anosov  for the splitting 
\[
G^s \oplus G^u = \coprod_{n\in\ZZ} G_n^s \oplus G_n^u = \coprod_{n\in\ZZ}\RR \bu_n \oplus \bv_n^\perp,
\]
where $\bu_n$ and $\bv_n$ are defined in \eqref{eq:unvn}.

In particular, this holds when $\bsigma = (\sigma_n)_{n\in\ZZ}$  is two-sided primitive, satisfies the two-sided Pisot condition and the growth condition $\lim_{n\to\pm \infty} \frac{1}{n} \log \lVert M_{\sigma_n}\rVert = 0$. 
\end{theorem}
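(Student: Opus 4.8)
\textbf{Proof plan for Theorem~\ref{th:anosovS}.}

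The plan is to reduce everything to the matrix case treated in Section~\ref{subsec:anosovM}. First I would observe that, by Definition~\ref{def:SadicmappingS}, the $\cS$-adic mapping family $(\TT,f_{\bsigma})$ associated to $\bsigma = (\sigma_n)_{n\in\ZZ}$ is literally the $\cM$-adic mapping family $(\TT,f_{\bM})$ of Definition~\ref{def:Madicmapping} attached to the sequence of incidence matrices $\bM = \bM_{\bsigma} = (M_{\sigma_n})_{n\in\ZZ}$, because in both cases the transformation $\TT_n\to\TT_{n+1}$ is left multiplication by $M_{\sigma_n}^{-1}$. Next, recall that Definition~\ref{def:geaS} stipulates precisely that a property (primitivity, existence of generalized eigenvectors, strong convergence, the Pisot condition, \dots) holds for $\bsigma$ exactly when it holds for $\bM_{\bsigma}$. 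So the hypotheses of the theorem — two-sided primitivity of $\bsigma$, strong convergence in the future to $\bu$ and in the past to $\bv$ — are, by definition, the hypotheses that $\bM_{\bsigma}$ is two-sided primitive and admits strong convergence to the generalized right eigenvector $\bu$ in the future and to the generalized left eigenvector $\bv$ in the past. These are exactly the assumptions of Proposition~\ref{th:anosov}.

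Applying Proposition~\ref{th:anosov} to $\bM = \bM_{\bsigma}$ then yields that $(\TT,f_{\bM}) = (\TT,f_{\bsigma})$ is eventually Anosov for the splitting
\[
G^s \oplus G^u = \coprod_{n\in\ZZ} G_n^s \oplus G_n^u = \coprod_{n\in\ZZ} \RR \bu_n \oplus \bv_n^\perp,
\]
with $\bu_n,\bv_n$ given by \eqref{eq:unvn} (applied to the sequence $\bM_{\bsigma}$). This is the first assertion.

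For the ``in particular'' clause, I would invoke Theorem~\ref{cor:anosov} (the matrix-level statement), again via Definition~\ref{def:geaS}: if $\bsigma$ is two-sided primitive, satisfies the two-sided local Pisot condition, and satisfies the two-sided growth condition $\lim_{n\to\pm\infty}\frac1n\log\lVert M_{\sigma_n}\rVert = 0$, then $\bM_{\bsigma}$ satisfies the corresponding matrix hypotheses of Theorem~\ref{cor:anosov}. That theorem gives two-sided strong convergence of $\bM_{\bsigma}$ to a generalized right eigenvector $\bu$ in the future and to a generalized left eigenvector $\bv$ in the past, and hence the eventually Anosov conclusion for the same splitting; transporting this back through Definition~\ref{def:geaS} gives the claim for $\bsigma$. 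I do not expect any genuine obstacle here: the whole content is the bookkeeping identification $f_{\bsigma} = f_{\bM_{\bsigma}}$ together with the conventions of Definition~\ref{def:geaS}, after which the results of Section~\ref{subsec:anosovM} apply verbatim. The one point to be careful about is that ``Pisot condition for $\bsigma$'' in the ``in particular'' clause should be read, as in Definition~\ref{def:geaS} and Definition~\ref{def:genPisot}, as the \emph{local} Pisot condition on the incidence matrices, and that the growth condition is automatically satisfied when $\cS$ is finite, so it is vacuous in that common case.
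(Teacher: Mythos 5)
Your reduction is correct and is exactly how the paper treats this statement: the paper gives no separate proof of Theorem~\ref{th:anosovS}, presenting it as the immediate $\cS$-adic translation of Proposition~\ref{th:anosov} and Theorem~\ref{cor:anosov} via the identification $f_{\bsigma}=f_{\bM_{\bsigma}}$ and the conventions of Definition~\ref{def:geaS}, which is precisely the bookkeeping you carry out.
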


\subsection{Substitutive realizations} \label{subsec:realization} 
Our goal is to set up symbolic realizations of  multidimensional continued fraction algorithms and sequences of matrices. We follow here \cite[Section~2.3]{BST:23}, and we provide a two-sided version  of the notion of a substitutive realization. Consider the natural extension $(\hX,\hF,\hA)$ of a multidimensional continued fraction algorithm $(X,F,A)$; see Chapter~\ref{sec:cf} for the definition. We associate with each $(\bx, \by) \in \hX$ a bi-infinite sequence of substitutions $\bsigma = (\sigma_n)_{n\in\ZZ} \in  \cS_d^{\ZZ}$ with generalized  right and left eigenvectors $\bx$ and~$\by$.
We regard the partial quotient matrices $\tr{\!\hA}(\hF^n(\bx,\by))$ as incidence matrices of substitutions, i.e., for each $n \in \ZZ$ we choose $\sigma_n$ with incidence matrix $M_{\sigma_n} = \tr{\!\hA}(\hF^n(\bx,\by))$. 

\begin{definition}[Substitutive realization] \label{d:realization}
\indx{substitution!assignment}
\indx{substitutive!realization}
\indx{substitution!selection}
\indx{substitution!selection!faithful}
Let $\cM \subset \cM_d$, $d \ge 2$, be a set of matrices. We first call a map $\varrho: \cM \to \cS_d$\notx{roz}{$\varrho$}{substitution assignment} a \emph{substitution assignment on $\cM$} if the incidence matrix of $\varrho(M)$ is~$M$ for each $M \in \cM$. Often we will have to apply such a substitution assignment to a sequence of matrices. In this case it is to be understood that $\varrho$ is applied component-wise. 

We then call a map $\varphi: \hX \to \cS_d$\notx{phi}{$\varphi$}{substitution selection} a \emph{substitution selection} for a positive $(d{-}1)$-dimensional continued fraction algorithm $(X,F,A)$ with natural extension $(\hX,\hF,\hA)$  if the incidence matrix of $\varphi(\bx,\by)$ is equal to $\tr{\!\hA}(\bx,\by)$  for all $(\bx,\by) \in \hX$. The corresponding \emph{substitutive realization} of $(X,F,A)$ is the map 
\[
\bphi:\, \hX \to \cS_d^{\ZZ}, \quad (\bx,\by) \mapsto (\varphi(\hF^n(\bx,\by)))_{n\in\ZZ}, \notx{phib}{$\bphi$}{substitutive realization}
\]
together with the shift $(\bphi(\hX),\Sigma)$.
For any $(\bx,\by) \in \hX$, the sequence $\bphi(\bx,\by)$ is called an \emph{$\cS$-adic expansion} of~$(\bx,\by)$. Because $\bphi(\bx,\by)\in\cS_d^\ZZ$, the shift $(X_{\bphi(\bx,\by)},\Sigma)$ is an $\cS$-adic dynamical system which we call the \emph{$\cS$-adic dynamical system of~$(\bx,\by)$ w.r.t.\ $( \hX, \hF,\varphi)$}.

If $\varphi(\bx,\by) = \varphi(\bx',\by')$ for all $(\bx,\by), (\bx', \by') \in \hX$ with $\hA(\bx,\by) = \hA(\bx',\by')$, then $\varphi$ is called a \emph{faithful substitution selection}, and $\bphi$ is a \emph{faithful substitutive realization}. 
\end{definition}

If $\varphi$ is a faithful substitution selection, then it may be defined by a substitution assignment $\varrho$ on $\cM = \tr{\!\hA}(\hX)$, i.e., for a suitable $\varrho$ we have $\varphi(\bx,\by)=\varrho \circ \tr{\!\hA}(\bx,\by)$.

The definition of a substitutive realization implies that the diagram
\begin{equation}\label{eq:diagphis}
\begin{tikzcd}
\hX\arrow[r, "F"]\arrow[d,"\bphi"] & \hX \arrow[d, "\bphi"] \\
\bphi(\hX) \arrow[r, "\Sigma"]& \bphi(\hX) 
\end{tikzcd}
\end{equation}
commutes.
If $(X,F,A)$ converges weakly in the future and in the past at $(\bx, \by)$ for $\nu$-almost all $(\bx,\by) \in \hX$ (w.r.t.\ a measure~$\nu$ having the properties determined in Section~\ref{sec:cfgentheory}), then the dynamical system $(\hX,\hF,\hA)$ is measurably conjugate to its substitutive realization, which we write as 
\begin{equation} \label{eq:phi}
(\hX,\hF,\hA,\nu) \overset{\bphi}{\cong} (\bphi(\hX ),\Sigma,\bphi_*\nu).
\end{equation}
From Lemma~\ref{lem:MCF_eigen}, we immediately obtain the following correspondence between $(\bx,\by)$ and the generalized right and left eigenvectors of $\bphi(\bx,\by)$.

\begin{lemma}\label{lem:MCF_eigenS}
Assume that $\hX \subset \PP_{\ge0}^{d-1} {\times} \PP_{\ge0}^{d-1}$ and that $(\bx,\by) \in (\hX,\hF,\hA)$ is weakly convergent. If $\bu$ and~$\bv$ are generalized right and left eigenvectors of $\bphi(\bx,\by)$, then $(\bu,\bv)$ is a representative of $(\bx,\by)$ in $\RR^d {\times} \RR^d$.
\end{lemma}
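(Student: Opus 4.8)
The proof of Lemma~\ref{lem:MCF_eigenS} should be essentially a translation of Lemma~\ref{lem:MCF_eigen} into the substitutive setting, so my plan is to reduce to that earlier result rather than redo the argument from scratch. The key observation is that, by the definition of a substitutive realization in Definition~\ref{d:realization}, the incidence matrix of $\varphi(\hF^n(\bx,\by))$ equals $\tr{\!\hA}(\hF^n(\bx,\by))$ for every $n \in \ZZ$; hence the sequence of incidence matrices $\bM_{\bphi(\bx,\by)} = (M_{\sigma_n})_{n\in\ZZ}$ is exactly the sequence $\bpsi(\bx,\by) = (\tr{\!\hA}(\hF^n(\bx,\by)))_{n\in\ZZ}$ produced by the map $\bpsi$ of \eqref{eq:bpsidef}. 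Since, by Definition~\ref{def:geaS}, the generalized right and left eigenvectors of the sequence of substitutions $\bphi(\bx,\by)$ are by definition the generalized right and left eigenvectors of its sequence of incidence matrices, the vectors $\bu$ and~$\bv$ in the statement are precisely the generalized right and left eigenvectors of $\bpsi(\bx,\by)$.

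The plan is then as follows. First I would record that the hypotheses of the present lemma feed directly into those of Lemma~\ref{lem:MCF_eigen}: the assumption $\hX \subset \PP_{\ge0}^{d-1} {\times} \PP_{\ge0}^{d-1}$ implies in particular $\hX \subset X {\times} \PP_{\ge0}^{d-1}$, and weak convergence of $(\bx,\by)$ in $(\hX,\hF,\hA)$ means, by the way convergence notions were carried over to natural extensions via \eqref{eq:diagpsis} in Section~\ref{subsec:MCFmappingfamily}, precisely that $\bpsi(\bx,\by)$ is two-sided weakly convergent in the sense of Definition~\ref{def:wsc}. Second, I would apply Lemma~\ref{lem:MCF_eigen} to conclude that $(\bx,\by) \in \PP_{\ge0}^d {\times} \PP_{\ge0}^d$ and that any nonnegative representative of $(\bx,\by)$ is a pair consisting of a generalized right and a generalized left eigenvector of $\bpsi(\bx,\by)$. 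Third, I would invoke the identification $\bM_{\bphi(\bx,\by)} = \bpsi(\bx,\by)$ together with Definition~\ref{def:geaS} to transfer this statement to $\bphi(\bx,\by)$: its generalized right and left eigenvectors $\bu$ and~$\bv$ are a nonnegative representative of $(\bx,\by)$ in $\RR^d {\times} \RR^d$, which is the claim.

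There is essentially no serious obstacle here; the lemma is a formal consequence of earlier results, and the only thing to be careful about is bookkeeping: matching the convergence notion of the natural extension with the convergence notion for sequences of matrices (both routed through $\bpsi$ and the commutative diagram \eqref{eq:diagpsis}), and checking that ``generalized eigenvector of a sequence of substitutions'' unwinds, via Definition~\ref{def:geaS}, to ``generalized eigenvector of the associated sequence of incidence matrices''. I would also note in passing that the generalized eigenvectors exist in the first place precisely because weak convergence together with Definition~\ref{def:gea} forces the nested cones $\bigcap_n M_{[0,n)}\RR_{\ge0}^d$ and $\bigcap_n \tr{\!M}_{[-n,0)}\RR_{\ge0}^d$ to be rays, so the statement is not vacuous. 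Given all this, the proof will be just a couple of sentences citing Lemma~\ref{lem:MCF_eigen}, Definition~\ref{d:realization}, and Definition~\ref{def:geaS}.

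\begin{proof}
By Definition~\ref{d:realization}, the incidence matrix of $\varphi(\hF^n(\bx,\by))$ equals $\tr{\!\hA}(\hF^n(\bx,\by))$ for every $n \in \ZZ$; hence the sequence of incidence matrices of $\bphi(\bx,\by)$ coincides with the sequence $\bpsi(\bx,\by)$ of \eqref{eq:bpsidef}. By Definition~\ref{def:geaS}, the generalized right and left eigenvectors $\bu$ and~$\bv$ of the sequence of substitutions $\bphi(\bx,\by)$ are, by definition, the generalized right and left eigenvectors of this sequence of incidence matrices, that is, of $\bpsi(\bx,\by)$. Moreover, $\hX \subset \PP_{\ge0}^{d-1} {\times} \PP_{\ge0}^{d-1}$ implies $\hX \subset X {\times} \PP_{\ge0}^{d-1}$, and, by the way convergence was carried over to natural extensions via the commutative diagram \eqref{eq:diagpsis} in Section~\ref{subsec:MCFmappingfamily}, weak convergence of $(\bx,\by)$ in $(\hX,\hF,\hA)$ means precisely that $\bpsi(\bx,\by)$ is two-sided weakly convergent in the sense of Definition~\ref{def:wsc}. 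Thus the hypotheses of Lemma~\ref{lem:MCF_eigen} are satisfied, and that lemma yields that $(\bx,\by) \in \PP_{\ge0}^d {\times} \PP_{\ge0}^d$ and that any representative of $(\bx,\by)$ in $\RR_{\ge0}^d {\times} \RR_{\ge0}^d$ is a pair of generalized right and left eigenvectors of $\bpsi(\bx,\by)$. In particular the pair $(\bu,\bv)$, being such a pair of eigenvectors, is a representative of $(\bx,\by)$ in $\RR^d {\times} \RR^d$.
\end{proof}
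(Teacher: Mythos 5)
Your proposal is correct and matches the paper's approach: the paper gives no separate proof, stating only that the lemma is obtained immediately from Lemma~\ref{lem:MCF_eigen}, and your reduction via the identification of the incidence matrices of $\bphi(\bx,\by)$ with $\bpsi(\bx,\by)$ (Definitions~\ref{d:realization} and~\ref{def:geaS}) is exactly the intended bookkeeping. Spelling it out as you do is fine and arguably clearer than the paper's one-line justification.
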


Analogously to the matrix setting in Section~\ref{subsec:MCFmappingfamily}, each sequence of substitutions $\bphi(\bx,\by)$, with $(\bx,\by) \in \hX$, can be used to define the $\cS$-adic mapping family $(\TT,f_{\bphi(\bx,\by)})$, which performs the multidimensional continued fraction algorithm~$\hF$ for a suitable element of the domain~$\hX$. Thus $\hX$ parametrizes the set of mapping families and the shift $(\bphi(\hX ),\Sigma,\bphi_*\nu)$ forms a renormalization procedure. 

These substitutive realizations will be crucial for the construction of nonstationary Markov partitions for multidimensional continued fraction algorithms; see Chapter~\ref{chapter:markov}.

\subsection{Substitutive realizations of the Brun algorithm}\label{sec:subsSadicBrun}
In this section, we illustrate the results of this section by examples related to the version of the Brun continued fraction algorithm studied in Section~\ref{sec:Brun}. We do this in three examples,  namely,  the ordered, unordered and multiplicative versions of the  Brun algorithm.

\begin{example}[Substitutive realization of the unordered Brun algorithm]\label{ex:UnBrunSubsRe}
\indx{Brun!substitution!unordered}\indx{substitution!Brun!unordered}
For $i,j \in \cA$, $i \neq j$, we define the \emph{unordered Brun substitutions} (see also~\cite{Delecroix-Hejda-Steiner}; we take the mirror images of the images of letters under the substitutions for better comparability to the ordered case) by
\begin{equation}\label{eq:sigmauij}
\sigma_{\rU,ij} :\, \begin{cases} j \mapsto ji,  \\ k \mapsto k&(k\in \cA\setminus\{j\}).
\notx{U}{$\rU$}{object related to the unordered Brun algorithm}
\end{cases} 
\end{equation}
These substitutions are chosen in such a way that for all $i,j \in \cA$, $i \neq j$, the incidence matrix of $\sigma_{\rU,ij}$ equals the matrix~$M_{\rU,ij}$ used in \eqref{subsec:BrunOrd} to define the ordered Brun algorithm. We now use Definition~\ref{d:realization} to set up a substitutive realization for the ordered Brun algorithm. We start with the definition of a  substitution assignment. To this end let $\cM_{\rU}$ and~$\cS_{\rU}$ be the set of unordered Brun matrices and unordered Brun substitutions, respectively, and set
\[
\varrho_{\rU}:\, \cM_{\rU} \to \cS_{\rU}, \quad M_{\rU,ij} \mapsto \sigma_{\rU,ij} \quad (i,j\in \cA,\, i \neq j).
\]
With $X_{\rU,ij}$ defined as in \eqref{eq:XunBrun}, we can now define the substitution selection of the unordered Brun algorithm by
\[
\varphi_{\rU}:\, \hX_{\rU} \to \cS_{\rU}; \quad
(\bx,\by) \mapsto \varrho(\tr{\!A}(\bx,\by)).
\]
This gives the \emph{substitutive realization of the unordered Brun algorithm $(X_{\rU},F_{\rU},A)$}  
\[
\bphi_{\rU}: \hX_{\rU} \to \cS_{\rU}^{\ZZ}, \quad (\bx,\by) \mapsto (\varphi_{\rU}(F_{\rU}^n(\bx,\by))).
\]
From \eqref{eq:unBrunMarkov}, we see immediately that
\begin{equation}\label{eq:unBrunMarkovSub}
\begin{aligned}
\bphi(\hX) & = \big\{(\sigma_n)_{n\in\ZZ} \,:\, (\sigma_n,\sigma_{n+1}) = (\sigma_{\rU,ij},\sigma_{\rU,ij})\ \text{for}\ i,j \in \cA,\, i \neq j\\
& \hspace{3em} \text{or}\ (\sigma_n,\sigma_{n+1}) = (\sigma_{\rU,ij},\sigma_{\rU,jk})\ \text{for}\ i,j,k \in \cA, \, i \ne j \hbox{ and } j \ne k\big\}.
\end{aligned}
\end{equation}
Since the ordered Brun algorithm $(X_{\rU},F_{\rU},A_U)$ converges weakly almost everywhere (see Section~\ref{sec:Brun}), we gain from \eqref{eq:phi} the  measurable conjugacy
\[
(\hX_{\rU},\hF_{\rU},\hA_{\rU},\hnu_{\rU}) \overset{\bphi_{\rU}}{\cong} (\bphi(\hX_{\rU}),\Sigma,\bphi_*\hnu_{\rU}).
\]
We can now prove the following proposition, which is in analogy with Proposition~\ref{prop:brunpisot:2} apart from the assertion about balance.
Condition \eqref{eq:BrunAllMatrices2} means that the expansion has bounded strong partial quotients in the sense defined in Example~\ref{ex:AR1}.

\begin{proposition}\label{prop:brunpisotsubsUN}
Let $(X_{\rU},F_{\rU},A_{\rU},\nu_{\rU})$ be the natural extension of the 2-dimensional ordered Brun algorithm. Suppose that for $\bphi_{\rU}(\bx,\by) = (\sigma_n)_{n\in\ZZ}$ there exists $h\in \NN$ such that 
\begin{equation}\label{eq:BrunAllMatrices2}
\{\sigma_n,\ldots, \sigma_{n+h}\} = \{\sigma_{\rU,ij}\,:\, i,j\in \{1,2,3\},\, i \neq j\}
\end{equation}
holds for each $n\in\ZZ$. Then the following assertions hold.
\begin{itemize}
\item $\bphi(\bx,\by)$ satisfies the two-sided Pisot condition.
\item $\bphi(\bx,\by)$ is algebraically irreducible.
\item The language $\cL_{\bphi(\bx,\by)}$ is balanced.
\item $\bphi(\bx,\by)$ converges strongly to the generalized right eigenvector $\bu=\chi(\bx)$ in the future and to the generalized left eigenvector $\bv=\chi(\by)$ in the past. The vector $\chi(\bx)$ has rationally independent coordinates.
\item The mapping family $(\TT,f_{\bpsi(\bx,\by)})$ associated to $(\bx,\by)$ is eventually Anosov for the splitting
\[
\coprod_{n\in\ZZ} \RR \bu_n \oplus \bv_n^\perp.
\]
\end{itemize}
\end{proposition}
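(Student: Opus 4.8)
The plan is to reduce Proposition~\ref{prop:brunpisotsubsUN} to the already established matrix-level result Proposition~\ref{prop:brunpisot:2} by exploiting the fact that $\bphi_{\rU}$ is a faithful substitutive realization, so that the sequence of incidence matrices $\bM_{\bphi_{\rU}(\bx,\by)} = (M_{\sigma_n})_{n\in\ZZ}$ is exactly $\bpsi(\bx,\by) = (M_n)_{n\in\ZZ}$. First I would observe that condition \eqref{eq:BrunAllMatrices2} on the substitutions is, by the definition of $\varrho_{\rU}$ and the equality $M_{\sigma_{\rU,ij}} = M_{\rU,ij}$, literally the same as condition \eqref{eq:BrunAllMatrices} on the matrices. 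Hence $\bpsi(\bx,\by)$ satisfies the hypotheses of Proposition~\ref{prop:brunpisot:2}, which immediately yields: the two-sided Pisot condition; algebraic irreducibility; strong convergence in the future and the past to the generalized right and left eigenvectors $\bu = \chi(\bx)$, $\bv = \chi(\by)$ (using Lemma~\ref{lem:MCF_eigenS}); rational independence of the coordinates of $\chi(\bx)$; and the eventually Anosov property of the associated mapping family $(\TT,f_{\bpsi(\bx,\by)})$ for the splitting $\coprod_{n\in\ZZ} \RR\bu_n \oplus \bv_n^\perp$. Since by Definition~\ref{def:geaS} all the matrix-level notions (primitivity, generalized eigenvectors, weak/strong/exponential convergence, algebraic irreducibility, Pisot condition) are declared to hold for $\bsigma$ precisely when they hold for $\bM_{\bsigma}$, and since $\bphi_{\rU}(\bx,\by)$ and $\bpsi(\bx,\by)$ share the same incidence matrices, all but one of the asserted bullet points transfer verbatim.

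The one genuinely new assertion, and the only place where the substitutive structure does real work, is the balance of the language $\cL_{\bphi_{\rU}(\bx,\by)}$. For this I would invoke Theorem~\ref{theo:sufcondPisotS}~(\ref{i:scP3}): the sequence of substitutions $\bphi_{\rU}(\bx,\by)$ is primitive (primitivity in the future follows from \eqref{eq:BrunAllMatrices2}, which forces $M_{[n,n+h)}$ positive for every $n$), it satisfies the local Pisot condition (established above via Proposition~\ref{prop:brunpisot:2}), and it satisfies the growth condition $\lim_{n\to\infty}\frac1n\log\lVert M_{\sigma_n}\rVert = 0$ trivially since there are only six unordered Brun substitutions. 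Theorem~\ref{theo:sufcondPisotS}~(\ref{i:scP3}) then gives that $\cL_{\bphi_{\rU}(\bx,\by)}$ is balanced. Strictly speaking, Theorem~\ref{theo:sufcondPisotS} also re-derives strong convergence and rational independence, so one could alternatively route the whole proof through it rather than Proposition~\ref{prop:brunpisot:2}; I would use Proposition~\ref{prop:brunpisot:2} for the matrix assertions (to keep the parallel structure with the earlier proposition) and Theorem~\ref{theo:sufcondPisotS} only for balance.

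Concretely, the proof would read: by \eqref{eq:BrunAllMatrices2} and $M_{\sigma_{\rU,ij}} = M_{\rU,ij}$, the incidence matrix sequence of $\bphi_{\rU}(\bx,\by)$ satisfies \eqref{eq:BrunAllMatrices}, so Proposition~\ref{prop:brunpisot:2} applies to $\bpsi(\bx,\by) = \bM_{\bphi_{\rU}(\bx,\by)}$, yielding the two-sided Pisot condition, algebraic irreducibility, strong convergence to $\bu = \chi(\bx)$ in the future and $\bv = \chi(\by)$ in the past, rational independence of the coordinates of $\chi(\bx)$, and the eventually Anosov property of the mapping family for the splitting $\coprod_{n\in\ZZ}\RR\bu_n\oplus\bv_n^\perp$; by Definition~\ref{def:geaS} these transfer to $\bphi_{\rU}(\bx,\by)$. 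For balance, the sequence $\bphi_{\rU}(\bx,\by)$ is primitive, satisfies the local Pisot condition, and satisfies the growth condition (it is defined on the finite set $\cS_{\rU}$), so Theorem~\ref{theo:sufcondPisotS}~(\ref{i:scP3}) shows $\cL_{\bphi_{\rU}(\bx,\by)}$ is balanced. I do not expect any real obstacle here; the main thing to be careful about is matching the notation $\bpsi$ (from Section~\ref{subsec:MCFmappingfamily}) with the incidence-matrix sequence of $\bphi_{\rU}$, i.e., checking that the substitutive realization is faithful so that $\bM_{\bphi_{\rU}(\bx,\by)}$ really equals $\bpsi(\bx,\by)$ — which is immediate from Definition~\ref{d:realization} and the construction of $\varrho_{\rU}$ — and being precise that condition \eqref{eq:BrunAllMatrices2} is stated over a window of length $h{+}1$ so that "bounded strong partial quotients" is correctly matched with Example~\ref{ex:AR1}.
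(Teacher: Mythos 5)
Your proposal is correct and is essentially the paper's argument: the paper's proof simply notes that the hypotheses of Theorems~\ref{th:anosovS} and~\ref{theo:sufcondPisotS} are verified by the same reasoning as in Proposition~\ref{prop:brunpisot:2} (primitivity from positivity of windowed products, the two-sided Pisot condition from Delecroix--Hejda--Steiner, and the growth condition trivially from finiteness of $\cS_{\rU}$), and concludes from those two theorems. Your routing of the matrix-level bullet points through Proposition~\ref{prop:brunpisot:2} itself, reserving Theorem~\ref{theo:sufcondPisotS}~(\ref{i:scP3}) for balance, is only a cosmetic repackaging of the same argument, since the incidence-matrix sequence of the faithful realization coincides with $\bpsi(\bx,\by)$.
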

\begin{proof}
The conditions of Theorems~\ref{th:anosovS} and~\ref{theo:sufcondPisotS} are satisfied  by reasoning as in the proof of Proposition~\ref{prop:brunpisot:2}. Thus the result follows from these theorems.
\end{proof}
\end{example}

\begin{example}[Substitutive realization of the ordered Brun algorithm]
\label{ex:OrdBrunSubsRe}\indx{Brun!substitution!ordered}\indx{substitution!Brun!ordered}
For $k\in \cA\setminus\{d\}$ we define the \emph{ordered Brun substitutions} 
\begin{equation}\label{eq:brunsubsallg}
\begin{aligned}
\sigma_{\rB,k} : &
\begin{cases}i \mapsto i, & (1 \le i < k)  \\ k \mapsto d, \\ i \mapsto i{-}1, &(k<i<d) \\ d \mapsto (d{-}1)d, \end{cases}  \\
\sigma_{\rB,d} : &
\begin{cases}i \mapsto i, & (i \neq d{-}1)  \\ d{-}1 \mapsto (d{-}1)d. \end{cases} 
\end{aligned}
\notx{B}{$\rB$}{object related to the ordered Brun algorithm}
\end{equation}
For $d=3$, they read 
\begin{equation}\label{eq:brunsubs}
\sigma_{\rB,1} :
\begin{cases} 1 \mapsto 3, \\ 2 \mapsto 1, \\ 3 \mapsto 23, \end{cases} \quad
\sigma_{\rB,2} : 
\begin{cases} 1 \mapsto 1, \\ 2 \mapsto 3, \\ 3 \mapsto 23, \end{cases} \quad
\sigma_{\rB,3} :
 \begin{cases} 1 \mapsto 1, \\ 2 \mapsto 23, \\ 3 \mapsto 3. \end{cases}
\end{equation}
These substitutions are chosen in a way that that for each $k \in\cA$ the incidence matrix of $\sigma_{\rB,k}$ equals the matrix $M_{\rB,k}$ used in \eqref{subsec:BrunOrd} to define the ordered Brun algorithm. For this reason, the sequences $\bsigma \in \{\sigma_{\rB,1},\dots,\sigma_{\rB,d}\}^{\ZZ}$ can be regarded as ``combinatorial'' analogs of the sequences of matrices generated by the ordered Brun algorithm. In the same way as we did for the unordered Brun algorithm in Example~\ref{ex:UnBrunSubsRe}, we can now set up a substitutive realization~$\bphi_{\rB}$ for $(X_{\rB},F_{\rB},A_{\rB},\nu_{\rB})$. This leads to the measurable conjugacy
\[
(\hX_{\rB},\hF_{\rB},\hA_{\rB},\hnu_{\rB}) \overset{\bphi_{\rB}}{\cong} (\bphi(\hX_{\rB}),\Sigma,\bphi_*\hnu_{\rB}).
\]

We can now prove a substitutive version of Proposition~\ref{prop:brunpisot:1} leading to the same assertions as Proposition~\ref{prop:brunpisotsubsUN}, also for the ordered Brun algorithm. Since this is completely analogous to the unordered case covered in Example~\ref{ex:UnBrunSubsRe}, we omit the details.
\end{example}

\begin{example}[Substitutive realization of the multiplicative Brun algorithm]
\label{ex:MultBrunSubsRe}\indx{Brun!substitution!multiplicative}\indx{substitution!Brun!multiplicative}
For $1 \le k < d$ and $m \ge 1$, we define the \emph{multiplicative Brun substitutions} 
\begin{equation}\label{eq:brunsubsallgMult}
\sigma_{\rM,k,m}:\, 
\begin{cases} i \mapsto i, & (1 \le i<k)  \\ k \mapsto d, \\ i \mapsto i{-}1, & (k<i<d) \\ d \mapsto (d{-}1)\underbrace{d\cdots d}_{\text{$m$ times}}, \end{cases} 
\notx{M}{$\rM$}{object related to the multiplicative Brun algorithm}
\end{equation}
and set $\cS_{\rM} = \{\sigma_{\rM,k,m} : 1 \le k < d,\, m\ge 1\}$.
In the same way as before, we get a measurable conjugacy to a shift on a substitutive realization~$\bphi_{\rM}$:
\begin{equation} \label{eq:phiMult}
(\hX_{\rM},\hF_{\rM},\hA_{\rM},\hnu_{\rM}) \overset{\bphi_{\rM}}{\cong} (\cS_{\rM}^{\ZZ},\Sigma,\bphi_*\hnu_{\rM}).
\end{equation}
In this case, we established no combinatorial conditions that would lead to an analog of Proposition~\ref{prop:brunpisotsubsUN}. Besides that, this algorithm is multiplicative and, hence, we need infinitely many substitutions in its substitutive realization. Thus, the balance assertion of Theorem~\ref{theo:sufcondPisotS} is not true. We will see in Section~\ref{sec:metricS}, that in our metric theory we can settle the problem of balance also for infinitely many substitutions.
\end{example}

Note that there are several possible substitutive realizations of the ordered and unordered Brun algorithms.
We have chosen realizations in a way that the two versions of the algorithm provide the same $\cS$-adic shifts. 
However, since the unordered version of the Brun algorithm is given by a larger set of substitutions than the ordered one, there is also more flexibility in the definition of the substitutive realization, i.e., the unordered case provides a larger class of $\cS$-adic shifts.

\section{Rauzy fractals and Rauzy boxes}\label{sec:rauzy}
We now recall the key definition of a Rauzy fractal in the $\cS$-adic setting. 
These objects were originally defined in \cite{BST:19,BST:23,Fogg:24}.  A~Rauzy fractal for a sequence~$\bsigma$ of substitutions is obtained as the closure of the projection of abelianizations of certain words taken from  the language $\cL_{\bsigma}^{(n)}$, $n \in \NN$. Under suitable hypotheses, a Rauzy fractal allows us to build a geometric model of the symbolic system $(X_{\bsigma},\Sigma)$.
The first hypothesis we need is weak convergence of the continued fraction defined in terms of~$\bsigma$. This condition is satisfied by most of the interesting algorithms. The second condition requires that the language of the symbolic system is balanced. This is a much stronger condition, which can be seen as a generalization of the Pisot condition. 

We first define Rauzy fractals and their suspensions, Rauzy boxes, in Section~\ref{subsec:defRF}.
We then prove tiling properties of these sets in Section~\ref{subsec:tilings}, and in Section~\ref{subsec:dspectrum} show that tiling properties imply pure discrete spectrum of the underlying $\cS$-adic dynamical systems. Finally, in Section~\ref{sec:suff-cond-tilings}, we provide a combinatorial condition that plays an important role in establishing tiling properties of Rauzy fractals. All this will be needed later on when we use them to construct nonstationary Markov partitions for $\cS$-adic mapping families.  

\subsection{Basic properties of Rauzy fractals and Rauzy boxes} \label{subsec:defRF}
  
For $d \ge 2$, let $\bsigma=(\sigma_n)_{n\in\ZZ} \in \cS_d^{\ZZ}$, be a sequence of unimodular substitutions over the alphabet $\cA = \{1,\dots,d\}$.
Throughout the section, we assume that $\bsigma$ is primitive (in the future), has a generalized right eigenvector~$\bu$ and a generalized left eigenvector~$\bv$ with $\langle \bu, \bv \rangle \ne 0$, and we use the abbreviations
\begin{equation}\label{eq:abbrproj}
\pi_n = \pi_{\bu_n,\bv_n} \quad\hbox{and}\quad
\tilde{\pi}_n = \tilde{\pi}_{\bu_n,\bv_n} \notx{pen}{$\pi_n,\tilde{\pi}_n$}{projections w.r.t.\ generalized eigenvectors}
\end{equation}
for the projections along $\RR\bu_n$ to $\bv_n^\perp$ and along $\bv_n^\perp$ to $\RR\bu_n$ respectively, with $\bu_n$ and $\bv_n$ as in~\eqref{eq:unvn}.
Note that $\pi_n$ is well defined because $\bv_n \in \RR_{\ge0}^d \setminus \{\mathbf{0}\}$ and $\bu_n \in \RR_{>0}^d$ by primitivity. 

We define $\cS$-adic Rauzy fractals as in \cite[Section~2.4, Definition~2.7]{BST:23}. 
An alternative definition of a Rauzy fractal is provided in~\cite[Section~2.9]{BST:19}, which uses prefixes of an an infinite \emph{limit word} of~$\bsigma$ (leading to a \emph{broken line} as depicted in Figure~\ref{fig:brokenline}) that is defined in terms of~$\cL_{\bsigma}$. As we do not want to use limit words, we do not pursue this approach.

Here, we mostly consider Rauzy fractals in the hyperplane~$\bv_n^\perp$, but we can project them of course to other hyperplanes~$\bw^\perp$, see Section~\ref{subsec:dspectrum} below.
Recall that $v \preceq w$ if $v$ is a prefix of~$w$ and that $\bl(w)$ is the abelianized vector of~$w$.

\begin{definition}[$\cS$-adic Rauzy fractal]\label{def:rauzy}
\indx{Rauzy!fractal}\indx{S@$\cS$-adic!Rauzy!fractal}
\indx{subtile}
Let $\bsigma = (\sigma_n)_{n\in\ZZ} \in \cS_d^{\ZZ}$, with $d \ge 2$, be a primitive sequence of unimodular substitutions over the alphabet $\cA$ that admits generalized right and left eigenvectors.
For $n \in \ZZ$, the \emph{Rauzy fractal}~$\cR_n$ (of level~$n$) associated to~$\bsigma$ is defined as 
\[
\cR_n = \overline{\{\pi_n \bl(p) \,:\, p\preceq \sigma_{[n,m)}(b)\ \mbox{for infinitely many}\ m \geq n,\, b \in \cA\}}, \notx{Rauzy}{$\cR_n,\cR_n(\cdot)$}{Rauzy fractal, subtile}
\]
and, for each $w \in \cA^*$, a~\emph{subtile}~$\cR_n(w)$ is defined as 
\begin{equation} \label{eq:RFactor}
\cR_n(w) = \overline{\{\pi_n \bl(p) \,:\, p\,w \preceq \sigma_{[n,m)}(b)\ \mbox{for infinitely many}\ m \geq n,\, b \in \cA\}}.
\end{equation}
\end{definition} 

We clearly have $\cR_n = \bigcup_{a\in\cA} \cR_n(a)$ for all $n \in \ZZ$.
The next lemma states that  the Rauzy fractals are bounded if and only if the corresponding language is balanced. 

\begin{lemma}[{\cite[Lemmas~4.1 and~5.4]{BST:19}}]\label{lem:RFbalanced}
Assume that $\bsigma \in \cS_d^{\ZZ}$ is primitive and has generalized right and left eigenvectors.
Then $\cR_n$ is bounded if and only if $\cL_{\bsigma}^{(n)}$ is balanced. 
In particular, if $\cL_{\bsigma}^{(n)}$ is $C$-balanced, then $\cR_n \subset \pi_n([-C, C]^d \cap \bone^\perp)$.
\end{lemma}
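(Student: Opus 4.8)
\textbf{Proof strategy for Lemma~\ref{lem:RFbalanced} (boundedness $\Leftrightarrow$ balance).}
The plan is to translate the two properties into a statement about the set of projected abelianized prefixes and then use the geometric characterization of balance via the projection $\pi_n = \pi_{\bu_n,\bone}$ recalled in Section~\ref{sec:combprop}. Throughout, I will work at a fixed level~$n$; by shift invariance of all the notions involved (primitivity, existence of generalized eigenvectors, balance, and the very definition of $\cR_n$ via $\cL_{\bsigma}^{(n)}$), it suffices to treat $n = 0$ and then quote the result for all $n$. Recall from Definition~\ref{def:rauzy} that
\[
\cR_n = \overline{\{\pi_n \bl(p) \,:\, p \preceq \sigma_{[n,m)}(b)\ \text{for infinitely many}\ m \ge n,\ b \in \cA\}},
\]
so $\cR_n$ is bounded if and only if the set $P_n = \{\bl(p) : p \preceq \sigma_{[n,m)}(b)\ \text{for infinitely many}\ m,\ b \in \cA\}$ of abelianized prefixes has bounded image under~$\pi_n$.

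First I would establish the implication ``balanced $\Rightarrow$ bounded''. If $\cL_{\bsigma}^{(n)}$ is $C$-balanced, then every prefix $p$ of some $\sigma_{[n,m)}(b)$ is a factor of $\cL_{\bsigma}^{(n)}$, so by Definition~\ref{def:balance} applied with $v = p$ and any other factor $w$ of the same length, $|p|_a - |p|_b$ varies by at most $C$ across letters (more precisely, comparing $p$ to suitable factors one gets $|{}|p|_a - |p|\, f_a| \le C$ for the frequency vector, but here I only need the cruder bound that follows directly from $C$-balance). The cleanest route is the geometric one: by the discussion after Definition~\ref{def:balance}, $C$-balance of a minimal shift is equivalent to $\lVert \pi_{\bu_n,\bone}\,\bl(w) \rVert$ being bounded for all factors $w$, and in fact $\bl(w) - |w|\,\bu_n = \pi_{\bu_n,\bone}\,\bl(w) \in \pi_n([-C,C]^d \cap \bone^\perp)$ when $\lVert\bu_n\rVert_1 = 1$. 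Since $\bv_n$ is a generalized left eigenvector, one checks that $\bv_n$ is proportional to~$\bone$ precisely in the stationary-like situation; in general one uses that $\pi_n = \pi_{\bu_n,\bv_n}$ differs from $\pi_{\bu_n,\bone}$ by a bounded linear isomorphism on $\bv_n^\perp$, so boundedness of one image is equivalent to boundedness of the other. Hence $\cR_n$ is bounded and, reading off the constant, $\cR_n \subset \pi_n([-C,C]^d \cap \bone^\perp)$, giving the ``in particular'' clause.

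For the converse, ``bounded $\Rightarrow$ balanced'', I would argue contrapositively. Suppose $\cL_{\bsigma}^{(n)}$ is not balanced. Then for every $C$ there are factors $v, w \in \cL_{\bsigma}^{(n)}$ with $|v| = |w|$ and $|v|_a - |w|_a > C$ for some $a \in \cA$. By primitivity, $(X_{\bsigma}^{(n)},\Sigma)$ is minimal (as noted in Section~\ref{subsec:lang}), so every factor of $\cL_{\bsigma}^{(n)}$ occurs as a factor of some $\sigma_{[n,m)}(b)$ with $m$ arbitrarily large; concatenating and using that prefixes of $\sigma_{[n,m)}(b)$ include arbitrarily long words, I can realize $v$ and $w$ as differences of two prefixes $p' \prec p$ of a common $\sigma_{[n,m)}(b)$ (a factor is a difference of two prefixes of a longer word in the language, and everything embeds into some $\sigma_{[n,m)}(b)$ by primitivity). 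Then $\bl(p) - \bl(p')$ has $a$-coordinate exceeding~$C$ while its total sum is controlled, so $\pi_n(\bl(p) - \bl(p'))$ has norm growing with~$C$; since both $\pi_n\bl(p)$ and $\pi_n\bl(p')$ lie in $\cR_n$, this forces $\cR_n$ to be unbounded. The only genuinely delicate point — and the one I expect to be the main obstacle — is the bookkeeping in the last step: one must be careful that ``$v$ is a factor of the language'' really does produce two prefixes of a \emph{single} word $\sigma_{[n,m)}(b)$ whose difference is (a translate of) $\bl(v) - \bl(w)$, which requires combining the occurrence of $v$ (resp.\ $w$) with the structure of the language at level~$n$ and invoking primitivity to push everything inside one iterated-substitution image; this is exactly the argument carried out in \cite[Lemmas~4.1 and~5.4]{BST:19}, which I would cite for the technical details while giving the geometric outline above.
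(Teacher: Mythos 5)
First, a point of reference: the paper does not prove this lemma at all — it is imported verbatim from \cite[Lemmas~4.1 and~5.4]{BST:19} — so there is no in-paper argument to compare yours against, and your proposal has to stand on its own. Its skeleton is the right one and is essentially sound. Both directions correctly reduce, via the identity $\pi_n = (\pi_n|_{\bone^\perp}) \circ \pi_{\bu_n,\bone}$ together with the fact that $\pi_n|_{\bone^\perp}\colon \bone^\perp \to \bv_n^\perp$ is a linear isomorphism (since $\RR\bu_n \cap \bone^\perp = \{\mathbf{0}\}$, $\bu_n$ being positive), to controlling $\pi_{\bu_n,\bone}\bl(p) = \bl(p) - |p|\,\bu_n$ for prefixes $p$, where $\lVert\bu_n\rVert_1 = 1$ identifies $\bu_n$ with the frequency vector (which exists under primitivity plus the existence of the generalized right eigenvector). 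Your sentence ``$|p|_a - |p|_b$ varies by at most $C$ across letters'' is garbled and not what balance says, but you immediately supersede it by the correct route: $C$-balance plus existence of frequencies yields, by the standard averaging over long factors, $\bl(w) - |w|\,\bu_n \in [-C,C]^d \cap \bone^\perp$ with the \emph{same} constant $C$, which gives the ``in particular'' clause exactly.

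The one genuine gap sits in the converse, and it is not quite where you locate it. Producing $v$ as a difference of two prefixes of a single $\sigma_{[n,m)}(b)$ is immediate (write $\sigma_{[n,m)}(b) = p'vs$; and you do not need $v$ and $w$ inside the same word — after noting that $\pi_{\bu_n,\bone}(\bl(v)-\bl(w)) = \bl(v)-\bl(w)$ is large because $|v|=|w|$, you treat $v$ and $w$ separately). What genuinely needs care is that Definition~\ref{def:rauzy} only admits prefixes $p$ with $p \preceq \sigma_{[n,m)}(b)$ for \emph{infinitely many} $m$, so the two prefixes you exhibit need not a priori contribute points of~$\cR_n$, and boundedness of $\cR_n$ then says nothing about them. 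The fix is short: for fixed $m$ the images of the maps $b' \mapsto (\text{first letter of } \sigma_{[m,m')}(b'))$ are nested as $m'$ grows, hence stabilize to a nonempty set $S$; choosing $b \in S$ makes $\sigma_{[n,m)}(b)$ a prefix of $\sigma_{[n,m')}(b')$ for infinitely many $m'$, so \emph{every} prefix of $\sigma_{[n,m)}(b)$ is persistent, and primitivity guarantees that for $m$ large the factor $v$ occurs inside $\sigma_{[n,m)}(b)$ for this particular $b$. With that insertion your contrapositive closes without any appeal to \cite{BST:19}, and the proof is complete.
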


In the stationary setting, the Rauzy fractals are the attractors of a \emph{graph-directed iterated function system} (GIFS for short) associated in a natural way with the substitution (see \cite[Equation~(4.2)]{SirventWang02}). They are thus described by finitely many set equations defined in terms of this GIFS. 
In the nonstationary setting, however, we have an infinite sequence of such set equations. 

\begin{lemma}[{see~\cite[Proposition~5.6]{BST:19}, \cite[Lemma~4.4]{BST:23}}] \label{lem:seteq}
Assume that $\bsigma \in \cS_d^{\ZZ}$ is primitive and has generalized right and left eigenvectors.
Then
\begin{equation} \label{e:setequationkl}
\cR_n(a) = \bigcup_{\substack{p \in\cA^*,\,b\in\cA:\\pa\preceq\sigma_n(b)}} \big(\pi_n \bl(p) +  M_{\sigma_n} \cR_{n+1}(b)\big) \qquad(n \in \ZZ,\, a \in \cA).
\end{equation}
\end{lemma}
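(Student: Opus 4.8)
The final statement to prove is Lemma~\ref{lem:seteq}, the set equation
\[
\cR_n(a) = \bigcup_{\substack{p \in\cA^*,\,b\in\cA:\\pa\preceq\sigma_n(b)}} \big(\pi_n \bl(p) +  M_{\sigma_n} \cR_{n+1}(b)\big).
\]
The plan is to unwind the definition of the subtile $\cR_n(a)$ in terms of prefixes of $\sigma_{[n,m)}(c)$, factor through one application of $\sigma_n$ using the recursion $\sigma_{[n,m)} = \sigma_n \circ \sigma_{[n+1,m)}$, and then match the pieces with the level $n{+}1$ subtiles, using the abelianization identity $\bl(\sigma_n(w)) = M_{\sigma_n}\bl(w)$ together with the projection compatibility $\pi_n \circ M_{\sigma_n} = M_{\sigma_n} \circ \pi_{n+1}$ on the relevant subspaces.

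First I would set up the projection compatibility: since $\bu_n = M_{[0,n)}^{-1}\bu$ gives $M_{\sigma_n}\bu_{n+1} = \bu_n$ (up to the sign/scaling built into \eqref{eq:unvn}, which does not affect the line $\RR\bu_n$), and $\tr{M}_{\sigma_n}\bv_n = \bv_{n+1}$ gives $\langle M_{\sigma_n}\bx, \bv_n\rangle = \langle \bx, \bv_{n+1}\rangle$, a direct computation with the formula $\pi_{\bu,\bv}\bx = \bx - \frac{\langle\bx,\bv\rangle}{\langle\bu,\bv\rangle}\bu$ from \eqref{eq:projections} yields $\pi_n(M_{\sigma_n}\bx) = M_{\sigma_n}(\pi_{n+1}\bx)$ for all $\bx\in\RR^d$; note $\langle\bu_{n+1},\bv_{n+1}\rangle = \langle M_{\sigma_n}\bu_{n+1}, \bv_n\rangle = \langle\bu_n,\bv_n\rangle$, so the denominators match. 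Next I would take a defining prefix of $\cR_n(a)$: a word $q$ with $q\,a \preceq \sigma_{[n,m)}(c)$ for infinitely many $m$. Write $\sigma_{[n,m)}(c) = \sigma_n(\sigma_{[n+1,m)}(c))$. There is a unique way to decompose: let $b$ be the first letter of $\sigma_{[n+1,m)}(c)$ such that the occurrence of $a$ (the letter immediately after the prefix $q$) falls inside the block $\sigma_n(b)$; writing $\sigma_{[n+1,m)}(c) = w' b w''$ one gets $q\,a \preceq \sigma_n(w') \cdot (\text{prefix of }\sigma_n(b))$, so $q = \sigma_n(w')\, p$ where $p\,a \preceq \sigma_n(b)$ and $w'\,b \preceq \sigma_{[n+1,m)}(c)$. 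Applying $\bl$ and $\pi_n$: $\pi_n\bl(q) = \pi_n\bl(\sigma_n(w')) + \pi_n\bl(p) = M_{\sigma_n}\pi_{n+1}\bl(w') + \pi_n\bl(p)$, and since $w'$ is a prefix of $\sigma_{[n+1,m)}(c)$ with $w' b \preceq \sigma_{[n+1,m)}(c)$ for infinitely many $m$, the point $\pi_{n+1}\bl(w')$ lies in $\cR_{n+1}(b)$ by \eqref{eq:RFactor}. This shows the left side is contained in the (closure of the) right side; taking closures, since there are only finitely many pairs $(p,b)$ with $p\,a\preceq\sigma_n(b)$, the right-hand union is already closed, giving $\cR_n(a)\subseteq\text{RHS}$.

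For the reverse inclusion I would reverse the argument: given $(p,b)$ with $p\,a\preceq\sigma_n(b)$ and given a point $\pi_{n+1}\bl(w')\in\cR_{n+1}(b)$ with $w'\,b\preceq\sigma_{[n+1,m)}(c)$ for infinitely many $m$, form $q = \sigma_n(w')\,p$; then $q\,a \preceq \sigma_n(w'\,b\cdot(\text{rest})) = \sigma_{[n,m)}(c)$ for infinitely many $m$ (using primitivity/growth so that $\sigma_{[n+1,m)}(c)$ really does extend $w'b$, which it does since $w'b$ is a strict prefix), so $\pi_n\bl(q) = M_{\sigma_n}\pi_{n+1}\bl(w') + \pi_n\bl(p) \in \cR_n(a)$. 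Passing to closures and using continuity of the affine map $\bx\mapsto M_{\sigma_n}\bx + \pi_n\bl(p)$ gives $M_{\sigma_n}\cR_{n+1}(b) + \pi_n\bl(p)\subseteq\cR_n(a)$ for each admissible $(p,b)$, hence $\text{RHS}\subseteq\cR_n(a)$.

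The main obstacle is the bookkeeping in the decomposition step: one must argue carefully that every prefix $q$ (with $qa$ a prefix of some $\sigma_{[n,m)}(c)$) decomposes \emph{uniquely} as $\sigma_n(w')\,p$ with $p\,a\preceq\sigma_n(b)$ and $w'\,b\preceq\sigma_{[n+1,m)}(c)$, and that in passing between levels the ``for infinitely many $m$'' quantifier is preserved in both directions. This is exactly the type of synchronization argument used for the stationary GIFS set equation (cf.\ \cite[Equation~(4.2)]{SirventWang02}), and the cited references \cite[Proposition~5.6]{BST:19} and \cite[Lemma~4.4]{BST:23} carry it out; I would model the proof on those, taking care that nonerasingness of the substitutions ($|\sigma_n(b)|\ge 1$) guarantees the block containing the marked occurrence of $a$ is well defined, and that the finiteness of $\{(p,b): p\,a\preceq\sigma_n(b)\}$ makes the right-hand union a closed set so no extra closure is needed on that side.
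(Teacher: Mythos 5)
The paper does not prove this lemma at all: it is stated with citations to \cite[Proposition~5.6]{BST:19} and \cite[Lemma~4.4]{BST:23}, so there is no internal proof to compare against. Your reconstruction is essentially the standard argument from those references, and the two inclusions, the identity $\pi_n M_{\sigma_n} = M_{\sigma_n}\pi_{n+1}$ (which follows exactly as you compute from $M_{\sigma_n}\bu_{n+1}=\bu_n$, $\tr{\!M}_{\sigma_n}\bv_n=\bv_{n+1}$, and $\langle\bu_n,\bv_n\rangle=\langle\bu_{n+1},\bv_{n+1}\rangle$), and the closedness of the finite right-hand union are all correct.

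One point you should state more carefully: the decomposition $q=\sigma_n(w')\,p$ with $pa\preceq\sigma_n(b)$ and $w'b\preceq\sigma_{[n+1,m)}(c)$ is \emph{not} unique as a function of $q$ and $a$ alone; it is determined only once $(m,c)$ is fixed, and different $m$ in the infinite set witnessing $\pi_n\bl(q)\in\cR_n(a)$ may yield different triples $(w',p,b)$. The forward inclusion therefore needs a pigeonhole argument: since there are only finitely many pairs $(p,b)$ with $pa\preceq\sigma_n(b)$, some fixed $(p,b)$ occurs for infinitely many $m$; for those $m$ the words $w'_m$ all satisfy $\bl(w'_m)=M_{\sigma_n}^{-1}(\bl(q)-\bl(p))$, hence have bounded length, so a second pigeonhole produces a single word $w'$ with $w'b\preceq\sigma_{[n+1,m)}(c_m)$ for infinitely many $m$, which is what the definition \eqref{eq:RFactor} of $\cR_{n+1}(b)$ requires. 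With that replacement of ``decomposes uniquely'' by the pigeonhole step, your proof is complete; you correctly identified this bookkeeping as the only delicate point, so the gap is one of formulation rather than of substance.
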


Lemma~\ref{lem:seteq} is about sets in~$\bv_n^\perp$. There is a dual statement for the projection on the line~$\RR\bu_n$, and in that case the sequence $\bsigma$ yields a set equation for line segments. To state it, we use the following notation for line segments. For a vector $\bx \in \ZZ^d$, the (closed) line segment connecting $\bx$ and the origin is written as~$\llbracket \bx \rrbracket$\notx{0line}{$\llbracket \cdot \rrbracket$}{line segment}.

\begin{lemma}\label{lem:brokenprojection}
Assume that $\bsigma \in \cS_d^{\ZZ}$ is primitive and has generalized right and left eigenvectors.
Then
\begin{equation} \label{e:dualsetequation}
\tilde{\pi}_n \llbracket \bl(\sigma_n(b)) \rrbracket =
\bigcup_{\substack{p\in\cA^*,\,a\in\cA:\\pa\preceq\sigma_n(b)}} \tilde{\pi}_n \big(\bl(p) + \llbracket \be_a\rrbracket\big) \qquad(n \in \ZZ,\, b\in \cA),
\end{equation}
where the union is disjoint up to the endpoints of the line segments. 
\end{lemma}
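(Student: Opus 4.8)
The statement to be proved is Lemma~\ref{lem:brokenprojection}, the dual (one-dimensional) set equation
\[
\tilde{\pi}_n \llbracket \bl(\sigma_n(b)) \rrbracket =
\bigcup_{\substack{p\in\cA^*,\,a\in\cA:\\pa\preceq\sigma_n(b)}} \tilde{\pi}_n \big(\bl(p) + \llbracket \be_a\rrbracket\big),
\]
with the union disjoint up to endpoints. The plan is to prove this by a direct, essentially elementary decomposition of the word $\sigma_n(b)$ into its letters, combined with the basic compatibility of the abelianization map $\bl$ with concatenation. First I would fix $n \in \ZZ$ and $b \in \cA$ and write out $\sigma_n(b) = a_1 a_2 \cdots a_L$, where $L = |\sigma_n(b)|$ and each $a_i \in \cA$. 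The prefixes $p$ of $\sigma_n(b)$ that have a continuation letter $a$ (i.e.\ $pa \preceq \sigma_n(b)$) are precisely $p = a_1 \cdots a_{i-1}$ with $a = a_i$, for $i \in \{1,\dots,L\}$; for such a prefix $\bl(p) = \sum_{j<i} \be_{a_j}$.

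Next I would establish the set-theoretic identity at the level of the abelianized line segments in $\RR^d$, before projecting. The segment $\llbracket \bl(\sigma_n(b)) \rrbracket$ runs from $\mathbf{0}$ to $\bl(\sigma_n(b)) = \sum_{j=1}^L \be_{a_j}$. The natural decomposition is into the consecutive sub-segments $\bl(a_1\cdots a_{i-1}) + \llbracket \be_{a_i} \rrbracket$ for $i = 1,\dots,L$: the $i$-th segment joins $\bl(a_1\cdots a_{i-1})$ to $\bl(a_1\cdots a_i)$, these segments concatenate to form exactly the segment from $\mathbf{0}$ to $\bl(\sigma_n(b))$, and consecutive segments share only the single point $\bl(a_1\cdots a_i)$. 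However, this ``broken line'' in $\RR^d$ is generally not a straight segment, so the union equality $\bigcup_i (\bl(a_1\cdots a_{i-1}) + \llbracket \be_{a_i}\rrbracket) = \llbracket \bl(\sigma_n(b))\rrbracket$ is \emph{false} in $\RR^d$ itself; it only becomes true after applying $\tilde\pi_n$. So the correct order is: apply $\tilde\pi_n$ first, then check the union. Since $\tilde\pi_n$ is linear, $\tilde\pi_n(\bl(p) + \llbracket \be_a\rrbracket) = \tilde\pi_n\bl(p) + \llbracket \tilde\pi_n \be_a \rrbracket$, and all the points $\tilde\pi_n \be_a = \frac{\langle \be_a,\bv_n\rangle}{\langle \bu_n,\bv_n\rangle}\bu_n$ lie on the ray $\RR\bu_n$; because $\bu_n$ and $\bv_n$ have positive coordinates (by primitivity and Proposition~\ref{prop:fur}/Definition~\ref{def:gea}), the scalars $\langle\be_a,\bv_n\rangle/\langle\bu_n,\bv_n\rangle$ are strictly positive. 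Hence each $\tilde\pi_n\be_{a_i}$ is a positive multiple of $\bu_n$, and the images of the broken-line segments are genuine nondegenerate sub-segments of $\RR\bu_n$ laid end to end, summing (by telescoping) to $\tilde\pi_n\llbracket\bl(\sigma_n(b))\rrbracket$. This gives both the union equality and the disjointness-up-to-endpoints: two such projected sub-segments, being consecutive intervals on a line with positive lengths, meet only in the common endpoint $\tilde\pi_n\bl(a_1\cdots a_i)$, and non-consecutive ones do not meet at all in their interiors.

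The verification that the indexing set $\{(p,a) : pa \preceq \sigma_n(b)\}$ coincides with $\{(a_1\cdots a_{i-1}, a_i) : 1 \le i \le L\}$ is routine, and the telescoping sum $\sum_{i=1}^L \tilde\pi_n\be_{a_i} = \tilde\pi_n\bl(\sigma_n(b))$ follows from $\bl(\sigma_n(b)) = \sum_i \be_{a_i}$ and linearity of $\tilde\pi_n$. I expect the only point requiring care — the ``main obstacle,'' such as it is — to be making precise the claim that the projected segments are \emph{nondegenerate} intervals on the line $\RR\bu_n$ with the \emph{same orientation}, so that their concatenation is again an interval rather than a set that overlaps itself; this is exactly where positivity of the coordinates of $\bu_n$ and $\bv_n$ (equivalently $\langle\be_a,\bv_n\rangle > 0$ for every $a$) enters, and it is worth stating explicitly. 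Everything else is bookkeeping with prefixes and the homomorphism property of $\bl$. Note also that this argument is genuinely parallel to the proof of Lemma~\ref{lem:seteq} (the set equation \eqref{e:setequationkl} in $\bv_n^\perp$), the difference being that here we work with $\tilde\pi_n$ onto $\RR\bu_n$ and with a single word $\sigma_n(b)$ rather than with subtiles, which is why the disjointness is exact (up to endpoints) rather than merely measure-disjoint.
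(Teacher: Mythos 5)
Your proposal is correct and follows essentially the same route as the paper: the prefixes of $\sigma_n(b)$ give a broken line from $\mathbf{0}$ to $\bl(\sigma_n(b))$, and $\tilde{\pi}_n$ maps its pieces to successive subsegments of $\RR\bu_n$ whose union telescopes to $\tilde{\pi}_n\llbracket\bl(\sigma_n(b))\rrbracket$. One tiny caveat: under primitivity in the future only $\bu_n$ is guaranteed positive while $\bv_n$ is merely in $\RR_{\ge0}^d\setminus\{\mathbf{0}\}$, so some projected segments may be degenerate points rather than nondegenerate intervals — this does not affect the union or the disjointness up to endpoints, but your claim that all the scalars $\langle\be_a,\bv_n\rangle/\langle\bu_n,\bv_n\rangle$ are strictly positive is slightly too strong.
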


\begin{proof}
The line segments $\bl(p) + \llbracket \be_a \rrbracket$ with $pa \preceq \sigma_n(b)$ form a broken line from $\mathbf{0}$ to $\bl(\sigma_n(b))$.
Since $\bu \in \RR_{>0}^d$ and $\bv \in \RR_{\ge0}^d \setminus \{\mathbf{0}\}$, $\tilde{\pi}_n$~projects these line segments to successive line segments of $\RR \bu$ between $\mathbf{0}$ and $\tilde{\pi}_n \bl(\sigma_n(b))$, and their union is $\llbracket \tilde{\pi}_n \bl(\sigma_n(b)) \rrbracket = \tilde{\pi}_n \llbracket \bl(\sigma_n(b)) \rrbracket$.
\end{proof}

Note that, while the union in \eqref{e:dualsetequation} is disjoint in measure, some work and additional conditions are needed to show that \eqref{e:setequationkl} is disjoint up to sets of measure zero; in \cite[Proposition~7.3]{BST:19}, this is proved under the so-called PRICE property, see Section~\ref{sec:suff-cond-tilings}.

Suspending the subtile $\cR_n(a)$ with the line segment $\tilde{\pi}_n \llbracket \be_a \rrbracket$, we obtain Rauzy boxes. 

\begin{definition}[Rauzy box]\label{def:rauzybox}
\indx{Rauzy!box}\indx{S@$\cS$-adic!Rauzy!box}
Let $\bsigma \in \cS_d^{\ZZ}$, with $d\ge 2$, be a primitive sequence of unimodular substitutions over the alphabet $\cA$ that admits generalized right and left eigenvectors. 
For each $n \in \ZZ$, the \emph{Rauzy box} (of level~$n$) is
\[
\hR_n = \bigcup_{a\in\cA} \hR_n(a), \notx{Rauzybox}{$\hat{\cR}_n,\hat{\cR}_n(\cdot)$}{Rauzy box}
\]
with cylinders~$\hR_n(a)$ that are defined as the Minkowski sums
\begin{equation}  \label{e:Rhat}
\hR_n(a) = \tilde{\pi}_n\,\llbracket \be_a\rrbracket - \cR_n(a) \qquad(n \in \ZZ,\, a\in\cA).
\end{equation}
\end{definition} 

Note that the minus sign in the definition of Rauzy boxes is due to the fact that Rauzy fractals will be regarded as duals of the intervals. Rauzy boxes are studied in \cite{BST:19}; they can also be defined for one-sided sequences, by replacing the generalized left eigenvector~$\bv_n$ by an arbitary vector $\bw \in \RR_\ge^d \setminus \{\mathbf{0}\}$.
In the present paper, Rauzy boxes have several applications. 
Indeed, each member of the partition by Rauzy boxes has a pair of horizontal and vertical transverses which allows  the expression of Property~M from Definition~\ref{def:M} in an easy way; see the proof  of Theorem~\ref{thm:MarkovCoarse}. 
Moreover, the tiling property for Rauzy fractals is equivalent to the tiling property for Rauzy boxes; see Proposition~\ref{p:cChC}, working modulo~$\ZZ^d$ will provide a cut-and-stack process; see Figure~\ref{fig:restack1} and Section~\ref{subsec:MarkovRB}. Note that Rauzy boxes generalize classical $L$-shaped constructions for fundamental domains for 2-dimensional lattices such as described in \cite{Arnoux:94}; they are also known as Klotz constructions, see e.g.\ \cite{Kramer:88}.

\subsection{Tilings by Rauzy fractals and Rauzy boxes, and restacking} \label{subsec:tilings}
Let $\bsigma \in \cS_d^{\ZZ}$, with $d\ge 2$, be a primitive sequence of unimodular substitutions over the alphabet~$\cA$ with generalized right and left eigenvectors $\bu$ and~$\bv$ respectively. 
In this section, we recall fundamental tiling properties of the $\cS$-adic Rauzy boxes and Rauzy fractals.
Our main interest in this paper lies in the collection
\begin{equation} \label{e:hC}
\hC_n = \{\bx + \hR_n(a)  \,:\, \bx \in \ZZ^d,\, a \in \cA\} \qquad (n \in \ZZ) \notx{Cb}{$\hat{\cC}_n,\hat{\cC}_n^\gen$}{collection of Rauzy boxes}
\end{equation}
of Rauzy boxes. 
Recall that a \emph{tiling}\indx{tiling} of~$\RR^d$ is a collection of closed sets (so-called ``tiles'') that covers~$\RR^d$ in a way that the intersection of any two distinct tiles has Lebesgue measure~$0$.
If $\hC_n$ forms a tiling of~$\RR^d$, then $\hR_n$ is a fundamental domain of $\RR^d/\ZZ^d$. 
The following \emph{tiling condition} will be important for us.

\begin{definition}[Tiling condition] \label{def:tilingcond}\indx{tiling!condition}
Let $\bsigma \in \cS_d^{\ZZ}$, with $d \ge 2$.
If $\bsigma$ is primitive, admits generalized right and left eigenvectors, and the collection~$\hC_n$ tiles~$\RR^d$ for all $n \in \ZZ$, then we say that the \emph{tiling condition} holds for~$\bsigma$.
\end{definition}

We will see in Proposition~\ref{p:cChC} below that the tiling property of~$\hC_n$ is equivalent to a tiling property of the collection of Rauzy fractals \begin{equation} \label{e:defcCn}
\cC_n = \{ \pi_n \bx+  \cR_n(a) \,:\, \bx \in \ZZ^d,\, a \in \cA,\, 0 \le \langle \bv_n,\bx\rangle < \langle \bv_n,\be_a\rangle\}.
\notx{C}{$\cC_n,\cC_n^{\bw}$}{collection of Rauzy fractals, in~$\bw^\perp$}
\end{equation}
The elements of $\cC_n$ are subsets of the hyperplane~$\bv_n^\perp$, and a tiling of a hyperplane~$\bw^\perp$ is defined in the same way as a tiling of~$\RR^d$ but with the intersections of tiles having zero $(d{-}1)$-dimensional Lebesgue measure. 
The set of pairs $\{ (\bx,a) \in \ZZ^d{\times}\cA : 0 \le \langle \bv_n,\bx\rangle < \langle \bv_n,\be_a\rangle\}$ is called the \emph{discrete hyperplane} approximating the hyperplane~$\bv_n^\perp$. 
This set is aperiodic whenever the coordinates of~$\bv_n$ are rationally independent.
Note that $0 \le \langle \bv_n,\bx\rangle \le \langle \bv_n,\be_a\rangle$ is equivalent to $\tilde{\pi}_n \bx \in \tilde{\pi}_n \llbracket \be_a\rrbracket$.

In the tiling properties, we do not assume boundedness of the Rauzy fractals. However, if we assume in addition that the language~$\cL_{\bsigma}$ is balanced, Lemma~\ref{lem:RFbalanced} yields boundedness, hence compactness of the Rauzy fractals~$\cR_n$ and of the Rauzy boxes~$\hR_n$.
Although later we will require balancedness in some of our results, we did not include it in the definition of the tiling condition because there exist interesting ``unbalanced'' examples, whose Rauzy fractals deserve to be studied; see for instance \cite{Cassaigne-Ferenczi-Zamboni:00,Andrieu:21}.

The following proposition gives a condition for the tiling property of~$\hC_n$ being independent of~$n$. 
For the proof, we define subtiles of $\cR_n(a)$ and~$\hR_n(a)$ according to~\eqref{e:setequationkl}.
Let
\begin{equation} \label{e:En}
E_n = \big\{ (a,p,b) \in \cA \times \cA^* \times \cA \,:\, pa \preceq \sigma_n(b) \big\} \qquad (n \in \ZZ) \notx{En}{$E_n$}{set of edges of a Bratteli diagram or nonstationary edge shift, prefixes of a substitution}
\end{equation}
be the set of prefixes of the substitution~$\sigma_n$ and define, for $n \in \ZZ$, $(a,p,b) \in E_n$,
\begin{equation} \label{e:Re}
\cR_n(a,p,b) = \pi_n\bl(p) + M_{\sigma_n} \cR_{n+1}(b), \quad
\hR_n(a,p,b) = \tilde{\pi}_n\llbracket \be_a\rrbracket - \cR_n(a,p,b). \notx{Rauzy}{$\cR_n,\cR_n(\cdot)$}{Rauzy fractal, subtile} \notx{Rauzybox}{$\hat{\cR}_n,\hat{\cR}_n(\cdot)$}{Rauzy box}
\end{equation}
By \eqref{e:setequationkl}, we have, for each $n \in \ZZ$, $a \in \cA$,
\begin{equation} \label{e:Rapb}
\bigcup_{\substack{p\in\cA^*,\,b\in\cA:\\(a,p,b)\in E_n}} \cR_n(a,p,b) = \cR_n(a) \quad \mbox{and} \quad \bigcup_{\substack{p\in\cA^*,\,b\in\cA:\\(a,p,b)\in E_n}} \hR_n(a,p,b) = \hR_n(a).
\end{equation} 
We set
\begin{equation} \label{e:Cgen}
\hC_n^\gen = \big\{\bx + \hR_n(a,p,b) \,:\, \bx \in \ZZ^d,\, (a,p,b) \in E_n \big\} \notx{Cb}{$\hat{\cC}_n,\hat{\cC}_n^\gen$}{collection of Rauzy boxes}
\end{equation}
because this collection will define generating Markov partitions in Section~\ref{subsec:refined}.

\begin{proposition} \label{p:ntilingbox}
Assume that $\bsigma \in \cS_d^{\ZZ}$ is primitive and has generalized right and left eigenvectors. For each $n \in \ZZ$, the following are equivalent.
\begin{enumerate}[\upshape (i)]
\itemsep.5ex
\item
$\hC_{n+1}$ forms a tiling of~$\RR^d$,
\item
$\hC_n^\gen$ forms a tiling of~$\RR^d$,
\item
$\hC_n$ forms a tiling of~$\RR^d$ and the union in \eqref{e:setequationkl} is disjoint in measure for all $a \in \cA$.
\end{enumerate}
\end{proposition}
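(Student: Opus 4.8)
The plan is to prove the cycle of implications (i) $\Rightarrow$ (ii) $\Rightarrow$ (iii) $\Rightarrow$ (i), using the set equations of Lemmas~\ref{lem:seteq} and~\ref{lem:brokenprojection} together with a volume-counting argument. The key elementary observation is that, since $M_{\sigma_n}$ is unimodular and maps $\ZZ^d$ onto $\ZZ^d$, the collection $\{\bx + M_{\sigma_n} S \,:\, \bx \in \ZZ^d\}$ tiles $\RR^d$ if and only if $\{\bx + S \,:\, \bx \in \ZZ^d\}$ does, for any measurable bounded fundamental-domain candidate $S$; and more generally a disjoint-in-measure decomposition $S = \bigcup_i (\bt_i + M_{\sigma_n} S_i)$ with $\bt_i \in \ZZ^d$ distinct modulo the relevant sublattice transfers a tiling property of the $S_i$'s to $S$. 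This is the mechanism by which passing between level $n$ and level $n+1$ preserves tiling.

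First I would prove (i) $\Rightarrow$ (ii). Assume $\hC_{n+1}$ tiles $\RR^d$. Using \eqref{e:Rhat}, \eqref{e:Re}, and \eqref{e:Rapb}, the Rauzy box $\hR_n(a)$ decomposes as $\bigcup_{(a,p,b)\in E_n} \hR_n(a,p,b)$, where $\hR_n(a,p,b) = \tilde{\pi}_n\llbracket \be_a\rrbracket - \pi_n\bl(p) - M_{\sigma_n}\cR_{n+1}(b)$. The shape $M_{\sigma_n}\cR_{n+1}(b)$ is, up to the $\ZZ^d$-translation by $\bl(\sigma_n(b))$-related vectors, governed by the level-$(n{+}1)$ tiling: since $\hC_{n+1}$ tiles, the translates $\{\bx + \hR_{n+1}(b)\}$ tile $\RR^d$, hence (applying $M_{\sigma_n}$, which permutes $\ZZ^d$) the translates $\{\bx + M_{\sigma_n}\hR_{n+1}(b)\}$ tile $\RR^d$. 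A computation using Lemma~\ref{lem:brokenprojection} shows that the segment decomposition $\tilde{\pi}_n\llbracket \bl(\sigma_n(b))\rrbracket = \bigcup_{pa\preceq\sigma_n(b)} \tilde{\pi}_n(\bl(p) + \llbracket\be_a\rrbracket)$ is disjoint up to endpoints, and this is exactly what is needed to check that, as $(a,p,b)$ ranges over $E_n$ and $\bx$ over $\ZZ^d$, the boxes $\bx + \hR_n(a,p,b)$ cover $\RR^d$ with pairwise measure-zero overlaps; the overlap control comes from the fact that the segments $\tilde{\pi}_n(\bl(p)+\llbracket\be_a\rrbracket)$ partition $\tilde{\pi}_n\llbracket\bl(\sigma_n(b))\rrbracket$, so the $M_{\sigma_n}$-image of the level-$(n{+}1)$ tiling, which tiles a ``slab'' over $\tilde{\pi}_{n+1}\llbracket\be_b\rrbracket$, gets cut into the pieces indexed by the prefixes. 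This gives (ii).

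Next, (ii) $\Rightarrow$ (iii): if $\hC_n^\gen$ tiles $\RR^d$, then since $\hR_n(a) = \bigcup_{(a,p,b)\in E_n}\hR_n(a,p,b)$ and the union over $a$ of these gives all of $\hC_n$'s pieces, merging the generic pieces of $\hC_n^\gen$ into the boxes $\hR_n(a)$ shows $\hC_n$ tiles; and the disjointness in measure of the pieces in $\hC_n^\gen$ forces, in particular, that for fixed $a$ the sets $\cR_n(a,p,b)$ (equivalently $\hR_n(a,p,b)$) are pairwise disjoint in measure, which is precisely the statement that the union in \eqref{e:setequationkl} is disjoint in measure. Finally (iii) $\Rightarrow$ (i): given that $\hC_n$ tiles and the level-$n$ set equation \eqref{e:setequationkl} is measure-disjoint, one reverses the argument of the first implication — the measure-disjoint decomposition of $\cR_n(a)$ into translated $M_{\sigma_n}$-copies of $\cR_{n+1}(b)$, combined with the segment partition of Lemma~\ref{lem:brokenprojection}, expresses the level-$n$ tiling as an $M_{\sigma_n}$-refinement of a candidate level-$(n{+}1)$ tiling; applying $M_{\sigma_n}^{-1}$ (again a $\ZZ^d$-automorphism) and using that a refinement of a tiling by measure-disjoint pieces which itself covers $\RR^d$ must come from a tiling, one concludes $\hC_{n+1}$ tiles.

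The main obstacle I anticipate is the bookkeeping in the implication (i) $\Rightarrow$ (ii): one must carefully track how a single Rauzy box $M_{\sigma_n}\hR_{n+1}(b)$ of level $n{+}1$, which is a suspension of $M_{\sigma_n}\cR_{n+1}(b)$ over a long segment $\tilde{\pi}_n\llbracket\bl(\sigma_n(b))\rrbracket$, is sliced by the prefix decomposition into the shorter boxes $\hR_n(a,p,b)$ over the unit-length segments $\tilde{\pi}_n\llbracket\be_a\rrbracket$, and verify that the $\ZZ^d$-translates fit together with no extra overlaps and no gaps. The clean way to organize this is to fix a fundamental domain for $\RR^d/\ZZ^d$ and compare Lebesgue measures: $\sum_{(a,p,b)\in E_n}\lambda(\hR_n(a,p,b)) = \sum_a \lambda(\hR_n(a)) = \lambda(\hR_n)$, and $\lambda(\hR_n) = 1$ would follow from the tiling; combined with the covering property (which is soft, coming directly from the set equations) and a measure-disjointness check, this pins down the tiling. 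So the hard part is really the measure-disjointness, and that is where Lemma~\ref{lem:brokenprojection}'s ``disjoint up to endpoints'' clause, transported through the $M_{\sigma_n}$-map and intersected with the level-$(n{+}1)$ tiling, does the essential work.
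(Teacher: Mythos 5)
Your proposal is correct and follows essentially the same route as the paper's proof: both rest on the restacking identity $M_{\sigma_n}\hR_{n+1}(b)=\bigcup_{(a,p,b)\in E_n}(\bl(p)+\hR_n(a,p,b))$, whose measure-disjointness comes from the segment partition of Lemma~\ref{lem:brokenprojection} transported by $M_{\sigma_n}\tilde{\pi}_{n+1}=\tilde{\pi}_n M_{\sigma_n}$, together with unimodularity of $M_{\sigma_n}$ (so that applying it preserves both $\ZZ^d$ and the tiling property), and then the equivalence of (ii) and (iii) by regrouping the pieces $\hR_n(a,p,b)$ into $\hR_n(a)$. The only differences are presentational — you run a cycle of implications where the paper states two biconditionals, and your concluding volume-count is an optional cross-check rather than a needed step.
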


\begin{proof}
Let $n \in \ZZ$.
By Lemma~\ref{lem:brokenprojection} and since $M_{\sigma_n} \tilde{\pi}_{n+1} =  \tilde{\pi}_n M_{\sigma_n}$ (cf.\ \cite[Lemma~5.2]{BST:19}), we have, for each $b \in \cA$, the topological partitions
\begin{align}
M_{\sigma_n} \tilde{\pi}_{n+1} \llbracket \be_b \rrbracket & = \tilde{\pi}_n M_{\sigma_n} \llbracket M_{\sigma_n} \be_b \rrbracket = \tilde{\pi}_n \llbracket \bl(\sigma_n(b)) \rrbracket =
\hspace{-1em} \bigcup_{\substack{p\in\cA^*,\,a\in\cA:\\(a,p,b)\in E_n}} \hspace{-1em} \tilde{\pi}_n \big(\bl(p) {+} \llbracket \be_a\rrbracket\big), \label{e:Mneb} \\
M_{\sigma_n} \hR_{n+1}(b) & = \tilde{\pi}_n \llbracket \bl(\sigma_n(b))\rrbracket - M_{\sigma_n} \cR_{n+1}(b) \nonumber \\
& = \bigcup_{\substack{p\in\cA^*,\,a\in\cA:\\(a,p,b)\in E_n}} \big(\bl(p) {+} \underbrace{\tilde{\pi}_n \llbracket \be_a\rrbracket {-} \pi_n \bl(p) {-} M_{\sigma_n} \cR_{n+1}(b)}_{\displaystyle\hR_n(a,p,b)}\big). \label{e:MnRn}
\end{align}

Since $M_{\sigma_n}$ is unimodular, the collection~$\hC_{n+1}$ forms a tiling if and only if $\{\bx {+} M_{\sigma_n} \cR_{n+1}(b) : \bx \in \ZZ^d,\, b \in \cA\}$ forms a tiling, and this is equivalent to $\hC_n^\gen$ being a tiling because the union in \eqref{e:MnRn} is disjoint in measure for each $b \in \cA$.
The collection~$\hC_n^\gen$ forms a tiling if and only if $\hC_n$ forms a tiling and the union in \eqref{e:Rapb} is disjoint in measure for all $a \in \cA$, i.e., the union in \eqref{e:setequationkl} is disjoint in measure for all $a \in \cA$.
\end{proof}

The proof of Proposition~\ref{p:ntilingbox} is of particular interest because it uses a restacking procedure \indx{restacking} that will be of importance for the study of Markov partitions in Chapter~\ref{chapter:markov}.
This is illustrated in Figure~\ref{fig:restack1} for a sequence of Brun substitutions.
More precisely, Figure~\ref{fig:restack1}~(i) shows the partition $\{\hR_{n+1}(b) : b \in \cA\}$ of the Rauzy box~$\hR_{n+1}$.
Multiplying by the incidence matrix~$M_{\sigma_n}$, we obtain~(ii), and subdividing according to \eqref{e:MnRn} gives~(iii). (In our example, only one cylinder is subdivided into two subcylinders.)
Figure~\ref{fig:restack1}~(iv) is obtained from~(iii) by restacking, i.e., by subtracting the integer vector $\bl(p)$ from the subcylinder $\bl(p) {+} \hR_n(a,p,b)$, for each $(a,p,b) \in E_n$. (In our example, only one vector $\bl(p)$ is nonzero and thus only one subcylinder is restacked.) 
Finally, merging the subcylinders $\hR_n(a,p,b)$ with same~$a$ gives the partition $\{\hR_n(a) : a \in \cA\}$ of the Rauzy box~$\hR_n$, which is depicted in Figure~\ref{fig:restack1}~(v).
Note that the generalized right and left eigenvectors $\bu_n$ and~$\bv_n$, which define the hyperplane $\bv_n^\perp$ containing~$\cR_n$ and the direction~$\bu_n$ of the ``sides'' of the Rauzy box~$\hR_n$, are different from $\bu_{n+1}$ and~$\bv_{n+1}$: $\bu_n = M_{\sigma_n} \bu_{n+1}$, $\bv_n = \tr{\!M}_{\sigma_n} \bv_{n+1}$. 
However, both Rauzy boxes $\hR_n$ and~$\hR_{n+1}$ as well as $M_{\sigma_n} \hR_{n+1}$ are fundamental domains of $\TT^d = \RR^d/\ZZ^d$. 

\begin{figure}[ht]
\hspace{-2cm} \includegraphics{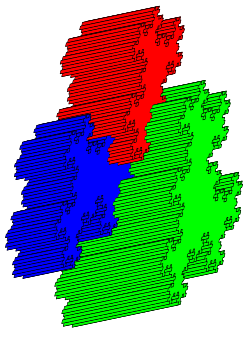} 
\includegraphics{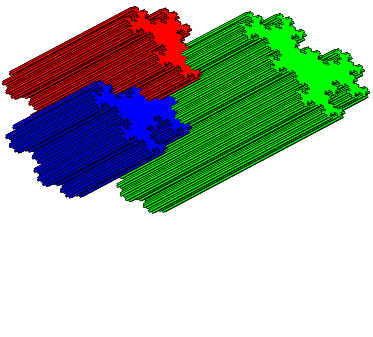} \\
\hspace{-8.5cm} (i) \\
\vspace{-2.5cm} (ii) \\
\vspace{-.75cm} \hspace{6cm} \includegraphics{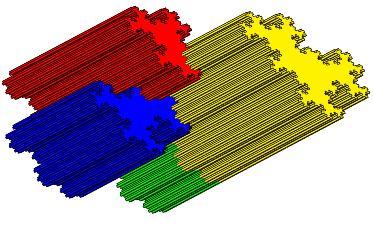} \\
\vspace{-.25cm} \hspace{6cm} (iii) \\
\vspace{-.75cm} \hspace{-3.5cm}\includegraphics{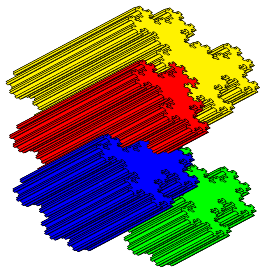}
\includegraphics{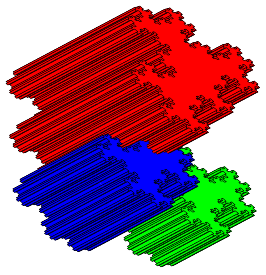} \\
\hspace{-3.5cm} (iv) \hspace{4cm} (v) \\
\caption{The restacking process described after Proposition~\ref{p:ntilingbox}, illustrated by the unordered Brun substitution $\sigma_n = \sigma_{\rU,12}$, with Rauzy boxes given by the periodic sequence of substitutions $\bsigma = (\sigma_{\rU,12}, \sigma_{\rU,23}, \sigma_{\rU,31})^{\ZZ}$. 
We see $\bigcup_{b\in\cA} \hR_{n+1}(b)$ in~(i), $\bigcup_{b\in\cA} M_{\sigma_n} \hR_{n+1}(b)$ in~(ii), $\bigcup_{(a,p,b)\in E_n} (\bl(p){+}\hR_n(a,p,b))$ in~(iii), $\bigcup_{(a,p,b)\in E_n} \hR_n(a,p,b)$ in~(iv), and $\bigcup_{a\in\cA} \hR_n(a)$ in~(v). Since $\sigma_n(1) = 1$, $\sigma_n(2) = 21$, $\sigma_n(3) = 3$, we have $E_n = \{(1,\epsilon,1), (2,\epsilon,2),$ $(1,2,2), (3,\epsilon,3)\}$, where $\epsilon$ denotes the empty word.}
 \label{fig:restack1}
\end{figure}

We next prove that the tiling properties of $\cC_n$ and $\hC_n$ are equivalent.
This result is proved in \cite[Proposition~7.6]{BST:19} for compact tiles and goes back to~\cite{Ito-Rao:06} in the substitutive setting.

\begin{proposition} \label{p:cChC}
Assume that $\bsigma \in \cS_d^{\ZZ}$ is primitive and has generalized right and left eigenvectors. For each $n \in \ZZ$, the collection~$\cC_n$ forms a tiling of~$\bv_n^\perp$ if and only if $\hC_n$ forms a tiling of~$\RR^d$.
\end{proposition}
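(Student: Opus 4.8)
The plan is to relate the tiling of the hyperplane $\bv_n^\perp$ by $\cC_n$ to the tiling of $\RR^d$ by $\hC_n$ via the product structure of Rauzy boxes: each box $\hR_n(a)$ is, up to a measure-zero set, the union of translates of $\cR_n(a)$ along the direction $\RR\bu_n$, indexed by the integer points of the discrete hyperplane approximating $\bv_n^\perp$. Concretely, I would first observe that for $(\bx,a)\in\ZZ^d\times\cA$ the condition $0\le\langle\bv_n,\bx\rangle<\langle\bv_n,\be_a\rangle$ appearing in \eqref{e:defcCn} is exactly the condition $\tilde\pi_n\bx\in\tilde\pi_n\llbracket\be_a\rrbracket\setminus\{\tilde\pi_n\be_a\}$, as remarked after the definition of the discrete hyperplane. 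The key geometric fact is the ``fibered'' decomposition
\[
\RR^d = \coprod_{\by\in\ZZ^d}\big(\by + \big([0,\langle\bv_n,\cdot\rangle\text{-slab}]\big)\big),
\]
more usefully phrased as: every point of $\RR^d$ is uniquely (up to boundary) of the form $\bz + t\bu_n$ with $\bz\in\bv_n^\perp$ and $t$ in a fundamental interval determined by the lattice $\ZZ^d$ and the direction $\bu_n$. Projecting $\ZZ^d$ along $\RR\bu_n$ onto $\bv_n^\perp$ produces exactly the points $\pi_n\bx$ occurring in \eqref{e:defcCn}, and the ``height'' segment attached to the point $\pi_n\bx$ with $\tilde\pi_n\bx\in\tilde\pi_n\llbracket\be_a\rrbracket$ is precisely $\tilde\pi_n\llbracket\be_a\rrbracket$ shifted appropriately; this is what makes $\hR_n(a) = \tilde\pi_n\llbracket\be_a\rrbracket - \cR_n(a)$ stack correctly over $\cR_n(a)$.

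Second, I would set up the two directions of the equivalence quantitatively using Lebesgue measure, following the approach of \cite[Proposition~7.6]{BST:19} but without assuming compactness. For the direction ``$\hC_n$ tiles $\RR^d$ $\Rightarrow$ $\cC_n$ tiles $\bv_n^\perp$'': if $\hC_n$ tiles, then the indicator functions $\sum_{\bx\in\ZZ^d,a\in\cA}\mathbf 1_{\bx+\hR_n(a)}$ equal $1$ a.e.; integrating this identity along the fibers $\RR\bu_n$ and using the product description \eqref{e:Rhat} of $\hR_n(a)$ together with Fubini gives that $\sum_{(\bx,a)}\mathbf 1_{\pi_n\bx+\cR_n(a)}$, the sum over the discrete hyperplane, equals a.e.\ a constant on $\bv_n^\perp$; checking that this constant is $1$ (by computing the average covering degree, which is $1$ because the fiber length of each $\hR_n(a)$ is $\langle\bv_n,\be_a\rangle/\langle\bv_n,\bu_n\rangle$ and the indices $\bx$ range over exactly a full set of lattice cosets) forces $\cC_n$ to be a measure tiling, hence a tiling. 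For the converse, I would reverse this: starting from a.e.\ covering degree $1$ for $\cC_n$ on $\bv_n^\perp$, I reconstruct the covering degree $1$ for $\hC_n$ on $\RR^d$ by ``un-integrating,'' i.e., by noting that the translates $\bx+\hR_n(a)$ of Rauzy boxes over a fixed $\cR_n(a)$-column exactly partition the corresponding $\RR\bu_n$-cylinder because the segments $\tilde\pi_n\llbracket\be_a\rrbracket$, translated over the discrete hyperplane points, tile the line $\RR\bu_n$ (this is Lemma~\ref{lem:brokenprojection} applied iteratively, or just the elementary fact that the stepped line projects bijectively onto $\RR\bu_n$).

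Third, I would assemble these computations cleanly. The cleanest route is probably: (a) reduce to checking the a.e.-constant covering-degree statement on both sides, since for collections of translates of finitely many closed sets with boundary of measure zero, ``tiling'' is equivalent to ``covering degree $1$ a.e.''; (b) prove the single identity relating the two covering-degree functions, namely that the covering degree function of $\hC_n$ at $\bz+t\bu_n$ depends only on $\bz\in\bv_n^\perp$ and equals the covering degree function of $\cC_n$ at the class of $\bz$ modulo the translations in \eqref{e:defcCn} — and in fact, by unimodularity, equals the covering degree of $\cC_n$ at $\bz$ itself; (c) conclude. Alternatively one can invoke Proposition~\ref{p:ntilingbox} to reduce to $\hC_n^\gen$, but this does not really simplify matters.

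\textbf{Main obstacle.} The hard part will be handling the measure-theoretic bookkeeping \emph{without} compactness of the Rauzy fractals: since $\cL_{\bsigma}$ need not be balanced under the hypotheses of this proposition, $\cR_n$ and $\hR_n$ may be unbounded, so I cannot directly integrate indicator functions over a bounded fundamental domain and must argue locally, e.g.\ on an arbitrary box, and appeal to local finiteness of the collections $\cC_n,\hC_n$ — which itself requires an argument (the translates hitting a fixed compact set are finite in number because the discrete hyperplane is locally finite and each tile, though possibly unbounded, meets a fixed compact set in a set of finite diameter in the transverse directions). Establishing this local finiteness rigorously, and then justifying the Fubini step with possibly unbounded but locally integrable indicator sums, is the delicate point; everything else is the direct transcription of the compact-case argument from \cite{BST:19,Ito-Rao:06}.
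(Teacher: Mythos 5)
Your geometric picture --- that $\hC_n$ is obtained from $\cC_n$ by erecting the segments $\tilde{\pi}_n\llbracket\be_a\rrbracket$ over the tiles indexed by the discrete hyperplane, so that the two tiling properties should transfer by slicing/stacking --- is the same idea as in the paper. The paper executes it by intersecting the (slightly modified) boxes $\hR'_n(a)=\hR_n(a)\setminus(\tilde{\pi}_n\be_a-\cR_n(a))$ with the affine hyperplanes $\bz+\bv_n^\perp$, $\bz\in\ZZ^d$, noting that each such slice is a translate of $-\cC_n$, and then transferring positive-measure overlaps and uncovered points \emph{directly} in both directions; the one delicate step is, given a positive-measure overlap of $\bx+\hR_n(a)$ and $\by+\hR_n(b)$, to produce $\bz\in\ZZ^d$ with $\max\{\langle\bv_n,\bx\rangle,\langle\bv_n,\by\rangle\}\le\langle\bv_n,\bz\rangle<\min\{\langle\bv_n,\bx{+}\be_a\rangle,\langle\bv_n,\by{+}\be_b\rangle\}$, which is handled via the dichotomy that $\{\langle\bv_n,\bz\rangle:\bz\in\ZZ^d\}$ is either dense in $\RR$ or equal to $c\ZZ$. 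No covering-degree averaging and no Fubini are needed.

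Your proposed route has two genuine gaps. First, step (a), the reduction of ``tiling'' to ``covering degree $1$ a.e.'', is not available here: the definition of tiling requires the closed tiles to cover \emph{every} point, and recovering everywhere-covering from a.e.-covering needs the union of the tiles to be closed, hence local finiteness (null boundaries are likewise only known under PRICE, Proposition~\ref{prop:sadic1}, not under the hypotheses of this proposition). Your argument for local finiteness is incorrect: a point $p$ lies in $\bx+\hR_n(a)$ iff $\pi_n\bx\in\pi_n p+\cR_n(a)$ and $\langle\bv_n,p-\bx\rangle\in[0,\langle\bv_n,\be_a\rangle]$, so the admissible $\bx$ are only confined to a slab of bounded $\langle\bv_n,\cdot\rangle$-width, and if $\cR_n(a)$ is unbounded (the language need not be balanced here) infinitely many translates can meet a fixed compact set. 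Second, the ``single identity'' of your step (b) --- that the covering degree of $\hC_n$ at $\bz+t\bu_n$ is independent of $t$ and equals that of $\cC_n$ at $\bz$ --- is not an identity but a consequence of the tiling property: as $t$ varies, the pairs $(\bx,a)$ whose segment $\tilde{\pi}_n(\bx+\llbracket\be_a\rrbracket)$ contains $\tilde{\pi}_n(\bz+t\bu_n)$ change, and constancy in $t$ amounts exactly to the statement that these segments stack up to tile the fiber with constant multiplicity --- which is the exchange-of-pieces mechanism proved in Proposition~\ref{p:tilingw} \emph{after} the tiling property is established, and cannot be assumed when proving it. Both gaps disappear if you drop the a.e.\ reduction and argue, as the paper does, separately with overlaps (which propagate from a slice to the box because the overlapping segments have positive length) and with uncovered points.
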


\begin{proof}
For all $\bx,\bz \in \ZZ^d$, $a \in \cA$, we have
\[
(\bx {+} \tilde{\pi} \llbracket\be_a\rrbracket) \cap (\bz {+} \bv_n^\perp) = 
\begin{cases} \{\pi_n \bx {+} \tilde{\pi}_n \bz\} & \mbox{if}\ \langle \bv_n,\bx\rangle \le \langle \bv_n,\bz\rangle \le \langle \bv_n,\bx{+}\be_a\rangle, \\ \emptyset & \mbox{otherwise.} \end{cases}
\]
Since the second inequality in the defintion of~$\cC_n$ in~\eqref{e:defcCn} is strict, we slightly modify the definition of the (collection of) Rauzy boxes by setting
\[
\hC'_n = \{\bx{+}\hR'_n(a) : \bx\in\ZZ^d,\, a\in\cA\}, \quad \mbox{with} \quad \hR'_n(a) = \hR_n(a) \setminus (\tilde{\pi}_n\be_a{-}\cR_n(a)).
\]
Then we have
\[
(\bx {+} \hR'_n(a)) \cap (\bz {+} \bv_n^\perp) = \begin{cases}\tilde{\pi}_n \bz {+} \pi_n \bx {-} \cR_n(a) & \mbox{if}\ 0 \le \langle \bv_n,\bz{-}\bx\rangle < \langle \bv_n,\be_a\rangle, \\ \emptyset & \mbox{otherwise.} \end{cases}
\]
Since $\tilde{\pi}_n \bz {+} \pi_n \bx {+} \cR_n(a) = \bz - (\pi_n (\bz {-} \bx) {+} \cR_n(a))$, this means that the intersection of $\hC'_n$ with $\bz {+} \bv_n^\perp$ is the translation of~$-\cC_n$ by~$\bz$.

Assume first that $\cC_n$ is not a tiling of~$\bv_n^\perp$.  
If $(\pi_n\bx{-}\cR_n(a)) \cap (\pi_n\by{-}\cR_n(b))$ has positive measure for distinct $(-\bx,a), (-\by,b)$ in the discrete hyperplane approximating~$\bv_n^\perp$, then we use the fact that the line segment $(\tilde{\pi}_n (\bx {+} \llbracket \be_a \rrbracket)) \cap (\tilde{\pi}_n (\by {+} \llbracket \be_b \rrbracket))$ has positive length (since $\max\{\langle \bv_n, \bx \rangle, \langle \bv_n, \by \rangle\} \le 0 < \min\{\langle \bv_n, \bx {+} \be_a \rangle, \langle \bv_n, \by {+} \be_b \rangle\}$) to obtain that $(\bx {+} \hR_n(a)) \cap (\by {+}\hR_n(b))$ has positive measure.
If $\cC_n$ does not cover a point $\bz \in \bv_n^\perp$, then $\hC'_n$ does not cover~$\bz$ and thus there exists $\varepsilon > 0$ such that $\hC_n$ does not cover $\bz {+} \varepsilon \bu_n$. 
In both cases, the collection $\hC_n$ is not a tiling of~$\RR^d$.

Assume now that $\hC_n$ is not a tiling of~$\RR^d$.  
Assume that $(\bx{+}\hR'_n(a)) \cap (\by{+}\hR'_n(b))$ has positive measure for distinct $(\bx,a), (\by,b) \in \ZZ^d {\times} \cA$.
The set $\{\langle \bv_n,\bz \rangle : \bz \in \ZZ^d\}$ is either dense in~$\RR$ or equal to $c \ZZ$ for some $c \in \RR$.
In both cases, we have some $\bz \in \ZZ^d$ such that $\max\{\langle \bv_n, \bx \rangle, \langle \bv_n, \by \rangle\} \le \langle \bv_n, \bz \rangle < \min\{\langle \bv_n, \bx {+} \be_a \rangle, \langle \bv_n, \by {+} \be_b \rangle\}$, thus $(\pi_n (\bz {-} \bx) {+ }\cR_n(a)) \cap (\pi_n (\bz {-} \by) {+} \cR_n(b))$ has positive measure and both $(\bz {-} \bx, a)$, $(\bz {-} \bx, b)$ are in the discrete hyperplane approximating~$\bv_n^\perp$.
Similarly, if $\hC_n$ does not cover~$\RR^d$, then $\hC'_n$ does not cover $\bz {+} \bv_n^\perp$ for some $\bz \in \ZZ^d$ and thus~$\cC_n$ does not cover~$\bv_n^\perp$. 
In both cases, the collection $\cC_n$ is not a tiling of~$\bv_n^\perp$.
\end{proof}

\subsection{Tiling implies pure discrete spectrum} \label{subsec:dspectrum}
Now, we show that the tiling condition implies that $(X_{\bsigma}^{(n)},\Sigma)$ is measurably conjugate to a rotation on the torus~$\TT^{d-1}$ and, hence, that it has pure discrete spectrum.

More precisely, we use that the projections of the Rauzy fractals~$\cR_n(a)$ to the hyperplane~$\bone^\perp$ tile this hyperplane periodically.
More generally, for a sequence $\bsigma \in \cS_d^\ZZ$ admitting a generalized right eigenvector and, for $n \in \ZZ$ and $\bw \in \RR_{>0}^d$, we consider the collection of tiles
\[
\cC_n^{\bw} = \{\pi_{\bu_n,\bw}\bx + \cR_n^\bw(a) \,:\, \bx \in \ZZ^d,\, a \in \cA,\, 0 \le \langle \bw,\bx\rangle < \langle \bw,\be_a\rangle\} \notx{C}{$\cC_n,\cC_n^{\bw}$}{collection of Rauzy fractals, in~$\bw^\perp$}
\]
in~$\bw^\perp$, where 
\[
\cR_n^\bw(a) = \overline{\{\pi_{\bu_n,\bw} \bl(p) \,:\, pa \preceq \sigma_{[n,m)}(b)\ \mbox{for infinitely many}\ m \geq n,\, b \in \cA\}}. \notx{Rauzy}{$\cR_n^{\bw}, \cR_n^{\bw}(\cdot)$}{Rauzy fractal in $\bw^\perp$}
\]
If $\bw$ is the generalized left eigenvector~$\bv_n$, then we recover the collection~$\cC_n$ of~\eqref{e:defcCn}.
Choosing $\bw = \bone$ gives a periodic collection of tiles because, for $\bx \in \ZZ^d$, $a \in \cA$, $0 \le \langle \bone,\bx\rangle < \langle \bone,\be_a\rangle$ is equivalent to $\langle \bone,\bx\rangle  = 0$, and $\ZZ^d \cap \bone^\perp$ is a lattice of~$\bone^\perp$ spanned by the vectors $\be_b{-}\be_a$, $b \in \cA \setminus \{a\}$. 
We also set $\cR_n^\bw = \bigcup_{a\in\cA} \cR_n^\bw(a)$. Moreover, we define
\[
\hC_n^\bw = \{\bx + \hR_n^\bw(a)  \,:\, \bx \in \ZZ^d,\, a \in \cA\}, \notx{Cb}{$\hat{\cC}_n^{\bw}$}{collection of Rauzy boxes w.r.t.~$\bw$} \quad \mbox{with} \quad \hR_n^\bw(a) = \tilde{\pi}_{\bu_n,\bw}\,\llbracket \be_a\rrbracket - \cR_n^\bw(a), \notx{Rauzyboxw}{$\hat{\cR}_n^\bw, \hat{\cR}_n^\bw(\cdot)$}{Rauzy box w.r.t.~$\bw$}
\]
and we set $\hR_n^\bw = \bigcup_{a\in\cA} \hR_n^\bw(a)$.
We will see in Proposition~\ref{p:tilingw} that the tiling property of~$\cC_n^\bw$ does not depend~on~$\bw$. 
In particular, $\cC_n^\bone$ is a tiling if and only if $\cC_n$ is a tiling (and $\bsigma$ admits a generalized left eigenvector).

For $\balpha \in \RR^{d-1}$, define the rotation
\[
\fr_{\balpha}:\, \TT^{d-1} \to \TT^{d-1},\ \bz \mapsto \bz + \balpha. \notx{rot}{$\fr_{\balpha}, \fr_{\bx}$}{toral rotation}
\]

\begin{theorem} \label{t:tilingpds}
Let $\bsigma \in \cS_d^{\ZZ}$, with $d \ge 2$, be a primitive sequence of unimodular substitutions that admits a generalized right eigenvector~$\bu$, and let $n \in \ZZ$. 
If $\cC_n^{\bone}$ forms a tiling of~$\bone^\perp$, then the uniquely ergodic shift $(X_{\bsigma}^{(n)},\Sigma)$ is measurably conjugate to the minimal rotation $(\TT^{d-1}, \fr_{(\alpha_1,\dots,\alpha_{d-1})})$, with $(\alpha_1,\dots,\alpha_{d-1},1{-}\sum_{i=1}^{d-1}\alpha_i) \in \RR_{\ge0} \bu_n $, and  $(X_{\bsigma}^{(n)},\Sigma)$ has pure discrete spectrum. 
\end{theorem}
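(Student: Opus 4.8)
The plan is to build an explicit measurable conjugacy from the $\cS$-adic shift $(X_{\bsigma}^{(n)},\Sigma)$ to the torus rotation, using the Rauzy fractal tiling as the geometric realization. Since the tiling condition is shift-invariant (by Proposition~\ref{p:ntilingbox}, or rather its analog together with Proposition~\ref{p:cChC}) it suffices to treat $n=0$; I would simply write $\cR = \cR_0$, $\cR(a) = \cR_0(a)$, $\bu = \bu_0$, and work in $\bone^\perp$ throughout, using the fact from Proposition~\ref{p:tilingw} (to be proved just before this theorem) that the tiling property of $\cC_0^{\bone}$ is equivalent to that of $\cC_0^{\bv_0} = \cC_0$, hence to that of the Rauzy boxes $\hC_0$ by Proposition~\ref{p:cChC}. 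First I would note that $(X_{\bsigma},\Sigma)$ is minimal and uniquely ergodic by Lemma~\ref{l:uniquelyergodic} (primitivity plus the existence of a generalized right eigenvector), so there is a well-defined invariant measure $\mu$ to conjugate.

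The core construction is the \emph{representation map}. For $\omega = (\omega_k)_{k\in\ZZ} \in X_{\bsigma}$, one uses the $\cS$-adic desubstitution structure (recognizability, available in this generality by \cite{BSTY,BPRS}) to write, for each level $m\ge 0$, the prefix $\omega_0\omega_1\cdots$ as $\sigma_{[0,m)}$ applied to a level-$m$ sequence, with a controlled prefix remainder; concretely this produces for each $\omega$ a nested sequence of points $\pi_0\bl(p_m) + M_{\sigma_{[0,m)}}\cR_m(b_m)$ whose intersection is a single point $\varphi(\omega) \in \cR = \cR_0$, giving a continuous surjection $\varphi\colon X_{\bsigma} \to \cR$ that intertwines $\Sigma$ with the exchange of the pieces $\cR(a)$, i.e. $\varphi(\Sigma\omega) = \varphi(\omega) + \pi_0\be_{\omega_0}$ whenever $\varphi(\omega) \in \cR(\omega_0)$. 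This "domain exchange" picture is exactly the one underlying the set equation \eqref{e:setequationkl}. The tiling hypothesis for $\cC_0$ enters precisely to guarantee that this exchange of pieces, when read modulo the lattice $\ZZ^d \cap \bone^\perp$ in the hyperplane $\bone^\perp$, becomes the single translation $\bz \mapsto \bz + \pi_{\bu,\bone}\be_a$ that is \emph{independent of} $a$ (because $\pi_{\bu,\bone}\be_a - \pi_{\bu,\bone}\be_b \in \ZZ^d\cap\bone^\perp$): the tiles $\cR(a)$, after the exchange, reassemble into a fundamental domain, so the exchange is a.e. equal to one rotation of the quotient torus $\bone^\perp / (\ZZ^d\cap\bone^\perp) \cong \TT^{d-1}$. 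Under the affine identification of $\bone^\perp/(\ZZ^d\cap\bone^\perp)$ with $\TT^{d-1}$ via the first $d-1$ coordinates, this rotation is $\fr_{(\alpha_1,\dots,\alpha_{d-1})}$ with $(\alpha_1,\dots,\alpha_{d-1},1-\sum\alpha_i) \in \RR_{\ge0}\bu_0$, and minimality of this rotation follows from rational independence of the coordinates of $\bu_0$ (recall this holds under the standing balance/Pisot hypotheses, but here one can also get it directly: $\cC_0^{\bone}$ being a genuine aperiodic tiling forces rational independence). Then I would push the map down: let $\psi\colon X_{\bsigma} \to \TT^{d-1}$ be $\varphi$ followed by the quotient. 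One checks $\psi\circ\Sigma = \fr_{\balpha}\circ\psi$ everywhere.

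It remains to upgrade $\psi$ to a measurable conjugacy, i.e. to show $\psi$ is injective off a $\mu$-null set and measure-preserving onto normalized Lebesgue measure $\lambda_{\TT^{d-1}}$. Measure preservation: $\psi_*\mu$ is a $\fr_{\balpha}$-invariant probability measure, and $\fr_{\balpha}$ is uniquely ergodic (minimal rotation), so $\psi_*\mu = \lambda_{\TT^{d-1}}$. Essential injectivity is the delicate point. The tiling property says the overlaps $(\bx + \cR(a)) \cap (\by + \cR(b))$ for distinct lattice-plus-letter data have zero $(d{-}1)$-dimensional Lebesgue measure; combined with $\psi_*\mu = \lambda$ this forces the set of points with multiple $\psi$-preimages to have $\mu$-measure zero, by a standard Baire/measure argument: the "multiple point" set is contained in a countable union of such overlaps (translated around), and one transports measure zero back through $\psi$ using $\psi_*\mu = \lambda$. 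I expect this essential-injectivity step — making rigorous that "tiling $\Rightarrow$ the coding is $\mu$-a.e. injective" — to be the main obstacle, because it requires controlling the boundaries of the Rauzy fractals (which are genuinely fractal and whose Lebesgue measure being zero is not automatic) and combining recognizability with the tiling; this is exactly where the arguments of \cite[Section~7]{BST:19} and \cite[Section~5]{BST:23} get technical, and I would invoke the relevant lemmas from there (boundary has measure zero once the collection tiles, recognizability of primitive $\cS$-adic sequences) rather than redo them. Once $\psi$ is shown to be a measurable isomorphism, pure discrete spectrum of $(X_{\bsigma}^{(n)},\Sigma,\mu)$ is immediate, since a minimal rotation on $\TT^{d-1}$ has pure discrete spectrum and this property is a measure-isomorphism invariant.
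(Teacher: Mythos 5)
Your overall strategy is the paper's: realize the shift as a domain exchange on the Rauzy fractal, observe that the tiling hypothesis makes the exchange descend to a single translation of $\bone^\perp/(\ZZ^d\cap\bone^\perp)$, compute the rotation vector from $\pi_{\bu_n,\bone}\be_d$, and get rational independence (hence minimality) directly from aperiodicity of the tiling. But you run the conjugating map in the opposite direction from the paper, and this creates a genuine gap. Your map $\varphi\colon X_{\bsigma}\to\cR_0$ is defined as the intersection of the nested sets $\pi_0\bl(p_m)+M_{\sigma_{[0,m)}}\cR_m(b_m)$, and for this to be a single point you need $\mathrm{diam}(M_{\sigma_{[0,m)}}\cR_m(b_m))\to 0$. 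That shrinking is \emph{not} available under the hypotheses of this theorem: the statement assumes only primitivity, existence of a generalized right eigenvector, and the tiling property of $\cC_n^{\bone}$ — not balance, not compactness of the Rauzy fractals (the paper explicitly keeps unbounded fractals in play in the tiling statements), and not strong convergence. The shrinking you need is exactly the estimate used in the proof of Theorem~\ref{thm:MarkovFine}, which rests on \cite[Lemma~5.11]{BST:23} and requires balance plus rational independence; the lemmas you propose to import from \cite{BST:19,BST:23} likewise assume PRICE/balance. So as written, $\varphi$ may fail to be single-valued, and with it the entire intertwining $\psi\circ\Sigma=\fr_{\balpha}\circ\psi$ and the injectivity argument built on it. (Recognizability is a second, milder extra ingredient your direction needs and the paper's does not.)

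The paper's proof sidesteps all of this by defining the representation map from the fractal to the shift: $\psi_n(\bz)=(\omega_k)_k$ where $\mathfrak{h}_n^k(\bz)\in\cR_n^{\bone}(\omega_k)$, with $\mathfrak{h}_n$ the exchange of pieces. This is well defined off the null set of tile overlaps using only the tiling hypothesis, it intertwines $\mathfrak{h}_n$ with $\Sigma$ by construction, and the pushforward $\lambda_{\bone}\circ\psi_n^{-1}$ is identified with $\mu$ by unique ergodicity of the \emph{shift} (Lemma~\ref{l:uniquelyergodic}) rather than of the rotation. No shrinking of refinements, no recognizability, no separate essential-injectivity argument via countable unions of overlaps. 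If you want to keep your direction, you must either add balance/strong convergence to the hypotheses (weakening the theorem) or replace the nested-intersection definition of $\varphi$ by the paper's itinerary map.
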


\begin{proof}
We follow partially \cite[Section~8]{BST:19}.
By Lemma~\ref{l:uniquelyergodic}, $(X_{\bsigma}^{(n)}, \Sigma)$ is minimal and uniquely ergodic. 

Next we establish the measurable conjugacy of $(X_{\bsigma}^{(n)},\Sigma,\mu)$ to the domain exchange\indx{domain exchange} $(\cR_n,\mathfrak{h}_n,\lambda_{\bone})$ with
\begin{equation} \label{e:Hn}
\mathfrak{h}_n:\ \cR_n^\bone \to \cR_n^\bone, \quad \bz \mapsto \bz + \pi_{\bu_n,\bone}\, \be_a \quad \mbox{if}\ \bx \in \cR_n^\bone(a).
\end{equation}
and $\lambda_{\bone}$\notx{lambda}{$\lambda,\lambda_{\bone}$}{Lebesgue measure} the Lebesgue measure on~$\bone^\perp$, normalized such that the lattice $\ZZ^d \cap \bone^\perp$ has covolume~$1$. 
The mapping~$\mathfrak{h}_n$ is well defined a.e.\ on~$\cR_n^\bone$ because $\cC_n^\bone$ forms a tiling. 
By the definition of~$\cR_n^\bone(a)$, we have that, up to measure zero,
\[
\mathfrak{h}_n(\cR_n^\bone(a)) = \overline{\{\pi_{\bu_n,\bone} \bl(pa) \,:\, pa \preceq \sigma_{[n,m)}(b)\ \mbox{for infinitely many}\ m \geq n,\, b \in \cA\}}.
\]
Therefore, $\mathfrak{h}_n$ is a piecewise isometry with $\bigcup_{a\in\cA} \mathfrak{h}_n(\cR_n^\bone(a)) = \cR_n^\bone$, hence bijective up to measure zero, and its invariant measure is~$\lambda_{\bone}$. 
The representation map 
\begin{equation}\label{eq:psin}
\psi_n:\, \cR_n^\bone \to X_{\bsigma}^{(n)}, \ \bz \mapsto (\omega_k)_{k\in\ZZ} \ \mbox{such that}\ \mathfrak{h}_n^k(\bz) \in \cR_n^\bone(\omega_k)\ \mbox{for all}\ k \in \ZZ
\end{equation}
is well defined outside the set 
\[
D_n = \bigcup_{a,b\in\cA,\by\in\ZZ^d} \big(\pi_{\bu_n,\bone}\by + (\cR_n^\bone(a) \cap \cR_n^\bone(b))\big),
\]
which has zero Lebesgue measure because $\cC_n^\bone$ is a tiling.
On $\cR_n^\bone \setminus D_n$, we obviously have $\psi_n \circ \mathfrak{h}_n = \Sigma \circ \psi_n$.
Therefore, $\lambda_{\bone} \circ \psi_n^{-1}$ is an invariant measure of $(X_{\bsigma}^{(n)}, \Sigma)$ and thus equal to the unique invariant measure~$\mu$. 

Since $\pi_{\bu_n,\bone} \be_a \equiv \pi_{\bu_n,\bone} \be_b \bmod{\ZZ^d \cap \bone^\perp}$ for all $a,b \in \cA$, this implies that the domain exchange $(\cR_n^\bone,\mathfrak{h}_n,\lambda_{\bone})$ is measurably conjugate to the rotation by $\pi_{\bu_n,\bone} \be_d$ on $\bone^\perp / (\ZZ^d \cap \bone^\perp)$.
Since $\pi_{\bu_n,\bone} \be_d = \be_d {-} \bu_n/\lVert\bu_n\rVert_1 = (-\alpha_1,\dots,-\alpha_{d-1},\sum_{i=1}^{d-1}\alpha_i)$, this rotation is conjugate to the rotation by $(\alpha_1,\dots,\alpha_{d-1})$ on~$\TT^{d-1}$, via the map $(x_1,\dots,x_d) \mapsto (-x_1,\dots,-x_{d-1})$. 

Moreover, $\bu_n$ has rationally independent coordinates because otherwise the Rauzy fractal~$\cR_n^\bone$ would lie in a countable union of $(d{-}2)$-dimensional subspaces of the $(d{-}1)$-dimensional space~$\bone^\perp$, contradicting the tiling property. 
Therefore, the rotation is minimal. 
\end{proof}

It is conjectured that pure discrete spectrum always holds under the conditions of Proposition~\ref{prop:sadic1}. This hard and unsolved problem is known as the \emph{$\cS$-adic Pisot conjecture}\indx{Pisot!S@$\cS$-adic conjecture}\indx{S@$\cS$-adic!Pisot conjecture}; see Conjecture~\ref{c:SadicPisotconj}.

The tiling property for~$\cC_n^\bone$ can be replaced by that for~$\cC_n$ due to the following proposition.
Moreover, this proposition shows that the tiling property of~$\hC_n$ depends only on the future $(\sigma_n,\sigma_{n+1},\dots)$ of the sequence $\bsigma = (\sigma_n)_{n\in\ZZ}$. 

\begin{proposition} \label{p:tilingw}
Let $\bsigma \in \cS_d^{\ZZ}$, with $d \ge 2$, be a primitive sequence of unimodular substitutions that admits a generalized right eigenvector, let $n \in \ZZ$ and $\bw \in \RR_{>0}^d$.
Then the following are equivalent.
\begin{enumerate}[\upshape (i)]
\itemsep.5ex
\item \label{it:w1}
$\cC_n^\bw$ forms a tiling of $\bw^\perp$. 
\item \label{it:w2}
$\hC_n^\bw$ forms a tiling of $\RR^d$. 
\item \label{it:w3}
$\cC_n^\bone$ forms a tiling of $\bone^\perp$. 
\end{enumerate}
\end{proposition}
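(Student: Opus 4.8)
The plan is to reduce the three-way equivalence to two assertions, each holding for \emph{every} positive vector rather than just the fixed~$\bw$: first, that (i) and (ii) are equivalent for every $\bw\in\RR_{>0}^d$; and second, that the validity of (ii) does not depend on the choice of $\bw\in\RR_{>0}^d$. Granting these, the proposition is immediate, since statement (iii) is exactly statement (i) with $\bw$ replaced by $\bone\in\RR_{>0}^d$, so that
\[
\text{(i) for }\bw \iff \text{(ii) for }\bw \iff \text{(ii) for }\bone \iff \text{(i) for }\bone = \text{(iii)}.
\]
Note that this route never mentions the generalized left eigenvector $\bv_n$, which is essential here because $\bsigma$ is only assumed primitive in the future, so the collections $\cC_n$ and $\hC_n$ of~\eqref{e:defcCn} and~\eqref{e:hC} need not even be defined.

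For the equivalence of (i) and (ii) I would transcribe the proof of Proposition~\ref{p:cChC} line by line, replacing $\bv_n$ by $\bw$ and the projections $\pi_n=\pi_{\bu_n,\bv_n}$, $\tilde{\pi}_n=\tilde{\pi}_{\bu_n,\bv_n}$ by $\pi_{\bu_n,\bw}$, $\tilde{\pi}_{\bu_n,\bw}$. That proof uses only that $\bu_n\in\RR_{>0}^d$ (so that $\pi_{\bu_n,\bw}$ is well defined and the segments $\tilde{\pi}_{\bu_n,\bw}\llbracket\be_a\rrbracket$ are nondegenerate), that $\langle\bw,\be_a\rangle=w_a>0$, and that $\{\langle\bw,\bx\rangle:\bx\in\ZZ^d\}$ is either dense in~$\RR$ or a discrete subgroup; none of this relies on $\bv_n$ being a generalized left eigenvector, and all of it holds for any $\bw\in\RR_{>0}^d$. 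In particular the half-open modification $\hR'_n(a)=\hR_n^\bw(a)\setminus(\tilde{\pi}_{\bu_n,\bw}\be_a-\cR_n^\bw(a))$ and the slicing of $\hC_n^\bw$ by the affine hyperplanes $\bz+\bw^\perp$, $\bz\in\ZZ^d$, go through verbatim and yield that $\cC_n^\bw$ tiles $\bw^\perp$ if and only if $\hC_n^\bw$ tiles~$\RR^d$.

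The core point, and what I expect to be the main obstacle, is the $\bw$-independence of (ii): for $\bw,\bw'\in\RR_{>0}^d$, $\hC_n^\bw$ tiles $\RR^d$ if and only if $\hC_n^{\bw'}$ tiles~$\RR^d$. Two structural facts should drive this. First, $\pi_{\bu_n,\bw'}$ restricts to a linear isomorphism $\bw^\perp\to\bw'^\perp$ which, by the identity $\pi_{\bu_n,\bw'}\circ\pi_{\bu_n,\bw}=\pi_{\bu_n,\bw'}$ together with the definition of the subtiles, maps $\cR_n^\bw(a)$ onto $\cR_n^{\bw'}(a)$ for every $a\in\cA$. Second, writing $\RR^d=\bw^\perp\oplus\RR\bu_n$ one checks directly from~\eqref{e:Rhat} that $\hR_n^\bw(a)$ is the ``column'' $-\cR_n^\bw(a)+[0,h_a^\bw]\,\bu_n$ with $h_a^\bw=w_a/\langle\bu_n,\bw\rangle$; for $\bw=\bone$ all heights $h_a^\bone=1/\lVert\bu_n\rVert_1$ are equal, so $\hR_n^\bone$ is a genuine prism $-\cR_n^\bone+[0,1/\lVert\bu_n\rVert_1]\,\bu_n$, whereas for general $\bw$ it is a ``stepped'' version of it with the same footprint. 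I would then run the classical stepped-surface argument (as in~\cite{Ito-Rao:06} and \cite[Section~7]{BST:19}): the map that slides, separately for each $a$, the $a$-columns of $\hC_n^\bw$ along $\RR\bu_n$ onto the columns of a prism over $-\cR_n^\bw$ descends to a measure-preserving bijection of $\RR^d/\ZZ^d$ intertwining the class of $\hC_n^\bw$ with that of the prism tiling, so that one tiles precisely when the other does; and the prism tiling over $-\cR_n^\bw$ is transported, via the linear isomorphism above, to the prism tiling over $-\cR_n^\bone$, which is $\hC_n^\bone$. The delicate part here is making the column-sliding identification rigorous — in particular verifying that the (irrational) shifts in the $\bu_n$-direction are exactly absorbed by $\ZZ^d$-periodicity, and keeping track of overlaps and of the covering of~$\RR^d$; everything else is bookkeeping. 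Once this is established, the final remark follows for free: since $\cR_n^\bw(a)$, and hence the tiling property, depends only on the tail $(\sigma_n,\sigma_{n+1},\dots)$, in the presence of a generalized left eigenvector the choice $\bw=\bv_n$ recovers the statement that the tiling of $\hC_n$ is determined by the future of~$\bsigma$ alone.
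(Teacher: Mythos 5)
Your overall architecture is the same as the paper's: the equivalence of (i) and~(ii) is obtained by rereading the proof of Proposition~\ref{p:cChC} with $\bv_n$ replaced by~$\bw$ (and indeed that proof uses nothing beyond $\bu_n\in\RR_{>0}^d$, $\langle\bw,\be_a\rangle>0$, and the dense-or-discrete dichotomy for $\{\langle\bw,\bx\rangle:\bx\in\ZZ^d\}$), and the whole content is then the $\bw$-independence of the tiling property of~$\hC_n^\bw$, with (iii) recovered as the case $\bw=\bone$. Your observation that $\pi_{\bu_n,\bw'}$ restricts to an isomorphism $\bw^\perp\to\bw'^\perp$ carrying $\cR_n^\bw(a)$ onto $\cR_n^{\bw'}(a)$, and that $\hR_n^\bw(a)$ is the column $-\cR_n^\bw(a)+[0,h_a^\bw]\bu_n$ with $h_a^\bw=\langle\bw,\be_a\rangle/\langle\bu_n,\bw\rangle$, is correct and is implicitly used in the paper as well.

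The gap is in the $\bw$-independence step, which you flag as delicate but do not carry out, and the mechanism you propose for it does not work as stated. Since the heights $h_a^\bw$ depend on~$a$, the box $\hR_n^\bw$ is \emph{not} a prism, so a map "sliding the $a$-columns onto the columns of a prism" must rescale along $\RR\bu_n$ by an $a$-dependent factor; such a fiberwise rescaling cannot be simultaneously measure-preserving and $\ZZ^d$-equivariant, so the asserted "measure-preserving bijection of $\RR^d/\ZZ^d$ intertwining the two collections" is not available a priori — constructing it is essentially equivalent to what has to be proved. The paper's actual argument is purely one-dimensional and avoids any global map. For $\bz\in\cR_n^\bone$, the domain exchange structure $\cR_n^\bone=\bigcup_{a\in\cA}(\pi_{\bu_n,\bone}\bl(a)+\cR_n^\bone(a))$ produces an itinerary $(\omega_k)_{k\in\ZZ}$ with $\bz+\pi_{\bu_n,\bone}\by_k\in\cR_n^\bone(\omega_k)$ and $\by_{k+1}=\by_k+\be_{\omega_k}$; then for \emph{every} $\bw$ the tiles $\by_k+\hR_n^\bw(\omega_k)$ meet the line $\RR\bu_n-\bz$ in the consecutive segments $\tilde{\pi}_{\bu_n,\bw}(\by_k+\llbracket\be_{\omega_k}\rrbracket)-\pi_{\bu_n,\bw}\bz$, whose endpoints match because $\langle\by_k+\be_{\omega_k},\bw\rangle=\langle\by_{k+1},\bw\rangle$ — changing $\bw$ rescales the segment lengths but never the combinatorics. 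This yields the $\bw$-independent identity $\ZZ^d+\hR_n^\bw=\ZZ^d+\RR\bu_n-\cR_n^\bone$, which settles the covering half; for overlaps, one observes that a tile meeting such a column in positive measure may be assumed to lie outside the itinerary chain (distinct chain tiles are measure-disjoint, being separated by the slabs $\by_k+\llbracket\be_{\omega_k}\rrbracket+\bw^\perp$), and it then keeps overlapping the chain for every~$\bw$. This itinerary bookkeeping is exactly the content you deferred, and your proof is incomplete without it.
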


\begin{proof}
First note that the projections $\pi_{\bu_n,\bw}$ are well defined since \mbox{$\langle\bw,\bu_n\rangle > 0$}.
Then the equivalence between (\ref{it:w1}) and~(\ref{it:w2}) follows directly from Proposition~\ref{p:cChC} and its proof; it suffices to replace $\bv_n$ by~$\bw$. 
Therefore, we only have to show that the tiling property of~$\hC_n^\bw$ does not depend on~$\bw$ (for $\bw \in \RR_{>0}^d$).

Let $\bz \in \cR_n^\bone$.
Since $\cR_n^\bone = \bigcup_{a\in\cA} (\pi_{\bu_n,\bone}\bl(a){+}\cR_n^\bone(a))$ (see the proof of Theorem~\ref{t:tilingpds}), we can find a sequence $(\omega_k)_{k\in\ZZ} \in \cA^\ZZ$ such~that 
\[
\bz + \pi_{\bu_n,\bone}\, \by_k \in \cR_n^\bone(\omega_k) \ \mbox{for all}\ k \in \ZZ, \quad \mbox{with}\ 
\by_k = \begin{cases}\bl(\omega_0\cdots\omega_{k-1}) & \mbox{if}\ k \ge 0, \\ -\bl(\omega_k\cdots\omega_{-1}) & \mbox{if}\ k < 0.\end{cases}
\] 
Since $\cR_n^\bw = \pi_{\bu_n,\bw} \cR_n^\bone$, we get that $\pi_{\bu_n,\bw}(\bz{+}\by_k) \in \cR_n^\bw(\omega_k)$, thus
\[
\begin{gathered}
\tilde{\pi}_{\bu_n,\bw}(\by_k {+} \llbracket\be_{\omega_k}\rrbracket) - \pi_{\bu_n,\bw} \bz \subset \by_k + \hR_n^\bw(\omega_k) \quad \mbox{for all}\ k \in \ZZ, \\
\RR \bu_n - \bz= \bigcup_{k\in\ZZ} \big(\tilde{\pi}_{\bu_n,\bw}(\by_k {+} \llbracket\be_{\omega_k}\rrbracket) - \pi_{\bu_n,\bw} \bz \big) \subset \bigcup_{k\in\ZZ} \big(\by_k + \hR_n^\bw(\omega_k)\big). 
\end{gathered}
\]
Note that changing~$\bw$ does not change the decomposition of the line, only the length of the line segments. 
We obtain that $\ZZ^d {+} \hR_n^\bw = \ZZ^d {+} \RR \bu_n {-} \cR_n^\bone$. 
Since this set does not depend on~$\bw$, the covering property of $\hC_n^\bw$ does not depend on~$\bw$. 

Assume now that $\hC_n^\bw$ is a covering. 
Then $\hC_n^\bw$ is not a tiling if and only if there is a subset of positive measure of one tile that has an intersection of positive measure with another tile of~$\hC_n^\bw$. 
By $\ZZ^d$-invariance, we can assume that this set is contained in $\RR \bu_n {-} \cR_n^\bone$.
Therefore, we consider w.l.o.g.\ a subset $\tilde{\pi}_{\bu_n,\bw} (\bx {+} \llbracket \be_a\rrbracket) {-} \pi_{\bu_n,\bw} Z$ of $\bx {+} \hR_n^\bw(a)$ with $(\bx,a) \in \ZZ^d {\times} \cA$, $Z \subset \cR_n^\bone(a)$ and $\lambda_{\bone}(Z) > 0$.
Then, we can, similarly to the previous paragraph, find a sequence $(\omega_k)_{k\in\ZZ} \in \cA^\ZZ$ such~that 
\[
\mathcal{\lambda}_{\bone} \big((\pi_{\bu_n,\bone} \by_k {+} Z) \cap \cR_n^\bone(\omega_k)\big) > 0\ \mbox{for all}\ k \in \ZZ, \ \mbox{with}\ 
\by_k = \begin{cases}\bl(\omega_0\cdots\omega_{k-1}) & \hspace{-.5em}\mbox{if}\ k \ge 0, \\ -\bl(\omega_k\cdots\omega_{-1}) & \hspace{-.5em}\mbox{if}\ k < 0,\end{cases}
\]
and we have, for all $k \in \ZZ$,
\[
\tilde{\pi}_{\bu_n,\bw}(\by_k {+} \llbracket\be_{\omega_k}\rrbracket) - \pi_{\bu_n,\bw}\big(Z \cap (\cR_n^\bone(\omega_k) {-} \pi_{\bu_n,\bone} \by_k)\big) \subset \by_k + \hR_n^\bw(\omega_k).
\]
Since $\by_k {+} \hR_n^\bw(\omega_k) \subset \by_k {+} \llbracket\be_{\omega_k}\rrbracket {+} \bw^\perp$ for all $k \in \ZZ$, the intersection of two distinct tiles in $\{\by_k {+} \hR_n^\bw(\omega_k) : k \in \ZZ\}$ has zero measure. 
Therefore, we can assume w.l.o.g.\ that $(\bx,a) \notin \{(\by_k,\omega_k) : k \in \ZZ\}$.
Now, changing $\bw$ does not change the property that $\tilde{\pi}_{\bu_n,\bw} (\bx {+} \llbracket \be_a\rrbracket) {-} \pi_{\bu_n,\bw} Z$ is a subset of $\bx {+} \hR_n^\bw(\ba)$ and has intersection of positive measure with $\by_k {+} \hR_n^\bw(\omega_k)$ for some $k \in \ZZ$ (with $k$ depending on~$\bw$); here, we use that the coordinates of $\bw$ and thus the lengths of $\tilde{\pi}_{\bu_n,\bw} \llbracket\be_a\rrbracket$, $a \in \cA$, are positive for all $\bw \in \RR_{>0}^d$.
This proves that the tiling property of~$\hC_n^\bw$ does not depend on~$\bw$; in particular, it holds for $\bw = \bone$ if and only if it holds for some~$\bw \in \RR_{>0}^d$. 
\end{proof}

A~similar result to Proposition~\ref{p:tilingw} can be found in \cite[Proposition~7.5]{BST:19} for $\bw \in \RR_{\ge0}^d \setminus \{\mathbf{0}\}$, with some additional conditions and a completely different proof. 
In our case, it is not difficult to see that the measure of the symmetric difference of $\hR_n^\bw$ and $\hR_n^{\bw'}$ is small when $\bw, \bw' \in \RR_{\ge0}^d \setminus \{\mathbf{0}\}$ are close.
This implies that $\hC_n^\bw$ is a tiling even for $\bw \in \RR_{\ge0}^d \setminus \{\mathbf{0}\}$ when $\cC_n^\bone$ is a tiling, but we do not know if the converse holds.

Theorem~\ref{t:tilingpds} and Proposition~\ref{p:tilingw} give the following corollary.

\begin{corollary} \label{c:tilingpds}
Let $\bsigma \in \cS_d^{\ZZ}$, with $d \ge 2$, be a primitive sequence of unimodular substitutions that admits generalized left and right eigenvectors.
If $\hC_n$ forms a tiling of~$\RR^d$, $n \in \ZZ$, then the uniquely ergodic shift $(X_{\bsigma}^{(n)},\Sigma)$ is measurably conjugate to the minimal rotation $(\TT^{d-1}, \fr_{\alpha_1,\dots,\alpha_{d-1})})$, with $(\alpha_1,\dots,\alpha_{d-1},1{-}\sum_{i=1}^{d-1}\alpha_i) \in \RR_{\ge0} \bu_n $, and  $(X_{\bsigma}^{(n)},\Sigma)$ has pure discrete spectrum.  
\end{corollary}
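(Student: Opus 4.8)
The plan is to deduce Corollary~\ref{c:tilingpds} by concatenating the two preceding results, Theorem~\ref{t:tilingpds} and Proposition~\ref{p:tilingw}; no new argument is required. Fix $n \in \ZZ$. Since $\bsigma$ is primitive and admits generalized right and left eigenvectors, the shifted sequence $\Sigma^n\bsigma$ does as well, with eigenvectors $\bu_n$, $\bv_n$ given by~\eqref{eq:unvn} and $\langle\bu_n,\bv_n\rangle\ne 0$ (the standing assumption of the section); in particular Lemma~\ref{l:uniquelyergodic} already shows that $(X_{\bsigma}^{(n)},\Sigma)$ is minimal and uniquely ergodic. With the notation of Proposition~\ref{p:tilingw} and $\bw = \bv_n$ one has $\cR_n^{\bv_n}(a) = \cR_n(a)$ and $\hR_n^{\bv_n}(a) = \hR_n(a)$ for all $a \in \cA$, hence $\hC_n^{\bv_n} = \hC_n$ and $\cC_n^{\bv_n} = \cC_n$.

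First I would rewrite the hypothesis ``$\hC_n$ forms a tiling of $\RR^d$'' as ``$\hC_n^{\bv_n}$ forms a tiling of $\RR^d$'', i.e.\ item~(\ref{it:w2}) of Proposition~\ref{p:tilingw} for $\bw = \bv_n$. The equivalence $(\ref{it:w2})\Leftrightarrow(\ref{it:w3})$ in that proposition then gives that $\cC_n^{\bone}$ forms a tiling of $\bone^\perp$. Second, I would feed this into Theorem~\ref{t:tilingpds} applied to $\bsigma$ at level $n$: the hypotheses of that theorem (primitivity, existence of a generalized right eigenvector, and $\cC_n^{\bone}$ a tiling) are met, so $(X_{\bsigma}^{(n)},\Sigma)$ is measurably conjugate to the minimal rotation $(\TT^{d-1},\fr_{(\alpha_1,\dots,\alpha_{d-1})})$ with $(\alpha_1,\dots,\alpha_{d-1},1-\sum_{i=1}^{d-1}\alpha_i)\in\RR_{\ge0}\bu_n$, and it has pure discrete spectrum. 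That is exactly the asserted conclusion, and the unique ergodicity part of the statement is the one recorded above from Lemma~\ref{l:uniquelyergodic}.

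I do not expect a genuine obstacle; the only point that needs checking is that $\bv_n \in \RR_{>0}^d$, since Proposition~\ref{p:tilingw} is stated for $\bw \in \RR_{>0}^d$ (and the remark following it notes that the analogue for merely nonnegative $\bw$ may fail in one direction, so the reduction cannot be bypassed at this step). This strict positivity is automatic in the settings where the corollary is used --- e.g.\ under two-sided primitivity, where Proposition~\ref{prop:fur} provides a strictly positive generalized left eigenvector $\bv_n$ --- so I would either invoke two-sided primitivity or simply remark that $\bv_n$ may and will be chosen strictly positive. Everything else is a mechanical invocation of already-proved statements.
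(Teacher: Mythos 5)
Your proof is correct and is exactly the paper's: the paper gives no written argument beyond the sentence that Theorem~\ref{t:tilingpds} and Proposition~\ref{p:tilingw} together yield the corollary, and your chain ($\hC_n=\hC_n^{\bv_n}$ tiles $\Rightarrow$ $\cC_n^{\bone}$ tiles $\Rightarrow$ conjugacy to the rotation) is that deduction spelled out. Your caveat about needing $\bv_n\in\RR_{>0}^d$ is a genuine subtlety the paper glosses over --- the remark following Proposition~\ref{p:tilingw} explicitly states that for merely nonnegative $\bw$ the implication from ``$\hC_n^{\bw}$ tiles'' back to ``$\cC_n^{\bone}$ tiles'' is not known --- so recording either two-sided primitivity or strict positivity of $\bv_n$ as you do is the right repair.
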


\subsection{Sufficient conditions for tilings and property PRICE} \label{sec:suff-cond-tilings}
There are several criteria for tiling that can be expressed in terms of so-called \emph{coincidence conditions} that exist in the usual Pisot framework; see e.g.\  \cite{AL11,AkiBBLS}.
In \cite{BST:19,BST:23}, sufficient conditions for the collections $\cC_n$ to be tilings are given.
These conditions require the so-called property PRICE, for which we introduce a two-sided version that implies the one-sided property PRICE of \cite[Definition~4.2]{BST:23} but is simpler to state. 
We use the notation 
\begin{equation}\label{eq_BC}
B_C^{(n)} = \{\bsigma \in \cS_d^{\ZZ} \,:\, \cL_{\bsigma}^{(n)}\ \mbox{is $C$-balanced}\} \qquad (n \in \ZZ). \notx{BC}{$B_C^{(n)}$}{set of balanced sequences of substitutions}
\end{equation} 
Note that 
\begin{equation}\label{eq:BCkn}
\Sigma^kB_C^{(n)}= B_C^{(n-k)} \qquad (k,n\in\ZZ).
\end{equation}

\begin{definition}[Two-sided property PRICE]\label{def:PRICE}\indx{PRICE}
Let $d\ge 2$ and fix $n\in\ZZ$. We say that a~sequence $\bsigma \,{=}\, (\sigma_m)_{m\in\ZZ} \,{\in}\, \cS_d^{\ZZ}$ has the \emph{two-sided property PRICE (at level~$n$)} if there are strictly increasing sequences of positive integers $(\ell_k)_{k\in\NN}$ and of integers $(r_k)_{k\in\NN}$ such that the following conditions hold.
\begin{itemize}
\labitem{(P)}{defP}
There exists $h \in \NN$ and a positive matrix~$M'$ such that $M_{\sigma_{[\ell_k-h,\ell_k)}} = M'$ for all $k \in \NN$.
\labitem{(R)}{defR}
We have $(\sigma_{r_k-\ell_k}, \dots,\sigma_{r_k+\ell_k-1}) = (\sigma_{n-\ell_k}, \dots,\sigma_{n+\ell_k-1})$ for all $k \in \NN$.
\labitem{(I)}{defI}
The sequence~$\bsigma$ is algebraically irreducible. 
\labitem{(C)}{defC}
There exists $C > 0$ such that $\bsigma \in B_C^{(r_k+\ell_k)}$ for all $k\in\NN$.
\labitem{(E)}{defE}
$\bsigma$ admits a generalized left eigenvector $\bv$.
\end{itemize}
\end{definition}

We recall some properties implied by PRICE that are proved in \cite{BST:19,BST:23} for the one-sided property PRICE defined in \cite[Definition~5.8]{BST:19}.

\begin{proposition} \label{prop:sadic1}
Let $n \in \ZZ$ and assume that $\bsigma \in \cS_d^{\ZZ}$ satisfies the two-sided property PRICE at level~$n$.
Then the following holds.
\begin{enumerate}[\upshape (i)]
\item \label{it:alllevelsPRICE}
The sequence~$\bsigma$ satisfies the two-sided property PRICE at all levels $m \ge n$. 
\item \label{it:righteigenvector}
The sequence~$\bsigma$ admits a generalized right eigenvector.
\item \label{it:onesidedPRICE}
The sequence $(\sigma_n,\sigma_{n+1},\dots)$ satisfies the one-sided property PRICE. 
\item \label{it:subdivision}
For each $a \in \cA$, the subtile~$\cR_n(a)$ of the Rauzy fractal is a compact set that is the closure of its interior, its boundary has zero Lebesgue measure, and the union in \eqref{e:setequationkl} is disjoint up to sets of measure zero.
\item \label{it:multipletiling}
The collection $\cC_n$ forms a multiple tiling of~$\bv_n^\perp$, i.e., there exists $c \ge 1$ such that each point of $\bv_n^\perp$ is contained in at least $c$ elements of~$\cC_n$ and a.e.\ point of $\bv_n^\perp$ is contained in exactly $c$ elements of~$\cC_n$.
\end{enumerate}
\end{proposition}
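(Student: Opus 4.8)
The statement to be proved, Proposition~\ref{prop:sadic1}, asserts five consequences of the two-sided property PRICE. My strategy is to reduce each of them to the corresponding one-sided results already established in \cite{BST:19,BST:23}, since the two-sided property PRICE of Definition~\ref{def:PRICE} has been deliberately engineered to imply the one-sided property PRICE of \cite[Definition~5.8]{BST:19} and \cite[Definition~4.2]{BST:23}. The one subtlety is that the quantities $\cR_n(a)$, $\hR_n$, $\cC_n$, etc.\ defined here depend only on the \emph{future} $(\sigma_n,\sigma_{n+1},\dots)$ (see Lemma~\ref{lem:seteq}, Lemma~\ref{lem:brokenprojection}, and the remark before Proposition~\ref{p:tilingw}), except that the projections~$\pi_n$, $\tilde{\pi}_n$ require a generalized left eigenvector~$\bv_n$ which is a genuinely two-sided object. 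Hence assertion~(\ref{it:righteigenvector}) and (\ref{it:onesidedPRICE}) together say precisely that ``future PRICE + left eigenvector'' holds, from which all the remaining statements are quotations of one-sided theorems applied to the future.

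\textbf{Step 1: assertion (\ref{it:onesidedPRICE}).} I would first unwind the definitions. Conditions \ref{defP}, \ref{defR}, \ref{defI}, \ref{defC} of Definition~\ref{def:PRICE} involve the sequences $(\ell_k)$, $(r_k)$ and only constrain blocks $(\sigma_{r_k-\ell_k},\dots,\sigma_{r_k+\ell_k-1})$ and $(\sigma_{\ell_k-h},\dots,\sigma_{\ell_k-1})$; the indices $\ell_k-h$ and, for large $k$, $r_k-\ell_k$ eventually exceed $n$, so the relevant data lie in the future. Comparing with \cite[Definition~5.8]{BST:19}, conditions~\ref{defP}, \ref{defR}, \ref{defI}, \ref{defC} restricted to indices $\ge n$ are exactly the ingredients of the one-sided property PRICE for $(\sigma_n,\sigma_{n+1},\dots)$; the extra one-sided requirement of a projection vector is supplied by condition~\ref{defE} (the generalized left eigenvector~$\bv$), which by \eqref{eq:unvn} transports to $\bv_n = \tr{\!M}_{\sigma_{[0,n)}}\bv$ (or its inverse), a legitimate choice of $\bw \in \RR_{\ge0}^d\setminus\{\mathbf{0}\}$ for the one-sided construction. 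So (\ref{it:onesidedPRICE}) follows by direct matching.

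\textbf{Step 2: assertions (\ref{it:righteigenvector}), (\ref{it:subdivision}), (\ref{it:multipletiling}), (\ref{it:alllevelsPRICE}).} For (\ref{it:righteigenvector}): condition~\ref{defP} gives primitivity and condition~\ref{defR} gives eventual recurrence of the sequence of incidence matrices $\bM_{\bsigma}$ (the blocks $(\sigma_{n-\ell_k},\dots,\sigma_{n+\ell_k-1})$ recur at positions~$r_k$ with $\ell_k\to\infty$), so Proposition~\ref{prop:fur} applied to $(M_{\sigma_n})_{n\ge0}$ yields the generalized right eigenvector~$\bu$. With (\ref{it:onesidedPRICE}) and (\ref{it:righteigenvector}) in hand, assertions (\ref{it:subdivision}) and (\ref{it:multipletiling}) are restatements of \cite[Proposition~7.3]{BST:19} and \cite[Corollary~7.4 / Theorem~7.1]{BST:19} (equivalently the corresponding statements in \cite{BST:23}) applied to the one-sided sequence $(\sigma_n,\sigma_{n+1},\dots)$ with projection vector~$\bv_n$, and I would cite them directly. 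For (\ref{it:alllevelsPRICE}): given PRICE at level~$n$, one obtains PRICE at level $m>n$ by keeping the \emph{same} sequences $(\ell_k)$, $(r_k)$ (after discarding finitely many initial terms so that $\ell_k-h>m$ and $r_k-\ell_k>m$); conditions \ref{defP}, \ref{defI}, \ref{defC}, \ref{defE} are unchanged, and condition~\ref{defR} at level~$m$ follows from condition~\ref{defR} at level~$n$ because the block equality $(\sigma_{r_k-\ell_k},\dots,\sigma_{r_k+\ell_k-1})=(\sigma_{n-\ell_k},\dots,\sigma_{n+\ell_k-1})$ restricts to the sub-block around position~$m$ once $\ell_k$ is large enough, possibly after shifting~$r_k$ by the constant $m-n$ and shrinking~$\ell_k$ accordingly.

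\textbf{Expected main obstacle.} The only place requiring genuine care — rather than bookkeeping — is verifying that the two-sided property PRICE really does specialize to the \emph{precise} one-sided hypotheses used in \cite{BST:19,BST:23}: those papers phrase PRICE with a single directive sequence and a fixed auxiliary vector, and one must check that condition~\ref{defE} (existence of a generalized \emph{left} eigenvector) supplies exactly the vector~$\bw$ needed there and that the balance condition~\ref{defC}, which here is stated at levels $r_k+\ell_k$, translates via \eqref{eq:BCkn} and the recurrence~\ref{defR} into balance at the levels demanded by the one-sided definition. I expect this translation to go through cleanly — it is the reason Definition~\ref{def:PRICE} was written as it was — but it is the step where a careful reader should concentrate, and where I would write out the index arithmetic in full.
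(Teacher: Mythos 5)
Your proposal takes essentially the same route as the paper: reduce to $n=0$ by shift invariance, derive the generalized right eigenvector from \ref{defP} and \ref{defR} via Proposition~\ref{prop:fur}, deduce the one-sided property PRICE for the future, and then quote \cite[Propositions~6.4, 7.3 and~7.5]{BST:19} (together with Lemma~\ref{lem:RFbalanced}) for (iv) and (v); for (i) the paper uses the reindexed sequences $(\ell_{k+m}-m)_k$, $(r_{k+m}+m)_k$, which matches your adjustment. The one step you defer --- checking that condition~\ref{defE} supplies exactly the recurrent vector demanded by the one-sided definition --- is closed in the paper by the short computation
\[
\bv = \lim_{k\to\infty} \frac{\tr{\!M}_{\sigma_{[-\ell_k,0)}}\bv}{\lVert\tr{\!M}_{\sigma_{[-\ell_k,0)}}\bv\rVert} = \lim_{k\to\infty} \frac{\tr{\!M}_{\sigma_{[r_k-\ell_k,r_k)}}\bv}{\lVert\tr{\!M}_{\sigma_{[r_k-\ell_k,r_k)}}\bv\rVert} = \lim_{k\to\infty} \frac{\bv_{r_k}}{\lVert\bv_{r_k}\rVert},
\]
where the middle equality uses \ref{defR} to identify the past blocks $[-\ell_k,0)$ with the future blocks $[r_k-\ell_k,r_k)$; this is exactly the index arithmetic you said you would write out, and it goes through as you anticipated.
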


\begin{proof}
Since $\bsigma$ satisfies the property PRICE at level~$n$ if and only if $\Sigma^n \bsigma$ satisfies the property PRICE at level~$0$, we can assume w.l.o.g.\ that $n = 0$.

Then, for each $m \ge 0$, $\bsigma$ has the property PRICE at level~$m$ with sequences $(\ell_{k+m}{-}m)_{k\in\NN}$ and $(r_{k+m}{+}m)_{k\in\NN}$, hence (\ref{it:alllevelsPRICE}) holds.
Since $\bsigma$ is primitive and eventually recurrent by (P) and (R), it has a generalized right eigenvector~$\bu$ by Proposition~\ref{prop:fur}, i.e., (\ref{it:righteigenvector}) holds. 
We have 
\[
\bv = \lim_{k\to\infty} \frac{\tr{\!M}_{\sigma_{[-\ell_k,0)}}\bv}{\lVert\tr{\!M}_{\sigma_{[-\ell_k,0)}}\bv\rVert} = \lim_{k\to\infty} \frac{\tr{\!M}_{\sigma_{[r_k-\ell_k,r_k)}}\bv}{\lVert\tr{\!M}_{\sigma_{[r_k-\ell_k,r_k)}}\bv\rVert} = \lim_{k\to\infty} \frac{\tr{\!M}_{\sigma_{[0,r_k)}}\bv}{\lVert\tr{\!M}_{\sigma_{[0,r_k)}}\bv\rVert} = \lim_{k\to\infty} \frac{\bv_{r_k}}{\lVert\bv_{r_k}\rVert}
\]
when $\bv$ is the generalized left eigenvector of~$\bsigma$ with $\lVert\bv\rVert = 1$.
Therefore, we obtain that $(\sigma_0,\sigma_1,\dots)$ satisfies the one-sided property PRICE of \cite[Definition~5.8]{BST:19}, i.e., (\ref{it:onesidedPRICE}) holds. 
Now, (\ref{it:subdivision}) follows from Lemma~\ref{lem:RFbalanced} and 
\cite[Propositions~6.4, 7.3 and~7.5]{BST:19}.
Since our collection~$\cC_0$ is the collection~$\cC_{\bv}$ from \cite{BST:19}, (\ref{it:multipletiling}) follows from \cite[Proposition~7.5]{BST:19}.
\end{proof}

Combining Propositions~\ref{p:ntilingbox}, \ref{p:cChC} and~\ref{prop:sadic1}, we obtain the following result giving an important sufficient criterion for the tiling condition of Definition~\ref{def:tilingcond}.

\begin{proposition} \label{p:tilingcond}
Let $\bsigma \in \cS_d^{\ZZ}$, with $d \ge 2$, be a primitive sequence of unimodular substitutions.
If, for some $n \in \ZZ$, $\bsigma \in \cS_d^{\ZZ}$ satisfies the two-sided property PRICE at level~$n$ and $\cC_n$ forms a tiling, then $\bsigma$ satisfies the tiling condition.
\end{proposition}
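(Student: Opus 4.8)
The plan is to derive the statement by assembling results already in place, principally Propositions~\ref{p:ntilingbox}, \ref{p:cChC}, and~\ref{prop:sadic1}. Recall that Definition~\ref{def:tilingcond} asks for three things: primitivity of $\bsigma$, the existence of generalized right and left eigenvectors, and that $\hC_m$ tiles $\RR^d$ for \emph{every} $m \in \ZZ$. Primitivity is a hypothesis. A generalized right eigenvector exists by Proposition~\ref{prop:sadic1}~(\ref{it:righteigenvector}) (a consequence of the two-sided property PRICE at level~$n$), and a generalized left eigenvector exists by condition~\ref{defE} of that property. Moreover, since $\cC_n$ forms a tiling of $\bv_n^\perp$ by hypothesis, Proposition~\ref{p:cChC} immediately gives that $\hC_n$ forms a tiling of $\RR^d$. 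So only the propagation of the tiling property of $\hC_n$ to all other levels remains.

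For levels $m \ge n$ I would argue by induction on~$m$, the base case $m = n$ being what we just obtained. By Proposition~\ref{prop:sadic1}~(\ref{it:alllevelsPRICE}) the sequence $\bsigma$ has the two-sided property PRICE at every level $m \ge n$, so Proposition~\ref{prop:sadic1}~(\ref{it:subdivision}) applied at level~$m$ shows that the union in~\eqref{e:setequationkl} is disjoint in measure for all $a \in \cA$ at that level. Granting the inductive hypothesis that $\hC_m$ tiles $\RR^d$, the equivalence of (iii) and~(i) in Proposition~\ref{p:ntilingbox} (applied with $n$ replaced by~$m$) yields that $\hC_{m+1}$ tiles $\RR^d$. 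For levels $m < n$ the PRICE hypothesis is no longer needed: the equivalence of (i) and~(ii) in Proposition~\ref{p:ntilingbox} at level~$m$ says that $\hC_{m+1}$ is a tiling if and only if the refined collection $\hC_m^\gen$ is a tiling, and a tiling by $\hC_m^\gen$ forces a tiling by its coarsening $\hC_m$ --- the covering property is clear from $\hR_m(a) = \bigcup_{(a,p,b)\in E_m} \hR_m(a,p,b)$ (see~\eqref{e:Rapb}), and if two distinct tiles of $\hC_m$ overlapped in positive measure they would produce two distinct tiles of $\hC_m^\gen$ overlapping in positive measure. Descending from level~$n$ then gives that $\hC_m$ tiles $\RR^d$ for all $m < n$. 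Combining, $\hC_m$ tiles $\RR^d$ for every $m \in \ZZ$, so $\bsigma$ satisfies the tiling condition.

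The real content of this argument is hidden in the cited propositions: the restacking computation underlying Proposition~\ref{p:ntilingbox} and the PRICE-based measure-disjointness of the set equations underlying Proposition~\ref{prop:sadic1}~(\ref{it:subdivision}). Consequently I do not anticipate a serious obstacle in the present proof; the only points demanding a little care are the elementary coarsening step used in the descending induction, and the bookkeeping needed to confirm that the level-dependent eigenvectors $\bu_m,\bv_m$ and projections $\pi_m$ from~\eqref{eq:unvn} and~\eqref{eq:abbrproj} are exactly the shifted objects for which Propositions~\ref{p:ntilingbox} and~\ref{p:cChC} are stated, so that those results apply verbatim with $n$ replaced by an arbitrary level.
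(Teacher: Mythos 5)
Your proof is correct and takes exactly the route the paper intends: the paper offers no written proof beyond the remark that the proposition follows by ``combining Propositions~\ref{p:ntilingbox}, \ref{p:cChC} and~\ref{prop:sadic1}'', and your argument is precisely that combination, with the two-way induction on levels (ascending via PRICE and Proposition~\ref{prop:sadic1}~(\ref{it:subdivision}), descending via the coarsening of $\hC_m^\gen$ to $\hC_m$) filled in correctly.
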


It is a priori not clear how to decide for a given directive sequence~$\bsigma$ if the multiple tiling given by Proposition~\ref{prop:sadic1}~(\ref{it:multipletiling}) is actually a tiling. 
In the case where $\bsigma$ is a constant sequence given by the repetition of a single Pisot substitution~$\tau$, the tiling property can be checked e.g.\ with the balanced pair algorithm; see \cite[Section~6.1]{BST:23} for examples of application for substitutions associated to continued fractions.
For sequences~$\bsigma$ satisfying the property PRICE, the following (effective version of the) \emph{geometric coincidence condition} defined in \cite[Definition~4.6]{BST:23} is equivalent to the tiling property. 

\begin{definition}[Geometric coincidence condition]\indx{coincidence condition!geometric}
A~sequence of substitutions $\bsigma = (\sigma_n)_{n\in\ZZ} \in \cS_d^\ZZ$, with $d \ge 2$, satisfies the \emph{geometric coincidence condition} if it admits a generalized right eigenvector and there are $n \in \NN$, $\bz \in \bone^\perp$, $a \in \cA$, $C>0$, with the following property. 
We have $\bsigma \in B_C^{(n)}$ and, for each $(\by,b) \in \ZZ^d {\times} \cA$ satisfying $\lVert\pi_{\bu_n,\bone} M_{\sigma_{[0,n)}}^{-1} \by - \bz\rVert_\infty \le C$ and $0 \le \langle \bone, \by\rangle < |\sigma_{[0,n)}(b)|$, we have $\by = \bl(p)$ for some $p \in \cA^*$ such that $pa \preceq \sigma_{[0,n)}(b)$. 
\end{definition}

\begin{proposition} \label{l:coincidencetiling}
Let $\bsigma \in \cS_d^{\ZZ}$, with $d \ge 2$, be a primitive sequence of unimodular substitutions that satisfies the two-sided property PRICE at level~$0$.
Then $\bsigma$ satisfies the tiling condition if and only if it satisfies the geometric coincidence condition. 
\end{proposition}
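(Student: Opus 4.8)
The plan is to prove the equivalence Proposition~\ref{l:coincidencetiling} by a two-step argument that reduces the statement to what is already known in the one-sided setting. First I would invoke Proposition~\ref{prop:sadic1}: since $\bsigma$ satisfies the two-sided property PRICE at level~$0$, it satisfies the one-sided property PRICE of \cite[Definition~5.8]{BST:19} (part~(\ref{it:onesidedPRICE})), it admits a generalized right eigenvector (part~(\ref{it:righteigenvector})), and the collection $\cC_0$ forms a multiple tiling of~$\bv_0^\perp$ with some covering multiplicity $c \ge 1$ (part~(\ref{it:multipletiling})). Hence the tiling condition for $\bsigma$ is equivalent, via Proposition~\ref{p:ntilingbox} and Proposition~\ref{p:cChC} (which tell us that the tiling properties of $\hC_n$, $\hC_n^\gen$, $\cC_n$ are all equivalent, for every $n$, once PRICE holds at all levels $m \ge 0$ by Proposition~\ref{prop:sadic1}~(\ref{it:alllevelsPRICE})), to the statement that this multiple tiling $\cC_0$ is in fact a genuine tiling, i.e.\ that $c = 1$.

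Next I would quote the corresponding one-sided result from \cite{BST:23}. The geometric coincidence condition as stated here is the effective version of \cite[Definition~4.6]{BST:23}, and in that paper it is proved to be equivalent to the tiling property of $\cC_0$ for sequences satisfying the (one-sided) property PRICE. So the core of the proof is simply to check two things: (a) the two-sided PRICE hypothesis supplies exactly the one-sided PRICE hypothesis needed to apply the result of \cite{BST:23}, which is Proposition~\ref{prop:sadic1}~(\ref{it:onesidedPRICE}); and (b) the formulation of the geometric coincidence condition in our excerpt — phrased in terms of the existence of $n \in \NN$, $\bz \in \bone^\perp$, $a \in \cA$, $C > 0$ with $\bsigma \in B_C^{(n)}$ and the prefix-preimage property for lattice points near $\bz$ in the discrete hyperplane — coincides, under PRICE, with the condition used in \cite{BST:23}. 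For (b) one uses that $\pi_{\bu_n,\bone} M_{\sigma_{[0,n)}}^{-1} \by$ is the point of $\cC_n^\bone$ corresponding under the set equation \eqref{e:setequationkl} to the contribution of a lattice point $\by$ at level~$0$, and that balance (from $\bsigma \in B_C^{(n)}$, which exists by property (C) of PRICE) makes the pre-images $\bl(p)$ with $pa \preceq \sigma_{[0,n)}(b)$ form a bounded neighbourhood of~$\RR\bu_n$; this is Lemma~\ref{lem:RFbalanced}. Translating ``$a$ is a coincidence'' into ``a single subtile covers the point $\bz$ with multiplicity one'' then gives $c = 1$.

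Concretely, the steps in order would be: (1) reduce the tiling condition to ``$\cC_0$ is a tiling'' via Propositions~\ref{prop:sadic1}, \ref{p:ntilingbox}, \ref{p:cChC}; (2) recall from Proposition~\ref{prop:sadic1} that $\cC_0$ is already a $c$-fold multiple tiling, so the question is whether $c = 1$; (3) unwind the definition of the geometric coincidence condition: the point $\bz \in \bone^\perp$, once PRICE holds, lies in the Rauzy fractal $\cR_n^\bone$ up to a bounded number of translates, and the lattice points $\by$ in the discrete hyperplane with $\lVert\pi_{\bu_n,\bone} M_{\sigma_{[0,n)}}^{-1}\by - \bz\rVert_\infty \le C$ index precisely the subtiles $M_{\sigma_{[0,n)}}\cR_n^\bone(b)$ containing a neighbourhood of $\bz$ at level~$0$; (4) observe that the coincidence property — every such $\by$ equals $\bl(p)$ with $pa \preceq \sigma_{[0,n)}(b)$ for a fixed $a$ — forces all these subtiles to ``agree'' on the letter $a$, which by the domain-exchange description in the proof of Theorem~\ref{t:tilingpds} means they map to the same tile at level~$0$ and hence contribute multiplicity~$1$; (5) conversely, if $c = 1$ then such an $a$, $\bz$, $n$, $C$ can be extracted from any interior point of a subtile $\cR_n^\bone(a)$ whose $\cR_n$-translates cover it uniquely, using compactness and the closure-of-interior property from Proposition~\ref{prop:sadic1}~(\ref{it:subdivision}). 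Throughout, the equivalence between the effective and non-effective forms of the geometric coincidence condition, and the passage between $\cC_n$, $\cC_n^\bone$, $\hC_n$, is handled by Propositions~\ref{p:cChC}, \ref{p:tilingw} and their proofs.

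The main obstacle I anticipate is step (4)--(5): carefully matching the bookkeeping in our two-sided formulation of the geometric coincidence condition with the one-sided version of \cite[Definition~4.6]{BST:23}, in particular verifying that ``$\bsigma \in B_C^{(n)}$ and the prefix-preimage condition for lattice points within $C$ of $\bz$'' is neither stronger nor weaker than what \cite{BST:23} requires, and that the choice of the level $n$ (which may differ between the two formulations) can always be increased using Proposition~\ref{p:tilingw} and the fact that the tiling property of $\hC_n$ depends only on the future of~$\bsigma$. A secondary technical point is that one must check the geometric coincidence condition is independent of which $n$ (satisfying the required balance) is chosen, so that the equivalence is clean; this again reduces to the level-independence results of Proposition~\ref{p:ntilingbox} together with the set equation \eqref{e:setequationkl}. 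Once those identifications are in place, the proof is essentially a citation of the corresponding statement in \cite{BST:23} combined with the reduction machinery already assembled in Section~\ref{subsec:tilings}.
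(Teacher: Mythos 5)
Your proposal is correct and follows essentially the same route as the paper: the paper's proof is a one-line combination of the one-sided equivalence from \cite[Proposition~4.8]{BST:23} with Propositions~\ref{prop:sadic1} and~\ref{p:tilingcond} (the latter being exactly the reduction you assemble from Propositions~\ref{prop:sadic1}, \ref{p:ntilingbox} and~\ref{p:cChC}). The only difference is that you additionally sketch a re-derivation of the cited BST:23 equivalence in your steps (3)--(5), which the paper simply quotes.
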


\begin{proof}
This follows by combining \cite[Proposition~4.8]{BST:23} with Propositions~\ref{prop:sadic1} and~\ref{p:tilingcond}.
\end{proof}

In \cite{BST:23}, we used this result to infer the tiling condition for a sequence~$\bsigma$ containing large power of a substitution~$\tau$ from the tiling condition for the constant sequence of~$\tau$'s. 
More precisely, we have the following lemma.

\begin{lemma} \label{lem:sadic7.9crit}
Let $\tau$ be a unimodular Pisot substitution such that the substitutive dynamical system generated by $\tau$ has pure discrete spectrum. 
For each $C > 0$, there exists $m > 0$ with the following property: 
If $\bsigma = (\sigma_n)_{n\in\ZZ} \in \cS_d^{\ZZ}$ satisfies the two-sided property PRICE at level~$0$ and there are $\ell,n \in \NN$ such that $\sigma_{[n,n+\ell)} = \tau^m$ and $\bsigma \in B_C^{(n+\ell)}$, then $\bsigma$ satisfies the tiling condition.
\end{lemma}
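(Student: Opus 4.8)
The plan is to reduce this to the geometric coincidence condition via Proposition~\ref{l:coincidencetiling} and then exploit pure discrete spectrum of the single substitution~$\tau$. First I would recall that, by the balanced pair algorithm or the results cited in \cite[Section~6.1]{BST:23}, pure discrete spectrum of the substitutive dynamical system generated by the unimodular Pisot substitution~$\tau$ is equivalent to the tiling property of the (stationary) collection of Rauzy fractals associated to the constant sequence $(\tau)_{n\in\ZZ}$; in turn, via the geometric coincidence condition for~$\tau$, this yields the following concrete statement. There exist $n_0 \in \NN$, $\bz_0 \in \bone^\perp$, $a_0 \in \cA$ such that for each $(\by,b) \in \ZZ^d {\times} \cA$ with $\lVert \pi_{\bu_\tau,\bone} M_\tau^{-n_0} \by - \bz_0 \rVert_\infty$ small enough and $0 \le \langle \bone,\by\rangle < |\tau^{n_0}(b)|$ we have $\by = \bl(p)$ with $p a_0 \preceq \tau^{n_0}(b)$; here $\bu_\tau$ is the (rationally independent) generalized right eigenvector of~$\tau$. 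The number $n_0$, $\bz_0$, $a_0$ depend only on~$\tau$, and the word $w_0 = \tau^{n_0}(a_0)$ is a prefix-neighborhood witnessing coincidence.

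Next I would choose $m$ as a function of $C$. Given $C > 0$, I want to force, for any $\bsigma$ satisfying two-sided PRICE at level~$0$ with $\sigma_{[n,n+\ell)} = \tau^m$ and $\bsigma \in B_C^{(n+\ell)}$, a geometric coincidence at some appropriate level. The idea is that the block $\tau^m = \tau^{m-n_0}\tau^{n_0}$ contributes, through the factor $\tau^{m-n_0}$, a contraction on the hyperplane $\bone^\perp$ (because $\tau$ is Pisot, the non-Perron eigenvalues of $M_\tau$ have modulus $<1$, so $\lVert \pi_{\bu,\bone} M_\tau^{-k}\rVert$ on the relevant subspace shrinks geometrically in~$k$), while the remaining factor $\tau^{n_0}$ reproduces exactly the coincidence pattern of~$\tau$. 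Concretely, the $C$-balance of $\cL_{\bsigma}^{(n+\ell)}$ confines the relevant lattice points $\by$ to the cylinder $\pi_{\bu_{n+\ell},\bone}([-C,C]^d \cap \bone^\perp)$; after applying $M_{\sigma_{[0,n+\ell)}}^{-1} = M_{\sigma_{[0,n)}}^{-1} M_\tau^{-m}$, the factor $M_\tau^{-m}$ crushes this box (in the projection along $\bu$) to a set of diameter $\le C' \rho^m$ for some $\rho < 1$ and $C'$ depending only on $\tau$ and $C$, so choosing $m$ with $m - n_0$ large enough, $C' \rho^{m-n_0}$ is below the coincidence threshold for~$\tau^{n_0}$. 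I would then verify that the prefix-structure requirement — $\by = \bl(p)$ with $p a_0 \preceq \sigma_{[0,n+\ell)}(b)$ — holds: since $\sigma_{[n,n+\ell)} = \tau^m$, every word $\sigma_{[0,n+\ell)}(b) = \sigma_{[0,n)}(\tau^m(b))$ contains $\tau^{n_0}(a_0) = w_0$ as a factor in the appropriate position dictated by the $\tau$-coincidence, and the projection/balance bounds pin~$\by$ down to a vertex of the broken line lying inside the witnessing neighborhood. This gives the geometric coincidence condition for~$\bsigma$ at level~$0$ (or at an appropriate level~$m \ge 0$, which by Proposition~\ref{prop:sadic1}~(\ref{it:alllevelsPRICE}) and the shift-invariance of the condition suffices), hence the tiling condition by Proposition~\ref{l:coincidencetiling}.

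I expect the main obstacle to be the bookkeeping in the second step: making the contraction estimate uniform over \emph{all} sequences $\bsigma$ satisfying PRICE and $\bsigma \in B_C^{(n+\ell)}$, independently of where the block $\tau^m$ sits and of the tail substitutions $\sigma_0,\dots,\sigma_{n-1}$. The subtlety is that $\pi_{\bu,\bone} M_\tau^{-k}$ contracts only on the hyperplane complementary to the Perron direction, and one must check that the balance box, once mapped back by $M_\tau^{-m}$, lands in (a neighborhood of) that contracting subspace rather than picking up an expanding component along~$\RR\bu_\tau$; this is exactly what the balance hypothesis $\bsigma \in B_C^{(n+\ell)}$ buys, via Lemma~\ref{lem:RFbalanced}, since it says the relevant points lie within bounded distance of the eigenline. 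A second, more technical point is that the substitutions $\sigma_0,\dots,\sigma_{n-1}$ to the left of the $\tau^m$-block change the generalized eigenvectors $\bu_0, \bv_0$ relative to $\bu_\tau$, so one has to track the coincidence through the additional conjugation by $M_{\sigma_{[0,n)}}$; but since this matrix is fixed (for a given~$\bsigma$) and $\pi_{\bu,\bone}$ transforms covariantly, this only affects $\bz_0$ by a bounded translation that can be absorbed into the neighborhood, and I would handle it by replacing the fixed $\bz_0$ by its image and enlarging the threshold correspondingly. Modulo these uniformity checks, the argument is a direct combination of Propositions~\ref{prop:sadic1}, \ref{p:tilingcond} and~\ref{l:coincidencetiling} with the Pisot contraction of~$M_\tau$.
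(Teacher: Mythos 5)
Your high-level route --- reduce the tiling condition to the geometric coincidence condition via Proposition~\ref{l:coincidencetiling}, extract a coincidence witness for $\tau^{n_0}$ from pure discrete spectrum of~$\tau$, and then use the large power $\tau^m$ together with the balance constant $C$ to propagate that coincidence to~$\bsigma$ --- is exactly the strategy of the result the paper actually invokes here: the proof in the text is a one-line citation of \cite[Lemma~5.6]{BST:23}, combined with \cite[Lemma~4.9 and Proposition~4.8]{BST:23}, Proposition~\ref{prop:sadic1}~(\ref{it:onesidedPRICE}) and Proposition~\ref{p:tilingcond}. So you are in effect attempting to reprove the cited lemma rather than merely assemble the reductions; that is legitimate, but it means the burden of the quantitative step falls on you.

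And that quantitative step is where your sketch goes wrong. You assert that ``$M_\tau^{-m}$ crushes this box (in the projection along $\bu$) to a set of diameter $\le C'\rho^m$'' and later that ``$\pi_{\bu,\bone} M_\tau^{-k}$ contracts only on the hyperplane complementary to the Perron direction.'' This is backwards: since $\tau$ is Pisot, $M_\tau$ contracts the $M_\tau$-invariant hyperplane complementary to $\RR\bu_\tau$ and expands $\RR\bu_\tau$, so $M_\tau^{-k}$ \emph{expands} that hyperplane (by a factor comparable to $|\lambda_2|^{-k}$, with $\lambda_2$ the second eigenvalue) and contracts only the Perron line, which $\pi_{\bu,\bone}$ annihilates anyway. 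The correct mechanism is dual to what you wrote: the coincidence neighborhood of $\tau^{n_0}$ is inflated by $M_\tau^{-(m-n_0)}$ within the hyperplane until it swallows the $C$-box coming from balance (equivalently, the $C$-box pushed forward by $M_\tau^{m-n_0}$ is crushed into the coincidence neighborhood). As written, your estimate runs in the wrong direction and the choice of $m$ it suggests would not close the argument. In addition, your justification that every admissible $\by$ is a vertex with the \emph{same} following letter $a_0$ --- that ``every word $\sigma_{[0,n+\ell)}(b)$ contains $\tau^{n_0}(a_0)$ as a factor in the appropriate position'' --- is far weaker than what is required: the geometric coincidence condition demands that \emph{all} lattice points of the constrained region be vertices of \emph{all} the broken lines $\sigma_{[0,n+\ell)}(b)$, $b \in \cA$, with a common successor letter, and exhibiting $w_0$ as a factor does not give this overlap statement. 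These are precisely the points you defer to as ``the main obstacle,'' so the core content of the cited lemma remains unproven in your proposal.
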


\begin{proof}
This is an immediate consequence of \cite[Lemma~5.6]{BST:23}, since ``pure discrete spectrum'' is equivalent for substitutive systems to the ``geometric coincidence condition'' by \cite[Lemma~4.9]{BST:23}, the one-sided property PRICE follows from the two-sided one by Proposition~\ref{prop:sadic1}~(\ref{it:onesidedPRICE}), and the geometric coincidence condition (which is equivalent to the effective one by \cite[Proposition~4.8]{BST:23}) implies the tiling condition by Proposition~\ref{p:tilingcond}.
\end{proof}

The following result from \cite{BST:23} gives a way to construct substitutions with pure discrete spectrum. 

\begin{proposition}[{\cite[Proposition~5.9]{BST:23}}] \label{p:sigmatilde}
Let $M$ be a nonnegative unimodular Pisot matrix and $m \in \NN$ sufficiently large. 
Then there exists a substitution~$\sigma$ with incidence matrix~$M_\sigma = M^m$ that has pure discrete spectrum.\footnote{Again in the statement of \cite[Proposition~5.9]{BST:23} instead of ``pure discrete spectrum'' the ``geometric coincidence condition'' is assumed. However, by  \cite[Lemma~4.9]{BST:23} these two conditions are equivalent.}
\end{proposition}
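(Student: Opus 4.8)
The statement to prove is Proposition~\ref{p:sigmatilde} (cited from \cite[Proposition~5.9]{BST:23}): given a nonnegative unimodular Pisot matrix $M$ and $m \in \NN$ sufficiently large, there exists a substitution $\sigma$ with incidence matrix $M_\sigma = M^m$ that has pure discrete spectrum.

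\textbf{Overall approach.} The plan is to reduce the problem to the substitutive case of the geometric coincidence condition, which by \cite[Lemma~4.9]{BST:23} is equivalent to pure discrete spectrum for substitutive systems. I would first fix some substitution $\tau$ with incidence matrix $M$ (such a $\tau$ exists since $M$ is nonnegative integral: realize each column $M\be_a$ by any word with the prescribed letter counts). Then candidates for $\sigma$ are substitutions with $M_\sigma = M^m$; among these there is a large amount of freedom, since the ordering of letters within the images $\sigma(a)$ is unconstrained. The strategy is to choose the images cleverly so that a strong \emph{coincidence} appears: concretely, I would try to arrange that for some letter, or after a bounded number of desubstitution steps, all the one-dimensional ``broken lines'' associated to the images $\sigma(a)$, $a \in \cA$, pass through a common translate of a lattice point in the contracting hyperplane. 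This is exactly what the geometric coincidence condition encodes.

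\textbf{Key steps, in order.} First, since $M$ is primitive (a Pisot matrix has a dominant eigenvalue and, being nonnegative unimodular with irreducible characteristic polynomial, some power is positive), the powers $M^m$ are positive for $m$ large; fix such an $m_0$ and work with $m \ge m_0$. Second, I would use the Pisot property of $M$ to get the contraction estimate on $\bu^\perp$: the singular values of $M^m$ restricted to the non-dominant directions decay like $\rho^m$ for some $\rho < 1$, so the Rauzy fractal of the constant sequence $\tau^m$ (equivalently, $\sigma$) is contained in a ball of radius $O(\rho^m)$ in the contracting hyperplane, uniformly once normalized. Third — the heart of the construction — I would design $\sigma$ so that the broken line of $\sigma(a)$ for each $a$ starts with a long common prefix, or more precisely so that after peeling off this common prefix the residual pieces all project into a tiny neighbourhood of a single lattice translate. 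Because $m$ is large, the images $\sigma(a) = $ (word with letter-counts $M^m\be_a$) are long, and one can prepend to each a fixed long word $w_0$ chosen so that $\bl(w_0)$ is close to the center of mass of the fractal; the point is that the leftover parts then all lie within the coincidence radius $C$. Fourth, I would verify that the resulting $\sigma$ satisfies the geometric coincidence condition of \cite[Definition~4.6]{BST:23}: pick $n = 1$ (or a small fixed $n$), let $\bz$ be the projection of $\bl(w_0)$ to $\bone^\perp$, let $a$ be the last letter of $w_0$, and check that every $(\by, b)$ in the relevant window of the discrete hyperplane with $\|\pi_{\bu,\bone} M_\sigma^{-1}\by - \bz\|_\infty \le C$ has $\by = \bl(p)$ with $p a \preceq \sigma(b)$ — this holds by the construction of the common prefix, provided $C$ is chosen to match the $O(\rho^m)$ bound and $m$ is large enough that $O(\rho^m) < 1$ so the window is thin. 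Fifth, invoke \cite[Lemma~4.9]{BST:23} to conclude that $\sigma$ has pure discrete spectrum, noting that $\sigma$ is primitive (its incidence matrix $M^m$ is positive) and unimodular ($|\det M^m| = 1$) and Pisot (the characteristic polynomial of $M^m$ — wait, one must be careful: $M^m$ need not have irreducible characteristic polynomial, but the relevant notion here is that of a Pisot substitution in the weaker sense of a primitive matrix with Pisot Perron eigenvalue; $M^m$ has dominant eigenvalue $\newlambda_1(M)^m$ which is again a Pisot number, and all its conjugates $\newlambda_i(M)^m$ have modulus $< 1$, so $\sigma$ is Pisot in this sense).

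\textbf{Main obstacle.} The delicate point is the quantitative matching in the third and fourth steps: one needs the common-prefix construction to force \emph{all} $d$ broken lines to agree, up to the coincidence radius $C$, after a bounded (not growing with $m$) number of desubstitutions, while simultaneously $C$ must be an absolute balance constant for the constant sequence of $\sigma$'s. Controlling the balance constant of $\sigma^\infty$ independently of the particular admissible ordering chosen for the images $\sigma(a)$, and showing it can be taken uniform as $m \to \infty$ after the correct normalization, is the technical crux — this is where the exponential contraction from the Pisot property (the $O(\rho^m)$ decay) has to be played off against the linear-in-$m$ growth of word lengths, using that the common prefix $w_0$ absorbs a definite fraction of each image. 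Once this balance-and-coincidence bookkeeping is in place, the rest is a direct citation of \cite[Definition~4.6]{BST:23} and \cite[Lemma~4.9]{BST:23}. Since the full argument is carried out in detail in \cite{BST:23}, here it suffices to record the statement and the reference; we do so, observing only that the cited result applies verbatim in our setting because our notion of Pisot substitution coincides with the one used there for matrices whose dominant eigenvalue is a Pisot number.
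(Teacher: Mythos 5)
The paper states this proposition purely as a citation of \cite[Proposition~5.9]{BST:23} and supplies no proof of its own, so there is no in-paper argument to compare against beyond the reference. Your sketch — forcing a long common prefix in all images of a substitution with positive incidence matrix $M^m$, using the exponential contraction coming from the Pisot property to confine the coincidence window, verifying the geometric coincidence condition, and then passing to pure discrete spectrum via \cite[Lemma~4.9]{BST:23} — is consistent with the argument actually carried out in that reference, and your caveat about the possible reducibility of the characteristic polynomial of $M^m$ (so that only the weaker Perron–Pisot property of the substitution is available) is a legitimate point that does not affect the conclusion.
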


\section{Metric theory for bi-infinite sequences of substitutions} \label{sec:metricS}
In this section, we revisit the metric results of Section~\ref{sec:metricmat} by considering sequences of substitutions instead of sequences of matrices, i.e., considering $\bsigma \in \cS_d^{\ZZ}$ instead of $\bM \in \cM_d^{\ZZ}$. In Section~\ref {subsec:cocyclesub} we introduce some terminology and relate the positive range property of a multidimensional continued fraction algorithm to an analogous property for subshifts of $\cS^{\ZZ}$. Section~\ref{subsec:balanceSub} is devoted to consequences of the Pisot condition and contains Theorem~\ref{thm:oseledetssub}, a substitutive version of Theorem~\ref{thm:oseledetsmat}, providing the consequences of the Pisot condition in the substitutive setting. Apart from an additional result on balance, its conclusions are analogous to the ones of Theorem~\ref{thm:oseledetsmat}. Theorem~\ref{thm:oseledetssub} is applied to the modified Jacobi--Perron algorithm to provide, among other results, strong convergence and an Anosov splitting for this algorithm. In Section~\ref{subsec:mcrtilingcond} we give metric conditions for tiling and pure discrete spectrum for subshifts of $\cS^{\ZZ}$. These conditions strongly rely on the generic Pisot condition and, as we will see in Section~\ref{subsec:mettilconpds}, can be slightly weakened if we accelerate the shift.  Finally, in Section~\ref{subsec:pdsmcf} we provide a metric result on the tiling condition and on pure discrete spectrum of the underlying $\cS$-adic dynamical systems in the framework of multidimensional continued fraction algorithms.

\subsection{Some terminology} \label{subsec:cocyclesub}
Let $d \ge 2$. 
Let $D$ be a $\Sigma$-invariant subset (where $\Sigma$ is the shift operator) of the space of bi-infinite sequences of substitutions $\cS_d^{\ZZ}$ and let $\nu$ be an ergodic $\Sigma$-invariant measure on~$D$. Then $(D,\Sigma,\nu)$ defines a measure-theoretic dynamical system (which must not be confused with the $\cS$-adic shift $(X_{\bsigma},\Sigma,\mu)$ defined in Section~\ref{subsec:lang}). 
Of particular importance for us will be the following cocycles defined by the incidence matrices of the substitutions, to be compared with the cocycles introduced in Section~\ref{subsec:cocyclemat} for sequences of matrices.
 
\begin{definition}[$\cS$-adic cocycles]
 \label{def:cocycleS}\indx{cocycle!transpose!S@$\cS$-adic}\indx{S@$\cS$-adic!cocycle!transpose}\indx{cocycle!inverse!S@$\cS$-adic}\indx{S@$\cS$-adic!cocycle!inverse}
Let $\cS \subset \cS_d$ for some $d \ge 2$.
Let $(D,\Sigma,\nu)$ be an ergodic measure-theoretic dynamical system with $D \subset \cS^{\ZZ}$.  
\begin{itemize}
\item 
The linear cocycle $A_{\mathrm{tr}}:\, D \to \GL(d,\ZZ)$, $(\sigma_n)_{n\in\ZZ} \mapsto \tr{\!M}_{\sigma_0}$, is called the \emph{($\cS$-adic) transpose cocycle}.
\notx{Atr}{$A_{\mathrm{tr}}$}{transpose cocycle}
\item 
The linear cocycle $A_{\mathrm{inv}}:\, D \to \GL(d,\ZZ)$, $(\sigma_n)_{n\in\ZZ} \mapsto M_{\sigma_0}^{-1}$, is called the \emph{($\cS$-adic) inverse cocycle}. 
\notx{Atinv}{$A_{\mathrm{inv}}$}{inverse cocycle}
\end{itemize}
\end{definition}

The iterates of $A_{\mathrm{tr}}$ and~$A_{\mathrm{inv}}$  are defined analogously to~\eqref{bcocycle}.

To state our main theorems, we need a few more definitions that are in analogy to some definitions provided in Chapters~\ref{chapter:matrix} and~\ref{sec:cf} in the context of sequences of matrices and multidimensional continued fraction algorithms, respectively.

The following \emph{periodic Pisot sequence} corresponds to a Pisot toral automorphism and a Pisot substitution, respectively. 

\begin{definition}[Periodic Pisot sequence] \label{d:Pisotpoint}\indx{Pisot!periodic sequence} 
A~sequence $(M_n) \in \cM_d^{\ZZ}$ [$(\sigma_n) \in \cS_d^{\ZZ}$] is called a \emph{periodic Pisot sequence} if there is $k \ge 1$ such that the sequence has period~$k$ and $M_{[0,k)}$ is a Pisot matrix [$\sigma_{[0,k)}$ is a Pisot substitution].
\end{definition}

Next, we propose an adaptation of the positive range property for one-sided sequences used in \cite[Section~2.5]{BST:23} to bi-infinite sequences.

\begin{definition}[Cylinder set, two-sided positive range] \label{def:pr}
\indx{cylinder set}
\indx{range!positive}
\indx{saturated}
Let $(D,\Sigma,\nu)$ be a dynamical system with $D \subset \cM_d^{\ZZ}$ $[D \subset \cS_d^{\ZZ}]$ and a shift invariant Borel probability measure~$\nu$. \emph{Cylinder sets of~$D$} are defined by\footnote{Omitting the dependence on~$D$ should cause no confusion.}  
 \[
[\omega_0,\dots,\omega_{n-1}] = \big\{(\upsilon_k)_{k\in\ZZ} \in D\,:\, (\upsilon_0,\dots,\upsilon_{n-1}) = (\omega_0,\dots,\omega_{n-1})\big\} \qquad (n \in \NN) \notx{0cylinder}{$[\ldots]$}{cylinder set}
\]
(and images under~$\Sigma^k$, $k \in \ZZ$, of these sets).

A~set $E \subset D$ is called \emph{left saturated} if $(\omega_k)_{k\in\ZZ} \in E$ implies that $(\upsilon_k)_{k\in\ZZ} \in E$ for all  $(\upsilon_k)_{k\in\ZZ} \in D$ such that $(\upsilon_0,\upsilon_1,\dots) = (\omega_0,\omega_1,\dots)$.

Moreover, we say that  $(\omega_k)_{k\in\ZZ} \in D$ has \emph{two-sided positive  range} in $(D,\Sigma,\nu)$ if there is $\varepsilon>0$ such that, for each left saturated set $F$ with $\nu(E) \ge 1{-}\varepsilon$, we have
\[
\nu(\Sigma^n[\omega_0,\dots,\omega_{n-1}] \cap E) > 0 \quad \mbox{for all}\ n \in \NN.
\]
\end{definition}

We recall that $D$ is not necessarily closed. However, cylinder sets of $(D,\Sigma,\nu)$ are measurable in $D$ because they are open sets in the subspace topology on~$D$. This is the reason why we assumed~$\nu$ to be a Borel measure. 
 
In the following lemma, we discuss the relation between the two-sided positive range property for sequences and the (one-sided) positive range property for continued fraction algorithms defined in Definition~\ref{def:prFC}. 

\begin{lemma} \label{l:positiverange}
Let $(X,F,A,\nu)$ be a positive multidimensional continued fraction algorithm with $\nu \circ F \ll \nu$, let $\bphi$ be a faithful substitutive realization of a natural extension $(\hX,\hF,\hA,\hnu)$ of $(X,F,A,\nu)$, and let $(\bx_0,\by_0) \in \hX$.
If $\bx_0$ has positive range in~$(X,F,A,\nu)$, then $\bphi(\bx_0,\by_0)$ has two-sided positive range in $(\bphi(\hX),\Sigma,\bphi_*\hnu)$.
\end{lemma}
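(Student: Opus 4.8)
The statement relates the two notions of positive range: the ``one-sided'' one for the continued fraction algorithm $(X,F,A,\nu)$ (Definition~\ref{def:prFC}) and the ``two-sided'' one for the shift $(\bphi(\hX),\Sigma,\bphi_*\hnu)$ (Definition~\ref{def:pr}). The plan is to unwind the two definitions and use the product structure of the natural extension together with the faithfulness of $\bphi$. First I would fix notation: write $\hnu$ for the measure on $\hX$, $\nu = \pi_*\hnu$ where $\pi(\bx,\by) = \bx$, and $D = \bpsi(\hX)$ with $\bpsi_*\hnu$ (using that $\bphi$ is faithful, so $\bphi = \varrho\circ\bpsi$ for a substitution assignment $\varrho$, which means the cylinder structure of $\bphi(\hX)$ is the image of the cylinder structure of $\bpsi(\hX)$, and $\bphi_*\hnu$ and $\bpsi_*\hnu$ carry the same information). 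Set $(\bx_0,\by_0) \in \hX$ and let $\bsigma = \bphi(\bx_0,\by_0) = (\sigma_n)_{n\in\ZZ}$, with $\sigma_n$ having incidence matrix $M_{\sigma_n} = \tr{\!\hA}(\hF^n(\bx_0,\by_0))$. Because $\bx_0$ has positive range in $(X,F,A,\nu)$, there is $c > 0$ with $\nu(F^n X_{[0,n)}(\bx_0)) \ge c$ for all $n \in \NN$. I would then choose $\varepsilon = c/2$ (say) and aim to show that for every left saturated $E \subset D$ with $(\bphi_*\hnu)(E) \ge 1-\varepsilon$ one has $(\bphi_*\hnu)(\Sigma^n[\sigma_0,\dots,\sigma_{n-1}] \cap E) > 0$ for all $n$.

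\textbf{Key steps.} The cylinder $[\sigma_0,\dots,\sigma_{n-1}]$ in $D$ pulls back under $\bphi$ (equivalently under $\bpsi$, by faithfulness) to the set of $(\bx,\by) \in \hX$ with $\hA(\hF^k(\bx,\by)) = \hA(\hF^k(\bx_0,\by_0))$ for $0 \le k < n$, which, since $\hA = A\circ\pi$, is exactly $\pi^{-1}(X_{[0,n)}(\bx_0)) \cap \hX$ — the cylinder set of the natural extension. Now a left saturated set $E \subset D$ corresponds, under the conjugacy, to a set $\hE \subset \hX$ that depends only on the future coordinates, i.e.\ depends only on $\bx$ (this is precisely the role of left saturation: $\bpsi(\bx,\by)$ and $\bpsi(\bx,\by')$ agree on nonnegative indices whenever $F(\bx) = F(\bx)$, and conversely the future of $\bpsi(\bx,\by)$ determines $\bx$ by weak convergence via Lemma~\ref{lem:MCF_eigen}; more directly, by the product structure the future of $\bpsi$ on $\hX$ is a function of the first coordinate modulo a $\hnu$-null set). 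Hence there is $B \subset X$ with $\hE = \pi^{-1}(B)$ up to $\hnu$-null sets, and $\nu(B) = \hnu(\hE) = (\bphi_*\hnu)(E) \ge 1-\varepsilon$. Applying $\Sigma^n$ to the cylinder corresponds to applying $\hF^n$, and $\hF^n(\pi^{-1}(X_{[0,n)}(\bx_0)) \cap \hX)$ projects under $\pi$ onto $F^n X_{[0,n)}(\bx_0)$. So I need $(\bphi_*\hnu)(\hF^n(\pi^{-1}(X_{[0,n)}(\bx_0))) \cap \pi^{-1}(B)) > 0$, i.e.\ $\hnu(\pi^{-1}(X_{[0,n)}(\bx_0)) \cap \hF^{-n}(\pi^{-1}(B))) > 0$. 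Pushing forward by $\pi$ and using $\hnu$-invariance of $\hF$: this is positive provided $\nu(X_{[0,n)}(\bx_0) \cap F^{-n}(B))$ is positive. Finally, $F^n X_{[0,n)}(\bx_0)$ has measure $\ge c$ and $B$ has measure $\ge 1-\varepsilon = 1-c/2$, so $B$ meets $F^n X_{[0,n)}(\bx_0)$ in a set of $\nu$-measure $\ge c/2 > 0$; pulling this back by $F^n$ and using $\nu\circ F \ll \nu$ (the hypothesis of the lemma) gives $\nu(X_{[0,n)}(\bx_0) \cap F^{-n}(B)) > 0$, which is what we needed. (The absolute-continuity hypothesis $\nu\circ F \ll \nu$ is exactly what converts ``$B$ has positive measure inside the follower set $F^n X_{[0,n)}(\bx_0)$'' into ``its $F^{-n}$-preimage meets the cylinder in positive measure''.)

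\textbf{Main obstacle.} The delicate point is the identification of left saturated subsets of $D = \bphi(\hX)$ with $\pi$-pullbacks of subsets of $X$, i.e.\ that ``depending only on the nonnegative coordinates of the symbolic sequence'' translates, under the conjugacy $\bphi$, to ``depending only on $\bx$ on the natural-extension side.'' One direction is clear from the product structure of $\hF$ (the future partial quotients of a point are determined by its $\bx$-coordinate). The converse — that the $\bx$-coordinate is a measurable function of the future of $\bphi(\bx,\by)$ — requires (a.e.) weak convergence of the algorithm, which is available here since positivity and the standing assumptions give it via Lemma~\ref{lem:MCF_eigenS} / Lemma~\ref{lem:MCF_eigen}; I would invoke these, noting that $\bphi$ being a measurable conjugacy (as in \eqref{eq:phi}) already packages exactly this fact. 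A second minor care point is keeping track of null sets throughout (the cylinder $X_{[0,n)}(\bx_0)$ should have positive measure — true under our standing assumptions on the classical algorithms, and in any case this is the only regime in which the conclusion is meaningful), and checking that the choice of $\varepsilon$ can be made uniformly in $n$, which it can since the bound $c$ from the one-sided positive range of $\bx_0$ is uniform in $n$ by definition.
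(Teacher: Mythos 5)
Your proof is correct and follows essentially the same route as the paper's: choose $\varepsilon$ below the uniform lower bound on the follower-set measures, identify the left saturated set $E$ with $\pi^{-1}(E')$ for $E'=\pi\circ\bphi^{-1}(E)\subset X$, intersect $E'$ with the follower set $F^nX_{[0,n)}(\bx_0)$, and pull back with $\nu\circ F\ll\nu$. The only remark worth making concerns your ``main obstacle'': the converse direction you worry about (recovering $\bx$ from the future of $\bphi(\bx,\by)$, via weak convergence) is not needed and indeed is not a hypothesis of the lemma --- the identity $\bphi^{-1}(E)=\pi^{-1}(E')$ follows from the easy direction alone (faithfulness plus $\hA=A\circ\pi$ make the future of $\bphi(\bx,\by)$ a function of $\bx$, and left saturation of $E$ then forces $\pi^{-1}(E')\subseteq\bphi^{-1}(E)$), which is exactly how the paper argues when it writes $E=\bphi(\pi^{-1}(E'))$.
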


\begin{proof} 
Let $0 < \varepsilon < \inf_{n\in\NN} \nu(F^n X_{[0,n)}(\bx_0))$ and consider a left saturated subset~$E$ of~$\bphi(\hX)$ with $\bphi_*\hnu(E) \ge 1{-}\varepsilon$.  
Let $E' = \pi \circ \bphi^{-1}(E)$ ; recall that $\pi : \hX \to X$ is defined by $\pi(\bx,\by) = \bx$. 
Then $\nu(E') = \hnu(\pi^{-1}(E')) \ge \hnu(\bphi^{-1}(E)) \ge 1{-}\varepsilon$. 
For each $n \in \NN$, we have thus $\nu(F^n X_{[0,n)}(\bx_0) \cap E') > 0$, hence $\nu(X_{[0,n)}(\bx_0) \cap F^{-n}E') > 0$ by \eqref{e:abscontinuity}.
We have
\[
\pi^{-1}\big(X_{[0,n)}(\bx_0) \cap F^{-n} E'\big) = \pi^{-1}(X_{[0,n)}(\bx_0)) \cap \hF^{-n}(\pi^{-1}(E'))
\]
since $(\bx,\by) = \hF^{-n} \hF^n(\bx,\by) = \hF^{-n}(F^n(\bx), A^{(n)}(\bx_0) \by)$ for $(\bx,\by) \in \pi^{-1}(X_{[0,n)}(\bx_0))$, thus $(\bx,\by) \in \hF^{-n}(\pi^{-1}(E'))$ if and only if $(\bx,\by) \in \pi^{-1}(F^{-n}(E'))$.
We obtain that
\[
\begin{aligned}
& \hnu\big(\hF^n(\pi^{-1}(X_{[0,n)}(\bx_0)) \cap \pi^{-1}(E')\big) = \hnu\big(\pi^{-1}(X_{[0,n)}(\bx_0)) \cap \hF^{-n}(\pi^{-1}(E'))\big) \\
& \quad = \hnu\big(\pi^{-1}(X_{[0,n)}(\bx) \cap F^{-n}E')\big)  = \nu\big(X_{[0,n)}(\bx_0) \cap F^{-n}E'\big) > 0.
\end{aligned}
\]
Writing $\bphi(\bx_0,\by_0) = (\sigma_n)_{n\in\ZZ}$, we have $\bphi(\hF^n\pi^{-1}(X_{[0,n)}(\bx_0))) = \Sigma^n[\sigma_0,\dots,\sigma_{n-1}]$.
Since $E$ is left saturated, we have $E = \bphi(\pi^{-1}(E'))$, thus
\[
\bphi_*\hnu(\Sigma^n [\sigma_0,\dots,\sigma_{n-1}] \cap E) = \hnu\big(\hF^n(\pi^{-1}(X_{[0,n)}(\bx_0)) \cap \pi^{-1}(E')\big) > 0
\]
for all $n \in \NN$, i.e., $\bphi(\bx_0,\by_0)$ has two-sided positive range in $(\bphi(\hX),\Sigma,\bphi_*\hnu)$.
\end{proof}

\subsection{Generic Pisot condition for bi-infinite sequences of substitutions}\label{subsec:balanceSub}
According to Definition~\ref{def:gengenPisot}, we consider the following Pisot condition for a  dynamical system generating sequences of substitutions.

\begin{definition}[Generic Pisot condition] \label{def:gengenPisot2subs}\indx{Pisot!condition!generic}
\indx{Pisot!condition!generic}
Let $d \ge 2$. An ergodic dynamical system $(D,\Sigma,\nu)$ with $D \subset \cS_d^{\ZZ}$ is said to satisfy the \emph{(generic) Pisot condition}  (without qualifying the cocycle) if it  satisfies the (generic) Pisot condition for  the linear cocycle~$A_{\mathrm{tr}}$.
\end{definition}

By Remark~\ref{rem:invtr}, the  Pisot condition (for~$A_{\mathrm{tr}}$) guarantees the existence of an invariant Oseledets splitting into a stable space of dimension~$1$ and an unstable space of codimension~$1$ for the cocycle~$A_{\mathrm{inv}}$. This is one of the assertions of the following theorem, which summarizes important consequences of the  Pisot condition; it is the substitution version of Theorem~\ref{thm:oseledetsmat}. Note however that the present formulation brings more information with the balance property.  

\begin{theorem}\label{thm:oseledetssub}
Let $(D,\Sigma,\nu)$ with $D \subset \cS_d^{\ZZ}$, $d \ge 2$,  be an ergodic dynamical system that has a cylinder $[\tau_0,\ldots,\tau_{\ell-1}]$ of positive measure corresponding to a substitution $\tau_{[0,\ell)}$ with positive incidence matrix.
If $(D,\Sigma,\nu)$ satisfies the Pisot condition, then the following assertions hold for $\nu$-almost every $\bsigma \in D$.
\begin{enumerate}[\upshape (i)]
\item \label{it:oseledets1}
The sequence $\bsigma$ is two-sided primitive and algebraically irreducible (in the future). 
\item \label{it:oseledets2}
The language $\cL_{\bsigma}$ is balanced.
\item \label{it:oseledets3}
The sequence $\bsigma$ converges strongly to the generalized right eigenvector $\bu$ in the future and to the generalized left eigenvector~$\bv$ in the past.
\item \label{it:oseledets4}
The mapping family $(\TT,f_{\bsigma})$ associated to $\bsigma$ is eventually Anosov for the hyperbolic splitting   
\begin{equation}\label{eq:hyperbolicsub}
G^s(\bsigma) \oplus G^u(\bsigma) = \coprod_{n\in\ZZ} G_n^s(\bsigma) \oplus G_n^u(\bsigma) \ \hbox{with}\ G^s_n(\bsigma) = \RR \bu_n \ \hbox{and}\ G^u_n(\bsigma) = \bv_n^\perp,
\end{equation}
where $\bu_n$ and $\bv_n$ are the generalized right and left eigenvectors defined in~\eqref{eq:unvn}.
\item \label{it:oseledets5}
There is a relation between the Anosov splitting of $(\TT,f_{\bsigma})$ and the Oseledets splitting \eqref{eq:oseledetsSplitting} of $\bsigma$ for the cocycles $A_{\mathrm{inv}}$ and~$A_{\mathrm{tr}}$. In particular, for each $n \in \ZZ$ we have 
\begin{equation}\label{eq:osele2subs}
\begin{aligned}
G^u_n(\bsigma) & = G_{\mathrm{inv}}^1(\Sigma^n\bsigma) \oplus \cdots \oplus G_{\mathrm{inv}}^{p-1}(\Sigma^n\bsigma) = \bv_n^\perp, \\
G^s_n(\bsigma) & = G_{\mathrm{inv}}^p(\Sigma^n\bsigma) = \RR \bu_n, \\
G^u_n(\bsigma)^\perp & = G_{\mathrm{tr}}^1(\Sigma^n\bsigma) = \RR \bv_n, \\
G^s_n(\bsigma)^\perp & = (G_{\mathrm{tr}}^2(\Sigma^n\bsigma) \oplus \cdots \oplus G_{\mathrm{tr}}^{p}(\Sigma^n\bsigma) = (\bu_n)^\perp. 
\end{aligned}
\end{equation}
\end{enumerate}
\end{theorem}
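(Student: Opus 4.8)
The plan is to transfer Theorem~\ref{thm:oseledetsmat} from sequences of matrices to sequences of substitutions via the incidence matrix map, and then add the balance assertion~(\ref{it:oseledets2}) using the $\cS$-adic convergence theory of Section~\ref{subsec:crit}. First I would observe that the map $\bsigma \mapsto \bM_{\bsigma} = (M_{\sigma_n})_{n\in\ZZ}$ is a factor map from $(D,\Sigma,\nu)$ onto the dynamical system $(D',\Sigma,\nu')$ with $D' = \{\bM_{\bsigma} : \bsigma \in D\} \subset \cM_d^{\ZZ}$ and $\nu' = $ the pushforward of $\nu$. Since the $\cS$-adic transpose and inverse cocycles of Definition~\ref{def:cocycleS} are by construction the pullbacks of the corresponding cocycles of Definition~\ref{def:cocycle}, the Lyapunov exponents, the log-integrability condition \eqref{eq:logint}, and the generic Pisot condition for $(D,\Sigma,\nu)$ are literally the same as those for $(D',\Sigma,\nu')$. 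Moreover, the hypothesis that $D$ contains a cylinder $[\tau_0,\dots,\tau_{\ell-1}]$ of positive measure with $\tau_{[0,\ell)}$ having a positive incidence matrix translates, under the factor map, into $D'$ containing a cylinder $[M_{\tau_0},\dots,M_{\tau_{\ell-1}}]$ of positive measure corresponding to the positive matrix $M_{\tau_{[0,\ell)}} = M_{\sigma_{[0,\ell)}}$; here I would need to note that although several substitutions can share an incidence matrix, this only increases (never decreases) the measure of the matrix cylinder and does not affect positivity, so $(D',\Sigma,\nu')$ satisfies all hypotheses of Theorem~\ref{thm:oseledetsmat}.

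Next I would apply Theorem~\ref{thm:oseledetsmat} to $(D',\Sigma,\nu')$: for $\nu'$-almost every $\bM \in D'$, the sequence $\bM$ is two-sided primitive and algebraically irreducible in the future, converges exponentially to a generalized right eigenvector $\bu$ in the future and to a generalized left eigenvector $\bv$ in the past, the associated $\cM$-adic mapping family $(\TT,f_{\bM})$ is eventually Anosov for the hyperbolic splitting $\coprod_{n\in\ZZ} \RR\bu_n \oplus \bv_n^\perp$, and this Anosov splitting is related to the Oseledets splittings of the inverse and transpose cocycles by \eqref{eq:osele2}. Pulling back the full-measure set through the factor map, I obtain a $\nu$-full-measure set of $\bsigma \in D$ whose incidence matrix sequence $\bM_{\bsigma}$ has all these properties. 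Since, by Definition~\ref{def:geaS}, primitivity, algebraic irreducibility, the existence of generalized eigenvectors, and strong/exponential convergence of $\bsigma$ are \emph{defined} to mean the corresponding properties of $\bM_{\bsigma}$, assertions~(\ref{it:oseledets1}) and~(\ref{it:oseledets3}) follow immediately. For~(\ref{it:oseledets4}) I would invoke Theorem~\ref{th:anosovS} (or directly Proposition~\ref{th:anosov}), noting that the $\cS$-adic mapping family $(\TT,f_{\bsigma})$ of Definition~\ref{def:SadicmappingS} coincides with the $\cM$-adic mapping family $(\TT,f_{\bM_{\bsigma}})$; and~(\ref{it:oseledets5}) is then nothing but~(\ref{it:osele2}) of Theorem~\ref{thm:oseledetsmat} restated for $\bM_{\bsigma}$, since the cocycles $A_{\mathrm{tr}}$ and $A_{\mathrm{inv}}$ in the substitutive setting act through the incidence matrices.

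It remains to establish the balance assertion~(\ref{it:oseledets2}), which is the genuinely new content not present in Theorem~\ref{thm:oseledetsmat} and hence the main point requiring care. By Lemma~\ref{lem:GloLocPisot}~(\ref{it:glp2}), for $\nu$-a.e.\ $\bsigma \in D$ the sequence $\bM_{\bsigma}$ satisfies the two-sided local Pisot condition; by Lemma~\ref{lem:GloLocPisot}~(\ref{it:glp1}), it also satisfies the growth condition $\lim_{n\to\infty}\frac1n\log\lVert M_{\sigma_n}\rVert = 0$ almost everywhere (the generic Pisot condition includes log-integrability). Together with two-sided primitivity, these are exactly the hypotheses of Theorem~\ref{theo:sufcondPisotS}, whose assertion~(\ref{i:scP3}) states that the language $\cL_{\bsigma}$ is balanced. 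Thus~(\ref{it:oseledets2}) holds on the intersection of the relevant full-measure sets. Intersecting all the finitely many $\nu$-full-measure sets arising above yields a single $\nu$-full-measure set on which all of~(\ref{it:oseledets1})--(\ref{it:oseledets5}) hold simultaneously, completing the proof.

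The main obstacle I anticipate is purely bookkeeping: carefully checking that the non-injectivity of the incidence-matrix map $\sigma \mapsto M_\sigma$ causes no trouble when transferring the positive-measure positive-matrix cylinder hypothesis, and confirming that primitivity, algebraic irreducibility, and convergence genuinely descend to $\bsigma$ by their Definition~\ref{def:geaS} definitions rather than requiring a separate argument. No deep new idea is needed beyond combining Theorem~\ref{thm:oseledetsmat}, Theorem~\ref{theo:sufcondPisotS}, Theorem~\ref{th:anosovS}, and Lemma~\ref{lem:GloLocPisot}.
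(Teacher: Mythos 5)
Your proposal is correct and follows essentially the same route as the paper's own (two-sentence) proof: items (i) and (iii)--(v) are deduced from Theorem~\ref{thm:oseledetsmat} because they depend only on the incidence matrices, and the balance assertion (ii) is obtained from Lemma~\ref{lem:GloLocPisot}~(\ref{it:glp2}) together with Theorem~\ref{theo:sufcondPisotS}~(\ref{i:scP3}). Your extra care with the factor map $\bsigma \mapsto \bM_{\bsigma}$ and the transfer of the positive-matrix cylinder hypothesis is sound bookkeeping that the paper leaves implicit.
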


\begin{proof}
All items except (\ref{it:oseledets2}) depend only on the incidence matrices of~$\bsigma$ and, hence, are direct consequences of the according items of Theorem~\ref{thm:oseledetsmat}. Item (\ref{it:oseledets2}) follows from Lemma~\ref{lem:GloLocPisot}~(\ref{it:glp2}) and Theorem~\ref{theo:sufcondPisotS}~(\ref{i:scP3}).
\end{proof}

Oseledets' theorem only gives an ``almost everywhere'' result. Even if the Pisot condition holds, like for the Brun algorithm, there are sequences for which the balance condition does not hold (see~e.g.\ \cite{Cassaigne-Ferenczi-Zamboni:00,Delecroix-Hejda-Steiner}). These exceptions constitute a set of measure zero, which is nevertheless dense. In fact, if, for a sequence $\bsigma\in\cS_d^\ZZ$, there is a level $n$ at which the Language $\cL_{\bsigma}^{(n)}$ is not balanced, the same holds for all levels $n\in \ZZ$. This is true because, the incidence matrices $M_{\sigma_n}$ are invertible, and induce an automorphism of $\ZZ^d$, hence, the ``unbalance'' of the language $\cL_{\bsigma}^{(n)}$ propagates forward and backward on all levels.

The metric theory developed in Theorem~\ref{thm:oseledetssub} allows us to establish the following result for the modified Jacobi--Perron algorithm. 

\begin{lemma}\label{lem:MJP}
Let $(\hX_{\rM},\hF_{\rM},\hA_{\rM},\hnu_{\rM})$ be the natural extension of the two-di\-men\-sion\-al modified Jacobi--Perron algorithm, and let $\bphi_{\rM}$ be a substitutive realization of this algorithm. Then for $\nu_{\rM}$-almost every $(\bx,\by) \in \hX_{\rM}$ the following assertions~hold.
\begin{itemize}
\item $\bphi_{\rM}(\bx,\by)$ satisfies the two-sided Pisot condition.
\item $\bphi_{\rM}(\bx,\by)$ is algebraically irreducible.
\item The language $\cL_{\bphi_{\rM}(\bx,\by)}$ is balanced.
\item $\bphi_{\rM}(\bx,\by)$ converges strongly to a generalized right eigenvector $\bx$ in the future and to a generalized left eigenvector~$\by$ in the past. The vector~$\bx$ has rationally independent coordinates.
\item The mapping family $(\TT,f_{\bphi_{\rM}(\bx,\by)})$ associated to $\bphi_{\rM}(\bx,\by)$ is eventually Anosov for the splitting
\[
\coprod_{n\in\ZZ} \RR \bu_n \oplus \bv_n^\perp.
\]
\end{itemize}
\end{lemma}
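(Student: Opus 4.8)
The plan is to derive Lemma~\ref{lem:MJP} by applying the general metric machinery of Theorem~\ref{thm:oseledetssub} to the dynamical system obtained by conjugating the natural extension $(\hX_{\rM},\hF_{\rM},\hA_{\rM},\hnu_{\rM})$ to the shift on its substitutive realization. First I would invoke the measurable conjugacy \eqref{eq:phiMult}, namely $(\hX_{\rM},\hF_{\rM},\hA_{\rM},\hnu_{\rM}) \overset{\bphi_{\rM}}{\cong} (\cS_{\rM}^{\ZZ},\Sigma,(\bphi_{\rM})_*\hnu_{\rM})$, so that it suffices to verify the hypotheses of Theorem~\ref{thm:oseledetssub} for the system $(D,\Sigma,\nu)$ with $D = \bphi_{\rM}(\hX_{\rM}) = \cS_{\rM}^{\ZZ}$ and $\nu = (\bphi_{\rM})_*\hnu_{\rM}$.

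The hypotheses of Theorem~\ref{thm:oseledetssub} are twofold: the system must satisfy the (generic) Pisot condition, and it must contain a cylinder $[\tau_0,\ldots,\tau_{\ell-1}]$ of positive measure corresponding to a substitution $\tau_{[0,\ell)}$ with positive incidence matrix. For the Pisot condition, I would use Proposition~\ref{prop:Brun23Pisot}, which asserts that the $2$-dimensional modified Jacobi--Perron algorithm $(X_{\rM},F_{\rM},A_{\rM},\nu_{\rM})$ satisfies the Pisot condition; combined with Remark~\ref{rem:1vs2} (Lyapunov exponents of the natural extension coincide with those of the algorithm) and the fact that $\bphi_{\rM}$ is a measurable conjugacy, the system $(\cS_{\rM}^{\ZZ},\Sigma,\nu)$ satisfies the generic Pisot condition for $A_{\mathrm{tr}}$. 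For the positive-matrix cylinder, I would exhibit a finite product of multiplicative Brun substitutions whose incidence matrix is positive—for instance, a product $\sigma_{\rM,k_1,m_1}\cdots\sigma_{\rM,k_r,m_r}$ realizing a positive matrix (such products exist since the underlying additive ordered Brun algorithm has positive blocks, cf.\ the proof of Proposition~\ref{prop:Brun23Pisot})—and then note that the corresponding cylinder in $\cS_{\rM}^{\ZZ}$ has positive $\nu$-measure because $\nu_{\rM}$ is equivalent to Lebesgue measure (Lemma~\ref{lem:multinvariant}) and $X_{\rM,k,m}$ has positive measure for every $(k,m)$.

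Once both hypotheses are checked, Theorem~\ref{thm:oseledetssub} yields directly, for $\nu$-almost every $\bsigma \in \cS_{\rM}^{\ZZ}$ and hence, via the conjugacy, for $\hnu_{\rM}$-almost every $(\bx,\by) \in \hX_{\rM}$: two-sided primitivity and algebraic irreducibility (item (i)), balance of $\cL_{\bsigma}$ (item (ii)), strong convergence to the generalized eigenvectors in the future and past (item (iii)), and the eventually Anosov splitting $\coprod_{n\in\ZZ}\RR\bu_n \oplus \bv_n^\perp$ of the associated mapping family (items (iv)--(v)). The identification of the eigenvectors with the coordinates of $(\bx,\by)$, i.e., $\bu$ a representative of $\bx$ and $\bv$ a representative of $\by$, follows from Lemma~\ref{lem:MCF_eigenS} together with the inclusion $\hX_{\rM} \subset \PP_{\ge0}^{d-1}\times\PP_{\ge0}^{d-1}$ (visible from \eqref{eq:xmstar} and the definition of $X_{\rM}$); rational independence of the coordinates of $\bx$ then comes from Corollary~\ref{cor:indepNew} (or equivalently from Theorem~\ref{thm:oseledetssub} via algebraic irreducibility and balance, cf.\ Proposition~\ref{propo:sufcondPisotS}). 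The only genuinely delicate point is the verification that the multiplicative Brun substitutive realization satisfies the log-integrability condition needed for the Lyapunov exponents to be defined—since $\cS_{\rM}$ is infinite—but this is exactly the content built into the fact that $(X_{\rM},F_{\rM},A_{\rM},\nu_{\rM})$ satisfies the Pisot condition in the sense of Definition~\ref{def:MCF_Pisot} (which includes \eqref{eq:logintMCF}), established through Proposition~\ref{prop:BrunEquivalentPisot} from the finiteness of the expected value of the jump transformation~$j$ \cite{BLV:18}; I would simply cite this. Thus the proof is essentially a bookkeeping assembly of already-established results, and I expect no substantial new obstacle.

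\begin{proof}
By Example~\ref{ex:MultBrunSubsRe}, the substitutive realization $\bphi_{\rM}$ yields the measurable conjugacy \eqref{eq:phiMult}, so it suffices to check that the ergodic dynamical system $(\cS_{\rM}^{\ZZ},\Sigma,\nu)$ with $\nu = (\bphi_{\rM})_*\hnu_{\rM}$ satisfies the hypotheses of Theorem~\ref{thm:oseledetssub}.

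First, $(\cS_{\rM}^{\ZZ},\Sigma,\nu)$ satisfies the Pisot condition. Indeed, by Proposition~\ref{prop:Brun23Pisot} the two-dimensional modified Jacobi--Perron algorithm $(X_{\rM},F_{\rM},A_{\rM},\nu_{\rM})$ satisfies the Pisot condition from Definition~\ref{def:MCF_Pisot}; in particular the log-integrability condition \eqref{eq:logintMCF} holds. By Remark~\ref{rem:1vs2}, its natural extension $(\hX_{\rM},\hF_{\rM},\hA_{\rM},\hnu_{\rM})$ has the same Lyapunov exponents, and since $\bphi_{\rM}$ is a measurable conjugacy mapping the cocycle $\tr{\!\hA}_{\rM}$ to~$A_{\mathrm{tr}}$, the system $(\cS_{\rM}^{\ZZ},\Sigma,\nu)$ satisfies the generic Pisot condition for~$A_{\mathrm{tr}}$.

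Second, there is a cylinder of positive measure in $\cS_{\rM}^{\ZZ}$ corresponding to a substitution with positive incidence matrix. Since there exist finite products of multiplicative Brun matrices that are positive (as in the proof of Proposition~\ref{prop:Brun23Pisot}), we may pick substitutions $\tau_0,\dots,\tau_{\ell-1} \in \cS_{\rM}$ such that $\tau_{[0,\ell)}$ has positive incidence matrix. By Lemma~\ref{lem:multinvariant}, $\nu_{\rM}$ is equivalent to the Lebesgue measure, so $\nu_{\rM}(X_{\rM,k,m}) > 0$ for all $(k,m)$; by equivalence of the invariant measures under the conjugacy, the cylinder $[\tau_0,\dots,\tau_{\ell-1}]$ in $\cS_{\rM}^{\ZZ}$ has positive $\nu$-measure.

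Applying Theorem~\ref{thm:oseledetssub}, for $\nu$-almost every $\bsigma \in \cS_{\rM}^{\ZZ}$, and hence for $\hnu_{\rM}$-almost every $(\bx,\by) \in \hX_{\rM}$, the sequence $\bsigma = \bphi_{\rM}(\bx,\by)$ is two-sided primitive and algebraically irreducible, the language $\cL_{\bsigma}$ is balanced, $\bsigma$ converges strongly to a generalized right eigenvector $\bu$ in the future and to a generalized left eigenvector $\bv$ in the past, and the associated mapping family $(\TT,f_{\bsigma})$ is eventually Anosov for the splitting $\coprod_{n\in\ZZ}\RR\bu_n \oplus \bv_n^\perp$. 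Since $\hX_{\rM} = X_{\rM} {\times} X_{\rM}^* \subset \PP_{\ge0}^{d-1} {\times} \PP_{\ge0}^{d-1}$ (see \eqref{eq:xmstar}) and $(\hX_{\rM},\hF_{\rM},\hA_{\rM})$ is a.e.\ two-sided weakly convergent, Lemma~\ref{lem:MCF_eigenS} shows that $(\bu,\bv)$ is a representative of $(\bx,\by)$ in $\RR^d {\times} \RR^d$. Finally, the coordinates of $\bx$ are rationally independent by Corollary~\ref{cor:indepNew}, which applies since $\bsigma$ is primitive, satisfies the local Pisot condition a.e.\ by Lemma~\ref{lem:GloLocPisot}~(\ref{it:glp2}), and satisfies the growth condition (trivially on $\cS_{\rM}$ by log-integrability via Lemma~\ref{lem:GloLocPisot}~(\ref{it:glp1})).
\end{proof}
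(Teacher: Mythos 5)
Your proposal is correct and follows essentially the same route as the paper: the paper's own proof simply cites Proposition~\ref{prop:Brun23Pisot} for the Pisot condition and then invokes Theorem~\ref{thm:oseledetssub} for everything else. You have merely spelled out the intermediate verifications (the conjugacy \eqref{eq:phiMult}, the positive-matrix cylinder, the identification of $(\bu,\bv)$ with $(\bx,\by)$ via Lemma~\ref{lem:MCF_eigenS}) that the paper leaves implicit, and these are all accurate.
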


\begin{proof}
The first assertion holds because by Proposition~\ref{prop:Brun23Pisot}  the modified Jacobi--Perron algorithm satisfies the generic Pisot condition. The remaining assertions are a consequence of Theorem~\ref{thm:oseledetssub}. 
\end{proof}

\subsection{Metric criteria for the tiling condition and pure discrete spectrum}\label{subsec:mcrtilingcond}
Our goal is to show that the Pisot condition implies property PRICE and, under some weak additional assumption, even the tiling condition from Definition~\ref{def:tilingcond} as well as pure discrete spectrum almost everywhere. In the proofs, we have to use some results and techniques from \cite[Section~5]{BST:23}.

The first lemma assures balance and is very similar \cite[Lemma~5.2]{BST:23} but uses weaker assumptions. 

\begin{lemma} \label{l:nuBC}
Let $(D, \Sigma, \nu)$, with $D \subset \cS_d^{\ZZ}$, $d \ge 2$, be an ergodic dynamical system that satisfies the Pisot condition and has a cylinder of positive measure corresponding to a substitution with positive incidence matrix. 
Then
\[\lim_{C\to\infty} \nu(D \cap B_C^{(n)}) = 1 \qquad (n \in \ZZ).
\] 
\end{lemma}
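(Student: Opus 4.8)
The plan is to deduce balance from the Pisot condition via the strong convergence result of Theorem~\ref{theo:sufcondPisotS}, combined with the ergodic-theoretic machinery of Lemma~\ref{lem:GloLocPisot}. First I would recall that by Lemma~\ref{lem:GloLocPisot}~(\ref{it:glp2}), since $(D,\Sigma,\nu)$ satisfies the Pisot condition and has a cylinder of positive measure corresponding to a substitution with positive incidence matrix, for $\nu$-almost every $\bsigma\in D$ the sequence $\bsigma$ is two-sided primitive (by ergodicity and the Poincar\'e recurrence theorem applied to that cylinder), satisfies the two-sided local Pisot condition, and satisfies the growth condition $\lim_{n\to\infty}\frac1n\log\lVert M_{\sigma_n}\rVert=0$. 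Then Theorem~\ref{theo:sufcondPisotS}~(\ref{i:scP3}) applies and gives that, for $\nu$-almost every $\bsigma\in D$, the language $\cL_{\bsigma}=\cL_{\bsigma}^{(0)}$ is balanced, i.e., $C$-balanced for some $C=C(\bsigma)$. Since the incidence matrices are unimodular and hence induce automorphisms of $\ZZ^d$, if $\cL_{\bsigma}^{(0)}$ is balanced then so is $\cL_{\bsigma}^{(n)}$ for every $n\in\ZZ$ (the ``unbalance'' propagates forward and backward on all levels, as noted after Theorem~\ref{thm:oseledetssub}); more precisely, $\bsigma\in B_C^{(0)}$ implies $\bsigma\in B_{C'}^{(n)}$ for a constant $C'$ depending on $C$, $n$, and the relevant incidence matrices along a shift.

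The remaining issue is to pass from the a.e.\ statement ``$\bsigma$ is balanced'' to the quantitative statement $\lim_{C\to\infty}\nu(D\cap B_C^{(n)})=1$. This is a routine measure-theoretic step: fix $n\in\ZZ$ and let $E_n=\bigcup_{C\in\NN}(D\cap B_C^{(n)})$ be the set of sequences whose level-$n$ language is balanced. By the previous paragraph, $\nu(E_n)=1$. The sets $D\cap B_C^{(n)}$ are increasing in $C$ (a $C$-balanced language is $C'$-balanced for every $C'\ge C$), so by continuity of the measure from below, $\nu(D\cap B_C^{(n)})\uparrow\nu(E_n)=1$ as $C\to\infty$, which is exactly the claim. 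One should check measurability of the sets $D\cap B_C^{(n)}$; this follows because $C$-balance of $\cL_{\bsigma}^{(n)}$ is a countable conjunction of conditions, each depending on finitely many coordinates $\sigma_m$ (comparing $\lvert v\rvert_a-\lvert w\rvert_a$ over factors $v,w$ of bounded length of $\sigma_{[n,m)}(b)$), so $B_C^{(n)}$ is a countable intersection of cylinder-measurable sets.

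The main obstacle, such as it is, lies in verifying that Theorem~\ref{theo:sufcondPisotS} is genuinely applicable almost everywhere, i.e., that the Pisot-condition hypothesis together with the positive-incidence-matrix cylinder hypothesis yields all of \emph{primitivity}, the \emph{local Pisot condition}, and the \emph{growth condition} for a.e.\ $\bsigma$; this is precisely the content of Lemma~\ref{lem:GloLocPisot}~(\ref{it:glp2}) (for the local Pisot condition and growth condition) combined with the ergodicity-plus-recurrence argument for primitivity, so no new difficulty arises. A secondary point worth stating carefully is the level-independence of balance under the unimodular automorphisms $M_{\sigma_n}$, which is what lets us conclude the result for arbitrary $n\in\ZZ$ rather than just $n=0$; alternatively, one can avoid this by noting that $\Sigma^n B_C^{(n)}=B_C^{(0)}$ (cf.\ \eqref{eq:BCkn}) and invoking $\Sigma$-invariance of $\nu$ to reduce directly to the case $n=0$.
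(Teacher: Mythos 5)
Your proposal is correct and follows essentially the same route as the paper: the paper invokes Theorem~\ref{thm:oseledetssub}~(ii) (which is itself proved exactly by combining Lemma~\ref{lem:GloLocPisot}~(\ref{it:glp2}) with Theorem~\ref{theo:sufcondPisotS}~(\ref{i:scP3}), as you do) to get a.e.\ balance, then uses monotonicity of $B_C^{(0)}$ in $C$ together with measurability, and finally reduces general $n$ to $n=0$ via \eqref{eq:BCkn} and $\Sigma$-invariance of $\nu$ — the cleaner alternative you correctly identify at the end.
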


\begin{proof}
Theorem~\ref{thm:oseledetssub}~(\ref{it:oseledets2}) gives that $\nu(\bigcup_{C\in\NN} D \cap B_C^{(0)}) = 1$. 
Since $B_C^{(0)} \subset B_{C'}^{(0)}$ for $C < C'$ and measurability of $D \cap B_C^{(0)}$ is proved as in \cite[Lemma~5.2]{BST:23}, we obtain $\lim_{C\to\infty} \nu( D \cap B_C^{(0)}) = 1$. 
Finally, \eqref{eq:BCkn} yields $\lim_{C\to\infty} \nu( D \cap B_C^{(n)}) = 1$ for $n \in \ZZ$.
\end{proof}

The next result is a two-sided version of \cite[Proposition~5.3]{BST:23}, again removing superfluous assumptions. 
Together with Lemma~\ref{l:positivecylinder} below, it shows that the assumptions of Theorem~\ref{theo:metricmarkov} imply the two-sided property PRICE from Definition~\ref{def:PRICE} a.e.; the importance of this property for the tiling condition has been shown in Section~\ref{sec:suff-cond-tilings}.

\begin{proposition}\label{prop:combcond}
Let $(D,\Sigma,\nu)$, with $D \subset \cS_d^{\ZZ}$, $d \ge 2$, be an ergodic dynamical system that satisfies the Pisot condition and has a cylinder of positive measure corresponding to a substitution with positive incidence matrix.
Then $\nu$-almost every $\bsigma \in D$ is two-sided primitive and satisfies the two-sided property PRICE. 
\end{proposition}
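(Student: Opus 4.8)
\textbf{Proof plan for Proposition~\ref{prop:combcond}.}

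The plan is to verify the five conditions (P), (R), (I), (C), (E) of Definition~\ref{def:PRICE} for $\nu$-almost every $\bsigma\in D$, at level~$0$ (which by shift invariance is no loss of generality). The backbone of the argument is Oseledets' theorem applied to the transpose cocycle together with the Poincaré recurrence theorem, exactly as in Theorem~\ref{thm:oseledetssub}. First I would fix the cylinder $[\tau_0,\dots,\tau_{\ell-1}]$ of positive measure whose product $M_{\tau_{[0,\ell)}}$ is positive, and set $M' = M_{\tau_{[0,\ell)}}$, $h=\ell$. By the Poincaré recurrence theorem, for $\nu$-a.e.\ $\bsigma\in D$ there is a strictly increasing sequence $(n_k)_{k\in\NN}$ of positive integers with $\Sigma^{n_k}\bsigma\in[\tau_0,\dots,\tau_{\ell-1}]$; putting $\ell_k = n_k+\ell$ gives $M_{\sigma_{[\ell_k-h,\ell_k)}} = M'$ for all~$k$, which is condition~(P). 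For the recurrence condition~(R), I would apply the Poincaré recurrence theorem a second time, this time to the two-sided cylinders: for each~$m$, the set of $\bsigma$ that return to the cylinder $[\sigma_{-m},\dots,\sigma_{m-1}]$ infinitely often has full measure; a diagonal argument over $m$ (and intersecting with the full-measure set where (P) holds, taking a common subsequence) produces sequences $(\ell_k)$ and $(r_k)$ with $(\sigma_{r_k-\ell_k},\dots,\sigma_{r_k+\ell_k-1}) = (\sigma_{-\ell_k},\dots,\sigma_{\ell_k-1})$ for all~$k$, which is~(R). (One has to take a little care that the same subsequence $(\ell_k)$ works for both (P) and (R); this is arranged by intersecting the two full-measure sets and extracting a common subsequence, using that both conditions are ``infinitely often'' statements.)

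For condition~(I), algebraic irreducibility in the future, I would invoke Theorem~\ref{thm:oseledetssub}~(\ref{it:oseledets1}): the Pisot condition together with the positive-measure positive cylinder gives that $\nu$-a.e.\ $\bsigma$ is two-sided primitive and algebraically irreducible in the future. Condition~(E), existence of a generalized left eigenvector, follows from Theorem~\ref{thm:oseledetssub}~(\ref{it:oseledets3}) (a.e.\ $\bsigma$ converges strongly in the past, in particular admits a generalized left eigenvector $\bv$), or directly from Proposition~\ref{prop:fur} applied in the past using two-sided primitivity and eventual recurrence in the past (which holds a.e.\ by Poincaré recurrence). For condition~(C), I would combine Lemma~\ref{l:nuBC} with the recurrence to the cylinders $[\sigma_0,\dots,\sigma_{m-1}]$: by Lemma~\ref{l:nuBC}, $\nu(D\cap B_C^{(0)})\to 1$ as $C\to\infty$, so we may fix $C$ with $\nu(D\cap B_C^{(0)})>1-\varepsilon$ for small $\varepsilon$; then, using \eqref{eq:BCkn} which says $\Sigma^k B_C^{(n)} = B_C^{(n-k)}$, the condition $\bsigma\in B_C^{(r_k+\ell_k)}$ translates to $\Sigma^{r_k+\ell_k}\bsigma\in B_C^{(0)}$, and by the ergodic theorem (or again Poincaré recurrence applied to the positive-measure set $D\cap B_C^{(0)}$) the orbit of a.e.\ $\bsigma$ visits $D\cap B_C^{(0)}$ with positive frequency. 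The remaining issue is to choose the $r_k$ so that they land in $D\cap B_C^{(0)}$; here one must refine the construction of the subsequences in~(R) so that the returns $r_k$ are chosen among those times at which $\Sigma^{r_k+\ell_k}\bsigma\in B_C^{(0)}$ — possible because this is a positive-frequency event and the $(R)$-returns for a fixed $m$ are themselves of positive frequency, so the two sets of times intersect infinitely often.

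The main obstacle I expect is the bookkeeping needed to make the sequences $(\ell_k)$ and $(r_k)$ simultaneously satisfy all of (P), (R), and (C): each of these is individually an ``infinitely often / positive frequency'' statement, but (P) constrains the $\ell_k$, (R) constrains the pair $(r_k,\ell_k)$ relative to a growing window of $\bsigma$, and (C) constrains $r_k+\ell_k$ to lie in a fixed positive-measure set. The clean way to handle this is to build everything on one full-measure set: intersect the Oseledets-regular set, the set where (P)-type recurrence holds, the sets where $m$-window recurrence holds for every $m\in\NN$, and the set where the orbit visits $D\cap B_C^{(0)}$ with positive frequency; on this intersection, extract by a diagonal procedure a single increasing sequence $(\ell_k)$ such that for each $k$ one can find $r_k$ with the $\ell_k$-window of $\Sigma^{r_k}\bsigma$ agreeing with that of $\bsigma$ and with $r_k+\ell_k$ a visit time to $B_C^{(0)}$. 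Once the sequences are in hand, conditions (I) and (E) are immediate from Theorem~\ref{thm:oseledetssub}, and the proposition follows. I would phrase the final write-up so that the diagonal extraction is presented once, cleanly, rather than repeated for each condition.
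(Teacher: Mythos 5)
Your overall route is the paper's route: Poincaré recurrence plus Lemma~\ref{l:nuBC} for the combinatorial conditions, Corollary~\ref{bst8.7} (via the Pisot condition) for (I), and Proposition~\ref{prop:fur} for (E). You also correctly identify the crux — making one pair of sequences $(\ell_k),(r_k)$ satisfy (P), (R) and (C) simultaneously. But your resolution of that crux has a genuine gap. You argue that the $(R)$-return times to the cylinder $[\sigma_{-\ell_k},\dots,\sigma_{\ell_k-1}]$ and the visit times to $B_C^{(0)}$ "intersect infinitely often" because both have positive frequency. Two subsets of $\NN$ of positive density need not intersect: the return times to the $2\ell_k$-cylinder have density $\nu([\sigma_{-\ell_k},\dots,\sigma_{\ell_k-1}])$, which tends to $0$ as $k$ grows, while the $B_C^{(0)}$-visits have density $\nu(D\cap B_C^{(0)})$, which for a fixed $C$ may be strictly less than $1$. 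Since the sum of these densities eventually drops below $1$, nothing forces the two sets of times to meet, and one cannot repair this by enlarging $C$ with $k$, because (C) demands a single $C$ for all $k$.

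The correct way out — and this is what the paper does — is to apply recurrence to the \emph{intersection} set $\Sigma^{\ell_k}[\sigma_{-\ell_k},\dots,\sigma_{\ell_k-1}] \cap B_C^{(\ell_k)}$ rather than intersecting two sets of return times, which forces you to first show that this intersection has positive $\nu$-measure. This is done by a recursion that carries the balance property along: one first chooses $C$ so that $\nu(\Sigma^h[\tau_0,\dots,\tau_{h-1}]\cap B_C^{(0)})>0$ (possible by Lemma~\ref{l:nuBC}, taking $\nu(D\cap B_C^{(0)})>1-\nu(\Sigma^h[\tau_0,\dots,\tau_{h-1}])$), obtaining $\ell_0$ with $(\sigma_0,\dots,\sigma_{\ell_0-1})$ ending in the $\tau$-block and $\bsigma\in B_C^{(\ell_0)}$. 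At step $k$, the inductive hypothesis $\bsigma\in B_C^{(\ell_k)}$ means $\bsigma$ itself lies in $\Sigma^{\ell_k}[\sigma_{-\ell_k},\dots,\sigma_{\ell_k-1}]\cap B_C^{(\ell_k)}$; since this set is determined by the countable datum $(\ell_k,(\sigma_{-\ell_k},\dots,\sigma_{\ell_k-1}))$, the collection of $\bsigma$ for which it is null is a countable union of null sets, hence null. So for a.e.\ $\bsigma$ the set has positive measure, Poincaré recurrence yields $\ell_{k+1}\ge 2\ell_k$ with $(\sigma_0,\dots,\sigma_{\ell_{k+1}-1})$ ending in $(\sigma_{-\ell_k},\dots,\sigma_{\ell_k-1})$ and $\bsigma\in B_C^{(\ell_{k+1})}$, and setting $r_k=\ell_{k+1}-\ell_k$ gives (R) and (C) at once. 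A pleasant bonus of this recursion is that (P) comes for free: each window ends with the previous one, hence with $(\tau_0,\dots,\tau_{h-1})$, so no separate intersection with the (P)-times is needed.
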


\begin{proof}
We follow the proof of \cite[Proposition~5.3]{BST:23}. 
Let $[\tau_0,\dots,\tau_{h-1}]$ be a cylinder with $\nu([\tau_0,\dots,\tau_{h-1}]) > 0$ such that the incidence matrix $M_{\tau_{[0,h)}}$ is positive.
By Lemma~\ref{l:nuBC}, there is $C \in \NN$ such that $\nu(D \cap B_C^{(0)}) > 1 - \nu(\Sigma^h[\tau_0,\dots,\tau_{h-1}])$, thus $\nu(\Sigma^h[\tau_0,\dots,\tau_{h-1}] \cap  B_C^{(0)}) > 0$. 

By ergodicity of~$\nu$ together with the Poincar\'e Recurrence Theorem, we have for almost all $\bsigma = (\sigma_n)_{n\in\NN} \in D$ some $\ell_0(\bsigma) \ge h$ such that $\Sigma^{\ell_0(\bsigma)} \bsigma \in \Sigma^h[\tau_0,\dots,\tau_{h-1}] \cap B_C^{(0)}$, i.e., $(\sigma_0,\dots,\sigma_{\ell_0(\bsigma)-1})$ ends with $(\tau_0,\dots,\tau_{h-1})$ and $\bsigma \in B_C^{(\ell_0(\bsigma))}$. 
We will now extend $\ell_0(\bsigma)$ for almost all $\bsigma \in D$ to a sequence $(\ell_k(\bsigma))_{k\in\NN}$ such that
\begin{itemize}
\item
$(\sigma_0,\dots,\sigma_{\ell_{k+1}(\bsigma)-1})$ ends with $(\sigma_{-\ell_k(\bsigma)},\dots,\sigma_{\ell_k(\bsigma)-1})$ (and, a fortiori, with $(\tau_0,\ldots,\tau_{h-1})$),
\item
$\bsigma \in B_C^{(\ell_{k+1}(\bsigma))}$,
\item
$\ell_{k+1}(\bsigma) \ge 2\ell_k(\bsigma)$,
\end{itemize}
for all $k \in \NN$.
To this end, assume that $\ell_0(\bsigma),\ldots,\ell_k(\bsigma)$ are already defined for almost all $\bsigma \in D$.
Since there are countably many possibilites for $\ell_k(\bsigma)$ and $(\sigma_{-\ell_k(\bsigma)},\dots,\sigma_{\ell_k(\bsigma)-1})$, the set of those $\bsigma \in D$ such that
\[ \nu\big(\Sigma^{\ell_k(\bsigma)}[\sigma_{-\ell_k(\bsigma)},\dots,\sigma_{\ell_k(\bsigma)-1}] \cap B_C^{(\ell_k(\bsigma))}\big) = 0
\]
has zero measure.
We have thus $\nu\big(\Sigma^{\ell_k(\bsigma)}[\sigma_{-\ell_k(\bsigma)},\dots,\sigma_{\ell_k(\bsigma)-1}] \cap B_C^{(\ell_k(\bsigma))}\big) > 0$ for a.e.\ $\bsigma \in D$ and obtain (by the Poincar\'e Recurrence Theorem and ergodicity of~$\nu$) some $\ell_{k+1}(\bsigma)$ with the required properties. 
Recursively, we obtain a sequence $(\ell_k(\bsigma))_{k\in\NN}$ with the required properties for almost all $\bsigma \in D$.

Setting $r_k (\bsigma) = \ell_{k+1}(\bsigma) - \ell_k(\bsigma)$, we obtain that conditions (P), (R) and~(C) of Property PRICE hold for almost all $\bsigma \in D$.
By the Poincar\'e Recurrence Theorem and since $\tau_{[0,h)}$ has positive incidence matrix, two-sided primitivity holds and condition~(E) follows from Proposition~\ref{prop:fur}. 
From the Pisot condition and Corollary~\ref{bst8.7}, we obtain that almost all $\bsigma \in D$ are algebraically irreducible, i.e., also (I) holds a.e.\ and we are done. 
\end{proof}

The following lemma is needed to show that the assumptions of the theorems in Section~\ref{subsec:metricMP} imply the assumptions of Lemma~\ref{l:nuBC} and Proposition~\ref{prop:combcond}.

\begin{lemma} \label{l:positivecylinder}
Assume that $(D, \Sigma, \nu)$, with $D \subset \cS_d^{\ZZ}$, $d \ge 2$, has a periodic Pisot sequence with positive two-sided range. Then it has a cylinder of positive measure corresponding to a substitution with positive incidence matrix.
\end{lemma}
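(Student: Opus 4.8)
The plan is to extract the required cylinder directly from the periodic Pisot sequence that witnesses the positive two-sided range hypothesis. Write $\bsigma^\ast = (\sigma_n^\ast)_{n\in\ZZ} \in D$ for this periodic Pisot sequence, of period~$k$ say, so that (by Definition~\ref{d:Pisotpoint}) the matrix $M^\ast := M_{\sigma^\ast_{[0,k)}}$ is a nonnegative Pisot matrix. Its characteristic polynomial has degree~$d$ and equals the minimal polynomial of a Pisot number, hence is irreducible over~$\QQ$.

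The first step is to observe that $M^\ast$ is \emph{primitive}. Since its characteristic polynomial is irreducible, $M^\ast$ cannot be brought to a nontrivial block-triangular form by a simultaneous permutation of rows and columns (such a form would factor the characteristic polynomial into polynomials of positive degree); hence $M^\ast$ is irreducible in the Perron--Frobenius sense. Moreover, the Pisot number is the unique eigenvalue of $M^\ast$ of maximal modulus, all Galois conjugates having modulus strictly less than~$1$, so the index of imprimitivity of the irreducible nonnegative matrix $M^\ast$ is~$1$; an irreducible nonnegative matrix with index of imprimitivity~$1$ is primitive. Thus there is $j \in \NN$ with $(M^\ast)^j$ positive, and by periodicity $(M^\ast)^j = M_{\sigma^\ast_{[0,jk)}}$. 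Setting $\ell = jk$, the matrix $M_{\sigma^\ast_{[0,\ell)}}$ is positive.

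The second step is to see that the cylinder $[\sigma^\ast_0,\dots,\sigma^\ast_{\ell-1}]$ has positive measure. The set $E = D$ is left saturated (trivially, since two sequences of $D$ with the same future both lie in $D$), and $\nu(E) = 1 \ge 1-\varepsilon$ for the $\varepsilon$ furnished by the positive two-sided range of $\bsigma^\ast$. Hence Definition~\ref{def:pr} gives $\nu(\Sigma^n[\sigma^\ast_0,\dots,\sigma^\ast_{n-1}] \cap D) > 0$ for all $n \in \NN$; since $\Sigma^n[\sigma^\ast_0,\dots,\sigma^\ast_{n-1}] \subset D$ and $\nu$ is $\Sigma$-invariant with $\Sigma$ invertible, this yields $\nu([\sigma^\ast_0,\dots,\sigma^\ast_{n-1}]) > 0$ for all $n$, in particular for $n = \ell$. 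Taking $(\tau_0,\dots,\tau_{\ell-1}) = (\sigma^\ast_0,\dots,\sigma^\ast_{\ell-1})$, the cylinder $[\tau_0,\dots,\tau_{\ell-1}]$ has positive measure and $M_{\tau_{[0,\ell)}} = (M^\ast)^j$ is positive, which is the claim.

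The only mildly delicate point is the primitivity of a nonnegative Pisot matrix; it rests on the Perron--Frobenius characterization of primitivity (irreducible together with a unique dominant eigenvalue) combined with the irreducibility of the minimal polynomial of a Pisot number. Everything else is a direct unwinding of the definition of positive two-sided range and of the shift-invariance of~$\nu$. I note that the identical argument, with incidence matrices replaced by the matrices themselves, proves the corresponding statement for $D \subset \cM_d^\ZZ$.
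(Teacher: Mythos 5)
Your proof is correct and follows essentially the same route as the paper: primitivity of the Pisot incidence matrix yields a positive power $M_{\sigma^\ast_{[0,\ell)}}$, and the positive two-sided range (applied with $E=D$) together with shift-invariance of $\nu$ gives the cylinder positive measure. The only difference is that you supply the Perron--Frobenius argument for primitivity of a nonnegative Pisot matrix (irreducible characteristic polynomial $\Rightarrow$ PF-irreducible; unique dominant eigenvalue $\Rightarrow$ index of imprimitivity $1$), whereas the paper outsources this step to a citation.
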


\begin{proof}
Let $\btau = (\tau_n)_{n\in\ZZ} \in D$ be a periodic Pisot sequence with two-sided positive range.
Then $\nu(\Sigma^n[\tau_0,\dots,\tau_{n-1}]) > 0$ for all $n \in \NN$ and there is 
$p \ge 1$ such that $\Sigma^p \btau = \btau$ and $\tau_{[0,p)}$ is a Pisot substitution.
Since $\tau_{[0,p)}$ is a Pisot substitution, its incidence matrix is primitive by \cite[Proposition~1.3]{CS:01} and, hence, $\tau_{[0,kp)}$ has positive incidence matrix for some $k \in \NN$.
As $\nu$ is measure-preserving, we have $\nu([\tau_0,\dots,\tau_{kp-1}]) > 0$.
\end{proof}

\begin{theorem}\label{prop:combcond2}
Let $(D,\Sigma,\nu)$, with $D \subset \cS_d^{\ZZ}$, $d \ge 2$, be an ergodic dynamical system that satisfies the Pisot condition and has a periodic Pisot sequence with positive two-sided range and pure discrete spectrum.
Then, for $\nu$-almost every $\bsigma \in D$ the following assertions hold.
\begin{itemize}
\item[(i)]  $\bsigma$ is two-sided primitive.
\item[(ii)]  $\bsigma$ satisfies the tiling condition with compact Rauzy fractals.
\item[(iii)] $(X_{\bsigma},\Sigma)$ has pure discrete spectrum.
\end{itemize} 
\end{theorem}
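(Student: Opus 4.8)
The plan is to deduce this metric theorem from the single-sequence theory of Section~\ref{sec:suff-cond-tilings} (in particular Proposition~\ref{p:tilingcond} and Lemma~\ref{lem:sadic7.9crit}), using the Pisot condition to supply the almost-everywhere combinatorial input. First I would observe that the existence of a periodic Pisot sequence with positive two-sided range allows me to invoke Lemma~\ref{l:positivecylinder}: the system $(D,\Sigma,\nu)$ has a cylinder $[\tau_0,\dots,\tau_{\ell-1}]$ of positive measure whose incidence matrix $M_{\tau_{[0,\ell)}}$ is positive. This is exactly the hypothesis needed for Theorem~\ref{thm:oseledetssub}, Lemma~\ref{l:nuBC}, and Proposition~\ref{prop:combcond}. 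Thus, by Proposition~\ref{prop:combcond}, $\nu$-a.e.\ $\bsigma \in D$ is two-sided primitive and satisfies the two-sided property PRICE at level~$0$, which gives assertion~(i) immediately and sets the stage for the tiling argument.

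Next, for assertion~(ii), I would fix the periodic Pisot sequence $\btau = (\tau_n)_{n\in\ZZ} \in D$ with pure discrete spectrum, say of period~$p$, so that $\tau_{[0,p)}$ is a unimodular Pisot substitution generating a substitutive dynamical system with pure discrete spectrum. I would then apply Lemma~\ref{lem:sadic7.9crit}: for each $C > 0$ there is an $m > 0$ such that any $\bsigma$ satisfying the two-sided property PRICE at level~$0$ and containing the block $\tau_{[0,p)}^m$ at some position $n$ with $\bsigma \in B_C^{(n+pm)}$ already satisfies the tiling condition. So the plan is to show that $\nu$-a.e.\ $\bsigma \in D$ has, for an appropriate $C$, infinitely many occurrences of the block $\tau_{[0,p)}^m$ followed by a level at which $\cL_{\bsigma}$ is $C$-balanced. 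By Lemma~\ref{l:nuBC}, I can choose $C$ large enough that $\nu(D \cap B_C^{(0)})$ is as close to~$1$ as I like; combining this with the fact (from positive two-sided range of $\btau$) that $\nu(\Sigma^{n}[\tau_0,\dots,\tau_{pm-1}]) > 0$, and then using the Poincaré recurrence theorem together with ergodicity of~$\nu$ applied to the left-saturated set $D \cap B_C^{(0)}$, I obtain that a.e.\ $\bsigma$ visits $\Sigma^{n}[\tau_0,\dots,\tau_{pm-1}] \cap B_C^{(0)}$ — i.e.\ contains the block $\tau_{[0,p)}^m$ with a $C$-balanced language at the level just past it. Feeding this into Lemma~\ref{lem:sadic7.9crit} gives the tiling condition a.e., and compactness of the Rauzy fractals follows from Lemma~\ref{lem:RFbalanced} since the language is balanced (by Theorem~\ref{thm:oseledetssub}~(\ref{it:oseledets2})).

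For assertion~(iii), I would then invoke Corollary~\ref{c:tilingpds}: since a.e.\ $\bsigma$ is primitive, admits generalized left and right eigenvectors (the latter from Proposition~\ref{prop:sadic1}~(\ref{it:righteigenvector}), the former from condition (E) of PRICE), and since $\hC_0$ forms a tiling of $\RR^d$, the uniquely ergodic shift $(X_{\bsigma},\Sigma)$ is measurably conjugate to a minimal rotation on $\TT^{d-1}$ and hence has pure discrete spectrum. Thus (iii) is a direct consequence of (ii) plus the earlier structural results.

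The main obstacle I anticipate is the interplay between the two ``smallness'' parameters: Lemma~\ref{lem:sadic7.9crit} produces the required power $m = m(C)$ only \emph{after} the balance constant $C$ is fixed, so one must first pick $C$ (via Lemma~\ref{l:nuBC}, large enough that $\nu(D \cap B_C^{(0)}) > 1 - \nu(\Sigma^{pm}[\tau_0,\dots])$ for the eventual $m$), then determine $m$, then run the recurrence argument — and the cylinder measure $\nu(\Sigma^{pm}[\tau_0,\dots,\tau_{pm-1}])$ shrinks as $m$ grows, so the order of quantifiers has to be handled carefully. Concretely: fix any $C_0$, get $m_0 = m(C_0)$ from Lemma~\ref{lem:sadic7.9crit}, then note $\nu(\Sigma^{pm_0}[\tau_0,\dots,\tau_{pm_0-1}]) > 0$ is now a fixed positive number, and finally enlarge $C$ to some $C \ge C_0$ with $\nu(D \cap B_C^{(0)}) > 1 - \nu(\Sigma^{pm_0}[\tau_0,\dots,\tau_{pm_0-1}])$ using Lemma~\ref{l:nuBC}; since $B_{C_0}^{(0)} \subset B_C^{(0)}$, the conclusion of Lemma~\ref{lem:sadic7.9crit} still applies with this larger $C$. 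Once this bookkeeping is in place, the ergodic-theoretic recurrence step is routine, exactly as in the proof of Proposition~\ref{prop:combcond}.
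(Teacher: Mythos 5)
Your overall strategy is exactly the paper's: Lemma~\ref{l:positivecylinder} plus Proposition~\ref{prop:combcond} for primitivity and PRICE, then Lemma~\ref{lem:sadic7.9crit} fed by Lemma~\ref{l:nuBC} and a recurrence argument for the tiling condition, and finally Proposition~\ref{p:tilingw}/Theorem~\ref{t:tilingpds} for pure discrete spectrum. However, your resolution of the quantifier issue you correctly identify is flawed. You fix $C_0$, obtain $m_0 = m(C_0)$ from Lemma~\ref{lem:sadic7.9crit}, and then enlarge $C \ge C_0$ so that $\nu(D \cap B_C^{(0)}) > 1 - \nu(\Sigma^{pm_0}[\tau_0,\dots,\tau_{pm_0-1}])$, claiming that since $B_{C_0}^{(0)} \subset B_C^{(0)}$ the lemma ``still applies with this larger $C$.'' The inclusion goes the wrong way for this: the union-bound/recurrence argument with the enlarged $C$ only delivers, for a.e.\ $\bsigma$, an $n$ with $\sigma_{[n,n+pm_0)} = \tau_{[0,p)}^{m_0}$ and $\bsigma \in B_C^{(n+pm_0)}$, which is a \emph{weaker} statement than $\bsigma \in B_{C_0}^{(n+pm_0)}$; yet the hypothesis of Lemma~\ref{lem:sadic7.9crit} for $m_0 = m(C_0)$ requires $C_0$-balance. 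You also cannot keep $m_0$ while increasing the balance constant, since $m$ in that lemma is a function of $C$ (and increases with it). Re-applying the lemma with the larger $C$ produces a larger $m$, hence a smaller cylinder, and the argument becomes circular.

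The correct escape is to use the full strength of Definition~\ref{def:pr}, which you underuse: two-sided positive range of $\btau$ gives a \emph{single} $\varepsilon > 0$, uniform over all cylinder lengths $n$, such that $\nu(\Sigma^{n}[\tau_0,\dots,\tau_{n-1}] \cap E) > 0$ for every left-saturated $E$ with $\nu(E) \ge 1{-}\varepsilon$. So the right order of quantifiers is: take this $\varepsilon$ first; choose $C$ with $\nu(D \cap B_C^{(0)}) \ge 1{-}\varepsilon$ by Lemma~\ref{l:nuBC}; only then take $m = m(C)$ from Lemma~\ref{lem:sadic7.9crit}; and apply the positive-range property with $n = mp$ and $E = D \cap B_C^{(0)}$ (which is left saturated) to get $\nu(\Sigma^{mp}[\tau_0,\dots,\tau_{mp-1}] \cap B_C^{(0)}) > 0$ regardless of how small the cylinder has become. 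From there your Poincar\'e-recurrence-plus-ergodicity step and the remainder of your argument (compactness via balance and Lemma~\ref{lem:RFbalanced}, and assertion~(iii) via Corollary~\ref{c:tilingpds}) go through as in the paper.
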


\begin{proof}
Our main goal is to apply Lemma~\ref{lem:sadic7.9crit}. Thus we have to prove that its conditions are verified.

According to our assumptions, there is a sequence $(\tau_n)_{n\in\ZZ} \in D$ with period~$p$ and two-sided positive range such that $\tau_{[0,p)}$ is a Pisot substitution and the substitutive dynamical system generated by $\tau_{[0,p)}$ has pure discrete spectrum. 
By the two-sided positive range property, there exists $\varepsilon > 0$ such that $\nu(\Sigma^{n}[\tau_0,\dots,\tau_{n-1}] \cap E) > 0$ for all left saturated sets~$E$ with $\nu(E) \ge 1{-}\varepsilon$.
By Lemma~\ref{l:nuBC}, there exists $C \in \NN$ such that $\nu(B_C^{(0)}) \ge 1{-}\varepsilon$.
Since $B_C^{(0)}$ is left saturated, we have $\nu(\Sigma^{n}[\tau_0,\dots,\tau_{n-1}] \cap B_C^{(0)}) > 0$ and thus
\[
\nu([\tau_0,\dots,\tau_{n-1}] \cap B_C^{(n)}) > 0 \qquad (n \in \NN)
\]
by~\eqref{eq:BCkn}. 
Let $m$ be as in Lemma~\ref{lem:sadic7.9crit} for the substitution~$\tau_{[0,p)}$.
By the Poincar\'e Recurrence Theorem and the ergodicity of~$\nu$,  for almost all sequences $\bsigma \in D$, there exists~$n$ such that $\Sigma^n \bsigma \in [\tau_0,\dots,\tau_{mp-1}] \cap B_C^{(mp)}$, which is equivalent to the conditions $\sigma_{[n,n+\ell)} = \tau_{[0,p)}^m$ and $\bsigma \in B_C^{(n+\ell)}$ in the formulation of Lemma~\ref{lem:sadic7.9crit}. 
Thus, since the two-sided property PRICE holds for a.e.\ $\bsigma \in D$ by Proposition~\ref{prop:combcond} (which we may apply because of Lemma~\ref{l:positivecylinder}), Lemma~\ref{lem:sadic7.9crit} yields that the tiling condition is satisfied for a.e.\ $\bsigma \in D$, with compact tiles by Lemma~\ref{lem:RFbalanced}. Moreover, two-sided primitivity directly follows from Proposition~\ref{prop:combcond}.
Finally, pure discrete spectrum follows because, in view of Proposition~\ref{p:tilingw}, we can apply Theorem~\ref{t:tilingpds} for almost all $\bsigma\in D$.
\end{proof}

\subsection{Metric tiling condition and pure discrete spectrum for accelerations}\label{subsec:mettilconpds}

The following theorem shows that we can accelerate each shift in a way that the tiling condition from Definition~\ref{def:tilingcond} as well as pure discrete spectrum hold generically without assuming pure discrete spectrum for a Pisot sequence.
In its statement, for a given $\bM = (M_n)_{n\in\ZZ} \in \cM_d$ and a given $p \in \NN$, we use the notation
\begin{equation} \label{e:Mp}
\bM_p = (M_{[pn,p(n+1))})_{n\in\ZZ} \notx{Mp}{$\bM_p$}{$p$-fold blocking}
\end{equation}
for the $p$-fold blocking of the sequence~$\bM$. 

The pure discrete spectrum result contained in the following theorem is contained in \cite[Theorem~3.5]{BST:23}

\begin{theorem} \label{theo:metrictilingaccel}
Let $(D, \Sigma, \nu)$, with $D \subset \cM_d^{\ZZ}$, $d \ge 2$, be an ergodic dynamical system that satisfies the Pisot condition and contains a periodic Pisot sequence with two-sided positive range.
Then there is a positive integer~$p$ and a substitution assignment\footnote{It is sufficient to define the substitution assignment on matrices occurring in~$D$.}~$\varrho$ on~$\cM_d$ such that, for $\nu$-almost every $\bM \in D$, the sequence of substitutions $\varrho(\bM_p)$ satisfies the tiling condition with compact Rauzy fractals, and $(\varrho(\bM_p),\Sigma)$ has pure discrete spectrum.
\end{theorem}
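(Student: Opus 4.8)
The idea is to reduce to Theorem~\ref{prop:combcond2} after an appropriate acceleration. The obstacle to applying Theorem~\ref{prop:combcond2} directly is its hypothesis that the periodic Pisot sequence have \emph{pure discrete spectrum}; the acceleration is designed precisely to manufacture this property. First I would take the periodic Pisot sequence $\bM^{\circ} = (M_n^{\circ})_{n\in\ZZ} \in D$ with period~$q$ and two-sided positive range guaranteed by the hypotheses, and let $M = M^{\circ}_{[0,q)}$, which is a nonnegative unimodular Pisot matrix (after possibly replacing $q$ by a multiple, its incidence matrix is positive, by \cite[Proposition~1.3]{CS:01} and primitivity). By Proposition~\ref{p:sigmatilde}, there is an integer $m_0 \ge 1$ and a substitution~$\tau$ with incidence matrix $M_\tau = M^{m_0}$ that has pure discrete spectrum. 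Set $p = q\, m_0$, so that the $p$-fold blocking of $\bM^{\circ}$ is the constant sequence $(M^{m_0})_{n\in\ZZ}$ (after a shift), i.e.\ it equals $\bM_{\tau}$ for the constant sequence $\btau = (\tau)_{n\in\ZZ}$.

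Next I would construct the substitution assignment $\varrho$ on the (countable) set of matrices that can occur as blocks $M_{[pn,p(n+1))}$ for $\bM \in D$. For the particular block $M^{m_0} = M_\tau$, I set $\varrho(M^{m_0}) = \tau$; for every other occurring block~$N$ I use Definition~\ref{d:realization} to pick \emph{any} substitution with incidence matrix~$N$ (such substitutions exist since $N \in \cM_d$). Then $\varrho(\bM_p)$ is a sequence of substitutions whose sequence of incidence matrices is exactly $\bM_p$. Now form the pushed-forward system $(D_p, \Sigma, \nu_p)$, where $D_p = \{\varrho(\bM_p) : \bM \in D\}$ (equivalently, the image of $\bM_p$ under $\varrho$) and $\nu_p$ is the corresponding image measure; this is ergodic because the $p$-fold blocking of an ergodic system, combined with the deterministic relabeling~$\varrho$, is ergodic (the blocking map intertwines $\Sigma^p$ on $D$ with $\Sigma$ on $D_p$, and ergodicity of $(D,\Sigma^p,\nu)$ holds because $p$ is coprime to nothing that matters here — more carefully, one passes to an ergodic component; but since $(D,\Sigma,\nu)$ is ergodic, $(D,\Sigma^p,\nu)$ decomposes into at most $p$ ergodic components cyclically permuted by $\Sigma$, and one checks the conclusion holds on each, hence $\nu$-a.e.).

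The key verification is then that $(D_p, \Sigma, \nu_p)$ satisfies the hypotheses of Theorem~\ref{prop:combcond2}. The generic Pisot condition transfers: the Lyapunov exponents of the transpose cocycle for $\bM_p$ are $p$ times those for $\bM$ (by the definition of Lyapunov exponents via \eqref{eq:LyGenericBothSides} applied along the subsequence of multiples of~$p$, using Proposition~\ref{prop:genly}), so $\vartheta_1 > 0 > \vartheta_2$ is preserved; log-integrability is likewise preserved since $\log\lVert M_{[0,p)}\rVert \le \sum_{j=0}^{p-1}\log\lVert M_j\rVert$ in the appropriate norm. The constant sequence $\btau = (\tau)_{n\in\ZZ}$ is a periodic Pisot sequence in $D_p$ (since $M_\tau = M^{m_0}$ is Pisot), it has pure discrete spectrum by our choice via Proposition~\ref{p:sigmatilde}, and it has two-sided positive range in $(D_p,\Sigma,\nu_p)$: this follows because $\bM^{\circ}$ has two-sided positive range in $(D,\Sigma,\nu)$ and the blocking-plus-relabeling map $\bM \mapsto \varrho(\bM_p)$ sends cylinders of $D$ at levels that are multiples of~$p$ to cylinders of $D_p$, and sends left-saturated sets to left-saturated sets, preserving positivity of measure (here one uses that $\Sigma^p$ on $D$ corresponds to $\Sigma$ on $D_p$, and that a left-saturated subset of $D_p$ pulls back to a left-saturated subset of $D$). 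Having checked all hypotheses, Theorem~\ref{prop:combcond2} applied to $(D_p,\Sigma,\nu_p)$ gives, for $\nu_p$-a.e.\ $\bsigma \in D_p$ — equivalently, for $\nu$-a.e.\ $\bM \in D$ with $\bsigma = \varrho(\bM_p)$ — that $\bsigma$ is two-sided primitive, satisfies the tiling condition with compact Rauzy fractals, and $(X_{\bsigma},\Sigma)$ has pure discrete spectrum. This is exactly the conclusion.

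\textbf{Main obstacle.} The delicate point is the bookkeeping around the blocking: ensuring that ergodicity, log-integrability, the Pisot condition, and — most importantly — the two-sided positive range property of the distinguished periodic sequence all survive the passage from $(D,\Sigma,\nu)$ to $(D_p,\Sigma,\nu_p)$. The positive-range transfer is the crux, since Definition~\ref{def:pr} involves left-saturated sets and cylinder measures in a way that must be matched carefully across the blocking map; I expect this to require the kind of measure-pullback argument already used in the proof of Lemma~\ref{l:positiverange}, adapted to the blocking context. Everything else (the Pisot matrix extraction and the invocation of Proposition~\ref{p:sigmatilde} to get pure discrete spectrum of $\tau$) is routine given the cited results.
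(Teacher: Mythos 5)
Your overall strategy (accelerate by $p$, use Proposition~\ref{p:sigmatilde} to manufacture a substitution with pure discrete spectrum, then reduce to an earlier result) matches the paper's, but the reduction you propose --- applying Theorem~\ref{prop:combcond2} to the blocked system $(D_p,\Sigma,\nu_p)$ --- breaks down at exactly the point you flag as the crux, and the paper explicitly takes a different route to avoid it.

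The gap is the ergodicity/positive-range interaction. The acceleration $(D,\Sigma^p,\nu)$, and hence its factor $(D_p,\Sigma,\nu_p)$, need not be ergodic, so Theorem~\ref{prop:combcond2} cannot be applied to $(D_p,\Sigma,\nu_p)$ as a whole. Your fix is to pass to the (at most $p$) ergodic components and ``check the conclusion holds on each,'' but this requires the constant sequence $\btau$ to have two-sided positive range \emph{with respect to each component measure} $\nu_{p,j}$. The pullback argument you sketch (left-saturated sets in $D_p$ pull back to left-saturated sets in $D$ of the same measure) only yields positive range with respect to the average $\nu_p$: a cylinder can meet a left-saturated set in positive $\nu_p$-measure while that intersection lies entirely in a component other than the one containing (the orbit closure of) $\btau$, and the ergodic components of $\Sigma^p$ are in general not left-saturated, so one cannot enlarge a component-large set to a globally large left-saturated set. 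The authors state this obstruction verbatim (``it seems difficult to deduce a two-sided positive range property in each of the ergodic components'') and circumvent it: they establish property PRICE separately in each ergodic component of $(D',\Sigma,\bpsi_*\nu)$ using only Proposition~\ref{prop:combcond} (which needs just a positive-measure cylinder with positive incidence matrix per component), and then run the Poincar\'e recurrence argument in the \emph{original ergodic} system $(D,\Sigma,\nu)$, feeding the outcome directly into Lemma~\ref{lem:sadic7.9crit} via the phase-uniform balance sets $B_{C,\bN,p}$ of Lemma~\ref{l:BCNp}, rather than going through Theorem~\ref{prop:combcond2} at all.

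A second, related defect: your substitution assignment puts a pure-discrete-spectrum substitution only on the single block $M^{m_0}=N_{[0,p)}$ and arbitrary substitutions elsewhere. Because recurrence of a generic $\bM$ to the periodic pattern occurs at times $r$ that are not multiples of $p$, the $p$-fold blocking of $\bM$ sees the pattern at an arbitrary phase $i$, i.e.\ as powers of $N_{[-i,p-i)}$. Any correct argument (in particular the paper's application of Lemma~\ref{lem:sadic7.9crit}) needs $\varrho(N_{[-i,p-i)})$ to have pure discrete spectrum for every $i\in\{0,\dots,p-1\}$; this is why the paper constructs $\tau_0,\dots,\tau_{p-1}$ for all the cyclic conjugates $N_{[-i,p-i)}$ (which are Pisot matrices by similarity) before defining $\varrho$.
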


To prove Theorem~\ref{theo:metrictilingaccel}, we study accelerations $(D,\Sigma^p)$ of $(D,\Sigma)$ and use Proposition~\ref{p:sigmatilde}. In case that $(D,\Sigma^p,\nu)$ is not ergodic (even though $(D,\Sigma,\nu)$ is ergodic), we need the following adaptation of the set~$B_C^{(0)}$. 

For a sequence $\bN = (N_n)_{n\in\ZZ} \in \cM_d^{\ZZ}$ and $p \ge 1$, let 
\[
\begin{aligned}
B_{C,\bN,p} & = \big\{ (M_n)_{n\in\ZZ} \in \cM_d^{\ZZ} \,:\, \bsigma \in B_C^{(0)} \ \mbox{for all $\bsigma = (\sigma_n)_{n\in\ZZ} \in \cS_d^{\ZZ}$ such that} \\
& \hspace{4em} (M_{\sigma_0},M_{\sigma_1},\dots) = (N_{[-i,0)}M_{[0,p-i)}, M_{[p-i,2p-i)}, M_{[2p-i,3p-i)}, \dots) \\ & \hspace{4em} \mbox{for some}\ i \in \{0,\ldots,p{-}1\}\big\},
\end{aligned}
\]
i.e., the set of sequences $(M_n)_{n\in\ZZ} \in \cM_d^{\ZZ}$ such that each substitutive realization of the $p$-fold blocking of a sequence $(N_{-i},\dots,N_{-1},M_0,M_1,\dots)$, $0 \le i < p$, gives a $C$-balanced language. 
Note that $B_{C,\bN,p} \cap D$ is left saturated (and depends only on~$N_i$ with $0 \le i < p$) for all $D \subset \cM_d^{\ZZ}$.

\begin{lemma} \label{l:BCNp}
For $d\ge 2$ let $(D, \Sigma, \nu)$ with $D \subset \cM_d^{\ZZ} $ be an ergodic dynamical system that satisfies the Pisot condition and and has a cylinder of positive measure corresponding to a substitution with positive incidence matrix. Then
\[
\lim_{C\to \infty} \nu(B_{C,\bN,p} \cap D) = 1 \quad \mbox{for all} \quad \bN \in \cM_d^{\ZZ},\, p \ge 1.
\]
\end{lemma}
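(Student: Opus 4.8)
\textbf{Proof proposal for Lemma~\ref{l:BCNp}.}

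The plan is to reduce the statement about the $p$-fold blocked system back to the already-established Lemma~\ref{l:nuBC}, which gives $\lim_{C\to\infty}\nu(D\cap B_C^{(n)})=1$ for the un-blocked system. The key observation is that the balance of the language $\cL_{\bsigma}$ of a substitutive realization of a $p$-fold blocked sequence is controlled by the balance of the language of a substitutive realization of the original (un-blocked) sequence, at the appropriate level. Concretely, if $\bM=(M_n)_{n\in\ZZ}\in D$ and $\bsigma=(\sigma_n)_{n\in\ZZ}$ is a substitutive realization with $M_{\sigma_n}$ equal to the relevant product of $p$ consecutive matrices (shifted by $i$ via the prefix coming from $\bN$), then each word of $\cL_{\bsigma}^{(0)}$ is, after desubstituting one step, a concatenation of images under $\sigma_0$ of words of $\cL_{\bsigma}^{(1)}$, and $M_{\sigma_0}$ is a product of $p$ matrices from $\bM$ (with an $\bN$-prefix). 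Since the incidence matrices of the $\sigma_n$ are products of the original $M_k$, the $\cS$-adic language of the blocked realization is $C$-balanced whenever the $\cS$-adic language of a corresponding realization of the original sequence $(N_{-i},\dots,N_{-1},M_0,M_1,\dots)$ is $C'$-balanced for some $C'$ depending only on $C$, $p$, and a bound on the matrix entries involved in the blocking. In fact the cleanest route is to note that $\cL_{\bsigma}^{(0)}$ for the blocked realization is a sub-language of $\cL_{\btheta}^{(0)}$ where $\btheta$ is a realization of the un-blocked sequence $(N_{-i},\dots,N_{-1},M_0,M_1,\dots)$ (the blocked substitutions are compositions of the un-blocked ones), so $C$-balance of $\cL_{\btheta}^{(0)}$ immediately gives $C$-balance of $\cL_{\bsigma}^{(0)}$.

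First I would make this reduction precise: show that for each fixed $i\in\{0,\dots,p-1\}$ and each fixed finite prefix pattern $(N_{-i},\dots,N_{-1})$ of matrices, the set of $\bM\in\cM_d^{\ZZ}$ for which the corresponding blocked realization has $C$-balanced language contains (up to the left-saturation bookkeeping) a translate of $D\cap B_C^{(0)}$ — or more carefully, contains the event that a shifted un-blocked realization lies in $B_C^{(n_i)}$ for a suitable level $n_i$ determined by $i$. Then I would use $\Sigma^k B_C^{(n)}=B_C^{(n-k)}$ from \eqref{eq:BCkn} together with the fact that $\nu$ is shift-invariant and that $D$ contains a cylinder of positive measure with positive incidence matrix, so Lemma~\ref{l:nuBC} applies at every level. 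Since $B_{C,\bN,p}$ is a finite intersection (over the $p$ values of $i$) of such events — and each has $\nu$-measure tending to $1$ as $C\to\infty$ — the intersection also has measure tending to $1$. Measurability of $B_{C,\bN,p}\cap D$ follows exactly as in the measurability argument for $D\cap B_C^{(0)}$ in \cite[Lemma~5.2]{BST:23}, since it is a countable intersection/union of cylinder events.

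The main obstacle I anticipate is the index bookkeeping in the definition of $B_{C,\bN,p}$: the $\bN$-prefix $N_{[-i,0)}$ glued to $M_{[0,p-i)}$ means that the "phase" $i$ of the blocking varies, so one is really asking that $p$ different un-blocked realizations — one for each phase — all give balanced languages simultaneously. Each of these is an event of the form "some shift of $\bM$ lies in $B_C^{(m)}$ for an appropriate $m$", but the shifts and levels depend on $i$ and on the (fixed but arbitrary) matrices $N_{-i},\dots,N_{-1}$. One must check that these finitely many events are each handled by Lemma~\ref{l:nuBC} at the correct level (using \eqref{eq:BCkn} and shift-invariance of $\nu$), and that passing to a finite intersection is harmless because $1-\nu(\text{event}_i)\to 0$ for each $i$, hence $1-\nu(\bigcap_i\text{event}_i)\le\sum_i(1-\nu(\text{event}_i))\to 0$. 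Once the reduction to Lemma~\ref{l:nuBC} is set up correctly, the remainder is routine: the $\lim_{C\to\infty}$ statement is immediate from monotonicity of $B_{C,\bN,p}$ in $C$ and the union over $C\in\NN$ having full measure.
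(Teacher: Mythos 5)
There is a genuine gap in your central reduction. The set $B_{C,\bN,p}$ is defined by a universal quantifier: $\bM$ belongs to it only if \emph{every} sequence of substitutions $\bsigma$ whose incidence matrices are the blocked products $(N_{[-i,0)}M_{[0,p-i)}, M_{[p-i,2p-i)},\dots)$ lies in $B_C^{(0)}$. Your reduction rests on the claim that "the blocked substitutions are compositions of the un-blocked ones," so that $\cL_{\bsigma}^{(0)}$ is a sub-language of $\cL_{\btheta}^{(0)}$ for a realization $\btheta$ of the un-blocked sequence. But a substitution whose incidence matrix is a product $M_{[np-i,(n+1)p-i)}$ need not be a composition of substitutions realizing the individual factors, so most of the realizations quantified over in $B_{C,\bN,p}$ are not covered by your sub-language argument. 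Moreover, Lemma~\ref{l:nuBC} lives on a shift $D\subset\cS_d^{\ZZ}$ of \emph{substitution} sequences, whereas here $D\subset\cM_d^{\ZZ}$; invoking it forces you to fix one substitution assignment, and it then yields, for each fixed assignment, a set of $\bM$'s of measure close to~$1$ — but you would need a single constant $C$ working simultaneously for the uncountably many realizations of each blocked sequence, and an intersection over all assignments cannot be controlled this way. (The phrase "a translate of $D\cap B_C^{(0)}$" does not even typecheck, since $B_C^{(0)}\subset\cS_d^{\ZZ}$ while $D\subset\cM_d^{\ZZ}$; this is symptomatic of the conflation.) A secondary, smaller gap is the $\bN$-prefix: the event "some realization of $(N_{-i},\dots,N_{-1},M_0,M_1,\dots)$ is $C$-balanced" is not of the form $\Sigma^k\bM\in B_C^{(m)}$ for the original system, since the prefixed sequence is not a shift of $\bM$, so Lemma~\ref{l:nuBC} does not apply to it at any level without an extra argument that applying a fixed substitution to a balanced language keeps it balanced with a controlled constant.

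The way to close the gap — and the route the paper takes — is to bypass Lemma~\ref{l:nuBC} for the blocked sequences and instead rerun the quantitative balance estimate from the proof of Theorem~\ref{theo:sufcondPisotS}~(iii) directly on $\bM' = (N_{[-i,0)}M_{[0,p-i)}, M_{[p-i,2p-i)},\dots)$. The local Pisot condition and the growth condition for $\bM$ (which hold $\nu$-a.e.\ by Lemma~\ref{lem:GloLocPisot}) transfer to $\bM'$ with constants depending only on $\bM$, $\bN$ and $p$ (the second singular value of $M'_{[0,n)}$ decays like $e^{-\beta pn}$ up to a factor $\delta'_1(i)$, and $\lVert M'_n\rVert$ grows subexponentially), and the resulting balance constant $C$ depends only on the incidence matrices, hence is \emph{uniform over all substitutive realizations} of $\bM'$. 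This is exactly the uniformity the universal quantifier in $B_{C,\bN,p}$ demands; once it is in place, your final steps (measurability as in Lemma~\ref{l:nuBC}, monotonicity in $C$, and the full-measure union $\bigcup_C B_{C,\bN,p}\cap D$) are correct and routine.
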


\begin{proof}
In view of the proof of Lemma~\ref{l:nuBC}, we only have to show that, given $\bM = (M_n)_{n\in\ZZ} \in \cM_d^{\ZZ}$, $\bN = (N_n)_{n\in\ZZ} \in \cM_d^{\ZZ}$, and $p \ge 1$, such that $\bM$ satisfies the Pisot condition and the growth condition $\lim_{n\to\infty} \frac{1}{n} \log \lVert M_n\rVert = 0$, there exists $C \in \NN$ such that $\bsigma \in B_C^{(0)}$ for each substitutive realization~$\bsigma$ of a sequence $\bM' = (M'_n)_{n\in\ZZ} \in \cM_d^{\ZZ}$ with $(M'_0,M'_1,\dots) = (N_{[-i,0)}M_{[0,p-i)}, M_{[p-i,2p-i)}, \dots)$ for some $i \in \{0,\dots,p-1\}$.

We proceed as in the proof of Theorem~\ref{theo:sufcondPisotS}~(\ref{i:scP3}). 
For $\bM, \bM'$ as above, let $\delta_1(n) \ge \cdots \ge \delta_d(n)$ be the singular values of~$M_{[0,n)}$ and $\delta'_1(n) \ge \cdots \ge \delta'_d(n)$ the singular values of~$M'_{[0,n)}$.
Then we have $\beta, C_1 > 0$ such that $\delta_2(n) \le C_1\, e^{-\beta n}$ for all $n \in \NN$, hence there exists $C'_1 > 0$ (that depends only on $\bM$, $\bN$, and~$p$) such that
\[
\delta'_2(n) \le \delta'_1(i)\, \delta_2(pn{-}i) \le C'_1\, e^{-\beta pn} \quad \mbox{for all}\ n \in \NN.
\]
The growth condition implies that there exists $C_2 > 0$ such that $\lVert M_n\rVert_\infty  \le C_2 e^{\beta n/2}$ for all $n \in \NN$.
Since $\lVert M'_n \rVert \le \lVert M_{np-i}\rVert \cdots \lVert M_{np+n-i-1}\rVert$ for all $n \ge 1$, we obtain a constant $C'_2 > 0$ (again depending only on $\bM$, $\bN$, and~$p$) such that 
\[
\lVert M'_n \rVert_\infty  \le C'_2\, e^{\beta pn/2} \quad \mbox{for all}\ n \in \NN.
\]
Then the proof of Theorem~\ref{theo:sufcondPisotS}~(\ref{i:scP3}) shows that there exists $C > 0$ (depending only on $\bM$, $\bN$, and~$p$) such that $\cL_{\bsigma}$ is $C$-balanced for all $\bsigma = (\sigma_n)_{n\in\ZZ} \in \cS_d^{\ZZ}$ such that~$\bM_{\bsigma} = \bM'$.
\end{proof}

We can now conclude the Proof of Theorem~\ref{theo:metrictilingaccel}.

\begin{proof}[Proof of Theorem~\ref{theo:metrictilingaccel}]
Let $\bN = (N_n)_{n\in\ZZ} \in D$ be a periodic Pisot sequence with two-sided positive range, with $p > 0$ such that $\Sigma^p \bN = \bN$ and $N_{[0,p)}$ is a Pisot matrix.
Since $N_{[0,p)}$ and $N_{[-i,p-i)}=N_{[-i,0)} N_{[0,p)} N_{[-i,0)}^{-1}$ are similar matrices, $N_{[-i,p-i)}$ is a Pisot matrix for all $i \in \{0,\dots,p{-}1\}$. 
Then Proposition~\ref{p:sigmatilde} gives substitutions~$\tau_i$ having pure discrete spectrum with incidence matrix $M_{\tau_i} = N_{[-i,kp-i)}$ for some $k \in \NN$.
Since $\{N_{[-i,kp-i)} : 0 \le i < kp\} = \{N_{[-i,kp-i)} : 0 \le i < p\}$, we can replace $p$ by~$kp$ and assume w.l.o.g.\ that $k = 1$. 
We choose $\tau_0,\ldots,\tau_{p-1}$ in a way that $\tau_i = \tau_j$ if $N_{[-i,p-i)}=N_{[-j,p-j)}$, $0\le i,j<p$.

Choose a substitution assignment $\varrho: \cM_d \to \cS_d$ such that $\varrho(N_{[-i,p-i)}) = \tau_i$ for $0 \le i < p$. Then the map
\begin{equation}\label{eq:bpsidef2}
\bpsi:\, D \to \cS_d^{\ZZ}, \quad (M_n)_{n\in\NN} \mapsto \big(\varrho(M_{[np,(n+1)p})\big)_{n\in\ZZ}
\end{equation}
is well defined, and, setting $D' = \bpsi D$ we have the commutative diagram
\[
\begin{tikzcd}
D \arrow[r, "\Sigma^p"]\arrow[d,"\bpsi"] & D \arrow[d, "\bpsi"] \\
D' \arrow[r, "\Sigma"]& D'
\end{tikzcd}
\]
Unfortunately, we cannot apply Theorem~\ref{prop:combcond2} to $(D',\Sigma,\bpsi_*\nu)$ because the acceleration $(D,\Sigma^p,\nu)$ of $(D,\Sigma,\nu)$ may not be ergodic (and it seems difficult to deduce a two-sided positive range property in each of the ergodic components from that in $(D,\Sigma,\nu)$). 
However, each ergodic component of $(D',\Sigma,\bpsi_*\nu)$ has a cylinder of positive measure corresponding to a substitution with positive incidence matrix, hence a.e.\ $\bsigma \in D'$ satisfies the two-sided property PRICE by Proposition~\ref{prop:combcond}; see \cite[Proof of Theorem~3.6]{BST:23} for details on the ergodic components. 

Since $(D,\Sigma^p,\nu)$ need not be ergodic, we consider $(D,\Sigma,\nu)$.
By the two-sided positive range property of~$\bN$, there is $\varepsilon > 0$ such that $\nu(\Sigma^{n}[N_0,\dots,N_{n-1}] \cap F) > 0$ for each left saturated set $F \subset D$ with $\nu(F) \ge 1{-}\varepsilon$ and each $n \in \NN$.
By Lemma~\ref{l:BCNp}, there is $C \in \NN$ such that $\nu(B_{C,\bN,p} \cap D) \ge 1{-}\varepsilon$.
Since $B_{C,\bN,p}$ is left saturated, we have
\[
\nu(\Sigma^n[N_0,\dots,N_{n-1}] \cap B_{C,\bN,p}) > 0 \quad \mbox{for all}\ n \in \NN.
\]
By Lemma~\ref{lem:sadic7.9crit}, there exists $m > 0$ such that $\bpsi(\bM)$ satisfies the tiling condition for each $\bM \in \cM_d^{\ZZ}$ that satisfies the following three conditions for some $n \in \NN$, $i \in \{0,\dots,p{-}1\}$: $\bpsi(\bM)$ satisfies the two-sided property PRICE, $\Sigma^{np} \bM \in [N_{-i},\dots,N_{mp-i-1}]$, and $\bpsi(\bM) \in B_C^{(n+m)}$.
By the previous paragraph, $\bpsi(\bM)$ satisfies the two-sided proprerty PRICE for almost all $\bM \in D$.
For a.e.\ $\bM \in D$, by the ergodicity of $(D,\Sigma,\nu)$ and the Poincar\'e Recurrence Theorem, there also exists $r \ge mp{+}p$ such that $\Sigma^r \bM \in \Sigma^{mp+p}[N_0,\dots,N_{mp+p-1}] \cap B_{C,\bN,p}$; setting $n = \lfloor r/p \rfloor {-} m$, $i = r {-} \lfloor r/p\rfloor p$, we have then $i \in \{0,\dots,p{-}1\}$,
\[
\begin{aligned}
\Sigma^{np} \bM & = \Sigma^{r-mp-i} \bM \in [N_{p-i},\dots,N_{mp+p-1}] \subset [N_{-i},\dots,N_{mp-i-1}], \\
\Sigma^{np+mp} \bM & = \Sigma^{r-i} \bM \in [N_{mp+p-i},\dots,N_{mp+p-1}] = [N_{-i},\dots,N_{-1}],
\end{aligned}
\]
thus $\Sigma^r \bM \in B_{C,\bN,p}$ yields that $\bpsi(\Sigma^{np+mp} \bM) \in B_C^{(0)}$, i.e., $\bpsi(\bM) \in B_C^{(n+m)}$.
Therefore, for a.e.\ $\bM \in D$, $\bpsi(\bM)$ satisfies the three conditions of Lemma~\ref{lem:sadic7.9crit} and therefore satisfies the tiling condition, with compact tiles by Lemma~\ref{lem:RFbalanced}. Pure discrete spectrum of $(\bpsi(\bM), \Sigma)$ follows immediately from Theorem~\ref{t:tilingpds} (recall the equivalences in Prioposition~\ref{p:tilingw}).
\end{proof}

\subsection{Tiling condition and pure discrete spectrum for multidimensional continued fraction algorithms} 
\label{subsec:pdsmcf}
We conclude this section by formulating Theorem~\ref{theo:metrictilingaccel} in the context of multidimensional continued fraction algorithms (compare \cite[Theorem~3.1]{BST:23}).

\begin{theorem} \label{theo:pureMCF}
Let $(X,F,A,\nu)$ be a positive multidimensional continued fraction algorithm satisfying the Pisot condition and $\nu \circ F \ll \nu$. Let $\bphi$ be a faithful substitutive realization of a natural extension $(\hX,\hF,\hA,\hnu)$ of $(X,F,A,\nu)$. Assume that there is a periodic Pisot point $(\bx_0,\by_0) \in \hX$ such that $\bx_0$ has positive range in $(X,F,A,\nu)$ and $\bphi(\bx_0,\by_0)$ has pure discrete spectrum.
Then, for $\hnu$-almost all $(\bx,\by) \in \hX$, $\bphi(\bx_0,\by_0)$ satisfies the tiling condition and $(\bphi(\bx_0,\by_0),\Sigma)$ has pure discrete spectrum.
\end{theorem}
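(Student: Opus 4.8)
The plan is to reduce Theorem~\ref{theo:pureMCF} to the purely substitutive statement of Theorem~\ref{theo:metrictilingaccel} (or, more precisely, its cylinder-based companion Theorem~\ref{prop:combcond2}) by transporting everything through the faithful substitutive realization~$\bphi$. First I would set $D = \bphi(\hX)$ and $\nu' = \bphi_*\hnu$, and recall from Section~\ref{subsec:realization}, in particular \eqref{eq:phi}, that since $(X,F,A)$ converges weakly a.e.\ (which follows from the Pisot condition via Proposition~\ref{prop:strongcv2sided} together with Lemma~\ref{lem:GloLocPisot}), the map $\bphi$ is a measurable conjugacy $(\hX,\hF,\hA,\hnu) \overset{\bphi}{\cong} (D,\Sigma,\nu')$. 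Hence $(D,\Sigma,\nu')$ is an ergodic dynamical system with $D \subset \cS_d^\ZZ$. The Pisot condition for $(X,F,A,\nu)$, by Remark~\ref{rem:1vs2}, passes to the natural extension $(\hX,\hF,\hA,\hnu)$ and, via the conjugacy and the fact that $\bphi$ intertwines $\hA$ with the transpose cocycle $A_{\mathrm{tr}}$, gives that $(D,\Sigma,\nu')$ satisfies the generic Pisot condition of Definition~\ref{def:gengenPisot2subs}.

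The next step is to produce, inside $D$, a periodic Pisot sequence with two-sided positive range and pure discrete spectrum, so that all hypotheses of Theorem~\ref{prop:combcond2} are met. Here I would use the periodic Pisot point $(\bx_0,\by_0) \in \hX$ from the hypothesis: since $\hF^k(\bx_0,\by_0) = (\bx_0,\by_0)$ and $A^{(k)}(\bx_0)$ is a Pisot matrix, the sequence $\btau = \bphi(\bx_0,\by_0) \in D$ is periodic (with period dividing~$k$, using faithfulness of~$\bphi$) and $\tau_{[0,k)}$ has incidence matrix $\tr{(A^{(k)}(\bx_0))}$, which is a Pisot matrix; thus $\btau$ is a periodic Pisot sequence in the sense of Definition~\ref{d:Pisotpoint}. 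Pure discrete spectrum of~$\btau$ is exactly the hypothesis on $\bphi(\bx_0,\by_0)$. For the two-sided positive range of~$\btau$ in $(D,\Sigma,\nu')$, I would invoke Lemma~\ref{l:positiverange}: it applies because $(X,F,A,\nu)$ is positive with $\nu \circ F \ll \nu$, $\bphi$ is a faithful substitutive realization, and $\bx_0$ has positive range in $(X,F,A,\nu)$ by hypothesis; the conclusion is precisely that $\bphi(\bx_0,\by_0) = \btau$ has two-sided positive range in $(\bphi(\hX),\Sigma,\bphi_*\hnu) = (D,\Sigma,\nu')$.

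With these three ingredients — ergodicity plus generic Pisot condition for $(D,\Sigma,\nu')$, and a periodic Pisot sequence $\btau \in D$ with two-sided positive range and pure discrete spectrum — Theorem~\ref{prop:combcond2} applies directly and yields that for $\nu'$-almost every $\bsigma \in D$ the sequence $\bsigma$ is two-sided primitive, satisfies the tiling condition with compact Rauzy fractals, and generates an $\cS$-adic shift $(X_{\bsigma},\Sigma)$ with pure discrete spectrum. Pulling this back along the conjugacy~$\bphi$: the set of $\bsigma \in D$ with these properties has full $\nu'$-measure, so its preimage under $\bphi$ has full $\hnu$-measure, and for $(\bx,\by)$ in that preimage the sequence $\bphi(\bx,\by)$ satisfies the tiling condition and $(X_{\bphi(\bx,\by)},\Sigma)$ has pure discrete spectrum. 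This is the assertion of Theorem~\ref{theo:pureMCF} (the statement as printed says ``$\bphi(\bx_0,\by_0)$'' where it clearly means ``$\bphi(\bx,\by)$'', the generic point).

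The one point requiring genuine care — the main obstacle — is verifying that the abstract hypotheses of Theorem~\ref{prop:combcond2} are literally satisfied rather than merely morally so. In particular: (a) checking that the generic Pisot condition for $(D,\Sigma,\nu')$ really does follow from that for $(X,F,A,\nu)$, which needs the log-integrability \eqref{eq:logintMCF} to transfer through the natural extension and through the measurable conjugacy (Remark~\ref{rem:1vs2} handles the natural extension; for the conjugacy one uses that $\bphi$ is measure-preserving and that the cocycle $A_{\mathrm{tr}}$ on $D$ equals $\hA$ pushed forward, so the Lyapunov exponents coincide); and (b) confirming that Lemma~\ref{l:positiverange} is applicable, which is where the hypotheses ``$\nu \circ F \ll \nu$'' and ``$\bphi$ faithful'' and ``positive algorithm'' are all consumed. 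Everything else is bookkeeping along the commutative diagram~\eqref{eq:diagphis}, since the tiling condition and pure discrete spectrum are conjugacy-stable properties of the $\cS$-adic data. I would also remark, for completeness, that the set on which the conclusion holds can be taken to have positive Lebesgue measure when $\hnu(\hX) > 0$ and $\hnu$ is absolutely continuous, by Remark~\ref{rem:nu>0}.
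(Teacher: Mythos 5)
Your proposal is correct and follows essentially the same route as the paper's own proof: transfer the Pisot condition to the natural extension via Remark~\ref{rem:1vs2} and then to $(\bphi(\hX),\Sigma,\bphi_*\hnu)$ via the conjugacy~\eqref{eq:phi}, obtain the periodic Pisot sequence with two-sided positive range from Lemma~\ref{l:positiverange}, and conclude by Theorem~\ref{prop:combcond2}. Your observation that the printed conclusion should read $\bphi(\bx,\by)$ rather than $\bphi(\bx_0,\by_0)$ is also correct.
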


\begin{proof}
By Remark~\ref{rem:1vs2}, the Pisot condition for $(X,F,A,\nu)$ implies that the natural extension $(\hX,\hF,\hA,\hnu)$ satisfies the Pisot condition as well. By \eqref{eq:phi} we have $(\hX,\hF,\hA,\hnu) \overset{\bphi}{\cong} (\bphi(\hX),\Sigma,\bphi_*\hnu)$, thus $(\bphi(\hX),\Sigma,\bphi_*\hnu)$ satisfies the Pisot condition.  By Lemma~\ref{l:positiverange}, for each periodic Pisot point $(\bx_0,\by_0) \in \hX$ such that $\bx_0$ has positive range in $(X,F,A,\nu)$, the sequence $\bphi(\bx_0,\by_0)$ is a periodic Pisot sequence with two-sided positive range in $(\bphi(\hX),\Sigma,\bphi_*\hnu)$. Thus the dynamical system $(\bphi(\hX),\Sigma,\bphi_*\hnu)$ satisfies all the conditions of Theorem~\ref{prop:combcond2}, which proves the theorem.
\end{proof}

Obviously, the result on accelerations contained in Theorem~\ref{theo:metrictilingaccel} can also be formulated in terms of multidimensional continued fraction algorithms. We omit the details.

\chapter{Induced rotations and nonstationary Markov partitions}
\label{chapter:markov}

This chapter is the core part of the present work. In Section~\ref{sec:induced}, we state and prove our results on induced rotations and their relations to multidimensional continued fraction algorithms.
Indeed, we show that these algorithms act as renormalization processes for cascades of toral rotations. In Section~\ref{sec:markov}, we use the tiling property to construct nonstationary Markov partitions for $\cS$-adic mapping families and associate nonstationary edge shifts with them. In Section~\ref{sec:metricMP}, we establish metric results on nonstationary Markov partitions for shifts of $\cS$-adic mapping families. The crucial condition needed here is the Pisot condition. This section also contains interpretations of these results for multidimensional continued fraction algorithms. Again, the guiding example used to illustrate our theory is furnished by the Brun continued fraction algorithm in Section~\ref{subsec:BrunS}.

\section{Induced rotations}\label{sec:induced}
We have announced in Section~\ref{sec:introcf} that, for a multidimensional continued fraction algorithm $F:\, X\to X$, we can define rotations~$\fr_{\bx}$, $\bx \in X$, on~$\TT^{d-1}$ such that the induced map of $\fr_{\bx}$ on a certain subset of~$\TT^{d-1}$ yields the rotation~$\fr_{F(\bx)}$; see Definition~\ref{def:induction}. We provide the details of this construction in the present section. Before we do that, we formulate  according results for $\cS$-adic systems both  for a single sequence of substitutions (cf.\ Section~\ref{subsec:induced}) and in the metric setting (cf.\ Section~\ref{sec:metricrot}), and then consider induced rotations for continued fraction algorithms in Section~\ref{subsec:inductioncf}.

\subsection{Induced rotations for a single sequence}\label{subsec:induced}
Consider a sequence of unimodular substitutions $\bsigma \in \cS_d^{\ZZ}$ that satisfies the tiling condition from Definition~\ref{def:tilingcond} with compact Rauzy fractals~$\cR_n$ as defined in Definition~\ref{def:rauzy}. 
Recall that the $\cS$-adic shift $(X_{\bsigma}^{(n)},\Sigma)$ (defined in Definition~\ref{def:sadicshift}) is conjugate to the rotation by $\pi_{\bu_n,\bone} \be_d$ on $\bone^\perp  / (\ZZ^d \cap \bone^\perp)$ by Theorem~\ref{t:tilingpds} and its proof.
To see the induction process, it is more convenient to view these rotations on the hyperplanes $\bv_n^\perp$.
Using the projections~$\pi_n$ defined in~\eqref{eq:abbrproj}, we obtain the commutative diagram
\begin{equation} \label{e:pinrotation}
\begin{tikzcd}[column sep=4em]
\bone^\perp  / (\ZZ^d \cap \bone^\perp) \arrow[r, "+\pi_{\bu_n,\bone}\be_d"] \arrow[d,"\pi_n"] & \bone^\perp  / (\ZZ^d \cap \bone^\perp) \arrow[d, "\varphi"] \\
\bv_n^\perp  /\pi_n(\ZZ^d \cap \bone^\perp) \arrow[r, "+\pi_n\be_d"]& \bv_n^\perp  /\pi_n(\ZZ^d \cap \bone^\perp)
\end{tikzcd}
\end{equation}
(Here, the vector~$\be_d$ can be replaced by any~$\be_i$, $i\in\cA$, because the difference $\be_d {-} \be_i$ is in~$\bone^\perp$.)
Note that
\begin{equation} \label{e:Tsigman}
\TT_{\bsigma}^{(n)} = \bv_n^\perp  /\pi_n(\ZZ^d \cap \bone^\perp) \notx{Tdsigma}{$\TT_{\bsigma}^{(n)}$}{torus, domain of $\tilde{\fr}_{\bsigma}^{(n)}$}
\end{equation} 
is a $(d{-}1)$-dimensional torus with fundamental domain~$\cR_n$. 
Therefore, we write the rotation ``$+\pi_n\be_d$" on $\TT_{\bsigma}^{(n)}$ in \eqref{e:pinrotation} as
\[
\tilde{\fr}_{\bsigma}^{(n)}:\, \cR_n \to \cR_n,\quad \bz \mapsto \bz + \pi_n \be_d \mod \pi_n(\ZZ^d \cap \bone^\perp) \notx{rota}{$\tilde{\fr}_{\bsigma}^{(n)}$}{cascade of rotations}
\]
and obtain that $(X_{\bsigma}^{(n)},\Sigma)$ is measurably conjugate to $(\cR_n, \tilde{\fr}_{\bsigma}^{(n)})$.
All dynamical systems whose transformations are rotations are equipped with the Lebesgue measure. 

Our aim is to show that the $\cS$-adic structure of the shift $(X_{\bsigma}^{(0)},\Sigma)$ can be geometrically interpreted by the fact that the induced map of~$\tilde{\fr}_{\bsigma}^{(0)}$ on the subset $M_{\sigma_{[0,n)}} \cR_n$ of~$\cR_0$
is conjugate to the rotation~$\tilde{\fr}_{\bsigma}^{(n)}$. In particular, we will show that the following diagram is commutative for all $n \in \NN$:
\[
\begin{tikzcd}[row sep=7ex,column sep=10em]
\cR_n  \arrow[r,"\displaystyle \tilde{\fr}_{\bsigma}^{(n)}"] \arrow[d,"\displaystyle M_{\sigma_{[0,n)}}"] & \cR_n \arrow[d,"\displaystyle M_{\sigma_{[0,n)}}"] \\
M_{\sigma_{[0,n)}}\cR_n \arrow[r,"\displaystyle\text{induced map of}\ \tilde{\fr}_{\bsigma}^{(0)}"] & M_{\sigma_{[0,n)}}\cR_n
\end{tikzcd}
\]
We state this result in the following theorem, which is inspired by the concept of \emph{$\sigma$-structure} introduced in \cite{Arnoux-Ito:01}. It is the exact version of Theorem~\nameref{t:C} formulated in Section~\ref{sec:introresults}. 

\begin{theorem} \label{t:Frot}
Let $\bsigma = (\sigma_n)_{n\in\ZZ} \in \cS_d^{\ZZ}$, with $d \ge 2$, be a sequence of unimodular substitutions that satisfies the tiling condition.
For $m,n\in\ZZ$, $m<n$, the induced map of the rotation~$\tilde{\fr}_{\bsigma}^{(m)}$ on the subset $M_{\sigma_{[m,n)}}\cR_n$ of~$\cR_m$  equals (after renormalization) the rotation~$\tilde{\fr}_{\bsigma}^{(n)}$.
\end{theorem}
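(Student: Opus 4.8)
The plan is to exploit the set equation for Rauzy fractals from Lemma~\ref{lem:seteq} together with the dual set equation for the line segments (Lemma~\ref{lem:brokenprojection}), and to interpret the renormalization geometrically via the torus structure in~\eqref{e:Tsigman}. By shift invariance (the tiling condition holds at all levels, and $\tilde{\fr}_{\Sigma^m\bsigma}^{(k)}=\tilde{\fr}_{\bsigma}^{(m+k)}$ up to relabelling), it suffices to treat the case $m=0$ and to show that the first return map of $\tilde{\fr}_{\bsigma}^{(0)}$ to $Y_n:=M_{\sigma_{[0,n)}}\cR_n\subset\cR_0$ is conjugate, via the linear map $M_{\sigma_{[0,n)}}$, to $\tilde{\fr}_{\bsigma}^{(n)}$ on $\cR_n$. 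A straightforward induction on~$n$ then reduces everything to the one-step case $n=1$: if the induced map of $\tilde{\fr}_{\bsigma}^{(0)}$ on $M_{\sigma_0}\cR_1$ is $M_{\sigma_0}$-conjugate to $\tilde{\fr}_{\Sigma\bsigma}^{(0)}=\tilde{\fr}_{\bsigma}^{(1)}$, then inducing twice and using that inducing a first return map on a smaller set gives the first return map on the smaller set yields the general statement.

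For the one-step case, the key computation is to describe the return time function. Fix $\bz\in\cR_1$ and consider its image $M_{\sigma_0}\bz\in M_{\sigma_0}\cR_1\subset\cR_0$. Applying $\tilde{\fr}_{\bsigma}^{(0)}$ means adding $\pi_0\be_d$ modulo $\pi_0(\ZZ^d\cap\bone^\perp)$; under the conjugacy $\psi_0$ with the $\cS$-adic shift (Theorem~\ref{t:tilingpds}), this is the shift on $X_{\bsigma}^{(0)}$. The subset $M_{\sigma_0}\cR_1$ corresponds, via the set equation $\cR_0(a)=\bigcup_{pa\preceq\sigma_0(b)}(\pi_0\bl(p)+M_{\sigma_0}\cR_1(b))$, to those sequences in $X_{\bsigma}^{(0)}$ that begin (at position~$0$) with the empty prefix of some $\sigma_0(b)$, i.e.\ to points whose coding ``desubstitutes'' cleanly at the start. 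The first return time to this set is precisely $|\sigma_0(b)|$ where $b$ is the letter read by the level-$1$ coding, exactly as in the classical Rauzy-induction ``desubstitution'' picture. Concretely, for $\bz\in\cR_1(b)$ one checks $(\tilde{\fr}_{\bsigma}^{(0)})^{|\sigma_0(b)|}(M_{\sigma_0}\bz)=M_{\sigma_0}\bz+M_{\sigma_0}\pi_1\bl(\sigma_1(\text{first letter}))$ reduced modulo the lattice, which equals $M_{\sigma_0}(\bz+\pi_1\be_?)=M_{\sigma_0}\tilde{\fr}_{\bsigma}^{(1)}(\bz)$ after using $M_{\sigma_0}\pi_1=\pi_0 M_{\sigma_0}$ (from \cite[Lemma~5.2]{BST:19}) and $M_{\sigma_0}(\ZZ^d\cap\bone^\perp)\subset\ZZ^d\cap\bone^\perp$. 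The intermediate iterates $(\tilde{\fr}_{\bsigma}^{(0)})^j(M_{\sigma_0}\bz)$ for $0<j<|\sigma_0(b)|$ land in translates $\pi_0\bl(p)+M_{\sigma_0}\cR_1$ with $p\ne\epsilon$, which are disjoint (in measure) from $M_{\sigma_0}\cR_1$ by the measure-disjointness part of the tiling condition (Proposition~\ref{p:ntilingbox}~(iii)); this is what makes the return time equal to $|\sigma_0(b)|$ and not smaller.

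The main obstacle will be making the ``disjointness of intermediate iterates'' argument rigorous and handling the set of measure zero on which $\psi_0$ is not injective. One has to be careful that the induced map is genuinely defined a.e.\ — this uses that $\tilde{\fr}_{\bsigma}^{(0)}$ is a minimal rotation (hence Lebesgue-ergodic, and every set of positive measure is visited), so Poincar\'e recurrence gives a well-defined first return map a.e.\ on $M_{\sigma_{[0,n)}}\cR_n$, which has positive measure because it is a fundamental domain of $\TT_{\bsigma}^{(n)}$ transported by a unimodular matrix. The identification of the return time with the substitution length should be phrased at the symbolic level on $X_{\bsigma}^{(0)}$, where it is essentially the statement that desubstituting by $\sigma_0$ is well-defined on the minimal shift — which follows from recognizability (\cite{BSTY}) or, more elementarily in this primitive setting, from the fact that the level-$1$ coding $\psi_1$ is a.e.\ defined and $\psi_0=$ (apply $\sigma_0$) $\circ\,\psi_1$ up to the shift bookkeeping. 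Once the symbolic return-time formula is in place, transporting it through the conjugacies $\psi_0,\psi_1$ and the linear maps, together with the commuting relation $M_{\sigma_0}\pi_1=\pi_0 M_{\sigma_0}$, gives the claimed commutative diagram; the ``after renormalization'' clause is simply the statement that the induced map becomes $\tilde{\fr}_{\bsigma}^{(n)}$ once we rescale by $M_{\sigma_{[0,n)}}^{-1}$, and the induction over~$n$ concludes the proof.
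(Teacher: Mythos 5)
Your proposal is correct, and its core computation is exactly the one in the paper: for $\bz \in \cR_n(b)$ the first return time of $M_{\sigma_{[m,n)}}\bz$ to $M_{\sigma_{[m,n)}}\cR_n$ is $|\sigma_{[m,n)}(b)|$, the intermediate iterates lie in the measure-disjoint prefix translates $\pi_m\bl(p) + M_{\sigma_{[m,n)}}\cR_n(b)$ with $p \prec \sigma_{[m,n)}(b)$, and the final iterate is $M_{\sigma_{[m,n)}}(\bz + \pi_n\be_b)$ via the intertwining $M_{\sigma_n}\pi_{n+1} = \pi_n M_{\sigma_n}$. The organization differs: instead of proving the one-step case and inducting via transitivity of first-return maps, the paper iterates the set equation \eqref{e:setequationkl} once to obtain the partition $\cR_m = \bigcup_{p\prec\sigma_{[m,n)}(b)} (\pi_m\bl(p) + M_{\sigma_{[m,n)}}\cR_n(b))$ and runs the return-time argument directly for arbitrary $m<n$; this saves the composition of conjugacies and the (true but extra) transitivity lemma. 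Your detour through the symbolic conjugacy $\psi_0$, desubstitution and recognizability is unnecessary: the orbit of $M_{\sigma_{[m,n)}}\bz$ is computed explicitly, since $k\pi_m\be_d \equiv \pi_m\bl(p)$ modulo $\pi_m(\ZZ^d\cap\bone^\perp)$ for the length-$k$ prefix $p$ (because $\bl(p) - k\be_d \in \ZZ^d\cap\bone^\perp$), so no recognizability input is needed and the "a.e." issues reduce to the measure-zero overlaps of the partition. Two small slips to fix: the displacement after $|\sigma_0(b)|$ steps is $\pi_0\bl(\sigma_0(b)) = M_{\sigma_0}\pi_1\be_b$ (no $\sigma_1$ should appear), and your "$\be_?$" is $\be_b$ for $\bz \in \cR_1(b)$ --- the resulting piecewise translation is the domain exchange $\mathfrak{h}_1$, which is the single rotation by $\pi_1\be_d$ on $\TT_{\bsigma}^{(1)}$ because all the $\pi_1\be_b$ are congruent modulo $\pi_1(\ZZ^d\cap\bone^\perp)$.
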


\begin{proof}
By the tiling condition and Proposition~\ref{p:ntilingbox}, iterating the subdivision \eqref{e:setequationkl} and taking the union over $a \in \cA$ gives the (measurable) partition
\begin{equation} \label{e:partitionRm}
\cR_m = \bigcup_{\substack{p\in\cA^*,\,b\in\cA:\\p\prec\sigma_{[m,n)}(b)}} \big(\pi_m \bl(p) +  M_{\sigma_{[m,n)}} \cR_n(b)\big). 
\end{equation}
We claim that the induced map of $\tilde{\fr}_{\bsigma}^{(m)}$ on $M_{\sigma_{[m,n)}} \cR_n$ maps points $M_{\sigma_{[m,n)}} \bz$ with $\bz \in \cR_n(b)$, $b \in \cA$, to $M_{\sigma_{[m,n)}} (\bz {+} \pi_n \be_b)$.
Indeed, for $1 \le k < |\sigma_{[m,n)}(b)|$, we have $(\tilde{\fr}_{\bsigma}^{(m)})^k(M_{\sigma_{[m,n)}} \bz) = \pi_m \bl(p) + M_{\sigma_{[m,n)}} \bz$ with the prefix $p$ of $\sigma_{[m,n)}(b)$ of length~$k$.
Because the union in \eqref{e:partitionRm} is disjoint, this implies that $(\tilde{\fr}_{\bsigma}^{(m)})^k(M_{\sigma_{[m,n)}} \bz) \notin M_{\sigma_{[m,n)}} \cR_n$.
For $k = |\sigma_{[m,n)}(b)|$, we have
\[
(\tilde{\fr}_{\bsigma}^{(m)})^k(M_{\sigma_{[m,n)}} \bz) = \pi_m\bl(\sigma_{[m,n)}(b)) + M_{\sigma_{[m,n)}} \bz = M_{\sigma_{[m,n)}} (\bz {+} \pi_n \be_b) \in M_{\sigma_{[m,n)}} \cR_n
\]
by the definition of~$\cR_n(b)$ and~$\cR_n$.
This proves the claim, and renormalization by $M_{\sigma_{[m,n)}}^{-1}$ gives the rotation~$\tilde{\fr}_{\bsigma}^{(n)}$. 
\end{proof}

\subsection{Metric theory for induced rotations}\label{sec:metricrot}
Let $(D,\Sigma,\nu)$ be a dynamical system with $D \subset \cS_d^{\ZZ}$ and a shift invariant Borel measure~$\nu$. We want to prove that, under the Pisot condition, $\nu$-a.e.\ element of $D$ satisfies the conditions of Theorem~\ref{t:Frot}. This leads to the following metric version of this theorem.

\begin{theorem} \label{theo:metricRot}
Let $(D, \Sigma, \nu)$, with $D \subset \cS_d^{\ZZ}$, $d \ge 2$, be an ergodic dynamical system that satisfies the Pisot condition and contains a periodic Pisot sequence having two-sided positive range and pure discrete spectrum.   
Then, for $\nu$-almost every $\bsigma = (\sigma_n)_{n\in\ZZ} \in D$, for all $m,n\in\ZZ$, $m<n$, the induced map of the rotation~$\tilde{\fr}_{\bsigma}^{(m)}$ on the subset $M_{\sigma_{[m,n)}}\cR_n$ of~$\cR_m$  equals (after renormalization) the rotation~$\tilde{\fr}_{\bsigma}^{(n)}$.
\end{theorem}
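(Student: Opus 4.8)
The plan is to deduce Theorem~\ref{theo:metricRot} from Theorem~\ref{t:Frot} by verifying that, under the stated hypotheses, $\nu$-almost every $\bsigma \in D$ satisfies the tiling condition with compact Rauzy fractals, so that the conclusion of Theorem~\ref{t:Frot} applies pointwise for a.e.\ $\bsigma$. Since the conclusion of Theorem~\ref{t:Frot} is a statement about a single sequence $\bsigma$ satisfying the tiling condition, once we know that this property holds $\nu$-a.e., the theorem follows immediately: for any fixed $m < n$, the induced map of $\tilde{\fr}_{\bsigma}^{(m)}$ on $M_{\sigma_{[m,n)}}\cR_n$ is (after renormalization by $M_{\sigma_{[m,n)}}^{-1}$) the rotation $\tilde{\fr}_{\bsigma}^{(n)}$, by the argument given in the proof of Theorem~\ref{t:Frot} using the disjoint subdivision \eqref{e:partitionRm} coming from Proposition~\ref{p:ntilingbox}.

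First I would invoke Theorem~\ref{prop:combcond2}: the hypotheses of Theorem~\ref{theo:metricRot} are precisely the hypotheses of that theorem (ergodicity, the Pisot condition, and the existence of a periodic Pisot sequence with two-sided positive range and pure discrete spectrum). Theorem~\ref{prop:combcond2} then yields that, for $\nu$-almost every $\bsigma \in D$, the sequence $\bsigma$ is two-sided primitive, satisfies the tiling condition with compact Rauzy fractals, and $(X_{\bsigma},\Sigma)$ has pure discrete spectrum. In particular, for such $\bsigma$ the collections $\hC_n$ tile $\RR^d$ for all $n \in \ZZ$ (this is the content of the tiling condition in Definition~\ref{def:tilingcond}), and compactness of the $\cR_n$ is exactly what is required as an hypothesis in Theorem~\ref{t:Frot}.

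Next I would note that, for a fixed such $\bsigma$, the rotations $\tilde{\fr}_{\bsigma}^{(n)}$ are well defined on the tori $\TT_{\bsigma}^{(n)}$ of \eqref{e:Tsigman}, because the tiling condition together with Theorem~\ref{t:tilingpds} (applied at every level $n$, using Proposition~\ref{p:tilingw} to pass from the tiling property of $\hC_n$ to that of $\cC_n^{\bone}$) guarantees that $\cR_n$ is a fundamental domain for $\pi_n(\ZZ^d \cap \bone^\perp)$ in $\bv_n^\perp$, and that $\bu_n$ has rationally independent coordinates so that the rotation is minimal. Then Theorem~\ref{t:Frot} applies verbatim to this $\bsigma$ and gives the desired statement for all $m < n$. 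Taking the union over the countably many pairs $(m,n)$ with $m < n$ does not affect the almost-everywhere conclusion, so we conclude that for $\nu$-almost every $\bsigma \in D$ and all $m < n$ the induced map of $\tilde{\fr}_{\bsigma}^{(m)}$ on $M_{\sigma_{[m,n)}}\cR_n$ equals, after renormalization, $\tilde{\fr}_{\bsigma}^{(n)}$.

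The only real work is in checking that the hypotheses of Theorem~\ref{theo:metricRot} line up exactly with those of Theorem~\ref{prop:combcond2}, which they do word for word; so there is no genuine obstacle here, and the proof is essentially a one-line reduction. If one wanted to be scrupulous, the point deserving a sentence of care is that Theorem~\ref{prop:combcond2} produces the tiling condition \emph{with compact Rauzy fractals}, which is needed because Theorem~\ref{t:Frot}'s proof invokes the disjoint-in-measure subdivision of $\cR_m$; compactness (hence measurability with boundary of measure zero, via Proposition~\ref{prop:sadic1}~(\ref{it:subdivision}) and Lemma~\ref{lem:RFbalanced}) ensures the partition \eqref{e:partitionRm} is genuinely a partition up to a null set, which is what makes the first-return-time computation valid.

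\begin{proof}
By Theorem~\ref{prop:combcond2}, the hypotheses imply that $\nu$-almost every $\bsigma \in D$ is two-sided primitive and satisfies the tiling condition with compact Rauzy fractals. For any such $\bsigma$, all conditions of Theorem~\ref{t:Frot} are met, so for every pair $m,n\in\ZZ$ with $m<n$, the induced map of~$\tilde{\fr}_{\bsigma}^{(m)}$ on the subset $M_{\sigma_{[m,n)}}\cR_n$ of~$\cR_m$ equals, after renormalization by $M_{\sigma_{[m,n)}}^{-1}$, the rotation~$\tilde{\fr}_{\bsigma}^{(n)}$. Since there are only countably many such pairs $(m,n)$, this conclusion holds simultaneously for all of them for $\nu$-almost every $\bsigma \in D$, which is the assertion of the theorem.
\end{proof}
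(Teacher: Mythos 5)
Your proposal is correct and follows exactly the same route as the paper's own proof, which consists of the single reduction: Theorem~\ref{prop:combcond2} yields the tiling condition for $\nu$-almost every $\bsigma \in D$, and then Theorem~\ref{t:Frot} applies. Your additional remarks on the role of compactness and the countability of the pairs $(m,n)$ are sound but not needed beyond that reduction.
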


\begin{proof}
By Theorem~\ref{prop:combcond2}, the tiling condition is satisfied for a.e.\ $\bsigma \in D$, hence we can apply Theorem~\ref{t:Frot}.
\end{proof}

While in Theorem~\ref{theo:metricRot} we started with a shift of sequences of substitutions, in the following theorem, we start with sequences of $d{\times}d$ matrices~$\bM$ and use a substitution assignment $\varrho: \cM_d \to \cS_d$ (see Definition~\ref{d:realization}) to associate a sequence of substitutions~$\varrho(\bM)$ and, by means of these substitutions, a sequence of induced rotations.

\begin{theorem}\label{theo:metricRot2}
Let $(D, \Sigma, \nu)$, with $D \subset \cM_d^{\ZZ} $, $d \ge 2$, be an ergodic dynamical system that satisfies the Pisot condition. Assume that there is a substitution assignment~$\varrho$ on~$\cM_d$ for which there is a periodic Pisot sequence in $\varrho(D)$ having two-sided positive range in $(\varrho(D),\Sigma,\varrho_*\nu)$ and pure discrete spectrum. 
Then, for $\nu$-almost every $\bM = (M_n)_{n\in\ZZ} \in D$, setting $\bsigma = \varrho(\bM)$, for all $m,n\in\ZZ$, $m<n$, the induced map of the rotation~$\tilde{\fr}_{\bsigma}^{(m)}$ on the subset $M_{[m,n)}\cR_n$ of~$\cR_m$  equals (after renormalization) the rotation~$\tilde{\fr}_{\bsigma}^{(n)}$.
\end{theorem}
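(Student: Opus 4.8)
The plan is to reduce Theorem~\ref{theo:metricRot2} to Theorem~\ref{theo:metricRot} by transporting the hypotheses and conclusion along the substitution assignment $\varrho$. The key observation is that $\varrho$, applied componentwise, gives a measurable map $\varrho\colon D \to \varrho(D) \subset \cS_d^{\ZZ}$ that intertwines the shifts, i.e.\ $\varrho \circ \Sigma = \Sigma \circ \varrho$, so that $(\varrho(D),\Sigma,\varrho_*\nu)$ is again an ergodic dynamical system. Since the incidence matrix of $\varrho(M)$ is $M$ for every $M \in \cM_d$ (Definition~\ref{d:realization}), the sequence of incidence matrices of $\varrho(\bM)$ is exactly $\bM$; in particular the transpose cocycle $A_{\mathrm{tr}}$ on $(\varrho(D),\Sigma,\varrho_*\nu)$ has the same Lyapunov exponents as the transpose cocycle on $(D,\Sigma,\nu)$, because both are computed from the same products of matrices $\tr{\!M}_{[0,n)}$ via \eqref{eq:LyGenericBothSides}. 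Hence the Pisot condition for $(D,\Sigma,\nu)$ (in the sense of Definition~\ref{def:gengenPisot2}) is equivalent to the Pisot condition for $(\varrho(D),\Sigma,\varrho_*\nu)$ (in the sense of Definition~\ref{def:gengenPisot2subs}).

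First I would check that the remaining hypothesis of Theorem~\ref{theo:metricRot} holds for $(\varrho(D),\Sigma,\varrho_*\nu)$: this is precisely what is assumed in Theorem~\ref{theo:metricRot2}, namely that $\varrho(D)$ contains a periodic Pisot sequence $\btau$ that has two-sided positive range in $(\varrho(D),\Sigma,\varrho_*\nu)$ and pure discrete spectrum. (A periodic Pisot sequence is one whose period-block substitution has Pisot incidence matrix, by Definition~\ref{d:Pisotpoint}; since $\varrho$ preserves incidence matrices, periodicity and the Pisot property of the block are inherited from $\bM$ to $\varrho(\bM)$, and conversely, so no mismatch arises.) With all hypotheses of Theorem~\ref{theo:metricRot} verified for the system $(\varrho(D),\Sigma,\varrho_*\nu)$, that theorem gives a set $D' \subset \varrho(D)$ with $\varrho_*\nu(D') = 1$ such that every $\bsigma \in D'$ satisfies the tiling condition and, for all $m < n$, the induced map of $\tilde{\fr}_{\bsigma}^{(m)}$ on $M_{\sigma_{[m,n)}} \cR_n$ equals, after renormalization by $M_{\sigma_{[m,n)}}^{-1}$, the rotation $\tilde{\fr}_{\bsigma}^{(n)}$.

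It then remains to pull this back: set $D'' = \varrho^{-1}(D') \subset D$, so that $\nu(D'') = \varrho_*\nu(D') = 1$ by definition of the pushforward measure. For $\bM = (M_n)_{n\in\ZZ} \in D''$, the sequence $\bsigma = \varrho(\bM) = (\sigma_n)_{n\in\ZZ}$ lies in $D'$, so Theorem~\ref{theo:metricRot} applies to it; moreover $M_{\sigma_{[m,n)}} = M_{[m,n)}$ because the incidence matrix of each $\sigma_k = \varrho(M_k)$ is $M_k$ and incidence matrices are multiplicative along compositions (Section~\ref{subsec:lang}). Substituting $M_{\sigma_{[m,n)}} = M_{[m,n)}$ into the conclusion of Theorem~\ref{theo:metricRot} yields exactly the statement of Theorem~\ref{theo:metricRot2}: for $\nu$-almost every $\bM \in D$, with $\bsigma = \varrho(\bM)$, and for all $m < n$, the induced map of $\tilde{\fr}_{\bsigma}^{(m)}$ on $M_{[m,n)} \cR_n$ equals after renormalization the rotation $\tilde{\fr}_{\bsigma}^{(n)}$.

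The proof is essentially a bookkeeping argument, and I do not anticipate a genuine obstacle; the only point that requires a moment of care is confirming that \emph{all} of the conditions feeding into Theorem~\ref{theo:metricRot} (Pisot condition for the transpose cocycle, existence of a periodic Pisot sequence with two-sided positive range, and pure discrete spectrum of that sequence) transport cleanly across $\varrho$, and in particular that the notion of two-sided positive range—which is a measure-theoretic statement about cylinder sets of the \emph{target} system $(\varrho(D),\Sigma,\varrho_*\nu)$—is exactly what is hypothesized, so that no additional preimage argument on positive range is needed. Once this matching of hypotheses is made explicit, the conclusion follows verbatim from Theorem~\ref{theo:metricRot} on the full-measure set $\varrho^{-1}(D')$.
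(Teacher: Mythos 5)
Your proposal is correct and takes essentially the same approach as the paper, which likewise proves Theorem~\ref{theo:metricRot2} by observing that $(\varrho(D),\Sigma,\varrho_*\nu)$ satisfies all hypotheses of Theorem~\ref{theo:metricRot} and pulling the conclusion back along $\varrho$. Your write-up simply makes explicit the bookkeeping (equality of Lyapunov exponents via identical incidence matrices, $M_{\sigma_{[m,n)}} = M_{[m,n)}$, and the pushforward of the full-measure set) that the paper leaves implicit.
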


\begin{proof}
Theorem~\ref{theo:metricRot2} follows immediately from Theorem~\ref{theo:metricRot} because the system $(\varrho(D),\Sigma,\varrho_*\nu)$ satisfies all conditions of Theorem~\ref{theo:metricRot}. 
\end{proof}

Using Theorem~\ref{theo:metrictilingaccel}, we do not need have to assume pure discrete spectrum for a Pisot sequence when we consider appropriate accelerations of the shift map.
Recall that $\bM_p$ denotes the $p$-fold blocking of the sequence~$\bM$, as in \eqref{e:Mp}.

\begin{theorem}\label{theo:metricRotAcc}
Let $(D, \Sigma, \nu)$, with $D \subset \cM_d^{\ZZ}$, $d \ge 2$, be an ergodic dynamical system that satisfies the Pisot condition and has a periodic Pisot sequence with two-sided positive range.
Then there exists a positive integer~$p$ and a substitution assignment~$\varrho$ on~$\cM_d$ with the following property. 
For $\nu$-almost every $\bM = (M_n)_{n\in\ZZ} \in D$, setting $\bsigma = \varrho(\bM_p)$, for all $m,n\in\ZZ$, $m<n$, the induced map of the rotation~$\tilde{\fr}_{\bsigma}^{(m)}$ on the subset $M_{[mp,np)}\cR_n$ of~$\cR_m$  equals (after renormalization) the rotation~$\tilde{\fr}_{\bsigma}^{(n)}$.
\end{theorem}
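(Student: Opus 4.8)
The plan is to combine the acceleration result for the tiling condition, Theorem~\ref{theo:metrictilingaccel}, with the single-sequence statement on induced rotations, Theorem~\ref{t:Frot}. Since all the hard combinatorial and metric work has already been done in those two theorems, the argument here is essentially a matter of unwinding definitions and checking that the hypotheses transfer.

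First I would invoke Theorem~\ref{theo:metrictilingaccel} verbatim: the system $(D,\Sigma,\nu)$ is ergodic, satisfies the Pisot condition, and contains a periodic Pisot sequence with two-sided positive range, which are exactly the hypotheses needed. That theorem furnishes a positive integer~$p$ and a substitution assignment~$\varrho$ on~$\cM_d$ such that, for $\nu$-almost every $\bM \in D$, the sequence of substitutions $\bsigma := \varrho(\bM_p)$ satisfies the tiling condition with compact Rauzy fractals (and moreover $(\bsigma,\Sigma)$ has pure discrete spectrum, though only the tiling condition is needed below). Fix such a generic $\bM$ and set $\bsigma = (\sigma_n)_{n\in\ZZ} = \varrho(\bM_p)$.

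Next I would apply Theorem~\ref{t:Frot} to this $\bsigma$: since $\bsigma \in \cS_d^{\ZZ}$ is a sequence of unimodular substitutions satisfying the tiling condition, for all $m,n \in \ZZ$ with $m < n$ the induced map of the rotation~$\tilde{\fr}_{\bsigma}^{(m)}$ on the subset $M_{\sigma_{[m,n)}}\cR_n$ of~$\cR_m$ equals, after renormalization, the rotation~$\tilde{\fr}_{\bsigma}^{(n)}$. The only remaining point is to rewrite the subset $M_{\sigma_{[m,n)}}\cR_n$ in terms of the original matrices~$M_k$. By the definition of the substitution assignment~$\varrho$, the incidence matrix of~$\sigma_k$ is the $k$-th term of the $p$-fold blocking $\bM_p$, i.e.\ $M_{\sigma_k} = M_{[kp,(k+1)p)}$; consequently
\[
M_{\sigma_{[m,n)}} = M_{\sigma_m} M_{\sigma_{m+1}} \cdots M_{\sigma_{n-1}} = M_{[mp,(m+1)p)} M_{[(m+1)p,(m+2)p)} \cdots M_{[(n-1)p,np)} = M_{[mp,np)},
\]
using the telescoping of consecutive blocks of the product. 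Substituting this identity into the conclusion of Theorem~\ref{t:Frot} yields precisely the claim: the induced map of~$\tilde{\fr}_{\bsigma}^{(m)}$ on $M_{[mp,np)}\cR_n \subset \cR_m$ equals, after renormalization, the rotation~$\tilde{\fr}_{\bsigma}^{(n)}$, for all $m < n$ and for $\nu$-a.e.\ $\bM \in D$.

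The only place where there is any genuine content beyond citation is verifying that $\bsigma = \varrho(\bM_p)$ is indeed a sequence of \emph{unimodular} substitutions so that Theorem~\ref{t:Frot} applies — but this is immediate, since a substitution assignment on~$\cM_d$ produces, by Definition~\ref{d:realization}, substitutions whose incidence matrices lie in $\cM_d = \NN^{d\times d} \cap \GL(d,\ZZ)$, hence are unimodular. So there is no real obstacle here; the theorem is a packaging of Theorems~\ref{theo:metrictilingaccel} and~\ref{t:Frot} via the blocking identity above, exactly parallel to how Theorem~\ref{theo:metricRot2} is deduced from Theorem~\ref{theo:metricRot}. If one wanted to be careful, the one bookkeeping subtlety is that $(D,\Sigma^p,\nu)$ need not be ergodic, but this does not affect the present statement because we only use the a.e.\ conclusion of Theorem~\ref{theo:metrictilingaccel}, whose proof already handles the ergodic decomposition.
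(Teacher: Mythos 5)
Your proposal is correct and follows exactly the paper's route: the paper's own proof is the one-line observation that Theorem~\ref{theo:metrictilingaccel} supplies $p$ and $\varrho$ so that $\bsigma = \varrho(\bM_p)$ satisfies the hypotheses of Theorem~\ref{t:Frot} almost everywhere. Your additional verifications (the telescoping identity $M_{\sigma_{[m,n)}} = M_{[mp,np)}$ and the unimodularity of the substitutions produced by~$\varrho$) are left implicit in the paper but are accurate and harmless to include.
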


\begin{proof}
By Theorem~\ref{theo:metrictilingaccel}, all conditions of Theorem~\ref{t:Frot} are satisfied.
\end{proof}

\subsection{Induced rotations for continued fraction algorithms} \label{subsec:inductioncf}
Recall again some facts about multidimensional continued fraction algorithms stated earlier.
Let $(X,F,A,\nu)$ be such an algorithm with natural extension  $(\hX,\hF,\hA,\hnu)$. In Definition~\ref{d:realization}, we defined the faithful substitutive realization $\bphi$ of $(\hX,\hF,\hA,\hnu)$.
If $(X, F, A,\nu)$ converges weakly in the future and in the past at $(\bx, \by)$ for $\hnu$-almost all $(\bx, \by)\in \hX$, then we know from \eqref{eq:phi} that the $\cS$-adic shift $(\bphi(\hX),\Sigma,\bphi_*\nu)$ is measurably conjugate to $(\hX, \hF,\hA,\hnu)$.
The results of Sections~\ref{subsec:induced} and~\ref{sec:metricrot} can be applied to $\bphi(\bx,\by)$.
However, in accordance with the classical case discussed in Section~\ref{sec:introsturm}, we prefer to view multidimensional continued fraction algorithms as induction processes on the set of rotations on the fixed torus $\bone^\perp / (\ZZ^d \cap \bone^\perp)$.

Recall the map $\chi: \PP^{d-1} \to \Delta^{d-1}$ from \eqref{eq:chidef}, and set
\begin{equation} \label{e:rtilde}
\fr_{\bx}:\ \bone^\perp / (\ZZ^d \cap \bone^\perp) \to \bone^\perp / (\ZZ^d \cap \bone^\perp),\ \bz \mapsto \bz + \be_d - \chi(\bx) \bmod{\ZZ^d \cap \bone^\perp} \notx{rot}{$\fr_{\balpha}, \fr_{\bx}$}{toral rotation}
\end{equation}
for $\bx \in \PP^{d-1}$.
Direct calculation yields that $\fr_{\bx}$ is a version of $\tilde{\fr}_{\bphi(\bx,\by)}$ acting on $\bone^\perp / (\ZZ^d \cap \bone^\perp)$ for all $\by$ satisfying $(\bx,\by) \in \hX$.
More precisely, we have
\[
\fr_{\bx} = \pi_{\chi(\bx),\bone} \circ \tilde{\fr}_{\bphi(\bx,\by)} \circ \pi_{\chi(\bx),\chi(\by)}
\]
and, for all $n \in \ZZ$, the following diagram commutes (with $\TT_{\bsigma}^{(n)}$ defined in \eqref{e:Tsigman}):
\begin{equation} \label{e:tilder}
\begin{tikzcd}[column sep=4em]
\TT_{\bphi(\bx,\by)}^{(n)} \arrow[r, "\tilde{\fr}_{\bphi(\bx,\by)}^{(n)}"] \arrow[d,"\pi_{\chi(F^n(\bx)),\bone}"] & \TT_{\bphi(\bx,\by)}^{(n)} \arrow[d, "\pi_{\chi(F^n(\bx)),\bone}"] \\
\bone^\perp / (\ZZ^d \cap \bone^\perp) \arrow[r, "\fr_{F^n(\bx)}"]& \bone^\perp / (\ZZ^d \cap \bone^\perp)
\end{tikzcd}
\end{equation}
(Note that the projections are bijective in this diagram.)
For convenience of notation, the following theorems are formulated in terms of natural extensions, even though the occurring rotations depend only on~$\bx$. 

\begin{theorem} \label{t:FrotMCF}
Let $(X,F,A,\mu)$ be a positive multidimensional continued fraction algorithm with natural extension $(\hX,\hF,A,\hmu)$ and substitutive realization~$\bphi$. 
Let $\bsigma = \bphi(\bx,\by) \in \cS_d^{\ZZ}$ with $(\bx,\by)\in \hX$.
If $\bsigma$ satisfies the tiling condition with compact Rauzy fractals, then, for all $n\in\NN$, the induced map of the rotation $\fr_{\bx}$ on the subset $\pi_{\chi(\bx),\bone} \tr{\!A}^{(n)}(\bx) \cR_n$ of~$\cR_0^{\bone}$ equals (after renormalization) the rotation~$\fr_{F^n(\bx)}$.
\end{theorem}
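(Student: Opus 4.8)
The plan is to transport Theorem~\ref{t:Frot} along the commutative diagram \eqref{e:tilder}. By the hypothesis, $\bsigma = \bphi(\bx,\by)$ satisfies the tiling condition with compact Rauzy fractals, so Theorem~\ref{t:Frot} applies with $m = 0$: for each $n \in \NN$, the induced map of the rotation $\tilde{\fr}_{\bsigma}^{(0)}$ on the subset $M_{\sigma_{[0,n)}} \cR_n$ of $\cR_0$ equals, after renormalization by $M_{\sigma_{[0,n)}}^{-1}$, the rotation $\tilde{\fr}_{\bsigma}^{(n)}$. Recall from the substitutive realization that $M_{\sigma_j} = \tr{\!\hA}(\hF^j(\bx,\by))$, hence $M_{\sigma_{[0,n)}} = \tr{\!A}^{(n)}(\bx)$ (using that $\hA = A \circ \pi$ and $\pi(\hF^j(\bx,\by)) = F^j(\bx)$). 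So the subset on which we induce is $\tr{\!A}^{(n)}(\bx)\,\cR_n$ inside $\cR_0 \subset \bv_0^\perp$.

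First I would set $P = \pi_{\chi(\bx),\bone}$, the projection along $\RR\chi(\bx) = \RR\bu_0$ onto $\bone^\perp$, which by \eqref{e:tilder} restricts to a bijection $\cR_0 = \cR_0^{\,\bv_0} \to \cR_0^{\bone}$ intertwining $\tilde{\fr}_{\bsigma}^{(0)}$ with $\fr_{\bx}$; more precisely $\fr_{\bx} \circ P = P \circ \tilde{\fr}_{\bsigma}^{(0)}$ on $\cR_0$ (up to the measure-zero boundary set, which is harmless since all these systems carry Lebesgue measure and the conjugacies are measurable isomorphisms). A measurable conjugacy carries first-return maps to first-return maps: if $G$ is the induced map of $\tilde{\fr}_{\bsigma}^{(0)}$ on a set $Y \subset \cR_0$, then $P \circ G \circ P^{-1}$ is the induced map of $\fr_{\bx}$ on $P(Y)$, because the first-return time of $P(\bz)$ to $P(Y)$ under $\fr_{\bx}$ equals the first-return time of $\bz$ to $Y$ under $\tilde{\fr}_{\bsigma}^{(0)}$. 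Applying this with $Y = \tr{\!A}^{(n)}(\bx)\,\cR_n = M_{\sigma_{[0,n)}}\cR_n$ gives that the induced map of $\fr_{\bx}$ on $P\big(\tr{\!A}^{(n)}(\bx)\cR_n\big) = \pi_{\chi(\bx),\bone}\,\tr{\!A}^{(n)}(\bx)\,\cR_n$ is conjugate, via $P$ and the renormalization $M_{\sigma_{[0,n)}}^{-1}$, to $\tilde{\fr}_{\bsigma}^{(n)}$.

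It then remains to identify this induced map, after the appropriate renormalization, with $\fr_{F^n(\bx)}$. This is exactly the content of the right-hand commuting square of \eqref{e:tilder} at level $n$: the rotation $\tilde{\fr}_{\bsigma}^{(n)}$ on $\TT_{\bsigma}^{(n)}$ is conjugate via $\pi_{\chi(F^n(\bx)),\bone}$ to $\fr_{F^n(\bx)}$ on $\bone^\perp / (\ZZ^d \cap \bone^\perp)$. Composing the conjugacy from Theorem~\ref{t:Frot} (renormalization by $M_{\sigma_{[0,n)}}^{-1} = \tr{(\tr{\!A}^{(n)}(\bx))}^{-1}$, mapping the induced piece to $\cR_n$) with $\pi_{\chi(F^n(\bx)),\bone}$ yields the asserted renormalization identifying the induced map of $\fr_{\bx}$ on $\pi_{\chi(\bx),\bone}\,\tr{\!A}^{(n)}(\bx)\,\cR_n$ with $\fr_{F^n(\bx)}$. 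I would verify the translation vector directly: $\tilde{\fr}_{\bsigma}^{(n)}$ translates by $\pi_n\be_d$, and under the identification $\pi_{\chi(F^n(\bx)),\bone}$ this becomes $\be_d - \chi(F^n(\bx)) \bmod \ZZ^d \cap \bone^\perp$, which is the translation vector of $\fr_{F^n(\bx)}$ by \eqref{e:rtilde}; this is the same computation already used to justify \eqref{e:tilder}.

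The main obstacle is purely bookkeeping: one must track two distinct projections — $\pi_n = \pi_{\bu_n,\bv_n}$ onto $\bv_n^\perp$ (used to define $\cR_n$ and $\tilde{\fr}_{\bsigma}^{(n)}$) versus $\pi_{\chi(F^n(\bx)),\bone}$ onto $\bone^\perp$ (used to define $\fr_{F^n(\bx)}$) — and keep straight that the renormalization in Theorem~\ref{t:Frot} acts by $M_{\sigma_{[0,n)}}^{-1}$ on $\bv_0^\perp \to \bv_n^\perp$, whereas the final statement phrases everything on the fixed torus $\bone^\perp / (\ZZ^d \cap \bone^\perp)$. The key identities making this go through — $M_{\sigma_n} \tilde{\pi}_{n+1} = \tilde{\pi}_n M_{\sigma_n}$ and the analogous intertwining for $\pi_n$, together with $\bu_n = M_{\sigma_{[0,n)}}\bu_0$ and the fact that $\pi_{\bu_n,\bone}$ is a linear iso $\bv_n^\perp \to \bone^\perp$ — are all available from Section~\ref{sec:rauzy} (e.g.\ \cite[Lemma~5.2]{BST:19} and the proof of Theorem~\ref{t:tilingpds}), so no genuinely new estimate is needed; the argument is a diagram chase combined with the elementary observation that conjugacies preserve first-return maps.
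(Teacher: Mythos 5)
Your proposal is correct and follows exactly the route the paper takes: the paper's proof of Theorem~\ref{t:FrotMCF} is the one-line remark "This follows from Theorem~\ref{t:Frot} and \eqref{e:tilder}," and your argument is precisely the diagram chase that justifies it (apply Theorem~\ref{t:Frot} with $m=0$, identify $M_{\sigma_{[0,n)}} = \tr{\!A}^{(n)}(\bx)$, and transport the induced map through the conjugacies $\pi_{\chi(F^k(\bx)),\bone}$ of \eqref{e:tilder}, using that measurable conjugacies carry first-return maps to first-return maps). Your bookkeeping of the projections and the translation vector is accurate; no gap.
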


\begin{proof}
This follows from Theorem~\ref{t:Frot} and \eqref{e:tilder}.
\end{proof}

According to Theorem~\ref{t:FrotMCF}, the multidimensional continued fraction algorithm~$F$ can be viewed as a mapping acting on the parameter space of rotations on the torus $\bone^\perp / (\ZZ^d \cap \bone^\perp) \simeq \TT^{d-1}$.
As in the classical case (discussed in Section~\ref{sec:introsturm}), the recurrence properties of the rotation~$\fr_{\bx}$ are related to the continued fraction expansion of an element~$\bx$.
Indeed, good approximation properties of the convergents correspond to good contraction properties of $\tr{\!A}^{(n)}(\bx)$, which entail that $\pi_{\chi(\bx),\bone} \tr{\!A}^{(n)}(\bx) \cR_n$ is small and that the induced map $\fr_{\bx}$ is close to the identity.
Recall that exponential contraction of $\tr{\!A}^{(n)}(\bx)$ is ensured by the Pisot condition, and the speed of convergence is governed by the first and second Lyapunov exponents of the algorithm.

For the following statement, recall the Pisot condition from Definition~\ref{def:MCF_Pisot} and the notation $\ll$ from \eqref{e:abscontinuity}.

\begin{theorem} \label{theo:metricRotMCF}
Let $(X,F,A,\nu)$ be a positive multidimensional continued fraction algorithm satisfying the Pisot condition and $\nu \circ F \ll \nu$. Let $\bphi$ be a faithful substitutive realization of a natural extension $(\hX,\hF,\hA,\hnu)$ of $(X,F,A,\nu)$. Assume that there is a periodic Pisot point $(\bx_0,\by_0) \in \hX$ such that $\bx_0$ has positive range in $(X,F,A,\nu)$ and $\bphi(\bx_0,\by_0)$ has pure discrete spectrum.
Then, for $\hnu$-almost all $(\bx,\by) \in \hX$, setting $\bsigma = \bphi(\bx,\by)$, for all $n\in\NN$, the induced map of the rotation~$\fr_{\bx}$ on the subset $\pi_{\chi(\bx),\bone} \tr{\!A}^{(n)}(\bx) \cR_n$ of~$\cR_0^\bone$ equals (after renormalization) the rotation~$\fr_{F^n(\bx)}$.
\end{theorem}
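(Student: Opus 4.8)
The plan is to deduce Theorem~\ref{theo:metricRotMCF} from the single-sequence statement Theorem~\ref{t:FrotMCF} by showing that $\hnu$-almost every $(\bx,\by)\in\hX$ satisfies the hypothesis of that theorem, namely that $\bsigma = \bphi(\bx,\by)$ satisfies the tiling condition with compact Rauzy fractals. This is exactly the content of the metric tiling theorem for multidimensional continued fraction algorithms, Theorem~\ref{theo:pureMCF}, so the bulk of the argument is checking that the hypotheses of Theorem~\ref{theo:pureMCF} coincide with those of the present theorem.

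First I would invoke Theorem~\ref{theo:pureMCF}: its hypotheses are precisely that $(X,F,A,\nu)$ is a positive multidimensional continued fraction algorithm satisfying the Pisot condition with $\nu\circ F \ll \nu$, that $\bphi$ is a faithful substitutive realization of a natural extension $(\hX,\hF,\hA,\hnu)$, and that there is a periodic Pisot point $(\bx_0,\by_0)\in\hX$ such that $\bx_0$ has positive range in $(X,F,A,\nu)$ and $\bphi(\bx_0,\by_0)$ has pure discrete spectrum. These are word-for-word the hypotheses of the theorem to be proved. The conclusion of Theorem~\ref{theo:pureMCF} is that, for $\hnu$-almost all $(\bx,\by)\in\hX$, $\bphi(\bx,\by)$ satisfies the tiling condition and $(\bphi(\bx,\by),\Sigma)$ has pure discrete spectrum. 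To upgrade "satisfies the tiling condition" to "satisfies the tiling condition with compact Rauzy fractals" I would use Lemma~\ref{lem:RFbalanced}: compactness of the Rauzy fractals $\cR_n$ is equivalent to the languages $\cL_{\bsigma}^{(n)}$ being balanced, and balance of $\cL_{\bsigma}$ holds $\hnu$-a.e.\ by Theorem~\ref{thm:oseledetssub}~(\ref{it:oseledets2}) (whose hypotheses — an ergodic system satisfying the Pisot condition with a cylinder of positive measure corresponding to a substitution with positive incidence matrix — are met; the latter follows from Lemma~\ref{l:positivecylinder} applied to the periodic Pisot sequence $\bphi(\bx_0,\by_0)$, which has two-sided positive range in $(\bphi(\hX),\Sigma,\bphi_*\hnu)$ by Lemma~\ref{l:positiverange}). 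Actually, it is cleaner to note that Theorem~\ref{theo:pureMCF} already reduces, via the conjugacy~\eqref{eq:phi}, to Theorem~\ref{prop:combcond2}, whose conclusion (ii) explicitly states "the tiling condition with compact Rauzy fractals"; so compactness is automatic.

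Having established that $\bsigma = \bphi(\bx,\by)$ satisfies the tiling condition with compact Rauzy fractals for $\hnu$-almost all $(\bx,\by)\in\hX$, I would then apply Theorem~\ref{t:FrotMCF} to each such $(\bx,\by)$: it gives, for all $n\in\NN$, that the induced map of the rotation $\fr_{\bx}$ on the subset $\pi_{\chi(\bx),\bone}\,\tr{\!A}^{(n)}(\bx)\,\cR_n$ of~$\cR_0^\bone$ equals, after renormalization, the rotation~$\fr_{F^n(\bx)}$. Since this is precisely the asserted conclusion (with $\bsigma = \bphi(\bx,\by)$), the theorem follows. I do not expect any genuine obstacle here: the statement is essentially a packaging of Theorem~\ref{theo:pureMCF} (metric tiling) together with Theorem~\ref{t:FrotMCF} (deterministic induction), and the only minor care needed is to track that "pure discrete spectrum" in the hypothesis on the periodic Pisot point is what feeds Theorem~\ref{prop:combcond2} through the conjugacy~\eqref{eq:phi}, and that the "positive range" hypothesis on $\bx_0$ translates to "two-sided positive range" of $\bphi(\bx_0,\by_0)$ via Lemma~\ref{l:positiverange} (which needs $\nu\circ F\ll\nu$ and faithfulness of~$\bphi$, both assumed). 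If one wanted to avoid pure discrete spectrum of the Pisot point, one could instead pass to an acceleration using Theorem~\ref{theo:metrictilingaccel}, but as stated the direct route via Theorem~\ref{theo:pureMCF} is the natural one.

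\begin{proof}
By Remark~\ref{rem:1vs2}, the Pisot condition for $(X,F,A,\nu)$ implies that $(\hX,\hF,\hA,\hnu)$ satisfies the Pisot condition, and by~\eqref{eq:phi} the system $(\bphi(\hX),\Sigma,\bphi_*\hnu)$ is measurably conjugate to $(\hX,\hF,\hA,\hnu)$ and hence also satisfies the Pisot condition. By Lemma~\ref{l:positiverange}, since $\nu\circ F\ll\nu$, $\bphi$ is faithful, and $\bx_0$ has positive range in $(X,F,A,\nu)$, the sequence $\bphi(\bx_0,\by_0)$ is a periodic Pisot sequence with two-sided positive range in $(\bphi(\hX),\Sigma,\bphi_*\hnu)$; by assumption it has pure discrete spectrum. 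Thus $(\bphi(\hX),\Sigma,\bphi_*\hnu)$ satisfies all the hypotheses of Theorem~\ref{prop:combcond2}. Transporting its conclusion back to $\hX$ via the conjugacy~\eqref{eq:phi}, we obtain that for $\hnu$-almost all $(\bx,\by)\in\hX$ the sequence $\bsigma = \bphi(\bx,\by)$ satisfies the tiling condition with compact Rauzy fractals. For each such $(\bx,\by)$, Theorem~\ref{t:FrotMCF} applies and yields, for all $n\in\NN$, that the induced map of the rotation~$\fr_{\bx}$ on the subset $\pi_{\chi(\bx),\bone}\,\tr{\!A}^{(n)}(\bx)\,\cR_n$ of~$\cR_0^\bone$ equals, after renormalization, the rotation~$\fr_{F^n(\bx)}$. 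This is the assertion of the theorem.
\end{proof}
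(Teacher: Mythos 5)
Your proof is correct and follows essentially the same route as the paper: verify the hypotheses of Theorem~\ref{prop:combcond2} for $(\bphi(\hX),\Sigma,\bphi_*\hnu)$ via Remark~\ref{rem:1vs2}, the conjugacy~\eqref{eq:phi}, and Lemma~\ref{l:positiverange}, then apply the deterministic induction result together with the diagram~\eqref{e:tilder}. The only cosmetic difference is that you package the last step through Theorem~\ref{t:FrotMCF} directly, whereas the paper factors it through Theorem~\ref{theo:metricRot}; the content is identical.
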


\begin{proof}
This follows from Theorem~\ref{theo:metricmarkov} in the same way as Theorem~\ref{theo:pureMCF} follows from Theorem~\ref{prop:combcond2}. By Remark~\ref{rem:1vs2}, the Pisot condition for $(X,F,A,\nu)$ implies that the natural extension $(\hX,\hF,\hA,\hnu)$ satisfies the Pisot condition as well. By \eqref{eq:phi} we have $(\hX,\hF,\hA,\hnu) \overset{\bphi}{\cong} (\bphi(\hX),\Sigma,\bphi_*\hnu)$, thus $(\bphi(\hX),\Sigma,\bphi_*\hnu)$ satisfies the Pisot condition.  By Lemma~\ref{l:positiverange}, for each periodic Pisot point $(\bx_0,\by_0) \in \hX$ such that $\bx_0$ has positive range in $(X,F,A,\nu)$, the sequence $\bphi(\bx_0,\by_0)$ is a periodic Pisot sequence with two-sided positive range in $(\bphi(\hX),\Sigma,\bphi_*\hnu)$. Thus the dynamical system $(\bphi(\hX),\Sigma,\bphi_*\hnu)$ satisfies all the conditions of Theorem~\ref{theo:metricRot}, which proves the theorem in view of \eqref{e:tilder}.
\end{proof}

We saw in Theorem~\ref{theo:metricRotAcc} that shifts on sequences of matrices can always be accelerated in a way that the pure discrete spectrum condition is not required. The same is true in the continued fraction setting. This is the content of the following theorem. 
 
\begin{theorem}  \label{theo:RotAcc}
Let $(X,F,A,\nu)$ be a positive multidimensional continued fraction algorithm satisfying the Pisot condition, $\nu \circ F \ll \nu$. Let $(\hX,\hF,\hA,\hnu)$ be a natural extension of $(X,F,A,\nu)$. Assume that there is a periodic Pisot point $\bx_0 \in X$ with $\bx_0$ having positive range in $(X,F,A,\nu)$. Then there exist a positive integer~$p$ and a (faithful) substitutive realization~$\bphi$ of $(X,F^p,A,\nu)$ with the following property.
For $\hnu$-almost all $(\bx,\by) \in \hX$, setting $\bsigma = \bphi(\bx,\by)$, for all $n\in\NN$, the induced map of the rotation~$\fr_{\bx}$ on the subset $\pi_{\chi(\bx),\bone} \tr{\!A}^{(np)}(\bx) \cR_n$ of~$\cR_0^\bone$, regarded as fundamental domain of $\bone^\perp / (\ZZ^d \cap \bone^\perp)$, equals (after renormalization) the rotation~$\fr_{F^{np}(\bx)}$.
\end{theorem}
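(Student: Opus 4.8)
The plan is to combine the acceleration machinery of Theorem~\ref{theo:metrictilingaccel} (in its continued-fraction incarnation, analogous to Theorem~\ref{theo:RotAcc}'s matrix counterpart Theorem~\ref{theo:metricRotAcc}) with the diagram~\eqref{e:tilder} relating $\tilde{\fr}_{\bphi(\bx,\by)}^{(n)}$ to $\fr_{F^n(\bx)}$. First I would pass from the algorithm $(X,F,A,\nu)$ to its natural extension and then to the shift picture: by Remark~\ref{rem:1vs2} the natural extension $(\hX,\hF,\hA,\hnu)$ satisfies the Pisot condition, and by Lemma~\ref{l:positiverange}, the periodic Pisot point $\bx_0$ with positive range in $(X,F,A,\nu)$ yields, via any faithful substitutive realization, a periodic Pisot sequence with two-sided positive range in the associated shift on $\cM_d^\ZZ$ obtained through $\bpsi$ as in \eqref{eq:bpsidef}. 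Thus the dynamical system $(D,\Sigma,\nu') = (\bpsi(\hX),\Sigma,\bpsi_*\hnu)$ is an ergodic system on $\cM_d^\ZZ$ satisfying the Pisot condition and containing a periodic Pisot sequence with two-sided positive range — precisely the hypotheses of Theorem~\ref{theo:metrictilingaccel}.

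Next I would apply Theorem~\ref{theo:metrictilingaccel} to $(D,\Sigma,\nu')$. This produces a positive integer~$p$ and a substitution assignment~$\varrho$ on $\cM_d$ such that, for $\nu'$-almost every $\bM = (M_n)_{n\in\ZZ} \in D$, the sequence of substitutions $\varrho(\bM_p)$ (the $p$-fold blocking) satisfies the tiling condition with compact Rauzy fractals and $(\varrho(\bM_p),\Sigma)$ has pure discrete spectrum. I would then define $\bphi$ as the substitutive realization of $(X,F^p,A,\nu)$ obtained by composing: for $(\bx,\by) \in \hX$, take $\bM = \bpsi(\bx,\by)$ and set $\bsigma = \bphi(\bx,\by) = \varrho(\bM_p)$. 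One must check that this is a well-defined (faithful) substitutive realization of the accelerated algorithm $(X,F^p,A,\nu)$: the incidence matrix of the $n$-th substitution of $\varrho(\bM_p)$ is $M_{[pn,p(n+1))} = \tr{\!\hA}^{(p)}(\hF^{pn}(\bx,\by))$, which is exactly the transpose of the $n$-th partial-quotient matrix of $F^p$, so $\bphi$ is a legitimate substitutive realization by Definition~\ref{d:realization}, and faithfulness is inherited from $\varrho$ being a substitution assignment (depending only on the matrix block). Here $\tr{\!A}^{(np)}(\bx)$ is the $n$-fold product of the blocked partial-quotient matrices, i.e.\ $M_{\sigma_{[0,n)}}$ for $\bsigma = \varrho(\bM_p)$.

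Finally, since $\bsigma = \bphi(\bx,\by)$ satisfies the tiling condition with compact Rauzy fractals for $\hnu$-almost all $(\bx,\by)$, I would invoke Theorem~\ref{t:Frot} with $m=0$: the induced map of $\tilde{\fr}_{\bsigma}^{(0)}$ on the subset $M_{\sigma_{[0,n)}}\cR_n$ of $\cR_0$ equals, after renormalization, $\tilde{\fr}_{\bsigma}^{(n)}$. Transporting this through the commutative diagram~\eqref{e:tilder} — whose vertical maps $\pi_{\chi(F^{np}(\bx)),\bone}$ are bijective — converts $\tilde{\fr}_{\bsigma}^{(0)}$ into $\fr_{\bx}$, the set $M_{\sigma_{[0,n)}}\cR_n = \tr{\!A}^{(np)}(\bx)\cR_n$ into $\pi_{\chi(\bx),\bone}\tr{\!A}^{(np)}(\bx)\cR_n \subset \cR_0^\bone$, and $\tilde{\fr}_{\bsigma}^{(n)}$ into $\fr_{F^{np}(\bx)}$; note that on the accelerated algorithm the $n$-th iterate corresponds to $F^{np}$ on the original one, so $F^n(\bx)$ in Theorem~\ref{t:FrotMCF}'s notation becomes $F^{np}(\bx)$ here. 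Induction commutes with this conjugacy since the projections intertwine the return maps, giving the claimed statement. The main obstacle I anticipate is the bookkeeping in the second paragraph: verifying that the composition $\varrho \circ (\cdot)_p \circ \bpsi$ genuinely defines a faithful substitutive realization of $(X,F^p,A,\nu)$ — in particular that the blocking of the partial-quotient sequence and the substitution assignment are compatible with the natural extension structure, and that the non-ergodicity of $(D,\Sigma^p,\nu')$ (already handled inside the proof of Theorem~\ref{theo:metrictilingaccel}) does not reintroduce difficulties at the level of the continued fraction algorithm. The rest is a transcription through~\eqref{e:tilder} exactly as Theorem~\ref{theo:metricRotAcc} is transcribed from Theorem~\ref{theo:metrictilingaccel}.
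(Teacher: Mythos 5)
Your proposal is correct and follows essentially the same route as the paper, whose proof is a one-line reduction: it derives Theorem~\ref{theo:RotAcc} from the accelerated matrix-shift result (itself resting on Theorem~\ref{theo:metrictilingaccel}) ``in the same way as Theorem~\ref{theo:metricRotMCF} follows from Theorem~\ref{theo:metricRot}'', i.e.\ exactly the transfer to $(\bpsi(\hX),\Sigma,\bpsi_*\hnu)$, the use of Lemma~\ref{l:positiverange} for two-sided positive range, and the transcription through the diagram~\eqref{e:tilder} that you spell out. The bookkeeping you flag in your second paragraph (that $\varrho$ applied to the $p$-fold blocking is a faithful substitutive realization of the accelerated algorithm) is left implicit in the paper but is verified correctly in your write-up.
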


\begin{proof}
This follows from Theorem~\ref{theo:metricRot2} in the same way as Theorem~\ref{theo:metricRotMCF} follows from Theorem~\ref{theo:metricRot}.
\end{proof}

\section{Nonstationary Markov partitions for mapping families}\label{sec:markov} 
Given a bi-infinite sequence~$\bM$ of nonnegative unimodular $d{\times}d$ integer matrices, we can interpret this sequence as a mapping family $(\TT,f_{\bM})$, i.e., as a sequence of toral automorphisms. The main results of this section state that (under certain natural conditions) we can define a nonstationary generating Markov partition for $(\TT,f_{\bM})$ by superimposing a combinatorial structure upon~$\bM$, i.e., by considering a sequence of substitutions $\bsigma$ such that $\bM$ is the sequence of incidence matrices~$\bM_{\bsigma}$. This allows to conjugate $(\TT,f_{\bM})$ to a nonstationary edge shift. We make this more precise in the following paragraph.

Let $ \bsigma \in \cS_d^{\ZZ}$, with $d \ge 2$, be a bi-infinite sequence of unimodular substitutions. We interpret the abelianization of~$\bsigma$, i.e., the sequence~$\bM_{\bsigma}$ of its incidence matrices, as an $\cS$-adic mapping family $(\TT,f_{\bsigma}) = (\TT,f_{\bM_{\bsigma}})$, whose mappings are the inverses of the elements of~$\bM_{\bsigma}$; see Definition~\ref{def:SadicmappingS}. In Section~\ref{subsec:MarkovRB}, we discuss the restacking process  used in the proof of Proposition~\ref{p:ntilingbox} and show that the Rauzy boxes of $\bsigma$ can be used to define atoms of a nonstationary Markov partition for $(\TT,f_{\bsigma})$. This means that we use the zero entropy dynamical system $(X_{\bsigma},\Sigma)$ to define the atoms of a Markov partition for the system $(\TT,f_{\bsigma})$ of positive entropy. In order to make this work, we need to assume the tiling condition from Definition~\ref{def:tilingcond} (with compact Rauzy fractals).
If $\bsigma$ is two-sided primitive, then the above mentioned Markov partition of $(\TT,f_{\bsigma})$ can be refined in a way that it becomes generating. This refined partition will be treated in Section~\ref{subsec:refined}.  
In Section \ref{subsec:nsftSub}, we use these Markov partitions to code the mapping family $(\TT,f_{\bsigma})$ by a nonstationary edge shift. 

\subsection{Markov partition defined in terms of Rauzy boxes} \label{subsec:MarkovRB}
Let $\bsigma \in \cS_d^{\ZZ}$, with $d \ge 2$, be a sequence of unimodular substitutions. We want to use Rauzy boxes in order to define a nonstationary Markov partition for the $\cS$-adic mapping family $(\TT,f_{\bsigma})$. In particular, let~$\hR_n$ be the $\cS$-adic Rauzy boxes associated to~$\bsigma$. 
We assume that $\hR_n$ is compact, so that it is also closed when we consider it is a subset of~$\TT^d$. 
We will show, under the tiling condition, that the interiors of the subtiles~$\hR_n(a)$ taken modulo the lattice~$\ZZ^d$ have the nonstationary Property~$M$ (see Definition~\ref{def:M}) and, hence, in view of Proposition~\ref{prop:MMarkov}, form a nonstationary Markov partition for the mapping family $(\TT,f_{\bsigma})$. 

If we assume the tiling condition and that the Rauzy fractals are compact, then the boundary of $\hR_n(a)$ has measure zero for all $a \in \cA$, and the collection 
\begin{equation} \label{e:cPn}
\cP_n = \big\{ \rint(\hR_n(a)) \bmod{\ZZ^d} \,:\, a \in \cA \big\} \notx{Pa}{$\cP_n$}{nonstationary Markov partition}
\end{equation}
forms a topological partition of the $d$-dimensional torus $\TT_n$ for each $n \in \ZZ$.
We recall that criteria for this tiling condition are provided in Section~\ref{sec:rauzy}.

In order to prove that $(\cP_n)_{n\in\ZZ}$ satisfies the nonstationary Property~M (see Definition~\ref{def:M}), we need to come up with suitable horizontal and vertical partitions $H_{n,a}$ and~$V_{n,a}$ for each element of~$\cP_n$, $n \in \ZZ$. 
To this end, we set, for each $n \in \ZZ$, $a \in \cA$, $\bx \in \rint(\hR_n(a))$,
\begin{equation}\label{eq:hvsmall}
\begin{aligned}
h_n(\bx \bmod \ZZ^d) & =  \tilde{\pi}_n \bx - \rint(\cR_n(a)) \bmod \ZZ^d, \notx{hnatom}{$h_n(\cdot), h_n^\gen(\cdot)$}{\hspace{.75em}atom of a horizontal partition} \\
v_n(\bx \bmod \ZZ^d) & = \pi_n \bx + \rint(\tilde{\pi}_n \llbracket\be_a\rrbracket) \bmod \ZZ^d, \notx{vnatom}{$v_n(\cdot), v_n^\gen(\cdot)$}{\hspace{.75em}atom of a vertical partition}
\end{aligned}
\end{equation}
where the projections $\pi_n, \tilde{\pi}_n$ are defined in \eqref{eq:abbrproj}.
\begin{figure}[ht]
\includegraphics{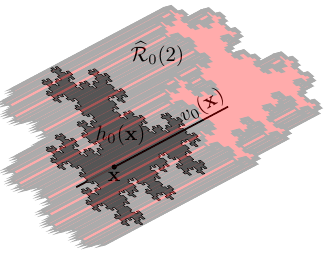}
\caption{Atoms of a transverse partition of a Rauzy box}
\label{fig:bruntransverse}
\end{figure}
See Figure~\ref{fig:bruntransverse} for an example.
This immediately implies that 
\begin{equation}\label{eq:HV}
\begin{aligned}
H_{n,a} & = \big\{h_n(\bx \bmod \ZZ^d) \,:\, \bx\in \rint(\hR_n(a))\big\}, \notx{Hn}{$H_{n,\cdot}, H_{n,\cdot}^\gen$}{horizontal partition} \\
V_{n,a} & = \big\{v_n(\bx \bmod \ZZ^d) \,:\, \bx\in \rint(\hR_n(a))\big\} \notx{Vn}{$V_{n,\cdot}, V_{n,\cdot}^\gen$}{vertical partition}
\end{aligned}
\end{equation}
are transverse partitions of $\rint(\hR_n(a)) \bmod \ZZ^d$ for all $n \in \ZZ$, $a \in \cA$.

\begin{theorem}\label{thm:MarkovCoarse}
Let $\bsigma \in \cS_d^{\ZZ}$, with $d \ge 2$, be a sequence of unimodular substitutions that satisfies the tiling condition with compact Rauzy fractals. 
Then the sequence $(\cP_n)_{n\in\ZZ}$ defined in \eqref{e:cPn} forms a nonstationary Markov partition for the mapping family $(\TT,f_{\bsigma})$ associated to~$\bsigma$.
\end{theorem}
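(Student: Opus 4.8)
The plan is to verify the nonstationary Property~M from Definition~\ref{def:M} for the partition $(\cP_n)_{n\in\ZZ}$ together with the transverse partitions $H_{n,a}$, $V_{n,a}$ defined in \eqref{eq:HV}, and then invoke Proposition~\ref{prop:MMarkov} to conclude that $(\cP_n)_{n\in\ZZ}$ is a nonstationary Markov partition. First I would record the basic setup: under the tiling condition the collection $\hC_n$ tiles $\RR^d$ for all $n$, so by Proposition~\ref{p:ntilingbox} also $\hC_n^\gen$ tiles $\RR^d$ and the union in \eqref{e:setequationkl} is disjoint in measure; compactness of the Rauzy fractals guarantees that $\hR_n(a)$ is closed with boundary of measure zero, so that $\cP_n$ really is a topological partition of $\TT_n=\TT^d$ and $H_{n,a},V_{n,a}$ really are transverse partitions of $\rint(\hR_n(a))\bmod\ZZ^d$ (the transversality $h_n(p)\cap v_n(q)\neq\emptyset$ is immediate from the product structure $\hR_n(a)=\tilde\pi_n\llbracket\be_a\rrbracket-\cR_n(a)$, since $\tilde\pi_n\llbracket\be_a\rrbracket\subset\RR\bu_n$ and $\cR_n(a)\subset\bv_n^\perp$ are transverse complements in $\RR^d$).

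The heart of the argument is checking the two inclusions in \eqref{eq:propMone}. Here $f_n=M_{\sigma_n}^{-1}$, so for $\bp\in f_n(\rint\hR_n(a))\cap\rint\hR_{n+1}(b)$ I must show $h_{n+1}(\bp)\subset M_{\sigma_n}^{-1}(h_n(M_{\sigma_n}\bp))$ and $M_{\sigma_n}^{-1}(v_n(M_{\sigma_n}\bp))\subset v_{n+1}(\bp)$, all taken modulo $\ZZ^d$. The key computational tool is the restacking decomposition used in the proof of Proposition~\ref{p:ntilingbox}: equations \eqref{e:Mneb} and \eqref{e:MnRn} express $M_{\sigma_n}\hR_{n+1}(b)$ as a disjoint-in-measure union over $(a,p,b)\in E_n$ of translates $\bl(p)+\hR_n(a,p,b)$, where $\hR_n(a,p,b)=\tilde\pi_n\llbracket\be_a\rrbracket-\pi_n\bl(p)-M_{\sigma_n}\cR_{n+1}(b)$. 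Thus for $\bq=M_{\sigma_n}\bp\in\rint\hR_n(a)$ lying in the piece $\bl(p)+\hR_n(a,p,b)$, one writes $\bq-\bl(p)=\tilde\pi_n\bt-\pi_n\bl(p)-M_{\sigma_n}\bz$ with $\bt\in\llbracket\be_a\rrbracket$, $\bz\in\cR_{n+1}(b)$, and then $\bp=M_{\sigma_n}^{-1}(\tilde\pi_n\bt)-M_{\sigma_n}^{-1}\pi_n\bl(p)-\bz$. Using $M_{\sigma_n}\tilde\pi_{n+1}=\tilde\pi_n M_{\sigma_n}$, $M_{\sigma_n}\pi_{n+1}=\pi_n M_{\sigma_n}$, and $\bl(p)\in\ZZ^d$, one identifies $M_{\sigma_n}^{-1}(\tilde\pi_n\bt)$ as a point of $\tilde\pi_{n+1}\llbracket\be_b\rrbracket$ (the relevant sub-segment produced by the prefix $p$) and $\bz=\pi_{n+1}\bl(q)$ for the appropriate prefix extension. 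Plugging these into the definitions \eqref{eq:hvsmall} and working modulo $\ZZ^d$ yields: the horizontal fibre $h_{n+1}(\bp)=\tilde\pi_{n+1}(\text{stuff})-\rint\cR_{n+1}(b)$ is contained in $M_{\sigma_n}^{-1}$ of the horizontal fibre $h_n(\bq)=\tilde\pi_n\bq-\rint\cR_n(a)$, because $M_{\sigma_n}\cR_{n+1}(b)$ sits inside $\cR_n(a)$ up to an integer translation (set equation \eqref{e:setequationkl}); dually the vertical fibre $v_n(\bq)=\pi_n\bq+\rint\tilde\pi_n\llbracket\be_a\rrbracket$ maps under $M_{\sigma_n}^{-1}$ into $v_{n+1}(\bp)=\pi_{n+1}\bp+\rint\tilde\pi_{n+1}\llbracket\be_b\rrbracket$, since $M_{\sigma_n}^{-1}\tilde\pi_n\llbracket\be_a\rrbracket=\tilde\pi_{n+1}M_{\sigma_n}^{-1}\llbracket\be_a\rrbracket$ decomposes via \eqref{e:Mneb} into sub-segments of the $\tilde\pi_{n+1}\llbracket\be_b\rrbracket$'s, and the one containing the image of $\bq$ is exactly the one attached to $b$.

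Once \eqref{eq:propMone} is verified, Definition~\ref{def:M} is satisfied, so Proposition~\ref{prop:MMarkov} gives that $(\cP_n)_{n\in\ZZ}$ is a nonstationary Markov partition for $(\TT,f_{\bsigma})$, which is the assertion. I expect the main obstacle to be the bookkeeping in the restacking step: one must carefully track how a point of $\rint\hR_n(a)$, after applying $M_{\sigma_n}^{-1}$, lands in exactly one cylinder $\rint\hR_{n+1}(b)$ (using disjointness in measure of the union in \eqref{e:setequationkl}, which is where the tiling condition is genuinely used) and identify the prefix $p$ with $pa\preceq\sigma_n(b)$ that governs the translation by $\bl(p)\in\ZZ^d$ — the fact that this translation is by an integer vector is precisely what makes the inclusions hold only modulo $\ZZ^d$ and not on the nose in $\RR^d$. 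A secondary technical point is to make sure that passing to interiors and reducing modulo $\ZZ^d$ does not spoil the inclusions: this is handled by the measure-zero boundary property of the Rauzy boxes (Proposition~\ref{prop:sadic1}~(\ref{it:subdivision}) or directly from compactness plus the tiling condition) together with the fact that $f_{\bsigma}$ is a homeomorphism on each $\TT_n$, so it maps interiors to interiors. No balance or primitivity hypothesis is needed for this coarse version; only the tiling condition with compact Rauzy fractals enters, exactly as stated.
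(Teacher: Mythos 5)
Your proposal is correct and follows essentially the same route as the paper: the paper's proof likewise verifies the two inclusions of Property~M for the transverse partitions \eqref{eq:HV} by locating $M_{\sigma_n}\bx$ in a piece $\bl(p)+\rint(\hR_n(a,p,b))$ of the restacking decomposition \eqref{e:MnRn}, using the intertwining of the projections with $M_{\sigma_n}$ and the integrality of $\bl(p)$ exactly as you describe, and then concludes via Proposition~\ref{prop:MMarkov}. The only cosmetic difference is that the paper states the inclusions after multiplying through by $M_{\sigma_n}$, i.e., as $v_n(M_{\sigma_n}\bx)\subset M_{\sigma_n}v_{n+1}(\bx)$ and $M_{\sigma_n}h_{n+1}(\bx)\subset h_n(M_{\sigma_n}\bx)$, which is equivalent to your formulation.
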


\begin{proof}
By Proposition~\ref{prop:MMarkov}, we have to show that the mapping family $(\TT,f_{\bsigma})$ associated to~$\bsigma$ has Property~M. Looking at Definition~\ref{def:M}, we thus have to prove that, for each $n \in \ZZ$, $a \in \cA$, the atoms of the partitions $H_{n,a}$ and~$V_{n,a}$ defined in \eqref{eq:HV} satisfy
\begin{equation} \label{e:propertyMsigma}
\begin{gathered}
v_n(M_{\sigma_n}\bx \bmod \ZZ^d) \subset M_{\sigma_n} v_{n+1}(\bx \bmod \ZZ^d), \\ 
M_{\sigma_n} h_{n+1}(\bx \bmod \ZZ^d) \subset h_n(M_{\sigma_n}\bx \bmod \ZZ^d),
\end{gathered}
\end{equation}
for all $\bx \in (M_{\sigma_n}^{-1} \rint(\hR_n(a)) {+} \ZZ^d) \cap \rint(\hR_{n+1}(b))$, $b \in \cA$.

Since $M_{\sigma_n} \bx \in (\rint(\hR_n(a)) {+} \ZZ^d) \cap M_{\sigma_n} \rint(\hR_{n+1}(b))$, we obtain by \eqref{e:MnRn} and \eqref{e:Rapb} that $M_{\sigma_n} \bx \in \bl(p) {+} \rint(\hR_n(a,p,b))$ for some $p \in \cA^*$ such that $(a,p,b) \in E_n$. 
Therefore, using \eqref{e:Mneb}, 
\begin{gather*}
\begin{aligned}
v_n(M_{\sigma_n}\bx) & = \pi_n \big(M_{\sigma_n} \bx {-} \bl(p)) + \rint(\tilde{\pi}_n \llbracket\be_a\rrbracket\big) \equiv M_{\sigma_n} \pi_{n+1} \bx + \tilde{\pi}_n \bl(p) {+} \rint(\tilde{\pi}_n \llbracket\be_a\rrbracket) \\
& \subset M_{\sigma_n} \pi_{n+1} \bx + M_{\sigma_n} \rint(\tilde{\pi}_{n+1} \llbracket\be_b \rrbracket) = M_{\sigma_n} v_{n+1}(\bx), 
\end{aligned} \\[.5ex]
\begin{aligned}
M_{\sigma_n} h_{n+1}(\bx) & = M_{\sigma_n} \big(\tilde{\pi}_{n+1} \bx {-} \rint(\cR_{n+1}(b))\big) = \tilde{\pi}_n M_{\sigma_n} \bx - M_{\sigma_n} \rint(\cR_{n+1}(b)) \\
& = \tilde{\pi}_n M_{\sigma_n} \bx + \pi_n \bl(p) - \rint(\cR_n(a,p,b)) \\
& \subset \tilde{\pi}_n (M_{\sigma_n} \bx {-} \bl(p)) -  \rint(\cR_n(a)) \equiv h_n(M_{\sigma_n}\bx),
\end{aligned}
\end{gather*}
where we have omitted $\bmod\ \ZZ^d$ for readability.
Hence, \eqref{e:propertyMsigma} holds, which proves the theorem.
\end{proof}

We now provide a geometric interpretation of the Markov property in terms of the \emph{restacking}\indx{restacking} process on the Rauzy boxes used in the proof of Proposition~\ref{p:ntilingbox}. This reflects the way classical Markov partitions of toral automorphisms are viewed: one first distorts their atoms linearly and then moves them back to the fundamental domain. In our context, this ``moving back'' can be visualized by restacking subtiles of Rauzy boxes modulo the lattice~$\ZZ^d$.  In particular, let $\hR_n$ be the $\cS$-adic Rauzy boxes associated to~$\bsigma$. Looking at the horizontal slices $h_n(\bx) = \tilde{\pi}_n \bx {-} \rint(\cR_n(a))$ and the vertical lines $v_n(\bx) = \pi_n \bx {+} \rint(\llbracket \tilde{\pi}_n\be_a\rrbracket)$ of the subboxes~$\hR_n(a)$ during the restacking process performed in the proof of Proposition~\ref{p:ntilingbox}, we can see Property~M (see Definition \ref{def:M}) in action. Indeed, this restacking process is the nonstationary version of the restacking done in the context of stationary Markov partitions of toral automorphisms; see for instance~\cite[Figures~9 and~10]{A98}. For the case $d = 2$, the nonstationary restacking process is explained in \cite{AF:05}.
Indeed, what we did shows the following.
The ``cylinders'' $\hR_n(a)$ of~$\hR_n$ are ``horizontal'' unions of slices of cylinders of~$M_{\sigma_n} \hR_{n+1}$, as can be seen from the stacking process in Figure~\ref{fig:restack1}. 
Viewing this process in the backwards direction, we see that the cylinders $\hR_{n+1}(b)$ are built by ``vertical'' piles of subtiles of $M_{\sigma_n}^{-1} \hR_n$.

\subsection{Generating Markov partition}\label{subsec:refined}
In the previous section, we saw that the sequence $(\cP_n)_{n\in\ZZ}$ defined in \eqref{e:cPn} forms a Markov partition for the mapping family $(\TT,f_{\bsigma})$. However, in general this Markov partition is too coarse to be \emph{generating}.
To get a generating Markov partition for a large class of sequences of substitutions, we need the finer partitions corresponding to $\hC_n^\gen$. 
On top of this, we need to assume strong convergence in the future and in the past.

Assume again that $\bsigma \in \cS_d^{\ZZ}$, $d \ge 2$, satisfies the tiling condition and that the Rauzy fractals~$\cR_n$ are compact.
Then, by Proposition~\ref{p:ntilingbox}, the collection~$\hC_n^\gen$ forms a tiling of~$\RR^d$ for all $n \in \ZZ$, and 
\begin{equation}\label{eq:markovQ}
\cP_n^\gen = \big\{ \rint(\hR_n(a,p,b)) \bmod \ZZ^d \,:\, (a,p,b) \in E_n 
\big\} \notx{Pagen}{$\cP_n^\gen$}{generating nonstationary Markov partition}
\end{equation}
forms a topological partition of the torus~$\TT_n$, which is a refinement of the topological partition~$\mathcal{P}_n$ defined in~\eqref{e:cPn}. 
Similarly to \eqref{eq:hvsmall}, we set
\[
\begin{aligned}
h_n^\gen(\bx \bmod \ZZ^d) & = \tilde{\pi}_n \bx - \rint(\cR_n(a,p,b)) \bmod \ZZ^d, \notx{hnatom}{$h_n(\cdot), h_n^\gen(\cdot)$}{\hspace{.75em}atom of a horizontal partition} \\
v_n^\gen(\bx \bmod \ZZ^d) & = \pi_n \bx + \rint(\tilde{\pi}_n \llbracket\be_a\rrbracket) \bmod \ZZ^d, \notx{vnatom}{$v_n(\cdot), v_n^\gen(\cdot)$}{\hspace{.75em}atom of a vertical partition} \end{aligned}
\]
for each $n \in \ZZ$, $(a,p,b) \in E_n$, $\bx \in \rint(\hR_n(a,p,b))$.
Again, this implies, for each $n \in \ZZ$ and $(a,p,b) \in E_n$, that 
\[
\begin{aligned}
H_{n,(a,p,b)}^\gen & = \{h_n^\gen(\bx \bmod \ZZ^d) \,:\, \bx\in \rint(\hR_n(a,p,b))\}, \notx{Hn}{$H_{n,\cdot}, H_{n,\cdot}^\gen$}{horizontal partition} \\ 
V_{n,(a,p,b)}^\gen & = \{v_n^\gen(\bx \bmod \ZZ^d) \,:\, \bx\in \rint(\hR_n(a,p,b))\} \notx{Vn}{$V_{n,\cdot}, V_{n,\cdot}^\gen$}{vertical partition}
\end{aligned}
\]
are transverse partitions of $\rint(\hR_n(a,p,b)) \bmod{\ZZ^d}$.
Note that $v_n^\gen(\bx) = v_n(\bx)$ for all $\bx \in \bigcup_{(a,p,b)\in E_n}\rint(\hR_n(a,p,b))$ (but usually $h_n^\gen(\bx) \neq h_n(\bx)$). 
Using these partitions, we shall prove that the mapping family $(\TT,f_{\bsigma})$ associated to~$\bsigma$ has Property~M and, hence, is a Markov partition. We then also show that this Markov partition is generating. Before we state the according result, we need some preparations. Indeed, the crucial part is to prove that the sets considered in Lemma~\ref{lem:inductiveIntersections} have small diameter; we will use strong convergence to prove this. 

\begin{lemma} \label{lem:refinedIntersections} 
Assume that $\bsigma = (\sigma_n)_{n\in\ZZ} \in \cS_d^{\ZZ}$ satisfies the tiling condition with compact Rauzy fractals.
Then, for each $n \in \ZZ$, $(a,p,b) \in E_n$, $(a',p',b') \in E_{n+1}$, we have
\[
\begin{aligned}
& \big(M_{\sigma_n}^{-1} \rint(\hR_n(a,p,b)) {+} \ZZ^d\big) \cap \rint(\hR_{n+1}(a',p',b')) \\
& \qquad = \begin{cases}\tilde{\pi}_{n+1} M_{\sigma_n}^{-1} \big(\bl(p) {+} \rint(\llbracket \be_a \rrbracket)\big) {-} \pi_{n+1} \bl(p') {-} M_{\sigma_{n+1}} \rint(\cR_{n+2}(b')) & \mbox{if}\ a' = b, \\ \emptyset & \mbox{otherwise}. \end{cases}
\end{aligned}
\]
\end{lemma}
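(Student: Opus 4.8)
\textbf{Proof plan for Lemma~\ref{lem:refinedIntersections}.}
The plan is to unwind the definitions of the subtiles~$\hR_n(a,p,b)$ and~$\hR_{n+1}(a',p',b')$ and to match them via the restacking relation~\eqref{e:MnRn} that was already used in the proof of Proposition~\ref{p:ntilingbox}. Recall from~\eqref{e:Re} that $\hR_n(a,p,b) = \tilde\pi_n\llbracket\be_a\rrbracket - \cR_n(a,p,b)$ and $\cR_n(a,p,b) = \pi_n\bl(p) + M_{\sigma_n}\cR_{n+1}(b)$. Thus $M_{\sigma_n}^{-1}\hR_n(a,p,b) = M_{\sigma_n}^{-1}\tilde\pi_n\llbracket\be_a\rrbracket - M_{\sigma_n}^{-1}\pi_n\bl(p) - \cR_{n+1}(b)$, and using $M_{\sigma_n}^{-1}\tilde\pi_n = \tilde\pi_{n+1}M_{\sigma_n}^{-1}$ together with $M_{\sigma_n}^{-1}\pi_n = \pi_{n+1}M_{\sigma_n}^{-1}$ (both from \cite[Lemma~5.2]{BST:19}), we get $M_{\sigma_n}^{-1}\hR_n(a,p,b) = \tilde\pi_{n+1}M_{\sigma_n}^{-1}(\bl(p) + \llbracket\be_a\rrbracket) - \cR_{n+1}(b)$. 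Note that $\cR_{n+1}(b) \subset \bv_{n+1}^\perp$ while $\tilde\pi_{n+1}M_{\sigma_n}^{-1}(\bl(p)+\llbracket\be_a\rrbracket)$ is a segment of $\RR\bu_{n+1}$; so $M_{\sigma_n}^{-1}\hR_n(a,p,b)$ is a "Rauzy box" sitting over that segment (in the direction $\bu_{n+1}$), with base $-\cR_{n+1}(b)$.

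First I would compute the intersection before taking things modulo $\ZZ^d$, as a warm-up: a translate $\bz + M_{\sigma_n}^{-1}\hR_n(a,p,b)$ meets $\hR_{n+1}(a',p',b')$ in positive measure only if $b' = b$ (since the bases are $-\cR_{n+1}(b)$ and $-\cR_{n+1}(a',p',b') = -\pi_{n+1}\bl(p') - M_{\sigma_{n+1}}\cR_{n+2}(b')$, and by the disjointness part of the tiling condition for $\hC_{n+1}^\gen$ — Proposition~\ref{p:ntilingbox} applied at level $n+1$ — the sets $\cR_{n+1}(a',p',b')$ with distinct $(a',p',b')$ have disjoint interiors, while $\cR_{n+1}(b) = \bigcup \cR_{n+1}(a',p',b')$ over $(a',p',b') \in E_{n+1}$ with middle label... wait, second coordinate $b$). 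Then I would pass to the quotient: working modulo $\ZZ^d$, the relevant translates of $M_{\sigma_n}^{-1}\hR_n(a,p,b)$ are exactly those by integer vectors, and the fibering structure over $\RR\bu_{n+1}$ together with $\bu_{n+1}$ being irrational (rational independence of its coordinates, which holds under the tiling condition as in the proof of Theorem~\ref{t:tilingpds}) forces the unique contributing translate. Carrying out the bookkeeping, the surviving piece of $(M_{\sigma_n}^{-1}\rint(\hR_n(a,p,b)) + \ZZ^d) \cap \rint(\hR_{n+1}(a',p',b'))$ is $\tilde\pi_{n+1}M_{\sigma_n}^{-1}(\bl(p) + \rint(\llbracket\be_a\rrbracket)) - \pi_{n+1}\bl(p') - M_{\sigma_{n+1}}\rint(\cR_{n+2}(b'))$ when $a' = b$ (the condition that the base $\cR_{n+1}(a',p',b')$ of the target box lies inside the base $\cR_{n+1}(b)$ of the restacked box), and empty otherwise.

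The main obstacle I anticipate is the careful handling of interiors and the modulo-$\ZZ^d$ reduction: one must check that taking $\rint(\cdot)$ commutes appropriately with the Minkowski-difference decompositions, that boundaries (which have measure zero by compactness plus the tiling condition, cf.\ Proposition~\ref{prop:sadic1}~(\ref{it:subdivision})) do not spoil the set equalities, and that no two distinct integer translates of the restacked box contribute simultaneously — this last point is exactly where irrationality of $\bu_{n+1}$ and the tiling property of $\hC_{n+1}^\gen$ (equivalently $\hC_{n+2}$, by Proposition~\ref{p:ntilingbox}) enter. A clean way to organize this is to first prove the set identity "upstairs" in $\RR^d$ for the collection $\hC_n^\gen$ versus $M_{\sigma_n}^{-1}\hC_{n+1}^\gen$ using~\eqref{e:MnRn}, and only then descend to $\TT^d = \RR^d/\ZZ^d$; this mirrors exactly the structure of the proof of Proposition~\ref{p:ntilingbox} and of Theorem~\ref{thm:MarkovCoarse}, so most of the work is already available and the new content is just tracking the finer index $(a,p,b)$ through the restacking.
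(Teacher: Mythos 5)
Your proposal is correct and follows essentially the same route as the paper: one rewrites $M_{\sigma_n}^{-1}\rint(\hR_n(a,p,b))+\ZZ^d$ as $\tilde{\pi}_{n+1}M_{\sigma_n}^{-1}\big(\bl(p)+\rint(\llbracket\be_a\rrbracket)\big)-\rint(\cR_{n+1}(b))+\ZZ^d$ via the commutation of $M_{\sigma_n}^{-1}$ with the projections, notes that the representative is contained in $\rint(\hR_{n+1}(b))$, and uses the tiling property of $\hC_{n+1}$ --- which by itself forces the unique contributing integer translate and the condition $a'=b$, so the appeal to irrationality of $\bu_{n+1}$ is not needed --- before restricting the base to the subtile $\cR_{n+1}(a',p',b')$. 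Only tidy up the momentary ``$b'=b$'' slip: the correct condition, which you do state at the end, is $a'=b$, since by \eqref{e:Rapb} the subtiles $\cR_{n+1}(a',p',b')$ decompose $\cR_{n+1}(a')$ with the \emph{first} coordinate fixed.
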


\begin{proof}
First note that 
\begin{equation} \label{e:intRapb}
\begin{aligned}
& \big(M_{\sigma_n}^{-1} \rint(\hR_n(a,p,b)) {+} \ZZ^d\big) \cap \rint(\hR_{n+1}(a')) \\
& \qquad = \begin{cases}\tilde{\pi}_{n+1} M_{\sigma_n}^{-1} \big(\bl(p) {+} \rint(\llbracket \be_a \rrbracket)\big) {-} \rint(\cR_{n+1}(b)) & \mbox{if}\ a' = b, \\ \emptyset & \mbox{otherwise}. \end{cases}
\end{aligned}
\end{equation}
Indeed, we have 
\[
\begin{aligned}
 M_{\sigma_n}^{-1} \rint(\hR_n(a,p,b)) + \ZZ^d & = M_{\sigma_n}^{-1} \tilde{\pi}_n \rint(\llbracket \be_a \rrbracket) - M_{\sigma_n}^{-1} \pi_n \bl(p) - \rint(\cR_{n+1}(b)) + \ZZ^d \\
& = M_{\sigma_n}^{-1} \tilde{\pi}_n \big(\bl(p) + \rint(\llbracket \be_a \rrbracket) \big) - \rint(\cR_{n+1}(b)) + \ZZ^d 
\end{aligned}
\]
and $M_{\sigma_n}^{-1} \tilde{\pi}_n \big(\bl(p) {+} \rint(\llbracket \be_a \rrbracket) \big) {-} \rint(\cR_{n+1}(b)) \subset \rint(\hR_{n+1}(b))$, thus the tiling property of~$\hC_{n+1}$ implies \eqref{e:intRapb}.
If we intersecting with $\rint(\hR_{n+1}(a',p',b'))$ instead of $\rint(\hR_{n+1}(a'))$, we obtain the lemma.
\end{proof}

For $\ell \le n \le m$, $(a_k,p_k,b_k) \in E_k$ for $k \in \{\ell,\dots,m\}$, define
\[
\begin{aligned}
& \hR_n((a_k,p_k,b_k)_{\ell\le k\le m}) = \bigcap_{k=\ell}^{n-1} \big(M_{\sigma_{[k,n)}}^{-1} \rint(\hR_k(a_k,p_k,b_k)) {+} \ZZ^d\big) \\
& \hspace{5em} \cap 
\rint(\hR_n(a_n,p_n,b_n)) \cap \bigcap_{k=n+1}^m \big(M_{\sigma_{[n,k)}} \rint(\hR_k(a_k,p_k,b_k)) {+} \ZZ^d\big).
\end{aligned}
\]

\begin{lemma}\label{lem:inductiveIntersections}
Assume that $\bsigma = (\sigma_n)_{n\in\ZZ} \in \cS_d^{\ZZ}$ satisfies the tiling condition with compact Rauzy fractals.
Let $\ell \le n \le m$, $(a_k,p_k,b_k) \in E_k$ for $k \in \{\ell,\dots,m\}$. Then 
\[
\begin{aligned}
& \hR_n((a_k,p_k,b_k)_{\ell\le k\le m}) \\
& = \sum_{k=\ell}^{n-1} \tilde{\pi}_n M_{\sigma_{[k,n)}}^{-1} \bl(p_k) + \tilde{\pi}_n M_{\sigma_{[\ell,n)}}^{-1} \llbracket \be_{a_\ell} \rrbracket - \sum_{k=n}^m \pi_n \bl(\sigma_{[n,k)}(p_k)) - M_{\sigma_{[n,m)}} \cR_{m+1}(b_m)
\end{aligned} 
\]
if $b_k = a_{k+1}$ for all $k \in \{\ell,\dots,m{-}1\}$, and $\hR_n((a_k,p_k,b_k)_{\ell\le k\le m}) = \emptyset$ otherwise.
\end{lemma}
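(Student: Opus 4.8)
The plan is to prove this lemma by induction, building the intersection piece by piece from the two sides toward the central level~$n$, using Lemma~\ref{lem:refinedIntersections} as the single-step building block. First I would observe that the statement is really two separate inductions glued together at level~$n$: one expanding the "future" factors $M_{\sigma_{[n,k)}} \rint(\hR_k(a_k,p_k,b_k)) + \ZZ^d$ for $k$ from $n{+}1$ up to~$m$, and one expanding the "past" factors $M_{\sigma_{[k,n)}}^{-1} \rint(\hR_k(a_k,p_k,b_k)) + \ZZ^d$ for $k$ from $n{-}1$ down to~$\ell$. Since all these factors are intersected together, and since the future factors live "along" the direction~$\bu_n$ while the past factors and the level-$n$ box contribute the "transverse" part, the two inductions are essentially independent and can be composed.

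For the future direction, I would set up an auxiliary induction showing that for $n \le j \le m$,
\[
\rint(\hR_n(a_n,p_n,b_n)) \cap \bigcap_{k=n+1}^j \big(M_{\sigma_{[n,k)}} \rint(\hR_k(a_k,p_k,b_k)) {+} \ZZ^d\big) = \tilde{\pi}_n \llbracket \be_{a_n} \rrbracket - \sum_{k=n}^j \pi_n \bl(\sigma_{[n,k)}(p_k)) - M_{\sigma_{[n,j)}} \cR_{j+1}(b_j)
\]
provided $b_k = a_{k+1}$ for $k \in \{n,\dots,j{-}1\}$ (and empty otherwise). The base case $j = n$ is just the definition of $\hR_n(a_n,p_n,b_n)$ in \eqref{e:Re}. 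For the inductive step, I apply $M_{\sigma_{[n,j)}}$ to the statement of Lemma~\ref{lem:refinedIntersections} at level~$j$ (with the triple $(a_j,p_j,b_j) \in E_j$ and $(a_{j+1},p_{j+1},b_{j+1}) \in E_{j+1}$), using that $M_{\sigma_{[n,j)}} \tilde{\pi}_{j+1} M_{\sigma_j}^{-1} = \tilde{\pi}_j M_{\sigma_{[n,j)}}$ and $M_{\sigma_{[n,j)}} \pi_{j+1} = \pi_j M_{\sigma_{[n,j)}}$ (cf.\ \cite[Lemma~5.2]{BST:19}); intersecting with $\rint(\hR_k(a_k,p_k,b_k))$ for $k < j$ commutes with this because those factors already lie in the appropriate torus. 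The condition $a_{j+1} = b_j$ coming from Lemma~\ref{lem:refinedIntersections} is exactly the Markov compatibility condition, and tracking the projection of $\bl(p_{j+1})$ through $M_{\sigma_{[n,j)}}$ produces the term $\pi_n \bl(\sigma_{[n,j+1)}(p_{j+1}))$.

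For the past direction, the analogous induction shows that for $\ell \le i \le n$,
\[
\rint(\hR_n(a_n,p_n,b_n)) \cap \bigcap_{k=i}^{n-1} \big(M_{\sigma_{[k,n)}}^{-1} \rint(\hR_k(a_k,p_k,b_k)) {+} \ZZ^d\big)
\]
equals $\sum_{k=i}^{n-1} \tilde{\pi}_n M_{\sigma_{[k,n)}}^{-1} \bl(p_k) + \tilde{\pi}_n M_{\sigma_{[i,n)}}^{-1} \llbracket \be_{a_i} \rrbracket - \rint(\cR_n(a_n,p_n,b_n))$ when $b_k = a_{k+1}$ throughout, and empty otherwise; here I apply $M_{\sigma_{[i+1,n)}}^{-1}$ to Lemma~\ref{lem:refinedIntersections} at level~$i$ and again use the commutation relations, now noting that $\tilde{\pi}_n M_{\sigma_{[i,n)}}^{-1}\llbracket\be_{a_i}\rrbracket$ is a line segment in direction~$\bu_n$ whose length shrinks with $n - i$. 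Finally I combine the two one-sided results: intersecting the past expansion of $\rint(\hR_n(a_n,p_n,b_n))$ with the future expansion, the "$-\rint(\cR_n(a_n,p_n,b_n))$" term from the past side gets replaced by the full future tail "$-\sum_{k=n}^m \pi_n\bl(\sigma_{[n,k)}(p_k)) - M_{\sigma_{[n,m)}}\cR_{m+1}(b_m)$", since $\rint(\cR_n(a_n,p_n,b_n)) = \pi_n\bl(p_n) + M_{\sigma_n}\rint(\cR_{n+1}(b_n))$ and the future induction precisely refines the $\cR_{n+1}(b_n)$ factor; the compatibility conditions from both sides merge into the single chain $b_k = a_{k+1}$ for all $k \in \{\ell,\dots,m{-}1\}$. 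The main obstacle I anticipate is bookkeeping: keeping the translation vectors, the projections $\pi_n$ versus $\tilde{\pi}_n$, and the "$\bmod \ZZ^d$" identifications consistent across the two inductions, and making sure the disjointness needed to replace intersections-with-$\ZZ^d$-translates by a single clean coset representative (which comes from the tiling property of $\hC_k^\gen$, i.e.\ Proposition~\ref{p:ntilingbox}) is invoked at each step rather than assumed globally.
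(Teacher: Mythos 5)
Your proposal is correct and follows the same route as the paper, whose entire proof is the single sentence ``This follows by induction from Lemma~\ref{lem:refinedIntersections}''; your two one-sided inductions composed at the central level~$n$ (future refining the $\cR_{n+1}(b_n)$ factor via \eqref{e:setequationkl}, past shrinking and translating the segment $\tilde{\pi}_n\llbracket\be_{a_n}\rrbracket$, with disjointness supplied at each step by Proposition~\ref{p:ntilingbox}) are exactly the intended fleshing-out of that induction. One bookkeeping remark: the base case $j=n$ of your future induction agrees with the definition~\eqref{e:Re} only if the last term reads $M_{\sigma_{[n,j+1)}}\cR_{j+1}(b_j)$ rather than $M_{\sigma_{[n,j)}}\cR_{j+1}(b_j)$ --- an off-by-one already present in the statement of the lemma itself (compare the factor $M_{\sigma_{[n,m+1)}}\cR_{m+1}(b_m)$ used in the proof of Theorem~\ref{thm:MarkovFine}), so it is worth correcting the index consistently rather than inheriting it.
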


\begin{proof}
This follows by induction from Lemma~\ref{lem:refinedIntersections}. 
\end{proof}

The next theorem is the exact statement  of Theorem~\nameref{t:D}. The improvement with respect to Theorem~\ref{thm:MarkovCoarse} is that  the Markov partition  is now generating.

\begin{theorem}\label{thm:MarkovFine}
Let $\bsigma \in \cS_d^{\ZZ}$, with $d \ge 2$, be a sequence of unimodular substitutions that satisfies the tiling condition with compact Rauzy fractals.
Then the sequence $(\cP_n^\gen)_{n\in\ZZ}$ defined in \eqref{eq:markovQ} forms a nonstationary Markov partition for the mapping family $(\TT,f_{\bsigma})$.  
If moreover $\bsigma$ is two-sided primitive, then this nonstationary Markov partition is generating.
\end{theorem}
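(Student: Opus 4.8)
The plan is to establish the statement in two steps, both resting on the Rauzy‑fractal machinery of Section~\ref{sec:rauzy}: first that $(\cP_n^\gen)_{n\in\ZZ}$ is a nonstationary Markov partition, and then — assuming two‑sided primitivity — that it is generating.

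For the Markov property I would argue exactly as in the proof of Theorem~\ref{thm:MarkovCoarse}, replacing the transverse partitions $H_{n,a},V_{n,a}$ by the refined ones $H_{n,(a,p,b)}^\gen,V_{n,(a,p,b)}^\gen$. By Proposition~\ref{prop:MMarkov} it suffices to check the nonstationary Property~M of Definition~\ref{def:M}, i.e.\ the two inclusions \eqref{eq:propMone} for $\bx$ ranging over $(M_{\sigma_n}^{-1}\rint(\hR_n(a,p,b)){+}\ZZ^d)\cap\rint(\hR_{n+1}(a',p',b'))$. By Lemma~\ref{lem:refinedIntersections} this intersection is nonempty only when $a'=b$, in which case it is an explicit translate of $\tilde\pi_{n+1}M_{\sigma_n}^{-1}(\bl(p){+}\rint(\llbracket\be_a\rrbracket))-M_{\sigma_{n+1}}\rint(\cR_{n+2}(b'))$; inserting this into the identities \eqref{e:Mneb} and \eqref{e:MnRn} and using the intertwinings $M_{\sigma_n}\pi_{n+1}=\pi_nM_{\sigma_n}$, $M_{\sigma_n}\tilde\pi_{n+1}=\tilde\pi_nM_{\sigma_n}$, the required inclusions $v_n^\gen(M_{\sigma_n}\bx)\subset M_{\sigma_n}v_{n+1}^\gen(\bx)$ and $M_{\sigma_n}h_{n+1}^\gen(\bx)\subset h_n^\gen(M_{\sigma_n}\bx)$ drop out by the same chain of equalities and inclusions as in Theorem~\ref{thm:MarkovCoarse}. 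Here one uses that the tiling condition with compact Rauzy fractals makes $\cP_n^\gen$ a genuine topological partition (recorded right after \eqref{eq:markovQ}). This step is essentially bookkeeping and is not the hard part.

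For the generating property, fix a bi‑infinite sequence $(a_k,p_k,b_k)_{k\in\ZZ}$ with $(a_k,p_k,b_k)\in E_k$ (if $b_k\neq a_{k+1}$ for some $k$ the corresponding intersection is empty by Lemma~\ref{lem:inductiveIntersections} and there is nothing to prove); by shift‑invariance it is enough to treat level~$0$. By Lemma~\ref{lem:inductiveIntersections} with $\ell=-N$, $m=N$, $n=0$, the $N$‑th term of the nested intersection of Definition~\ref{nsMarkov} (see Remark~\ref{rem:nsMarkovExplain}) is, up to an integer translation, the box
\[
B_N=\Big(\tilde\pi_0M_{\sigma_{[-N,0)}}^{-1}\llbracket\be_{a_{-N}}\rrbracket+\sum_{k=-N}^{-1}\tilde\pi_0M_{\sigma_{[k,0)}}^{-1}\bl(p_k)\Big)-\Big(M_{\sigma_{[0,N)}}\cR_{N+1}(b_N)+\sum_{k=0}^{N}\pi_0\bl(\sigma_{[0,k)}(p_k))\Big),
\]
the first bracket being a segment parallel to $\bu_0$ and the second a translate of $-M_{\sigma_{[0,N)}}\cR_{N+1}(b_N)$ inside $\bv_0^\perp$. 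Since $B_{N+1}\subset B_N$, the generating property reduces to $\operatorname{diam}(B_N)\to 0$, which splits into controlling the segment length and the Rauzy‑fractal cross‑section separately.

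The segment part is the easy half: its length equals $|\langle\be_{a_{-N}},\bv_{-N}\rangle|\,\lVert\bu_0\rVert/|\langle\bu_0,\bv_0\rangle|$, where $\bv_{-N}=\tr{(M_{\sigma_{[-N,0)}})}^{-1}\bv$ is the generalized left eigenvector of $\Sigma^{-N}\bsigma$ (cf.~\eqref{eq:unvn}); as $\bsigma$ is primitive in the past, $\tr{(M_{\sigma_{[-N,0)}})}$ is eventually positive with entrywise unbounded coefficients while $\tr{(M_{\sigma_{[-N,0)}})}\bv_{-N}=\bv$ stays fixed, forcing $\lVert\bv_{-N}\rVert\to 0$ exactly as in the proof of Proposition~\ref{th:anosov}. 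For the other half I would bound $\operatorname{diam}(M_{\sigma_{[0,N)}}\cR_{N+1}(b_N))$ by $\lVert M_{\sigma_{[0,N)}}|_{\bv_N^\perp}\rVert\cdot\operatorname{diam}(\cR_N)$, writing $\cR_{N+1}(b_N)$ (after one application of $M_{\sigma_N}$) as a translate of a subset of $\cR_N\subset\bv_N^\perp$; the factor $\operatorname{diam}(\cR_N)$ stays bounded because the languages $\cL_{\bsigma}^{(n)}$ are balanced (Lemma~\ref{lem:RFbalanced}), while $\lVert M_{\sigma_{[0,N)}}|_{\bv_N^\perp}\rVert\to 0$ because $M_{\sigma_{[0,N)}}^{-1}$ expands $\bv_0^\perp$ — i.e.\ the mapping family $(\TT,f_{\bsigma})$ is eventually Anosov in the future (Theorem~\ref{th:anosovS}), which under the present hypotheses follows from strong convergence of $\bsigma$ to its generalized right eigenvector. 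This uniform contraction on the contracting hyperplanes is the genuine obstacle, and it is precisely where the full strength of the tiling condition together with compactness of the Rauzy fractals is used. Once both halves tend to~$0$, $\bigcap_N B_N$ is a single point, so every nested intersection of atoms contains at most one point and $(\cP_n^\gen)_{n\in\ZZ}$ is generating.
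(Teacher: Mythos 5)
Your proposal follows the paper's own proof almost line for line in its architecture: Property~M is verified with the refined transverse partitions exactly as in Theorem~\ref{thm:MarkovCoarse} using Lemma~\ref{lem:refinedIntersections}, and the generating property is reduced via Lemma~\ref{lem:inductiveIntersections} to showing that two diameters tend to zero — the length of the segments $\tilde{\pi}_0 M_{\sigma_{[-N,0)}}^{-1}\llbracket\be_{a_{-N}}\rrbracket$ and the diameter of the images of the Rauzy fractals under the forward products. Your treatment of the segment part (computing its length as a multiple of $\langle \be_{a_{-N}},\bv_{-N}\rangle$ and letting primitivity in the past force $\bv_{-N}\to\mathbf{0}$) is a correct rephrasing of the paper's argument, and your observation that the nested sets $B_N$ with shrinking diameters intersect in a point is fine.

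The one genuine gap is the justification of the forward contraction $\lVert M_{\sigma_{[0,N)}}\vert_{\bv_N^\perp}\rVert\to 0$. You derive it from the eventually Anosov property of Theorem~\ref{th:anosovS}, claiming that strong convergence of $\bsigma$ to its generalized right eigenvector ``follows from the present hypotheses.'' It does not follow from anything you cite: Theorem~\ref{th:anosovS} takes strong convergence (in the future \emph{and} in the past) as a hypothesis rather than deriving it, and the paper's two criteria for strong convergence — Theorem~\ref{theo:sufcondPisotS} (which needs the local Pisot condition) and Proposition~\ref{propo:sufcondPisotS} (which needs algebraic irreducibility) — both rest on assumptions that are absent from Theorem~\ref{thm:MarkovFine}. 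The missing bridge, which is how the paper closes this step, is: compactness of the Rauzy fractals gives balance of the languages $\cL_{\bsigma}^{(n)}$ (Lemma~\ref{lem:RFbalanced}), and the tiling condition forces $\bu_n$ to have rationally independent coordinates (otherwise $\cR_n^{\bone}$ would lie in countably many lower-dimensional subspaces, contradicting the tiling property, as in the proof of Theorem~\ref{t:tilingpds}); these two facts are precisely the hypotheses of \cite[Lemma~5.11]{BST:23}, which then yields $\lim_{m\to\infty}\sup\{\lVert\pi_{\bu_n,\bone}M_{\sigma_{[n,m+1)}}\bl(w)\rVert : w\in\cL_{\bsigma}^{(m+1)}\}=0$ and hence the shrinking of $M_{\sigma_{[n,m+1)}}\cR_{m+1}(b_m)$. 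Without supplying this (nontrivial, externally cited) step, the central contraction estimate of your proof is asserted rather than proved.
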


\begin{proof}
We first prove Property~M following the lines of the proof of Theorem~\ref{thm:MarkovCoarse}.
For each $\bx \in (M_{\sigma_n}^{-1} \rint(\hR_n(a,p,b)) {+} \ZZ^d) \cap \rint(\hR_{n+1}(a',p',b'))$, we have  $a' = b$, thus 
\[
v_n^\gen(M_{\sigma_n}\bx) = v_n(M_{\sigma_n}\bx) \subset M_{\sigma_n} v_{n+1}(\bx) = M_{\sigma_n} v_{n+1}^\gen(\bx)
\]
by the proof of Theorem~\ref{thm:MarkovCoarse} and, by Lemma~\ref{lem:refinedIntersections}, 
\[
\begin{aligned}
& M_{\sigma_n} h_{n+1}^\gen(\bx) = M_{\sigma_n} \big(\tilde{\pi}_{n+1} \bx {-} \rint(\cR_{n+1}(a',p',b'))\big) \\
& \hspace{3em}\subset \tilde{\pi}_n M_{\sigma_n} \bx - M_{\sigma_n} \rint(\cR_{n+1}(b)) = \tilde{\pi}_n M_{\sigma_n} \bx + \pi_n \bl(p) - \rint(\cR_n(a,p,b)) \\
& \hspace{3em} \equiv \tilde{\pi}_n (M_{\sigma_n} \bx {-} \bl(p)) -  \rint(\cR_n(a,p,b)) \equiv h_n^\gen(M_{\sigma_n} \bx),
\end{aligned}
\]
where we have omitted $\bmod\ \ZZ^d$ for readability.
Therefore, Property~M holds. 

Assume now that $\bsigma$ is two-sided primitive.
To prove that the Markov partition $(\cP_n^\gen)_{n\in\ZZ}$ is generating, we show that $(\hR_n((a_k,p_k,a_{k+1})_{-m\le k\le m}))_{m\in\NN}$, which is a nested sequence of sets, converges to a single point for all $(a_k,p_k,a_{k+1})_{k\in\ZZ} \in \prod_{k\in\ZZ} E_k$.
By Lemma~\ref{lem:inductiveIntersections}, we have
\begin{equation} \label{e:Rpoint}
\hspace{-.5em} \bigcap_{m\to\infty} \hspace{-.5em} \hR_n((a_k,p_k,a_{k+1})_{-m\le k\le m}) = \bigg\{ \sum_{k=-\infty}^{n-1} \hspace{-.5em} \tilde{\pi}_n M_{\sigma_{[k,n)}}^{-1} \bl(p_k) - \hspace{-.25em} \sum_{k=n}^\infty \pi_n \bl(\sigma_{[n,k)}(p_k)) \bigg\} \hspace{-.5em}
\end{equation}
when 
\begin{equation} \label{eq:to0} 
\hspace{-.5em} \lim_{m\to\infty} \hspace{-.25em} \mathrm{diam}(\tilde{\pi}_n M_{\sigma_{[-m,n)}}^{-1} \llbracket \be_{a_{-m}} \rrbracket) = 0 \ \mbox{and} \ \lim_{m\to\infty} \hspace{-.25em} \mathrm{diam}(M_{\sigma_{[n,m+1)}} \cR_{m+1}(b_m)) = 0. \hspace{-.5em}
\end{equation}
The convergence in \eqref{eq:to0} guarantees convergence of the sums in \eqref{e:Rpoint}. 
To prove the first assertion of \eqref{eq:to0}, note that, for each $m \in \NN$ and $a \in \cA$, we have $\tilde{\pi}_n M_{\sigma_{[-m,n)}}^{-1} \be_a \in \RR_{\ge0} \bu_n$ and $M_{\sigma_{[-m,n)}} \tilde{\pi}_n M_{\sigma_{[-m,n)}}^{-1} \be_a = \tilde{\pi}_{-m} \be_a \in \RR_{\ge0}^d \setminus [1,\infty)^d$.
By primitivity in the past, all coordinates of $M_{\sigma_{[-m,n)}}$ tend to~$\infty$ as $m \to \infty$, which implies that $\lim_{m\to\infty} \tilde{\pi}_n M_{\sigma_{[-m,n)}}^{-1} \be_a = \mathbf{0}$. 
To prove the second assertion, we want to apply \cite[Lemma~5.11]{BST:23}. 
Since we have assumed that $\cR_n$ is compact, the language~$\cL_{\bsigma}^{(n)}$ is balanced for each $n \in \ZZ$ by Lemma~\ref{lem:RFbalanced}. 
As we already saw in the proof of Theorem~\ref{t:tilingpds}, the tiling condition implies that $\bu_n$ has rationally independent coordinates. Thus the conditions of \cite[Lemma~5.11]{BST:23} are satisfied, i.e., we have $\lim_{m\to\infty} \sup\big\{ \lVert\pi_{\bu_n,\bone} M_{\sigma_{[n,m+1)}}\, \bl(w)\rVert \,:\,  w \in \cL_{\bsigma}^{(m+1)} \big\} = 0$, which implies the second assertion of \eqref{eq:to0} by the definition of the Rauzy fractals. Therefore, the Markov partition $(\cP_n^\gen)_{n\in\ZZ}$ is generating.
\end{proof}

As in Section~\ref{subsec:induced}, we obtain the following version of Theorem~\ref{thm:MarkovFine} for multidimensional continued fraction algorithms. 

\begin{theorem} \label{t:MpartitionMCF}
Let $(X,F,A,\mu)$ be a positive multidimensional continued fraction algorithm with natural extension $(\hX,\hF,A,\hmu)$. Let $\bphi$ be a substitutive realization of this algorithm. Suppose that, for $(\bx,\by)\in \hX$, the sequence $\bsigma = \bphi(\bx,\by) \in \cS_d^{\ZZ}$ satisfies the tiling condition with compact Rauzy fractals.
Then the sequence $(\cP_n^\gen)_{n\in\ZZ}$ defined in \eqref{eq:markovQ} forms a nonstationary Markov partition for the mapping family $(\TT,f_{\bsigma})$.  
If moreover $\bsigma$ is two-sided primitive, then this nonstationary Markov partition is generating.
\end{theorem}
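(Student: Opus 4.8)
The plan is to reduce Theorem~\ref{t:MpartitionMCF} to Theorem~\ref{thm:MarkovFine}, essentially by unwinding the definitions that connect a multidimensional continued fraction algorithm to its substitutive realization. First I would recall that, by Definition~\ref{def:SadicmappingS}, the $\cS$-adic mapping family $(\TT,f_{\bsigma})$ attached to a sequence of substitutions~$\bsigma$ depends only on~$\bsigma$ itself, not on how~$\bsigma$ arose. In the present setting, $\bsigma = \bphi(\bx,\by)$ is the sequence of substitutions produced by the substitutive realization~$\bphi$ of the natural extension $(\hX,\hF,\hA)$ evaluated at the point $(\bx,\by) \in \hX$; by Definition~\ref{d:realization}, $\bphi(\bx,\by) = (\varphi(\hF^n(\bx,\by)))_{n\in\ZZ}$ is indeed an element of~$\cS_d^{\ZZ}$. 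Thus $(\TT,f_{\bsigma})$ is a perfectly legitimate $\cS$-adic mapping family in the sense of Chapter~\ref{chapter:substitution}, and the Rauzy boxes~$\hR_n$, the prefix sets~$E_n$, and the refined partition $\cP_n^\gen$ defined in~\eqref{eq:markovQ} all make sense for it.

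With this observation in place, the proof is immediate. The hypothesis of Theorem~\ref{t:MpartitionMCF} is precisely that $\bsigma = \bphi(\bx,\by)$ satisfies the tiling condition of Definition~\ref{def:tilingcond} and that its Rauzy fractals~$\cR_n$ are compact. But these are exactly the hypotheses under which Theorem~\ref{thm:MarkovFine} applies to the sequence of unimodular substitutions~$\bsigma$ (note that every substitution in a substitutive realization is unimodular, since its incidence matrix lies in~$\cM_d \subset \GL(d,\ZZ)$, as recorded after Definition~\ref{d:realization} and in Definition~\ref{def:SadicmappingS}). Hence Theorem~\ref{thm:MarkovFine} yields directly that $(\cP_n^\gen)_{n\in\ZZ}$ is a nonstationary Markov partition for $(\TT,f_{\bsigma})$. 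For the second assertion, if in addition $\bsigma$ is two-sided primitive, then the ``moreover'' part of Theorem~\ref{thm:MarkovFine} gives that this Markov partition is generating.

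I would therefore write the proof in one or two sentences: the sequence $\bsigma = \bphi(\bx,\by)$ belongs to~$\cS_d^{\ZZ}$, and by hypothesis satisfies the tiling condition with compact Rauzy fractals, so Theorem~\ref{thm:MarkovFine} applies verbatim and gives both statements. There is no genuine obstacle here; the only thing requiring a moment's care is confirming that all the structures invoked in the statement (the mapping family, the Rauzy boxes, the sets~$E_n$, the partition~$\cP_n^\gen$) are defined intrinsically in terms of~$\bsigma$, so that the passage from ``sequence of substitutions'' to ``sequence of substitutions produced by a continued fraction algorithm'' introduces nothing new. One might additionally remark, for the reader's orientation, that this is the analog for sequences of substitutions of the relationship between $\cM$-adic mapping families and continued fraction algorithms described in Section~\ref{subsec:MCFmappingfamily}, and that the metric counterpart—where the tiling and primitivity hypotheses are verified $\hnu$-almost everywhere under the Pisot condition—will be taken up in Section~\ref{sec:metricMP}.
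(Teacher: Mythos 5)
Your proposal is correct and matches the paper's approach exactly: the paper introduces this theorem as ``the following version of Theorem~\ref{thm:MarkovFine} for multidimensional continued fraction algorithms'' and gives no further argument, treating it precisely as the specialization of Theorem~\ref{thm:MarkovFine} to $\bsigma = \bphi(\bx,\by)$ that you describe. Your additional check that all the relevant objects ($\cP_n^\gen$, the Rauzy boxes, the mapping family) depend only on $\bsigma$ itself is exactly the right observation to make the reduction airtight.
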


\subsection{Nonstationary edge shifts} \label{subsec:nsftSub}
Let $\bsigma \in \cS_d^{\ZZ}$ be as in Theorem~\ref{thm:MarkovFine}.
In the present section, we show that the refined Markov partitions~$\cP_n^\gen$ from \eqref{eq:markovQ} can be used to construct a measurable conjugacy between a certain nonstationary edge shift and the mapping family associated to~$\bsigma$. This measurable conjugacy is even one-to-one outside a ``small'' set. This result shows as well that the matrices of the mapping family $(\TT,f_{\bsigma})$ are the transposed inverses of  the transition matrices of this nonstationary edge shift, which is a version of a theorem of Adler~\cite[Theorem~8.4]{A98} and Manning~\cite[Theorem~2.1]{Manning}. 

As in Section~\ref{subsec:nsft}, we will define the required nonstationary edge shift by using 
a Bratteli diagram. We therefore recall how to associate a Bratteli diagram (or Markov compactum) to a sequence of substitutions $\bsigma \in \cS_d^{\ZZ}$ in a natural way. 

\begin{definition}[Brattteli diagram associated to a sequence of substitutions; {see e.g.~\cite{BSTY}}]
\indx{Bratteli diagram}
Let $\bsigma=(\sigma_n)_{n\in\ZZ} \in \cS_d^{\ZZ}$. For $n\in\ZZ$, let $V_n = \cA$ and 
\[
E_n = \big\{ (a,p,b) \in \cA \times \cA^* \times \cA \,:\, pa \preceq \sigma_n(b) \big\}. \notx{En}{$E_n$}{set of edges of a Bratteli diagram or nonstationary edge shift, prefixes of a substitution}
\]
Put $V = \coprod_{n\in\ZZ} V_n$ and $E = \coprod_{n\in\ZZ} E_n$, and define the source and range maps by $s(a,p,b) = a$ and $r(a,p,b) = b$, respectively. Then $\mathscr{B}=(V,E)$ is the \emph{Bratteli diagram of the sequence~$\bsigma$}. 
\end{definition}

It is immediate from the definition that the transition matrices~$A_n$ of the Bratteli diagram of $(\sigma_n)_{n\in\ZZ}$ are equal to $\tr{\!M}_{\sigma_n}$. Indeed, there are $|\sigma_n(b)|_a$ edges with source vertex $a \in \cA$ and range vertex $b \in \cA$ in~$\mathscr{B}$. 

Let $\mathscr{B}$ be the Bratteli diagram of a sequence $\bsigma=(\sigma_n)_{n\in\ZZ}$. We want to give an appropriate symbolic representation for the mapping family $(\TT,f_{\bsigma})$ in terms of the nonstationary edge shift $(X_{\mathscr{B}},\Sigma)$; see Definition~\ref{def:nfst}. 
In particular, we establish the following theorem, generalizing \cite[Proposition~3.6]{AF:05}.
Recall that the Rauzy boxes $\hR_n(a,p,b)$ are defined in~\eqref{e:Re}.

\begin{theorem}\label{th:symbMod}
Let $\bsigma \in \cS_d^{\ZZ}$, with $d \ge 2$, be a sequence of unimodular substitutions that is two-sided primitive and satisfies the tiling condition with compact Rauzy fractals. Let $\mathscr{B} = (V,E)$ be the Bratteli diagram associated to $\bsigma$. Then the maps
\[ 
\psi_n:\, X_{\mathscr{B}}^{(n)} \to \TT_n, \; (e_k)_{k\in\ZZ} \mapsto \bx \quad \mbox{such that}\ 
\bigcap_{\ell\in\NN}\overline{\bigcap_{|k|\le\ell} f_{\bsigma}^{n-k}(\rint(\hR_k(e_k)))} = \{\bx\},
\]
with $\rint(\hR_k(e_k))$ viewed as a subset of~$\TT_k$, are continuous and one-to-one except on the set of the pullbacks of the boundary of the Rauzy boxes~$\hR_n(e)$, $e \in E_n$. Moreover, the diagram 
\[  
\xymatrix{  
\cdots \ar[r]^{\Sigma}& X_{\mathscr{B}}^{(-1)} \ar[d]_{\psi_{-1}}\ar[r]^{\Sigma} &  X_{\mathscr{B}}^{(0)} \ar[d]_{\psi_0}\ar[r]^{\Sigma} & X_{\mathscr{B}}^{(1)} \ar[d]_{\psi_1}\ar[r]^{\Sigma} &  X_{\mathscr{B}}^{(2)}\ar[d]_{\psi_2}  \ar[r]^{\Sigma}&\cdots  \\
\cdots \ar[r]^{f_{-2}}& \TT_{-1} \ar[r]^{f_{-1}} &  \TT_0 \ar[r]^{f_0} & \TT_1 \ar[r]^{f_1} &  \TT_2 \ar[r]^{f_{2}}   &\cdots }  
\]
is commutative. Therefore, $\varphi: X_{\mathscr{B}} \to \TT$, defined by~$\psi_n$ on the component~$X_{\mathscr{B}}^{(n)}$, is a measurable conjugacy between  $(X_{\mathscr{B}},\Sigma)$ and the mapping family $(\TT,f_{\bsigma})$, where $\varphi$ fails to be one-to-one only on the set of the pullbacks of the boundary of the Rauzy boxes~$\hR_n(e)$, $e \in E_n$, $n \in \ZZ$.
\end{theorem}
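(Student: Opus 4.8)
The plan is to deduce Theorem~\ref{th:symbMod} from the general symbolic-representation machinery of Section~\ref{subsec:nsft}, exactly as \cite[Proposition~3.6]{AF:05} is deduced in the two-dimensional case, with the new inputs being the (generating) nonstationary Markov partition $(\cP_n^\gen)_{n\in\ZZ}$ from Theorem~\ref{thm:MarkovFine} and the identification of its combinatorics with the Bratteli diagram~$\mathscr{B}$. First I would invoke Theorem~\ref{thm:MarkovFine}: since $\bsigma$ is two-sided primitive and satisfies the tiling condition with compact Rauzy fractals, the sequence $(\cP_n^\gen)_{n\in\ZZ}$ with $\cP_n^\gen = \{\rint(\hR_n(a,p,b)) \bmod \ZZ^d : (a,p,b) \in E_n\}$ is a \emph{generating} nonstationary Markov partition for $(\TT,f_{\bsigma})$. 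Thus Proposition~\ref{prop:genSymRep} applies once we check that the Bratteli diagram it produces coincides with~$\mathscr{B}$, and that the maps~$\psi_n$ described there are exactly the ones in the statement.

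The key step is the combinatorial identification. By the recipe in Section~\ref{subsec:nsft}, the Bratteli diagram associated with the Markov partition $(\cP_n^\gen)_{n\in\ZZ}$ has vertex set $V_n = E_n$ (the atoms of~$\cP_n^\gen$, i.e.\ the prefix triples of~$\sigma_n$) and an edge from $(a,p,b)\in E_n$ to $(a',p',b')\in E_{n+1}$ whenever $\rint(\hR_n(a,p,b)) \cap f_{\bsigma}^{-1}(\rint(\hR_{n+1}(a',p',b'))) \neq \emptyset$. By Lemma~\ref{lem:refinedIntersections}, this intersection is nonempty precisely when $a' = b$ — with the nonemptiness in that case following from the tiling property together with $\cR_{n+2}(b')$ having nonempty interior (Proposition~\ref{prop:sadic1}~(iv), available via the property PRICE or directly since under the tiling condition the subtiles are closures of their interiors). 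Hence the adjacency is ``$(a,p,b) \to (b,p',b')$,'' which is exactly the edge structure of the two-level Bratteli diagram obtained by telescoping $\mathscr{B}$: an edge $e \in E_n$ followed by an edge $e' \in E_{n+1}$ with $r(e) = s(e')$, because $r(a,p,b) = b = s(b,p',b') = a'$. After applying Proposition~\ref{prop:refinePtoQ}-type reasoning (or simply observing that edge shifts and vertex shifts of a Bratteli diagram telescope compatibly), one sees that the vertex shift of the Markov partition's diagram is, up to a topological conjugacy respecting the shift, the nonstationary edge shift $(X_{\mathscr{B}},\Sigma)$ of~$\mathscr{B}$ in the sense of Definition~\ref{def:nfst}: a point of $X_{\mathscr{B}}^{(n)}$ is a bi-infinite path $(e_k)_{k\in\ZZ}$ with $r(e_k) = s(e_{k+1})$, and this is the same data as a legal bi-infinite vertex sequence in the telescoped diagram. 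Under this identification the transition matrices are $A_n = \tr{\!M}_{\sigma_n}$, which is why the matrices of $(\TT,f_{\bsigma})$ — namely $M_{\sigma_n}^{-1}$ — are the transposed inverses of the transition matrices, recovering the Adler–Manning statement.

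Next I would transport the conclusion of Proposition~\ref{prop:genSymRep} through this identification. The map $\psi_n$ of that proposition sends a legal sequence to the unique point of $\bigcap_{\ell\in\NN}\overline{\bigcap_{|k|\le\ell} f_{\bsigma}^{n-k}(R_{k,i_k})}$, where $R_{k,i_k}$ ranges over the atoms of~$\cP_k^\gen$; since those atoms are exactly the sets $\rint(\hR_k(e_k)) \bmod \ZZ^d$ with $e_k \in E_k$, this is precisely the formula stated for~$\psi_n$ in Theorem~\ref{th:symbMod}. Continuity and the one-to-one property off the pullbacks of the atom boundaries are then immediate from Proposition~\ref{prop:genSymRep}; the boundaries of the atoms are $\partial \hR_n(e)$, $e\in E_n$, and they have zero Lebesgue measure because the Rauzy fractals are compact and satisfy the subdivision property (Lemma~\ref{lem:RFbalanced} together with Proposition~\ref{prop:sadic1}~(iv), or directly from the tiling condition). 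Commutativity of the displayed diagram and the fact that the combined map $\varphi: X_{\mathscr{B}} \to \TT$ (defined levelwise by~$\psi_n$) is a measurable conjugacy, one-to-one outside the pullbacks of those boundaries, then follow verbatim from Proposition~\ref{prop:genSymRep}. Finally I would remark that the generating property (Theorem~\ref{thm:MarkovFine}) is what forces each intersection $\bigcap_{\ell}\overline{\bigcap_{|k|\le\ell} f_{\bsigma}^{n-k}(\rint(\hR_k(e_k)))}$ to contain \emph{at most} one point, and two-sided primitivity (used in the proof of that theorem via the ``diameters tend to zero'' estimates, i.e.\ Lemma~\ref{lem:inductiveIntersections} and \cite[Lemma~5.11]{BST:23}) is what forces it to contain \emph{at least} one point, so that $\psi_n$ is well defined everywhere.

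The main obstacle I anticipate is the bookkeeping in the combinatorial identification — precisely matching the vertex shift of the Markov partition's Bratteli diagram (whose vertices are the atoms $E_n$) with the nonstationary edge shift $(X_{\mathscr{B}},\Sigma)$ of the substitution Bratteli diagram $\mathscr{B}$ (whose edges are $E_n$). One must be careful that the two notions of ``legal sequence'' agree: a vertex sequence $((a_k,p_k,b_k))_{k\in\ZZ}$ is legal for the partition diagram iff $b_k = a_{k+1}$ for all $k$, which is exactly the path-compatibility $r(e_k) = s(e_{k+1})$ in~$\mathscr{B}$; and one should check that the shift actions correspond and that $X_{\mathscr{B}}^{(n)} = \Sigma^n(X_{\mathscr{B}}^{(0)})$ in Definition~\ref{def:nfst} matches the level structure coming from $\psi_n: X_{\mathscr{B}}^{(n)} \to \TT_n$. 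None of this is deep, but it is the place where an error would most easily creep in, so I would write it out carefully, perhaps invoking \cite{BSTY} for the standard fact that $E_n$ plays the role of both ``edges of $\mathscr{B}$ at level $n$'' and ``atoms of the refined Markov partition at level $n$.''
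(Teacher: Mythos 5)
Your proposal is correct and follows essentially the same route as the paper: both rest on the generating Markov partition of Theorem~\ref{thm:MarkovFine}, on Lemmas~\ref{lem:refinedIntersections} and~\ref{lem:inductiveIntersections} for nonemptiness and for matching legal symbol sequences with paths in~$\mathscr{B}$, and on the Adler-style argument for injectivity off the boundary pullbacks (the paper argues directly, citing \cite[Theorem~6.5]{A98}, rather than formally reducing to Proposition~\ref{prop:genSymRep} after a Bratteli-diagram identification, but this is only a difference of packaging). One small misattribution in your last paragraph: the fact that $\bigcap_{\ell}\overline{\bigcap_{|k|\le\ell} f_{\bsigma}^{n-k}(\rint(\hR_k(e_k)))}$ contains \emph{at least} one point comes from Lemma~\ref{lem:inductiveIntersections} together with compactness (nested nonempty compact sets), whereas two-sided primitivity enters only through the diameter estimates that give the \emph{at most} one point (generating) half.
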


\begin{proof}
Recall that, because $\rint(\hR_k(e_k) \in \TT_k$, we have   
\begin{equation}\label{eq:f_edgeshiftproof}
f_{\bsigma}^{n-k}(\rint(\hR_k(e_k))) = \begin{cases}
M_{\sigma_{[n,k)}} \hR_k(e_k) & \text{if}\ k \ge n, \\[.5ex]
M_{\sigma_{[k,n)}}^{-1} \hR_k(e_k) & \text{if}\ k < n. \end{cases}
\end{equation}
Then $\bigcap_{\ell\in\NN} \overline{\bigcap_{|k|\le\ell} f_{\bsigma}^{n-k}(\rint(\hR_k(e_k)))} \neq \emptyset$ by Lemma~\ref{lem:refinedIntersections}, and, since $(\cP_n^\gen)_{n\in\ZZ}$ with $\cP_n^\gen$, $n\in\ZZ$, as in \eqref{eq:markovQ} is a generating Markov partition for $(\TT,f_{\bsigma})$ by Theorem~\ref{thm:MarkovFine}, this intersection contains exactly one element of~$\TT_n$ (which is given by \eqref{e:Rpoint}). Thus $\pi_n: X_{\mathscr{B}}^{(n)} \to \TT_n$ is a well-defined mapping for each $n \in \ZZ$. It can be proved in exactly the same way as in the proof of \cite[Theorem~6.5]{A98} that $\pi_n$ is one-to-one, except for the case when $(e_k)_{k\in\ZZ}$ belongs to the preimage of the boundary of some $\hR_k(e)$ for $e \in E_k$ and $k \in \ZZ$. In this case, $\psi_n$ is instead finite-to-one.

Therefore, the disjoint union~$X_{\mathscr{B}}$ projects essentially one-to-one to~$\TT$ via~$\varphi$. It is easy to check that the shift $\Sigma: X_{\mathscr{B}}^{(n)} \to X_{\mathscr{B}}^{(n+1)}$ projects to the map $f_n: \TT_n \to \TT_{n+1}$, and the diagram commutes.
\end{proof}

\begin{definition}\label{symbolic model} \indx{mapping family!symbolic model}
Under the conditions of Theorem~\ref{th:symbMod}, we say that $(X_{\mathscr{B}},\Sigma)$ is a \emph{symbolic model} of  the mapping family $(\TT,f_{\bsigma})$. If $\bM$ is the sequence of incidence matrices of~$\bsigma$, then we also say that $(X_{\mathscr{B}},\Sigma)$ is a \emph{symbolic model}  of the mapping family $(\TT,f_{\bM})$.
\end{definition}

\section[Metric results on Markov partitions]{Metric results on nonstationary Markov partitions} \label{sec:metricMP}
In this section, we present  metric results  on  Markov partitions. The theory for mapping families is provided in Section~\ref{subsec:metricMP}, multidimensional continued fraction algorithms are treated in Section~\ref{sec:MPBrun}.
We will apply the results to the Brun continued fraction algorithm in Section~\ref{subsec:BrunS}.

\subsection{Metric results on Markov partitions for mapping families}\label{subsec:metricMP}
Let $(D,\Sigma,\nu)$ be a dynamical system with $D \subset \cS_d^{\ZZ}$ [$D \subset \cM_d^{\ZZ}$] and a shift invariant Borel measure~$\nu$.
We want to prove that $\nu$-a.e.\ element of $D$ admits a generating Markov partition whose atoms are given by Rauzy boxes. 
Our goal is to apply Theorem~\ref{thm:MarkovFine} to almost all elements of the shift. The crucial condition in this theorem is the tiling condition from Definition~\ref{def:tilingcond}, sufficient conditons of which are provided in Section~\ref{sec:suff-cond-tilings}.

The first result of this section is a version of Theorem~\nameref{t:E} (from the introduction) for sequences of substitutions. It allows to associate a nonstationary Markov partition to almost every element of a shift of substitutive sequences. Recall the definition of Pisot condition from Definition~\ref{def:gengenPisot2} and the definition of pure discrete spectrum in Section~\ref{sec:spectr-prop-sub}.

\begin{theorem} \label{theo:metricmarkov}
Let $(D, \Sigma, \nu)$, with $D \subset \cS_d^{\ZZ}$, $d \ge 2$, be an ergodic dynamical system that satisfies the Pisot condition.  Assume that there is a periodic Pisot sequence having two-sided positive range and pure discrete spectrum.   

Then, for $\nu$-almost every $\bsigma \in D$, the linear eventually Anosov mapping family $(\TT,f_{\bsigma})$ associated to $\bsigma$  admits a generating nonstationary Markov partition, whose atoms are explicitly given by Rauzy boxes. This Markov partition provides a symbolic model for $(\TT,f_{\bsigma})$ as a nonstationary edge shift. 
\end{theorem}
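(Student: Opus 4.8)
The plan is to reduce Theorem~\ref{theo:metricmarkov} to a combination of the structural result Theorem~\ref{thm:MarkovFine}, the metric consequence of the Pisot condition Theorem~\ref{prop:combcond2}, and the Anosov property Theorem~\ref{thm:oseledetssub}. The statement to prove has three parts: (a) that $(\TT,f_{\bsigma})$ is a linear eventually Anosov mapping family for $\nu$-a.e.\ $\bsigma$, (b) that it admits a generating nonstationary Markov partition whose atoms are Rauzy boxes, and (c) that this partition yields a symbolic model as a nonstationary edge shift. Each of these has essentially already been established in a ``single sequence'' form earlier in the paper; the work here is to verify that the hypotheses of those theorems hold $\nu$-a.e.

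First I would apply Theorem~\ref{prop:combcond2}. To do so I must check its hypotheses: $(D,\Sigma,\nu)$ is ergodic and satisfies the Pisot condition by assumption, and we are assuming the existence of a periodic Pisot sequence in $D$ with two-sided positive range and pure discrete spectrum. Hence Theorem~\ref{prop:combcond2} applies verbatim and gives: for $\nu$-a.e.\ $\bsigma \in D$, the sequence $\bsigma$ is two-sided primitive, satisfies the tiling condition with compact Rauzy fractals, and $(X_{\bsigma},\Sigma)$ has pure discrete spectrum. Fix such a $\bsigma$ in the full-measure set (intersecting with the full-measure set from the next paragraph).

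Second, with $\bsigma$ two-sided primitive and satisfying the tiling condition with compact Rauzy fractals, I would invoke Theorem~\ref{thm:MarkovFine}: the sequence $(\cP_n^\gen)_{n\in\ZZ}$ of partitions into interiors of Rauzy subboxes $\rint(\hR_n(a,p,b)) \bmod \ZZ^d$ is a nonstationary Markov partition for $(\TT,f_{\bsigma})$, and since $\bsigma$ is two-sided primitive, this partition is generating. This gives part (b). For part (c), I would apply Theorem~\ref{th:symbMod}, whose hypotheses (two-sided primitive, tiling condition with compact Rauzy fractals) are exactly those we have verified; this produces the measurable conjugacy between $(\TT,f_{\bsigma})$ and the nonstationary edge shift $(X_{\mathscr{B}},\Sigma)$ associated to the Bratteli diagram of $\bsigma$, failing to be one-to-one only on the pullbacks of boundaries of Rauzy boxes — i.e., the symbolic model in the sense of Definition~\ref{symbolic model}. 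For part (a), that $(\TT,f_{\bsigma})$ is eventually Anosov (in fact with the codimension-one hyperbolic splitting $\coprod_{n\in\ZZ} \RR\bu_n \oplus \bv_n^\perp$), I would cite Theorem~\ref{thm:oseledetssub}, whose hypotheses (ergodicity, a cylinder of positive measure corresponding to a substitution with positive incidence matrix, the Pisot condition) are again available: the cylinder hypothesis follows from Lemma~\ref{l:positivecylinder} applied to the periodic Pisot sequence with positive two-sided range, and the Pisot condition is assumed.

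The steps are each essentially invocations, so I do not anticipate a genuine obstacle; the only point requiring care is bookkeeping of the full-measure sets — one must take the intersection of the full-measure sets furnished by Theorem~\ref{prop:combcond2} and Theorem~\ref{thm:oseledetssub}, and check that both theorems are indeed applicable (in particular that Lemma~\ref{l:positivecylinder} supplies the ``positive incidence matrix cylinder'' hypothesis needed by Theorem~\ref{thm:oseledetssub}, and that Proposition~\ref{prop:combcond}, invoked inside Theorem~\ref{prop:combcond2}, is applicable thanks to the same lemma). I would also remark that the linearity of the mapping family is immediate from Definition~\ref{def:SadicmappingS}, since the transformations $M_{\sigma_n}^{-1}$ are linear toral automorphisms, so ``linear eventually Anosov mapping family'' is simply the conjunction of Definition~\ref{def:SadicmappingS} and the eventually Anosov property from Theorem~\ref{thm:oseledetssub}~(\ref{it:oseledets4}).
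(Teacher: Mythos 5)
Your proposal is correct and follows the same route as the paper, which simply invokes Theorem~\ref{prop:combcond2} to get two-sided primitivity and the tiling condition with compact Rauzy fractals almost everywhere, and then concludes via Theorem~\ref{thm:MarkovFine}. You spell out explicitly what the paper leaves to ``the result follows'' (the appeal to Theorem~\ref{th:symbMod} for the edge-shift model and to Theorem~\ref{thm:oseledetssub} via Lemma~\ref{l:positivecylinder} for the eventually Anosov property), and all of these invocations are correctly justified.
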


\begin{proof}[Proof of Theorem~\ref{theo:metricmarkov}]
By Theorem~\ref{prop:combcond2}, a.e.\ $\bsigma \in D$ satisfies the tiling condition with compact Rauzy fractals and is two-sided primitive. Therefore, all conditions of Theorem~\ref{thm:MarkovFine} are satisfied, and the result follows.
\end{proof}

We mention that the Pisot condition is the only crucial condition in this theorem. All the other conditions are easily checked.

While in Theorem~\ref{theo:metricmarkov} we started with a shift of sequences of substitutions, in the following theorem, we start with sequences of $d{\times}d$ matrices~$\bM$ and use a substitution assignment $\varrho: \cM_d \to \cS_d$ (see Definition~\ref{d:realization}) to associate a sequence of substitutions~$\varrho(\bM)$ and, by means of these substitutions, a nonstationary Markov partition, with it. This theorem corresponds to Theorem~\nameref{t:E}  from the introduction.

\begin{theorem}\label{theo:metricmarkovM}
Let $(D, \Sigma, \nu)$, with $D \subset \cM_d^{\ZZ} $, $d \ge 2$, be an ergodic dynamical system that satisfies the Pisot condition. Assume that there is a substitution assignment\footnote{It is sufficient to define the substitution assignment on matrices occurring in~$D$.} $\varrho$ on $\cM_d$ for which there is a periodic Pisot sequence in $\varrho(D)$ having two-sided positive range in $(\varrho(D),\Sigma,\varrho_*\nu)$ and pure discrete spectrum. 

Then, for $\nu$-almost every $\bM\in D$, the linear eventually Anosov mapping family $(\TT,f_{\bM})$ associated to~$\bM$  admits a generating nonstationary Markov partition, whose atoms are explicitly given by Rauzy boxes of $\varrho(\bM)$. This Markov partition also provides a symbolic model for $(\TT,f_{\bM})$ as a nonstationary edge shift. 
\end{theorem}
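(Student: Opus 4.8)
The plan is to deduce Theorem~\ref{theo:metricmarkovM} from Theorem~\ref{theo:metricmarkov} by transporting the hypotheses and conclusions along the substitution assignment $\varrho$. The key observation is that $\varrho\colon\cM_d\to\cS_d$ induces (componentwise) a measurable map $D\to\cS_d^\ZZ$, $\bM\mapsto\varrho(\bM)$, which intertwines the shift on $D$ with the shift on $\varrho(D)$: one has $\varrho(\Sigma\bM)=\Sigma\varrho(\bM)$ since $\varrho$ acts componentwise. Hence $(\varrho(D),\Sigma,\varrho_*\nu)$ is a well-defined dynamical system, and it is ergodic because $\varrho$ is a measurable factor map (in fact, since $\varrho$ is injective on matrices, the map $\bM\mapsto\varrho(\bM)$ is a bijection onto $\varrho(D)$ with measurable inverse, so it is even a measurable conjugacy). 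Moreover, since the incidence matrix of $\varrho(M)$ is $M$ for every $M$, the sequence of incidence matrices of $\varrho(\bM)$ is exactly $\bM$; consequently the transpose cocycle $A_{\mathrm{tr}}$ on $(\varrho(D),\Sigma,\varrho_*\nu)$ corresponds under this conjugacy to the transpose cocycle $A_{\mathrm{tr}}$ on $(D,\Sigma,\nu)$, so they have the same Lyapunov exponents. In particular, $(D,\Sigma,\nu)$ satisfying the Pisot condition (Definition~\ref{def:gengenPisot2}) is equivalent to $(\varrho(D),\Sigma,\varrho_*\nu)$ satisfying the Pisot condition (Definition~\ref{def:gengenPisot2subs}).

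With these identifications in hand, the next step is to check that all hypotheses of Theorem~\ref{theo:metricmarkov} are met by the system $(\varrho(D),\Sigma,\varrho_*\nu)$. It is ergodic and satisfies the Pisot condition by the previous paragraph, and by assumption it contains a periodic Pisot sequence having two-sided positive range and pure discrete spectrum. Therefore Theorem~\ref{theo:metricmarkov} applies: for $\varrho_*\nu$-almost every $\bsigma\in\varrho(D)$, the mapping family $(\TT,f_{\bsigma})$ is linear eventually Anosov and admits a generating nonstationary Markov partition whose atoms are the Rauzy boxes of $\bsigma$, providing a symbolic model as a nonstationary edge shift. Now pull this back: the set of $\bsigma\in\varrho(D)$ for which the conclusion holds has full $\varrho_*\nu$-measure, so its preimage under $\bM\mapsto\varrho(\bM)$ has full $\nu$-measure in $D$. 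For $\bM$ in this full-measure set, put $\bsigma=\varrho(\bM)$; since $\bM_{\bsigma}=\bM$, the $\cS$-adic mapping family $(\TT,f_{\bsigma})$ of Definition~\ref{def:SadicmappingS} is literally the $\cM$-adic mapping family $(\TT,f_{\bM})$ of Definition~\ref{def:Madicmapping} (both are $\cdots\xrightarrow{M_{-1}^{-1}}\TT_0\xrightarrow{M_0^{-1}}\TT_1\xrightarrow{}\cdots$). Hence $(\TT,f_{\bM})$ inherits all the stated properties, and its Markov partition atoms are the Rauzy boxes of $\varrho(\bM)$, as claimed.

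The argument is thus essentially a matter of bookkeeping: unwinding the definitions to see that the substitution assignment produces a measure-theoretic isomorphism between $(D,\Sigma,\nu)$ and $(\varrho(D),\Sigma,\varrho_*\nu)$ that is compatible with the transpose cocycle and identifies the associated mapping families. I do not anticipate a genuine obstacle here; the only mildly delicate point is confirming measurability of $\bM\mapsto\varrho(\bM)$ and of its image $\varrho(D)$, together with the claim that Lyapunov exponents are preserved. These follow because $\varrho$ is a map between countable discrete sets, so $\bM\mapsto\varrho(\bM)$ is continuous (hence Borel) for the product topologies, and the cocycle $A_{\mathrm{tr}}$ is, by construction, the same function of the underlying matrix sequence on both sides, so Proposition~\ref{prop:genly} gives identical exponents. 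One should also remark, for completeness, that "linear eventually Anosov" for $(\TT,f_{\bM})$ is exactly the $\cM$-adic instance discussed in Section~\ref{subsec:anosovM}, so no separate verification of the Anosov property is needed beyond what Theorem~\ref{theo:metricmarkov} already yields. With these remarks the proof is complete.
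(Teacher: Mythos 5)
Your proposal is correct and follows exactly the paper's route: the paper's proof is the one-line observation that $(\varrho(D),\Sigma,\varrho_*\nu)$ satisfies all hypotheses of Theorem~\ref{theo:metricmarkov}, and your argument simply spells out the bookkeeping (the conjugacy induced by $\varrho$, the preservation of the transpose cocycle and hence of the Pisot condition, and the identification of the mapping families) that the paper leaves implicit.
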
 

\begin{proof}
Theorem~\ref{theo:metricmarkovM} follows immediately from Theorem~\ref{theo:metricmarkov} because the system $(\varrho(D),\Sigma,\varrho_*\nu)$ satisfies all conditions of Theorem~\ref{theo:metricmarkov}. 
\end{proof}

The following theorem shows that we can accelerate each shift in a way that the pure discrete spectrum assumption is no longer needed for the acceleration. In its statement, for a given $\bM = (M_n)_{n\in\ZZ} \in \cM_d$ and a given $p \in \NN$, we use the notation $\bM_p= (M_{[pn,p(n+1))})_{n\in\ZZ}$ for the $p$-fold blocking of the sequence~$\bM$. 

\begin{theorem}\label{theo:metricmarkovAcc}
Let $(D, \Sigma, \nu)$, with $D \subset \cM_d^{\ZZ}$, $d \ge 2$, be an ergodic dynamical system that satisfies the Pisot condition and has a periodic Pisot sequence with two-sided positive range.
Then there exists a positive integer~$p$ and a substitution assignment~$\varrho$ on~$\cM_d$ such that, for $\nu$-almost every $\bM \in D$, the linear eventually Anosov mapping family $(\TT,f_{\bM_p})$ associated to~$\bM_p$  admits a generating nonstationary Markov partition, whose atoms are explicitly given by Rauzy boxes of $\varrho(\bM_p)$. This Markov partition also provides a symbolic model for  $(\TT,f_{\bM_p})$ as a nonstationary edge shift. 
\end{theorem}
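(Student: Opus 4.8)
The plan is to reduce Theorem~\ref{theo:metricmarkovAcc} to the already-proved combination of Theorem~\ref{theo:metrictilingaccel} and Theorem~\ref{thm:MarkovFine}, exactly paralleling the way Theorem~\ref{theo:metricmarkovM} was derived from Theorem~\ref{theo:metricmarkov}, but now invoking the accelerated tiling result instead of Theorem~\ref{prop:combcond2}. First I would apply Theorem~\ref{theo:metrictilingaccel} to the ergodic system $(D,\Sigma,\nu)$: since $(D,\Sigma,\nu)$ satisfies the Pisot condition and contains a periodic Pisot sequence with two-sided positive range, that theorem furnishes a positive integer~$p$ and a substitution assignment~$\varrho$ on~$\cM_d$ (defined, say, only on the matrices occurring in~$D$, which suffices) such that for $\nu$-almost every $\bM \in D$ the sequence of substitutions $\bsigma = \varrho(\bM_p)$ satisfies the tiling condition with compact Rauzy fractals, and $(\bsigma,\Sigma)$ has pure discrete spectrum. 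Fix such a $p$ and~$\varrho$; from now on work with a $\nu$-generic $\bM$.

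The second step is to verify that the remaining hypothesis of Theorem~\ref{thm:MarkovFine} --- two-sided primitivity of $\bsigma = \varrho(\bM_p)$ --- holds for $\nu$-almost every~$\bM$. Here I would argue as in the proof of Theorem~\ref{theo:metrictilingaccel}: the system $(D,\Sigma,\nu)$ has a cylinder of positive measure corresponding to a substitution with positive incidence matrix (this follows from the periodic Pisot sequence with two-sided positive range, via Lemma~\ref{l:positivecylinder} applied after passing through a substitution assignment, or directly by primitivity of a power of the Pisot matrix as in Lemma~\ref{l:positivecylinder}). By the Poincar\'e Recurrence Theorem and ergodicity of $(D,\Sigma,\nu)$, a positive block reoccurs with bounded gaps in $\nu$-a.e.\ $\bM$; passing to the $p$-fold blocking $\bM_p$ and then to its substitutive realization~$\bsigma$ preserves this, so $\bsigma$ is two-sided primitive a.e. (Note the incidence matrix of $\varrho(\bM_p)_n$ is $M_{[pn,p(n+1))}$, so a positive block of the $M_n$'s yields a positive block of incidence matrices of~$\bsigma$.)

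With both hypotheses of Theorem~\ref{thm:MarkovFine} in place for the $\nu$-generic $\bsigma = \varrho(\bM_p)$, that theorem gives that the sequence $(\cP_n^\gen)_{n\in\ZZ}$ from \eqref{eq:markovQ}, whose atoms are the Rauzy boxes $\hR_n(a,p',b)$ of $\varrho(\bM_p)$ taken modulo~$\ZZ^d$, is a generating nonstationary Markov partition for the $\cM$-adic mapping family $(\TT,f_{\bsigma})$. Since $\bsigma$ has the same sequence of incidence matrices as $\bM_p$, the mapping family $(\TT,f_{\bsigma})$ coincides with $(\TT,f_{\bM_p})$, so this is a generating Markov partition for $(\TT,f_{\bM_p})$; moreover $(\TT,f_{\bM_p})$ is eventually Anosov, e.g.\ by Theorem~\ref{cor:anosov} applied to $\bM_p$ (the $p$-fold blocking inherits the two-sided local Pisot and growth conditions, which hold $\nu$-a.e.\ by Lemma~\ref{lem:GloLocPisot}), or more directly since the tiling condition with compact Rauzy fractals already forces the strong convergence needed in Proposition~\ref{th:anosov}. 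Finally, Theorem~\ref{th:symbMod} applied to $\bsigma = \varrho(\bM_p)$ --- whose hypotheses (two-sided primitivity, tiling condition, compact Rauzy fractals) we have just checked --- provides the symbolic model of $(\TT,f_{\bM_p})$ as the nonstationary edge shift $(X_{\mathscr B},\Sigma)$ associated to the Bratteli diagram of $\varrho(\bM_p)$. This completes the proof.

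\textbf{Main obstacle.} There is essentially no new mathematical difficulty: the theorem is a packaging of Theorem~\ref{theo:metrictilingaccel}, Theorem~\ref{thm:MarkovFine}, and Theorem~\ref{th:symbMod}. The one point that needs a line of care is the bookkeeping of the blocking: making sure that the substitution assignment $\varrho$ produced by Theorem~\ref{theo:metrictilingaccel} is indeed applied to the $p$-fold blocked sequence $\bM_p$ (so that incidence matrices are the products $M_{[pn,p(n+1))}$), and that two-sided primitivity genuinely transfers from $\bM$ through $\bM_p$ to $\varrho(\bM_p)$ on a set of full $\nu$-measure --- which it does because a positive finite block of the original sequence, occurring with bounded gaps a.e.\ by ergodicity, becomes a positive single incidence matrix (or a positive block) after blocking.
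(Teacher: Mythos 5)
Your proposal is correct and follows essentially the same route as the paper, whose proof is the one-line observation that Theorem~\ref{theo:metrictilingaccel} supplies $p$ and $\varrho$ so that all hypotheses of Theorem~\ref{thm:MarkovFine} hold $\nu$-almost everywhere for $\varrho(\bM_p)$. The extra details you supply (the two-sided primitivity check via recurrence of a positive block, the identification of $(\TT,f_{\bsigma})$ with $(\TT,f_{\bM_p})$, and the appeal to Theorem~\ref{th:symbMod} for the edge-shift model) are exactly the ingredients the paper leaves implicit.
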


\begin{proof}
By Theorem~\ref{theo:metrictilingaccel}, all conditions of Theorem~\ref{thm:MarkovFine} are satisfied.
\end{proof}

\subsection[Markov partitions for continued fraction algorithms]{Nonstationary Markov partitions for continued fraction algorithms}  \label{sec:MPBrun}
In this section, we use the results from Section~\ref{subsec:metricMP} to establish results on nonstationary Markov partitions for multidimensional continued fraction algorithms.  
Recall again some facts about multidimensional continued fraction algorithms stated earlier.
Let $(X,F,A,\nu)$ be such an algorithm with natural extension  $(\hX,\hF,\hA,\hnu)$. In Definition~\ref{d:realization}, we defined faithful substitutive realizations~$\bphi$ of $(\hX,\hF,\hA,\hnu)$.
If the algorithm $(\hX, \hF, \hA,\hnu)$ converges a.e.\ weakly in the future and in the past, then we know from \eqref{eq:phi} that the $\cS$-adic shift $(\bphi( \hX),\Sigma,\bphi_*\nu)$ is measurably conjugate to $(\hX, \hF,\hA,\hnu)$.
By Lemma~\ref{lem:MCF_eigenS}, at each point $(\bx,\by)$ of convergence, the generalized right and left eigenvectors $\bu$ and $\bv$ of $\bphi(\bx,\by)$ are representatives of the projective vectors $\bx$ and $\by$ in $\RR^d$, respectively. According to Section~\ref{subsec:realization}, via $\bphi$ we can associate $\cS$-adic mapping families $(\TT,f_{\bphi(\bx,\by)})$ with $(\hX,\hF,\hA,\hnu)$. Our aim is to prove that for almost all $(\bx,\by)\in \hX$ the mapping family $(\TT,f_{\bphi(\bx,\by)})$ admits a generating Markov partition. According to Theorem~\ref{th:symbMod}, this Markov partition provides a symbolic model for  $(\TT,f_{\bphi(\bx,\by)})$. We now state this precisely in the following statement, whose accelerated version, namely  Theorem~\ref{theo:FCAcc},  corresponds to the    detailed formulation for Theorem~\ref{t:F}.  

Note that, when the conditions of Remark~\ref{rem:nu>0} hold, then the set of parameters $(\bx,\by)$ in~$\hX$ has positive Lebesgue measure.
For the following statement, recall the Pisot condition from Definition~\ref{def:MCF_Pisot} and the notation $\ll$ from \eqref{e:abscontinuity}.

\begin{theorem} \label{theo:FC}
Let $(X,F,A,\nu)$ be a positive multidimensional continued fraction algorithm satisfying the Pisot condition and $\nu \circ F \ll \nu$. Let $\bphi$ be a faithful substitutive realization of a natural extension $(\hX,\hF,\hA,\hnu)$ of $(X,F,A,\nu)$. Assume that there is a periodic Pisot point $(\bx_0,\by_0) \in \hX$ such that $\bx_0$ has positive range in $(X,F,A,\nu)$ and $\bphi(\bx_0,\by_0)$ has pure discrete spectrum.
Then, for $\hnu$-almost all $(\bx,\by) \in \hX$,  the mapping family $(\TT,f_{\bsigma})$ associated to $\bsigma=\bphi(\bx,\by)$ is eventually Anosov and admits a generating nonstationary Markov partition, whose atoms are explicitly given by Rauzy boxes. This Markov partition provides a symbolic model for $(\TT,f_{\bsigma})$ as a nonstationary edge shift. 
\end{theorem}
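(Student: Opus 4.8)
The plan is to reduce Theorem~\ref{theo:FC} to the already-established metric result for sequences of substitutions, namely Theorem~\ref{theo:metricmarkov}, by transporting the Pisot condition and the periodic Pisot sequence through the substitutive realization~$\bphi$. The key observation is that $\bphi$ provides a measurable conjugacy, so all the relevant metric hypotheses (ergodicity, the Pisot condition, existence of a periodic Pisot sequence with positive range and pure discrete spectrum) can be pushed forward from $(\hX,\hF,\hA,\hnu)$ to the $\cS$-adic shift $(\bphi(\hX),\Sigma,\bphi_*\hnu)$. Once this is done, Theorem~\ref{theo:metricmarkov} applies verbatim to $(\bphi(\hX),\Sigma,\bphi_*\hnu)$ and yields the conclusion for $\nu$-almost every $\bsigma\in\bphi(\hX)$, which by the conjugacy translates to $\hnu$-almost every $(\bx,\by)\in\hX$. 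This is exactly the pattern used in the proof of Theorem~\ref{theo:pureMCF} from Theorem~\ref{prop:combcond2}, and also in the proof of Theorem~\ref{theo:metricRotMCF}.

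First I would invoke Remark~\ref{rem:1vs2}: since $(X,F,A,\nu)$ satisfies the Pisot condition, its natural extension $(\hX,\hF,\hA,\hnu)$ satisfies the Pisot condition of Definition~\ref{def:gengenPisot}, because the Lyapunov exponents of the natural extension coincide with those of the algorithm. Next, because $(X,F,A,\nu)$ satisfies the Pisot condition, it converges weakly a.e.\ in the future and in the past (this follows from the negativity of the second Lyapunov exponent together with the general theory of Section~\ref{sec:matrices}; in particular exponential and hence strong and weak convergence hold a.e.\ by Lemma~\ref{lem:GloLocPisot} and Proposition~\ref{prop:strongcv2sided} applied through the cocycle). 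Therefore by \eqref{eq:phi} we have the measurable conjugacy $(\hX,\hF,\hA,\hnu) \overset{\bphi}{\cong} (\bphi(\hX),\Sigma,\bphi_*\hnu)$, so $(\bphi(\hX),\Sigma,\bphi_*\hnu)$ is ergodic and satisfies the Pisot condition of Definition~\ref{def:gengenPisot2subs}. Then I would apply Lemma~\ref{l:positiverange}: since $(\bx_0,\by_0)\in\hX$ is a periodic Pisot point with $\bx_0$ having positive range in $(X,F,A,\nu)$ and $\nu\circ F\ll\nu$, the sequence $\bphi(\bx_0,\by_0)$ is a periodic Pisot sequence (its period-block incidence matrix is a Pisot matrix by Definition~\ref{d:PisotpointFC}, and $\bphi$ is faithful) having two-sided positive range in $(\bphi(\hX),\Sigma,\bphi_*\hnu)$; moreover $\bphi(\bx_0,\by_0)$ has pure discrete spectrum by hypothesis.

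With these hypotheses verified, Theorem~\ref{theo:metricmarkov} gives that for $\bphi_*\hnu$-almost every $\bsigma\in\bphi(\hX)$ the linear eventually Anosov mapping family $(\TT,f_{\bsigma})$ admits a generating nonstationary Markov partition whose atoms are Rauzy boxes of $\bsigma$, and this Markov partition provides a symbolic model as a nonstationary edge shift (via Theorem~\ref{th:symbMod}). Pulling this back along the conjugacy $\bphi$, the set of ``good'' parameters $(\bx,\by)\in\hX$, namely $\bphi^{-1}$ of the full-measure set of good $\bsigma$, has $\hnu$-measure~$1$; for each such $(\bx,\by)$, setting $\bsigma=\bphi(\bx,\by)$ gives exactly the statement. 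The eventual Anosov property for these $\bsigma$ also follows directly from Theorem~\ref{thm:oseledetssub}~(\ref{it:oseledets4}) (or from Theorem~\ref{th:anosovS} via strong convergence a.e.), consistent with the conclusion.

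The only genuinely delicate point — and the one I would be most careful about — is checking that $\bphi(\bx_0,\by_0)$ genuinely qualifies as a ``periodic Pisot sequence'' in the sense of Definition~\ref{d:Pisotpoint} and that it has \emph{two-sided} positive range in the sense of Definition~\ref{def:pr} (not just the one-sided positive range of $\bx_0$ in the algorithm); this is precisely what Lemma~\ref{l:positiverange} delivers, but it requires the faithfulness of~$\bphi$ and the hypothesis $\nu\circ F\ll\nu$, and one must be sure that the period of the orbit of $(\bx_0,\by_0)$ under $\hF$ matches the period of the sequence $\bphi(\bx_0,\by_0)$ so that the period-block incidence matrix is the Pisot matrix $A^{(k)}(\bx_0)$ from Definition~\ref{d:PisotpointFC}. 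Everything else is a routine transport of structure along a measurable conjugacy.
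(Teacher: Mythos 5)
Your proposal is correct and follows essentially the same route as the paper: the paper's proof of Theorem~\ref{theo:FC} likewise reduces to Theorem~\ref{theo:metricmarkov} by transporting the Pisot condition via Remark~\ref{rem:1vs2} and the conjugacy \eqref{eq:phi}, and by converting the periodic Pisot point into a periodic Pisot sequence with two-sided positive range via Lemma~\ref{l:positiverange}, exactly mirroring how Theorem~\ref{theo:pureMCF} is deduced from Theorem~\ref{prop:combcond2}. Your added care about faithfulness, $\nu\circ F\ll\nu$, and the matching of periods is a reasonable elaboration of details the paper delegates to Lemma~\ref{l:positiverange}.
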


\begin{proof}
Again this follows from Theorem~\ref{theo:metricmarkov} in the same way as Theorem~\ref{theo:pureMCF} follows from Theorem~\ref{prop:combcond2}.
Indeed, as we saw in that proof, the Pisot condition for $(X,F,A,\nu)$ implies that $(\bphi(\hX),\Sigma,\bphi_*\hnu)$ satisfies the Pisot condition, and positive range of the periodic Pisot point $(\bx_0,\by_0) \in \hX$ in $(X,F,A,\nu)$ implies that $\bphi(\bx_0,\by_0)$ is a periodic Pisot sequence with two-sided positive range in $(\bphi(\hX),\Sigma,\bphi_*\hnu)$.
Thus the dynamical system $(\bphi(\hX),\Sigma,\bphi_*\hnu)$ satisfies all the conditions of Theorem~\ref{theo:metricmarkov}, which proves the theorem.
\end{proof}
 
Again, we can omit the pure discrete spectrum condition for a Pisot point when we accelerate the algorithm appropriately. 
 
\begin{theorem} \label{theo:FCAcc}
Let $(X,F,A,\nu)$ be a positive multidimensional continued fraction algorithm satisfying the Pisot condition, $\nu \circ F \ll \nu$. Let $(\hX,\hF,\hA,\hnu)$ be a natural extension of $(X,F,A,\nu)$. Assume that there is a periodic Pisot point $\bx_0 \in X$ with $\bx_0$ having positive range in $(X,F,A,\nu)$. Then there exist a positive integer~$p$ and a (faithful) substitutive realization~$\bphi$ of $(X,F^p,A,\nu)$,  such that the cocycle of the acceleration $(\hX,\hF^p,\hnu)$ generates sequences~$\bM$ of matrices whose eventually Anosov mapping family $(\TT,f_{\bM})$ $\hnu$-almost always admits a generating nonstationary Markov partition, with atoms that are explicitly given by  Rauzy boxes. This Markov partition also provides a symbolic model for $(\TT,f_{\bM})$ as a nonstationary edge shift. 
\end{theorem}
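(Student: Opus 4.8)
The plan is to deduce Theorem~\ref{theo:FCAcc} from the already established Theorem~\ref{theo:metricmarkovAcc} in exactly the same way Theorem~\ref{theo:FC} was deduced from Theorem~\ref{theo:metricmarkov}, with the acceleration bookkeeping handled by the blocking operation~$\bM_p$ introduced in~\eqref{e:Mp}. First I would set up the translation dictionary between the continued-fraction side and the matrix-sequence side. By Remark~\ref{rem:1vs2}, the Pisot condition for $(X,F,A,\nu)$ passes to the natural extension $(\hX,\hF,\hA,\hnu)$, which therefore satisfies the generic Pisot condition of Definition~\ref{def:gengenPisot}. Composing with the map~$\bpsi$ of \eqref{eq:bpsidef} and using the commutative diagram \eqref{eq:diagpsis}, one obtains the ergodic dynamical system $(D,\Sigma,\bpsi_*\hnu)$ with $D=\bpsi(\hX)\subset\cM_d^\ZZ$, which satisfies the generic Pisot condition for~$A_{\mathrm{tr}}$. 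The hypothesis on a periodic Pisot point $\bx_0\in X$ of positive range, via Lemma~\ref{l:positiverange} (applied after lifting~$\bx_0$ to a periodic point $(\bx_0,\by_0)\in\hX$ of the natural extension, which is possible because $\hF$ is periodic on the fiber over a periodic $\bx_0$ exactly as in Example~\ref{ex:Pisotperiodic}), yields that $\bpsi(\bx_0,\by_0)$ is a periodic Pisot sequence in~$D$ with two-sided positive range.

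Next I would invoke Theorem~\ref{theo:metricmarkovAcc} for this $(D,\Sigma,\bpsi_*\hnu)$. It furnishes a positive integer~$p$ and a substitution assignment~$\varrho$ on~$\cM_d$ (it suffices to define it on the matrices $Z_{[0,p)}$ with $Z_i$ occurring in~$D$, i.e.\ on the blocked partial-quotient matrices of length~$p$) such that for $\bpsi_*\hnu$-almost every $\bM\in D$ the mapping family $(\TT,f_{\bM_p})$ admits a generating nonstationary Markov partition with Rauzy-box atoms built from $\varrho(\bM_p)$, providing a symbolic model as a nonstationary edge shift via Theorem~\ref{th:symbMod}. Pulling this back through the measurable conjugacy $(\hX,\hF,\hA,\hnu)\overset{\bpsi}{\cong}(D,\Sigma,\bpsi_*\hnu)$, one gets the statement for $\hnu$-almost all $(\bx,\by)\in\hX$. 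The last cosmetic step is to rephrase ``the blocking $\bM_p$ of the sequence produced by the cocycle of $(\hX,\hF,\hA)$'' as ``the sequence $\bM$ produced by the cocycle of the acceleration $(\hX,\hF^p,\hnu)$'': blocking the partial-quotient matrices of~$\hF$ in groups of~$p$ is precisely taking the single partial-quotient matrix of~$\hF^p$, so $\bpsi_p\circ\hF^p$ and the $p$-fold blocking of $\bpsi\circ\hF$ agree, and the faithful substitutive realization $\bphi$ of $(X,F^p,A,\nu)$ is then $\varrho\circ\tr{\!\hA^{(p)}}$ composed with $\bpsi_p$. Since $(X,F^p,A,\nu)$ inherits the Pisot condition from $(X,F,A,\nu)$ (the Lyapunov exponents of the acceleration are those of $F$ multiplied by a positive constant, as in Lemma~\ref{lem:viana819}, so the sign condition $\vartheta_1>0>\vartheta_2$ is preserved) and $\nu\circ F^p\ll\nu$ follows from $\nu\circ F\ll\nu$ by iteration, all hypotheses are consistent.

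I do not expect a genuine obstacle here, since all the heavy lifting—Oseledets theory, the tiling condition via property PRICE, the restacking construction of Rauzy-box Markov partitions, and the acceleration trick with Proposition~\ref{p:sigmatilde}—has been carried out in Theorems~\ref{theo:metrictilingaccel} and~\ref{theo:metricmarkovAcc}. The one point requiring a little care is the identification of the ergodic structure: $(D,\Sigma^p,\bpsi_*\hnu)$ need not be ergodic even though $(D,\Sigma,\bpsi_*\hnu)$ is, but this is exactly the situation already addressed in the proof of Theorem~\ref{theo:metrictilingaccel} (where one works with the sets $B_{C,\bN,p}$ and argues on ergodic components), so Theorem~\ref{theo:metricmarkovAcc} already encapsulates the fix and nothing new has to be done. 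A second minor point is verifying that the periodic Pisot point $\bx_0\in X$ indeed lifts to a periodic point $(\bx_0,\by_0)$ of the geometric natural extension with $A^{(k)}(\bx_0)$ Pisot for the appropriate period~$k$; this uses uniqueness of the natural-extension fiber structure as recorded in Section~\ref{sec:natex} together with Definition~\ref{d:PisotpointFC}, and is routine. Assembling these observations gives the proof.

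\begin{proof}
By Remark~\ref{rem:1vs2}, the Pisot condition for $(X,F,A,\nu)$ implies that the natural extension $(\hX,\hF,\hA,\hnu)$ satisfies the Pisot condition. The periodic Pisot point $\bx_0\in X$ lifts to a periodic point $(\bx_0,\by_0)\in\hX$ of $\hF$ (uniqueness of the fiber structure of the natural extension, see Section~\ref{sec:natex}), and, since $\bx_0$ has positive range in $(X,F,A,\nu)$, the sequence $\bpsi(\bx_0,\by_0)$ is a periodic Pisot sequence with two-sided positive range in $(\bpsi(\hX),\Sigma,\bpsi_*\hnu)$ by Lemma~\ref{l:positiverange}. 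Setting $D=\bpsi(\hX)$, the dynamical system $(D,\Sigma,\bpsi_*\hnu)$ thus satisfies the Pisot condition and contains a periodic Pisot sequence with two-sided positive range. By Theorem~\ref{theo:metricmarkovAcc}, there exist a positive integer~$p$ and a substitution assignment~$\varrho$ on~$\cM_d$ such that for $\bpsi_*\hnu$-almost every $\bM\in D$ the linear eventually Anosov mapping family $(\TT,f_{\bM_p})$ admits a generating nonstationary Markov partition, whose atoms are given by Rauzy boxes of $\varrho(\bM_p)$, and which provides a symbolic model for $(\TT,f_{\bM_p})$ as a nonstationary edge shift (Theorem~\ref{th:symbMod}). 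Pulling this back via the measurable conjugacy $(\hX,\hF,\hA,\hnu)\overset{\bpsi}{\cong}(D,\Sigma,\bpsi_*\hnu)$ and observing that the $p$-fold blocking of the partial-quotient sequence of~$\hF$ is the partial-quotient sequence of~$\hF^p$, we obtain the desired substitutive realization $\bphi=\varrho\circ\tr{\!\hA^{(p)}}$ of $(X,F^p,A,\nu)$ and the claimed generating nonstationary Markov partitions for $\hnu$-almost all $(\bx,\by)\in\hX$.
\end{proof}
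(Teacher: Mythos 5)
Your proposal is correct and follows essentially the same route as the paper, whose proof is the one-line remark that Theorem~\ref{theo:FCAcc} follows from the accelerated metric theorem for matrix sequences in the same way that Theorem~\ref{theo:FC} follows from Theorem~\ref{theo:metricmarkov}. You have simply spelled out the transfer (Pisot condition to the natural extension via Remark~\ref{rem:1vs2}, two-sided positive range via Lemma~\ref{l:positiverange}, then Theorem~\ref{theo:metricmarkovAcc} and the identification of the $p$-fold blocking with the cocycle of $\hF^p$), which is exactly what the paper intends.
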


\begin{proof}
This follows from Theorem~\ref{theo:metricmarkovM} in the same way as Theorem~\ref{theo:FC} follows from Theorem~\ref{theo:metricmarkov}.
\end{proof}

\section{Nonstationary Markov partitions and induced rotations for the Brun algorithm} \label{subsec:BrunS}
In this section, we want to apply Theorems~\ref{theo:pureMCF},~\ref{theo:metricRotMCF}, and~\ref{theo:FC}  to the 2- and 3-dimensional cases of the versions of the Brun continued fraction algorithm studied in Section~\ref{sec:Brun}. We start with the unordered case. The substitutive realization~$\bphi_{\rU}$ for this algorithm was set up in Example~\ref{ex:UnBrunSubsRe}.
Recall that, for $\bx \in \PP^{d-1}$, the rotation~$\fr_{\bx}$ is defined on $\bone^\bot / (\ZZ^d \cap \bone^\bot)$ by $\fr_{\bx}(\bz) = \bz + \be_d - \chi(\bx)$; see \eqref{e:rtilde}.
Moreover, under the tiling condition, the Rauzy fractal~$\cR_0^{\bone}$ is a fundamental domain of $\bone^\bot / (\ZZ^d \cap \bone^\bot)$.
 
\begin{corollary} \label{cor:FCB1}
For $d\in\{3,4\}$, let $(X_{\rU},F_{\rU},A_{\rU},\nu_{\rU})$ be the unordered Brun algorithm. Let $\bphi_{\rU}$ be the faithful substitutive realization of this algorithm that is defined by the unordered Brun substitutions
\[
\sigma_{\rU,ij} : 
\begin{cases} j \mapsto ij, \\ k \mapsto k & (k\in \cA\setminus\{j\}) \end{cases} \qquad (i,j \in \cA,\, i \neq j).
\]
Then, for $\hnu_{\rU}$-almost all $(\bx,\by) \in \hX_{\rU}$, the following assertions hold.
\begin{itemize}
\item[(i)] The $\cS$-adic dynamical system $(X_{\bphi_{\rU}(\bx,\by)},\Sigma)$ has pure discrete spectrum.
\item[(ii)]  The mapping family $(\TT,f_{\bsigma})$ associated to $\bsigma_{\rU}=(\sigma_n)_{n\in\ZZ}=\bphi_{\rU}(\bx,\by)$ is eventually Anosov and admits a generating nonstationary Markov partition, whose atoms are explicitly given by Rauzy boxes. This Markov partition provides a symbolic model for $(\TT,f_{\bsigma})$ as a nonstationary edge shift. 
\item[(iii)] The induced map of the rotation $\fr_{\bx}$ on the subset $\pi_{\chi(\bx),\bone} \tr{\!A}_{\rU}^{(n)}(\bx) \cR_n$ of~$\cR_0^{\bone}$ equals (after renormalization) the rotation~$\fr_{F_{\rU}^n(\bx)}$ for each $n \in \NN$.
\end{itemize}
\end{corollary}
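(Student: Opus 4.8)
The plan is to derive Corollary~\ref{cor:FCB1} as a direct application of the three general theorems about the Brun algorithm that have been assembled in the preceding sections, namely Theorem~\ref{theo:pureMCF} (for assertion~(i)), Theorem~\ref{theo:FC} (for assertion~(ii)), and Theorem~\ref{theo:metricRotMCF} (for assertion~(iii)). All three of these theorems have the same hypotheses: a positive multidimensional continued fraction algorithm $(X,F,A,\nu)$ satisfying the Pisot condition and $\nu \circ F \ll \nu$, a faithful substitutive realization $\bphi$ of a natural extension, and the existence of a periodic Pisot point $(\bx_0,\by_0) \in \hX$ such that $\bx_0$ has positive range and $\bphi(\bx_0,\by_0)$ has pure discrete spectrum. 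So the entire task reduces to verifying these hypotheses for the unordered Brun algorithm in dimensions $d \in \{3,4\}$.

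First I would check the structural hypotheses. The unordered Brun algorithm $(X_{\rU},F_{\rU},A_{\rU},\nu_{\rU})$ is positive by construction (the unordered Brun matrices $M_{\rU,ij} = I_d + P_{ij}$ are nonnegative), and $\bphi_{\rU}$ as defined in Example~\ref{ex:UnBrunSubsRe} is a faithful substitutive realization of a natural extension $(\hX_{\rU},\hF_{\rU},\hA_{\rU},\hnu_{\rU})$ built in Lemma~\ref{eq:natexBU}. The absolute continuity condition $\nu_{\rU} \circ F_{\rU} \ll \nu_{\rU}$ follows because $\nu_{\rU}$ is equivalent to the Lebesgue measure (Lemma~\ref{lem:unorderedinvariant}) and $F_{\rU}$ is piecewise linear and nonsingular, so it maps Lebesgue-null sets to Lebesgue-null sets. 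The Pisot condition for $d \in \{3,4\}$ is exactly the content of Proposition~\ref{prop:Brun23Pisot}. Weak convergence a.e.\ (needed to invoke \eqref{eq:phi}, i.e., the measurable conjugacy $(\hX_{\rU},\hF_{\rU},\hA_{\rU},\hnu_{\rU}) \overset{\bphi_{\rU}}{\cong} (\bphi_{\rU}(\hX_{\rU}),\Sigma,\bphi_*\hnu_{\rU})$) follows from the Pisot condition via Lemma~\ref{lem:GloLocPisot} and Proposition~\ref{prop:conjMCFD}; this conjugacy is also recorded explicitly at the end of Example~\ref{ex:UnBrunSubsRe}.

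The one genuinely substantive step, and the step I expect to be the main obstacle, is exhibiting a periodic Pisot point $(\bx_0,\by_0)$ whose $\bx_0$ has positive range and whose associated periodic sequence of substitutions $\bphi_{\rU}(\bx_0,\by_0)$ generates a substitutive dynamical system with pure discrete spectrum. I would proceed as follows: pick a short periodic word in the unordered Brun substitutions, for instance something containing all of $\sigma_{\rU,12}, \sigma_{\rU,23}, \sigma_{\rU,31}$ (as in the periodic sequence $\bsigma = (\sigma_{\rU,12}, \sigma_{\rU,23}, \sigma_{\rU,31})^{\ZZ}$ appearing in Figure~\ref{fig:restack1}), so that the product matrix is positive, hence primitive; then check that its characteristic polynomial is the minimal polynomial of a Pisot number, so that the product substitution $\sigma_{[0,p)}$ is a Pisot substitution; the corresponding $(\bx_0,\by_0)$ is the pair of Perron--Frobenius right and left eigenvectors of that product matrix, which lies in $\hX_{\rU}$ by Lemma~\ref{lem:MCF_eigen} and is a periodic Pisot point in the sense of Definition~\ref{d:PisotpointFC}. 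Positive range of $\bx_0$ holds because the unordered Brun algorithm has the finite range property, in fact $F_{\rU}^n X_{[0,n)}(\bx)$ is always a finite union of the full-measure sets $X_{\rU,ij}$, so $\inf_n \nu_{\rU}(F_{\rU}^n X_{[0,n)}(\bx_0)) > 0$. Finally, pure discrete spectrum of the single Pisot substitution $\sigma_{[0,p)}$ can be verified algorithmically (e.g.\ by the balanced pair algorithm, cf.\ the discussion in Section~\ref{sec:suff-cond-tilings}, or by a coincidence-type criterion such as those referenced after Definition~\ref{def:tilingcond}); alternatively, if a direct computation is delicate, one can instead invoke Proposition~\ref{p:sigmatilde} to replace $\sigma_{[0,p)}$ by a sufficiently large power $\sigma_{[0,p)}^m$, which automatically has pure discrete spectrum, and work with the periodic point of period $pm$ instead — this sidesteps the verification entirely at the cost of a slightly longer period.

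Once these hypotheses are in place, assertions~(i), (ii), (iii) follow verbatim from Theorems~\ref{theo:pureMCF}, \ref{theo:FC}, and \ref{theo:metricRotMCF} respectively, applied with $(X,F,A,\nu) = (X_{\rU},F_{\rU},A_{\rU},\nu_{\rU})$ and $\bphi = \bphi_{\rU}$; the eventual Anosov property in~(ii) is part of the conclusion of Theorem~\ref{theo:FC} (and is also already available from Theorem~\ref{thm:oseledetssub} via the Pisot condition), and the explicit description of the atoms as Rauzy boxes and the symbolic model as a nonstationary edge shift are likewise part of those conclusions. I would close by remarking that the same argument, with the ordered Brun substitutions of Example~\ref{ex:OrdBrunSubsRe} and the multiplicative Brun substitutions of Example~\ref{ex:MultBrunSubsRe} in place of the unordered ones, yields the analogous corollaries for the ordered and multiplicative versions — for the multiplicative (modified Jacobi--Perron) case one uses Lemma~\ref{lem:MJP} in place of Proposition~\ref{prop:brunpisotsubsUN} to supply the Pisot condition and balance.
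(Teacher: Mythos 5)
Your proposal is correct and follows essentially the same route as the paper: reduce to Theorems~\ref{theo:pureMCF}, \ref{theo:FC}, and~\ref{theo:metricRotMCF}, supply the Pisot condition via Proposition~\ref{prop:Brun23Pisot} and $\nu_{\rU}\circ F_{\rU}\ll\nu_{\rU}$ via Lemma~\ref{lem:unorderedinvariant}, and exhibit the periodic Pisot point coming from the cycle $(\sigma_{\rU,12},\sigma_{\rU,23},\sigma_{\rU,31})$ for $d=3$ (the paper uses $\sigma_{\rU,12}\circ\sigma_{\rU,23}\circ\sigma_{\rU,34}\circ\sigma_{\rU,41}$, citing \cite{BST:23}, for $d=4$), checking pure discrete spectrum by the balanced pair algorithm and positive range via the Markov structure \eqref{eq:unBrunMarkov}. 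One caution on your fallback: Proposition~\ref{p:sigmatilde} only produces \emph{some} substitution with incidence matrix $M^m$, not $\sigma_{[0,p)}^m$ itself, so invoking it would force you to change the substitutive realization $\bphi_{\rU}$ and hence would prove the accelerated variant (Theorem~\ref{theo:FCAcc}) rather than the stated corollary, which fixes $\bphi_{\rU}$ to be given by the unordered Brun substitutions.
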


\begin{proof}
We need to make sure that the conditions of Theorems~\ref{theo:pureMCF}, \ref{theo:metricRotMCF}, and~\ref{theo:FC} are satisfied for the algorithm $(X_{\rU},F_{\rU},A_{\rU},\nu_{\rU})$ in the 2- and 3-dimensional case. The generic Pisot condition holds by Proposition~\ref{prop:Brun23Pisot}. The fact that $\nu_{\rU} \circ F_{\rU} \ll \nu_{\rU}$ follows from the fact that, according to Lemma~\ref{lem:unorderedinvariant}, $\nu_{\rU}$~is defined in terms of a measure with full support that is absolutely continuous w.r.t.\ the Lebesgue measure.  It remains to find a periodic Pisot point $(\bx_0,\by_0) \in \hX_{\rU}$ such that $\bphi_{\rU}(\bx_0,\by_0)$ has pure discrete spectrum. 

For $d = 3$, we consider
\begin{equation} \label{e:tau3}
\tau = \sigma_{\rU,12}\circ\sigma_{\rU,23}\circ\sigma_{\rU,31}:\, \begin{cases}1 \mapsto 1321, \\ 2 \mapsto 21, \\ 3 \mapsto 321.\end{cases}
\end{equation}
One easily checks that $\tau$ is a Pisot substitution and, by standard algorithms like for instance the balanced pair algorithm from~\cite{Livshits:87,Livshits:92}, one verifies that the substitutive dynamical system generated by~$\tau$ has pure discrete spectrum.
Let $\btau \in \cS_3^{\ZZ}$ be the periodic sequence with cycle $(\sigma_{\rU,12}, \sigma_{\rU,23}, \sigma_{\rU,31})$.
Then $\btau$ satisfies the admissibility property~\eqref{eq:unBrunMarkovSub}.
Let $(\bx_0,\by_0) \in \hX_{\rU}$ such hat $\chi(\bx_0)$ and $\chi(\by_0)$ are the generalized right and left eigenvectors of $\btau$.
Then $\bphi_{\rU}(\bx_0,\by_0) = \btau$, hence $(\bx_0,\by_0)$ is a periodic Pisot point, which has positive range because of the Markov property~\eqref{eq:unBrunMarkov}, and $\bphi_{\rU}(\bx_0,\by_0)$ has pure discrete spectrum.

For $d = 4$, we know from \cite[Section~6.5]{BST:23}\footnote{In comparison with \cite{BST:23}, we revert the words in the images of the substitutions. This does not change the arguments.} that for 
\begin{equation} \label{e:tau4}
\tau = \sigma_{\rU,12} \circ \sigma_{\rU,23} \circ \sigma_{\rU,34} \circ \sigma_{\rU,41}:\ \begin{cases}1 \mapsto 14321, \\ 2 \mapsto 21, \\ 3 \mapsto 321, \\ 4 \mapsto 4321,\end{cases}
\end{equation}
we can create a Pisot point with positive range and pure discrete spectrum in the same way. Thus all the conditions of Theorems~\ref{theo:pureMCF},~\ref{theo:metricRotMCF}, and~\ref{theo:FC} are satisfied for $d \in \{3,4\}$ and the corollary is established.
\end{proof}

An analogous result is obtained for the ordered version of Brun's algorithm. Namely, we get the following corollary. The substitutive realization~$\bphi_{\rB}$ for this algorithm was set up in Example~\ref{ex:OrdBrunSubsRe}. Moreover, the conditions of Remark~\ref{rem:nu>0} hold, i.e., the set~$\hX_{\rB}$ has positive Lebesgue measure.

\begin{corollary} \label{cor:FCB2}
For $d \in \{3,4\}$, let $(X_{\rB},F_{\rB},A_{\rB},\nu_{\rB})$ be the ordered Brun continued fraction algorithm. Let $\bphi_{\rB}$ be the faithful substitutive realization of this algorithm defined by the ordered Brun substitutions~$\sigma_{\rB,k}$, $k \in \cA$, as in~\eqref{eq:brunsubsallg}. 

Then, for $\hnu_{\rB}$-almost all $(\bx,\by) \in \hX_{\rB}$, the following assertions hold.
\begin{itemize}
\item[(i)] The $\cS$-adic dynamical system $(X_{\bphi_{\rB}(\bx,\by)},\Sigma)$ has pure discrete spectrum.
\item[(ii)]  The mapping family $(\TT,f_{\bsigma})$ associated to $\bsigma_{\rB}=(\sigma_n)_{n\in\ZZ}=\bphi_{\rB}(\bx,\by)$ is eventually Anosov and admits a generating nonstationary Markov partition, whose atoms are explicitly given by Rauzy boxes. This Markov partition provides a symbolic model for $(\TT,f_{\bsigma})$ as a nonstationary edge shift. 
\item[(iii)] The induced map of the rotation $\fr_{\bx}$ on the subset $\pi_{\chi(\bx),\bone} \tr{\!A}_{\rB}^{(n)}(\bx) \cR_n$ of~$\cR_0^{\bone}$ equals (after renormalization) the rotation~$\fr_{F_{\rB}^n(\bx)}$ for each $n \in \NN$.
\end{itemize}
\end{corollary}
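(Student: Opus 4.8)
The plan is to reduce Corollary~\ref{cor:FCB2} to Corollary~\ref{cor:FCB1}, using the fact that the ordered Brun algorithm is a factor of the unordered one via the coordinate ordering map, together with the general continued fraction results (Theorems~\ref{theo:pureMCF}, \ref{theo:metricRotMCF}, and~\ref{theo:FC}). More precisely, I would verify directly that the ordered Brun algorithm $(X_{\rB},F_{\rB},A_{\rB},\nu_{\rB})$, with the faithful substitutive realization $\bphi_{\rB}$ from Example~\ref{ex:OrdBrunSubsRe}, satisfies all the hypotheses of those three theorems for $d\in\{3,4\}$, and then simply invoke them.

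First I would check the hypotheses that hold ``for free'': the generic Pisot condition holds for $(X_{\rB},F_{\rB},A_{\rB},\nu_{\rB})$ in dimensions $d-1\in\{2,3\}$ by Proposition~\ref{prop:Brun23Pisot}; the absolute continuity condition $\nu_{\rB}\circ F_{\rB} \ll \nu_{\rB}$ follows because, by Lemma~\ref{lem:orderedinvariant}, $\nu_{\rB}$ has a density with full support with respect to the (piecewise) Lebesgue measure, so it is equivalent to Lebesgue measure and $F_{\rB}$-invariance of $\nu_{\rB}$ gives the claim. The ordered Brun algorithm is full (each $F_{\rB}(X_{\rB,k}) = X_{\rB}$), hence it has finite range and every point has positive range. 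The substitutive realization $\bphi_{\rB}$ is faithful by construction, and $(X_{\rB},F_{\rB},A_{\rB},\nu_{\rB})$ converges weakly a.e.\ in the future and in the past (see Section~\ref{sec:Brun}), so $(\hX_{\rB},\hF_{\rB},\hA_{\rB},\hnu_{\rB}) \overset{\bphi_{\rB}}{\cong} (\bphi_{\rB}(\hX_{\rB}),\Sigma,\bphi_{\rB*}\hnu_{\rB})$.

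The one remaining point is the existence of a periodic Pisot point $(\bx_0,\by_0)\in\hX_{\rB}$ such that $\bx_0$ has positive range in $(X_{\rB},F_{\rB},A_{\rB},\nu_{\rB})$ and $\bphi_{\rB}(\bx_0,\by_0)$ has pure discrete spectrum. Here I would exhibit an explicit periodic cycle of ordered Brun substitutions. For $d=3$, the natural candidate is $\tau_{\rB} = \sigma_{\rB,1}\circ\sigma_{\rB,2}\circ\sigma_{\rB,3}$ (or a suitable power/variant thereof), which one checks to be a unimodular Pisot substitution; then, exactly as in the proof of Corollary~\ref{cor:FCB1}, I would verify pure discrete spectrum of the substitutive dynamical system generated by $\tau_{\rB}$ by running a standard algorithm such as the balanced pair algorithm (cf.~\cite{Livshits:87,Livshits:92,BST:23}). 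Taking $\btau$ to be the periodic sequence with this cycle (which is admissible, since the ordered Brun algorithm is full, hence $\bphi_{\rB}(\hX_{\rB})$ is the full shift on $\{\sigma_{\rB,1},\sigma_{\rB,2},\sigma_{\rB,3}\}$), and letting $(\bx_0,\by_0)\in\hX_{\rB}$ be the point whose projective coordinates are given by the generalized right and left eigenvectors of $\btau$, we get $\bphi_{\rB}(\bx_0,\by_0)=\btau$, which is a periodic Pisot sequence; positive range of $\bx_0$ is automatic by fullness. For $d=4$, I would do the same with a four-fold cycle of ordered Brun substitutions, e.g.\ $\sigma_{\rB,1}\circ\sigma_{\rB,2}\circ\sigma_{\rB,3}$ composed with $\sigma_{\rB,4}$ (or an appropriate permutation of indices), again using \cite[Section~6.5]{BST:23} to certify pure discrete spectrum of a suitable single Pisot substitution with the right incidence matrix. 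Alternatively — and this is the cleaner route that I expect to be the main technical obstacle to make fully rigorous — one can transport the periodic Pisot point $(\bx_0^{\rU},\by_0^{\rU})$ from the unordered case through the semiconjugacy $\mathrm{ord}: X_{\rU}\to X_{\rB}$ of~\eqref{eq:diagBrun}, checking that $\mathrm{ord}$ lifts to $\hX_{\rU}\to\hX_{\rB}$, preserves periodicity and the Pisot property of the associated matrix, and that the associated substitutive systems (after the reindexing matching $\sigma_{\rU,ij}\mapsto\sigma_{\rB,k}$) are measurably isomorphic, so pure discrete spectrum is inherited. Once the periodic Pisot point with pure discrete spectrum and positive range is in hand, assertions (i), (ii), (iii) follow verbatim from Theorems~\ref{theo:pureMCF}, \ref{theo:FC}, and~\ref{theo:metricRotMCF} respectively, completing the proof.
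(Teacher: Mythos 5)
Your overall strategy is exactly the paper's: verify the hypotheses of Theorems~\ref{theo:pureMCF}, \ref{theo:metricRotMCF}, and~\ref{theo:FC} for $(X_{\rB},F_{\rB},A_{\rB},\nu_{\rB})$, getting the Pisot condition from Proposition~\ref{prop:Brun23Pisot} and $\nu_{\rB}\circ F_{\rB}\ll\nu_{\rB}$ from Lemma~\ref{lem:orderedinvariant}, and then exhibit a periodic Pisot point with positive range and pure discrete spectrum. The one step where you diverge is the choice of that periodic point, and it is the only step where your write-up is not yet closed. Your primary candidate, the cycle $\sigma_{\rB,1}\circ\sigma_{\rB,2}\circ\sigma_{\rB,3}$, would require you to actually run the balanced pair algorithm afresh (and to find an analogous cycle for $d=4$), which you defer; your fallback of transporting the unordered point through $\mathrm{ord}$ is, as you suspect, the genuinely delicate route, since $\mathrm{ord}$ relates the algorithms but not the substitutive realizations (the $\sigma_{\rU,ij}$ and $\sigma_{\rB,k}$ live on differently permuted alphabets, so the induced symbolic systems are not literally the same). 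The paper sidesteps both issues with a slicker choice: the \emph{constant} sequence $(\sigma_{\rB,1})$, observing that $\sigma_{\rB,1}^d$ is precisely the substitution $\tau$ of \eqref{e:tau3} (for $d=3$) and \eqref{e:tau4} (for $d=4$) whose pure discrete spectrum was already certified in the proof of Corollary~\ref{cor:FCB1}; admissibility and positive range are then automatic by fullness, exactly as you note. If you adopt that choice, your argument is complete with no further computation.
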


\begin{proof}
Again, we verify the conditions of Theorems~\ref{theo:pureMCF}, \ref{theo:metricRotMCF}, and~\ref{theo:FC}. As in Corollary~\ref{cor:FCB1} we see from Proposition~\ref{prop:Brun23Pisot} and Lemma~\ref{lem:orderedinvariant} that the generic Pisot condition holds and that $\nu_{\rB}\circ F_{\rB} \ll \nu_{\rB}$.
Here, the constant sequence $(\sigma_{\rB,1})$  gives a periodic Pisot point for $d \in \{3,4\}$; note that $\sigma_{\rB,1}^d$ is the substitution~$\tau$ from \eqref{e:tau3} and \eqref{e:tau4} for $d=3$ and $d=4$ respectively.
\end{proof}

From Corollary~\ref{cor:FCB2}, we derive the following exact formulation of Corollary~\nameref{c:G}.

\begin{corollary}\label{cor:foliMarkov}
Let $d \in \{3,4\}$. For Lebesgue a.e.\ pair of vectors $(\bu,\bv) \in \RR_{\ge0}^d \times \RR_{\ge0}^d$, we can provide an eventually Anosov mapping family $$(\TT,f_{\bM}) = ((\TT_n)_{n\in\ZZ}, (M_n)_{n\in\ZZ})$$ such that for each $n \in \ZZ$, in each $d$-dimensional torus~$\TT_n$ the mappings $M_m^{-1} \cdots M_n^{-1}$ ($m<n$) \emph{expand}~$\bu_n$ and \emph{contract}~$\bv_n^\perp$, where
\[
\bu_n = \begin{cases}M_{[0,n)}^{-1}\, \bu & \text{for } m \ge 0, \\ M_{[n,0)}\, \bu & \text{for } m < 0,\end{cases} \qquad \bv_n = \begin{cases}\tr{\!M}_{[0,n)}\, \bv & \text{for } m \ge 0, \\ \tr{\!M}_{[n,0)}^{-1}\, \bv & \text{for } m < 0.
\end{cases}
\]
The mapping family can be constructed in a way that it admits a generating Markov partition and, hence, a symbolic model as a nonstationary edge shift.
\end{corollary}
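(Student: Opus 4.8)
\textbf{Proof plan for Corollary~\ref{cor:foliMarkov}.}
The plan is to deduce this realization result from Corollary~\ref{cor:FCB2} by exhibiting, for almost every pair $(\bu,\bv)$ of nonnegative vectors, an element of the domain $\hX_{\rB}$ of the natural extension of the ordered Brun algorithm whose associated pair of generalized right and left eigenvectors is proportional to $(\bu,\bv)$. First I would recall from Section~\ref{subsec:BrunOrd} that the natural extension $(\hX_{\rB},\hF_{\rB},\hA_{\rB},\hnu_{\rB})$ of the ordered Brun algorithm lives on $\hX_{\rB}=X_{\rB}{\times}X_{\rB}^*$ with $\hX_{\rB}\subset \PP_{\ge0}^{d-1}{\times}\PP_{\ge0}^{d-1}$, that $\hnu_{\rB}$ is absolutely continuous with respect to the (piecewise) Lebesgue measure (Lemma~\ref{lem:orderedinvariant}), and that by Remark~\ref{rem:nu>0} the set $\hX_{\rB}$ has positive Lebesgue measure, so ``$\hnu_{\rB}$-a.e.'' is the same as ``Lebesgue-a.e.\ on $\hX_{\rB}$''. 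Next, by Proposition~\ref{prop:conjMCFD} (whose hypotheses hold here because the ordered Brun algorithm is positive, $\hX_{\rB}\subset X{\times}\PP_{\ge0}^{d-1}$, and it is a.e.\ two-sided weakly convergent — indeed exponentially convergent by Proposition~\ref{prop:Brun23Pisot} for $d\in\{3,4\}$), the map $(\bx,\by)\mapsto \bpsi(\bx,\by)$ is a.e.\ injective and $\hX_{\rB}$ can be identified with a set of pairs of normalized generalized eigenvectors; by Lemma~\ref{lem:MCF_eigen} (equivalently Lemma~\ref{lem:MCF_eigenS}) a representative $(\bu,\bv)\in\RR_{\ge0}^d{\times}\RR_{\ge0}^d$ of $(\bx,\by)$ is exactly a pair of generalized right and left eigenvectors of $\bphi_{\rB}(\bx,\by)=(\sigma_n)_{n\in\ZZ}$, with incidence matrices $M_n=M_{\sigma_n}$.

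The second step is to translate the almost-everywhere statement on $\hX_{\rB}$ into an almost-everywhere statement on pairs of vectors in $\RR_{\ge0}^d{\times}\RR_{\ge0}^d$. Here I would use that the natural projection $\chi{\times}\chi$ (composed with normalization) sends the Lebesgue class on $\hX_{\rB}$ to the Lebesgue class on an open subset of $\Delta^{d-1}{\times}\Delta^{d-1}$ of positive measure, and that every pair of nonzero nonnegative vectors is proportional to a unique element of $\Delta^{d-1}{\times}\Delta^{d-1}$; thus a full-measure subset of $\hX_{\rB}$ corresponds, after passing to rays, to a positive-measure subset of $\RR_{\ge0}^d{\times}\RR_{\ge0}^d$ modulo scaling. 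To get \emph{almost every} pair $(\bu,\bv)$ (and not merely a positive-measure set), I would invoke the symmetry of the unordered algorithm under the symmetric group $\mathfrak S_d$ together with the relation $\mathrm{ord}_*\mu_{\rU}=\mu_{\rB}$ from \eqref{eq:diagBrun}: since the Brun domain $\Delta_{\rB}$ is a fundamental domain for the $\mathfrak S_d$-action and the Brun algorithm (via the coordinate-ordering map $\mathrm{ord}$) is defined for almost all ordered vectors, the union of the $\mathfrak S_d$-translates of the set of good parameters has full Lebesgue measure in $\RR_{\ge0}^d{\times}\RR_{\ge0}^d$ (the orbit of a positive-measure set under a finite group acting by permutations, combined with the fact that Brun converges almost everywhere on each ordered chamber, exhausts everything up to null sets). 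This is exactly the kind of argument already used to pass from the unordered to the ordered density in Lemma~\ref{lem:orderedinvariant}.

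The third and final step is purely bookkeeping: fix a good pair $(\bu,\bv)$, let $(\bx,\by)\in\hX_{\rB}$ be the corresponding point, put $\bM=\bpsi(\bx,\by)=(M_n)_{n\in\ZZ}$ and $\bsigma=\bphi_{\rB}(\bx,\by)=(\sigma_n)_{n\in\ZZ}$. By Corollary~\ref{cor:FCB2}(ii) the $\cM$-adic mapping family $(\TT,f_{\bM})=(\TT,f_{\bsigma})$ is eventually Anosov and admits a generating nonstationary Markov partition with Rauzy-box atoms, providing a symbolic model as a nonstationary edge shift. The expansion/contraction statement is precisely Theorem~\ref{cor:anosov} (equivalently Theorem~\ref{thm:oseledetsmat}(iii) or Proposition~\ref{th:anosov}) applied to $\bM$: the splitting is $G^s_n\oplus G^u_n=\RR\bu_n\oplus\bv_n^\perp$ with $\bu_n,\bv_n$ given by \eqref{eq:unvn}, the maps $M_m^{-1}\cdots M_n^{-1}$ expand $\bu_n$ and contract $\bv_n^\perp$ for large $n-m$ — and the formulas for $\bu_n,\bv_n$ in the statement of the corollary are literally \eqref{eq:unvn}. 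I expect the \textbf{main obstacle} to be the measure-theoretic passage in the second step: one must argue carefully that the image of a full-$\hnu_{\rB}$-measure set under ``project to a pair of rays'' is co-null in $\RR_{\ge0}^d{\times}\RR_{\ge0}^d$, which requires knowing both that $\hnu_{\rB}$ is equivalent to Lebesgue on the (positive-measure, open) set $\hX_{\rB}$ and that the pair $(\bx,\by)\mapsto$ (pair of eigen-rays) is a.e.\ a bijection onto its image with image of full measure after symmetrization — the injectivity is Proposition~\ref{prop:conjMCFD}, the fullness is the $\mathfrak S_d$-symmetry plus a.e.\ convergence, but assembling these cleanly is the only non-routine part.
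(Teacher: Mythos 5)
Your overall strategy is the paper's: reduce to Corollary~\ref{cor:FCB2}, identify a point $(\bx,\by)\in\hX_{\rB}$ with its pair of generalized right and left eigenvectors via Lemma~\ref{lem:MCF_eigen}, use permutation symmetry to pass from the Brun chamber to all of $\PP_{\ge0}^{d-1}\times\PP_{\ge0}^{d-1}$, and finish with the equivalence of the invariant measure with Lebesgue measure. However, your symmetrization step has a genuine gap in the second coordinate. The symmetry you invoke (the $\mathfrak{S}_d$-equivariance of the unordered algorithm and the relation $\mathrm{ord}_*\mu_{\rU}=\mu_{\rB}$) acts \emph{diagonally} and only guarantees coverage of the first factor, for which $X_{\rB}$ is indeed a fundamental domain. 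The natural extension lives on $\hX_{\rB}=X_{\rB}\times X_{\rB}^*$, and $X_{\rB}^*$ is \emph{not} a fundamental domain for $\mathfrak{S}_d$ (for $d=3$ it is the half-space $\{w_1\le w_3\}$). Consequently the diagonal orbit $\bigcup_{\zeta}(\zeta X_{\rB}\times\zeta X_{\rB}^*)$ misses a positive-measure set of pairs: a generic $\bx$ forces the permutation $\zeta$, and only those $\by\in\zeta X_{\rB}^*$ are then reached, so only those $\bv$ in a chamber coupled to $\bu$ are realized. Routing through the unordered natural extension does not help, since its domain $\bigcup_{i\ne j}\Delta_{ij}\times\Delta_{ij}^*$ couples the admissible $\by$ to the cell of $\bx$ in the same way. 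Moreover, the general principle you appeal to (``the orbit of a positive-measure set under a finite group exhausts everything up to null sets'') is false; what is actually needed is a full-measure set inside each cell of a family of cells that covers the product.

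The missing ingredient, which is what the paper's proof supplies, is that the natural extension of the (conjugated) ordered Brun algorithm is not unique in its second factor: for \emph{every} pair $(\zeta,\eta)\in\mathfrak{S}_d\times\mathfrak{S}_d$, the map $(\bx,\by)\mapsto\big(\zeta\,\tr{\!A}(\bx)^{-1}\zeta^{-1}\bx,\ \eta A(\bx)\eta^{-1}\by\big)$ is an a.e.\ bijection of $\zeta X_{\rB}\times\eta X_{\rB}^*$, yielding $|\mathfrak{S}_d|^2$ natural extensions whose domains cover $\PP_{\ge0}^{d-1}\times\PP_{\ge0}^{d-1}$ up to measure zero, and one must note that Corollary~\ref{cor:FCB2} remains valid for each of these conjugated systems. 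Applying the corollary cell by cell over all $(\zeta,\eta)$ then gives the eventually Anosov mapping family with generating Markov partition for a.e.\ $(\bx,\by)$ in the full product, after which your identification of $(\bx,\by)$ with $(\bu,\bv)$ and the passage to Lebesgue a.e.\ pairs of vectors go through as you describe.
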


\begin{proof}
We saw in Section~\ref{subsec:BrunOrd} that the domain~$\hX_{\rB}$ of the natural extension of the ordered Brun algorithm equals $\hX_{\rB} = X_{\rB} \times X_{\rB}^*$ with $X_{\rB}$ and~$X_{\rB}^*$ given in \eqref{eq:BOX} and~\eqref{eq:BOXstar}.
Clearly, up to a set of zero measure (here we take the Lebesgue measure on~$\Delta^{d-1}$ transferred to~$\PP_{\ge0}^{d-1}$ via the chart~$\chi$),
\[
\bigcup_{\zeta,\eta\in\mathfrak{S}_d} (\zeta X_{\rB} \times \eta X_{\rB}^*) = \PP_{\ge0}^{d-1} \times  \PP_{\ge0}^{d-1}.
\]
For $\zeta \in \mathfrak{S}_d$, we consider the algorithm $(\zeta X_{\rB}, \zeta F_{\rB} \zeta^{-1}, \zeta A_{\rB} \zeta^{-1}, \zeta_*\nu_{\rB})$. 
For each $\eta \in \mathfrak{S}_d$, the map
\begin{equation*}
\hF_{\zeta,\eta}:\, \zeta X_{\rB} \times \eta X_{\rB}^* \to  \zeta X_{\rB} \times \eta X_{\rB}^*, \quad (\bx, \by) \mapsto (\zeta \tr{\!A}(\bx)^{-1} \zeta^{-1} \bx, \eta A(\bx)\eta^{-1}\by),
\end{equation*}
defines a natural extension $(\zeta X_{\rB} \times \eta X_{\rB}^*,\hF_{\zeta,\eta}, \hA_{\zeta,\eta},\hnu_{\zeta,\eta})$, and Corollary~\ref{cor:FCB2} remains true for the algorithms $(\zeta X_{\rB}, \zeta F_{\rB} \zeta^{-1}, \zeta A_{\rB} \zeta^{-1}, \zeta_*\nu_{\rB})$, $\zeta \in \mathfrak{S}_d$, with each of these natural extensions. Thus we get eventually Anosov mapping families for a.e.\ $(\bx,\by) \in \PP_{\ge0}^{d-1} \times \PP_{\ge0}^{d-1}$ (w.r.t.\ the appropriate measures~$\hnu_{\zeta,\eta}$ taken in each of the sets $\zeta X_{\rB} \times \eta X_{\rB}^*$). Because the generalized right and left eigenvectors of $\bphi(\bx,\by)$ are affine representatives of $\bx$ and~$\by$ respectively, the corollary is true for $\hmu_{\rB}$-a.e.\ $(\bu,\bv) \in \RR_{\ge0}^d  \times  \RR_{\ge0}^d$. The result follows because $\hmu_{\rB}$ is equivalent to the Lebesgue measure on $\RR_{\ge0}^d \times \RR_{\ge0}^d$.
\end{proof}

It remains to treat the modified Jacobi--Perron algorithm; for a substitutive realization~$\bphi_{\rM}$ of this algorithm we refer to Example~\ref{ex:MultBrunSubsRe}. The according result reads as follows.

\begin{corollary} \label{cor:FCB3}
For $d \in \{3,4\}$, let $(X_{\rM},F_{\rM},A_{\rM},\nu_{\rM})$ be the modified Jacobi--Perron continued fraction algorithm. Let $\bphi_{\rM}$ be the faithful substitutive realization of this algorithm that is defined by the multiplicative Brun substitutions~$\sigma_{\rM,k,m}$, $1 \le k < d$, $m \ge 1$, defined in \eqref{eq:brunsubsallgMult}. 

Then, for $\hnu_{\rM}$-almost all $(\bx,\by) \in \hX_{\rM}$, the following assertions hold.
\begin{itemize}
\item[(i)] The $\cS$-adic dynamical system $(X_{\bphi_{\rM}(\bx,\by)},\Sigma)$ has pure discrete spectrum.
\item[(ii)]  The mapping family $(\TT,f_{\bsigma})$ associated to $\bsigma_{\rM}=(\sigma_n)_{n\in\ZZ}=\bphi_{\rM}(\bx,\by)$ is eventually Anosov and admits a generating nonstationary Markov partition, whose atoms are explicitly given by Rauzy boxes. This Markov partition provides a symbolic model for $(\TT,f_{\bsigma})$ as a nonstationary edge shift. 
\item[(iii)] The induced map of the rotation $\fr_{\bx}$ on the subset $\pi_{\chi(\bx),\bone} \tr{\!A}_{\rM}^{(n)}(\bx) \cR_n$ of~$\cR_0^{\bone}$ equals (after renormalization) the rotation~$\fr_{F_{\rM}^n(\bx)}$ for each $n \in \NN$.
\end{itemize}
\end{corollary}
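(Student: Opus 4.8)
The plan is to show that Corollary~\ref{cor:FCB3} follows by exactly the same strategy as Corollaries~\ref{cor:FCB1} and~\ref{cor:FCB2}, namely by verifying the hypotheses of Theorems~\ref{theo:pureMCF}, \ref{theo:metricRotMCF}, and~\ref{theo:FC} for the modified Jacobi--Perron algorithm $(X_{\rM},F_{\rM},A_{\rM},\nu_{\rM})$ in dimensions $d\in\{3,4\}$. Two of the three required ingredients are immediate from earlier results: the generic Pisot condition holds by Proposition~\ref{prop:Brun23Pisot}, and the absolute-continuity property $\nu_{\rM}\circ F_{\rM}\ll\nu_{\rM}$ follows from Lemma~\ref{lem:multinvariant}, since $\nu_{\rM}$ is equivalent to the (piecewise) Lebesgue measure on~$\Delta_{\rM}$ and therefore sends null sets to null sets.

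The only point needing genuine work is the production of a periodic Pisot point $(\bx_0,\by_0)\in\hX_{\rM}$ with positive range in $(X_{\rM},F_{\rM},A_{\rM},\nu_{\rM})$ such that $\bphi_{\rM}(\bx_0,\by_0)$ has pure discrete spectrum. The natural candidate is the fixed point of a single multiplicative Brun substitution with $m=1$: since $\sigma_{\rM,k,1}=\sigma_{\rB,k}$ by comparing~\eqref{eq:brunsubsallgMult} with~\eqref{eq:brunsubsallg}, the constant sequence $(\sigma_{\rM,1,1})_{n\in\ZZ}=(\sigma_{\rB,1})_{n\in\ZZ}$ is available. First I would recall from the proof of Corollary~\ref{cor:FCB2} that $\sigma_{\rB,1}^d$ equals the Pisot substitution~$\tau$ of~\eqref{e:tau3} for $d=3$ and of~\eqref{e:tau4} for $d=4$, whose substitutive dynamical system has pure discrete spectrum (verified via the balanced pair algorithm of~\cite{Livshits:87,Livshits:92}); hence $\sigma_{\rB,1}$ itself is a Pisot substitution generating a system with pure discrete spectrum. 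Letting $(\bx_0,\by_0)\in\hX_{\rM}$ be the point whose coordinates $\chi(\bx_0)$, $\chi(\by_0)$ are the generalized right and left eigenvectors of the constant sequence $(\sigma_{\rB,1})$, we get $\bphi_{\rM}(\bx_0,\by_0)=(\sigma_{\rB,1})_{n\in\ZZ}$, so $(\bx_0,\by_0)$ is a periodic Pisot point. Positive range of $\bx_0$ is immediate because the modified Jacobi--Perron algorithm is \emph{full} (every cylinder set $X_{\rM,k,m}$ is mapped onto all of $X_{\rM}$ by $F_{\rM}$, as noted in Section~\ref{subsec:BrunMul}), so every follower set $F_{\rM}^n X_{[0,n)}(\bx_0)$ equals $X_{\rM}$ up to a null set and has measure $\nu_{\rM}(X_{\rM})>0$.

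With these verifications in hand, the three conclusions follow directly: assertion~(i) on pure discrete spectrum of $(X_{\bphi_{\rM}(\bx,\by)},\Sigma)$ is Theorem~\ref{theo:pureMCF}; assertion~(ii) on the eventually Anosov mapping family admitting a generating nonstationary Markov partition with Rauzy-box atoms and the associated nonstationary edge shift symbolic model is Theorem~\ref{theo:FC}; and assertion~(iii) on the induced rotation is Theorem~\ref{theo:metricRotMCF}. The main (and essentially only) obstacle is checking that the constant sequence $(\sigma_{\rB,1})$ is admissible for the modified Jacobi--Perron algorithm and that its fixed point really has pure discrete spectrum; but admissibility is trivial here since, the algorithm being full, $\bphi_{\rM}(\hX_{\rM})=\cS_{\rM}^{\ZZ}$ is the full shift and contains every constant sequence over $\cS_{\rM}$, and the pure-discreteness computation has already been carried out in the proof of Corollary~\ref{cor:FCB2}. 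I would remark that, alternatively, Theorem~\ref{theo:FCAcc} together with Lemma~\ref{lem:MJP} gives the Markov partition after an acceleration without even needing the pure-discrete-spectrum input for the periodic point.
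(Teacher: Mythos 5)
Your proposal is correct and follows essentially the same route as the paper's own proof: verify the Pisot condition via Proposition~\ref{prop:Brun23Pisot}, obtain $\nu_{\rM}\circ F_{\rM}\ll\nu_{\rM}$ from Lemma~\ref{lem:multinvariant}, and take the constant sequence $(\sigma_{\rM,1,1})=(\sigma_{\rB,1})$ as the periodic Pisot point with pure discrete spectrum, exactly as in Corollary~\ref{cor:FCB2}. The extra details you supply (positive range from fullness, admissibility of the constant sequence, the identification $\sigma_{\rB,1}^d=\tau$) are all consistent with what the paper leaves implicit.
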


\begin{proof}
Again, we have to assure the conditions of Theorems~\ref{theo:pureMCF}, \ref{theo:metricRotMCF}, and~\ref{theo:FC}. As in Corollary~\ref{cor:FCB1}, we see from Proposition~\ref{prop:Brun23Pisot} and Lemma~\ref{lem:multinvariant} that the generic Pisot condition holds and that $\nu_{\rM}\circ F_{\rM} \ll \nu_{\rM}$. 
As in Corollary~\ref{cor:FCB2}, the constant sequence $(\sigma_{\mathrm{M,1,1}})$ gives a periodic Pisot point for $d \in \{3,4\}$; note that $\sigma_{\mathrm{M,1,1}} = \sigma_{\mathrm{B,1}}$.
\end{proof}

\chapter{Concluding remarks}\label{sec:conclude}

We end this work by  providing open problems and perspectives for further research. Here, problems around the Pisot condition (see Definition~\ref{def:gengenPisot} for its generic form) and the tiling condition (see Definition~\ref{def:tilingcond}) play an important role. We also propose to view multidimensional continued fraction algorithms as cross sections of the Weyl chamber flow (or a variant of it). This is inspired by the interpretation of the classical continued fraction algorithm as  a cross-section of the geodesic flow on the unit tangent bundle of the modular surface. Finally, we dwell on different kinds of dynamics related to our theory and their interplay.

\section{Open problems}\label{sec:open}

In all our results on nonstationary Markov partitions in Section~\ref{sec:markov}, we assume the tiling condition from Definition~\ref{def:tilingcond}, which is closely related to the notion of pure discrete spectrum, as recalled in Section~\ref{subsec:dspectrum}.
In \cite[Conjecture~3.5]{BST:19}, we have formulated an $\cS$-adic Pisot conjecture stating that this tiling condition holds under weak assumptions. 
We slightly modify this conjecture using the local Pisot condition (see Definition~\ref{def:genPisot}). 

\begin{conjecture} \label{c:SadicPisotconj}
Let $\bsigma \in \cS^{\ZZ}$ be a primitive sequence of unimodular substitutions satisfying the local Pisot condition and $\lim_{n\to\infty} \frac{1}{n} \log \lVert M_n\rVert = 0$.
Then the tiling condition holds, and $(X_{\bsigma}, \Sigma, \mu)$ has pure discrete spectrum. 
\end{conjecture}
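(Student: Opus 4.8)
The plan is to run the same three‑tier strategy that Section~\ref{sec:rauzy} uses to pass from property PRICE to the tiling condition, but to replace the recurrence hypotheses hidden inside PRICE by the exponential convergence that the local Pisot condition supplies through Theorem~\ref{theo:sufcondPisotS}. First I would record what is already free. By Theorem~\ref{theo:sufcondPisotS}, the sequence $\bsigma$ converges exponentially, hence strongly, to its generalized right eigenvector~$\bu$; the coordinates of~$\bu$ are rationally independent; $\bsigma$ is algebraically irreducible; and each language $\cL_{\bsigma}^{(n)}$ is balanced. By Lemma~\ref{lem:RFbalanced} this makes every Rauzy fractal~$\cR_n$ compact, so the collections $\cC_n$ and $\hC_n$ consist of compact tiles and, by Lemma~\ref{lem:seteq}, satisfy the infinite family of graph‑directed set equations~\eqref{e:setequationkl}. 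What remains of the conjecture is then entirely the assertion that $\hC_0$ (equivalently, by Propositions~\ref{p:ntilingbox} and~\ref{p:cChC}, each $\hC_n$, each $\hC_n^\gen$, and each $\cC_n$) is a genuine tiling and not merely a covering; once that is shown, pure discrete spectrum follows from Theorem~\ref{t:tilingpds} together with Proposition~\ref{p:tilingw}.

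The second step is to establish a multiple tiling: there should be an integer $c \ge 1$ such that $\cC_n$ covers $\bv_n^\perp$ with multiplicity~$c$ almost everywhere. In the PRICE framework this is Proposition~\ref{prop:sadic1}~(\ref{it:multipletiling}), whose proof in \cite{BST:19} uses recurrence crucially: one fixes a ``model'' stretch of the directive sequence that returns with controlled balance constants and runs the dual broken‑line counting argument against it. In the general setting I would instead argue along the subsequence $(\Sigma^{\ell_k}\bsigma)_k$ of shifts realizing the balance bound $\cR_n \subset \pi_n([-C,C]^d \cap \bone^\perp)$: this bound gives compactness of the space of Rauzy‑fractal collections, and the exponential decay of $\delta_2(M_{\sigma_{[0,n)}})$ together with $\frac1n\log\lVert M_{\sigma_n}\rVert \to 0$ forces the hyperplanes $\bv_n^\perp$ and the directions $\bu_n$ to vary slowly on a logarithmic scale, so a compactness/continuity argument should transport the multiplicity from a (periodic, or merely recurrent) accumulation point of the orbit closure. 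The subtlety is that the conjecture assumes no recurrence at all, so $\bsigma$ may have no periodic or even recurrent point in sight; in that case one must build the counting estimate directly from the exponential and growth data, i.e.\ prove a nonstationary analogue of the classical multiple‑tiling theorem for Pisot substitutions.

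The third and decisive step is to upgrade the multiple tiling to an honest tiling, i.e.\ to show $c = 1$. The natural tool is the geometric coincidence condition: by Proposition~\ref{l:coincidencetiling} (whose hypotheses follow from the first two steps, once PRICE or a substitute for it is in place) the tiling condition is \emph{equivalent} to the existence, at some level~$n$, of a point whose preimage strip in the discrete hyperplane is covered by a single subtile $\cR_n(a)$. In the stationary case such a ``super‑coincidence'' is produced by the balanced pair algorithm or by algebraic arguments around homoclinic points; in the nonstationary case one would hope to manufacture it from the structure of some $\sigma_n$ of large norm, the prototype being Proposition~\ref{p:sigmatilde}, which says that a large power of a Pisot matrix always admits a coincident substitution. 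I expect this to be the main obstacle, and a genuinely hard one: specialising Conjecture~\ref{c:SadicPisotconj} to a constant sequence $\bsigma = (\tau)$ recovers exactly the classical Pisot substitution conjecture for irreducible unimodular Pisot substitutions, which is open for alphabets of size $\ge 3$. A complete unconditional proof is therefore not to be expected without a breakthrough on that problem; what seems realistic is to prove the conjecture \emph{conditionally} on the stationary Pisot conjecture, or to settle the accelerated version along the lines of Theorem~\ref{theo:metrictilingaccel} under the additional assumption that $\bsigma$ contains arbitrarily long powers of a fixed Pisot substitution known to have pure discrete spectrum — which is essentially the content of Lemma~\ref{lem:sadic7.9crit}.
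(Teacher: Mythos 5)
The statement you were asked to prove is Conjecture~\ref{c:SadicPisotconj}; the paper offers no proof of it, and explicitly notes that a constant sequence $\bsigma = (\tau)$ reduces it to the classical Pisot substitution conjecture, which is open. Your proposal is therefore correctly not a proof but a roadmap, and to your credit you say so. The roadmap itself is faithful to the partial results the paper actually establishes: Step~1 (exponential convergence, rational independence, algebraic irreducibility, balance, hence compact Rauzy fractals) is exactly Theorem~\ref{theo:sufcondPisotS} plus Lemma~\ref{lem:RFbalanced}, and the reduction of the conclusion to the tiling property of a single $\hC_n$ via Propositions~\ref{p:ntilingbox}, \ref{p:cChC}, \ref{p:tilingw} and Theorem~\ref{t:tilingpds} is correct.

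The two genuine gaps are where you locate them, and it is worth being precise about why they do not close. For the multiple-tiling step, Proposition~\ref{prop:sadic1}~(v) is proved only under the two-sided property PRICE, whose conditions (P), (R), (C) demand that a fixed positive block recurs along a subsequence $(\ell_k)$ \emph{and} that the shifted sequences $\Sigma^{r_k+\ell_k}\bsigma$ are $C$-balanced with a uniform constant~$C$. The local Pisot condition gives balance of $\cL_{\bsigma}^{(n)}$ for each fixed~$n$, but the constant produced by the proof of Theorem~\ref{theo:sufcondPisotS}~(iii) depends on the constants $C_1,C_2,\beta$ of the shifted sequence and need not be uniform in~$n$; and no recurrence whatsoever is implied by the hypotheses. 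Your compactness/continuity idea for transporting multiplicity from an accumulation point of the orbit closure would need the accumulation point to itself satisfy the hypotheses and would need continuity of the multiplicity~$c$ under Hausdorff convergence of the tile collections, neither of which is established anywhere in the paper. For the step $c=1$, Proposition~\ref{l:coincidencetiling} only converts the problem into the geometric coincidence condition; producing a coincidence is exactly the content of the classical conjecture, and Proposition~\ref{p:sigmatilde} and Lemma~\ref{lem:sadic7.9crit} only supply it for \emph{specially constructed} substitutions or for sequences containing large powers of a substitution already known to have pure discrete spectrum — hypotheses absent from the conjecture. So your conditional and accelerated fallbacks are the right conclusions, but they prove Theorems~\ref{prop:combcond2} and~\ref{theo:metrictilingaccel} (which additionally assume a periodic Pisot sequence with positive range, and in the first case pure discrete spectrum of that periodic point), not the conjecture itself.
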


This conjecture generalizes the classical Pisot conjecture for unimodular irreducible Pisot substitutions (see \cite{AkiBBLS})  because a stationary directive sequence formed by a unimodular irrreducible Pisot substitution satisfies the conditions of this conjecture. 
Since all infinite factor balanced shifts that we are aware of are defined by sequences of substitutions satisfying the conditions of Conjecture~\ref{c:SadicPisotconj}, we even ask the following question.

\begin{openproblem}
Let $(X,\Sigma,\mu)$ be a factor balanced subshift.
Under which  conditions does  $(X,\Sigma,\mu)$ have pure discrete spectrum? 
\end{openproblem}
 
Like in the substitutive setting, also in the primitive  $\cS$-adic setting, the tiling condition implies pure discrete measure-theoretic spectrum by Theorem~\ref{t:tilingpds}.
However, contrary to the substitutive case (see \cite{Barge-Kwapisz:06} and  \cite[Lemma~4.9]{BST:23}), it is not clear if the converse is true. 

\begin{conjecture}
Let $\bsigma \in \cS_d^{\ZZ}$, with $d \ge 2$, be a primitive sequence of unimodular substitutions that admits generalized right and left eigenvectors. 
If $(X_{\bsigma}^{(n)},\Sigma,\mu)$ has pure discrete spectrum  for each $n \in \ZZ$, then the tiling condition holds.
\end{conjecture}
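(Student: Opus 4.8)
The goal is to show that, for a primitive sequence $\bsigma \in \cS_d^{\ZZ}$ admitting generalized right and left eigenvectors, pure discrete spectrum of all the $\cS$-adic shifts $(X_{\bsigma}^{(n)},\Sigma,\mu)$ forces the tiling condition. The natural strategy is to run the argument of the substitutive case through the $\cS$-adic coincidence machinery developed in Section~\ref{sec:rauzy}. First I would recall from Proposition~\ref{prop:sadic1}~(\ref{it:multipletiling}) (available under the property PRICE) that the collection $\cC_n$ always forms a \emph{multiple} tiling of $\bv_n^\perp$ with some multiplicity $c = c(n) \ge 1$; the content of the tiling condition is exactly that $c(n) = 1$ for all $n$. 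So the problem reduces to ruling out $c(n) \ge 2$. The plan is to translate the multiplicity $c(n)$ of the covering of $\cC_n$ into the multiplicity of a point in the domain exchange / rotation factor: in the proof of Theorem~\ref{t:tilingpds}, the representation map $\psi_n\colon \cR_n^{\bone} \to X_{\bsigma}^{(n)}$ is well defined outside the set $D_n$ of overlaps, and $\lambda_{\bone}\circ\psi_n^{-1} = \mu$. When $\cC_n^{\bone}$ is only a $c$-fold tiling, the measure $\lambda_{\bone}$ restricted to $\cR_n^{\bone}$ pushes forward to $c\cdot\mu$ on $X_{\bsigma}^{(n)}$ (up to normalization); equivalently, $\psi_n$ is generically $c$-to-one rather than one-to-one. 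Thus $c(n) > 1$ means that the maximal equicontinuous factor map of $(X_{\bsigma}^{(n)},\Sigma,\mu)$ onto the rotation $(\TT^{d-1},\fr)$ is generically $c$-to-one on a set of positive measure — and this contradicts pure discrete spectrum, since a system with pure discrete spectrum is measurably isomorphic (not merely a measurable factor) to its Kronecker factor, hence the factor map is one-to-one a.e.

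\textbf{Key steps, in order.} (1) Reduce to showing $c(n) = 1$ using the multiple-tiling statement; here I would need to be careful that the multiple-tiling result of Proposition~\ref{prop:sadic1} is stated under PRICE, so strictly the cleanest route is to assume additionally that $\bsigma$ satisfies PRICE (or at least is eventually recurrent, which combined with primitivity gives generalized eigenvectors via Proposition~\ref{prop:fur}), and then remark on how to relax it — this is where the conjectural status bites, see below. (2) Establish the ``multiplicity transfer'' identity: for a $c$-fold tiling $\cC_n^{\bone}$, the domain exchange $(\cR_n^{\bone},\mathfrak{h}_n,\lambda_{\bone})$ maps $c$-to-one onto $(X_{\bsigma}^{(n)},\Sigma,\mu)$, via a careful reading of the coding map $\psi_n$ in \eqref{eq:psin} — a point $\bz$ of $X_{\bsigma}^{(n)}$ is hit by exactly as many translates of the tiles as cover a generic point in the discrete-hyperplane picture. (3) Invoke that $(X_{\bsigma}^{(n)},\Sigma,\mu)$ has pure discrete spectrum, so it is measurably conjugate to its maximal equicontinuous factor, which by Theorem~\ref{t:tilingpds}'s computation is the rotation by $\pi_{\bu_n,\bone}\be_d$; the conjugacy being a measurable isomorphism forces the factor map $\psi_n$ to be injective a.e., i.e.\ $c(n) = 1$. (4) Conclude that $\cC_n$ is a genuine tiling for every $n$, which by Proposition~\ref{p:cChC} (and Proposition~\ref{p:tilingw}) is equivalent to $\hC_n$ being a tiling of $\RR^d$ for every $n$, i.e.\ the tiling condition holds.

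\textbf{Main obstacle.} The hard part is step (2) together with the reduction in step (1): the multiple-tiling property of $\cC_n$, and with it the finiteness and even the well-definedness of the multiplicity $c(n)$, is only known under the two-sided property PRICE (Proposition~\ref{prop:sadic1}~(\ref{it:multipletiling})), whereas the conjecture is stated only assuming primitivity plus existence of generalized eigenvectors. Without PRICE one does not even know that $\cR_n$ is the closure of its interior with boundary of measure zero, nor that $\cC_n$ is a multiple tiling, so the clean ``$c$-to-one factor map'' picture can fail and the above argument does not apply directly. So what the proof sketch really establishes is the implication \emph{under PRICE} (or any hypothesis guaranteeing the multiple-tiling and interior-regularity statements); the genuinely conjectural content is to remove that extra assumption. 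A secondary, more technical obstacle is making rigorous the passage from ``generically $c$-to-one as a measurable factor map'' to ``contradicts pure discrete spectrum'': one must check that the Kronecker factor of $(X_{\bsigma}^{(n)},\Sigma,\mu)$ is \emph{exactly} the rotation on $\TT^{d-1}$ identified in Theorem~\ref{t:tilingpds} and not a strictly larger group rotation, which uses that $\bu_n$ has rationally independent coordinates (itself a consequence of the multiple-tiling property, as noted at the end of the proof of Theorem~\ref{t:tilingpds}) and the fact that the eigenvalues $\exp(2\pi i \mu([a]))$ generate the full dual group; I would borrow this from the one-sided analysis of \cite{BST:19,BST:23} and the substitutive template of \cite{Barge-Kwapisz:06}.
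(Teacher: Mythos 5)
The statement you set out to prove is not a theorem of the paper but one of its open conjectures: the authors explicitly write that, contrary to the substitutive case, ``it is not clear if the converse is true,'' and they give no proof. So there is no proof to compare your argument against; the only question is whether your sketch closes the gap, and it does not --- as your own ``Main obstacle'' paragraph essentially concedes.

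Two points make the gap concrete. First, your reduction to ``$c(n)=1$'' relies on the multiple-tiling statement of Proposition~\ref{prop:sadic1}~(\ref{it:multipletiling}), which is only available under the two-sided property PRICE; the conjecture assumes only primitivity and the existence of generalized eigenvectors, under which neither the multiple-tiling property nor the regularity of $\cR_n$ (closure of its interior, boundary of measure zero) is known. Restricting to PRICE changes the statement being proved. Second, and more seriously, even under PRICE your step~(3) does not follow: a system with pure discrete spectrum can be a generically $c$-to-one extension of one of its rotation factors with $c\ge 2$, provided its full Kronecker factor is a strictly larger compact group (for instance a finite group extension of the torus rotation). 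To derive a contradiction from $c\ge 2$ you must show that the rotation by $\pi_{\bu_n,\bone}\be_d$ identified in Theorem~\ref{t:tilingpds} is exactly the maximal equicontinuous factor and that no larger group rotation can be measurably isomorphic to $(X_{\bsigma}^{(n)},\Sigma,\mu)$. In the substitutive case this identification is the heart of the Barge--Kwapisz argument and uses self-similarity in an essential way (the eigenvalue group is pinned down by the single Pisot incidence matrix); in the $\cS$-adic setting that rigidity is absent, and this is precisely where the conjecture lives. A further technical issue: when $c\ge 2$ the domain exchange $\mathfrak{h}_n$ of \eqref{e:Hn} is not well defined on a set of positive measure (a point can lie in several subtiles $\cR_n^{\bone}(a)$), so the ``multiplicity transfer'' of your step~(2) cannot be read off from the coding map $\psi_n$ of \eqref{eq:psin} as stated.
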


In Chapter~\ref{sec:cf}, we saw that multidimensional continued fraction algorithms give rise to important families of $\cS$-adic shifts. For our theory  to be applicable to these shifts,  it is important that they satisfy the Pisot condition, according to 
 Section~\ref{sec:MPBrun}. In \cite{BST21}, we considered this question which is intimately related to strong convergence on these algorithms. In particular, we proved that the Selmer continued fraction algorithm (see \cite{Selmer:61})\indx{Selmer algorithm}\indx{continued fraction algorithm!Selmer} satisfies the Pisot condition for dimension $d\in\{3,4\}$. Besides that, in \cite{BST21}  we gave some numerical results that indicate that the second Lyapunov exponent of many well-known multidimensional  continued fraction algorithms become positive from a certain dimension onwards. 
More precisely these heuristic results point toward the following conjecture.

\begin{conjecture}\label{conj:convfc}
We conjecture the following convergence behavior of some classical multidimensional continued fraction algorithms. 
\begin{itemize}
\item The Brun continued fraction algorithm satisfies the Pisot condition if and only if $d \le 10$.
\item The Jacobi--Perron continued fraction algorithm (see \cite{Perron:07}) satisfies the Pisot condition if and only if $d \le 10$. \indx{Jacobi--Perron!algorithm}\indx{continued fraction algorithm!Jacobi--Perron}
\item The Selmer continued fraction algorithm satisfies the Pisot condition if and only if $d\le 4$.
\end{itemize}
\end{conjecture}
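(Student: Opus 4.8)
The statement is a conjecture, so what follows is a program of attack rather than a proof; for each of the three algorithms it splits into an ``if'' half (the Pisot condition of Definition~\ref{def:MCF_Pisot} holds below the stated threshold) and an ``only if'' half (it fails above it), and I would treat the two halves with quite different tools. For the ``if'' half one must show that the second Lyapunov exponent $\vartheta_2$ of the transpose cocycle $A_{\mathrm{tr}}$ is strictly negative; by the exterior-power formula \eqref{def:Lyapu2} one has $\vartheta_1+\vartheta_2 = \lim_n \frac1n\log\lVert\wedge^2 M_{[0,n)}\rVert$, so negativity of $\vartheta_2$ is equivalent to the growth rate of the $\wedge^2$-cocycle being strictly below $\vartheta_1$. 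Following the pattern already carried out for $d\in\{3,4\}$ in Proposition~\ref{prop:Brun23Pisot} (via \cite{AD:19,Schratzberger:98,Schratzberger:01,Hardcastle:02}) and for the Selmer algorithm in \cite{BST21}, I would first use the explicit invariant densities — those of Lemmas~\ref{lem:unorderedinvariant}, \ref{lem:orderedinvariant} and \ref{lem:multinvariant} for Brun, with the analogous classical formulas for Jacobi--Perron and Selmer — to write $\vartheta_1+\vartheta_2$ as an explicit finite-dimensional integral through a Furstenberg-type formula applied to the exterior-square cocycle, and then bound that integral strictly below $\vartheta_1$. This second step is the substantive point: one looks for an adapted norm, or a subadditive comparison functional on $\wedge^2\RR^d$, that contracts strictly under the partial-quotient matrices on a set of full measure, in the spirit of the contraction estimates underlying Lemma~\ref{l:localOseledets} and Proposition~\ref{prop:strongcv2sided}.

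The ``only if'' half is the genuinely hard direction, since it is uniform in~$d$: one must produce $\vartheta_2\ge 0$ for \emph{all} $d\ge 11$ (resp.\ $d\ge 5$ for Selmer). The natural route is to exhibit, for each such~$d$, a bilinear form or an invariant two-dimensional sub-object whose averaged action under the partial-quotient matrices fails to contract, equivalently a nonnegative subadditive cocycle witnessing $\vartheta_1+\vartheta_2 = 2\vartheta_1$ and hence $\vartheta_2 = \vartheta_1 > 0$. Alternatively one can try to convert the known non-strong-convergence phenomena and the heuristics of \cite{BST21} into rigorous lower bounds on $\vartheta_2$ by a renormalization argument: amplify a single ``bad block'' of partial-quotient matrices — one on which the would-be stable plane expands — into a positive-measure obstruction using the ergodicity of the natural extension together with the Poincar\'e recurrence theorem. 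In practice this requires identifying such a block explicitly, which is precisely what the numerics of \cite{BST21} suggest exists but do not pin down.

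I expect the main obstacle to be that these Lyapunov exponents have no known closed form, so distinguishing $\vartheta_2<0$ from $\vartheta_2\ge 0$ exactly at the borderline dimension ($d=10$ for Brun and Jacobi--Perron, $d=4$ for Selmer) demands estimates of a sharpness that current methods do not reach. Even a rigorous computer-assisted enclosure of the relevant integrals can at best settle each dimension up to and a little beyond the conjectured threshold, leaving the infinite tail untouched. A realistic program therefore has two ingredients: a rigorous-numerics phase establishing the sign of $\vartheta_2$ in every dimension up to the threshold plus a few more, and a \emph{monotonicity} theorem showing that once $\vartheta_2$ becomes nonnegative in some dimension it remains nonnegative in all higher ones — perhaps by embedding the behaviour of the $d$-dimensional cocycle into that of the $(d{+}1)$-dimensional one, or by proving that the normalized quantity $\vartheta_2/\vartheta_1$ is nondecreasing in~$d$ along these families. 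That monotonicity statement is itself open, and is, I think, the true crux of the conjecture.
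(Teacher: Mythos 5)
This statement is a conjecture, and the paper offers no proof of it: its only support is the numerical evidence of \cite{BST21} together with the rigorously known low-dimensional cases (Proposition~\ref{prop:Brun23Pisot} for Brun with $d\in\{3,4\}$, and the Selmer results cited in Section~\ref{sec:open}). Your proposal correctly recognizes this and is an accurate account of what a proof would require; there is therefore nothing in the paper to compare it against step by step. Two remarks on your program. First, your ``if'' half is consistent with how the existing partial results were actually obtained (explicit invariant densities plus contraction estimates for the exterior-square cocycle), and your identification of the ``only if'' half as the uniform-in-$d$ obstruction is exactly where the difficulty lies. Second, your proposed monotonicity theorem --- that $\vartheta_2\ge 0$ in dimension $d$ should propagate to all higher dimensions --- is not something the paper suggests or that appears in the cited literature; it is a plausible structural idea, but you should flag that there is no known embedding of the $d$-dimensional cocycle into the $(d{+}1)$-dimensional one for these algorithms, and that the conjectured thresholds ($d\le 10$ for Brun and Jacobi--Perron versus $d\le 4$ for Selmer) already show that any such monotonicity would have to be algorithm-specific rather than a general principle. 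As a review of a conjecture, your text is a reasonable research program, but it proves none of the three assertions, and in particular neither direction of any of the three equivalences is established by it.
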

 
While the Pisot property of the Brun and the Selmer continued fraction algorithms would suffice to apply our theory to these algorithms in higher dimensions, there is an additional problem with the Jacobi--Perron algorithm. Indeed, it might make no sense to try to establish nonstationary Markov partitions for Lebesgue almost every orbit of the Jacobi--Perron algorithm because the domain $\hDelta$ defined in Section~\ref{sec:natex}, that would be a candidate for a natural extension, seems to have ``fractal structure'' and may well have Lebesgue measure 0; see \cite[Section~8]{AL18}. In particular, the following is a long standing open question.

\begin{openproblem}
Define the candidate  $\hDelta$ for a natural extension of the Jacobi--Perron algorithm as described in Section~\ref{sec:natex}.
Does $\hDelta$ have positive Lebesgue measure?
\end{openproblem}

This is also of interest because, according to Section~\ref{sec:natex}, the natural extension can be used to calculate the invariant measure of a continued fraction algorithm. According to Lemmas~\ref{lem:unorderedinvariant} and \ref{lem:orderedinvariant}, this invariant measure is known for the Brun algorithm in any dimension. For the Selmer algorithm, the invariant measure is provided in \cite{BFK:15,BFK:19} for any dimension. However, for the Jacobi--Perron algorithm, even for $d=3$, we do not know an explicit formula for the invariant measure. The only thing we know is that its density is piecewisely analytic; see~\cite[Theorem~1]{Schweiger:90} and \cite[Th\'eor\`eme~2.24]{Broise:96}.

\begin{openproblem}
Is there an explicit formula for the density of the invariant measure of the Jacobi--Perron algorithm?
\end{openproblem}

Let us focus  now on  the topological properties of the  boundaries of  the Markov partitions  we get. The subtiles of our $\cS$-adic Rauzy fractals seem to have fractal boundaries in all our examples. When looking at the stationary case, according to \cite{Bow78}, each atom of a Markov partition has nonsmooth boundary for $d\ge 3$. We therefore formulate the following conjecture for the nonstationary case.
 
\begin{conjecture}[{cf.~\cite[Section~10]{AF:01}}]
For $d\ge 3$, let $(X,f)$ be an $\cS$-adic mapping family which satisfies the Pisot condition from Definition~\ref{def:gengenPisot2subs}. Assume that $(X,f)$ admits a nonstationary Markov partition $(\cP_n)_{n\in\ZZ}$.
Then none of the atoms of~$\cP_n$, $n \in \ZZ$, has a smooth boundary. 
\end{conjecture}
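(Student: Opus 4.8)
The plan is to reduce the statement, via the description of the atoms as suspensions of Rauzy fractals, to a claim about the topological boundary of the $\cS$-adic Rauzy fractals inside the unstable hyperplane, and then to run a nonstationary analogue of Bowen's argument from \cite{Bow78} (see also \cite[Section~10]{AF:01} for the $d=2$ picture). First I would fix the meaning of ``smooth boundary'': an atom $R_{n,i}$ has smooth boundary if $\partial R_{n,i}$ is a finite union of $C^1$ embedded hypersurfaces of $\TT_n$ meeting along $C^1$ faces. Under the Pisot condition the splitting is codimension one, so through a.e.\ point (with respect to the invariant measure yielding the Oseledets/Anosov splitting) the stable direction is the line $\RR\bu_n$ and the unstable direction is the hyperplane $\bv_n^\perp$. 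As in the stationary case, Property~M (Definition~\ref{def:M}) produces a decomposition of the boundary of any nonstationary Markov partition into a stable part $\partial^s$ (a union of stable segments, with $f_n(\partial^s)\subset\partial^s$) and an unstable part $\partial^u$ (a union of pieces of affine unstable hyperplanes, with $f_n^{-1}(\partial^u)\subset\partial^u$); indeed the transverse partitions $H_{n,i}$, $V_{n,i}$ in the proof of Theorem~\ref{thm:MarkovCoarse} are precisely the local models of these foliated boundaries.

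Next I would show that it suffices to treat the sets $\cR_n(a)$. Since $\partial^u$ is $f^{-1}$-invariant, made of finitely many pieces of codimension-one leaves at each level, and coded by the Markov structure, the usual reconstruction-of-the-partition step of Bowen's proof shows each such piece is, up to a finite decomposition and translation, one of the sets $\cR_n(a)$; when the atoms are the Rauzy boxes of Theorems~\ref{thm:MarkovCoarse}--\ref{thm:MarkovFine} this is transparent, because $\partial(\hR_n(a))$ literally contains a translate of $\cR_n(a)$ in a plane parallel to $\bv_n^\perp$. A $(d{-}1)$-dimensional flat region is a $C^1$ manifold-with-boundary only if its topological boundary inside the ambient hyperplane is a $C^1$ $(d{-}2)$-submanifold, so the whole conjecture reduces to: for $d\ge 3$ and under the Pisot condition, $\partial\cR_n(a)$, viewed inside $\bv_n^\perp$, is never a piecewise $C^1$ $(d{-}2)$-submanifold. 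Note the hypothesis $d\ge3$ enters exactly here, since for $d=2$ these boundaries are genuinely segments.

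For the core estimate I would exploit the nonstationary graph-directed structure. By Lemma~\ref{lem:seteq} and the tiling condition (Proposition~\ref{p:ntilingbox}, Proposition~\ref{prop:sadic1}), $\cR_n(a)$ is a genuine (measure-disjoint) union of translated copies $\pi_n\bl(p)+M_{\sigma_n}\cR_{n+1}(b)$, $(a,p,b)\in E_n$, so its boundary satisfies a ``boundary graph'' equation over the sub-collection of edges whose subtile meets the exterior or another subtile; iterating, near any of its points $\partial\cR_n(a)$ contains an affine copy $\pi_n\bl(p_k)+M_{\sigma_{[n,n+k)}}(\text{piece of }\partial\cR_{n+k}(b_k))$ of a unit-scale tile boundary for all $k$, the maps $M_{\sigma_{[n,n+k)}}\colon\bv_{n+k}^\perp\to\bv_n^\perp$ being the inverse unstable cocycle and hence uniformly contracting by Theorem~\ref{thm:oseledetssub}. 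If $\partial\cR_n(a)$ were piecewise $C^1$, then at an interior point of a $C^1$ piece it is nearly flat at small scales; blowing up by $M_{\sigma_{[n,n+k)}}^{-1}$ and passing to a limit I would argue the tiles $\cR_m(b)$ become asymptotically polytopal, contradicting (i) that the $\cR_m$ tile their hyperplanes and are uniformly bounded (balance, Lemma~\ref{lem:RFbalanced}) and (ii) that algebraic irreducibility with rationally independent generalized eigenvectors (Theorem~\ref{thm:oseledetssub}) excludes polytopal tiles and flat boundary pieces. A more robust variant replaces this by a strict Hausdorff-dimension bound $\dim_H\partial\cR_n(a)>d{-}2$, obtained from a Frostman measure carried by the boundary graph whose dimension is governed by the Lyapunov spectrum of the unstable cocycle; since a $C^1$ $(d{-}2)$-submanifold has dimension exactly $d{-}2$, this also closes the argument and dovetails with the metric machinery of Section~\ref{sec:metricS}.

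The hard part, and the reason this remains a conjecture, is precisely the absence of self-affinity in the $\cS$-adic setting: unlike for a single Pisot substitution, the unstable cocycle $(M_{\sigma_{[n,m)}}^{-1}\!\restriction_{\bv_n^\perp})_{m>n}$ need not be asymptotically conformal — its condition number on $\bv_n^\perp$ can grow — so ``blow-up to flatness'' is not automatic and the dimension of $\partial\cR_n(a)$ is not a single similarity dimension but must be tracked through the whole Lyapunov spectrum together with the combinatorics of the boundary graph. Ruling out, once and for all, the consistent propagation of flat boundary pieces through all levels under the Pisot condition is the genuine obstacle; a full proof would presumably yield, as in \cite{Bow78}, the sharper quantitative statement $d{-}2<\dim_H\partial\cR_n(a)<d{-}1$.
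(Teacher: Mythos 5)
This statement is posed in the paper as an open conjecture (Section on open problems); the paper offers no proof of it, so there is nothing to compare your argument against except the conjecture itself. Your proposal does not close the gap: it is a strategy outline in the style of Bowen's stationary argument, and you concede explicitly in your last paragraph that the decisive step --- ruling out the consistent propagation of flat boundary pieces through all levels, equivalently the strict bound $\dim_H \partial\cR_n(a) > d-2$ --- is ``the genuine obstacle.'' A proof that names its own missing step is not a proof, so the statement remains exactly as open after your argument as before it.

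Beyond that global gap, two local points deserve attention if you pursue this. First, the conjecture quantifies over \emph{every} nonstationary Markov partition of the mapping family, whereas your reduction to the sets $\cR_n(a)$ is only ``transparent'' for the specific Rauzy-box partitions of Theorems~\ref{thm:MarkovCoarse} and~\ref{thm:MarkovFine}. For a general $(\cP_n)_{n\in\ZZ}$ satisfying only Definition~\ref{nsMarkov}, the atoms need not carry transverse partitions at all, so the decomposition $\partial = \partial^s \cup \partial^u$ and the subsequent identification of unstable boundary pieces with Rauzy fractals both require a nonstationary analogue of Bowen's local-product-structure reconstruction that is nowhere established in the paper and is not automatic from the eventually Anosov property (the contraction/expansion is only asymptotic, not uniform per step). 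Second, both of your proposed endgames --- the ``blow-up to asymptotic polytopality'' contradiction and the Frostman-measure dimension bound --- are asserted rather than argued; in the stationary case these are exactly the places where self-affinity of the tiles is used, and replacing self-affinity by control of the full Lyapunov spectrum of the non-conformal unstable cocycle is the substance of the problem, not a routine adaptation.
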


Another direction of further research concerns the topology of the atoms of a nonstationary Markov partition or, equivalently, of the underlying Rauzy fractals. In the stationary case, such properties are studied for instance in \cite{ST:09}. We formulate the following problem.

\begin{openproblem}[{cf.~\cite[Section~10]{AF:01}}]
Let $\bsigma\in \cS^\ZZ$ be a sequence of substitution than satisfies the tiling condition. Study topological properties of $\cS$-adic Rauzy fractals and their subtiles. For instance, can we give criteria for connectedness of $\cS$-adic Rauzy fractals? 
\end{openproblem}

Finally, we ask whether we can go beyond the present unimodular Pisot case. Indeed, in the present paper we only dealt with linear eventually Anosov mapping families satisfying the Pisot condition. It is natural to ask the following question.

\begin{openproblem}
Is it possible to construct nonstationary Markov partitions for linear eventually Anosov mapping families whose hyperbolic splitting is \emph{not} Pisot, i.e., if each part of the hyperbolic splitting has dimension at least~$2$?
\end{openproblem}

For examples in the stationary case, see e.g.~\cite{KV:98,AFHI:11}.
  
\section{Arithmetic and geometric coding of the Weyl chamber flow}\label{sec:flows}
\indx{Weyl chamber flow}
We aim at linking a continued fraction map  (in the sense of Definition~\ref{def:cf})  to a flow on a homogeneous space generalizing the modular surface.  
According to \cite[Section~6]{Gorodnik:07}, Ledrappier and Mozes proposed to use fractal tilings in order to get an arithmetic coding of the Weyl chamber flow, a higher-dimensional analog of the Teichm\"uller flow. 
One motivation for the present work is to start with the realization of this program by using the $\ZZ^d$-tilings induced by the Rauzy boxes~$\hR_n$ and their Markov properties.

We recall that the Weyl chamber flow is an action of the diagonal subgroup of $\SL(d,\RR)$ on the space of lattices. More precisely, the \emph{Weyl chamber flow}\indx{Weyl chamber flow} is the $\RR^{d-1}$-action on the right on the space of $d$-dimensional lattices $\SL(d,\ZZ)\backslash \SL(d,\RR)$ of the diagonal subgroup of $\SL(d,\RR)$ of matrices 
\begin{equation} 
W(\bt) = W(t_1,t_2,\dots,t_d) = \mathrm{diag}\big(e^{t_1}, e^{t_2},\dots, e^{t_d}\big)
\end{equation}
such that $t_1 {+} t_2 {+} \cdots {+} t_d = 0$. For $d=2$, the flow $W(\bt)$ is just the geodesic flow on the unit tangent bundle of the modular surface $\SL(2,\ZZ) \backslash \HH^2 \oplus \mathrm{SO}(2,\RR)\cong \SL(2,\ZZ)\backslash \SL(2,\RR)$. Here, $\HH^2$~denotes the upper half plane.

The Weyl chamber flow acts on the space $\SL(d,\ZZ)\backslash \SL(d,\RR)$ of lattices while the natural extension of a multidimensional continued fraction algorithm acts on a subset of $\PP^{d-1} {\times} \PP^{d-1}$, as detailed in Section ~\ref{sec:natex}.  
If we want to code the Weyl chamber flow, then we need to relate these objects. The idea is to realize the natural extension of a continued fraction algorithm  as a Poincar\'e section of this flow  in terms of a geometric coding with Rauzy fractals. To make the Rauzy fractals fit together with this flow, a base change has to be performed by using the vectors of the Oseledets splitting defined in terms of the cocycle of the two-sided $\cS$-adic system  associated to the continued fraction. 
The point is, for almost every lattice, to define a fundamental domain given by a Rauzy box. The flow contracts the vertical direction of the box and increases the measure of its horizontal slices; every time this measure becomes larger than~1, we can renormalize by a restacking construction, as illustrated in Figure~\ref{fig:restack1}, corresponding to a change of basis in the lattice.

\section{An interplay between three dynamical systems}\label{subsec:dynamics}
Let us come back to the dynamical setting described in the introduction.  We have reached the goal described in Chapter~\ref{sec:intro}: We found a parametrized family of rotations which is stable by induction, and for which the renormalization is realized by the matrices $(M_n)_{n \in {\mathbb Z}}$ provided by the orbit of the natural extension of  the underlying  multidimensional  continued fraction algorithm; see Theorems~\ref{t:FrotMCF} and~\ref{theo:metricRotMCF}.  Moreover, by using substitutive realizations and $\cS$-adic shifts, we exhibited families of nonstationary Markov partitions to obtain measurable conjugacy to a symbolic  model for the multidimensional  continued fraction algorithm; see Sections~\ref{sec:markov}  and~\ref{sec:metricMP}. 
This is a  rich dynamical situation, in the sense that these results  involve  the interplay of different kinds of dynamics, each of which has a symbolic, a geometric, and an arithmetic interpretation. 

The first kind of dynamics is a family of $\cS$-adic shifts. Each of these $\cS$-adic shifts $(X_{\bsigma},\Sigma,\mu)$ is defined on the orbit closure of an $\cS$-adic  sequence, which is defined in terms of a sequence of substitutions~$\bsigma$; see Definition~\ref{def:sadicshift}. We assume that the tiling condition from Definition~\ref{def:tilingcond} holds. 
This entails that the shift map~$\Sigma$ acting on the $\cS$-adic shift $(X_{\bsigma},\Sigma,\mu)$ is one-to-one and onto, of entropy~0, and measurably conjugate to a rotation on the torus~$\TT^{d-1}$, by Theorem~\ref{t:tilingpds}. By suspension, this corresponds to a ``vertical''  linear flow on a torus of dimension~$d$. This first dynamics can thus be understood geometrically as follows in  dimension $d=2$,  in the basic case of Sturmian sequences (see Example~\ref{ex:sturm}): The Poincar\'e section of the vertical flow inside a single $L$-shaped region gives the rotation $\fr_\alpha$ on the unit circle~$\TT^1$;
one has in fact a collection of dynamical systems, parametrized by the rotation vector~$\alpha$.

Secondly, we have continued fraction algorithms. These are chaotic dynamical systems which are far from being one-to-one and have positive entropy. To make a continued fraction algorithm $(X,F,A,\nu)$ invertible, we build a geometric natural extension $(\hX,\hF,\hA,\hnu)$ of $(X,F,A,\nu)$; see Section~\ref{sec:natex}.  Provided  that the tiling condition holds, using a substitutive realization~$\bphi$, such as  defined in Section~\ref{subsec:realization},  we can interpret the natural extension $(\hX,\hF,\hA,\hnu)$ as a bi-infinite shift $(\bphi(\hX),\Sigma,\bphi_*\nu)$ that acts on a collection of $\cS$-adic shifts and, hence, on a collection of dynamical systems parametrized by a rotation vector~$\balpha$, by Theorem~\ref{t:tilingpds}. The continued fraction algorithm governs a normalization dynamics that allows one to go from $\cS$-adic shifts to $\cS$-adic shifts. By taking a suspension of the natural extension, we can build a flow of positive entropy, and this is an expanding dynamical system. Moreover this flow has a nice interpretation in the case of  regular continued fractions; see \cite{AF:01}.  Indeed, this second dynamics can be expressed as a Poincar\'e section of the geodesic flow on the tangent bundle of the modular surface, i.e.,  on  the  space of  $L$-shaped regions, with this  Poincar\'e section being geometrically described in terms of a nonstationary Markov partition.
In our general Pisot setting, we want to suspend the nonstationary Markov partitions we obtained; in other words, we want to use them for defining a Poincar\'e section of a natural flow defined on some space of unimodular lattices in~$\RR^d$. Here, Rauzy boxes play the role of $L$-shaped regions. Some ideas for this general case  are discussed just above in Section~\ref{sec:flows}. 

There is a third kind of dynamics obtained by putting the first two kinds of dynamics into one large system. In fact, this third dynamics is an enrichment of the second one, in the sense  that one follows \emph{pointed} $L$-shapes (and pointed Rauzy boxes in higher dimension) through the second dynamics. In symbolical terms, we follow given sequences in a shift through the  normalization process given by the continued fraction algorithm (the second dynamics). Consider an orbit of a continued fraction expansion, or equivalently a rotation vector~$\balpha$. In the shift of all $\cS$-adic sequences corresponding to this rotation~$\fr_{\balpha}$, consider an $\cS$-adic sequence, to which we associate an infinite sequence of renormalized  sequences obtained by a suitable recoding process, also based on substitutions. This recoding constitutes the shift on the Ostrowski expansion in the Sturmian case (see \cite[Section~6]{AF:01}) and the shift on the Dumont--Thomas expansion (cf.\ \cite{Dumont-Thomas,CS01a}) in the periodic case (i.e., in the substitutive case). In other words, we enrich the Gauss map  and turn it into the Ostrowski map, which is a skew product of the Gauss map; see \cite{BerLee:23} for more on the Ostrowski skew product. This third dynamics is different from the second one, since we now  consider a fixed orbit of the  given continued fraction algorithm, but it is also  expanding and of positive entropy. It is given by a nonstationary edge shift corresponding to the digit strings of the Ostrowski expansion, and in the periodic case, we recover a usual edge shift in the sense of \cite[Definitions~2.2.5]{Lind-Marcus}. 
For the case of Sturmian sequences, the whole picture is presented in \cite{AF:01}.

\bibliographystyle{amsalpha}
\bibliography{sadic3}

\printindex[index]
\makeatletter
\renewcommand{\@idxitem}{\par\hangindent 5em}
\makeatother
\printindex[notation]
\end{document}